\documentclass[leqno,12pt]{book}
\usepackage{amsmath,amsthm,amsfonts,amssymb,mathtools}
\usepackage{mathrsfs}
\allowdisplaybreaks

\makeatletter
\def\@seccntformat#1{\csname the#1\endcsname.~}
\def\section{\@startsection{section}{1}{\z@}%
              {-3.5ex \@plus -1ex \@minus -.2ex}%
              {2.3ex \@plus.2ex}%
              {\large\bf}}
\def\subsection{\@startsection{subsection}{1}{\z@}%
              {-3.5ex \@plus -1ex \@minus -.2ex}%
              {2.3ex \@plus.2ex}%
              {\large\bf}}

\newcommand\chappreface{
    \chapter*{Preface
        \@mkboth{%
           \MakeUppercase{Preface}}{\MakeUppercase{Preface}}}%
    }

\newcommand\chapintro{
    \chapter*{Introduction
        \@mkboth{%
           \MakeUppercase{Introduction}}{\MakeUppercase{Introduction}}}%
    }

\makeatother

\numberwithin{equation}{section}

\theoremstyle{plain}
\swapnumbers
\newtheorem{theorem}{Theorem}[section]
\newtheorem{corollary}[theorem]{Corollary}
\newtheorem{proposition}[theorem]{Proposition}
\newtheorem{lemma}[theorem]{Lemma}
\theoremstyle{definition}

\newtheorem{example}[theorem]{Example}
\newtheorem{remark}[theorem]{Remark}

 \makeatletter
 \let\c@equation\c@theorem
 \let\cl@equation\cl@theorem
 
 \makeatother

\newcommand{\ltwo}{\mathscr{L}_2}
\newcommand{\stwo}{\mathscr{S}_2}
\newcommand{\htwo}{{\mathcal{H}^{\otimes 2}}}
\newcommand{\D}{{\mathrm{D}}}
\newcommand{\q}{\mathfrak{q}}
\newcommand{\one}{\boldsymbol{1}}
\newcommand{\dettwo}{{{\det}_2}}
\newcommand{\op}{{\text{\rm op}}}
\newcommand{\tr}{{\text{\rm tr}}}
\newcommand{\f}{{\boldsymbol{\mathfrak{f}}}}
\newcommand{\kyosu}{{\mathrm{i}}}
\newcommand{\p}{\mathfrak{p}}

\newcommand{\bR}{{\boldsymbol{R}}}
\newcommand{\bS}{{\boldsymbol{S}}}
\newcommand{\bU}{{\boldsymbol{U}}}
\newcommand{\bV}{{\boldsymbol{V}}}

\newcommand{\e}{{\mathfrak{e}}}
\newcommand{\sh}{{\widehat{\mathfrak{sh}}}}
\newcommand{\ch}{{\mathfrak{ch}}}
\newcommand{\tnh}{{\widehat{\mathfrak{th}}}}
\newcommand{\snh}{{\mathfrak{sh}}}

\newcommand{\eqltwo}{\overset{(\ltwo)}{=}}
\newcommand{\as}{{\mathrm{AS}}}
\newcommand{\A}{{\boldsymbol{A}}}
\newcommand{\sa}{{\mathfrak{S}}}
\newcommand{\ba}{{\boldsymbol{a}}}
\newcommand{\I}{{\mathscr{I}}}
\newcommand{\J}{{\mathbf{J}}}

\begin{document}

\title{\bf
\huge
Quadratic Wiener functionals
\\[10pt]
\Large
---transformations and quadratic forms---
}
\author{\large
Setsuo TANIGUCHI
\thanks{email: {\tt se2otngc@kyudai.jp}}}
\date{{\tt February 20, 2026}}

\maketitle

\pagenumbering{roman}

\tableofcontents

\chappreface
The integral $\int_{\mathcal{W}} f e^{\q} d\mu$ on the Wiener space
$(\mathcal{W},\mu)$ (or the Laplace transformation  
$\mathbb{R}\ni\lambda\mapsto 
  \int_{\mathcal{W}} f e^{\lambda \q} d\mu$)
appears in many situations.
One may remember the Feynman-Kac formula for Schr\"odinger operator,
the Girsanov-Maruyama transformation, the pricing formula with
risk-neutral probability measure (Mathematical finance) and so on.
Evaluating the integral is one of fundamental problems in stochastic
analysis. 
Furthermore, continuing the evaluation holomorphically, we obtain 
the Fourier transformation  
$\mathbb{R}\ni\lambda\mapsto
 \int_{\mathcal{W}} f e^{\kyosu\lambda \q} d\mu$, 
where $\kyosu^2=-1$.
When $f=1$, it is exactly the characteristic function of
$\mathfrak{q}$ (a basic object in Probability theory).

Historically speaking, the study of evaluation of the
integral with quadratic $\q$ (an element of the direct sum
of Wiener chaos of order less than or equal to two) goes
back to the pioneering works by R.H.~Cameron and
W.T.~Martin \cite{CM0,CM}, M.~Kac \cite{kac}, and 
P.~L\'evy \cite{levy} in the mid-1900s.  
Cameron and Martin dealt with weighted square norms of
sample path of the one-dimensional Wiener process.
Kac did the square norm of sample path of the
one-dimensional Wiener process, which corresponds to the
Schr\"odinger operator for harmonic oscillator.
L\'evy did the stochastic area surrounded by the
two-dimensional Wiener process, which relates to the
Schr\"odinger operator for constant magnetic field.
Their approaches are different from each other, and each one
corresponds to the famous achievements named after them.
Cameron and Martin applied linear transformations on Wiener
space; it is a generalization of the Cameron-Martin formula.
Kac used the Fourier series expansion in the $L^2$-space
over the time interval; the method connects to the Feynman-Kac
formula. 
L\'evy took advantage of the development of Wiener process
in $\mathcal{W}$, which is originally used by N.~Wiener to
construct the Wiener measure $\mu$ and generalized by
K.~It\^o and M.~Nisio \cite{ito-nisio}.
The evaluation of Laplace transformation by L\'evy is called 
L\'evy's stochastic area formula.
After them, there are a lot of works concerning quadratic
$\q$s.
The author also made several contributions
[13--15, 17, 23, 24, 26, 27, 34--47].

In this monograph, we take the advantage of transformations
on the Wiener space to make the systematic investigation on
the integral $\int_{\mathcal{W}} fe^{\q} d\mu$ with
quadratic $\q$ so to establish a uniform framework to handle
such an integral, and revisit the author's contributions by
applying the uniform framework.

\chapintro
Let $d\in \mathbb{N}$ and $T>0$.
Denote by $\mathcal{W}$ the space of $\mathbb{R}^d$-valued continuous
functions on $[0,T]$ vanishing at $0$, and by $\mu$ the Wiener 
measure on $\mathcal{W}$.
Let $\mathcal{C}_n$ for $n\in \mathbb{N}\cup\{0\}$ be the Wiener 
chaos of order $n$.
We call an element of $\mathcal{C}_2$ {\it a quadratic form}, and that
of the direct sum 
$\mathcal{C}_0\oplus\mathcal{C}_1\oplus\mathcal{C}_2$ 
{\it a quadratic Wiener functional}.
Recall that each quadratic form is represented as
\[
    \q_\eta
    \equiv\int_0^T\biggl\langle \int_0^t
      \eta(t,s)d\theta(s),d\theta(t)\biggr\rangle
    =\sum_{i,j=1}^d \int_0^T \biggl(
       \int_0^t \eta_j^i(t,s) d\theta^j(s)\biggr)
       d\theta^i(t),
\]
where 
$\eta=\bigl(\eta_j^i\bigr)_{1\le i,j\le d}$
is a square integrable function on $[0,T]^2$ with values in 
$\mathbb{R}^{d\times d}$($\equiv$the space of $d\times d$ real
matrices) satisfying that
$\eta_i^j(t,s)=\eta_j^i(s,t)$ for $1\le i,j\le d$ and 
$(t,s)\in[0,T]^2$
(see \cite{nualart} or Theorem~\ref{t.w.chaos} below).
Here and in what follows, 
\begin{enumerate}
\item
$\langle\cdot,\cdot\rangle$ is the Euclidean inner product,
\item
every element of $\mathbb{R}^d$ is thought of as a column
vector, and $\mathbb{R}^{d\times d}$ acts on $\mathbb{R}^d$
from left as
$Mx=\Bigl(\sum\limits_{j=1}^d M_j^ix^j\Bigr)_{1\le i\le d}$ for 
$M=(M_j^i)_{1\le i,j\le d}\in \mathbb{R}^{d\times d}$ and
$x=(x^i)_{1\le i\le d}\in \mathbb{R}^d$,
\item
$\{\theta(t)=(\theta^1(t),\dots,\theta^d(t))\}_{t\in[0,T]}$
is the coordinate process on $\mathcal{W}$, i.e.,
$\theta(t)(w)=w(t)$ for $t\in[0,T]$ and $w\in\mathcal{W}$,
\item
each $d\theta^i(t)$ stands for the It\^o integral with 
respect to $\{\theta^i(t)\}_{t\in[0,T]}$; and
\item
$d\theta(t)=(d\theta^1(t),\dots,d\theta^d(t))$.
\end{enumerate}
Similarly, every quadratic Wiener functional is represented
as 
$\q_\eta+\int_0^T \langle f(t),d\theta(t)\rangle+c$
with square integrable $f:[0,T]\to \mathbb{R}^d$ and
$c\in \mathbb{R}$ (see \cite{nualart}).

Let $\mathcal{H}$ be the Cameron-Martin subspace of $\mathcal{W}$:
the space of absolutely continuous $h\in \mathcal{W}$ possessing
square integrable derivative $h^\prime$ on $[0,T]$. 
The inner product 
$\langle\cdot,\cdot\rangle_{\mathcal{H}}$ in $\mathcal{H}$
is given by 
\[
    \langle h,g\rangle_{\mathcal{H}}
    =\int_0^T \langle h^\prime(t),g^\prime(t)
           \rangle dt
    \quad\text{for } h,g\in \mathcal{H}.
\]
Its norm is given as
$\|\cdot\|_{\mathcal{H}}
 =\sqrt{\langle\cdot,\cdot\rangle_{\mathcal{H}}}$.
Denote by $\D$ the $\mathcal{H}$-derivative in the sense of
Malliavin calculus and by $\D^*$ its adjoint operator
(see \cite[Chapter~5]{mt-cambridge}). 

In Chapter~\ref{chap.qwf}, we establish a series expansion
of quadratic forms as  
\[
   \q_\eta=\frac12\sum_{n,m=1}^\infty 
     \langle B_\eta h_n,h_m\rangle_{\mathcal{H}}
     \{(\D^*h_n)(\D^*h_m)-\delta_{nm}\}
\]
for any orthonormal basis $\{h_n\}_{n=1}^\infty$ of 
$\mathcal{H}$, where $B_\eta:\mathcal{H}\to \mathcal{H}$
is the Hilbert-Schmidt operator with kernel $\eta$, i.e.,
it is defined as
\[
    \langle B_\eta h,g\rangle_{\mathcal{H}}
    =\int_0^T \biggl\langle \int_0^T \eta(t,s)
     h^\prime(s)ds, g^\prime(t)\biggr\rangle dt
    \quad\text{for }h,g\in \mathcal{H}.
\]
In the series expansion, 
$\D^*h_n$ is defined by thinking of $h_n$ as a constant
$\mathcal{H}$-valued Wiener functional and applying $\D^*$,
and the convergence takes place in every $L^p$-spaces with respect to 
$\mu$ for $p\in(1,\infty)$.
This series expansion will be seen by applying Malliavin
calculus, especially, representing $\q_\eta$ as
$\frac12(\D^*)^2B_\eta$ and using the continuity of the
adjoint operator $\D^*$. 
As an application of the series expansion, we revisit
L\'evy's (\cite{levy}) and Kac's (\cite{kac}) developments
of quadratic Wiener functionals.  

Chapter~\ref{chap.transf} is devoted to the investigation on 
{\it the transformation of order one} 
$\iota+F_\kappa$:
the sum of the identity mapping 
$\iota:\mathcal{W} \to \mathcal{W}$
and the Wiener functional $F_\kappa:\mathcal{W}\to \mathcal{H}$ 
determined with square integrable 
$\kappa:[0,T]^2\to \mathbb{R}^{d\times d}$ as
\[
    \langle F_\kappa,h\rangle_{\mathcal{H}}
    =\int_0^T \biggl\langle 
         \int_0^T \kappa(t,s)d\theta(s),h^\prime(t)
         \biggr\rangle dt
       \quad\text{for }h\in \mathcal{H}.
\]
Define $\eta(\kappa):[0,T]^2\to \mathbb{R}^{d\times d}$ by
\[
     \eta(\kappa)(t,s)
     =-\biggl\{\kappa(t,s)+\kappa(s,t)^\dagger
       +\int_0^T \kappa(u,t)^\dagger\kappa(u,s)du
       \biggr\}
       \quad\text{for }(t,s)\in[0,T]^2,
\]
where $M^\dagger$ is the transpose of
$M\in \mathbb{R}^{d\times d}$.
By using the development of quadratic forms achieved in
Chapter~\ref{chap.qwf}, we shall see that the transformation 
$\iota+F_\kappa$ of order one gives rise to a quadratic form
$\q_{\eta(\kappa)}$ as an exponent of Radon-Nikodym density
of change of variables formulas; if 
$\sup\limits_{\|h\|_{\mathcal{H}}=1}
 \langle B_{\eta(\kappa)}h,h\rangle_{\mathcal{H}}<1$, 
then it holds that
\[
    |\dettwo(I+B_\kappa)| 
    \int_{\mathcal{W}} f(\iota+F_\kappa)
       e^{\q_{\eta(\kappa)}} d\mu
    =e^{\frac12 \|\kappa\|_2^2} \int_{\mathcal{W}} f d\mu
    \quad\text{for every }f\in C_b(\mathcal{W}),
\]
where $\dettwo$ stands for the regularized determinant and
$C_b(\mathcal{W})$ is the space of bounded and continuous
functions on $\mathcal{W}$ with values in $\mathbb{R}$.
See Theorem~\ref{t.transf}.
On the way, it will be also shown that, in order for $\q_\eta$ to be  
exponentially integrable, it is necessary and sufficient that
$\sup\limits_{\|h\|_{\mathcal{H}}=1}
 \langle B_\eta h,h\rangle_{\mathcal{H}}<1$.

If  $\kappa$ satisfies that 
$\sup\limits_{\|h\|_{\mathcal{H}}=1}
 \langle B_{\eta(\kappa)}h,h\rangle_{\mathcal{H}}<1$, 
then $I+B_\kappa$ has a continuous inverse
(see Remark~\ref{r.transf}).
In the same chapter, letting $\widehat{\kappa}$ be 
the kernel of the  Hilbert-Schmidt operator
$(I+B_\kappa)^{-1}-I$, we shall show that
$\iota+F_{\widehat{\kappa}}$ is the inverse transformation
of $\iota+F_\kappa$ and the change of variables formula
turns into  
\[
    |\dettwo(I+B_\kappa)| 
    \int_{\mathcal{W}} f e^{\q_{\eta(\kappa)}} d\mu
    =e^{\frac12 \|\kappa\|_2^2} \int_{\mathcal{W}} 
     f(\iota+F_{\widehat{\kappa}}) d\mu.
\]
See Theorem~\ref{t.inv.transf}.
At the end of the chapter, the above result is applied to the 
linear transformations studied by Cameron and Martin \cite{CM}. 
A comparison between our results and theirs is also discussed. 

In Chapter~\ref{chap.twoways}, we shall show the following
four conditions are equivalent.
\\
(i)~$e^{\q_\eta}\in L^1(\mu)$,
\\
(ii)~$\Lambda(B_\eta)<1$,
\\
(iii)~there is a $\tau\in\stwo$ satisfying that
$\dettwo(I+B_\tau)\ne0$ and 
\[
    \eta(t,s)=-2\tau(t,s)-\int_0^T \tau(t,u)\tau(u,s)du
    \quad\text{for a.e.}~(t,s)\in[0,T]^2,
\]
and
\\
(iv)~
there is a $\rho\in\stwo$ satisfying that
\[
    \eta(t,s)=\rho(t,s)
      -\int_{t\vee s}^T \rho(t,u)\rho(u,s)du
    \quad\text{for a.e.}~(t,s)\in[0,T]^2,
\]
where we have used the abbreviation ``a.e.'' for 
``almost every'' with respect to the two-dimensional Lebesgue 
measure, and $t\vee s$ for $\max\{t,s\}$.
With $\kappa_A(\rho)\in\ltwo$ defined as
\[
    \kappa_A(\rho)(t,s)=-\one_{[0,t)}(s)\rho(t,s)
    \quad\text{for }(t,s)\in[0,T]^2, 
\]
it will be proved  that if these conditions hold, then  
\[
    \eta(\tau)=\eta(\kappa_A(\rho))=\eta\]
and the following two types of change of variables formulas hold:
\begin{align*}
    \int_{\mathcal{W}} f e^{\q_\eta} d\mu
    & = \{\dettwo(I-B_\eta)\}^{-\frac12}
        \int_{\mathcal{W}} f(\iota+F_{\widehat{\tau}}) d\mu
    \\
    & =e^{\frac14\|\rho\|_2^2} \int_{\mathcal{W}}
         f(\iota+F_{\widehat{\kappa_A(\rho)}}) d\mu
      \quad\text{for every }f\in C_b(\mathcal{W}).
\end{align*}
See Theorem~\ref{t.tways}.
Furthermore, restricting the domain, we shall see two types of 
bijectivity of the mapping 
$\ltwo\ni\kappa\mapsto\eta(\kappa)\in\stwo$.
In the same chapter, it will be also shown that the Girsanov
theorem is derived from the transformation
$\iota+F_{\widehat{\kappa_A(\rho)}}$ of oder one.

In Chapter~\ref{chap.lin.adapted}, we investigate transformations
$\iota+F_{\kappa_A(\rho)}$ which are continuous linear operators of
$\mathcal{W}$ to itself.
To be precise, define $G_\phi:\mathcal{W}\to \mathcal{H}$ and 
$\p_\sigma:\mathcal{W}\to \mathbb{R}$ for continuous 
$\phi,\sigma:[0,T]\to \mathbb{R}^{d\times d}$ as
\[
    \langle G_\phi,h\rangle_{\mathcal{H}}
      =-\int_0^T\langle \phi(t)\theta(t),h^\prime(t)
           \rangle dt
      \quad\text{for }h\in \mathcal{H}
    \quad\text{and}\quad
    \p_\sigma=\int_0^T\langle\sigma(t)\theta(t),
        d\theta(t)\rangle.
\]
$G_\phi$ is equal to $F_{\kappa_A(\rho)}$ with
$\rho\in\stwo$ given as 
\[
    \rho(t,s)=\one_{[0,t)}(s)\phi(t)
    +\one_{(t,T]}(s)\phi(s)^\dagger
    \quad\text{for }(t,s)\in[0,T]^2.
\]
Applying the results in the preceding chapter, we shall show
the equivalence between the integrability of $e^{\p_\sigma}$
and the solvability in the classical sense of 
the Riccati ordinary differential equation (ODE in short)
\[
    \bR^\prime=-\bR^2-\sigma^\dagger \bR -\bR\sigma
        -\sigma^\dagger\sigma,
    \quad \bR(T)=0.
\]
When $\sigma$ is of $C^1$-class, the solvability is
furthermore equivalent to the non-singularity of the solution $\bS$ 
to the $\mathbb{R}^{d\times d}$-valued second order ODE
\[
    \bS^{\prime\prime}-2\sigma_A \bS^\prime 
    -\sigma^\prime\bS=0,\quad 
    \bS(T)=I_d,~\bS^\prime(T)=\sigma(T),
\]
where we have said that $\bS$ is non-singular if $\det\bS(t)\ne0$ for
any $t\in[0,T]$. 
Using these $\bR$ and $\bS$, we will achieve another explicit
expression of Laplace transformation 
$\int_{\mathcal{W}} fe^{\p_\sigma} d\mu$.
The expression will be applied to evaluate the Feynman-Kac
density function and the kernel on a two-step nilpotent Lie
group.
Furthermore, it will be taken advantage of to achieve 
stochastic representations of algebraic or analytic objects:  
(a)~Euler, Bernoulli, and Eulerian polynomials
and 
(b)~reflectionless potentials and soliton solutions to the
KdV equation.

In Chapter~\ref{chap.sqrt}, we consider applications of
$\iota+F_\tau$ for $\tau\in\stwo$ appearing in the 
condition~(ii) above.
We first investigate the Wiener functional
\[
    \int_0^T \biggl\langle x,\int_0^T \kappa(t,s)d\theta(s)
     \biggr\rangle^2 dt
    \quad\text{with }x\in \mathbb{R}^d
    \text{ and }\kappa\in\ltwo.
\]
When $d=1$, $T=1$, $x=1$, and 
$\kappa(t,s)=\one_{[0,t)}(s)$ for $(t,s)\in[0,1]^2$, this is equal to
the Wiener functional studied by Kac \cite{kac}.
Next we compute the logarithm of the characteristic function
of $\q_\eta$ and show that the distribution of $\q_\eta$ is
self-decomposable. 
Thirdly, the evaluation of Laplace transformations will
be extended to pinned measures obtained as pull-backs of
Dirac measures by non-degenerate Wiener functionals. 
Fourthly, we will also show how the evaluation varies
accordingly to the change of pinning.
The variation is described in terms of Pl\"ucker coordinates.  
Finally, several examples on explicit expressions of
Pl\"ucker coordinates will be given in the last section of
the chapter. 

Chapters~\ref{chap.anal.alg}, \ref{chap.without.MC},
and \ref{chap.lap.another} are appendices.
In Chapter~\ref{chap.anal.alg}, analytic or algebraic
assertions used in Chapters from \ref{chap.qwf} to \ref{chap.sqrt} 
are seen. 
Chapter \ref{chap.without.MC} is devoted to proofs of the
results in Chapters~\ref{chap.qwf} and \ref{chap.transf}
without using Malliavin calculus.
In Chapter~\ref{chap.lap.another}, another proof of the
change of variables formula \eqref{t.tways.3} with $h=0$
is given by taking advantage of the eigenvalue expansion of
the Hilbert-Schmidt operator associated with the quadratic
form without using transformations of order one.

A list of notation appearing frequently in this monograph is 
presented before the bibliography.

\chapter{Quadratic forms}
\label{chap.qwf}

\pagenumbering{arabic}
\setcounter{page}{1}

\begin{quote}{\small
A bijective correspondence between quadratic forms and
symmetric Hilbert-Schmidt operators on $\mathcal{H}$ is
shown with the help of Malliavin calculus.
It is used to develop quadratic forms in $L^p$-spaces.
Taking advantage of the developments,  L\'evy's and Kac's
developments in the mid-1900s are revisited.
}
\end{quote}

\section{Series expansion of quadratic forms}
\label{sec.development}
We continue to use the symbols $\mathcal{W}$, $\mu$, and 
$\mathcal{H}$ presented in Introduction:
$\mathcal{W}$ denotes the $d$-dimensional Wiener space over $[0,T]$,
$\mu$ does the Wiener measure on $\mathcal{W}$, 
and $\mathcal{H}$ does the Cameron-Martin subspace.  
For a real separable Hilbert space $E$, let $L^p(\mu;E)$ be 
the space of $p$th integrable $E$-valued functions on 
$\mathcal{W}$ with respect to $\mu$. 
We write simply as $L^p(\mu)$ for $L^p(\mu;\mathbb{R})$.

Denote by $\ltwo$ the space of 
$\mathbb{R}^{d\times d}$-valued square integrable functions 
on $[0,T]^2$ with respect to the two-dimensional Lebesgue
measure.
Put
\[
    \stwo=
    \Bigl\{\eta\in\ltwo \,\Big|\,
      \eta(t,s)^\dagger=\eta(s,t)
      \text{ for }(t,s)\in[0,T]^2\Bigr\},
\]
where $M^\dagger$ is the transpose of 
$M\in \mathbb{R}^{d\times d}$. 
For $\eta=\bigl(\eta_j^i\bigr)_{1\le i,j\le d}\in\stwo$,
$\q_\eta:\mathcal{W}\to \mathbb{R}$ was defined in
Introduction as 
\[
    \q_\eta=\int_0^T\biggl\langle \int_0^t
      \eta(t,s)d\theta(s),d\theta(t)\biggr\rangle
    =\sum_{i,j=1}^d \int_0^T \biggl(
       \int_0^t \eta_j^i(t,s) d\theta^j(s)\biggr)
       d\theta^i(t).
\]
If we set
\[
    \|\kappa\|_2
    =\biggl(\int_0^T \int_0^T |\kappa(t,s)|^2 ds dt
     \biggr)^{\frac12}
    \quad\text{for }\kappa\in\ltwo,
\]
then, by the isometry for It\^o integral, we have that
\[
    \int_{\mathcal{W}}\q_\eta^2 d\mu
    =\frac12 \|\eta\|_2^2.
\]

For $k\in \mathbb{N}\cup\{0\}$, $p\in(1,\infty)$, 
and a real separable Hilbert space $E$, denote by 
$\mathbb{D}^{k,p}(E)$ the space of $k$-times 
$\mathcal{H}$-differentiable $E$-valued Wiener functionals
in the sense of Malliavin calculus, whose
$\mathcal{H}$-derivatives of  all orders up to $k$ are $p$th
integrable with respect to $\mu$. 
Set 
\[
    \mathbb{D}^\infty(E)
    =\bigcap_{k\in \mathbb{N}\cup\{0\}}
      \bigcap_{p\in(1,\infty)} \mathbb{D}^{k,p}(E).
\]
We write simply $\mathbb{D}^{k,p}$ and 
$\mathbb{D}^\infty$ for $\mathbb{D}^{k,p}(\mathbb{R})$ 
and $\mathbb{D}^\infty(\mathbb{R})$, respectively.
The symbols $\D$ and $\D^*$ stands for the
$\mathcal{H}$-derivative and its adjoint operator,
respectively.
As for Malliavin calculus, see
\cite[Chapter\,5]{mt-cambridge}.
Let $\htwo$ be the space of Hilbert-Schmidt operators from
$\mathcal{H}$ to itself, and set
\[
    \mathcal{S}(\htwo)=\{B\in\htwo \mid B^*=B\}, 
\]
where $B^*$ is the adjoint operator of $B$.
For $\kappa\in\ltwo$, define $B_\kappa\in\htwo$ by 
\[
    \langle B_\kappa h,g\rangle_{\mathcal{H}}
    =\int_0^T \biggl\langle \int_0^T 
       \kappa(t,s)h^\prime(s)ds,g^\prime(t)\biggr\rangle
       dt
    \quad\text{for } h,g\in \mathcal{H}.
\]
Our first aim of this section is to characterize $\q_\eta$
by using $B_\eta$ and give its series expansion in
$L^p(\mu)$ for every $p\in(1,\infty)$.

\begin{theorem}\label{t.q.eta}
Let $\eta\in\stwo$.
\\
{\rm(i)}
It holds that
\[
    \q_\eta=\frac12(\D^*)^2B_\eta,
\]
where the right term is defined by regarding 
$B_\eta\in \mathcal{S}(\htwo)$ as a constant Wiener
functional in $\mathbb{D}^\infty(\htwo)$ and applying $\D^*$
twice. 
\\
{\rm(ii)}
For any orthonormal basis (ONB in short) 
$\{h_n\}_{n=1}^\infty$ of $\mathcal{H}$, it holds that
\begin{equation}\label{t.q.eta.1}
   \q_\eta=\frac12\sum_{n,m=1}^\infty 
     \langle B_\eta h_n,h_m\rangle_{\mathcal{H}}
     \{(\D^*h_n)(\D^*h_m)-\delta_{nm}\},
\end{equation}
where the series converges in $L^p(\mu)$ for every
$p\in(1,\infty)$ and $\D^*h_n$ is defined 
by thinking of $h_n$ as a constant Wiener functional in
$\mathbb{D}^\infty(\mathcal{H})$ and applying $\D^*$.
\end{theorem}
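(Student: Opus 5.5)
I would prove (ii) first for a convenient finite-rank approximation and then pass to the limit using the continuity of $\D^*$. The link to (i) will come from the Meyer inequalities. For $B\in\mathcal{S}(\htwo)$ regarded as a constant functional in $\mathbb{D}^\infty(\htwo)$, the map $B\mapsto(\D^*)^2B$ is continuous linear from $\htwo$ into $L^p(\mu)$ for every $p\in(1,\infty)$. Indeed $\D^*:\mathbb{D}^{k+1,p}(E)\to\mathbb{D}^{k,p}(E)$ is continuous, and the $\mathbb{D}^{2,p}(\htwo)$-norm of a constant is its Hilbert--Schmidt norm. It therefore suffices to check both identities on a dense set of $B$'s and extend by continuity.

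\textbf{Step 1 (rank-one computation).} For $h,g\in\mathcal{H}$ I use the standard formula $\D^*(Fk)=F\D^*k-\langle \D F,k\rangle_{\mathcal{H}}$. With $F=\D^*h$ and $\D\D^*h=h$, this gives
\[
  (\D^*)^2(h\otimes g)=(\D^*h)(\D^*g)-\langle h,g\rangle_{\mathcal{H}}.
\]
Expanding $B=\sum_{n,m}\langle Bh_n,h_m\rangle_{\mathcal{H}}\,h_n\otimes h_m$ in $\htwo$ (Hilbert--Schmidt convergence) and using the continuity above, I get
\[
  \tfrac12(\D^*)^2B=\tfrac12\sum_{n,m}\langle Bh_n,h_m\rangle_{\mathcal{H}}\{(\D^*h_n)(\D^*h_m)-\delta_{nm}\},
\]
with convergence in every $L^p(\mu)$. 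Some care is needed here: $\sum_{n,m}$ should be read as the limit over finite square truncations $n,m\le N$. These truncations converge to $B$ in $\htwo$, which is what the continuity argument requires. Thus (ii) follows from (i).

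\textbf{Step 2 (proof of (i)).} Both sides of (i) are continuous in $\eta\in\stwo$ into $L^2(\mu)$. For the left side this is the isometry $\int\q_\eta^2d\mu=\frac12\|\eta\|_2^2$. For the right side it follows from $\|B_\eta\|_{\htwo}=\|\eta\|_2$ and Step 1. So it suffices to treat symmetrized elementary kernels. I take $\eta(t,s)=a(t)b(s)^\dagger+b(t)a(s)^\dagger$ with $a,b\in L^2([0,T];\mathbb{R}^d)$, or more simply the coordinate version with scalar step functions times matrix units. Linear spans of these are dense in $\stwo$.

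For such $\eta$, set $h(t)=\int_0^ta$ and $g(t)=\int_0^tb$. Then $B_\eta=h\otimes g+g\otimes h$ up to ordering convention, and $\D^*h=\int_0^T\langle a,d\theta\rangle$. Itô's product formula gives
\[
  (\D^*h)(\D^*g)-\langle h,g\rangle_{\mathcal{H}}
  =\int_0^T\Bigl\langle \int_0^t\bigl(a(t)b(s)^\dagger+b(t)a(s)^\dagger\bigr)d\theta(s),d\theta(t)\Bigr\rangle=\q_\eta .
\]
By Step 1 the left side equals $\frac12(\D^*)^2(h\otimes g+g\otimes h)=\frac12(\D^*)^2B_\eta$. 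This proves (i).

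\textbf{Expected obstacle.} The main technical point is justifying the continuity of $(\D^*)^2$ on constant $\htwo$-valued functionals in $L^p$ for all $p$, not just $p=2$. I would invoke Meyer's inequality from \cite[Chapter~5]{mt-cambridge}. Alternatively, I would note that everything lies in the second Wiener chaos, where hypercontractivity makes all $L^p$ norms equivalent to the $L^2$ norm, and the $L^2$ computation is an exact isometry. I would use this hypercontractivity argument as the cleaner route to the $L^p$ convergence in \eqref{t.q.eta.1}. The remaining work is the bookkeeping of the transpose and ordering conventions in identifying $B_\eta$ with $h\otimes g+g\otimes h$.
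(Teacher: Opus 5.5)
Your proof is correct. Step~1 is exactly the paper's argument for (ii): truncate the expansion of $B_\eta$ in $\htwo$, use the continuity of $\D^*$, and apply $(\D^*)^2(h_n\otimes h_m)=(\D^*h_n)(\D^*h_m)-\delta_{nm}$.

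For (i), however, you take a different route. The paper does no approximation there. It computes $\D\q_\eta$ and $\D^2\q_\eta=B_\eta$ directly; the symmetry $\eta\in\stwo$ is what merges the two triangular pieces into the full kernel. It then notes $\int_{\mathcal{W}}\D\q_\eta\,d\mu=0$ and $\int_{\mathcal{W}}\q_\eta\,d\mu=0$, and reads off (i) from Lemma~\ref{l.D^3=0}. You instead verify (i) on the symmetrized kernels $a(t)b(s)^\dagger+b(t)a(s)^\dagger$ by It\^o's product formula. Your identification $B_\eta=h\otimes g+g\otimes h$ and the It\^o computation are right. You then extend by density, using $\int_{\mathcal{W}}\q_\eta^2\,d\mu=\frac12\|\eta\|_2^2$ and $\|B_\eta\|_{\htwo}=\|\eta\|_2$.

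The paper's route is shorter once Lemma~\ref{l.D^3=0} is available. It also sets up a template that is reused repeatedly later: compute $\D^2\Phi$, $\int_{\mathcal{W}}\D\Phi\,d\mu$ and $\int_{\mathcal{W}}\Phi\,d\mu$, then apply the lemma. This appears in Proposition~\ref{p.h.osc}, Lemma~\ref{l.d*f.kappa}, Proposition~\ref{p.h.eta}, and Examples~\ref{e.samp.var} and \ref{e.ikm}.

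Your route needs neither that structural lemma nor the Malliavin differentiation of iterated It\^o integrals. It is essentially the It\^o-formula-plus-truncation argument the paper gives in Chapter~\ref{chap.without.MC} (Theorem~\ref{t.q.eta}${}^\prime$). Malliavin calculus enters only through the continuity of $(\D^*)^2$ on constant $\htwo$-valued functionals. Your hypercontractivity remark is also a valid substitute for Meyer's inequality in upgrading $L^2$ convergence on the second chaos to $L^p$ convergence.
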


For the proof, we prepare a lemma, which will be used
frequently in this monograph.
Define $h\otimes g\in\htwo$ for $h,g\in \mathcal{H}$ by 
\[
    (h\otimes g)k=\langle h,k\rangle_{\mathcal{H}}\, g
    \quad\text{for }k\in \mathcal{H}.
\]
Recalling that the inner product 
$\langle\cdot,\cdot\rangle_{\htwo}$ in $\htwo$ is given by 
\[
    \langle B,B^\prime\rangle_{\htwo}
    =\sum_{n=1}^\infty \langle Bh_n,B^\prime h_n
       \rangle_{\mathcal{H}}
    \quad\text{for }B,B^\prime\in\htwo,
\]
where $\{h_n\}_{n=1}^\infty$ is an (and any) ONB of
$\mathcal{H}$, we see that
\[
    \langle B,h\otimes g\rangle_{\htwo}
    =\langle Bh,g\rangle_{\mathcal{H}}
    \quad\text{for }B\in\htwo
    \text{ and }h,g\in \mathcal{H}
\]
and 
\begin{equation}\label{eq.hs}
    \|B_\kappa\|_{\htwo}=\|\kappa\|_2
    \quad\text{for }\kappa\in\ltwo,
\end{equation}
where 
$\|\cdot\|_{\htwo}
 =\langle \cdot,\cdot\rangle_{\htwo}^{\frac12}$.

\begin{lemma}\label{l.D^3=0}
If $\D^3\Phi=0$ for $\Phi\in \mathbb{D}^\infty$, then
$\D^2\Phi$ is a constant, say $=B$, and it holds that
\[
   \Phi=\frac12(\D^*)^2B+\D^*h+c,
   \quad\text{where }
   h=\int_{\mathcal{W}} \D\Phi d\mu
   \text{ and }
   c=\int_{\mathcal{W}} \Phi d\mu.
\]
\end{lemma}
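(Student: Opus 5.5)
We prove the lemma with the Wiener chaos decomposition and the standard commutation relations of Malliavin calculus. The plan is to show that $\Psi:=\Phi-\frac12(\D^*)^2B-\D^*h-c$ has vanishing derivative and vanishing mean, and hence is zero.

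The first step is to see that $\D^2\Phi$ is constant. Since $\D^3\Phi=\D(\D^2\Phi)=0$ and $\D^2\Phi\in\mathbb{D}^\infty(\htwo)$, we use the fact that an $E$-valued functional in $\mathbb{D}^{1,2}(E)$ with zero $\mathcal{H}$-derivative is $\mu$-a.e. equal to its mean. This follows from the chaos expansion: $\D$ acts on the $n$th chaos as multiplication by $\sqrt n$ after identification, so $\D G=0$ kills all chaos components of order $\ge1$. Equivalently, it follows from the Poincaré inequality $\int|G-\int G\,d\mu|^2d\mu\le\int\|\D G\|^2d\mu$. Thus $\D^2\Phi=B$ is a constant in $\htwo$, and $B$ is symmetric because second $\mathcal{H}$-derivatives are symmetric.

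The second step computes the derivatives of the candidate terms, using the commutation relation $\D\D^*K=K+\D^*(\D K)$ for $K\in\mathbb{D}^\infty(\mathcal{H}\otimes E)$, with the appropriate tensor ordering. For a constant $h\in\mathcal{H}$ this gives $\D(\D^*h)=h$, and hence $\D^2(\D^*h)=0$. For the constant symmetric $B$, viewed as an $\mathcal{H}\otimes\mathcal{H}$-valued functional, $\D^*B$ is the $\mathcal{H}$-valued functional $k\mapsto \D^*(Bk)$, so $\D\D^*B=B$. Applying the relation once more yields
\[
\D(\D^*)^2B=\D^*B+\D^*(\D\D^*B)=2\D^*B,
\]
and therefore $\D^2(\D^*)^2B=2B$. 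This is the point where the symmetry of $B$ is used, to identify the two contractions. All three terms have mean zero except the constant $c$, since $\int\D^*G\,d\mu=\langle \D1,G\rangle=0$.

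In the third step we show that $\Psi$ vanishes. First, $\D^2\Psi=B-B=0$, so by the first step $\D\Psi$ is a constant, equal to its mean. Next, $\int\D(\D^*)^2B\,d\mu=2\int\D^*B\,d\mu=0$ and $\int\D\D^*h\,d\mu=h$, so $\int\D\Psi\,d\mu=h-0-h=0$. Hence $\D\Psi=0$, so $\Psi$ equals its mean, which is $c-c=0$.

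The main obstacle is the bookkeeping in the second step. It requires a precise convention for $(\D^*)^2$ on $\htwo$-valued functionals, namely which tensor slot $\D^*$ contracts. It also requires verifying the commutation relation and the identity $\D^2(\D^*)^2B=2B$ with that convention, where symmetry of $B$ is essential. We would check this on elementary tensors $B=\frac12(h\otimes g+g\otimes h)$, for which $(\D^*)^2(h\otimes g)=(\D^*h)(\D^*g)-\langle h,g\rangle_{\mathcal{H}}$ can be verified directly. We would then extend by linearity and the continuity of $\D^*$ from $\mathbb{D}^{k,p}$ to $\mathbb{D}^{k-1,p}$.
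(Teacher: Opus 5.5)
Your proof is correct. The paper gives no argument of its own for this lemma; it only points to Proposition~5.7.4 of Matsumoto--Taniguchi. Your proposal is therefore a self-contained proof, and it uses the same mechanisms the paper relies on elsewhere:
\begin{itemize}
\item Lemma~\ref{l.D=0}: a vanishing derivative forces a functional to equal its mean.
\item The identity $\D(\D^*A)=A$ for a constant $A\in\htwo$ (cf.~\eqref{l.f.kappa.22}).
\item The consequence $\D(\D^*)^2A=\D^*A+\D^*A^*$, which equals $2\D^*B$ precisely because $B=\D^2\Phi$ is symmetric.
\end{itemize}

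The one convention to watch is that in the paper $\D^*$ contracts the first tensor slot. Thus $\langle \D^*A,g\rangle_{\mathcal{H}}=\D^*(A^*g)$ (Lemma~\ref{l.d*a}), not $\D^*(Ag)$. For $B=B^*$ this makes no difference. As you planned, you can confirm every identity on elementary tensors using $\D^*(h\otimes g)=(\D^*h)g$ and $(\D^*)^2(h\otimes g)=(\D^*h)(\D^*g)-\langle h,g\rangle_{\mathcal{H}}$.

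A different route would be Stroock's formula $\Phi=\sum_{n\ge0}\frac1{n!}(\D^*)^n\int_{\mathcal{W}}\D^n\Phi\,d\mu$. It gives the representation at once when $\D^3\Phi=0$, but it presupposes identifying $(\D^*)^n$ on deterministic kernels with multiple Wiener integrals. Your argument, which subtracts the candidate terms and shows the remainder has zero derivative and zero mean, avoids this. It needs only the zero-derivative criterion and the commutation relation.
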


\begin{proof}
See \cite[Proposition\,5.7.4]{mt-cambridge}.
\end{proof}

\begin{proof}[Proof of Theorem~\ref{t.q.eta}]
(i)
Since $\eta\in\stwo$, as an easy exercise of Malliavin
calculus (\cite{mt-cambridge}), we see that  
\[
    \langle \D\q_\eta,g\rangle_{\mathcal{H}}
     = \int_0^T \biggl\langle \int_0^t 
          \eta(t,s)g^\prime(s)ds, d\theta(t)
          \biggr\rangle
     +\int_0^T \biggl\langle \int_0^t 
        \eta(t,s)d\theta(s), g^\prime(t)
        \biggr\rangle dt
\]
and
\begin{align*}
   \langle \D^2\q_\eta,h\otimes g\rangle_{\htwo}
   &  = \int_0^T \biggl\langle \int_0^t 
        \eta(t,s)g^\prime(s) ds, h^\prime(t)
        \biggr\rangle dt
      +\int_0^T \biggl\langle \int_0^t 
        \eta(t,s)h^\prime(s) ds, g^\prime(t)
        \biggr\rangle dt
    \\
    & =\int_0^T \biggl\langle \int_0^T
        \eta(t,s)h^\prime(s) ds, g^\prime(t)
        \biggr\rangle dt
      =\langle B_\eta,h\otimes g\rangle_{\htwo}
\end{align*}
for $h,g\in \mathcal{H}$.
Hence it holds that
\[
    \int_{\mathcal{W}}\D\q_\eta d\mu=0
    \quad\text{and}\quad
    \D^2\q_\eta=B_\eta.
\]
Noting in addition that 
\[
    \int_{\mathcal{W}}\q_\eta d\mu=0,
\]
by Lemma~\ref{l.D^3=0}, we have the desired identity.  

\noindent
(ii)
Let $\{h_n\}_{n=1}^\infty$ be an ONB of $\mathcal{H}$.
Develop $B_\eta$ in $\htwo$ as 
\[
    B_\eta=\sum_{n,m=1}^\infty 
       \langle B_\eta h_n,h_m\rangle_{\mathcal{H}} 
       h_n\otimes h_m.
\]
For $N\in \mathbb{N}$, set 
\[
    B_\eta^{(N)}=\sum_{n,m=1}^N
      \langle B_\eta h_n,h_m\rangle_{\mathcal{H}} 
       h_n\otimes h_m.
\]
Then $B_\eta^{(N)}$ converges to $B_\eta$ in $\htwo$.
Due to the continuity of $\D^*$, 
$(\D^*)^2B_\eta^{(N)}$ converges to $(\D^*)^2B_\eta$ in
$L^p(\mu)$ for any $p\in(1,\infty)$.
Remembering that 
\[
    (\D^*)^2 (h_n\otimes h_m)
     =(\D^*h_n)(\D^*h_m)-\delta_{nm}
    \quad\text{for }n,m\in \mathbb{N} 
\]
(see \cite[(5.7.2)]{mt-cambridge}), we have that  
\[
    (\D^*)^2B_\eta^{(N)}=\sum_{n,m=1}^N
     \langle B_\eta h_n,h_m\rangle_{\mathcal{H}} 
     \{(\D^*h_n)(\D^*h_m)-\delta_{nm}\}.
\]
By the assertion (i), letting $N\to\infty$, we obtain the
identity \eqref{t.q.eta.1} and the convergence of the
series in $L^p(\mu)$ for any $p\in(1,\infty)$. 
\end{proof}

Recall that the Wiener chaos $\mathcal{C}_2$ of order two 
is the closure in $L^2(\mu)$ of the subspace spanned by
linear combinations of $(\D^*h_n)(\D^*h_m)-\delta_{nm}$ with
$n,m\in \mathbb{N}$, where $\{h_n\}_{n=1}^\infty$ is an ONB
of $\mathcal{H}$. 
It is easily seen that $\mathcal{C}_2$ is independent of the
choice of $\{h_n\}_{n=1}^\infty$.
Although the following fact is widely known
(\cite{nualart}), we give a proof via
Theorem~\ref{t.q.eta}. 

\begin{theorem}\label{t.w.chaos}
$\mathcal{C}_2=\{\q_\eta \mid \eta\in\stwo\}$.
\end{theorem}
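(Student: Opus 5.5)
We prove the two inclusions separately. Throughout, we use the linear isometry-type map $\stwo\ni\eta\mapsto\q_\eta\in L^2(\mu)$. The It\^o isometry gives $\int_{\mathcal{W}}\q_\eta^2d\mu=\frac12\|\eta\|_2^2$, and together with \eqref{eq.hs} this says $\|\q_\eta\|_{L^2(\mu)}=\frac1{\sqrt2}\|B_\eta\|_{\htwo}$.

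\emph{Inclusion $\{\q_\eta\}\subset\mathcal{C}_2$.} Take $\eta\in\stwo$ and an ONB $\{h_n\}$ of $\mathcal{H}$. By Theorem~\ref{t.q.eta}(ii), $\q_\eta$ is the $L^2(\mu)$-limit of the finite sums
\[
\tfrac12\sum_{n,m\le N}\langle B_\eta h_n,h_m\rangle_{\mathcal{H}}\{(\D^*h_n)(\D^*h_m)-\delta_{nm}\}.
\]
Each such sum lies in the span defining $\mathcal{C}_2$. Since $\mathcal{C}_2$ is closed, $\q_\eta\in\mathcal{C}_2$.

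\emph{Inclusion $\mathcal{C}_2\subset\{\q_\eta\}$.} First, every generator $(\D^*h_n)(\D^*h_m)-\delta_{nm}$ is of the form $\q_\eta$. By \cite[(5.7.2)]{mt-cambridge} it equals $(\D^*)^2(h_n\otimes h_m)$, which is also $(\D^*)^2$ of the symmetrization $S=\frac12(h_n\otimes h_m+h_m\otimes h_n)$, because $(\D^*)^2$ only sees the symmetric part (the expression is symmetric in $n,m$). Put
\[
\eta(t,s)=h_m'(t)\,h_n'(s)^\dagger+h_n'(t)\,h_m'(s)^\dagger .
\]
This $\eta$ lies in $\stwo$, and $B_\eta=2S$; this is checked directly from the definition of $B_\kappa$ and $\langle B,h\otimes g\rangle_{\htwo}=\langle Bh,g\rangle_{\mathcal{H}}$. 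By Theorem~\ref{t.q.eta}(i),
\[
\q_\eta=\tfrac12(\D^*)^2B_\eta=(\D^*)^2S=(\D^*h_n)(\D^*h_m)-\delta_{nm}.
\]
Hence, by linearity of $\eta\mapsto\q_\eta$, the whole span lies in $\{\q_\eta\mid\eta\in\stwo\}$.

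It remains to show that this set is closed in $L^2(\mu)$, which is the main point. Suppose $\q_{\eta_k}\to F$ in $L^2(\mu)$. The isometry gives $\|\eta_k-\eta_l\|_2=\sqrt2\|\q_{\eta_k}-\q_{\eta_l}\|_{L^2(\mu)}$, so $\{\eta_k\}$ is Cauchy in $\ltwo$. Its limit $\eta$ lies in $\stwo$, since $\stwo$ is a closed subspace: the symmetry condition is preserved under $L^2$ limits, interpreting it a.e. Applying the isometry once more gives $\q_{\eta_k}\to\q_\eta$ in $L^2(\mu)$, so $F=\q_\eta$. Therefore the closure of the span, namely $\mathcal{C}_2$, is contained in $\{\q_\eta\mid\eta\in\stwo\}$.

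The only delicate step is the identification $B_\eta=2S$ for the explicit kernel, that is, keeping track of which factor carries $t$ and which carries $s$ so that the transpose convention in $\stwo$ is respected. Everything else follows from Theorem~\ref{t.q.eta} and the isometry.
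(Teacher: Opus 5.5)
Your proof is correct. The first inclusion is the same as the paper's, but for $\mathcal{C}_2\subset\{\q_\eta\mid\eta\in\stwo\}$ you take a genuinely different route.

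The paper starts from an arbitrary $\q\in\mathcal{C}_2$ and proceeds in four steps:
\begin{enumerate}
\item it uses the table of second moments of the generators to expand $\q=\sum_{m\le n}a_{nm}\{(\D^*h_n)(\D^*h_m)-\delta_{nm}\}$ with square-summable coefficients;
\item it symmetrizes these coefficients to $b_{nm}$ and forms $B=\sum b_{nm}h_n\otimes h_m\in\mathcal{S}(\htwo)$;
\item it gets $\q=(\D^*)^2B$ by the same approximation argument as in Theorem~\ref{t.q.eta}(ii);
\item it invokes the \emph{surjectivity} part of Lemma~\ref{l.b.kappa} to produce $\eta\in\stwo$ with $B_\eta=2B$.
\end{enumerate}

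You instead write down explicit kernels for the generators. Your computation $B_\eta=h_n\otimes h_m+h_m\otimes h_n$ is right, and with Theorem~\ref{t.q.eta}(i) it gives $\q_\eta=(\D^*h_n)(\D^*h_m)-\delta_{nm}$. You then observe that $\eta\mapsto\q_\eta$ is $\frac1{\sqrt2}$ times an isometry from the complete space $\stwo$ (modulo $\eqltwo$), so its range is closed in $L^2(\mu)$.

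Your route is shorter and more structural. It needs neither the moment table nor the surjectivity of $\kappa\mapsto B_\kappa$. Combined with the first inclusion, it shows at once that $\eta\mapsto\q_\eta$ is, up to the factor $\frac1{\sqrt2}$, a unitary isomorphism of $\stwo$ (modulo $\eqltwo$) onto $\mathcal{C}_2$.

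The paper's route, in exchange, reads off the matrix $\langle B_\eta h_n,h_m\rangle_{\mathcal{H}}=2b_{nm}$ directly from the chaos coefficients of $\q$. That is the operator-side description $\q=\frac12(\D^*)^2B_\eta$ used throughout the rest of the monograph.

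One point to tighten is the one you flag yourself. Since $\stwo$ is defined by a pointwise symmetry condition, replace the $\ltwo$-limit $\eta$ by $\frac12(\eta+\eta^*)$, where $\eta^*(t,s)=\eta(s,t)^\dagger$. This kernel genuinely lies in $\stwo$, satisfies $\frac12(\eta+\eta^*)\eqltwo\eta$, and therefore defines the same $\q_\eta$.
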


For the proof, we prepare a lemma, which will be used
repeatedly in this monograph.
We say that $\kappa$ and $\kappa^\prime$ are equivalent in
$\ltwo$ if $\|\kappa-\kappa^\prime\|_2=0$, and denote it as
$\kappa\eqltwo\kappa^\prime$.
For $x=(x^1,\dots,x^d),y=(y^1,\dots,y^d)\in \mathbb{R}$,
$x\otimes y$ denotes the matrix given by
\[
    x\otimes y=\bigl(y^ix^j\bigr)_{1\le i,j\le d}
    =\begin{pmatrix}
    y^1x^1 & \dots & y^1x^d \\
    \vdots & & \\
    y^dx^1 & \dots & y^d x^d
    \end{pmatrix}
    \in \mathbb{R}^{d\times d}.
\]

\begin{lemma}\label{l.b.kappa}
For $\kappa,\kappa^\prime\in\ltwo$, 
$B_\kappa=B_{\kappa^\prime}$ if and only if 
$\kappa\eqltwo\kappa^\prime$.
The mapping $\ltwo\ni\kappa\mapsto B_\kappa\in\htwo$ is 
bijective, where the injectivity means that 
$\kappa_1\eqltwo\kappa_2$ for
$\kappa_1,\kappa_2\in \mathcal{L}_2$ with
$B_{\kappa_1}=B_{\kappa_2}$.
Finally, if $\eta\in\stwo$, then
$B_\eta\in \mathcal{S}(\htwo)$, and the restricted mapping 
$\stwo\ni\eta\mapsto B_\eta\in \mathcal{S}(\htwo)$ is also
bijective.
\end{lemma}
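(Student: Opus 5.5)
The plan is to pass through the standard isometry $U:\mathcal{H}\to L^2([0,T];\mathbb{R}^d)$, $Uh=h^\prime$, which is unitary: it is isometric by the definition of $\langle\cdot,\cdot\rangle_{\mathcal{H}}$, and it is onto because every $f\in L^2([0,T];\mathbb{R}^d)$ is the derivative of $h(t)=\int_0^t f(u)du\in\mathcal{H}$. Conjugating by $U$ turns $B_\kappa$ into the integral operator $K_\kappa f(t)=\int_0^T\kappa(t,s)f(s)ds$ on $L^2([0,T];\mathbb{R}^d)$. In fact $\langle B_\kappa h,g\rangle_{\mathcal{H}}=\langle K_\kappa Uh,Ug\rangle_{L^2}$, so $B_\kappa=U^*K_\kappa U$. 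Everything then reduces to the classical correspondence between $L^2$ kernels and Hilbert--Schmidt integral operators.

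For the first assertion, linearity gives $B_\kappa-B_{\kappa^\prime}=B_{\kappa-\kappa^\prime}$. By \eqref{eq.hs}, $\|B_{\kappa-\kappa^\prime}\|_{\htwo}=\|\kappa-\kappa^\prime\|_2$, so $B_\kappa=B_{\kappa^\prime}$ if and only if $\kappa\eqltwo\kappa^\prime$. This settles injectivity as well.

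For surjectivity, fix $B\in\htwo$ and an ONB $\{e_n\}$ of $\mathcal{H}$. Set $f_n=Ue_n=e_n^\prime$, which is an ONB of $L^2([0,T];\mathbb{R}^d)$. Define
\[
\kappa(t,s)=\sum_{n,m}\langle Be_n,e_m\rangle_{\mathcal{H}}\, f_m(t)\otimes f_n(s),
\]
with the $\otimes$ convention fixed so that $(x\otimes y)z=\langle x,z\rangle y$, i.e.\ so that $\kappa(t,s)$ acts as $f_m(t)f_n(s)^\dagger$. The family $\{f_m(t)f_n(s)^\dagger\}_{n,m}$ is orthonormal in $\ltwo$, and the coefficients are square summable with sum $\|B\|_{\htwo}^2$. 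Hence the series converges in $\ltwo$. Since $\kappa\mapsto B_\kappa$ is an isometry by \eqref{eq.hs}, we may pass the limit inside: it suffices to check that $B_{f_m\otimes f_n}=e_n\otimes e_m$, and then $B_\kappa=\sum\langle Be_n,e_m\rangle e_n\otimes e_m=B$. The check itself is a direct computation: the definition gives
\[
\langle B_{f_m\otimes f_n}h,g\rangle_{\mathcal{H}}=\langle e_n,h\rangle_{\mathcal{H}}\langle e_m,g\rangle_{\mathcal{H}}.
\]

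For the symmetric part, take $\eta\in\stwo$ and compute $\langle B_\eta h,g\rangle_{\mathcal{H}}$. Fubini applies since $\eta\in\ltwo$ and $h^\prime,g^\prime\in L^2$, so we may swap the $s$ and $t$ integrations and use $\eta(t,s)^\dagger=\eta(s,t)$:
\[
\int\!\!\int\langle\eta(t,s)h^\prime(s),g^\prime(t)\rangle\,ds\,dt=\int\!\!\int\langle h^\prime(s),\eta(s,t)g^\prime(t)\rangle\,dt\,ds=\langle h,B_\eta g\rangle_{\mathcal{H}}.
\]
Thus $B_\eta\in\mathcal{S}(\htwo)$. Injectivity of the restriction is inherited from the first part.

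For surjectivity onto $\mathcal{S}(\htwo)$, let $B^*=B$ and take $\kappa$ with $B_\kappa=B$. The same computation shows that $B_\kappa^*=B_{\kappa^\star}$, where $\kappa^\star(t,s)=\kappa(s,t)^\dagger$. Put $\eta=\frac12(\kappa+\kappa^\star)$. Then $\eta\in\stwo$ holds pointwise everywhere, not merely a.e., because the symmetrization is done at the level of representatives. Moreover $B_\eta=\frac12(B+B^*)=B$.

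The only delicate point, and the one I expect to be the main obstacle, is the bookkeeping in the surjectivity step. The $\otimes$ conventions for $\mathcal{H}$ and for $\mathbb{R}^{d\times d}$ are both defined "backwards", and the basis-kernel computation has to match them. A related subtlety is that "bijective" here means bijective modulo $\eqltwo$, and the symmetrized representative $\eta$ must satisfy the symmetry identically, which the explicit formula above guarantees.
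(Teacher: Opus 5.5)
Your proof is correct and is essentially the paper's argument: injectivity from \eqref{eq.hs}, surjectivity by assembling the kernel from the coefficients $\langle Be_n,e_m\rangle_{\mathcal{H}}$ and taking the $\ltwo$-limit, and the symmetric case via $B_\kappa^*=B_{\kappa^*}$ and $\eta=\frac12(\kappa+\kappa^*)$. The only differences are minor. You identify the limit through the ONB $\{e_n\otimes e_m\}$ of $\htwo$, whereas the paper uses $\pi_NB\pi_N\to B$. There is also a notational slip: under your stated convention $(x\otimes y)z=\langle x,z\rangle y$, the matrix $f_m(t)f_n(s)^\dagger$ that you actually (and correctly) use is $f_n(s)\otimes f_m(t)$, i.e.\ the paper's $h_n^\prime(s)\otimes h_m^\prime(t)$, not $f_m(t)\otimes f_n(s)$.
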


By the lemma, each $B\in\htwo$ has a unique
$\kappa\in\ltwo$ up to equivalence in $\ltwo$ such that
$B=B_\kappa$.
We call this $\kappa$ the {\it kernel} for $B$.

\begin{proof}
The first assertion follows from the identity 
\eqref{eq.hs}.
The assertion also implies the injectivity of the mapping
$\kappa\mapsto B_\kappa$.
To see the surjectivity of the mapping, let $B\in\htwo$. 
Take an ONB $\{h_n\}_{n=1}^\infty$ of $\mathcal{H}$.
For $N\in \mathbb{N}$, define $\kappa_N\in\ltwo$ by 
\[
    \kappa_N(t,s)=\sum_{n,m=1}^N 
     \langle Bh_n,h_m \rangle_{\mathcal{H}} 
     [h_n^\prime(s)\otimes h_m^\prime(t)]
    \quad\text{for } (t,s)\in[0,T]^2.
\]
Since 
\[
    \|\kappa_N-\kappa_M\|_2^2
    =\sum_{M<n\vee m\le N}      
     \langle Bh_n,h_m \rangle_{\mathcal{H}}^2
    \quad\text{for $N,M\in \mathbb{N}$ with
    $M<N$}
\]
and 
\[
    \sum_{n,m=1}^\infty 
      \langle Bh_n,h_m\rangle_{\mathcal{H}}^2
    =\sum_{n=1}^\infty \|Bh_n\|_{\mathcal{H}}^2
     =\|B\|_{\htwo}^2<\infty, 
\]
there is a $\kappa\in \ltwo$ with 
$\|\kappa_N-\kappa\|_2\to 0$ as $N\to\infty$. 
Hence 
\[
    \|B_{\kappa_N}-B_\kappa\|_{\htwo}
     =\|\kappa_N-\kappa\|_2\to0
    \quad\text{as }N\to\infty.
\]
Furthermore, if we denote by $\pi_N$ the orthogonal
projection  of $\mathcal{H}$ onto the subspace spanned by
$h_1,\dots,h_N$, then $B_{\kappa_N}=\pi_N B\pi_N$,
which implies that
\[
    \|B_{\kappa_N}-B\|_{\htwo}\to0
    \quad\text{as }N\to\infty.
\]
Therefore $B=B_\kappa$.
Thus the surjectivity of the mapping follows.

For $\kappa\in\ltwo$, define 
$\kappa^*\in\ltwo$ by 
$\kappa^*(t,s)=\kappa(s,t)^\dagger$
for $(t,s)\in[0,T]^2$.
Since $B_\kappa^*=B_{\kappa^*}$, 
$B_\eta\in \mathcal{S}(\htwo)$ 
if $\eta\in\stwo$.
The restricted mapping $\eta\mapsto B_\eta$ inherits the
injectivity from the original one.
Let $B\in \mathcal{S}(\htwo)$.
By the surjectivity of the original mapping, there is an
$\kappa\in\ltwo$ with $B_\kappa=B$.
Then $B_{\kappa^*}=B_\kappa^*=B$.
Set $\eta=\frac12\{\kappa+\kappa^*\}$.
Then $\eta\in\stwo$ and $B_\eta=B$.
Thus the restricted mapping is surjective.
\end{proof}

\begin{proof}[Proof of Theorem~\ref{t.w.chaos}]
By Theorem~\ref{t.q.eta}, $\q_\eta\in \mathcal{C}_2$ for
every $\eta\in\stwo$.

To see the converse, let $\q\in \mathcal{C}_2$.
Since 
\begin{align*}
    & \int_{\mathcal{W}}
     \{(\D^*h_n)(\D^*h_m)-\delta_{nm}\}
     \{(\D^*h_p)(\D^*h_q)-\delta_{pq}\}d\mu
    \\
    & 
    =\begin{cases}
       2 & \text{if }n=m=p=q,
       \\
       1 & 
       \text{if $n\ne m$, $p\ne q$, and $\{n,m\}=\{p,q\}$},
       \\
       0 & \text{otherwise},
     \end{cases}
\end{align*}
there exist $a_{nm}\in \mathbb{R}$, $1\le m\le n<\infty$
such that $\sum\limits_{1\le m\le n<\infty} a_{nm}^2<\infty$
and 
\[
    \q=\sum_{1\le m\le n<\infty} a_{nm}
       \{(\D^*h_n)(\D^*h_m)-\delta_{nm}\},
\]
where the series converges in $L^2(\mu)$.

Define $b_{nm}\in \mathbb{R}$ for $n,m\in \mathbb{N}$ by  
\[
    b_{nm}
    =\begin{cases}
       \frac12 a_{nm} & \text{if }m<n, \\
       a_{nn}   & \text{if } m=n, \\
       \frac12 a_{mn} & \text{if }m>n.
    \end{cases}
\]
Then, the series for $\q$ is rewritten as
\[
    \q=\sum_{n,m=1}^\infty b_{nm}
       \{(\D^*h_n)(\D^*h_m)-\delta_{nm}\},
\]
where the series converges in $L^2(\mu)$.
Define $B\in \mathcal{S}(\htwo)$ by 
\[
    B=\sum_{n,m=1}^\infty b_{nm} h_n\otimes h_m.
\]
By the same approximation argument for $B$ as we have used
to show Theorem~\ref{t.q.eta}(ii), we see that
$\q=(\D^*)^2B$. 
Due to Lemma~\ref{l.b.kappa}, there is an $\eta\in\stwo$
with the property that $B_\eta=2B$.
By Theorem~\ref{t.q.eta}, we have that
$\q=\frac12(\D^*)^2B_\eta=\q_\eta$. 
\end{proof}

We shall give an example of quadratic Wiener functionals,
which looks slightly different from the definition of
quadratic forms. 
For $\kappa\in\ltwo$ and $x\in \mathbb{R}^d$, define the
Wiener functionals
$\mathfrak{h}(\kappa;x)$ and $\mathfrak{h}(\kappa)$ by 
\[
    \mathfrak{h}(\kappa;x)
      =\frac12\int_0^T\biggl\langle x,
        \int_0^T\kappa(t,s)d\theta(s)
       \biggr\rangle^2 dt
    \quad\text{and}\quad
    \mathfrak{h}(\kappa)
      =\frac12\int_0^T\biggl|\int_0^T
        \kappa(t,s)d\theta(s)\biggr|^2 dt.
\]
If $d=1$ and $\kappa(t,s)=\sqrt{2}\one_{[0,t)}(s)$ for 
$(t,s)\in[0,T]$, where $\one_S$ is the indicator function of
the set $S$, then
$\mathfrak{h}(\kappa;1)=\mathfrak{h}(\kappa)
 =\int_0^T\theta(t)^2 dt$, which relates to the
harmonic oscillator $-\frac12\frac{d^2}{dx^2}+x^2$,
one of the fundamental Schr\"odinger operators
(\cite[\S\S5.8.1]{mt-cambridge}).
On account of this, we call $\mathfrak{h}(\kappa;x)$ and
$\mathfrak{h}(\kappa)$ 
{\it quadratic Wiener functionals of harmonic-oscillator
 type}.
In the stochastic approach to the KdV equation
(\cite{ikeda-t,st-spa}), 
$\mathfrak{h}(\kappa;x)$ with $\kappa$ of the form 
$\kappa(t,s)
    =\text{\rm diag}[
        e^{(t-s)p_1},\dots, e^{(t-s)p_d}]$ 
for $(t,s)\in[0,T]^2$,
where $p_1,\dots,p_d\in \mathbb{R}$ and 
$\text{\rm diag}[a_1,\dots,a_d]$ is the $d$-dimensional
diagonal matrix whose $i$th entry is $a_i$,
plays a fundamental role.
See also Section~\ref{sec.kdv}. 

\begin{proposition}\label{p.h.osc}
Let $\kappa\in\ltwo$ and $x\in \mathbb{R}^d$.
Define $c(\kappa;x),c(\kappa)\in\stwo$ by
\[
    c(\kappa;x)(t,s)
    =\int_0^T \bigl[(\kappa(u,s)^\dagger x)
          \otimes (\kappa(u,t)^\dagger x)\bigr] du
    \quad\text{and}\quad
    c(\kappa)(t,s)
    =\int_0^T \kappa(u,t)^\dagger \kappa(u,s)du
\]
for $(t,s)\in[0,T]^2$.
Then it holds that
\[
   \mathfrak{h}(\kappa;x)
      =\q_{c(\kappa;x)}
       +\frac12\int_0^T\int_0^T 
           |\kappa(t,s)^\dagger x|^2 ds dt
\]
and
\begin{equation}\label{p.h.osc.1}
    \mathfrak{h}(\kappa)
      =\q_{c(\kappa)}+\frac12\|\kappa\|_2^2.
\end{equation}
\end{proposition}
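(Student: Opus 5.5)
The plan is to use Lemma~\ref{l.D^3=0}, exactly as in the proof of Theorem~\ref{t.q.eta}(i). It suffices to treat $\mathfrak{h}(\kappa;x)$, because $\mathfrak{h}(\kappa)=\sum_{i=1}^d\mathfrak{h}(\kappa;e_i)$ for the standard basis $\{e_i\}$ of $\mathbb{R}^d$. Moreover $\sum_i c(\kappa;e_i)=c(\kappa)$, since $\sum_i (\kappa(u,s)^\dagger e_i)\otimes(\kappa(u,t)^\dagger e_i)=\kappa(u,t)^\dagger\kappa(u,s)$, and $\sum_i\int\!\!\int|\kappa^\dagger e_i|^2=\|\kappa\|_2^2$. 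By linearity of $\eta\mapsto\q_\eta$, \eqref{p.h.osc.1} then follows.

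\textbf{Step 1: reduction to finite-rank $\kappa$.} Write $X_\kappa(t)=\int_0^T\kappa(t,s)d\theta(s)$. For fixed $t$ this is a Wiener integral, i.e.\ $\D^*$ of a constant element of $\mathcal{H}$, so $\langle x,X_\kappa(t)\rangle=\D^*k_t$ with $k_t'(s)=\kappa(t,s)^\dagger x$. Both sides of the claimed identity depend continuously on $\kappa\in\ltwo$ in $L^2(\mu)$:
\begin{enumerate}
\item for the left side, by the It\^o isometry and Gaussian hypercontractivity;
\item for the right side, since $\|c(\kappa;x)-c(\kappa';x)\|_2\le |x|^2(\|\kappa\|_2+\|\kappa'\|_2)\|\kappa-\kappa'\|_2$ and $\|\q_\eta\|_{L^2}=\|\eta\|_2/\sqrt2$.
\end{enumerate}
If one prefers not to approximate, the direct Malliavin computation below also works for general $\kappa$; the approximation merely keeps $\mathfrak{h}(\kappa;x)\in\mathbb{D}^\infty$ transparent.

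\textbf{Step 2: derivatives and the constants in Lemma~\ref{l.D^3=0}.} Since $\D\langle x,X_\kappa(t)\rangle=k_t$ is constant, we get $\langle\D\mathfrak{h},g\rangle_{\mathcal{H}}=\int_0^T\langle x,X_\kappa(t)\rangle\langle k_t,g\rangle_{\mathcal{H}}dt$. Hence
\[
\langle \D^2\mathfrak{h},h\otimes g\rangle_{\htwo}=\int_0^T\langle k_t,h\rangle_{\mathcal{H}}\langle k_t,g\rangle_{\mathcal{H}}dt,
\]
which is deterministic, so $\D^3\mathfrak{h}=0$. Taking the mean of $\D\mathfrak{h}$ gives $\int\D\mathfrak{h}\,d\mu=0$ because $E[X_\kappa(t)]=0$. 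The constant term is $\int\mathfrak{h}\,d\mu=\frac12\int_0^T\|k_t\|_{\mathcal{H}}^2dt=\frac12\int\!\!\int|\kappa(t,s)^\dagger x|^2dsdt$.

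\textbf{Step 3: identification of the kernel.} Writing $\langle k_t,h\rangle_{\mathcal{H}}=\int_0^T\langle \kappa(t,s)^\dagger x,h'(s)\rangle ds$, Fubini gives
\[
\langle \D^2\mathfrak{h},h\otimes g\rangle_{\htwo}=\int_0^T\!\!\int_0^T\langle c(\kappa;x)(t,s)h'(s),g'(t)\rangle\,ds\,dt=\langle B_{c(\kappa;x)},h\otimes g\rangle_{\htwo},
\]
with the convention $(a\otimes b)y=\langle a,y\rangle b$, which matches the paper's matrix $x\otimes y$. This kernel is clearly in $\stwo$.

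Lemma~\ref{l.D^3=0} and Theorem~\ref{t.q.eta}(i) then give $\mathfrak{h}(\kappa;x)=\frac12(\D^*)^2B_{c(\kappa;x)}+\frac12\int\!\!\int|\kappa^\dagger x|^2=\q_{c(\kappa;x)}+\frac12\int\!\!\int|\kappa^\dagger x|^2$.

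The main obstacle is bookkeeping rather than analysis: one has to check carefully that the index and transpose conventions of $\otimes$ produce exactly $c(\kappa;x)(t,s)$ and not its transpose, and justify $\mathfrak{h}(\kappa;x)\in\mathbb{D}^\infty$ (via Step 1).
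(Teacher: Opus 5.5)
Your proposal is correct and follows essentially the paper's proof. You compute $\D\mathfrak{h}(\kappa;x)$ and $\D^2\mathfrak{h}(\kappa;x)=B_{c(\kappa;x)}$, use $\int_{\mathcal{W}}\D\mathfrak{h}(\kappa;x)\,d\mu=0$ and the mean of $\mathfrak{h}(\kappa;x)$, invoke Lemma~\ref{l.D^3=0} with Theorem~\ref{t.q.eta}(i), and then sum over an orthonormal basis $e_1,\dots,e_d$ of $\mathbb{R}^d$ to get \eqref{p.h.osc.1}; your Step~1 approximation is a harmless extra that makes explicit the membership $\mathfrak{h}(\kappa;x)\in\mathbb{D}^\infty$, which the paper uses tacitly (the same continuity estimate in $\kappa$ is what the paper uses in its Malliavin-free proof in Chapter~\ref{chap.without.MC}).
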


\begin{proof}
It holds that
\[
   \langle\D\mathfrak{h}(\kappa;x),g
      \rangle_{\mathcal{H}}
   = \int_0^T \biggl\langle x,\int_0^T \kappa(t,s)
           d\theta(s) \biggr\rangle
         \biggl\langle x,\int_0^T \kappa(t,u)
         g^\prime(u) du \biggr\rangle dt
\]
and 
\begin{align*}
   \langle \D^2\mathfrak{h}(\kappa;x)),h\otimes g
      \rangle_\htwo
    &  = \int_0^T \biggl\langle x,\int_0^T \kappa(t,s)
         h^\prime(s)ds \biggr\rangle
        \biggl\langle x,\int_0^T \kappa(t,u)
         g^\prime(u)du \biggr\rangle dt
   \\
     & =\langle B_{c(\kappa;x)}, h\otimes g\rangle_\htwo
\end{align*}
for $h,g\in \mathcal{H}$.
Hence we have that
\[
    \int_{\mathcal{W}}\D \mathfrak{h}(\kappa;x) d\mu=0
    \quad\text{and}\quad
    \D^2 \mathfrak{h}(\kappa;x)=B_{c(\kappa;x)}.
\]
Rewriting 
$\bigl\langle x,\int_0^T\kappa(t,s)d\theta(s)
 \bigr\rangle$
as 
$\int_0^T\langle\kappa(t,s)^\dagger x,d\theta(s)\rangle$, 
we see that
\[
    \int_{\mathcal{W}} \mathfrak{h}(\kappa;x)d\mu
    =\frac12\int_0^T\int_0^T|\kappa(t,s)^\dagger x|^2 dsdt.
\]
By Lemma~\ref{l.D^3=0} and Theorem~\ref{t.q.eta}, we obtain
the identity for $\mathfrak{h}(\kappa;x)$. 

Let $e_1,\dots,e_d$ be an orthonormal basis of
$\mathbb{R}^d$.
Notice that    
\[
    \mathfrak{h}(\kappa)
    =\sum_{i=1}^d \mathfrak{h}(\kappa;e_i).
\]
Since 
\[
    \sum_{i=1}^d c(\kappa;e_i)=c(\kappa)
    ~~\text{and}~~
    \sum_{i=1}^d \int_0^T\int_0^T
      |\kappa(t,s)^\dagger e_i|^2 dsdt=\|\kappa\|_2^2,
\]
the identity for $\mathfrak{h}(\kappa)$ follows from those
for $\mathfrak{h}(\kappa;e_i)$s.
\end{proof}

Note that $B_{c(\kappa)}=B_\kappa^*B_\kappa$.
Hence $B_{c(\kappa)}$ is of trace class and non-negative
definite,  i.e., 
$\langle B_{c(\kappa)}h,h\rangle_{\mathcal{H}}\ge0$ for any
$h\in \mathcal{H}$. 
By \eqref{p.h.osc.1}, $\mathfrak{h}(\kappa)$ is equal up to
a constant to $\q_\eta$ whose $B_\eta$ is of trace class and
non-negative definite.  
The converse assertion holds.

\begin{proposition}\label{p.h.osc.inv}
Let $\eta\in\stwo$ and assume that $B_\eta$ is of trace
class and non-negative definite.
Then there exists a $\kappa\in\stwo$ such that
\[
    \q_\eta=\mathfrak{h}(\kappa)-\frac12\tr B_\eta.
\]
\end{proposition}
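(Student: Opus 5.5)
Since $B_\eta$ is of trace class and non-negative definite, it has a non-negative square root $C=B_\eta^{1/2}$, which is self-adjoint and satisfies $\|C\|_{\htwo}^2=\tr(C^*C)=\tr B_\eta<\infty$; in particular $C\in\mathcal{S}(\htwo)$. By Lemma~\ref{l.b.kappa}, the restricted mapping $\stwo\ni\kappa\mapsto B_\kappa\in\mathcal{S}(\htwo)$ is bijective, so there is a $\kappa\in\stwo$ with $B_\kappa=C$. This $\kappa$ will be the candidate in the statement.

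The next step is to identify $c(\kappa)$ with $\eta$. By the remark following Proposition~\ref{p.h.osc}, $B_{c(\kappa)}=B_\kappa^*B_\kappa=C^*C=C^2=B_\eta$. I will verify this identity directly: for $h,g\in\mathcal{H}$, Fubini gives
\[
\langle B_\kappa h,B_\kappa g\rangle_{\mathcal{H}}
=\int_0^T\biggl\langle \int_0^T\kappa(u,s)h^\prime(s)ds,\int_0^T\kappa(u,t)g^\prime(t)dt\biggr\rangle du
=\langle B_{c(\kappa)}h,g\rangle_{\mathcal{H}}.
\]
Then the injectivity part of Lemma~\ref{l.b.kappa} yields $c(\kappa)\eqltwo\eta$. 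Since $\q_\eta$ depends on $\eta$ only through its $\ltwo$-class (for instance, by the It\^o isometry $\int\q_\eta^2d\mu=\frac12\|\eta\|_2^2$ together with linearity in $\eta$), it follows that $\q_{c(\kappa)}=\q_\eta$.

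Finally, \eqref{p.h.osc.1} gives $\mathfrak{h}(\kappa)=\q_{c(\kappa)}+\frac12\|\kappa\|_2^2=\q_\eta+\frac12\|\kappa\|_2^2$. By \eqref{eq.hs}, $\|\kappa\|_2^2=\|B_\kappa\|_{\htwo}^2=\|C\|_{\htwo}^2=\tr B_\eta$, and rearranging gives the claim.

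The only point that needs care is the functional-analytic one: that the square root of a non-negative trace class operator is Hilbert--Schmidt, with $\|C\|_{\htwo}^2=\tr B_\eta$. This follows from the spectral decomposition $B_\eta=\sum\lambda_n e_n\otimes e_n$ with $\lambda_n\ge0$ and $\sum\lambda_n<\infty$: taking $C=\sum\sqrt{\lambda_n}\,e_n\otimes e_n$ gives $\|C\|_{\htwo}^2=\sum\lambda_n=\tr B_\eta$. Everything else follows directly from the earlier results.
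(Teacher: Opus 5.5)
Your proposal is correct and follows the paper's proof essentially step for step: take a self-adjoint square root $C$ of $B_\eta$, note $\|C\|_{\htwo}^2=\tr B_\eta$, pick $\kappa\in\stwo$ with $B_\kappa=C$ via Lemma~\ref{l.b.kappa}, and conclude from $B_{c(\kappa)}=B_\kappa^*B_\kappa=B_\eta$ and \eqref{p.h.osc.1}. The only differences are cosmetic. You build $C$ from the spectral decomposition rather than citing the square root lemma (Lemma~\ref{l.sq.root}), and you spell out the step from $c(\kappa)\eqltwo\eta$ to $\q_{c(\kappa)}=\q_\eta$, which the paper leaves implicit.
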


\begin{proof}
Due to the square root lemma (Lemma~\ref{l.sq.root}), there
exists a continuous and linear
$C:\mathcal{H}\to \mathcal{H}$ such that $C^*=C$ and
$C^2=B_\eta$.
Since $B_\eta$ is of trace class, we have that
\begin{equation}\label{p.h.osc.inv.21}
    \|C\|_{\htwo}^2
    =\sum_{n=1}^\infty \|Ch_n\|_{\mathcal{H}}^2
    =\sum_{n=1}^\infty \langle B_\eta h_n,
                h_n\rangle_{\mathcal{H}}
    =\tr B_\eta,
\end{equation}
where $\{h_n\}_{n=1}^\infty$ is an ONB of $\mathcal{H}$.
Hence $C\in \mathcal{S}(\htwo)$.
By Lemma~\ref{l.b.kappa}, there is a $\kappa\in\stwo$ with
$C=B_\kappa$.
It then follows from \eqref{p.h.osc.inv.21} that
\[
    \|\kappa\|_2^2=\|B_\kappa\|_{\htwo}^2
    =\|C\|_{\htwo}^2=\tr B_\eta.
\]
Since $c(\kappa)=B_\kappa^* B_\kappa=C^2=B_\eta$, by
\eqref{p.h.osc.1}, we obtain the desired identity.
\end{proof}

\section{Series expansions due to L\'evy and Kac}
\label{sec.levy.kac}
In this section, we apply Theorem~\ref{t.q.eta} so to
revisit series expansions of quadratic Wiener functionals
due to L\'evy (\cite{levy}) and Kac (\cite{kac}).

\begin{example}\label{e.levy}
Let $d=2$ and $T=2\pi$.
The stochastic area $\mathfrak{s}$ introduced by L\'evy is 
given as
\[
    \mathfrak{s}
     =\frac12\int_0^{2\pi} \langle J\theta(t),
    d\theta(t)\rangle,
    \quad\text{where }
    J=\begin{pmatrix} 0 & -1 \\ 1 & 0 \end{pmatrix}.
\]
Defining $\eta\in \stwo$ by
\[
    \eta(t,s)=\frac12\{\one_{[0,t)}(s)
     -\one_{(t,2\pi]}(s)\}J
    \quad\text{for }(t,s)\in[0,2\pi]^2,
\]
we have that 
\[
    \mathfrak{s}=\q_\eta.
\]
L\'evy (\cite{levy}) achieved the series expansion in
$L^2(\mu)$ that 
\begin{equation}\label{e.levy.1}
    \mathfrak{s}=\sum_{n=1}^\infty \frac1{n}
     \bigl\{\xi_n(\eta_n^\prime-\eta^\prime\sqrt{2})
       -\eta_n(\xi_n^\prime-\xi^\prime\sqrt{2})
     \bigr\},
\end{equation}
where 
$\{\xi_n,\xi_n^\prime,\xi^\prime,\eta_n,\eta_n^\prime,
   \eta^\prime \mid  n\in \mathbb{N}\}$
is a family of IID random variables obeying the normal
distribution $N(0,1)$. 
He reasoned as follows.
\begin{enumerate}
\item
Develop the coordinate process
$\{\theta(t)=(\theta^1(t),\theta^2(t))\}_{t\in[0,2\pi]}$ 
in $\mathcal{W}$ with respect to the uniform convergence
norm as 
\begin{align*}
    & \theta^1(t)=\frac{\xi^\prime t}{\sqrt{2\pi}}
      +\sum_{n=1}^\infty \frac1{n\sqrt{\pi}}\Bigl\{
       \xi_n\bigl(\cos(nt)-1\bigr)
       +\xi_n^\prime \sin(nt)\Bigr\},
    \\
    & \theta^2(t)=\frac{\eta^\prime t}{\sqrt{2\pi}}
      +\sum_{n=1}^\infty \frac1{n\sqrt{\pi}}\Bigl\{
       \eta_n\bigl(\cos(nt)-1\bigr)
       +\eta_n^\prime \sin(nt)\Bigr\}
    \quad\text{for } t\in[0,2\pi].
\end{align*}

\item
Differentiate $\theta^1(t)$ and $\theta^2(t)$ formally as
\begin{align*}
    & d\theta^1(t)=\biggl(
       \frac{\xi^\prime}{\sqrt{2\pi}}
       +\sum_{n=1}^\infty \frac1{\sqrt{\pi}}\Bigl\{
       -\xi_n \sin(nt)+\xi_n^\prime \cos(nt)\Bigr\}
       \biggr)dt,
    \\
    & d\theta^2(t)=\biggl(
       \frac{\eta^\prime t}{\sqrt{2\pi}}
       +\sum_{n=1}^\infty \frac1{\sqrt{\pi}}\Bigl\{
         -\eta_n\sin(nt)+\eta_n^\prime \cos(nt)\Bigr\}
       \biggr)dt.
\end{align*}

\item
Substitute these into the definition of $\mathfrak{s}$ and
evaluate integrals of products of trigonometric functions
over $[0,2\pi]$.
\end{enumerate}
He said that the formal differentiation is justified by the
stochastic calculus.

Applying Theorem~\ref{t.q.eta}, we show \eqref{e.levy.1}.
To do so, let $e_1,e_2$ be an ONB of $\mathbb{R}^2$ and
define $h_{h;i}\in \mathcal{H}$ for 
$n\in \mathbb{N}\cup\{0\}$ and $i=1,2$, by 
\[
    h_{0;i}(t)=\frac{t}{\sqrt{2\pi}}e_i,\quad
    h_{2n;i}(t)=\frac{\cos(nt)-1}{n\sqrt{\pi}}e_i,
    \quad\text{and}\quad
    h_{2n-1;i}(t)=\frac{\sin(nt)}{n\sqrt{\pi}}e_i
\]
for $t\in[0,2\pi]$ and $n\in \mathbb{N}$.
$\{h_{n;i} \mid  n\in \mathbb{N}\cup\{0\},i=1,2\}$ is an ONB of
$\mathcal{H}$.
Notice that 
\[
    (B_\eta h)^\prime(t)
    =J\biggl\{h(t)-\frac12 h(2\pi)\biggr\}
    \quad\text{for }t\in[0,2\pi].
\]
By a straightforward computation, we know that
all 
$\langle B_\eta h_{n;i},h_{m;i}\rangle_{\mathcal{H}}$ 
for $n,m\in \mathbb{N}\cup\{0\}$ and $i=1,2$ vanish, 
\begin{align*}
    & \langle B_\eta h_{0;1},h_{2n;2}\rangle_{\mathcal{H}}
      =-\langle B_\eta h_{2n;1},h_{0;2}\rangle_{\mathcal{H}}
      =\frac{\sqrt{2}}{n},
    \\
    & \langle B_\eta h_{2n;1},h_{2m-1;2}\rangle_{\mathcal{H}}
      =-\langle B_\eta h_{2n-1;1},h_{2m;2}\rangle_{\mathcal{H}}
      =\frac1{n}\delta_{nm}
     \quad\text{for }n,m\in \mathbb{N},
\end{align*} 
and other $\langle B_\eta h_{n;1},h_{m;2}\rangle_{\mathcal{H}}$s
also vanish.
On account of the self-adjointness of $B_\eta$, by \eqref{t.q.eta.1},
we obtain \eqref{e.levy.1} with 
\begin{align*}
    & \xi_n=\D^*h_{2n;1}, \quad
      \xi_n^\prime=\D^*h_{2n-1;1},\quad
      \xi^\prime=\D^*h_{0;1},
    \\
    & \eta_n=\D^*h_{2n;2},\quad
      \eta_n^\prime=\D^*h_{2n-1;2},\quad
      \eta^\prime=\D^*h_{0;2}
      \quad\text{for }n\in \mathbb{N}.
\end{align*}
\end{example}

\begin{example}\label{e.kac}
Let $d=1$ and $T=1$.
Define $\mathfrak{h}:\mathcal{W}\to \mathbb{R}$ by 
\[
    \mathfrak{h}=\int_0^1 \theta(t)^2 dt.
\]
If we set
\[
    \kappa(t,s)=\one_{[0,t)}(s)
    \quad\text{for }(t,s)\in[0,1]^2,
\]
then 
$\mathfrak{h}=2 \mathfrak{h}(\kappa)$.
Since $\|\kappa\|_2^2=\frac12$, by
Proposition~\ref{p.h.osc}, we have that
\begin{equation}\label{e.kac.1}
    \mathfrak{h}=2\q_{c(\kappa)}+\frac12.
\end{equation}
Kac (\cite{kac,kac-pisa}) achieved the series expansion of
$\mathfrak{h}$ in $L^2(\mu)$ such that 
\begin{equation}\label{e.kac.2}
    \mathfrak{h}=\sum_{n=0}^\infty 
      \frac1{(n+\frac12)^2\pi^2} H_n^2,
\end{equation}
where $\{H_n \mid n\in \mathbb{N}\cup\{0\}\}$ is a family of IID
random variables obeying the normal distribution $N(0,1)$.
He obtained the series expansion in the following way.
\begin{enumerate}
\item
Put 
\[
    \varphi_n(t)=\sqrt{2}
    \sin\biggl(\Bigl(n+\frac12\Bigr)\pi t\biggr)
    \quad\text{for }t\in[0,1]\text{ and }
    n\in \mathbb{N}\cup\{0\}.
\]
Then $\{\varphi_n\}_{n=0}^\infty$ is an ONB of
$L^2([0,1];\mathbb{R})$,
Here, for $a,b\in \mathbb{R}$ with $a<b$ and a real
separable Hilbert space $E$, $L^p([a,b];E)$ denotes the
$L^p$-space of  $E$-valued functions defined on $[a,b]$ with
respect to the one-dimensional Lebesgue measure. 

\item
Let 
\[
    L_n=\int_0^1\theta(t)\varphi_n(t)dt
    \quad\text{for }n\in \mathbb{N}\cup\{0\}.
\]
By the Plancherel theorem, it holds that
\[
    \mathfrak{h}=\sum_{n=0}^\infty L_n^2.
\]

\item
Define $C:L^2([0,1];\mathbb{R})\to L^2([0,1];\mathbb{R})$ by  
\[
    (Cf)(t)=\int_0^1 (t\wedge s)f(s)ds
    \quad\text{for }f\in L^2([0,1];\mathbb{R})
      \text{ and }t\in[0,1].
\]
Then it holds that
\[
    C\varphi_n=\frac1{(n+\frac12)^2\pi^2}\varphi_n
    \quad\text{for }n\in \mathbb{N}\cup\{0\}.
\]

\item
Observe that
\[
    \int_{\mathcal{W}} L_nL_m d\mu
    =\int_0^1 \varphi_n(t) (C\varphi_m)(t)dt
    =\frac1{(n+\frac12)^2\pi^2} \delta_{nm}
    \quad\text{for }n,m\in \mathbb{N}\cup\{0\}.
\]
Hence $\{L_n\mid n\in \mathbb{N}\cup\{0\}\}$ is a family of
independent random variables, each $L_n$ obeying the normal
distribution $N\bigl(0,\{(n+\frac12)^2\pi^2\}^{-1}\bigr)$.
If we set 
\[
    H_n=\Bigl(n+\frac12\Bigr)\pi L_n
    \quad\text{for }n\in \mathbb{N}\cup\{0\},
\]
then \eqref{e.kac.2} holds.
\end{enumerate}

We apply Theorem~\ref{t.q.eta} to show \eqref{e.kac.2}.
Notice that 
\[
    c(\kappa)(t,s)
    =\int_0^1 \kappa(u,t)\kappa(u,s) du
    =1-(t\vee s)
    \quad\text{for }(t,s)\in[0,1]^2.
\]
Hence it holds that
\[
    (B_{c(\kappa)}h)^\prime(t)=\int_t^1 h(s)ds
    \quad\text{for }h\in \mathcal{H}
    \text{ and }t\in[0,1].
\] 
This implies that $B_{c(\kappa)}h=0$ if and only if $h=0$.
Let $\lambda\ne0$ be an eigenvalue of $B_{c(\kappa)}$ and 
$h\in \mathcal{H},\ne0$ be an associated eigenvector. 
The equation $B_{c(\kappa)}h=\lambda h$ is equivalent to
the ODE
\[
    \lambda h^{\prime\prime}=-h,
    \quad h(0)=0,~~h^\prime(1)=0.
\]
Since $B_{c(\kappa)}=B_\kappa^* B_\kappa$, we have that
\[
    \lambda\langle h,h\rangle_{\mathcal{H}}
    =\langle B_{c(\kappa)}h,h\rangle_{\mathcal{H}}
    =\|B_\kappa h\|_{\mathcal{H}}^2.
\]
Hence $\lambda>0$.
Solving the ODE, we we see that
\[
    \lambda=\frac1{(n+\frac12)^2\pi^2}
    \quad\text{and}\quad
    h=c\varphi_n
    \quad\text{for some $n\in \mathbb{N}\cup\{0\}$ and 
    $c\in \mathbb{R},\ne0$.}
\]
If we put 
\[
    h_n=\frac1{(n+\frac12)\pi}\varphi_n
    \quad\text{for }n\in \mathbb{N}\cup\{0\},
\]
then $\{h_n\}_{n=0}^\infty$ is an ONB of $\mathcal{H}$ and
we have that
\[
    B_{c(\kappa)}=\sum_{n=0}^\infty
     \frac1{(n+\frac12)^2\pi^2} h_n\otimes h_n.
\]
Since
\[
    \sum_{n=0}^\infty \frac1{(2n+1)^2}=\frac{\pi^2}8,
\]
by \eqref{t.q.eta.1}, we obtain that
\[
    \q_{c(\kappa)}=\frac12\sum_{n=0}^\infty
    \frac1{(n+\frac12)^2\pi^2} (\D^*h_n)^2 
    -\frac14.
\]
Plugging this into \eqref{e.kac.1} we arrive at
\eqref{e.kac.2} with $H_n=\D^*h_n$ for 
$n\in \mathbb{N}\cup\{0\}$.
\end{example}

\chapter{From transformations to quadratic forms}
\label{chap.transf}

\begin{quote}{\small
It is shown that transformations of order one give rise to
exponentially integrable quadratic forms via change of
variables formulas on the Wiener space, and such
transformations have inverse transformations.  
As an application, linear transformations are investigated, and 
the resulting change of variables formula is compared with the one
achieved by Cameron and Martin in 1940s. 
}
\end{quote}

\section{Transformations of order one}
\label{sec.transf}

The Wiener functional
$F_\kappa\in \mathbb{D}^\infty(\mathcal{H})$ and the function
$\eta(\kappa)\in\stwo$ for $\kappa\in\ltwo$ were defined in
Introduction as 
\begin{equation}\label{eq:f.kappa}
    \langle F_\kappa,h\rangle_{\mathcal{H}}
    =\int_0^T \biggl\langle 
         \int_0^T \kappa(t,s)d\theta(s),h^\prime(t)
         \biggr\rangle dt
       \quad\text{for }h\in \mathcal{H}    
\end{equation}
and 
\begin{equation}\label{eq:eta.kappa}
   \eta(\kappa)(t,s)
      =-\biggl\{\kappa(t,s)+\kappa(s,t)^\dagger
       +\int_0^T \kappa(u,t)^\dagger\kappa(u,s)du
       \biggr\}
   \quad\text{for }(t,s)\in[0,T]^2. 
\end{equation}
There we call $\iota+F_\kappa$ a transformation of order
one. 
This is because each component of
$\int_0^T\kappa(t,s)d\theta(s)$ is in the Wiener chaos of
order one.
The aim of this section is to show that a transformation of
order one gives rise to an exponentially integrable
quadratic form.

Put  
\[
    \Lambda(B)
    =\sup_{\|h\|_{\mathcal{H}}=1}\langle Bh,h
         \rangle_{\mathcal{H}}
    \quad\text{for }B\in \mathcal{S}(\htwo).
\]
If we develop $B\in \mathcal{S}(\htwo)$ as 
$B=\sum\limits_{n=1}^\infty a_n h_n\otimes h_n$
with a sequence $\{a_n\}_{n=1}^\infty\subset \mathbb{R}$ of 
square summable real numbers and
an ONB $\{h_n\}_{n=1}^\infty$ of $\mathcal{H}$, 
then we have that 
\[
    \Lambda(B)=\sup_{n\in \mathbb{N}} a_n.
\]
Since $\lim\limits_{n\to\infty} a_n=0$, 
we know that 
\[
    \Lambda(B)\ge0.
\]

\begin{theorem}\label{t.transf}
Let $\kappa\in\ltwo$ and assume that
$\Lambda(B_{\eta(\kappa)})<1$.
Then it holds that
\begin{equation}\label{eq:transf}
    |\dettwo(I+B_\kappa)| 
    \int_{\mathcal{W}} f(\iota+F_\kappa)
       e^{\q_{\eta(\kappa)}} d\mu
    =e^{\frac12 \|\kappa\|_2^2} \int_{\mathcal{W}} f d\mu
    \quad\text{for every }f\in C_b(\mathcal{W}),
\end{equation}
where $\dettwo$ stands for the regularized determinant 
(\cite{GGK}), and $I$ denotes the identity 
mapping of $\mathcal{H}$.
\end{theorem}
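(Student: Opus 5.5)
The plan is to reduce \eqref{eq:transf} to a finite-dimensional Gaussian computation by approximating $\kappa$, then pass to the limit.

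\textbf{Step 1 (finite rank).} Fix an ONB $\{h_n\}_{n=1}^\infty$ of $\mathcal{H}$. Let $\kappa_N$ be the kernel of $\pi_N B_\kappa\pi_N$, as in the proof of Lemma~\ref{l.b.kappa}, so that $\|\kappa_N-\kappa\|_2\to0$. For such $\kappa_N$,
\[
F_{\kappa_N}=\sum_{n,m\le N}\langle B_\kappa h_n,h_m\rangle_{\mathcal{H}}(\D^*h_n)h_m .
\]
Write $\xi=(\D^*h_1,\dots,\D^*h_N)$, which is standard Gaussian on $\mathbb{R}^N$ and independent of the remainder $w-\sum_{n\le N}(\D^*h_n)h_n$. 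Then $\iota+F_{\kappa_N}$ acts only on the $\xi$-coordinates, as the linear map $\xi\mapsto(I+A)\xi$, where $A=(\langle B_\kappa h_n,h_m\rangle_{\mathcal{H}})_{m,n\le N}$. By Theorem~\ref{t.q.eta}(ii), $\q_{\eta(\kappa_N)}$ is the finite sum
\[
\tfrac12\sum_{n,m\le N}\langle B_{\eta(\kappa_N)}h_n,h_m\rangle_{\mathcal{H}}\{\xi_n\xi_m-\delta_{nm}\},
\qquad
B_{\eta(\kappa_N)}=-(A+A^\dagger+A^\dagger A).
\]
The last identity uses $B_{\kappa^*}=B_\kappa^*$ and $B_{c(\kappa)}=B_\kappa^*B_\kappa$. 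Therefore
\[
\q_{\eta(\kappa_N)}=-\tfrac12|(I+A)\xi|^2+\tfrac12|\xi|^2+\tfrac12\operatorname{tr}(2A+A^\dagger A),
\]
and $\operatorname{tr}(A^\dagger A)=\|\kappa_N\|_2^2$.

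\textbf{Step 2 (finite-dimensional identity).} When $\det(I+A)\neq0$, the substitution $y=(I+A)x$ in the Gaussian integral gives
\[
\int f(\iota+F_{\kappa_N})e^{\q_{\eta(\kappa_N)}}d\mu=|\det(I+A)|^{-1}e^{\operatorname{tr}A+\frac12\|\kappa_N\|_2^2}\int f\,d\mu .
\]
Since $\dettwo(I+A)=\det(I+A)e^{-\operatorname{tr}A}$, this is exactly \eqref{eq:transf} for $\kappa_N$. Nonsingularity comes from the hypothesis: $I-B_{\eta(\kappa)}=(I+B_\kappa)^*(I+B_\kappa)$, so $\Lambda(B_{\eta(\kappa)})<1$ gives $\|(I+B_\kappa)h\|^2_{\mathcal{H}}\ge(1-\Lambda)\|h\|_{\mathcal{H}}^2$. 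The same bound holds for the compression $I+A$, and hence $\Lambda(B_{\eta(\kappa_N)})\le\Lambda(B_{\eta(\kappa)})$, uniformly in $N$.

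\textbf{Step 3 (limit; the main obstacle).} We need to let $N\to\infty$. On the left, $\dettwo$ is continuous in the Hilbert--Schmidt norm and $\|\kappa_N\|_2\to\|\kappa\|_2$. Also $F_{\kappa_N}\to F_\kappa$ in $L^p(\mu;\mathcal{H})$, hence in $\mathcal{W}$ in probability along a subsequence, and $\q_{\eta(\kappa_N)}\to\q_{\eta(\kappa)}$ in $L^2$. The difficulty is uniform integrability of $e^{\q_{\eta(\kappa_N)}}$. For this I will show $\sup_N\int e^{p\,\q_{\eta(\kappa_N)}}d\mu<\infty$ for some $p>1$ with $p\Lambda<1$. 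Diagonalize $B_{\eta(\kappa_N)}$ with eigenvalues $a_j\le\Lambda$. Then
\[
\int e^{p\q}d\mu=\prod_j(1-pa_j)^{-1/2}e^{-pa_j/2}.
\]
Since $-\log(1-x)-x\le C_\delta x^2$ for $x\le1-\delta$, this product is bounded by $\exp(C\|\eta(\kappa_N)\|_2^2)$, which is bounded uniformly in $N$. Dominated convergence under uniform integrability then passes the identity to the limit for $f\in C_b(\mathcal{W})$.
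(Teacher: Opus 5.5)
\textbf{Gap in the uniform bound on $\Lambda(B_{\eta(\kappa_N)})$.} Your overall route is the paper's: compress $B_\kappa$ to $\pi_N B_\kappa\pi_N$, perform the Gaussian change of variables $x\mapsto(I+A)x$ in finitely many coordinates, and pass to the limit using uniform integrability of $e^{\q_{\eta(\kappa_N)}}$ and Hilbert--Schmidt continuity of $\dettwo$. However, the step meant to control all $N$ at once is false. You claim that the lower bound $\|(I+B_\kappa)h\|_{\mathcal{H}}^2\ge(1-\Lambda)\|h\|_{\mathcal{H}}^2$ passes to the compression $I+A$, and hence that $\Lambda(B_{\eta(\kappa_N)})\le\Lambda(B_{\eta(\kappa)})$.

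Lower bounds on $\|Tx\|$ are not inherited by compressions. For $x\in\mathcal{R}(\pi_N)$ one has $\langle (I-B_{\eta(\kappa_N)})x,x\rangle_{\mathcal{H}}=\|\pi_N(I+B_\kappa)x\|_{\mathcal{H}}^2\le\|(I+B_\kappa)x\|_{\mathcal{H}}^2$, which is the wrong direction. You have implicitly identified $(\pi_N(I+B_\kappa)\pi_N)^*(\pi_N(I+B_\kappa)\pi_N)$ with the compression $\pi_N(I+B_\kappa)^*(I+B_\kappa)\pi_N$ of $I-B_{\eta(\kappa)}$, and only the latter inherits the bound.

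Here is a concrete counterexample. Take $B_\kappa=-h_1\otimes h_1-h_2\otimes h_2+h_1\otimes h_2+h_2\otimes h_1$. Then $I+B_\kappa$ swaps $h_1$ and $h_2$ and is orthogonal, so $B_{\eta(\kappa)}=0$. But for $N=1$ you get $A=-1$, so $I+A=0$. Also $B_{\eta(\kappa_1)}=h_1\otimes h_1$, so $\Lambda(B_{\eta(\kappa_1)})=1$, and $e^{\q_{\eta(\kappa_1)}}=e^{\frac12\{(\D^*h_1)^2-1\}}\notin L^1(\mu)$. Thus both the nonsingularity of $I+A$ in Step~2 and the uniform choice of $p$ in Step~3 fail as you wrote them.

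\textbf{The repair} is exactly what the paper does. Since $B_{\kappa_N}\to B_\kappa$ in $\htwo$, you also get $B_{\eta(\kappa_N)}=-(B_{\kappa_N}+B_{\kappa_N}^*+B_{\kappa_N}^*B_{\kappa_N})\to B_{\eta(\kappa)}$ in $\htwo$. Combined with $|\Lambda(B)-\Lambda(B')|\le\|B-B'\|_{\htwo}$, this gives an $N_0$ with $\Lambda_0:=\sup_{N\ge N_0}\Lambda(B_{\eta(\kappa_N)})<1$. Now work only with $N\ge N_0$.
\begin{itemize}
\item The estimate $|(I+A)x|^2=\langle(I-B_{\eta(\kappa_N)})x,x\rangle\ge(1-\Lambda_0)|x|^2$ gives $\det(I+A)\neq0$.
\item Your product bound with $p\Lambda_0<1$ then gives the uniform integrability.
\end{itemize}
The rest of your argument goes through unchanged.

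A minor point: with a general ONB, asserting that $\iota+F_{\kappa_N}$ sends $\xi$ to $(I+A)\xi$ and fixes the remainder needs two ingredients. You need the $\mathcal{H}$-invariant versions of $\D^*h_n$ (as in the proof of Lemma~\ref{l.h.inv}) and the a.s.\ expansion $w=\sum_n(\D^*h_n)(w)h_n$. The paper avoids this by choosing the ONB inside $\mathcal{W}^*$, where $\D^*\ell_n=\ell_n$.
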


\begin{remark}\label{r.transf}
(i) 
It holds that
\begin{equation}\label{r.transf.1}
    B_{\eta(\kappa)}
    =-(B_\kappa+B_\kappa^*+B_\kappa^*B_\kappa).
\end{equation}
Hence we have that
\[
    \inf_{\|h\|_{\mathcal{H}}=1} 
     \|(I+B_\kappa)h\|_{\mathcal{H}}^2
    =\inf_{\|h\|_{\mathcal{H}}=1} \bigl\{
       1-\langle B_{\eta(\kappa)}h,h
           \rangle_{\mathcal{H}} \bigr\}
    =1-\Lambda(B_{\eta(\kappa)}). 
\]
If $\Lambda(B_{\eta(\kappa)})<1$, then, by
Lemma~\ref{l.cont.inv}, $I+B_\kappa$ has a continuous
inverse, which implies that 
 $\dettwo(I+B_\kappa)\ne0$
(\cite[Theorem\,XII.1.1]{GGK}). 
Conversely, if $\dettwo(I+B_\kappa)\ne0$, then $I+B_\kappa$
has a continuous inverse.
By the inequality due to the boundedness of
$(I+B_\kappa)^{-1}$ that
\[
    \|h\|_{\mathcal{H}}^2
    \le \|(I+B_\kappa)^{-1}\|_\op^2
        \|(I+B_\kappa)h\|_{\mathcal{H}}^2
    \quad\text{for }h\in \mathcal{H},
\]
where $\|\cdot\|_\op$ stands for the operator norm, we see
that  
\[
    \Lambda(B_{\eta(\kappa)})\le 1-\|(I+B_\kappa)\|_\op^{-2}<1.
\]
Thus we obtain that
\begin{equation}\label{r.transf.2}
    \Lambda(B_{\eta(\kappa)})<1
    \quad\text{if and only if}\quad
    \dettwo(I+B_\kappa)\ne0.
\end{equation}

Thanks to this equivalence, $\Lambda(B_{\eta(\kappa)})<1$
if $\|B_\kappa\|_\op<1$.
It also follows from \eqref{r.transf.1} that if 
$\kappa(t,s)^\dagger=-\kappa(s,t)$ for $(t,s)\in[0,T]^2$,
that is, $B_\kappa=-B_\kappa^*$, then 
$\Lambda(B_{\eta(\kappa)})\le0<1$.
\\
(ii)
Take $b,c\in \mathbb{R}$ with $b^2+c^2>0$ and orthonormal
$h_1,h_2\in \mathcal{H}$. 
Define $\kappa_1,\kappa_2\in\ltwo$ by 
\begin{align*}
    & \kappa_1(t,s)
      =b\{h_1^\prime(s)\otimes h_1^\prime(t)
        +h_2^\prime(s)\otimes h_2^\prime(t)\}, 
    \\
    & \kappa_2(t,s)
      =c\{h_1^\prime(s)\otimes h_2^\prime(t)
       -h_2^\prime(s)\otimes h_1^\prime(t)\}
      \quad\text{for }(t,s)\in[0,T]^2.
\end{align*}
By a direct computation, we see that 
\begin{align*}
    & \|\kappa_1-\kappa_2\|_2^2=2(b^2+c^2)>0, 
    \\
    &  \eta(\kappa_1)(t,s)
       =-(2b+b^2)
        \{h_1^\prime(s)\otimes h_1^\prime(t)
          +h_2^\prime(s)\otimes h_2^\prime(t)\}, 
    \\
    & \eta(\kappa_2)(t,s)
      =-c^2\{h_1^\prime(s)\otimes h_1^\prime(t)
             +h_2^\prime(s)\otimes h_2^\prime(t)\}
    \quad\text{for }(t,s)\in[0,T]^2.
\end{align*}

Suppose that $1+c^2=(1+b)^2$.
Then $\eta(\kappa_1)=\eta(\kappa_2)$.
Furthermore, it holds that
\[
    \langle B_{\eta(\kappa_i)}h,h\rangle_{\mathcal{H}}
    =-c^2\{\langle h_1,h\rangle_{\mathcal{H}}^2
           +\langle h_2,h\rangle_{\mathcal{H}}^2\} 
    \quad\text{for }h\in \mathcal{H}
     \text{ and }i=1,2.
\]
Hence 
$\Lambda(B_{\eta(\kappa_i)})\le0<1$ for $i=1,2$.
Thus the mapping
$\ltwo\ni\kappa
 \mapsto\eta(\kappa)\in\stwo$
is not injective, even if the assumption in the theorem is
fulfilled.
\\
(iii) 
By Lemma~\ref{l.q.eta.int} below, 
$e^{\q_{\eta(\kappa)}}\in L^1(\mu)$ if and only if  
$\Lambda(B_{\eta(\kappa)})<1$.
\end{remark}

To prove the theorem, we prepare lemmas.

\begin{lemma}\label{l.D=0}
Let $E$ be a real separable Hilbert space and 
$\Phi\in \mathbb{D}^\infty(E)$.
If $\D\Phi=0$, then $\Phi=\int_{\mathcal{W}}\Phi d\mu$. 
\end{lemma}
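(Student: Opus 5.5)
The plan is to reduce the statement to the scalar case and then use the Wiener chaos expansion, in which the derivative acts as a number operator. Fix an ONB $\{e_k\}$ of $E$ and set $\Phi_k=\langle \Phi,e_k\rangle_E$. Each $\Phi_k$ lies in $\mathbb{D}^\infty$ and satisfies
\[
\D\Phi_k=\langle \D\Phi,e_k\rangle_E=0 .
\]
If we show $\Phi_k=\int_{\mathcal{W}}\Phi_k\,d\mu$ for every $k$, then summing over $k$ in $L^2(\mu;E)$ gives $\Phi=\int_{\mathcal{W}}\Phi\,d\mu$. So it suffices to treat $E=\mathbb{R}$.

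For a scalar $\Phi\in\mathbb{D}^\infty\subset L^2(\mu)$, I would write its chaos decomposition
\[
\Phi=\sum_{n\ge0}J_n\Phi,\qquad J_n\Phi\in\mathcal{C}_n,
\]
with $J_0\Phi=\int_{\mathcal{W}}\Phi\,d\mu$. The key input is the standard identity from \cite[Chapter\,5]{mt-cambridge}: on $\mathbb{D}^{1,2}$ one has $\D^*\D=L$, where $L$ is the Ornstein--Uhlenbeck operator, equivalently $\int_{\mathcal{W}}\|\D\Phi\|_{\mathcal{H}}^2\,d\mu=\sum_n n\|J_n\Phi\|_{L^2}^2$. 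If $\D\Phi=0$, this sum vanishes, so $J_n\Phi=0$ for every $n\ge1$. Hence $\Phi=J_0\Phi$, which is the constant $\int_{\mathcal{W}}\Phi\,d\mu$.

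A concrete way to verify that identity, avoiding the abstract number operator, is through the polynomials already used in Theorem~\ref{t.w.chaos}. Products of Hermite polynomials $\prod_i H_{a_i}(\D^*h_i)$ span the chaos $\mathcal{C}_n$ with $\sum a_i=n$. Write $\Psi$ for such a product. One computes
\[
\D^*\D\Psi=n\Psi,
\]
using $H_a'=aH_{a-1}$ and the Hermite recursion. Then, by duality,
\[
\int_{\mathcal{W}}\Phi\,\Psi\,d\mu=\frac1n\int_{\mathcal{W}}\Phi\,\D^*\D\Psi\,d\mu=\frac1n\int_{\mathcal{W}}\langle \D\Phi,\D\Psi\rangle_{\mathcal{H}}\,d\mu=0.
\]
So $\Phi$ is orthogonal to every chaos of order $n\ge1$.

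The main point needing care is this duality step. Since $\Phi\in\mathbb{D}^\infty$ and $\Psi$ is a smooth polynomial functional, the integration by parts $E[\Phi\,\D^*G]=E[\langle \D\Phi,G\rangle_{\mathcal{H}}]$ with $G=\D\Psi$ is legitimate. Combining this with the density of the Hermite polynomials in $L^2(\mu)\ominus\mathcal{C}_0$ finishes the proof.
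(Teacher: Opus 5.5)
Your proof is correct, but it follows a different route from the paper. The paper gives no argument of its own: it invokes \cite[Proposition\,5.2.9]{mt-cambridge} to conclude that $\Phi$ is constant, and then identifies the constant with the mean. Your proof is self-contained.

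\begin{itemize}
\item \emph{Reduction to the scalar case.} You pass to the components $\Phi_k=\langle\Phi,e_k\rangle_E$. This is legitimate because $\D$ commutes with the bounded functionals $\langle\cdot,e_k\rangle_E$, and the Bochner integral commutes with them as well.
\item \emph{Energy identity.} You use $\int_{\mathcal{W}}\|\D\Phi\|_{\mathcal{H}}^2d\mu=\sum_n n\|J_n\Phi\|_{L^2}^2$. Quoted as a known fact, this already contains the conclusion, so on its own it is no less of a citation than the paper's.
\item \emph{Hermite computation.} This is the step that makes your argument genuinely self-contained. The eigenrelation $\D^*\D\Psi=n\Psi$, combined with the duality $\int_{\mathcal{W}}\Phi\,\D^*G\,d\mu=\int_{\mathcal{W}}\langle\D\Phi,G\rangle_{\mathcal{H}}d\mu$ for $G=\D\Psi$, shows that $\Phi$ is orthogonal to every chaos $\mathcal{C}_n$ with $n\ge1$.
\end{itemize}

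Two conditions in the Hermite step should be stated explicitly.
\begin{itemize}
\item The $h_i$ must be orthonormal. When you expand $\D^*(F_jh_j)=F_j\D^*h_j-\langle\D F_j,h_j\rangle_{\mathcal{H}}$, this is what kills the cross terms $l\ne j$.
\item The $H_a$ must be the monic (probabilists') Hermite polynomials. Then $H_a'=aH_{a-1}$ and $xH_{a-1}(x)-(a-1)H_{a-2}(x)=H_a(x)$, which is exactly what collapses $\D^*\D\Psi$ to $\sum_j a_j\Psi$.
\end{itemize}

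Your route costs you the Wiener--It\^o decomposition as an external input: you need the density of $\bigoplus_{n\ge1}\mathcal{C}_n$ in $L^2(\mu)\ominus\mathcal{C}_0$. In return, it shows the lemma directly as the statement that the kernel of the number operator is $\mathcal{C}_0$. This matches the spirit of the chaos computations in Theorem~\ref{t.w.chaos}. The paper's citation is shorter but gives no such insight.
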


\begin{proof}
By \cite[Proposition\,5.2.9]{mt-cambridge}, $\Phi$ is a
constant function.
Hence $\Phi=\int_{\mathcal{W}}\Phi d\mu$. 
\end{proof}

\begin{lemma}\label{l.f.kappa}
Let $\kappa\in\ltwo$.
Then it holds that
\[
    F_\kappa=\D^*B_\kappa.
\]
Furthermore, for any ONB $\{h_n\}_{n=1}^\infty$ of
$\mathcal{H}$, it holds that
\[
    F_\kappa=\sum_{n,m=1}^\infty 
     \langle B_\eta h_n,h_m\rangle_{\mathcal{H}}
     (\D^*h_n)h_m,
\]
where the series converges in
$L^p(\mu;\mathcal{H})$ for every $p\in(1,\infty)$. 
\end{lemma}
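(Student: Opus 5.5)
The plan mirrors the proof of Theorem~\ref{t.q.eta}. Note that the series in the statement should read with $B_\kappa$ in place of $B_\eta$; this is evidently a typo. For the first identity, I compute $\D F_\kappa$ directly. For $h,g\in\mathcal{H}$, differentiating \eqref{eq:f.kappa} in the direction $g$ gives
\[
    \langle \D F_\kappa, g\otimes h\rangle_{\htwo}
    =\int_0^T \biggl\langle \int_0^T \kappa(t,s)g^\prime(s)ds,
      h^\prime(t)\biggr\rangle dt
    =\langle B_\kappa g,h\rangle_{\mathcal{H}},
\]
which says $\D F_\kappa=B_\kappa$, a constant in $\mathbb{D}^\infty(\htwo)$ (up to the identification convention for $\D$ of an $\mathcal{H}$-valued functional). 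In addition, $\int_{\mathcal{W}}F_\kappa d\mu=0$, since $\int_{\mathcal{W}}\langle F_\kappa,h\rangle_{\mathcal{H}}d\mu=0$ for each $h$ (It\^o integrals have mean zero).

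Next I compute $\D(\D^*B_\kappa)$. Using the commutation relation $\D\D^*G=G+\D^*\D G$ (for $\htwo$-valued $G$, with the appropriate index convention), and noting $\D B_\kappa=0$ because $B_\kappa$ is constant, I get $\D(\D^*B_\kappa)=B_\kappa$. Also $\int_{\mathcal{W}}\D^*B_\kappa\,d\mu=0$, since $\D^*$ maps into mean-zero functionals. Hence $\Phi=F_\kappa-\D^*B_\kappa\in\mathbb{D}^\infty(\mathcal{H})$ satisfies $\D\Phi=0$ and $\int_{\mathcal{W}}\Phi d\mu=0$, and Lemma~\ref{l.D=0} gives $\Phi=0$. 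A more elementary alternative avoids the commutation relation: test against $h$, so that $\langle \D^*B_\kappa,h\rangle_{\mathcal{H}}=\D^*(B_\kappa^* h)$ (or $B_\kappa h$, depending on convention), and check that this equals $\int_0^T\langle\int_0^T\kappa(t,s)^\dagger h^\prime(t)dt,d\theta(s)\rangle$. This identification of the Wiener integral with $\D^*$ of a Cameron--Martin vector is the first-order chaos fact.

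For the series, I follow Theorem~\ref{t.q.eta}(ii). Expand $B_\kappa=\sum_{n,m}\langle B_\kappa h_n,h_m\rangle_{\mathcal{H}}h_n\otimes h_m$ in $\htwo$ and truncate to $B_\kappa^{(N)}$ with $n,m\le N$, which converges to $B_\kappa$ in $\htwo$. The continuity of $\D^*:\mathbb{D}^{1,p}(\htwo)\to L^p(\mu;\mathcal{H})$ on constants yields $\D^*B_\kappa^{(N)}\to\D^*B_\kappa$ in $L^p(\mu;\mathcal{H})$ for all $p\in(1,\infty)$. It then remains to note that $\D^*(h_n\otimes h_m)=(\D^*h_n)h_m$, which follows from $\D^*(\phi g)=\phi\D^*g-\langle\D\phi,g\rangle$ applied componentwise, or directly by duality: for $\Psi\in\mathbb{D}^\infty(\mathcal{H})$,
\[
\int\langle \D\Psi,h_n\otimes h_m\rangle d\mu=\int\langle \D\langle\Psi,h_m\rangle,h_n\rangle d\mu=\int (\D^*h_n)\langle\Psi,h_m\rangle d\mu .
\]

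The main obstacle is bookkeeping rather than analysis. The tensor convention $(h\otimes g)k=\langle h,k\rangle g$ must match both the way $\D$ of an $\mathcal{H}$-valued map is identified with an element of $\htwo$ and the order in which $\D^*$ contracts. Otherwise one obtains $B_\kappa^*$ instead of $B_\kappa$, which matters because $\kappa$ is not symmetric. I would fix the convention by the duality computation just above and check it against the explicit formula \eqref{eq:f.kappa}.
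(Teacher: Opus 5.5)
Your proposal is correct and is essentially the paper's proof. You show $\D F_\kappa=B_\kappa=\D(\D^*B_\kappa)$, note that both functionals have mean zero, and apply Lemma~\ref{l.D=0}. You then obtain the series by truncating $B_\kappa$ in $\htwo$, using the continuity of $\D^*$ together with $\D^*(h_n\otimes h_m)=(\D^*h_n)h_m$. The only difference is that the paper checks $\D(\D^*B_\kappa)=B_\kappa$ by a duality computation using $\D(\D^*h)=h$, rather than by the general commutation relation.

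You are also right that $B_\eta$ in the series is a typo for $B_\kappa$. With the paper's conventions your hedges resolve as $\D(\D^*B_\kappa)=B_\kappa$ and $\langle \D^*B_\kappa,h\rangle_{\mathcal{H}}=\D^*(B_\kappa^*h)$ (cf.\ Lemma~\ref{l.d*a}).
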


\begin{proof}
It follows from \eqref{eq:f.kappa} that
\begin{align*}
    \langle \D F_\kappa,h\otimes g
          \rangle_{\htwo}
    & =\langle \D(\langle F_\kappa,g\rangle_{\mathcal{H}}),
       h\rangle_{\mathcal{H}}
      =\int_0^T \biggl\langle 
         \int_0^T \kappa(t,s)h^\prime(s)ds,g^\prime(t)
         \biggr\rangle dt
    \\
    & =\langle B_\kappa h,g\rangle_{\mathcal{H}}
      =\langle B_\kappa, h\otimes g\rangle_{\htwo}
      \quad\text{for }h,g\in \mathcal{H}.
\end{align*}
Thus we see that
\begin{equation}\label{l.f.kappa.21}
    \D F_\kappa=B_\kappa.
\end{equation}

For $h,g\in \mathcal{H}$ and 
$\Phi\in \mathbb{D}^\infty(\mathbb{R})$, it holds that 
\begin{align*}
   & \int_{\mathcal{W}} \langle \D(\D^*B_\kappa),
            h\otimes g\rangle_{\htwo}
          \Phi d\mu 
     =\int_{\mathcal{W}} \langle B_\kappa,
           \D(\D^*(\Phi h))\otimes g
            \rangle_{\htwo} d\mu
   \\
   & =\int_{\mathcal{W}} \langle 
          B_\kappa(\D(\D^*(\Phi h))),g\rangle_{\mathcal{H}}
          d\mu
     =\int_{\mathcal{W}} \langle \Phi h,
          \D(\D^*(B_\kappa^*g))\rangle_{\mathcal{H}}
          d\mu
     =\int_{\mathcal{W}} \langle h,
          B_\kappa^*g \rangle_{\mathcal{H}}\Phi 
          d\mu
   \\
   & =\int_{\mathcal{W}} \langle B_\kappa,h\otimes g
          \rangle_{\htwo} \Phi d\mu,
\end{align*}
where to see the fourth equality we have used the identity 
(\cite[(5.1.9)]{mt-cambridge}) that
$D(D^*h)=h$ for $h\in \mathcal{H}$.
Hence 
\begin{equation}\label{l.f.kappa.22}
    \D(\D^*B_\kappa)=B_\kappa.
\end{equation}

By \eqref{l.f.kappa.21} and \eqref{l.f.kappa.22},
$\D(F_\kappa-\D^*B_\kappa)=0$.
By Lemma~\ref{l.D=0}, it holds that 
\[
    F_\kappa-\D^*B_\kappa
    =\int_{\mathcal{W}}(F_\kappa-\D^*B_\kappa)d\mu.
\]
Due to \eqref{eq:f.kappa}, we know that
\[
    \int_{\mathcal{W}}F_\kappa d\mu=0.
\]
If we think of $h\in \mathcal{H}$ as a constant Wiener
functional in $\mathbb{D}^\infty(\mathcal{H})$, then 
$\D h=0$.
Hence it holds that
\[
    \biggl\langle \int_{\mathcal{W}} (\D^* B_\kappa)d\mu,
      h\biggr\rangle_{\mathcal{H}} 
    =\int_{\mathcal{W}} \langle \D^* B_\kappa,
            h\rangle_{\mathcal{H}} d\mu
    =\int_{\mathcal{W}} \langle B_\kappa,
               \D h\rangle_{\htwo}
       d\mu 
    =0
    \quad\text{for }h\in \mathcal{H}.
\]
Thus we see that
\[
    \int_{\mathcal{W}}(\D^*B_\kappa)d\mu=0.
\]
Therefore we obtain that
\[
    F_\kappa-\D^*B_\kappa=0,
\]
that is, $F_\kappa=\D^*B_\kappa$.

To see the second identity, put
\[
    B_\kappa^{(N)}=\sum_{n,m=1}^N \langle B_\kappa h_n,
         h_m\rangle_{\mathcal{H}} h_n\otimes h_m
    \quad\text{for }N\in \mathbb{N}.
\]
Then $B_\kappa^{(N)}\to B_\kappa$ in 
$\htwo$ as $N\to\infty$.
By the continuity of $\D^*$, 
$\D^*B_\kappa^{(N)}\to \D^*B_\kappa$ in
$L^p(\mu;\mathcal{H})$ for every $p\in(1,\infty)$.
Since $(\D^*)(h_n\otimes h_m)=(\D^*h_n)h_m$
(\cite[(5.7.3)]{mt-cambridge}), we have that 
\[
    \D^*B_\kappa^{(N)}=\sum_{n,m=1}^N \langle B_\kappa h_n,
     h_m\rangle_{\mathcal{H}} (\D^*h_n)h_m.
\]
Thus we obtain the second identity and the convergence of
the series in any $L^p(\mu;\mathcal{H})$.
\end{proof}

\begin{remark}\label{r.D^2F=0}
By \eqref{l.f.kappa.21}, $\D^2F_\kappa=0$ for any
$\kappa\in\ltwo$.
Conversely, for $F\in \mathbb{D}^\infty(\mathcal{H})$ with 
$\D^2F=0$, there are $\kappa\in\ltwo$ and $h\in \mathcal{H}$
such that $F=F_\kappa+h$. 
In fact, by Lemmas~\ref{l.b.kappa} and \ref{l.D=0}, such an
$F$ has a $\kappa\in\ltwo$ with $\D F=B_\kappa$.
By \eqref{l.f.kappa.21}, 
$\D(F-F_\kappa)=B_\kappa-B_\kappa=0$.
Since $\int_{\mathcal{W}}F_\kappa d\mu=0$, due to 
Lemma~\ref{l.D=0} again, setting 
$h=\int_{\mathcal{W}}F d\mu$, we obtain that
$F-F_\kappa=h$.
\end{remark}

\begin{lemma}\label{l.q.eta.int}
For $\eta\in\stwo$, the following holds.
\\
{\rm(i)} 
If $\Lambda(B_\eta)<1$, then it holds that
\[
    \int_{\mathcal{W}} e^{\q_\eta} d\mu
    \le \exp\biggl(\frac12\biggl\{\frac12
          +\frac{\Lambda(B_\eta)}{
           3(1-\Lambda(B_\eta))^3}
         \biggr\}\|\eta\|_2^2 \biggr).
\]
{\rm(ii)} 
$e^{\q_\eta}\in L^1(\mu)$ if and only if
$\Lambda(B_\eta)<1$.
\end{lemma}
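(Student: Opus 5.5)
We diagonalize $B_\eta$ and reduce everything to a computation with independent Gaussians. Since $B_\eta\in\mathcal{S}(\htwo)$ is compact and self-adjoint, there is an ONB $\{h_n\}_{n=1}^\infty$ of $\mathcal{H}$ and square summable reals $\{a_n\}$ with $B_\eta=\sum_n a_n h_n\otimes h_n$. Then $\Lambda(B_\eta)=\sup_n a_n$, and by \eqref{eq.hs}, $\sum_n a_n^2=\|B_\eta\|_{\htwo}^2=\|\eta\|_2^2$. By Theorem~\ref{t.q.eta}(ii) with this basis, the double series collapses to
\[
    \q_\eta=\frac12\sum_{n=1}^\infty a_n\{\xi_n^2-1\},
    \qquad \xi_n=\D^*h_n,
\]
with convergence in every $L^p(\mu)$. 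The $\xi_n$ are IID $N(0,1)$, being jointly Gaussian with $\int\xi_n\xi_m\,d\mu=\langle h_n,h_m\rangle_{\mathcal{H}}=\delta_{nm}$. Passing to a subsequence of partial sums $Q_N=\frac12\sum_{n\le N}a_n(\xi_n^2-1)$, we may assume $Q_N\to\q_\eta$ $\mu$-a.e.

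For (i), assume $\Lambda=\Lambda(B_\eta)<1$. By independence,
\[
    \int e^{Q_N}d\mu=\prod_{n\le N}\frac{e^{-a_n/2}}{\sqrt{1-a_n}}
    =\exp\Bigl(\sum_{n\le N}\tfrac12\{-\log(1-a_n)-a_n\}\Bigr),
\]
which is finite since every $a_n\le\Lambda<1$. The key elementary estimate is that, for $x\le\Lambda<1$,
\[
    -\log(1-x)-x\le\Bigl\{\frac12+\frac{\Lambda}{3(1-\Lambda)^3}\Bigr\}x^2 .
\]
We split into two cases.
\begin{itemize}
\item For $x\le0$: the Taylor remainder gives $-\log(1-x)-x\le x^2/2$.
\item For $0\le x\le\Lambda$: Taylor's formula with third-order remainder gives $-\log(1-x)-x=\frac{x^2}2+\frac{x^3}{3(1-\zeta)^3}$ for some $\zeta\in(0,x)$, and the last term is bounded by $\frac{\Lambda x^2}{3(1-\Lambda)^3}$.
\end{itemize}
Summing over $n$ and using $\sum_n a_n^2=\|\eta\|_2^2$ bounds $\int e^{Q_N}d\mu$ uniformly in $N$ by the stated right-hand side. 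Fatou's lemma along the a.e.-convergent subsequence then gives the bound for $\int e^{\q_\eta}d\mu$. This checking of the constant is the only delicate point, and it is routine.

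For (ii), the "if" direction is (i). For "only if", suppose $\Lambda\ge1$. Since $a_n\to0$, the supremum is attained: there is $n_0$ with $a_{n_0}=\Lambda\ge1$. Write $\q_\eta=\frac12a_{n_0}(\xi_{n_0}^2-1)+R$, where $R=\frac12\sum_{n\ne n_0}a_n(\xi_n^2-1)$ is an $L^2$-limit of functions of $\{\xi_n\}_{n\ne n_0}$ and is therefore independent of $\xi_{n_0}$. Then
\[
    \int e^{\q_\eta}d\mu=\int e^{\frac12a_{n_0}(\xi_{n_0}^2-1)}d\mu\cdot\int e^{R}d\mu .
\]
The second factor is positive, since $e^R>0$, and the first factor is $+\infty$ because $a_{n_0}\ge1$ makes $e^{a_{n_0}x^2/2}e^{-x^2/2}$ non-integrable on $\mathbb{R}$. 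Hence $e^{\q_\eta}\notin L^1(\mu)$. The point that needs a line of justification is this independence argument, which we settle by taking a.e. limits of the partial sums.
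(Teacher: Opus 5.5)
Your proposal is correct and follows the paper's proof essentially step for step: diagonalize $B_\eta$, apply Fatou along an a.e.-convergent subsequence of partial sums together with the Gaussian product formula and a third-order Taylor bound on $-\log(1-a)-a$, and for (ii) split off an eigenvalue $a_{n_0}\ge1$ and use independence of $\D^*h_{n_0}$ from the remaining series. The only cosmetic difference is in the Taylor step: you use the Lagrange remainder with a case split on the sign of $a_n$ (which also uses $\Lambda(B_\eta)\ge0$ when $a_n\le0$), whereas the paper uses the triple-integral remainder bounded via $0\vee a$.
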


In the above inequality, remember that $\Lambda(B_\eta)\ge0$.

\begin{proof}
(i) 
Since $B_\eta\in \mathcal{S}(\htwo)$
(Lemma~\ref{l.b.kappa}), there is an ONB
$\{h_n\}_{n=1}^\infty$ of $\mathcal{H}$ such that 
\begin{equation}\label{eq.b.eta.develop}
    B_\eta=\sum_{n=1}^\infty a_n h_n\otimes h_n, 
\end{equation}
where $a_n\in \mathbb{N}$ for $n\in \mathbb{N}$ and  
$\sum\limits_{n=1}^\infty a_n^2<\infty$.
By Theorem~\ref{t.q.eta}, there is an increasing sequence
$\{N_m\}_{m=1}^\infty\subset \mathbb{N}$ such that
\[
    \frac12 \sum_{n=1}^{N_m}a_n\{(\D^*h_n)^2-1\}
    \to \q_\eta
    \text{ as $m\to\infty$\quad $\mu$-a.s.}
\]
Since $\sup\limits_{n\in \mathbb{N}}a_n=\Lambda(B_\eta)<1$ and 
$\{\D^*h_n\}_{n=1}^\infty$ is an IID sequence of
random variables obeying the normal distribution $N(0,1)$,
by Fatou's lemma, we have that 
\begin{align}
    \int_{\mathcal{W}} e^{\q_\eta} d\mu
    & \le \liminf_{m\to\infty} \int_{\mathcal{W}}
         \exp\biggl(\frac12\sum_{n=1}^{N_m} 
           a_n\{(\D^*h_n)^2-1\}\biggr) d\mu
 \label{l.q.eta.int.21}
    \\
    &   = \liminf_{m\to\infty} \prod_{n=1}^{N_m}
          \biggl(\int_{\mathbb{R}} 
            e^{\frac12 a_n\{x^2-1\}} 
            \frac1{\sqrt{2\pi}} e^{-\frac12 x^2} dx
          \biggr)
 \nonumber
    \\
    &   =\biggl\{\limsup_{m\to\infty}
          \prod_{n=1}^{N_m} (1-a_n)e^{a_n}
            \biggr\}^{-\frac12}.
 \nonumber
\end{align}

The Taylor expansion of $\log(1-a)$ about $a=0$ implies that 
\[
    \log(1-a)+a+\frac{a^2}2
    =-\int_0^a\int_0^b\int_0^c \frac2{(1-u)^3} du dc db
    \ge -\frac{0\vee a}{3\{1-(0\vee a)\}^3}\,a^2
\]
for $a<1$.
Since $a_n\le\Lambda(B_\eta)<1$ for 
$n\in \mathbb{N}$, we obtain that  
\begin{align*}
    \prod_{n=1}^{N_m} (1-a_n)e^{a_n}
    & =\exp\biggl(\sum_{n=1}^{N_m} \biggl\{
        \log(1-a_n)+a_n+\frac{a_n^2}2\biggr\}\biggr)
       \exp\biggl(-\frac12\sum_{n=1}^{N_m} a_n^2\biggr)
    \\
    & \ge \exp\biggl(
          -\frac{\Lambda(B_\eta)}{
            3(1-\Lambda(B_\eta))^3}
         \sum_{n=1}^{N_m} a_n^2\biggr)
       \exp\biggl(-\frac12\sum_{n=1}^{N_m} a_n^2\biggr).
\end{align*}
Thus we have that
\[
    \limsup_{m\to\infty} 
     \prod_{n=1}^{N_m} (1-a_n)e^{a_n}
     \ge \exp\biggl(-\biggl\{\frac12
       +\frac{\Lambda(B_\eta)}{
          3(1-\Lambda(B_\eta))^3}
        \biggr\} \|B_\eta\|_{\htwo}^2\biggr).
\]
Since $\|B_\eta\|_{\htwo}=\|\eta\|_2$,
plugging this lower estimation into \eqref{l.q.eta.int.21},
we obtain the desired upper estimation. 
\\
(ii) 
By (i), it suffices to show that $e^{\q_\eta}\notin
L^1(\mu)$ if $\Lambda(B_\eta)\ge1$.
We continue to use the above development of $B_\eta$ and 
suppose that 
$\sup\limits_{n\in \mathbb{N}} a_n=\Lambda(B_\eta)\ge1$.
Since $\lim\limits_{n\to\infty}a_n=0$, there is an 
$n_0\in \mathbb{N}$ such that $a_{n_0}\ge1$.
Define $\eta^\prime\in \stwo$ as
$B_{\eta^\prime}=\sum\limits_{n\ne n_0} a_n h_n\otimes h_n$.
By Theorem~\ref{t.q.eta}, we see that
\[
    \q_{\eta^\prime}
    =\frac12 \sum_{n\ne n_0} a_n\{(\D^*h_n)^2-1\}
    \quad\text{and}\quad
    \q_\eta
    =\frac12 a_{n_0}\{(\D^*h_{n_0})^2-1\}
     +\q_{\eta^\prime}.
\]
The first identity yields that $\D^*h_{n_0}$ and
$\q_{\eta^\prime}$ are independent.
Since $\D^*h_{n_0}$ obeys the normal distribution $N(0,1)$,
we have that
\[
    \int_{\mathcal{W}} e^{\q_\eta} d\mu
    =\biggl(\int_{\mathbb{R}}
       e^{\frac12 a_{n_0}\{x^2-1\}} \frac1{\sqrt{2\pi}}
       e^{-\frac12 x^2} dx\biggr)
      \int_{\mathcal{W}} e^{\q_{\eta^\prime}} 
       d\mu
    =\infty.
\]
Thus $e^{\q_\eta}\notin L^1(\mu)$.
\end{proof}

\begin{remark}\label{r.q.eta.int}
Let $\eta\in\stwo$ and assume that $\Lambda(B_\eta)<1$.
Take a $p\in(1,\infty)$ with $p\Lambda(B_\eta)<1$.
Since $\Lambda(B_{p\eta})=p\Lambda(B_\eta)<1$, by the above
lemma, $e^{p\q_\eta}\in L^1(\mu)$.
Thus the second assertion of the above lemma is sharpen as
\[
    \Lambda(B_\eta)<1
    \quad\text{if and only if}\quad
    e^{\q_\eta}\in 
    L^{1+}(\mu)\equiv \bigcup_{p\in(1,\infty)}L^p(\mu).
\]
\end{remark}

We identify $\mathcal{H}$ with its dual space
$\mathcal{H}^*$, and use the continuous and dense inclusions
that 
\[
    \mathcal{W}^*\subset \mathcal{H}^*=\mathcal{H} 
    \subset \mathcal{W}.
\]

\begin{proof}[Proof of Theorem~\ref{t.transf}]
Take an ONB $\{\ell_n\}_{n=1}^\infty$ of $\mathcal{H}$ such
that $\ell_n\in \mathcal{W}^*$ for $n\in \mathbb{N}$.
For $N\in \mathbb{N}$, denote by $\pi^{(N)}$ the orthogonal 
projection of $\mathcal{H}$ onto the subspace spanned by
$\ell_1,\dots,\ell_N$.
Define $\Pi^{(N)}, \kappa^{(N)}\in\ltwo$ by
\[
    \Pi^{(N)}(t,s) =\sum_{n=1}^N 
        \ell_n^\prime(s)\otimes \ell_n^\prime(t)
\]
and
\[
    \kappa^{(N)}(t,s) =\int_0^T \int_0^T
       \Pi^{(N)}(t,u) \kappa(u,v)\Pi^{(N)}(v,s)
       dudv
\]
for $(t,s)\in[0,T]^2$.
Then it holds that
\[
    B_{\kappa^{(N)}}=\pi^{(N)}B_\kappa \pi^{(N)}.
\]
By this and \eqref{r.transf.1}, we see that
\[
    \|B_{\kappa^{(N)}}-B_\kappa\|_{\htwo}\to0
    \quad\text{and}\quad
    \|B_{\eta(\kappa^{(N)})}-B_{\eta(\kappa)}
        \|_{\htwo}\to0
    \quad\text{as }N\to\infty.
\]
Due to Theorem~\ref{t.q.eta}, Lemma~\ref{l.f.kappa}, and the 
continuity of $\D^*$, these yield that
\[
    \q_{\eta(\kappa^{(N)})}\to \q_{\eta(\kappa)}
    ~~\text{in $L^p(\mu)$}
    \quad\text{and}\quad
    F_{\kappa^{(N)}}\to F_\kappa
   ~~\text{in $L^p(\mu;\mathcal{H})$}
   \quad\text{for any }p\in(1,\infty).
\]
Furthermore, since
\[
    |\Lambda(B)
     -\Lambda(B^\prime)|
    \le \|B-B^\prime\|_{\htwo}
    \quad\text{for }
    B,B^\prime\in \mathcal{S}(\htwo),
\]
there is an $N_0\in\mathbb{N}$ such that
\begin{equation}\label{t.transf.21}
    \sup_{N\ge N_0}
       \Lambda(B_{\eta(\kappa^{(N)})})
    <1.
\end{equation}
Take a $p\in(1,\infty)$ such that
\[
    \alpha_p\equiv  \sup_{N\ge N_0}
     \Lambda(
         B_{p \eta(\kappa^{(N)})})
     =p \sup_{N\ge N_0} 
       \Lambda(
          B_{\eta(\kappa^{(N)})})<1.
\]
By Lemma~\ref{l.q.eta.int}, we know that
\begin{align*}
    & \sup_{N\ge N_0} \int_{\mathcal{W}} 
        \exp(p\q_{\eta(\kappa^{(N)})})
        d\mu 
      =\sup_{N\ge N_0} \int_{\mathcal{W}} 
         \exp(\q_{p\eta(\kappa^{(N)})})
         d\mu 
    \\
    & \le \exp\biggl(\frac12\biggl\{\frac12
      +\frac{0\vee \alpha_p}{
         3\{1-(0\vee \alpha_p)\}^3}
      \biggr\} p^2
      \sup_{N\ge N_0}\|\eta(\kappa^{(N)})\|_2^2\biggr)
    <\infty.
\end{align*}
Hence the family 
$\{\exp(\q_{\eta(\kappa^{(N)})}) \mid N\ge N_0\}$ 
is uniform integrable. 
Thus, on account of the continuity of the mapping 
$\htwo\ni A\mapsto\dettwo(I+A)$ 
(\cite[Theorem~XI.2.2]{GGK}),
\eqref{eq:transf} follows once we have shown the
identity 
\begin{equation}\label{t.transf.22}
   |\dettwo(I+B_{\kappa^{(N)}})| \int_{\mathcal{W}} 
        f(\iota+F_{\kappa^{(N)}})  
        \exp(\q_{\eta(\kappa^{(N)})})
        d\mu 
      =e^{\frac12 \|\kappa^{(N)}\|_2^2} \int_{\mathcal{W}} f d\mu
\end{equation}
for every $f\in C_b(\mathcal{W})$ and $N\ge N_0$.

Let $N\ge N_0$ and set 
\[
    A_{n,m}^{(N)}
    =\langle B_\kappa \ell_n,\ell_m\rangle_{\mathcal{H}}
    \quad\text{for }1\le n,m\le N
    \quad\text{and}\quad
    A^{(N)}=\bigl(A_{n,m}^{(N)}\bigr)_{1\le n,m\le N}
    \in \mathbb{R}^{N\times N}.
\]
Since $B_{\kappa^{(N)}}=\pi^{(N)}B_\kappa \pi^{(N)}$, it
holds that
\[
    B_{\kappa^{(N)}}=\sum_{n,m=1}^N
      A_{nm}^{(N)}\ell_n\otimes \ell_m.
\]
By \eqref{r.transf.1}, we have that 
\begin{equation}\label{t.transf.23}
    A^{(N)}+A^{(N)\dagger}+A^{(N)}A^{(N)\dagger}
     =\Bigl(-\langle B_{\eta(\kappa^{(N)})}\ell_n,
         \ell_m\rangle_{\mathcal{H}}\Bigr)_{1\le n,m\le N}.
\end{equation}

If we set $\varepsilon_N=1-\Lambda(B_{\kappa^{(N)}})$, then
by \eqref{t.transf.21}, $\varepsilon_N>0$ and 
\[
    |(I_N+A^{(N)\dagger})x|^2\ge \varepsilon_N|x|^2
    \quad\text{for every }x\in \mathbb{R}^N,
\]
where $I_N$ is the
$N$-dimensional identity matrix.
Thus the linear mapping 
$\mathbb{R}^N\ni x\mapsto 
 (I_N+A^{(N)\dagger})x\in \mathbb{R}^N$ is bijective.
Applying the change of variables formula for this mapping,
we obtain that
\begin{align}
    & |\det(I_N+A^{(N)\dagger})|
      \int_{\mathbb{R}^N} \varphi(x+A^{(N)\dagger}x)
      \frac1{\sqrt{2\pi}^N}
      e^{-\frac12 |x+A^{(N)\dagger}x|^2} dx
\label{t.transf.24}
    \\
    & =\int_{\mathbb{R}^N} \varphi(x) 
      \frac1{\sqrt{2\pi}^N} e^{-\frac12 |x|^2} dx
    \quad\text{for every }\varphi\in C_b(\mathbb{R}^N).
 \nonumber
\end{align}

By \eqref{t.transf.23} and \eqref{r.transf.1}, we see that 
\begin{align*}
    |x+A^{(N)\dagger}x|^2-|x|^2
     =&  -\sum_{n,m=1}^N 
        \langle B_{\eta(\kappa^{(N)})}\ell_n,
            \ell_m\rangle_{\mathcal{H}} 
        \{x^nx^m-\delta_{nm}\}
    \\
    & +2\sum_{n=1}^N \langle B_{\kappa^{(N)}}\ell_n,
       \ell_n\rangle_{\mathcal{H}}
      +\|B_{\kappa^{(N)}}\|_{\htwo}^2
    \quad\text{for }x=(x^1,\dots,x^N)\in \mathbb{R}^N.
\end{align*}
Recalling that $\dettwo(I_N+M)=\det(I_N+M)e^{-\tr M}$
for $M\in \mathbb{R}^{N\times N}$, we obtain  that
\begin{align}
    & \det(I_N+A^{(N)\dagger})e^{-\frac12 |x+A^{(N)\dagger}x|^2}
 \label{t.transf.25}
    \\
    & =\dettwo(I_N+A^{(N)}) 
     \exp\biggl(\frac12 \sum_{n,m=1}^N
       \langle B_{\eta(\kappa^{(N)})}\ell_n,
         \ell_m\rangle_{\mathcal{H}}
      \{x^nx^m-\delta_{nm}\}\biggr)
 \nonumber
    \\
    & \quad
      \times
      e^{-\frac12 |x|^2}e^{-\frac12 \|\kappa^{(N)}\|_2^2}
    \quad\text{for every }
    x=(x^1,\dots,x^N)\in \mathbb{R}^N.
 \nonumber
\end{align}

Since $B_{\kappa^{(N)}}=\pi^{(N)}B_\kappa \pi^{(N)}$ and 
$B_{\eta(\kappa^{(N)})}
 =\pi^{(N)}B_{\eta(\kappa^{(N)})} \pi^{(N)}$, 
by Theorem~\ref{t.q.eta} and Lemma~\ref{l.f.kappa}, we have
that 
\begin{align*}
    F_{\kappa^{(N)}} & =\sum_{n,m=1}^N 
       \langle B_\kappa \ell_n,\ell_m\rangle_{\mathcal{H}}
       (\D^*\ell_n)\ell_m,
    \\
    \q_{\eta(\kappa^{(N)})} & =\frac12 \sum_{n,m=1}^N 
       \langle B_{\eta(\kappa^{(N)})} \ell_n,
              \ell_m\rangle_{\mathcal{H}}
       \{(\D^*\ell_n)(\D^*\ell_m)-\delta_{nm}\}.
\end{align*}
Put 
$\boldsymbol{\ell}^{(N)}=(\ell_1,\dots,\ell_N):
  \mathcal{W}\to \mathbb{R}^N$ and
$\D^*\boldsymbol{\ell}^{(N)}
 =(\D^*\ell_1,\dots,\D^*\ell_N)$.
By the above expression of $F_{\kappa^{(N)}}$, we know that
\[
    \ell_m(F_{\kappa^{(N)}})
    =\langle \ell_m,F_{\kappa^{(N)}}\rangle_{\mathcal{H}}
    =\sum_{n=1}^N\langle  B_\kappa\ell_n,
    \ell_m\rangle_{\mathcal{H}}(\D^*\ell_n)
    \quad\text{for }1\le m\le N.
\]
Hence we have that
\[
    \boldsymbol{\ell}^{(N)}(F_{\kappa^{(N)}})
     =A^{(N)\dagger}\D^*\boldsymbol{\ell}^{(N)}.
\]
Since $\D^*\boldsymbol{\ell}^{(N)}$ obeys the normal
distribution $N(0,I_N)$ on $\mathbb{R}^N$,  
due to \eqref{t.transf.25}, 
the above expression of $\q_{\eta(\kappa^{(N)})}$, 
and the fact that $\D^*\ell_n=\ell_n$ for
$1\le n\le N$ (\cite[(5.1.5)]{mt-cambridge}),
the identity \eqref{t.transf.24} is rewritten as 
\[
    |\dettwo(I+B_{\kappa^{(N)}})| \int_{\mathcal{W}} 
     [\varphi\circ \boldsymbol{\ell}^{(N)}]
     (\iota+F_{\kappa^{(N)}}) 
     \exp(\q_{\eta(\kappa^{(N)})})
     d\mu
    =e^{\frac12 \|\kappa^{(N)}\|_2^2}
     \int_{\mathcal{W}} 
     [\varphi\circ \boldsymbol{\ell}^{(N)}]
     d\mu.
\]
Thus \eqref{t.transf.22} holds for 
$f=\varphi\circ \boldsymbol{\ell}^{(N)}$.
Since both $F_{\kappa^{(N)}}$ and
$\q_{\eta(\kappa^{(N)})}$ depend only on
$\D^*\boldsymbol{\ell}^{(N)}$, due to the splitting property
of the Wiener measure, this implies that \eqref{t.transf.22}
holds for every $f\in C_b(\mathcal{W})$. 
\end{proof}

\section{Inverse transformation of order one}
\label{sec:inv.transf}
The aim of this section is to show that every
transformation of order one $\iota+F_\kappa$ for 
$\kappa\in\ltwo$ with $\Lambda(B_{\eta(\kappa)})<1$ has an
inverse transformation.

If $\kappa\in \mathcal{L}_2$ and 
${\det}_2(I+B_\kappa)\ne0$, then $I+B_\kappa$ has a
continuous inverse
(\cite[Theorem\,XII.1.1]{GGK}). 
Due to the identity 
$(I+B_\kappa)^{-1}-I=-(I+B_\kappa)^{-1}B_\kappa$,
we see that 
$(I+B_\kappa)^{-1}-I\in \mathcal{H}^{\otimes 2}$
(\cite[Theorem\,IV.7.1]{GGK}).
Applying Lemma~\ref{l.b.kappa}, we obtain a kernel 
$\widehat{\kappa}\in \mathcal{L}_2$ for $(I+B_\kappa)^{-1}-I$;
\[
    B_{\widehat{\kappa}}=(I+B_\kappa)^{-1}-I.
\]
For $\kappa\in \mathcal{L}_2$ with
$\Lambda(B_{\eta(\kappa)})<1$, by \eqref{r.transf.2},
${\det}_2(I+B_\kappa)\ne0$ and hence $\widehat{\kappa}$ is
well-defined. 
Since
\[
    \int_{\mathcal{W}}
      \|F_\kappa-F_{\kappa^\prime}\|_{\mathcal{H}}^2 d\mu
     =\|\kappa-\kappa^\prime\|_2^2, 
\]
$F_\kappa=F_{\kappa^\prime}$ for 
$\kappa,\kappa^\prime \in \mathcal{L}_2$ with 
$\kappa\eqltwo\kappa^\prime$
By the injectivity described in Lemma~\ref{l.b.kappa}, 
$\widehat{\kappa}$ is determined uniquely by $\kappa$, and
so is $F_{\widehat{\kappa}}$ up to null sets.
The transformation $\iota+F_{\widehat{\kappa}}$ is an
inverse transformation of $\iota+F_\kappa$ as follows.

\begin{theorem}\label{t.inv.transf}
Let $\kappa\in \ltwo$ and assume that
$\Lambda(B_{\eta(\kappa)})<1$.
Then 
\begin{equation}\label{t.inv.transf.1}
    (\iota+F_\kappa)\circ(\iota+F_{\widehat{\kappa}})
    =(\iota+F_{\widehat{\kappa}})\circ(\iota+F_\kappa)
    =\iota
\end{equation}
and it holds that
\begin{equation}\label{eq:inv.transf}
    |\dettwo(I+B_\kappa)| \int_{\mathcal{W}} f
      e^{\q_{\eta(\kappa)}+\D^*h} d\mu
    =e^{\frac12 \|\kappa\|_2^2} \int_{\mathcal{W}} 
     f(\iota+F_{\widehat{\kappa}}) 
     e^{\D^*[(I+B_\kappa^*)^{-1}h]} d\mu
\end{equation}
for every $f\in C_b(\mathcal{W})$ and $h\in \mathcal{H}$.
In particular, 
\[
    |\dettwo(I+B_\kappa)| \int_{\mathcal{W}} 
    e^{\q_{\eta(\kappa)}+\D^*h} d\mu
    =e^{\frac12 \|\kappa\|_2^2} 
        e^{\frac12\|(I+B_\kappa^*)^{-1}h\|_{\mathcal{H}}^2} 
    \quad\text{for every }h\in \mathcal{H}.
\]
\end{theorem}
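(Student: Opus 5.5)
The plan is to exploit the fact that $F_\kappa$ and $F_{\widehat{\kappa}}$ lie in the first Wiener chaos, so that shifting them by an $\mathcal{H}$-valued functional changes them only by a bounded linear term. First I note that $\iota+F_{\widehat{\kappa}}$ is itself admissible. Indeed $I+B_{\widehat{\kappa}}=(I+B_\kappa)^{-1}$ is invertible, so $\dettwo(I+B_{\widehat{\kappa}})\ne0$, and by \eqref{r.transf.2} we get $\Lambda(B_{\eta(\widehat{\kappa})})<1$. Hence Theorem~\ref{t.transf} applies to both $\kappa$ and $\widehat{\kappa}$. Since $e^{\q_{\eta(\kappa)}}>0$, the formula \eqref{eq:transf} shows that the image measures $\mu\circ(\iota+F_\kappa)^{-1}$ and $\mu\circ(\iota+F_{\widehat{\kappa}})^{-1}$ are equivalent to $\mu$. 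The first step is to upgrade \eqref{eq:transf} from $f\in C_b(\mathcal{W})$ to bounded Borel $f$ by a monotone class argument, since both sides are finite measures in $f$. It then extends to nonnegative Borel $f$ by monotone convergence. In particular, compositions such as $F_\kappa\circ(\iota+F_{\widehat{\kappa}})$ are well defined $\mu$-a.e., independently of the versions chosen.

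Next I prove \eqref{t.inv.transf.1}. For $k\in\mathcal{H}$ the shift rule $\D^*k\circ(\iota+G)=\D^*k+\langle k,G\rangle_{\mathcal{H}}$ holds $\mu$-a.e. for any $\mathcal{H}$-valued $G$ whose image measure is absolutely continuous. To see this, take $k\in\mathcal{W}^*$, where $\D^*k=k$ is linear and continuous, and then approximate general $k$ in $\mathcal{H}$, passing to the limit in probability through the absolute continuity. Applying this to the expansion in Lemma~\ref{l.f.kappa}, and passing to a.s.\ convergent subsequences, gives
\[
    F_\kappa\circ(\iota+F_{\widehat{\kappa}})=F_\kappa+B_\kappa F_{\widehat{\kappa}} .
\]
Now $\langle B_\kappa F_{\widehat{\kappa}},g\rangle_{\mathcal{H}}=\langle F_{\widehat{\kappa}},B_\kappa^*g\rangle_{\mathcal{H}}$, so $\D(B_\kappa F_{\widehat{\kappa}})=B_\kappa B_{\widehat{\kappa}}$ and this term has mean zero. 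Consequently $(\iota+F_\kappa)\circ(\iota+F_{\widehat{\kappa}})-\iota=F_{\widehat{\kappa}}+F_\kappa+B_\kappa F_{\widehat{\kappa}}$ has mean zero and $\mathcal{H}$-derivative
\[
    B_{\widehat{\kappa}}+B_\kappa+B_\kappa B_{\widehat{\kappa}}=(I+B_\kappa)(I+B_{\widehat{\kappa}})-I=0 .
\]
By Lemma~\ref{l.D=0} it therefore vanishes. The opposite composition is handled identically, using $(I+B_{\widehat{\kappa}})(I+B_\kappa)=I$.

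For \eqref{eq:inv.transf}, set $k=(I+B_\kappa^*)^{-1}h$. I apply the extended \eqref{eq:transf} to the nonnegative functional $g=f(\iota+F_{\widehat{\kappa}})\,e^{\D^*k}$, splitting $f=f^+-f^-$. By \eqref{t.inv.transf.1}, $g(\iota+F_\kappa)=f\,e^{\D^*k\circ(\iota+F_\kappa)}$. The shift rule gives
\[
    \D^*k\circ(\iota+F_\kappa)=\D^*k+\langle k,F_\kappa\rangle_{\mathcal{H}}=\D^*k+\D^*(B_\kappa^*k)=\D^*h .
\]
Here $\langle k,F_\kappa\rangle_{\mathcal{H}}=\D^*(B_\kappa^*k)$ because both sides lie in the first chaos, have derivative $B_\kappa^*k$, and have mean zero. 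This is exactly \eqref{eq:inv.transf}. Both sides are finite, since $e^{\D^*k}\in L^p(\mu)$ and $e^{\q_{\eta(\kappa)}}\in L^{1+}(\mu)$ by Remark~\ref{r.q.eta.int}. The particular case follows from $f=1$ and $\int_{\mathcal{W}}e^{\D^*k}d\mu=e^{\frac12\|k\|_{\mathcal{H}}^2}$, because $\D^*k\sim N(0,\|k\|_{\mathcal{H}}^2)$. The main obstacle I expect is the measure-theoretic justification: making sense of compositions of a.e.-defined maps, and the shift rule for $\D^*k$ with $k\notin\mathcal{W}^*$ under a random shift. This is where the absolute continuity obtained from Theorem~\ref{t.transf} is essential.
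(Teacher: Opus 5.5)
Your proposal is correct. Its skeleton is the same as the paper's:
\begin{itemize}
\item show $F_\kappa\circ(\iota+F_{\widehat\kappa})=F_\kappa+B_\kappa F_{\widehat\kappa}$, and symmetrically for the other composition;
\item kill $F_\kappa+F_{\widehat\kappa}+B_\kappa F_{\widehat\kappa}$ using $(I+B_\kappa)(I+B_{\widehat\kappa})=I$;
\item feed a composed functional into \eqref{eq:transf}, extended to measurable integrands.
\end{itemize}

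Where you genuinely differ is in how the compositions are justified.

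The paper uses Lemma~\ref{l.h.inv} and the analogous $\mathcal{H}$-invariant version of each $\D^*h$. These are modifications that are exactly affine along all $\mathcal{H}$-shifts on a full-measure $\mathcal{H}$-invariant set. As a result, \eqref{t.inv.transf.1} holds pointwise on $X_\kappa\cap X_{\widehat\kappa}$ without any quasi-invariance. The paper then kills the remainder through $F_\kappa=\D^*B_\kappa$ and Lemma~\ref{l.d*a}, not through $\D=0$ plus Lemma~\ref{l.D=0}.

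You proceed differently:
\begin{itemize}
\item You first note, via \eqref{r.transf.2}, that $\widehat\kappa$ also satisfies the hypothesis. This is the first half of Corollary~\ref{c.inv.transf}, and it is legitimately available here because it needs only \eqref{r.transf.2} and Theorem~\ref{t.transf}.
\item You deduce that both $\iota+F_\kappa$ and $\iota+F_{\widehat\kappa}$ have laws absolutely continuous with respect to $\mu$.
\item You transfer the shift rule from $\mathcal{W}^*$ to $\mathcal{H}$ by convergence in probability.
\end{itemize}

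Each route has a payoff.
\begin{itemize}
\item Yours avoids the $\mathcal{H}$-invariant-set machinery and makes every composition version-independent up to null sets. This settles the concern raised in Remark~\ref{r.inv.transf}(i) directly.
\item The cost is that \eqref{t.inv.transf.1} now depends on Theorem~\ref{t.transf} applied to both $\kappa$ and $\widehat\kappa$. The paper's route proves \eqref{t.inv.transf.1} independently of Theorem~\ref{t.transf}, as a pointwise identity for a canonical modification.
\end{itemize}

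Two further differences in \eqref{eq:inv.transf} are cosmetic. You shift $\D^*k$ with $k=(I+B_\kappa^*)^{-1}h$ along $F_\kappa$; the paper instead shifts $\D^*h$ along $F_{\widehat\kappa}$, as in Lemma~\ref{l.d*h.f.hat.kappa}. You also use monotone convergence on nonnegative integrands rather than truncating by $N\wedge e^{\D^*h}$ and applying dominated convergence. Both are mirror-image, equivalent choices.
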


\begin{remark}\label{r.inv.transf}
(i) 
Precisely speaking, \eqref{t.inv.transf.1} means that
\[
    w+F_{\widehat{\kappa}}(w)+F_\kappa(w+F_{\widehat{\kappa}}(w))
   =w+F_\kappa(w)+F_{\widehat{\kappa}}(w+F_\kappa(w))
   =w
\]
for $\mu$-almost every $w\in \mathcal{W}$.
Since both $F_\kappa$ and $F_{\widehat{\kappa}}$ are
determined up to null sets, to determine terms 
$F_\kappa(w+F_{\widehat{\kappa}}(w))$
and $F_{\widehat{\kappa}}(w+F_\kappa(w))$,  we need nice
modifications of $F_\kappa$ and $F_{\widehat{\kappa}}$.
Such modifications will be obtained in Lemma~\ref{l.h.inv}
below. 
\\
(ii)
If $\dettwo(I+B_\kappa)\ne0$, then so does 
$I+B_\kappa^*$.
Hence $I+B_\kappa^*$ has a continuous inverse.
\end{remark}

For the proof, we prepare lemmas.
A measurable set $X\subset \mathcal{W}$ is said to be
$\mathcal{H}$-invariant if $X+h\equiv\{w+h\mid w\in X\}$
coincides with $X$ for every $h\in \mathcal{H}$.
As is easily seen, 
$X$ is $\mathcal{H}$-invariant if and only if 
$X+h\subset X$ for every $h\in \mathcal{H}$.

\begin{lemma}\label{l.h.inv}
Let $\kappa\in\ltwo$.
Then there is an $\mathcal{H}$-invariant set $X_\kappa$ such
that $\mu(X_\kappa)=1$ and 
\begin{equation}\label{l.h.inv.1}
    F_\kappa(w+h)=F_\kappa(w)+B_\kappa h
    \quad\text{for every }w\in X_\kappa
     \text{ and }h\in \mathcal{H}.
\end{equation}
\end{lemma}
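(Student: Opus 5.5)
I will construct a good modification of $F_\kappa$ through the finite-rank approximations already used in the proof of Theorem~\ref{t.transf}, and take $X_\kappa$ to be the set where these approximations converge.

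\textbf{Step 1: the approximations.} Take an ONB $\{\ell_n\}_{n=1}^\infty$ of $\mathcal{H}$ with $\ell_n\in\mathcal{W}^*$, and define $\kappa^{(N)}$ as in that proof, so that $B_{\kappa^{(N)}}=\pi^{(N)}B_\kappa\pi^{(N)}$. By Lemma~\ref{l.f.kappa} and the identity $\D^*\ell_n=\ell_n$, the functional
\[
    F^{(N)}(w)=\sum_{n,m=1}^N\langle B_\kappa\ell_n,\ell_m\rangle_{\mathcal{H}}\,\ell_n(w)\,\ell_m
\]
is a version of $F_{\kappa^{(N)}}$ defined for \emph{every} $w\in\mathcal{W}$. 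It satisfies exactly
\[
    F^{(N)}(w+h)=F^{(N)}(w)+\pi^{(N)}B_\kappa\pi^{(N)}h
    \quad\text{for all }w\in\mathcal{W},\ h\in\mathcal{H},
\]
because $\ell_n(w+h)=\ell_n(w)+\langle \ell_n,h\rangle_{\mathcal{H}}$.

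\textbf{Step 2: almost sure convergence along a subsequence.} Since $\|B_{\kappa^{(N)}}-B_\kappa\|_{\htwo}\to0$, we get
$\int\|F_{\kappa^{(N)}}-F_\kappa\|_{\mathcal{H}}^2d\mu=\|\kappa^{(N)}-\kappa\|_2^2\to0$.
Choose $N_k$ with $\sum_k\|\kappa^{(N_k)}-\kappa\|_2<\infty$. Then $F^{(N_k)}$ converges to $F_\kappa$ in $\mathcal{H}$ $\mu$-a.s.

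\textbf{Step 3: the invariant set and the modification.} Set
\[
    X_\kappa=\Bigl\{w\in\mathcal{W}\ \Big|\ \lim_{k\to\infty}F^{(N_k)}(w)\text{ exists in }\mathcal{H}\Bigr\}.
\]
This set is measurable, as a Cauchy-type condition on countably many continuous maps, and $\mu(X_\kappa)=1$.

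For $w\in X_\kappa$ and $h\in\mathcal{H}$, the identity in Step 1 gives
\[
    F^{(N_k)}(w+h)=F^{(N_k)}(w)+\pi^{(N_k)}B_\kappa\pi^{(N_k)}h .
\]
The last term converges to $B_\kappa h$ in $\mathcal{H}$: indeed $\pi^{(N)}\to I$ strongly, and $\|\pi^{(N)}B_\kappa\pi^{(N)}h-B_\kappa h\|_{\mathcal{H}}\le\|B_\kappa\|_{\op}\|\pi^{(N)}h-h\|_{\mathcal{H}}+\|(\pi^{(N)}-I)B_\kappa h\|_{\mathcal{H}}$. Hence $w+h\in X_\kappa$, so $X_\kappa+h\subset X_\kappa$ and $X_\kappa$ is $\mathcal{H}$-invariant.

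Now redefine $F_\kappa(w)=\lim_kF^{(N_k)}(w)$ on $X_\kappa$ and $F_\kappa=0$ off $X_\kappa$. By Step 2 this is a version of $F_\kappa$, and passing to the limit in the displayed identity yields \eqref{l.h.inv.1}.

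\textbf{Main obstacle.} The delicate point is ensuring the identity holds for \emph{every} $h$ and every $w$ in a single full-measure set, rather than only almost surely for each fixed $h$. This is exactly why convergence is defined pointwise through versions that are everywhere affine in $\mathcal{H}$-directions. The one quantitative input is the operator-norm convergence in Step 3, which holds for each fixed $h$ and is all that is needed.
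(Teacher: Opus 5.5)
Your proof is correct, but it is organized differently from the paper's.

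The paper fixes an arbitrary ONB $\{h_n\}_{n=1}^\infty$ of $\mathcal{H}$. It imports from \cite[Lemma\,5.7.7]{mt-cambridge} $\mathcal{H}$-invariant full-measure sets $X^{(n)}$ on which (a modification of) $\D^*h_n$ is exactly affine along $\mathcal{H}$. It then takes $X_\kappa$ to be $\bigcap_n X^{(n)}$ intersected with the set where the whole sequence of square partial sums $\sum_{n,m\le N}\langle B_\kappa h_n,h_m\rangle_{\mathcal{H}}(\D^*h_n)(w)h_m$ is Cauchy. Invariance then follows from $\pi_NB_\kappa\pi_N h\to B_\kappa h$, exactly as in your Step~3.

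You make two changes.
\begin{enumerate}
\item You choose the ONB inside $\mathcal{W}^*$. Each coordinate $\ell_n(w)$ is then an everywhere-defined continuous functional with $\ell_n(w+h)=\ell_n(w)+\langle\ell_n,h\rangle_{\mathcal{H}}$ identically, so you need no external lemma on $\mathcal{H}$-invariant modifications of $\D^*h$.
\item You pass to a subsequence with $\sum_k\|\kappa^{(N_k)}-\kappa\|_2<\infty$, which gives almost sure convergence directly from the $L^2$ isometry. The paper's assertion that its Cauchy set has full measure needs almost sure convergence of the full sequence of partial sums. The $L^p$-convergence in Lemma~\ref{l.f.kappa} does not give this by itself. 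It is true, since the partial sums equal $\pi_NM_N$ with $M_N=\sum_{n\le N}(\D^*h_n)B_\kappa h_n$ an $L^2$-bounded $\mathcal{H}$-valued martingale, but it needs that extra word.
\end{enumerate}

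What the paper's formulation buys is a modification given directly by the series of Lemma~\ref{l.f.kappa} for an arbitrary ONB, with no subsequence extraction. Yours is more elementary and self-contained. Both produce $\mathcal{H}$-affine versions of $F_\kappa$ on invariant full-measure sets, which is all that the proof of Theorem~\ref{t.inv.transf} uses.
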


This lemma asserts the existence of both
$\mathcal{H}$-invariant set and nice modification of
$F_\kappa$.

\begin{proof}
Let $\{h_n\}_{n=1}^\infty$ be an ONB of
$\mathcal{H}$.
There are $\mathcal{H}$-invariant sets $X^{(n)}$ for 
$n\in \mathbb{N}$ such that
\[
    \mu(X^{(n)})=1
    \quad\text{and}\quad
    (D^*h_n)(w+h)=(D^*h_n)(w)
     +\langle h_n,h\rangle_{\mathcal{H}}
\]
for any $n\in \mathbb{N}$, $w\in X^{(n)}$, and
$h\in\mathcal{H}$
(\cite[Lemma\,5.7.7]{mt-cambridge}). 
Put
\[
    X_\kappa=
     \biggl\{w\in \bigcap_{n=1}^\infty X^{(n)} \,\bigg|\,
     \lim_{N_1,N_2\to\infty} \biggl\|
      \sum_{N_1<n\vee m\le N_2} 
      \langle B_\kappa h_n, h_m\rangle_{\mathcal{H}}
      \bigl((\D^*h_n)(w)\bigr)h_m\biggr\|_{\mathcal{H}}=0
    \biggr\}.
\]

For $N\in \mathbb{N}$ and $h\in \mathcal{H}$, we know that
\[
    \sum_{n,m=1}^N 
     \langle B_\kappa h_n,h_m\rangle_{\mathcal{H}}
     \langle h,h_n\rangle_{\mathcal{H}} h_m
     =(\pi_N B_\kappa \pi_N) h, 
\]
where $\pi_N$ is the orthogonal projection of $\mathcal{H}$
onto the subspace spanned by $h_1,\dots,h_N$, and hence the
sum converges to $B_\kappa h$ in $\mathcal{H}$ as
$N\to\infty$.  
Thus $X_\kappa$ is $\mathcal{H}$-invariant.
By Lemma~\ref{l.f.kappa}, $\mu(X_\kappa)=1$.
On account of the same lemma, if we define the modification
of $F_\kappa$ by
\[
    F_\kappa(w)
    =\begin{cases}
       \displaystyle
       \lim_{N\to\infty} \sum_{n,m=1}^N
          \langle B_\kappa h_n, h_m\rangle_{\mathcal{H}}
          \bigl((\D^*h_n)(w)\bigr)h_m
          & \text{for }w\in X_\kappa,
       \\
       0 & \text{for }w\notin X_\kappa,
     \end{cases}
\]
then it satisfies \eqref{l.h.inv.1}.  
\end{proof}

\begin{lemma}\label{l.d*a}
Let $A\in\htwo$ and 
$g\in \mathcal{H}$.
Then it holds that
\[
    \langle \D^*A,g\rangle_{\mathcal{H}}=\D^*(A^*g).
\]
\end{lemma}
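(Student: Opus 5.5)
The identity $\langle \D^*A,g\rangle_{\mathcal{H}}=\D^*(A^*g)$ relates two Wiener functionals, and I will identify them by duality, testing both against an arbitrary $\Phi\in\mathbb{D}^\infty$. Here $A\in\htwo$ is viewed as a constant element of $\mathbb{D}^\infty(\htwo)$, so $\D^*A\in\mathbb{D}^\infty(\mathcal{H})$. Likewise $A^*g$ is a constant element of $\mathbb{D}^\infty(\mathcal{H})$, so both sides lie in every $L^p(\mu)$. It therefore suffices to show
\[
    \int_{\mathcal{W}}\langle \D^*A,g\rangle_{\mathcal{H}}\,\Phi\,d\mu
    =\int_{\mathcal{W}}\D^*(A^*g)\,\Phi\,d\mu
    \quad\text{for every }\Phi\in\mathbb{D}^\infty ,
\]
because $\mathbb{D}^\infty$ is dense in $L^q(\mu)$.

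For the right-hand side, the definition of $\D^*$ as the adjoint of $\D$ gives $\int_{\mathcal{W}}\langle A^*g,\D\Phi\rangle_{\mathcal{H}}d\mu=\int_{\mathcal{W}}\langle A(\D\Phi),g\rangle_{\mathcal{H}}d\mu$.

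For the left-hand side, I write $\langle \D^*A,g\rangle_{\mathcal{H}}\Phi=\langle \D^*A,\Phi g\rangle_{\mathcal{H}}$ and use the duality between $\D^*$ on $\htwo$-valued functionals and $\D$ on $\mathcal{H}$-valued functionals. Since $\D(\Phi g)=\D\Phi\otimes g$ in the paper's convention, where $(h\otimes g)k=\langle h,k\rangle_{\mathcal{H}}g$, this yields
\[
    \int_{\mathcal{W}}\langle A,\D\Phi\otimes g\rangle_{\htwo}d\mu
    =\int_{\mathcal{W}}\langle A(\D\Phi),g\rangle_{\mathcal{H}}d\mu ,
\]
by the identity $\langle B,h\otimes g\rangle_{\htwo}=\langle Bh,g\rangle_{\mathcal{H}}$ recorded before Lemma~\ref{l.D^3=0}. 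The two sides agree, and the lemma follows.

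The only delicate point is the convention: one must check that $\D(\Phi g)$ is $\D\Phi\otimes g$ and not $g\otimes\D\Phi$ in the paper's notation. This is consistent with the computation in the proof of Lemma~\ref{l.f.kappa}, where $\langle\D F,h\otimes g\rangle_{\htwo}=\langle\D\langle F,g\rangle_{\mathcal{H}},h\rangle_{\mathcal{H}}$. If the convention were reversed, one would obtain $A$ instead of $A^*$.

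As a sanity check, in the elementary case $A=h_n\otimes h_m$ we have $\D^*A=(\D^*h_n)h_m$ by \cite[(5.7.3)]{mt-cambridge}. The left side is then $(\D^*h_n)\langle h_m,g\rangle_{\mathcal{H}}$, while $A^*g=\langle h_m,g\rangle_{\mathcal{H}}h_n$, so the right side is the same. This case also gives an alternative proof: both sides are continuous in $A\in\htwo$ with values in $L^p(\mu)$, by the continuity of $\D^*$, and linear in $A$, so the elementary case extends to all $A$ by the expansion used in Theorem~\ref{t.q.eta}(ii).
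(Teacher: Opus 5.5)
Your proof is correct and is essentially the paper's argument: both test against an arbitrary $\Phi\in\mathbb{D}^\infty$ and run the chain $\int_{\mathcal{W}}\langle \D^*A,g\rangle_{\mathcal{H}}\Phi\,d\mu=\int_{\mathcal{W}}\langle A,\D\Phi\otimes g\rangle_{\htwo}d\mu=\int_{\mathcal{W}}\langle A(\D\Phi),g\rangle_{\mathcal{H}}d\mu=\int_{\mathcal{W}}\langle \D\Phi,A^*g\rangle_{\mathcal{H}}d\mu=\int_{\mathcal{W}}\D^*(A^*g)\,\Phi\,d\mu$. Your check that $\D(\Phi g)=\D\Phi\otimes g$ under the paper's convention, and your sanity check on elementary tensors $h_n\otimes h_m$, are both correct; they make explicit points the paper leaves implicit.
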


\begin{proof}
For any $\Phi\in \mathbb{D}^\infty(\mathbb{R})$, we have
that
\begin{align*}
    \int_{\mathcal{W}} 
       \langle \D^*A,g\rangle_{\mathcal{H}}
       \Phi d\mu
    & =\int_{\mathcal{W}} 
       \langle A,\D\Phi\otimes g
          \rangle_{\htwo} d\mu
      =\int_{\mathcal{W}} 
       \langle A(\D\Phi),g\rangle_{\mathcal{H}} d\mu
    \\
    & =\int_{\mathcal{W}} 
       \langle \D\Phi,A^*g\rangle_{\mathcal{H}} d\mu
      =\int_{\mathcal{W}} (\D^*(A^*g))\Phi d\mu.
\end{align*}
This implies the desired identity.
\end{proof}

\begin{lemma}\label{l.d*h.f.hat.kappa}
Let $\kappa\in\ltwo$ and assume that
$\Lambda(B_{\eta(\kappa)})<1$.
For each $h\in \mathcal{H}$, it holds that
\[
    (\D^*h)(\iota+F_{\widehat{\kappa}})
    =\D^*[(I+B_\kappa^*)^{-1}h].
\]
\end{lemma}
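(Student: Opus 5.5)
Fix $h\in\mathcal{H}$ and write $g=(I+B_\kappa^*)^{-1}h$. The plan is to use the $\mathcal{H}$-invariant set and the nice modifications from Lemma~\ref{l.h.inv}: on an $\mathcal{H}$-invariant full-measure set $X$ one has
\[
    (\D^*h)(w+k)=(\D^*h)(w)+\langle h,k\rangle_{\mathcal{H}}
    \quad\text{for } w\in X,\ k\in\mathcal{H}.
\]
Here $X$ is taken from \cite[Lemma\,5.7.7]{mt-cambridge}, intersected with $X_{\widehat{\kappa}}$ if needed. Since $F_{\widehat{\kappa}}(w)\in\mathcal{H}$, substituting $k=F_{\widehat{\kappa}}(w)$ gives
\[
    (\D^*h)(\iota+F_{\widehat{\kappa}})
    =\D^*h+\langle h,F_{\widehat{\kappa}}\rangle_{\mathcal{H}}
    \quad\mu\text{-a.s.}
\]
Lemma~\ref{l.f.kappa} gives $F_{\widehat{\kappa}}=\D^*B_{\widehat{\kappa}}$, and Lemma~\ref{l.d*a} then yields
\[
    \langle F_{\widehat{\kappa}},h\rangle_{\mathcal{H}}=\D^*(B_{\widehat{\kappa}}^*h).
\]
Hence the left-hand side equals $\D^*\bigl[(I+B_{\widehat{\kappa}}^*)h\bigr]$, by linearity of $\D^*$ on constant $\mathcal{H}$-valued functionals.

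It remains to identify $I+B_{\widehat{\kappa}}^*$. By definition $B_{\widehat{\kappa}}=(I+B_\kappa)^{-1}-I$, so
\[
    I+B_{\widehat{\kappa}}^*=\bigl((I+B_\kappa)^{-1}\bigr)^*=(I+B_\kappa^*)^{-1},
\]
and the latter exists as a bounded operator by Remark~\ref{r.inv.transf}(ii). This gives $(\D^*h)(\iota+F_{\widehat{\kappa}})=\D^*[(I+B_\kappa^*)^{-1}h]$, as claimed.

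The main obstacle is making the composition $(\D^*h)(\iota+F_{\widehat{\kappa}})$ well defined, since $\D^*h$ is only defined up to null sets. I would handle this by fixing the version of $\D^*h$ from \cite[Lemma\,5.7.7]{mt-cambridge}, which satisfies the shift identity on an $\mathcal{H}$-invariant set $X$ with $\mu(X)=1$. Because $w\in X$ and $F_{\widehat{\kappa}}(w)\in\mathcal{H}$ imply $w+F_{\widehat{\kappa}}(w)\in X$, the shift identity applies pointwise. I would also check that the result does not depend on the chosen modification of $F_{\widehat{\kappa}}$: two modifications agree a.s., and the right-hand side of the shift identity is then unchanged a.s. If $h$ is not covered by a basis element, I would pass to an ONB containing $h/\|h\|_{\mathcal{H}}$ to invoke the cited lemma.
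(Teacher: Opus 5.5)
Your proof is correct and is essentially the paper's own argument. You fix the $\mathcal{H}$-invariant version of $\D^*h$ and use the shift identity to get $\D^*h+\langle h,F_{\widehat{\kappa}}\rangle_{\mathcal{H}}$, then rewrite this via Lemmas~\ref{l.f.kappa} and \ref{l.d*a} as $\D^*[(I+B_{\widehat{\kappa}}^*)h]$, and finish with $I+B_{\widehat{\kappa}}^*=(I+B_\kappa^*)^{-1}$. Your extra precautions, intersecting with $X_{\widehat{\kappa}}$ and passing to an ONB containing $h/\|h\|_{\mathcal{H}}$, are unnecessary since the invariant-set lemma applies to each $h$ directly, but they do no harm.
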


\begin{proof}
Let $h\in \mathcal{H}$ and $X$ be an $\mathcal{H}$-invariant
set relative to $\D^*h$ 
(\cite[Lemma\,5.7.7]{mt-cambridge}),
that is, $\mu(X)=1$ and  
\[
    (\D^*h)(w+g)
    =(\D^*h)(w)+\langle h,g\rangle_{\mathcal{H}}
    \quad\text{for every }w\in X
    \text{ and }g\in \mathcal{H}.
\]
By this identity, Lemmas~\ref{l.f.kappa} and \ref{l.d*a},
and the equalities
\[
    (I+B_\kappa^*)(I+B_{\widehat{\kappa}}^*)
    =\{(I+B_{\widehat{\kappa}})(I+B_\kappa)\}^*
    =I,
\]
we have that
\begin{align*}
    (\D^*h)(\iota+F_{\widehat{\kappa}})
    & =\D^*h+\langle h,
          F_{\widehat{\kappa}}\rangle_{\mathcal{H}}
      =\D^*h+\langle h,
         \D^*B_{\widehat{\kappa}}\rangle_{\mathcal{H}}
      =\D^*[(I+B_{\widehat{\kappa}}^*)h]
    \\
    &  =\D^*[(I+B_\kappa^*)^{-1}h].
\qedhere
\end{align*}
\end{proof}

\begin{proof}[Proof of Theorem~\ref{t.inv.transf}]
By Lemma~\ref{l.h.inv} for $\kappa$ and $\widehat{\kappa}$,
we have that 
\[
   \bigl((\iota+F_\kappa)\circ
        (\iota+F_{\widehat{\kappa}})\bigr)(w)
      =w+F_\kappa(w)+F_{\widehat{\kappa}}(w)
       +B_\kappa F_{\widehat{\kappa}}(w)
\]
and
\[
    \bigl((\iota+F_{\widehat{\kappa}})
        \circ(\iota+F_\kappa)\bigr)(w)
      =w+F_\kappa(w)+F_{\widehat{\kappa}}(w)
       +B_{\widehat{\kappa}}F_\kappa(w)
\]
for any $w\in X_\kappa\cap X_{\widehat{\kappa}}$.
By Lemmas~\ref{l.f.kappa} and \ref{l.d*a} and the identity
\[
    (I+B_\kappa)(I+B_{\widehat{\kappa}})=I
     =(I+B_{\widehat{\kappa}})(I+B_\kappa), 
\]
we see that 
\begin{align*}
    \langle
       F_\kappa+F_{\widehat{\kappa}}+B_\kappa F_{\widehat{\kappa}},
       g\rangle_{\mathcal{H}}
    & =\langle \D^* B_\kappa,g\rangle_{\mathcal{H}}
       +\langle \D^* B_{\widehat{\kappa}},
                 g\rangle_{\mathcal{H}}
       +\langle \D^* B_{\widehat{\kappa}},
                 B_\kappa^* g\rangle_{\mathcal{H}}
    \\
    & =\D^*\Bigl[B_\kappa^*g+B_{\widehat{\kappa}}^*g
        +B_{\widehat{\kappa}}^*(B_\kappa^*g)\Bigr]
    \\
    &=\D^*\bigl[\bigl(
        \{(I+B_\kappa)(I+B_{\widehat{\kappa}})\}^*
         -I\bigr)g\bigr]
    \\
    & =0
\end{align*}
and 
\[
   \langle
       F_\kappa+F_{\widehat{\kappa}}+B_{\widehat{\kappa}}F_\kappa,
       g\rangle_{\mathcal{H}}
      =\D^*\bigl[\bigl(
        \{(I+B_{\widehat{\kappa}})(I+B_\kappa)\}^*
         -I\bigr)g\bigr]
      =0
\]
for any $g\in \mathcal{H}$.
Thus \eqref{t.inv.transf.1} holds.

To show \eqref{eq:inv.transf}, notice that \eqref{eq:transf} 
continues to hold for any bounded and measurable 
$\phi:\mathcal{W}\to \mathbb{R}$, that is,
\[
    |\dettwo(I+B_\kappa)| \int_{\mathcal{W}}
     \phi(\iota+F_\kappa) 
     e^{\q_{\eta(\kappa)}} d\mu
    =e^{\frac12 \|\kappa\|_2^2} \int_{\mathcal{W}} \phi d\mu.
\]
Substituting 
$\phi_N=[(N\wedge e^{\D^*h}) f](\iota+F_{\widehat{\kappa}})$
with $N\in \mathbb{N}$ for $\phi$, 
and applying \eqref{t.inv.transf.1} and
Lemma~\ref{l.d*h.f.hat.kappa}, we obtain that 
\[
    |\dettwo(I+B_\kappa)| \int_{\mathcal{W}}
       (N\wedge e^{\D^*h}) f e^{\q_{\eta(\kappa)}} d\mu
    =e^{\frac12\|\kappa\|_2^2} \int_{\mathcal{W}}
       f(\iota+F_{\widehat{\kappa}}) 
       (N\wedge e^{\D^*[(I+B_\kappa^*)^{-1}h]}) d\mu.
\]
Recall that 
$e^{\D^*g}\in \bigcap\limits_{p\in(1,\infty)}L^p(\mu)$ for
any $g\in \mathcal{H}$.
Hence $e^{\D^*[(I+B_\kappa^*)^{-1}h]}\in L^1(\mu)$.
Moreover, in conjunction with Remark~\ref{r.q.eta.int}, we
know that
$e^{\D^*h}e^{\q_{\eta(\kappa)}} \in L^1(\mu)$.
Thanks to the dominated convergence theorem, letting
$N\to\infty$ in the above identity, we arrive at
\eqref{eq:inv.transf}. 

Since $\D^*g$ obeys the normal distribution with mean $0$
and variance $\|g\|_{\mathcal{H}}^2$ for $g\in \mathcal{H}$,
the last assertion follows from \eqref{eq:inv.transf}.
\end{proof}

Looking at the identity \eqref{eq:inv.transf} from the point
of view of $\widehat{\kappa}$ and exchanging $\kappa$ and
$\widehat{\kappa}$, we obtain the following.    

\begin{corollary}\label{c.inv.transf}
Let $\kappa\in \mathcal{L}_2$ and assume that
${\det}_2(I+B_\kappa)\ne0$.
Then $\Lambda(B_{\eta(\widehat{\kappa})})<1$ and the
distribution $\mu\circ(\iota+F_\kappa)^{-1}$ of 
$\iota+F_\kappa$ under $\mu$ has the Radon-Nikodym
derivative with respect to $\mu$ of the form  
$|\dettwo(I+B_{\widehat{\kappa}})|
     e^{-\frac12 \|\widehat{\kappa}\|_2^2} 
     e^{\q_{\eta(\widehat{\kappa})}}$,
that is,
\[
    \frac{d(\mu\circ(\iota+F_\kappa)^{-1})}{d\mu}
    =|\dettwo(I+B_{\widehat{\kappa}})|
     e^{-\frac12 \|\widehat{\kappa}\|_2^2} 
     e^{\q_{\eta(\widehat{\kappa})}}.
\]
\end{corollary}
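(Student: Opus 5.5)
The plan is to apply Theorem~\ref{t.inv.transf} with $\kappa$ replaced by $\widehat{\kappa}$, after checking that its hypothesis holds for $\widehat{\kappa}$ and that $\widehat{\widehat{\kappa}}\eqltwo\kappa$.

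\textbf{Step 1: invertibility of $I+B_{\widehat{\kappa}}$.} Since $\dettwo(I+B_\kappa)\ne0$, the operator $I+B_\kappa$ has a continuous inverse, and $I+B_{\widehat{\kappa}}=(I+B_\kappa)^{-1}$. Hence $I+B_{\widehat{\kappa}}$ is invertible with continuous inverse $I+B_\kappa$. Its $\dettwo$ is therefore nonzero (\cite[Theorem\,XII.1.1]{GGK}). By the equivalence \eqref{r.transf.2} of Remark~\ref{r.transf}, we get $\Lambda(B_{\eta(\widehat{\kappa})})<1$.

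\textbf{Step 2: the double hat.} The kernel $\widehat{\widehat{\kappa}}$ is defined by $B_{\widehat{\widehat{\kappa}}}=(I+B_{\widehat{\kappa}})^{-1}-I=B_\kappa$. By Lemma~\ref{l.b.kappa}, $\widehat{\widehat{\kappa}}\eqltwo\kappa$, so $F_{\widehat{\widehat{\kappa}}}=F_\kappa$ $\mu$-a.s.

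\textbf{Step 3: apply the change of variables formula.} Apply \eqref{eq:inv.transf} of Theorem~\ref{t.inv.transf} to $\widehat{\kappa}$ with $h=0$. For every $f\in C_b(\mathcal{W})$ this gives
\[
    |\dettwo(I+B_{\widehat{\kappa}})|\int_{\mathcal{W}} f e^{\q_{\eta(\widehat{\kappa})}}d\mu
    =e^{\frac12\|\widehat{\kappa}\|_2^2}\int_{\mathcal{W}} f(\iota+F_\kappa)\,d\mu .
\]
The right-hand integral equals $\int f\,d(\mu\circ(\iota+F_\kappa)^{-1})$. Dividing by $e^{\frac12\|\widehat{\kappa}\|_2^2}$ identifies the two finite measures $\mu\circ(\iota+F_\kappa)^{-1}$ and $|\dettwo(I+B_{\widehat{\kappa}})|e^{-\frac12\|\widehat{\kappa}\|_2^2}e^{\q_{\eta(\widehat{\kappa})}}\,d\mu$ on $C_b(\mathcal{W})$. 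The second measure is finite by Lemma~\ref{l.q.eta.int}. Finite Borel measures on the Polish space $\mathcal{W}$ are determined by their integrals against $C_b(\mathcal{W})$, so the measures coincide and the stated density follows.

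\textbf{Main obstacle.} The only delicate point is Step 1: one must check that $\dettwo(I+B_{\widehat{\kappa}})\ne0$. This follows from the invertibility criterion for $\dettwo$ in \cite{GGK}, so no real difficulty is expected.
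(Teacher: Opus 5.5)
Your proposal is correct and follows the paper's proof essentially verbatim. You pass to $\widehat{\kappa}$, use $I+B_{\widehat{\kappa}}=(I+B_\kappa)^{-1}$ with \eqref{r.transf.2} to obtain the hypothesis, observe $\widehat{\widehat{\kappa}}\eqltwo\kappa$, and apply \eqref{eq:inv.transf} with $h=0$; your additional remark that two finite Borel measures agreeing on $C_b(\mathcal{W})$ coincide supplies a step the paper leaves implicit.
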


\begin{proof}
Set $\kappa^\prime=\widehat{\kappa}$.
Since $I+B_{\kappa^\prime}=(I+B_\kappa)^{-1}$, 
$\dettwo(I+B_{\kappa^\prime})\ne0$.
By \eqref{r.transf.2}, 
$\Lambda(B_{\eta(\kappa^\prime)})<1$.
Applying Theorem~\ref{t.inv.transf} to $\kappa^\prime$, 
and noticing that $\widehat{\kappa^\prime}=\kappa$, 
we see that 
\[
    |\dettwo(I+B_{\widehat{\kappa}})|
     e^{-\frac12 \|\widehat{\kappa}\|_2^2}
    \int_{\mathcal{W}} f
      e^{\q_{\eta(\widehat{\kappa})}}
      d\mu
    =\int_{\mathcal{W}} 
       f(\iota+F_\kappa)d\mu
    \quad\text{for every }f\in C_b(\mathcal{W}).
\]
Thus the desired expression of the Radon-Nikodym derivative
is obtained.
\end{proof}

\section{Linear transformations}
\label{sec:lin.transf}
In this section, we apply Theorem~\ref{t.transf} to
linear transformations initially investigated by Cameron and 
Martin (\cite{CM}). 
For this purpose, define
$\mathbb{F}_\phi:\mathcal{W}\to \mathcal{H}$ for
$\phi\in\ltwo$ by
\[
    \langle \mathbb{F}_\phi, h\rangle_{\mathcal{H}}
    =\int_0^T \biggl\langle 
       \int_0^T \phi(t,s)\theta(s)ds, h^\prime(t)
       \biggr\rangle dt
    \quad\text{for } h\in \mathcal{H}.
\]
Observe that 
\[
    (\mathbb{F}_\phi(w))(t)
    =\int_0^t\biggl(\int_0^T \phi(u,s)w(s)ds
       \biggr)du
    \quad\text{for }w\in \mathcal{W}
     \text{ and }t\in[0,T].
\]
Thus $\iota+\mathbb{F}_\phi:\mathcal{W}\to \mathcal{W}$
is a continuous linear transformation.
Set $\kappa_\phi\in\ltwo$ so that 
\[
    \kappa_\phi(t,s)=\int_s^T \phi(t,u)du
    \quad\text{for } (t,s)\in[0,T]^2.
\]
Applying It\^o's formula, we see that
\begin{equation}\label{eq:kappa.phi}
    \int_0^T \kappa_\phi(t,s)d\theta(s)
    =\int_0^T \phi(t,s)\theta(s)ds
    \quad\text{for } t\in[0,T].
\end{equation}
Hence $\mathbb{F}_\phi=F_{\kappa_\phi}$.
The aim of this section is to establish the change of
variables formula for the transformation
$\iota+\mathbb{F}_\phi$.
The comparison between our result and Cameron and Martin's
will be given in Remark~\ref{r.transf.cm} below.

\begin{theorem}\label{t.transf.cm}
Let $\phi\in\ltwo$ and assume that
$\Lambda(B_{\eta(\kappa_\phi)})<1$. 
Then $B_{\kappa_\phi}$ is of trace class and it holds that
\begin{equation}\label{t.transf.cm.1}
    |\det(I+B_{\kappa_\phi})|\int_{\mathcal{W}} 
      f(\iota+\mathbb{F}_\phi) e^{\Psi_\phi}
      d\mu
    =\int_{\mathcal{W}} f d\mu
    \quad\text{for every }f\in C_b(\mathcal{W}),
\end{equation}
where 
\[
    \Psi_\phi 
    = -\int_0^T\biggl\langle \int_0^T 
         \phi(t,s)^\dagger d\theta(t),
       \theta(s)\biggr\rangle ds
      -\frac12 \int_0^T\biggl|\int_0^T
         \phi(t,s)\theta(s)ds \biggr|^2dt.
\]
\end{theorem}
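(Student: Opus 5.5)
The plan is to deduce \eqref{t.transf.cm.1} from Theorem~\ref{t.transf} applied with $\kappa=\kappa_\phi$, using $\mathbb{F}_\phi=F_{\kappa_\phi}$. Everything then reduces to two facts:
\[
  \dettwo(I+B_{\kappa_\phi})=\det(I+B_{\kappa_\phi})\,e^{-\tr B_{\kappa_\phi}}
  \quad\text{and}\quad
  \Psi_\phi=\q_{\eta(\kappa_\phi)}-\tr B_{\kappa_\phi}-\tfrac12\|\kappa_\phi\|_2^2 .
\]
The second identity is the one that actually needs proof. Given both, multiplying \eqref{eq:transf} by $e^{-\frac12\|\kappa_\phi\|_2^2}$ and absorbing $e^{-\tr B_{\kappa_\phi}}$ into the exponent gives exactly \eqref{t.transf.cm.1}.

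\emph{Trace class.} I will factorize the kernel as
\[
  \kappa_\phi(t,s)=\int_0^T \phi(t,u)\,\one_{[s,T]}(u)\,I_d\,du .
\]
Both $\phi$ and the Volterra kernel $\one_{[s,T]}(u)I_d$ lie in $\ltwo$. From the definition of $B_\kappa$ one checks that $B_{\kappa_\phi}=B_\phi B_\chi$ with $\chi(u,s)=\one_{[s,T]}(u)I_d$. A product of two Hilbert--Schmidt operators is trace class. Its trace equals the $\htwo$-pairing of $B_\phi^*$ and $B_\chi$, which gives
\[
  \tr B_{\kappa_\phi}=\int_0^T\!\!\int_0^T \tr\phi(t,u)\,\one_{[t,T]}(u)\,du\,dt
  =\int\!\!\int_{t<u}\tr\phi(t,u)\,du\,dt .
\]
The first identity above is then the standard relation between $\det$ and $\dettwo$ for trace-class perturbations (\cite{GGK}).

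\emph{Identification of $\Psi_\phi$.} I will treat the two terms of $\Psi_\phi$ separately.
\begin{itemize}
\item Second term. By \eqref{eq:kappa.phi}, $\frac12\int_0^T|\int_0^T\phi(t,s)\theta(s)ds|^2dt=\mathfrak{h}(\kappa_\phi)$. Proposition~\ref{p.h.osc} then gives $\mathfrak{h}(\kappa_\phi)=\q_{c(\kappa_\phi)}+\frac12\|\kappa_\phi\|_2^2$.
\item First term. Call it $\Xi=-\int_0^T\langle\int_0^T\phi(t,s)^\dagger d\theta(t),\theta(s)\rangle ds$. I will compute $\D\Xi$ and $\D^2\Xi$ directly, as in the proof of Proposition~\ref{p.h.osc}, writing $\theta(s)=\D^*$ of a constant. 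This should give
\[
  \langle\D^2\Xi,h\otimes g\rangle_{\htwo}
  =-\int_0^T\langle\phi(t,s)h(s),g'(t)\rangle\,ds\,dt-(\text{same with }h,g\text{ swapped}),
\]
and an integration by parts in $s$ identifies this with $-\langle(B_{\kappa_\phi}+B_{\kappa_\phi}^*)h,g\rangle_{\mathcal{H}}$.
\item Mean of $\Xi$. We have $\int_{\mathcal{W}}\D\Xi\,d\mu=0$ since $\D\Xi$ is linear in $\theta$. The mean of $\Xi$ itself is
\[
  \int_{\mathcal{W}}\Xi\,d\mu=-\int_0^T\!\!\int_0^s\tr\phi(t,s)\,dt\,ds=-\tr B_{\kappa_\phi}.
\]
\item Lemma~\ref{l.D^3=0} combined with Theorem~\ref{t.q.eta} therefore yields $\Xi=-\q_{\kappa_\phi+\kappa_\phi^*}-\tr B_{\kappa_\phi}$.
\end{itemize}
Adding the two terms and using \eqref{eq:eta.kappa}, i.e.\ $\eta(\kappa)=-(\kappa+\kappa^*)-c(\kappa)$, gives $\Psi_\phi=\q_{\eta(\kappa_\phi)}-\tr B_{\kappa_\phi}-\frac12\|\kappa_\phi\|_2^2$, as required.

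\emph{Main obstacle.} I expect the delicate point to be the trace computation and its match with the Itô-mean of $\Xi$. The kernel $\kappa_\phi$ is only $L^2$, so "$\int\tr\kappa_\phi(t,t)dt$" is not directly meaningful. I will avoid the diagonal entirely by computing the trace through the Hilbert--Schmidt factorization. On the stochastic side, the mean of $\Xi$ involves $\int_0^T\phi(t,s)^\dagger d\theta(t)$ paired with $\theta(s)=\int_0^s d\theta$, whose covariance produces $\one_{[0,s]}(t)$. The two resulting expressions agree because both reduce to the same integral over $\{t<u\}$. If the direct Malliavin computation of $\D^2\Xi$ is awkward for a general $L^2$ kernel $\phi$, I will first prove the identity for continuous $\phi$ and then pass to the limit. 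In $L^p(\mu)$, $\Xi$ and $\q_{\eta(\kappa_\phi)}$ depend continuously on $\phi\in\ltwo$ (by Theorem~\ref{t.q.eta} and continuity of $\D^*$), and $\tr B_{\kappa_\phi}$ depends continuously on $\phi$ through the factorization, so the identity extends.
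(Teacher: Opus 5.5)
Your proof is correct. Its skeleton is the paper's: apply Theorem~\ref{t.transf} to $\kappa_\phi$, get trace class from $B_{\kappa_\phi}=B_\phi B_\chi$ (your $\chi$ is the paper's $\psi$ up to the diagonal), and pass from $\dettwo$ to $\det$. The two sub-steps, however, are handled differently.

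\textbf{The stochastic identity.} The paper proves Lemma~\ref{l.d*f.kappa} in general. It then computes $\D^*F_{\kappa_\phi}$ explicitly in Lemma~\ref{l.transf.cm}, approximating $\phi$ by kernels that are step functions in $t$ and using the product rule for $\D^*$. You split $\Psi_\phi=\Xi-\mathfrak{h}(\kappa_\phi)$ and treat the second piece with Proposition~\ref{p.h.osc}. You identify $\Xi$ through Lemma~\ref{l.D^3=0}, using $\D^2\Xi=-(B_{\kappa_\phi}+B_{\kappa_\phi}^*)$, $\int_{\mathcal{W}}\D\Xi\,d\mu=0$, and the It\^o mean $\int_{\mathcal{W}}\Xi\,d\mu$. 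This is the same decomposition that underlies Lemma~\ref{l.d*f.kappa}, organized around $\Xi$ instead of $\D^*F_{\kappa_\phi}$.

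\textbf{The trace.} This is the more substantive difference. The paper obtains $\tr B_{\kappa_\phi}=\int_0^T\!\int_0^s\tr\phi(t,s)\,dt\,ds$ only for continuous $\phi$, via the diagonal formula of Lemma~\ref{l.trace.detail} (a Fej\'er argument). It then removes continuity by approximating $\phi$ inside the whole identity \eqref{t.transf.cm.1}. That step needs trace-norm convergence of the determinants and uniform control of $e^{\Psi_{\phi_m}}$ along the sequence, which the paper only indicates.

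Your identity $\tr(B_\phi B_\chi)=\langle B_\phi^*,B_\chi\rangle_{\htwo}$, read as an $L^2$ pairing of kernels, gives the trace formula for every $\phi\in\ltwo$ at once. It never touches the diagonal, and it lets you apply Theorem~\ref{t.transf} directly to the given $\phi$. Your argument uses an approximation at most in the almost-sure identity for $\Xi$, whose two sides depend continuously on $\phi$, so no integrability issue arises. The paper's route yields, as a by-product, the explicit formula for $\D^*F_{\kappa_\phi}$ and the $\dettwo$-version in Lemma~\ref{l.transf.cm}, which hold without the trace-class property.
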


For the proof, we prepare lemmas.

\begin{lemma}\label{l.d*f.kappa}
For each $\kappa\in\ltwo$, it holds that 
\[
    \q_{\eta(\kappa)}
     =-\D^*F_\kappa-\frac12 \|F_\kappa\|_{\mathcal{H}}^2
    +\frac12\|\kappa\|_2^2.
\]
\end{lemma}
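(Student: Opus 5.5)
The plan is to use the $\D^3=0$ characterization of Lemma~\ref{l.D^3=0}: both sides are quadratic Wiener functionals, so it suffices to check that they agree in their second $\mathcal{H}$-derivative, in the mean of their first derivative, and in their expectation. Put
\[
    \Phi=-\D^*F_\kappa-\frac12\|F_\kappa\|_{\mathcal{H}}^2+\frac12\|\kappa\|_2^2.
\]

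\textbf{Step 1: $\Phi$ is quadratic and its derivatives.} By Lemma~\ref{l.f.kappa}, $F_\kappa=\D^*B_\kappa$. Hence $\D^*F_\kappa=(\D^*)^2B_\kappa$, and by Lemma~\ref{l.d*a} we have $\langle F_\kappa,g\rangle_{\mathcal{H}}=\D^*(B_\kappa^*g)$, which lies in the first chaos. So $\Phi\in\mathbb{D}^\infty$ and $\D^3\Phi=0$. For the derivatives:
\begin{itemize}
\item By \eqref{l.f.kappa.21}, $\D F_\kappa=B_\kappa$. This gives $\D\|F_\kappa\|_{\mathcal{H}}^2=2B_\kappa^*F_\kappa$, with the convention $\langle \D\Phi, h\rangle=\D_h\Phi$. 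Differentiating once more gives $\D^2\|F_\kappa\|_{\mathcal{H}}^2=2B_\kappa^*B_\kappa$, viewed as a bilinear form $(h,g)\mapsto 2\langle B_\kappa h,B_\kappa g\rangle$.
\item For $\D^*F_\kappa=(\D^*)^2B_\kappa$, I will use the same computation as in Theorem~\ref{t.q.eta}(i), or equivalently the expansion $(\D^*)^2(h_n\otimes h_m)=(\D^*h_n)(\D^*h_m)-\delta_{nm}$. This yields $\D^2(\D^*)^2B_\kappa=B_\kappa+B_\kappa^*$ as a symmetric form.
\end{itemize}
Combining the two,
\[
    \D^2\Phi=-(B_\kappa+B_\kappa^*+B_\kappa^*B_\kappa)=B_{\eta(\kappa)},
\]
where the last equality is \eqref{r.transf.1}. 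Theorem~\ref{t.q.eta}(i) gives $\D^2\q_{\eta(\kappa)}=B_{\eta(\kappa)}$, so the second derivatives match.

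\textbf{Step 2: lower-order terms.} For the first derivative, $\int\D\Phi\,d\mu=-\int\D(\D^*F_\kappa)\,d\mu-\int B_\kappa^*F_\kappa\,d\mu$. The second integral vanishes because $\int F_\kappa d\mu=0$. The first vanishes because $\D(\D^*F_\kappa)$ is linear in the first-chaos variables $\D^*h_n$, each of which has mean zero. For the expectation, $\int\D^*F_\kappa\,d\mu=0$, and by the It\^o isometry
\[
    \int\|F_\kappa\|_{\mathcal{H}}^2d\mu=\int_0^T\int_0^T|\kappa(t,s)|^2\,ds\,dt=\|\kappa\|_2^2.
\]
Therefore $\int\Phi\,d\mu=-\frac12\|\kappa\|_2^2+\frac12\|\kappa\|_2^2=0$.

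\textbf{Step 3: conclusion.} Lemma~\ref{l.D^3=0} now gives $\Phi=\frac12(\D^*)^2B_{\eta(\kappa)}$, and Theorem~\ref{t.q.eta}(i) identifies this with $\q_{\eta(\kappa)}$.

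\textbf{Expected obstacle.} The main difficulty is computing $\D^2(\D^*)^2B_\kappa$ rigorously when $B_\kappa$ is not symmetric, and keeping track of the transpose conventions. I would handle this by approximating with $B_\kappa^{(N)}$ as in Theorem~\ref{t.q.eta}(ii) and using the continuity of $\D$ and $\D^*$ on $\mathbb{D}^\infty$. Alternatively, write $(\D^*)^2B_\kappa=(\D^*)^2\frac12(B_\kappa+B_\kappa^*)$; this holds because $(\D^*)^2$ is invariant under the swap $h_n\otimes h_m\mapsto h_m\otimes h_n$. Theorem~\ref{t.q.eta}(i) then applies directly to the kernel $\kappa+\kappa^*$ and gives $(\D^*)^2B_\kappa=\q_{\kappa+\kappa^*}$.
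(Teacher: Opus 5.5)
Your proposal is correct and follows essentially the paper's route: Lemma~\ref{l.D^3=0}, the identity $\D^*F_\kappa=\frac12(\D^*)^2(B_\kappa+B_\kappa^*)$, the computation $\D^2\bigl(\frac12\|F_\kappa\|_{\mathcal{H}}^2\bigr)=B_\kappa^*B_\kappa$ with mean $\frac12\|\kappa\|_2^2$, and \eqref{r.transf.1}. The differences are only organizational: the paper proves the symmetrization by testing against $\varphi(\D^*\boldsymbol{h})$ and using the symmetry of the Hessian, and it applies Lemma~\ref{l.D^3=0} only to the norm term, while you get the symmetrization from $(\D^*)^2(h_n\otimes h_m)=(\D^*h_n)(\D^*h_m)-\delta_{nm}$ plus continuity of $\D^*$ and apply Lemma~\ref{l.D^3=0} once to the whole $\Phi$; both work.
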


\begin{proof}
Let $n\in \mathbb{N}$ and $h_1,\dots,h_n\in \mathcal{H}$.
Take an arbitrary infinitely differentiable 
$\varphi:\mathbb{R}^n\to \mathbb{R}$, whose derivatives of
all orders are bounded.
Put 
$\D^*\boldsymbol{h}=(\D^*h_1,\dots,\D^*h_n)$.
Since $\D\D^*h_i=h_i$ for $1\le i\le n$ 
(\cite[(5.1.9)]{mt-cambridge}), 
$\D^*\boldsymbol{h}\in \mathbb{D}^\infty(\mathbb{R}^n)$.
By Lemma~\ref{l.f.kappa}, the chain rule for $\D$ 
(\cite[Corollary\,5.3.2]{mt-cambridge}), and
the symmetry of the Hessian matrix 
$\Bigl(
  \dfrac{\partial^2\varphi}{\partial x^i \partial x^j}
 \Bigr)_{1\le i,j\le n}$, 
we see that
\begin{align*}
    & \int_{\mathcal{W}} (\D^*F_\kappa)
          \varphi(\D^*\boldsymbol{h}) d\mu
      =\int_{\mathcal{W}} \langle B_\kappa,
          \D^2\bigl(\varphi(\D^*\boldsymbol{h})\bigr) 
          \rangle_{\htwo} d\mu
    \\
    & =\sum_{i,j=1}^n \int_{\mathcal{W}} \langle B_\kappa,
         h_i\otimes h_j\rangle_{\htwo} 
         \frac{\partial^2\varphi}{
                \partial x^i \partial x^j}(
           \D^*\boldsymbol{h}) d\mu
    \\
    & =\frac12 \sum_{i,j=1}^n \int_{\mathcal{W}} 
        \langle (B_\kappa+B_\kappa^*),
         h_i\otimes h_j\rangle_{\htwo} 
         \frac{\partial^2\varphi}{
                \partial x^i \partial x^j}(
           \D^*\boldsymbol{h}) d\mu
    \\
    & =\frac12
         \int_{\mathcal{W}} 
           \bigl((\D^*)^2(B_\kappa+B_\kappa^*)\bigr)
           \varphi(\D^*\boldsymbol{h}) d\mu.
\end{align*}
Thus we obtain that 
\begin{equation}\label{l.d*f.kappa.21}
    \D^*F_\kappa=\frac12(\D^*)^2(B_\kappa+B_\kappa^*).
\end{equation}

Since $\D F_\kappa=B_\kappa$ (see \eqref{l.f.kappa.21}), we see that 
\[
    \biggl\langle \D\biggl(\frac12 
       \|F_\kappa\|_{\mathcal{H}}^2\biggr),g
      \biggr\rangle_{\mathcal{H}}
      =\langle F_\kappa,B_\kappa g\rangle_{\mathcal{H}}
\]
and
\[
    \biggl\langle \D^2\biggl(\frac12
         \|F_\kappa\|_{\mathcal{H}}^2\biggr),
        h\otimes g \biggr\rangle_{\htwo}
    =\langle B_\kappa h,B_\kappa g\rangle_{\mathcal{H}}
      =\langle B_\kappa^*B_\kappa, h\otimes g
       \rangle_{\htwo}
\]
for $h,g\in \mathcal{H}$.
Hence it holds that
\[
    \int_{\mathcal{W}} \D\biggl(
       \frac12\|F_\kappa\|_{\mathcal{H}}^2\biggr) d\mu=0
    \quad\text{and}\quad
    \D^2\biggl(\frac12\|F_\kappa\|_{\mathcal{H}}^2\biggr)
    =B_\kappa^*B_\kappa. 
\]
Furthermore, by the isometry for It\^o integral, we have
that 
\[
    \int_{\mathcal{W}} \frac12
        \|F_\kappa\|_{\mathcal{H}}^2 d\mu
    =\frac12\|\kappa\|_2^2.
\]
Then, by Lemma~\ref{l.D^3=0}, we obtain that  
\begin{equation}\label{l.d*f.kappa.22}
    \frac12 \|F_\kappa\|_{\mathcal{H}}^2
    =\frac12(\D^*)^2(B_\kappa^*B_\kappa)
     +\frac12 \|\kappa\|_2^2.
\end{equation}

Applying Theorem~\ref{t.q.eta} to $\q_{\eta(\kappa)}$
with the help of \eqref{r.transf.1}, we see that
\[
    \q_{\eta(\kappa)}
    =-\frac12(\D^*)^2\{B_\kappa+B_\kappa^*
     +B_\kappa^*B_\kappa\}.
\]
Plugging \eqref{l.d*f.kappa.21} and \eqref{l.d*f.kappa.22}
into this equality, we obtain the desired expression of
$\q_{\eta(\kappa)}$.
\end{proof}

\begin{remark}\label{r.(d*)2}
Let $B\in\htwo$ and $\kappa\in\ltwo$ be its kernel,
i.e., $B=B_\kappa$.
By \eqref{l.d*f.kappa.21}, we have that 
\[
    (\D^*)^2B
    =(\D^*)^2\Bigl(\frac12\{B_\kappa+B_\kappa^*\}\Bigr).
\]
Hence we have that 
\[
    \{(\D^*)^2B \mid B\in \htwo\}
    =\{(\D^*)^2B \mid B\in \mathcal{S}(\htwo)\}
    =\{\q_\eta \mid \eta\in\stwo\}.
\]
\end{remark}

\begin{lemma}\label{l.transf.cm}
Let $\phi\in\ltwo$ and assume that
$\Lambda(B_{\eta(\kappa_\phi)})<1$.
Then it holds that 
\begin{equation}\label{l.transf.cm.1}
    |\dettwo(I+B_{\kappa_\phi})|\int_{\mathcal{W}} 
      f(\iota+\mathbb{F}_\phi) 
      e^{\widetilde{\Psi}_\phi} d\mu
    =\int_{\mathcal{W}} f d\mu
    \quad\text{for every }f\in C_b(\mathcal{W}),
\end{equation}
where
\[
    \widetilde{\Psi}_\phi 
    = \Psi_\phi
    +\int_0^T\biggl(\int_0^s 
       \tr\phi(t,s) dt\biggr)ds.
\]
\end{lemma}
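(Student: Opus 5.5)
The plan is to reduce the lemma to Theorem~\ref{t.transf} applied with $\kappa=\kappa_\phi$. Since $\mathbb{F}_\phi=F_{\kappa_\phi}$ and $\Lambda(B_{\eta(\kappa_\phi)})<1$ by assumption, that theorem gives
\[
    |\dettwo(I+B_{\kappa_\phi})|\int_{\mathcal{W}} f(\iota+\mathbb{F}_\phi)\,
    e^{\q_{\eta(\kappa_\phi)}-\frac12\|\kappa_\phi\|_2^2}\,d\mu
    =\int_{\mathcal{W}} f\,d\mu .
\]
It therefore suffices to prove the identity of Wiener functionals
\[
    \q_{\eta(\kappa_\phi)}-\tfrac12\|\kappa_\phi\|_2^2=\widetilde{\Psi}_\phi .
\]
By Lemma~\ref{l.d*f.kappa}, the left side equals $-\D^*F_{\kappa_\phi}-\frac12\|F_{\kappa_\phi}\|_{\mathcal{H}}^2$. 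By \eqref{eq:f.kappa} and \eqref{eq:kappa.phi},
\[
    \|F_{\kappa_\phi}\|_{\mathcal{H}}^2=\int_0^T\biggl|\int_0^T\phi(t,s)\theta(s)ds\biggr|^2dt,
\]
which matches the second term of $\Psi_\phi$. What remains is to identify $\D^*F_{\kappa_\phi}$.

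Set $X=\int_0^T\bigl\langle \int_0^T\phi(t,s)^\dagger d\theta(t),\theta(s)\bigr\rangle ds$. I claim that
\[
    \D^*F_{\kappa_\phi}=X-\int_{\mathcal{W}}X\,d\mu,
    \qquad
    \int_{\mathcal{W}}X\,d\mu=\int_0^T\biggl(\int_0^s\tr\phi(t,s)dt\biggr)ds .
\]
This gives exactly $-\D^*F_{\kappa_\phi}-\frac12\|F_{\kappa_\phi}\|_{\mathcal{H}}^2=\Psi_\phi+\int_0^T\int_0^s\tr\phi(t,s)\,dt\,ds=\widetilde{\Psi}_\phi$.

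To prove the claim I would use Lemma~\ref{l.D^3=0} together with \eqref{l.d*f.kappa.21}, which gives $\D^*F_{\kappa_\phi}=\frac12(\D^*)^2(B_{\kappa_\phi}+B_{\kappa_\phi}^*)$. First, $X$ is a sum of products of two first-chaos elements, so $\D^3X=0$. Its first derivative is
\[
    \langle \D X,g\rangle_{\mathcal{H}}=\int_0^T\Bigl\langle\int_0^T\phi(t,s)^\dagger g'(t)dt,\theta(s)\Bigr\rangle ds+\int_0^T\Bigl\langle \int_0^T\phi(t,s)^\dagger d\theta(t),g(s)\Bigr\rangle ds,
\]
which has mean zero. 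The second derivative is
\[
    \langle\D^2X,h\otimes g\rangle_{\htwo}=\int_0^T\Bigl\langle\int_0^T\phi(t,s)^\dagger h'(t)dt,g(s)\Bigr\rangle ds+(h\leftrightarrow g).
\]
Writing $g(s)=\int_0^s g'(u)du$ and using Fubini, the first term becomes $\int_0^T\langle\int_0^T\kappa_\phi(t,u)g'(u)du,h'(t)\rangle dt=\langle B_{\kappa_\phi}g,h\rangle_{\mathcal{H}}$, since $\kappa_\phi(t,u)=\int_u^T\phi(t,s)ds$. Hence $\D^2X=B_{\kappa_\phi}+B_{\kappa_\phi}^*$. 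Lemma~\ref{l.D^3=0} then yields $X=\frac12(\D^*)^2(B_{\kappa_\phi}+B_{\kappa_\phi}^*)+\int X\,d\mu$, which is the first half of the claim. For the mean, write $\theta(s)=\int_0^s d\theta(u)$; the It\^o isometry for the pairing of $\int_0^T\phi(t,s)^\dagger d\theta(t)$ with $\int_0^s d\theta(u)$ gives $\int_0^s\tr\phi(t,s)\,dt$, and integrating in $s$ gives the stated value.

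The main obstacle is rigor for a general $\phi\in\ltwo$: the $ds$-integral of products of It\^o integrals must be shown to lie in $\mathbb{D}^\infty$, and the differentiation under the integral sign together with the Fubini steps must be justified. I would first carry out the argument for bounded, or simple, $\phi$. I would then pass to general $\phi$ by $\ltwo$-approximation. Both sides of the claim are continuous in $L^2(\mu)$ with respect to $\phi$ in $\ltwo$: the map $\phi\mapsto\kappa_\phi$ is $\ltwo$-continuous, and $X$ is bounded in $L^2(\mu)$ by $C\|\phi\|_2$ via the Cauchy--Schwarz inequality and hypercontractivity on the second chaos. So the identity extends by continuity. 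Finally, the substitution into Theorem~\ref{t.transf} only requires the functional identity $\mu$-a.e., so no further approximation is needed there.
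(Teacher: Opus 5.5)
Your proof is correct. The reduction is the same as the paper's: Theorem~\ref{t.transf} together with Lemma~\ref{l.d*f.kappa} reduces everything to the identity $\D^*F_{\kappa_\phi}=X-\int_0^T\int_0^s\tr\phi(t,s)\,dt\,ds$, which is the paper's \eqref{l.transf.cm.21}. This is then followed by an $\ltwo$-approximation of $\phi$.

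You differ from the paper in how that identity is established.

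\textbf{The paper's route.} The paper takes $\phi$ piecewise constant in $t$ and writes $F_{\kappa_\phi}=\sum_{n,i,j}\bigl(\int_0^T\phi^i_j(T_{N;n},s)\theta^j(s)ds\bigr)h_{n;i}$ explicitly. It then applies the product rule $\D^*(\Phi h)=\Phi\,\D^*h-\langle \D\Phi,h\rangle_{\mathcal{H}}$. The first piece reproduces $X$ exactly. The trace term appears as the divergence correction $\langle \D\Phi,h\rangle_{\mathcal{H}}$.

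\textbf{Your route.} You use the characterization the monograph uses elsewhere, in Theorem~\ref{t.q.eta}, Proposition~\ref{p.h.osc} and Lemma~\ref{l.d*f.kappa}. You compute $\D^2X=B_{\kappa_\phi}+B_{\kappa_\phi}^*$ and $\int_{\mathcal{W}}\D X\,d\mu=0$. Lemma~\ref{l.D^3=0} combined with \eqref{l.d*f.kappa.21} then gives $\D^*F_{\kappa_\phi}=X-\int_{\mathcal{W}}X\,d\mu$, and the It\^o isometry identifies the trace term as the mean of $X$. Your computations all check out: $\D X$, the Fubini step $g(s)=\int_0^s g'(u)du$ producing $\langle B_{\kappa_\phi}g,h\rangle_{\mathcal{H}}$, and the mean.

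\textbf{What each route buys.} Your route avoids the product rule for the divergence entirely. It also explains the correction term conceptually: it is pure centering, so $\D^*F_{\kappa_\phi}$ is exactly the second-chaos component of $X$. The paper's route is a more hands-on computation that exhibits the trace term as the Skorokhod-type correction. The price on your side is that you need $X\in\mathbb{D}^\infty$ with $\D^3X=0$ before invoking Lemma~\ref{l.D^3=0}. This is immediate on the paper's own approximating class (step in $t$), where $X$ is a finite sum of products of two Wiener integrals. Both arguments then close with the same $L^2$-continuity in $\phi$, which you justify adequately.
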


\begin{proof}
By Theorem~\ref{t.transf} and Lemma~\ref{l.d*f.kappa},
what to show is the identity
\begin{equation}\label{l.transf.cm.21}
    \D^*F_{\kappa_\phi}=\int_0^T\biggl\langle\int_0^T
       \phi(t,s)^\dagger d\theta(t),\theta(s)
       \biggr\rangle ds
    -\int_0^T\biggl(\int_0^s 
        \tr\phi(t,s) dt\biggr)ds.
\end{equation}
Take $\phi_m,\phi\in\ltwo$ for $m\in \mathbb{N}$ with 
$\|\phi_m-\phi\|_2\to0$ as $m\to\infty$.
Then 
\[
    \|B_{\kappa_{\phi_m}}-B_{\kappa_\phi}
  \|_{\htwo}
  =\|\kappa_{\phi_m}-\kappa_\phi\|_2\to0.
\]
By Lemma~\ref{l.f.kappa} and the continuity of $\D^*$, 
$F_{\kappa_{\phi_m}}\to F_{\kappa_\phi}$ in
$L^p(\mu;\mathcal{H})$ for any $p\in(1,\infty)$.
Furthermore, it is easily seen that the right hand side of
\eqref{l.transf.cm.21} for $\phi_m$ converges to that for
$\phi$ in probability. 
Thus, to show \eqref{l.transf.cm.21}, we may and will assume
that $\phi\in\ltwo$ is of the form 
\[
    \phi(t,s)=\sum_{n=0}^{N-1} 
       \one_{[T_{N;n},T_{N;n+1})}(t)
       \phi(T_{N;n},s)
    \quad\text{for } (t,s)\in[0,T]^2
    \quad\text{with some }N\in \mathbb{N},
\]
where $T_{N;n}=\frac{nT}{N}$.

Let $e_1,\dots,e_d$ be an ONB of $\mathbb{R}^d$.
For $1\le i\le d$, $0\le n\le N-1$, and $s\in[0,T]$, define 
$h_{n;i},k_{s;i}\in \mathcal{H}$ by 
$h_{n;i}^\prime=\one_{[T_{N;n},T_{N;n+1})}e_i$ and
$k_{s;i}^\prime=\one_{[0,s)}e_i$.
By \eqref{eq:kappa.phi}, it holds that
\[
    F_{\kappa_\phi}=\sum_{n=0}^{N-1} \sum_{i,j=1}^d
    \biggl(\int_0^T \phi_j^i(T_{N;n},s)
    \theta^j(s)ds\biggr) h_{n;i}.
\]
By \cite[(5.1.5)]{mt-cambridge}, we see that
\[
    \D^*h_{n;i}=\theta^i(T_{N;n+1})-\theta^i(T_{N;n})
    \quad\text{and}\quad
    \D\theta^i(s)=k_{s;i}.
\]
Due to the product rule for $\D^*$ 
(\cite[Theorem\,5.2.8]{mt-cambridge}), we
obtain that 
\begin{align*}
    \D^*F_{\kappa_\phi}
    = & \sum_{n=0}^{N-1} \sum_{i,j=1}^d
    \biggl(\int_0^T \phi_j^i(T_{N;n},s)
    \theta^j(s)ds\biggr)
    \{\theta^i(T_{N;n+1})-\theta^i(T_{N;n})\}
    \\
    & -\sum_{n=0}^{N-1} \sum_{i,j=1}^d
       \biggl\langle \int_0^T 
       \phi_j^i(T_{N;n},s) k_{s;j} ds,
       h_{n;i}\biggr\rangle_{\mathcal{H}}.
\end{align*}
The first term of the right hand side of the equality
coincides with 
\begin{align*}
    & \int_0^T \biggl[\sum_{j=1}^d \biggl(\sum_{i=1}^d
       \biggl\{\sum_{n=0}^{N-1}
         \phi_j^i(T_{N;n},s)
         \{\theta^i(T_{N;n+1})-\theta^i(T_{N;n})\}
       \biggr\}\biggr)\theta^j(s)\biggr]ds
    \\
    & =\int_0^T \biggl\langle \int_0^T \phi(t,s)^\dagger
        d\theta(t),\theta(s)\biggr\rangle ds.
\end{align*}
Since 
\[
    \langle k_{s;j},h_{n;i}\rangle_{\mathcal{H}}
    =\bigl\{(T_{N;n+1}\wedge s)
         -(T_{N;n}\wedge s)\bigr\}\delta_{ji},
\]
the second term of the right hand side coincides with
\[
    \int_0^T \biggl[\sum_{n=0}^{N-1} \biggl(
         \sum_{i=1}^d
         \phi_i^i(T_{N;n},s)\biggr)
         \bigl\{(T_{N;n+1}\wedge s)
           -(T_{N;n}\wedge s)\bigr\}\biggr] ds
    =\int_0^T \biggl(\int_0^s \tr\phi(t,s) dt
      \biggr)ds.
\]
Thus \eqref{l.transf.cm.21} holds.
\end{proof}

\begin{lemma}\label{l.kappa.phi.trace}
$B_{\kappa_\phi}$ is of trace class.
\end{lemma}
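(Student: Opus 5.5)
The plan is to factor $B_{\kappa_\phi}$ as a product of two Hilbert--Schmidt operators on $\mathcal{H}$; a product of two Hilbert--Schmidt operators is of trace class. The factorization comes from the definition
\[
    \kappa_\phi(t,s)=\int_0^T \phi(t,u)\one_{[s,T]}(u)\,du ,
\]
which exhibits $\kappa_\phi$ as the composition of the kernel $\phi$ with the kernel $\psi(u,s)=\one_{[0,u)}(s)I_d$. The latter is the kernel of the map $h\mapsto$ (antiderivative of $h$) read at the level of derivatives. Since $\psi$ is bounded on $[0,T]^2$, it lies in $\ltwo$.

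The first step is to verify the composition rule for the operators $B_\kappa$. For $\kappa_1,\kappa_2\in\ltwo$, put
\[
    (\kappa_1\star\kappa_2)(t,s)=\int_0^T\kappa_1(t,u)\kappa_2(u,s)\,du .
\]
Then $B_{\kappa_1\star\kappa_2}=B_{\kappa_1}B_{\kappa_2}$. This follows from the defining formula: $(B_\kappa h)^\prime(t)=\int_0^T\kappa(t,s)h^\prime(s)\,ds$, so composing the two operators amounts to composing the integral kernels acting on $h^\prime\in L^2([0,T];\mathbb{R}^d)$. The paper already uses this rule implicitly, for instance in $B_{c(\kappa)}=B_\kappa^*B_\kappa$ and in $B_{\kappa^{(N)}}=\pi^{(N)}B_\kappa\pi^{(N)}$.

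With $\psi(u,s)=\one_{[0,u)}(s)I_d$ we get $(\phi\star\psi)(t,s)=\int_s^T\phi(t,u)\,du=\kappa_\phi(t,s)$. Hence
\[
    B_{\kappa_\phi}=B_\phi B_\psi .
\]
By \eqref{eq.hs}, both factors are in $\htwo$, since $\|B_\phi\|_{\htwo}=\|\phi\|_2<\infty$ and $\|B_\psi\|_{\htwo}=\|\psi\|_2=\sqrt{d}\,T/\sqrt{2}<\infty$. The standard fact that the product of two Hilbert--Schmidt operators is of trace class (\cite{GGK}) then gives the claim, together with the bound $\|B_{\kappa_\phi}\|_1\le\|\phi\|_2\|\psi\|_2$.

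The only point needing care is the Fubini justification in the composition identity. The integrand $|\phi(t,u)||\psi(u,s)||h^\prime(s)||g^\prime(t)|$ is integrable by Cauchy--Schwarz because all four factors are square integrable, so this is routine. I expect no real obstacle. The same factorization is what will later let $\dettwo(I+B_{\kappa_\phi})$ be converted into $\det(I+B_{\kappa_\phi})e^{-\tr B_{\kappa_\phi}}$, with $\tr B_{\kappa_\phi}=\int_0^T\!\int_0^s\tr\phi(t,s)\,dt\,ds$. This trace formula could be checked as a byproduct by computing the trace of $B_\phi B_\psi$ as $\int\!\!\int\tr(\phi(t,u)\psi(u,t))\,du\,dt$.
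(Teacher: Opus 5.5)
Your proof is correct and is essentially the paper's own argument. The paper introduces the same kernel $\psi(t,s)=\one_{[0,t)}(s)I_d$, observes that $\kappa_\phi(t,s)=\int_0^T\phi(t,u)\psi(u,s)\,du$, and concludes from $B_{\kappa_\phi}=B_\phi B_\psi$ that $B_{\kappa_\phi}$ is of trace class as a product of two Hilbert--Schmidt operators. You also spell out the composition rule and the Fubini justification, which the paper leaves implicit.
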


\begin{proof}
Define $\psi\in\ltwo$ by 
$\psi(t,s)=\one_{[0,t)}(s)I_d$
for $(t,s)\in[0,T]^2$.
It holds that 
\[
    \kappa_\phi(t,s)
    =\int_0^T \phi(t,u)\psi(u,s)du
    \quad\text{for }(t,s)\in[0,T]^2.
\]
Hence we know that
\begin{equation}\label{l.kappa.phi.trace.21}
    B_{\kappa_\phi}=B_\phi B_\psi,
\end{equation}
which implies that $B_{\kappa_\phi}$ is of trace class.
\end{proof}

\begin{proof}[Proof of Theorem~\ref{t.transf.cm}]
By Lemma~\ref{l.kappa.phi.trace}, it suffices to show
\eqref{t.transf.cm.1}.
Suppose that the sequence 
$\{\phi_m\}_{m=1}^\infty \subset \ltwo$ satisfies that 
$\|\phi_m-\phi\|_2\to0$ as $m\to\infty$.
Denote by $\|\cdot\|_1$ the trace class norm.
By \cite[Lemma~IV.7.2]{GGK} and
\eqref{l.kappa.phi.trace.21}, we have that  
\[
    \|B_{\kappa_{\phi_m}}-B_{\kappa_\phi}\|_1
    \le \|B_\psi\|_{\htwo}
        \|B_{\phi_m}-B_\phi\|_{\htwo}
    =\|B_\psi\|_{\htwo}
     \|\phi_m-\phi\|_2
    \to0 \quad\text{as }m\to\infty.
\]
Thanks to the continuity of the mapping $B\mapsto\det(I+B)$
with respect to the trace class norm
(\cite[Corollary~II.4.2]{GGK}), 
$\det(I+B_{\kappa_{\phi_m}})$ converges to
$\det(I+B_{\kappa_\phi})$.
It is easily seen that $\Psi_{\phi_m}$ converges to
$\Psi_\phi$ in probability.
Furthermore,
\[
    |\Lambda(B_{\kappa_{\phi_m}})
      -\Lambda(B_{\kappa_\phi})|
    \le \|B_{\kappa_{\phi_m}}
         -B_{\kappa_\phi}\|_{\htwo}\to0
    \quad\text{as }m\to\infty.
\]
Thus, it suffices to show \eqref{t.transf.cm.1} for $\phi$,
which is continuous on $[0,T]^2$.

If $\phi$ is continuous, so is $\kappa_\phi$.
By Lemmas~\ref{l.kappa.phi.trace} and \ref{l.trace.detail},
we obtain that 
\[
    \tr B_{\kappa_\phi}
    =\int_0^T \tr\kappa_\phi(t,t) dt
    =\int_0^T \biggl(\int_t^T
        \tr\phi(t,s) ds\biggr)dt
    =\int_0^T \biggl(\int_0^s
         \tr\phi(t,s) dt\biggr)ds.
\]
With the help of the identity
$\dettwo(I+A)=\det(I+A)e^{-\tr A}$ for
$A\in\htwo$ of trace class,
plugging this into \eqref{l.transf.cm.1}, we obtain
\eqref{t.transf.cm.1}.
\end{proof}

\begin{remark}\label{r.transf.cm}
We shall compare Theorem~\ref{t.transf.cm} with the change
of variables formula achieved by Cameron and Martin
(\cite{CM}). 
To do so, we first recall their formula.
Let $d=1$ and $T=1$.
They considered the linear transformation of the form
\[
    \mathbb{T}:\mathcal{W}\ni w\mapsto
       w+\int_0^1 K(\cdot,s)w(s)ds,
\]
where $K\in\ltwo$ with $K(0,\cdot)=0$.
Reminding that their ``Wiener measure'' is the distribution
of the mapping $w\mapsto \frac1{\sqrt{2}}w$ under $\mu$, we
rewrite their change of variables formula in our setting as
follows.  
Under several assumptions, which we do not restate here, it
holds that
\begin{equation}\label{eq:cm.1}
    |D|\int_S (f\circ \mathbb{T}) e^{-\Phi} d\mu
    =\int_{\mathbb{T}(S)} f d\mu
    \quad\text{for any Borel set $S\subset\mathcal{W}$ and  
               $f\in C_b(\mathcal{W})$,}
\end{equation}
for every Borel subset $S$ of $\mathcal{W}$ and 
$f\in C_b(\mathcal{W})$, where 
$\mathbb{T}(S)=\{\mathbb{T} w \mid w\in S\}$,
\begin{equation}\label{eq:cm.2}
    D=1+\sum_{n=1}^\infty \frac1{n!} \int_0^1\cdots\int_0^1
      \det\Bigl(\bigl(K(s_i,s_j)\bigr)_{1\le i,j\le n}\Bigr)
      ds_1\cdots ds_n,
\end{equation}
and
\begin{align*}
    \Phi(w)
    = & \frac12\int_0^1\biggl[\frac{d}{dt} 
          \int_0^1 K(t,s)w(s)ds\biggr]^2dt
     +\int_0^1\biggl[\int_0^1 
        \frac{\partial K}{\partial t}(t,s) w(s) ds
        \biggr]dw(t)
   \\
    & +\frac12\int_0^1 J(s) d\{w(s)^2\}
    \quad\text{with }
    J(s)=\lim_{t\downarrow s}K(t,s)
         -\lim_{t\uparrow s}K(t,s).
\end{align*}
They said that both ``$dw(t)$'' and ``$d\{w(s)^2\}$'' can be defined 
as Stieltjes integrals.   

To compare \eqref{eq:cm.1} with \eqref{t.transf.cm.1},
we assume that $K$ is continuous on $[0,1]^2$ and of the
form 
\[
    K(t,s)=\int_0^t \phi(u,s)du
    \quad\text{for }(t,s)\in[0,1]^2
\]
for some $\phi\in\ltwo$ with 
$\Lambda(B_{\kappa_\phi})<1$.
Then 
$\mathbb{T}=\iota+\mathbb{F}_\phi
           =\iota+F_{\kappa_\phi}$, and 
Theorem~\ref{t.transf.cm} is applicable to this $\phi$.
Combining the following observations (i)--(v), we can
conclude that \eqref{eq:cm.1} follows from
\eqref{t.transf.cm.1}. 
\begin{enumerate}
\item
The first term of $-\Phi$ is equal to the second one of 
$\Psi_\phi$.

\item
Since $\dfrac{\partial K}{\partial t}=\phi$, exchanging the
order of the double integral with respect to $ds$ and
``$dw(t)$'', we rewrite the second term of $-\Phi$ as 
\[
    -\int_0^1\biggl[\int_0^1\phi(t,s)dw(t)\biggr]
       w(s)ds.
\]
Since $d=1$, $M^\dagger=M$ for $M\in \mathbb{R}^{1\times 1}$
and the multiplications is commutative.
Hence this term coincides with the first term of
$\Psi_\phi$ by regarding ``$dw(t)$'' as the It\^o integral
$d\theta(t)$. 

\item
$J=0$ and the third term of $-\Phi$ vanishes.

\item
For $\sigma\in\ltwo$, define the linear operator
$\mathcal{L}_\sigma$ on $L^2([0,1];\mathbb{R})$ by 
\[
    \mathcal{L}_\sigma f=\int_0^1 \sigma(\cdot,s)f(s)ds
    \quad\text{for }f\in L^2([0,1];\mathbb{R}).
\]
We identify $\mathcal{L}_\sigma$ and $B_\sigma$ via the
correspondence between  the spaces $L^2([0,1];\mathbb{R})$ 
and $\mathcal{H}$ such that
\[
    L^2([0,1];\mathbb{R})\ni f\leftrightarrow
    \int_0^\bullet f(t)dt \in \mathcal{H}.
\]
Since 
\[
    K(t,s)=\int_0^1 \psi(t,u)\phi(u,s)du
    \quad\text{for }(t,s)\in[0,1]^2, 
\]
where 
$\psi(t,s)=\one_{[0,t)}(s)$ for $(t,s)\in[0,1]^2$.
$\mathcal{L}_K=\mathcal{L}_\psi \mathcal{L}_\phi$, 
and it is of trace class.
Due to the continuity of $K$, $\det(I+\mathcal{L}_K)$ admits
the Fredholm representation as described in
\eqref{eq:cm.2} 
(\cite[Theorem~VI.1.1]{GGK}).
Hence 
\[
    D=\det(I+\mathcal{L}_K)
     =\det(I+\mathcal{L}_\psi \mathcal{L}_\phi).
\]

\quad
If $A_1,A_2\in \htwo$ are both of trace class,
then $\det(I+A_1A_2)=\det(I+A_2A_1)$ 
(\cite[IV.(5.9)]{GGK}).
This identity is extended to $\htwo$ by a
standard approximation argument.
Thus we have that
\[
    \det(I+B_\psi B_\phi)=\det(I+B_\phi B_\psi).
\]
Hence by \eqref{l.kappa.phi.trace.21} and the identification 
between $\mathcal{L}_\sigma$ and $B_\sigma$, we obtain that
\[
    D=\det(I+B_{\kappa_\phi}).
\]

\item
Let $S$ be a Borel subset of $\mathcal{W}$ and 
$f\in C_b(\mathcal{W})$.
Put
$G=\one_S\circ
    (\iota+F_{\widehat{\kappa_\phi}})$.
By Theorem~\ref{t.inv.transf}, it holds that 
\[
     f(\iota+F_{\kappa_\phi})
       \one_S
     =\bigl(f G\bigr)(\iota+F_{\kappa_\phi})
    \quad\text{and}\quad
    f G = f \one_{\mathbb{T}(S)}
    \quad\text{$\mu$-a.s.}
\]
\end{enumerate}
\end{remark}

\begin{remark}\label{r.gen.cv}
We compare \eqref{eq:transf} with the identity obtained from
the general change of variables formula described in
\cite[Theorem~5.6.1]{mt-cambridge}.
The change of variables formula there asserts that 
$F\in \mathbb{D}^\infty(\mathcal{H})$ having an
$r\in(\frac12,\infty)$ with 
\[
    e^{-D^*F+r\|DF\|_{\mathcal{H}^{\otimes 2}}^2}
    \in L^{1+}(\mu)
\]
satisfies that 
\[
    \int_{\mathcal{W}} f(\iota+F)
      \dettwo(I+DF)e^{-D^*F-\frac12\|F\|_{\mathcal{H}}^2} d\mu
    =\int_{\mathcal{W}} f d\mu
    \quad\text{for every }f\in C_b(\mathcal{W}).
\]

Let $\kappa\in\ltwo$.
Applying this change of variables formula to $F_\kappa$ with
the help of \eqref{l.f.kappa.21} and
Lemma~\ref{l.d*f.kappa}, we obtain that $F_\kappa$ with 
\[
    e^{-D^*F_\kappa}\in L^{1+}(\mu)
\]
satisfies that  
\[
    \dettwo(I+B_\kappa)\int_{\mathcal{W}} f(\iota+F_\kappa)
      e^{\q_{\eta(\kappa)}} d\mu
    =e^{\frac12\|\kappa\|_2^2} \int_{\mathcal{W}} f d\mu
    \quad\text{for every }f\in C_b(\mathcal{W}).
\]
Substituting $f=1$ into this, we see that 
${\det}_2(I+B_\kappa)>0$.
Thus the identity \eqref{eq:transf} is rediscovered.

Define $s(\kappa)\in\stwo$ as 
\[
    s(\kappa)(t,s)=-\{\kappa(t,s)+\kappa(s,t)^\dagger\}
    \quad\text{for }(t,s)\in[0,T]^2.
\]
By \eqref{l.d*f.kappa.21}, we have that
\[
    -D^*F_\kappa=\q_{s(\kappa)}.
\]
By Remark~\ref{r.q.eta.int}, we see that
\[
    e^{-D^*F_\kappa}\in L^{1+}(\mu)
    \quad\text{if and only if}\quad
    \Lambda(B_{s(\kappa)})<1.  
\]
Thus the general change of variables formula implies 
\eqref{eq:transf} with $\dettwo(I+B_\kappa)>0$ if and only if
$\Lambda(B_{s(\kappa)})<1$.

Assume that $\Lambda(B_{s(\kappa)})<2$.
Then $\Lambda(B_{\eta(\kappa)})<1$ and 
$\dettwo(I+B_\kappa)>0$.
In fact, it holds that
\[
    \inf_{\|h\|_{\mathcal{H}}=1}
      \langle B_{a\kappa} h,h\rangle_{\mathcal{H}}
    =-\frac{a}2 \Lambda(B_{s(\kappa)})>-a
    \quad\text{for }a\in[0,1].
\]
Hence 
\[
    \inf_{\|h\|_{\mathcal{H}}=1}
       \|(I+B_{a\kappa})h\|_{\mathcal{H}}
    \ge \inf_{\|h\|_{\mathcal{H}}=1}
        \langle (I+B_{a\kappa})h,h\rangle_{\mathcal{H}}
    >0
    \quad\text{for }a\in[0,1].
\]
By Lemma~\ref{l.cont.inv} and \eqref{r.transf.2},
$\dettwo(I+B_{a\kappa})\ne0$ and 
$\Lambda(B_{\eta(a\kappa)})<1$ for $a\in[0,1]$.
In particular, $\Lambda(B_{\eta(\kappa)})<1$.
Furthermore, since the mapping 
$[0,1]\ni a\mapsto 
 \dettwo(I+B_{a\kappa})=\dettwo(I+aB_\kappa)
 \in \mathbb{R}$ is
continuous and $\dettwo(I+B_{a\kappa})=1$ for $a=0$, 
$\dettwo(I+B_{a\kappa})>0$ for any $a\in[0,1]$. 
Hence $\dettwo(I+B_\kappa)>0$.
Thus Theorem~\ref{t.transf} covers the cases obtained by
using the general change of variables formula. 

In general, even if $\Lambda(B_{\eta(\kappa)})<1$ and 
$\dettwo(I+B_{\kappa})>0$, $\Lambda(B_{s(\kappa)})$ may not
be less than $2$.
For example, take orthonormal $h_1,h_2\in \mathcal{H}$ and 
$b_1,b_2<-1$.
Define $\kappa\in\stwo$ by 
\[
    \kappa(t,s)=b_1[h_1^\prime(s)\otimes h_1^\prime(t)]
     +b_2[h_2^\prime(s)\otimes h_2^\prime(t)]
    \quad\text{for }(t,s)\in[0,T]^2.
\]
We have that
\[
    s(\kappa)(t,s)
      =-2b_1[h_1^\prime(s)\otimes h_1^\prime(t)]
       -2b_2[h_2^\prime(s)\otimes h_2^\prime(t)]
\]
and
\[
    \eta(\kappa)(t,s)
       =-(2b_1+b_1^2)[h_1^\prime(s)\otimes h_1^\prime(t)]
        -(2b_2+b_2^2)[h_2^\prime(s)\otimes h_2^\prime(t)]
\]
for $(t,s)\in[0,T]^2$.
Hence we see that
\[
    \Lambda(B_{s(\kappa)})=(-2b_1)\vee(-2 b_2)>2
\]
and 
\[
    \Lambda(B_{\eta(\kappa)})
      =(-2b_1-b_1^2)\vee (-2b_2-b_2^2)
      =\{1-(1+b_1)^2\}\vee \{1-(1+b_2)^2\}
      <1.
\]
Furthermore, it holds that
\[
    \dettwo(I+B_\kappa)=(1+b_1)(1+b_2)e^{-(b_1+b_2)}>0.
\]

Thus, our condition that $\Lambda(B_{\eta(\kappa)})<1$
covers a wider class of transformations of order one than
the class obtained via the general change of variables
formula.
\end{remark}

\chapter{From quadratic forms to transformations}
\label{chap.twoways}

\begin{quote}{\small
Since $\lambda\q_\eta=\q_{\lambda\eta}$ for $\lambda\in \mathbb{R}$, 
investing the Laplace transformation
$\mathbb{R}\ni\lambda\mapsto
 \int_{\mathcal{W}} fe^{\lambda\q_\eta} d\mu$
with $\eta\in\stwo$ and $f\in  C_b(\mathcal{W})$
comes down to the case when $\lambda=1$:
what to be probed is the Wiener integral
$\int_{\mathcal{W}} fe^{\q_\eta} d\mu$.
In order to apply Theorem~\ref{t.transf} to the evaluation of
such Wiener integrals, the surjectivity of the mapping
$\ltwo\ni\kappa\mapsto\eta(\kappa)\in\stwo$ will be shown
in two ways.
We will also see its injectivity by restricting the domain.
The achieved evaluation is applied to 
quadratic Wiener functionals of harmonic-oscillator type.
Furthermore, the correspondence with the Girsanov transformation 
is investigated.
}
\end{quote}

\section{Surjectivity --- finding $\kappa$ with $\eta(\kappa)=\eta$}
\label{sec.twoways}
In this section, we present the two ways to find
$\kappa\in\ltwo$ with $\eta=\eta(\kappa)$ for given
$\eta\in\stwo$.

\begin{theorem}\label{t.tways}
Let $\eta\in\stwo$.
The following four conditions are equivalent.
\\
{\rm(i)}
$e^{\q_\eta}\in L^1(\mu)$.
\\
{\rm(ii)}
$\Lambda(B_\eta)<1$.
\\
{\rm(iii)}
There is a $\tau\in\stwo$ satisfying that
$\dettwo(I+B_\tau)\ne0$ and 
\begin{equation}\label{t.tways.1}
    \eta(t,s)=-2\tau(t,s)-\int_0^T \tau(t,u)\tau(u,s)du
    \quad\text{for a.e.}~(t,s)\in[0,T]^2.
\end{equation}
{\rm(iv)}
There is a $\rho\in\stwo$ satisfying that
\begin{equation}\label{t.tways.2}
    \eta(t,s)=\rho(t,s)
      -\int_{t\vee s}^T \rho(t,u)\rho(u,s)du
    \quad\text{for a.e.}~(t,s)\in[0,T]^2.
\end{equation}
If these conditions hold, then 
\[
    \eta(\tau)\eqltwo\eta
    \quad\text{and}\quad
    \eta(\kappa_A(\rho))\eqltwo\eta,
\]
where $\kappa_A(\rho)\in\ltwo$ is given as
$\kappa_A(\rho)(t,s)=-\one_{[0,t)}(s)\rho(t,s)$ for $(t,s)\in[0,T]^2$,
and it holds that
\begin{equation} \label{t.tways.3}
    \int_{\mathcal{W}} f e^{\q_\eta+\D^*h} d\mu
      = \{\dettwo(I-B_\eta)\}^{-\frac12}
        \int_{\mathcal{W}} f(\iota+F_{\widehat{\tau}})
          e^{\D^*[(I+B_\tau)^{-1}h]} d\mu
\end{equation}
and
\begin{equation}\label{t.tways.4}
    \int_{\mathcal{W}} f e^{\q_\eta+\D^*h} d\mu
      =e^{\frac14\|\rho\|_2^2} 
       \int_{\mathcal{W}}
         f(\iota+F_{\widehat{\kappa_A(\rho)}})
         e^{\D^*[(I+B_{\kappa_A^*(\rho)})^{-1}h]} d\mu
\end{equation}
for every $f\in C_b(\mathcal{W})$ and $h\in \mathcal{H}$,
where $\kappa_A^*(\rho)(t,s)=-\one_{(t,T]}(s)\rho(t,s)$ for
$(t,s)\in[0,T]^2$.
\end{theorem}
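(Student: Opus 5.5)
(i)$\Leftrightarrow$(ii) is Lemma~\ref{l.q.eta.int}(ii), so I will link (ii) to (iii) and to (iv) and then obtain \eqref{t.tways.3}, \eqref{t.tways.4} from Theorem~\ref{t.inv.transf}. Throughout I work at the operator level: by Lemma~\ref{l.b.kappa} an identity of kernels in $\ltwo$ is equivalent to the corresponding identity in $\htwo$. For $\tau\in\stwo$ we have $B_\tau^*=B_\tau$, so \eqref{r.transf.1} reads $B_{\eta(\tau)}=-2B_\tau-B_\tau^2$; this is the operator form of \eqref{t.tways.1}. Likewise $B_{\kappa_A(\rho)}=-V$ with $V$ the Volterra part of $B_\rho$, and $B_{\kappa_A^*(\rho)}=-V^*$. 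Since $\rho\in\stwo$, the kernel $\one_{(t,T]}(s)\rho(t,s)$ is the transpose-adjoint of $\one_{[0,t)}(s)\rho(t,s)$, and $B_\rho=V+V^*$ up to the null diagonal. Then $B_{\eta(\kappa_A(\rho))}=V+V^*-V^*V$, and the kernel of $V^*V$ is $\int_{t\vee s}^T\rho(u,t)^\dagger\rho(u,s)\,du=\int_{t\vee s}^T\rho(t,u)\rho(u,s)\,du$. This gives $\eta(\kappa_A(\rho))\eqltwo\eta$ exactly when \eqref{t.tways.2} holds.

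For (ii)$\Rightarrow$(iii), I note that $\Lambda(B_\eta)<1$ means $I-B_\eta$ is positive with a bounded inverse. I set $B_\tau=(I-B_\eta)^{1/2}-I$ via the square root lemma (Lemma~\ref{l.sq.root}). Diagonalizing $B_\eta=\sum a_nh_n\otimes h_n$ gives $B_\tau=\sum(\sqrt{1-a_n}-1)h_n\otimes h_n$, which is Hilbert--Schmidt because $|\sqrt{1-a}-1|\le C|a|$ on $a\le\Lambda(B_\eta)$. Lemma~\ref{l.b.kappa} then gives $\tau\in\stwo$. I check $-2B_\tau-B_\tau^2=I-(I+B_\tau)^2=B_\eta$, and $I+B_\tau$ is invertible, so $\dettwo(I+B_\tau)\ne0$. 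For (iii)$\Rightarrow$(ii), I compute $\langle B_\eta h,h\rangle=\|h\|^2-\|(I+B_\tau)h\|^2$. Invertibility (from $\dettwo\ne0$) gives $\Lambda(B_\eta)\le1-\|(I+B_\tau)^{-1}\|_\op^{-2}<1$, exactly as in Remark~\ref{r.transf}.

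For (iv)$\Rightarrow$(ii), $\eta=\eta(\kappa_A(\rho))$ gives $\langle B_\eta h,h\rangle=\|h\|^2-\|(I-V)h\|^2$, and a Volterra Hilbert--Schmidt operator is quasinilpotent. Hence $I-V$ is invertible ($\dettwo(I-V)=1$), and the same bound gives $\Lambda<1$. The main obstacle is (ii)$\Rightarrow$(iv), i.e.\ the existence of a causal factorization $I-B_\eta=(I-V)^*(I-V)$ with $V$ Volterra Hilbert--Schmidt. My first approach is to decompose $(I-B_\eta)^{-1}=I+B_\gamma$ and solve for $\rho$ through the resolvent, in the style of a Wiener--Hopf/Gohberg--Krein factorization. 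Concretely, I would consider the truncations $\eta^{(r)}=\one_{[r,T]^2}\eta$ on $[r,T]$. I would define $\rho(t,s)$ for $s<t$ as the kernel of $(I-B_{\eta^{(t)}})^{-1}-I$ evaluated at $(t,s)$, which is a Bellman--Krein type formula, and verify \eqref{t.tways.2} by differentiating in $r$. Each $I-B_{\eta^{(r)}}$ is invertible with uniformly bounded inverse because $\Lambda(B_{\eta^{(r)}})\le\Lambda(B_\eta)$. If regularity is a problem, I would first do this for finite-rank (continuous) $\eta$ and pass to the limit using continuity of factorization in $\htwo$ under the uniform invertibility bound.

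Finally, the formulas. For \eqref{t.tways.3} I apply Theorem~\ref{t.inv.transf} with $\kappa=\tau$. Here $B_\tau^*=B_\tau$ and $\|\tau\|_2^2=\tr B_\tau^2$. Moreover $\dettwo(I+B_\tau)^2=\det((I+B_\tau)^2)e^{-2\tr B_\tau}$ in the regularized sense, which equals $\dettwo(I-B_\eta)e^{\tr B_\tau^2}$. This gives the constant $|\dettwo(I+B_\tau)|^{-1}e^{\frac12\|\tau\|_2^2}=\{\dettwo(I-B_\eta)\}^{-1/2}$. For \eqref{t.tways.4} I use $\kappa=\kappa_A(\rho)$ with $\dettwo(I-V)=1$, and $\|\kappa_A(\rho)\|_2^2=\frac12\|\rho\|_2^2$ by symmetry of $\rho$; this gives the factor $e^{\frac14\|\rho\|_2^2}$.
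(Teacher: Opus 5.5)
Your arguments for (i)$\Leftrightarrow$(ii), (ii)$\Leftrightarrow$(iii), (iv)$\Rightarrow$(ii), and for \eqref{t.tways.3} and \eqref{t.tways.4} are sound and essentially the paper's. There are two small variations, both fine:
\begin{itemize}
\item you show $(I-B_\eta)^{1/2}-I$ is Hilbert--Schmidt by diagonalizing, where the paper writes it as $-(I+C_\eta)^{-1}B_\eta$;
\item you get $\dettwo(I-V)=1$ from quasinilpotence of Volterra operators, where the paper uses the step-kernel approximation of Lemma~\ref{l.dettwo}.
\end{itemize}

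The gap is (ii)$\Rightarrow$(iv). This is the only hard implication, and you do not prove it. Its whole content is the existence of a factorization $I-B_\eta=(I-V)^*(I-V)$ with $V$ Hilbert--Schmidt and kernel supported in $\{s<t\}$.

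Your explicit formula is wrong as stated. $B_{\eta^{(t)}}$ kills every $h$ with $h'$ supported in $[0,t)$, and so does $(I-B_{\eta^{(t)}})^{-1}-I=(I-B_{\eta^{(t)}})^{-1}B_{\eta^{(t)}}$. Hence its kernel vanishes at $(t,s)$ whenever $s<t$. Your $\rho$ would then vanish below the diagonal, so $\rho\eqltwo0$ by symmetry, and \eqref{t.tways.2} would force $\eta\eqltwo0$.

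Truncations to $[r,T]$ are the right ones for a UL factorization of $I-B_\eta$. Unwinding $V=I-(I-V^*)^{-1}(I-B_\eta)$, however, gives
$\rho(t,s)=\eta(t,s)+\int_t^T\Gamma^{(t)}(t,u)\eta(u,s)\,du$ for $s<t$, where $I+B_{\Gamma^{(t)}}=(I-B_{\eta^{(t)}})^{-1}$. That is the kernel of $(I-B_{\eta^{(t)}})^{-1}B_\eta$, not of $(I-B_{\eta^{(t)}})^{-1}B_{\eta^{(t)}}$.

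Even after this correction, three things are still missing:
\begin{itemize}
\item[(a)] a jointly measurable version of $(t,u)\mapsto\Gamma^{(t)}(t,u)$, since you are evaluating an $L^2$ kernel on the moving edge of its support;
\item[(b)] a Bellman--Krein identity in $r$, needed to verify \eqref{t.tways.2};
\item[(c)] the ``continuity of the factorization in $\htwo$'' that your approximation step relies on.
\end{itemize}
None of these is supplied, and together they are precisely the Gohberg--Krein theory.

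The paper does not redo that theory; it imports it.
\begin{itemize}
\item It passes to the inverse $I+B_\varphi=(I-B_\eta)^{-1}$ and shows $\Lambda(B_{-\varphi})\le1-(1-a)^{-1}<1$, where $a=\inf_{\|h\|_{\mathcal{H}}=1}\langle B_\eta h,h\rangle_{\mathcal{H}}$.
\item Hence every compression to $\mathcal{H}_\xi$ satisfies $\Lambda(\pi_\xi B_{-\varphi}\pi_\xi)<1$, so all truncated equations on $[0,\xi]$ are solvable. This is the dual of your observation that each $I-B_{\eta^{(r)}}$ is uniformly invertible.
\item It then invokes the Gohberg--Krein special factorization (Lemma~\ref{l.gk}, Proposition~\ref{p.sigma.gamma}). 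This gives $I-\mathcal{L}_\eta=(I+\mathcal{L}_{\nu_+})(I+\mathcal{L}_{\nu_-})$ with triangular factors.
\item Self-adjointness gives $\nu_+(s,t)^\dagger=\nu_-(t,s)$. So the glued kernel $\nu$ lies in $\stwo$, and $\rho=-\nu$ satisfies \eqref{t.tways.2}.
\end{itemize}
To complete your proof, either cite such a factorization theorem or actually carry out (a)--(c).
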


\begin{remark}\label{r.tways}
(i)
The equivalence between the conditions (i) and (ii) has been
already seen in Lemma~\ref{l.q.eta.int}.
\\
(ii)
Let $\eta\in\stwo$ satisfy $\Lambda(B_\eta)<1$.
Due to the theorem, we have such $\tau,\rho\in\stwo$ 
as described in the conditions (iii) and (iv).
Suppose that $\tau\eqltwo\kappa_A(\rho)$.
Since $\tau\in\stwo$, we have that
\[
    -\one_{[0,t)}(s) \rho(t,s)
    =\tau(t,s)=\tau(s,t)^\dagger
    =-\one_{(t,T]}(s) \rho(t,s)
    \quad\text{for a.e.}~(t,s)\in[0,T]^2.
\]
This implies that $\rho\eqltwo0$ and hence 
$\eta\eqltwo\eta(\kappa_A(\rho))\eqltwo0$.
Thus if $\|\eta\|_2\ne0$, then
$\|\tau-\kappa_A(\rho)\|_2\ne0$, that is, we have two types
of $\kappa$ with $\eta(\kappa)=\eta$.
\end{remark}

The proof is broken into several steps, each being a lemma.
On the way, we will see that $\dettwo(I-B_\eta)\ne0$ and 
$I+B_{\kappa_A^*(\rho)}$ has a continuous inverse if the
conditions are fulfilled.

\begin{lemma}\label{l.tw.(iii)to(ii)}
Suppose that the condition {\rm(iii)} in the theorem holds. 
Then $\eta(\tau)=\eta$, the condition {\rm(ii)} is fulfilled, 
and the identity \eqref{t.tways.3} holds.
\end{lemma}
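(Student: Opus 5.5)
The plan is to reduce everything to Theorem~\ref{t.transf}, Remark~\ref{r.transf} and Theorem~\ref{t.inv.transf} applied with $\kappa=\tau$.

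\emph{Step 1: $\eta(\tau)=\eta$.} Since $\tau\in\stwo$, we have $\tau(s,t)^\dagger=\tau(t,s)$. Then the definition \eqref{eq:eta.kappa} gives
\[
\eta(\tau)(t,s)=-\Bigl\{2\tau(t,s)+\int_0^T\tau(u,t)^\dagger\tau(u,s)\,du\Bigr\}
=-2\tau(t,s)-\int_0^T\tau(t,u)\tau(u,s)\,du .
\]
By \eqref{t.tways.1} this coincides with $\eta$ a.e. Equivalently, at the operator level $B_\tau^*=B_\tau$, so \eqref{r.transf.1} yields
\[
B_\eta=B_{\eta(\tau)}=-(2B_\tau+B_\tau^2)=I-(I+B_\tau)^2 .
\]
In particular $I-B_\eta=(I+B_\tau)^2$.

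\emph{Step 2: condition (ii).} Because $\dettwo(I+B_\tau)\ne0$, the equivalence \eqref{r.transf.2} gives $\Lambda(B_{\eta(\tau)})<1$. Since $B_{\eta(\tau)}=B_\eta$ by Lemma~\ref{l.b.kappa}, this is condition (ii).

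\emph{Step 3: the identity \eqref{t.tways.3}.} Apply Theorem~\ref{t.inv.transf} with $\kappa=\tau$. Using $q_{\eta(\tau)}=\q_\eta$ and $B_\tau^*=B_\tau$, we get
\[
|\dettwo(I+B_\tau)|\int_{\mathcal{W}} fe^{\q_\eta+\D^*h}d\mu
=e^{\frac12\|\tau\|_2^2}\int_{\mathcal{W}}f(\iota+F_{\widehat\tau})e^{\D^*[(I+B_\tau)^{-1}h]}d\mu .
\]
It then remains to prove the constant identity
\[
\{\dettwo(I-B_\eta)\}^{-1/2}=e^{\frac12\|\tau\|_2^2}|\dettwo(I+B_\tau)|^{-1}.
\]
This is the step I expect to need care, and I would handle it via the product rule for regularized determinants. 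For $A_1,A_2\in\htwo$,
\[
\dettwo((I+A_1)(I+A_2))=\dettwo(I+A_1)\dettwo(I+A_2)e^{-\tr(A_1A_2)}
\]
(GGK). Taking $A_1=A_2=B_\tau$ gives
\[
\dettwo(I-B_\eta)=\dettwo((I+B_\tau)^2)=\dettwo(I+B_\tau)^2e^{-\tr(B_\tau^2)} .
\]
Since $B_\tau$ is self-adjoint, $\tr(B_\tau^2)=\|B_\tau\|_{\htwo}^2=\|\tau\|_2^2$ by \eqref{eq.hs}. Hence $\dettwo(I-B_\eta)=\dettwo(I+B_\tau)^2e^{-\|\tau\|_2^2}>0$, and its $-\frac12$ power is exactly $|\dettwo(I+B_\tau)|^{-1}e^{\frac12\|\tau\|_2^2}$. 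This also gives $\dettwo(I-B_\eta)\ne0$, as promised before the lemma.

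If one prefers not to cite the product formula, I would verify it with the finite-rank approximations $\pi^{(N)}B_\tau\pi^{(N)}$ as in the proof of Theorem~\ref{t.transf}. In finite dimensions it follows from $\dettwo(I+M)=\det(I+M)e^{-\tr M}$ and $\tr(2M+M^2)=2\tr M+\tr M^2$. One then passes to the limit using the continuity of $\dettwo$ on $\htwo$.
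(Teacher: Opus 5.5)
Your proposal is correct and follows the paper's proof step by step: the operator identity $I-B_\eta=(I+B_\tau)^2$, then \eqref{r.transf.2} for condition (ii), then Theorem~\ref{t.inv.transf} with $\kappa=\tau$ combined with $\dettwo((I+B)^2)=\{\dettwo(I+B)\}^2e^{-\|B\|_{\htwo}^2}$. The only difference is that the paper derives this determinant identity from the trace-class case plus continuity, whereas you cite the general product rule. Your exponent $e^{-\|\tau\|_2^2}$ is the right one; the paper's displayed $e^{-\frac12\|\tau\|_2^2}$ is a slip, since only $e^{-\|\tau\|_2^2}$ makes the constant in \eqref{eq:inv.transf} agree with the one in \eqref{t.tways.3}.
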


\begin{proof}
The equation \eqref{t.tways.1} is rewritten as
\[
    I-B_\eta=(I+B_\tau)^2.
\]
Since $\tau\in\stwo$, this implies that
$I-B_\eta=I-B_{\eta(\tau)}$.
By Lemma~\ref{l.b.kappa}, 
$\eta\eqltwo\eta(\tau)$.
Furthermore, due to \eqref{r.transf.2} and the assumption 
that $\dettwo(I+B_\tau)\ne0$, we obtain that
$\Lambda(B_\eta)=\Lambda(B_{\eta(\tau)})<1$.

Recall that 
\[
    \det\bigl((I+B)^2\bigr)
    =(\det(I+B))^2
    \quad\text{and}\quad
    \dettwo(I+B)=\det(I+B)e^{-\tr B}
\]
for $B\in \mathcal{S}(\htwo)$ of trace class
(see \cite[IV(5.10)]{GGK} for the first equality).
Hence the identity 
\[
    \dettwo\bigl((I+B)^2\bigr)
    =\{\dettwo(I+B)\}^2  e^{-\|B\|_{\htwo}^2}
\]
holds for such a $B$, and extends to $\mathcal{S}(\htwo)$ by a
standard continuation argument.
Since $\|B_\tau\|_{\htwo}=\|\tau\|_2$, we know that
\[
    \dettwo(I-B_\eta)
    =\dettwo((I+B_\tau)^2)
    =|\dettwo(I+B_\tau)|^2e^{-\frac12\|\tau\|_2^2}.
\]
Plugging this into \eqref{eq:inv.transf} with $\kappa=\tau$,
we obtain \eqref{t.tways.3}.
\end{proof}

\begin{lemma}\label{l.dettwo}
For every $\rho\in\stwo$, 
$\dettwo(I+B_{\kappa_A(\rho)})
 =\dettwo(I+B_{\kappa_A^*(\rho)})=1$ 
and 
$\Lambda(B_{\eta(\kappa_A(\rho))})<1$.
\end{lemma}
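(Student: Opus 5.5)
The key input is that $B_{\kappa_A(\rho)}$ is a Volterra operator. Its kernel $\kappa_A(\rho)(t,s)=-\one_{[0,t)}(s)\rho(t,s)$ vanishes for $s\ge t$. Hence $B_{\kappa_A(\rho)}$ is quasi-nilpotent, i.e.\ its spectral radius is zero. Similarly $B_{\kappa_A^*(\rho)}$, with kernel supported in $\{s>t\}$, is quasi-nilpotent. Indeed, since $\rho\in\stwo$ we have $\kappa_A^*(\rho)=\kappa_A(\rho)^*$, so $B_{\kappa_A^*(\rho)}=B_{\kappa_A(\rho)}^*$.

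\textbf{Step 1: quasi-nilpotency.} I would first prove that $\|B_{\kappa_A(\rho)}^n\|_\op^{1/n}\to0$. Writing $k=\kappa_A(\rho)$, the $n$-fold iterated kernel is supported in $\{t>u_1>\cdots>u_{n-1}>s\}$. By Cauchy--Schwarz it is bounded, in $\ltwo$-norm, by $\|\rho\|_2^n/\sqrt{(n-1)!}$ up to the usual simplex argument. Concretely, set $a(t)=\bigl(\int_0^T|\rho(t,u)|^2du\bigr)^{1/2}$, which is square integrable. Iterating gives
\[
|k^{(n)}(t,s)|\le a(t)\,b(s)\Bigl(\int_{s<u_{n-1}<\cdots<u_1<t}\prod a(u_i)^2\Bigr)^{1/2},
\]
and the simplex integral is at most $\|a\|^{2(n-1)}/(n-1)!$. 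This yields a Hilbert--Schmidt norm of order $C^n/\sqrt{(n-1)!}$.

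\textbf{Step 2: the determinants.} For a Hilbert--Schmidt quasi-nilpotent $A$, all eigenvalues of $A$ vanish. Therefore $\dettwo(I+A)=\prod(1+\lambda_j)e^{-\lambda_j}=1$ by the product formula for the regularized determinant in \cite{GGK}. Alternatively, I would use the Plemelj--Smithies formula $\log\dettwo(I+zA)=\sum_{n\ge2}(-1)^{n+1}z^n\tr(A^n)/n$, valid for small $|z|$. Here $\tr(A^n)$ for $n\ge2$ is the integral of the iterated kernel on the diagonal, which is a product over a degenerate simplex and so vanishes. The function $z\mapsto\dettwo(I+zA)$ is entire and equals $1$ near $0$, hence it is identically $1$; taking $z=1$ gives the claim. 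I would invoke the Lidskii-type statement from \cite{GGK} (the regularized determinant is $\prod(1+\lambda_j)e^{-\lambda_j}$), so that Step 1 suffices. The same argument applies to $B_{\kappa_A^*(\rho)}=B_{\kappa_A(\rho)}^*$; alternatively, use $\dettwo(I+A^*)=\overline{\dettwo(I+A)}$.

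\textbf{Step 3: conclusion.} Since $\dettwo(I+B_{\kappa_A(\rho)})=1\neq0$, the equivalence \eqref{r.transf.2} of Remark~\ref{r.transf} immediately gives $\Lambda(B_{\eta(\kappa_A(\rho))})<1$.

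The main obstacle is the justification of Step 1/2: rigorously showing that the Volterra Hilbert--Schmidt operator has no nonzero eigenvalue, or equivalently controlling the iterated-kernel estimate with only $L^2$ integrability of $\rho$. If the direct estimate is awkward, a fallback is to approximate $\rho$ in $\ltwo$ by bounded kernels, for which the Volterra bound $|k^{(n)}|\le M^nT^{n-1}/(n-1)!$ is elementary. One then passes to the limit using the continuity of $A\mapsto\dettwo(I+A)$ on $\htwo$ (\cite[Theorem~XI.2.2]{GGK}), noting that $\kappa\mapsto\kappa_A(\rho)$ is $\ltwo$-continuous.
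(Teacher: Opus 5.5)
Your proof is correct, but it follows a different route from the paper.

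\textbf{The paper's route.} The paper avoids spectral theory entirely. By continuity of $A\mapsto\dettwo(I+A)$ on $\htwo$, it replaces $\kappa_A(\rho)$ by a step kernel supported on the strictly sub-diagonal blocks $[T_{N;n},T_{N;n+1})\times[T_{N;m},T_{N;m+1})$, $m<n$. This is legitimate because the diagonal blocks have total area $T^2/N\to0$. For such a kernel, $B_{\kappa_A(\rho)}$ is a finite-rank operator that is strictly triangular with respect to the orthogonal system $h_{n;i}^\prime=\one_{[T_{N;n},T_{N;n+1})}e_i$, hence nilpotent. So $\det(I+B)=1$ and $\tr B=0$, giving $\dettwo(I+B)=1$.

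\textbf{Your route.} You prove quasi-nilpotency for the genuine $L^2$ Volterra kernel and then invoke the Lidskii-type product formula for $\dettwo$. Your iterated estimate does work. Define $b(s)=\bigl(\int_0^T|\rho(u,s)|^2du\bigr)^{1/2}$, which you left undefined. Induction then gives
\[
|k^{(n)}(t,s)|\le a(t)b(s)\Bigl(\bigl(\textstyle\int_s^ta^2\bigr)^{n-2}/(n-2)!\Bigr)^{1/2}.
\]
The factorial is thus $(n-2)!$ rather than $(n-1)!$, which is harmless.

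Your Plemelj--Smithies alternative is the cleanest form of your idea, and it makes Step~1 unnecessary. For $n\ge2$,
\[
\tr(A^n)=\tr(A^{n-1}A)=\iint\tr[k^{(n-1)}(t,s)k(s,t)]\,ds\,dt=0,
\]
because the two factors have disjoint supports. Hence the entire function $z\mapsto\dettwo(I+zA)$ equals $1$ near $0$, and therefore everywhere.

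\textbf{Comparison.} Your route needs no approximation and yields more, e.g.\ $I+zB_{\kappa_A(\rho)}$ is invertible for every $z\in\mathbb{C}$. The paper's route needs only finite-dimensional linear algebra plus the continuity of $\dettwo$ already cited for Theorem~\ref{t.transf}, rather than the Lidskii or Plemelj--Smithies theorems.

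Your fallback of approximating by bounded kernels is in the same spirit as the paper's reduction. Note that the map that is $\ltwo$-continuous there is $\rho\mapsto\kappa_A(\rho)$.
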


\begin{proof}
Note that $B_{\kappa_A^*(\rho)}=B_{\kappa_A(\rho)}^*$.
Thus
$\dettwo(I+B_{\kappa_A^*(\rho)})=\dettwo(I+B_{\kappa_A(\rho)})$. 
Due to \eqref{r.transf.2}, the proof completes once we have
shown that $\dettwo(I+B_{\kappa_A(\rho)})=1$.
To show this identity, by the continuity of the mapping 
$\htwo\ni A\mapsto\dettwo(I+A)$, we may and will assume that 
$\kappa_A(\rho)$ is of the form 
\[
    \kappa_A(\rho)(t,s)
    =\sum_{0\le m<n\le N-1}
       \one_{[T_{N;n},T_{N;n+1})}(t)
       \one_{[T_{N;m},T_{N;m+1})}(s)
     K_{nm}
\]
for $(t,s)\in[0,T]^2$ for some $N\in \mathbb{N}$ and 
$K_{nm}\in \mathbb{R}^{d\times d}$, where
$T_{N;n}=\frac{nT}{N}$ for $0\le n\le N$.

Take an orthonormal basis $e_1,\dots,e_d$ of
$\mathbb{R}^d$. 
Define $h_{n;i}\in \mathcal{H}$ by 
\[
    h_{n;i}^\prime=\one_{[T_{N;n},T_{N;n+1})}e_i
\]
for $0\le n\le N-1$ and $1\le i\le d$.
Then $\{h_{n;i}\mid 0\le n\le N-1, 1\le i\le d\}$ is an
orthogonal system.
We have that
\[
    B_{\kappa_A(\rho)} 
    =\sum_{0\le m<n\le N-1} \sum_{i,j=1}^d K_{nm}^{ij}
      h_{m;j}\otimes h_{n;i},
    \quad\text{where }
    K_{nm}=\bigl(K_{nm}^{ij}\bigr)_{1\le i,j\le d}.
\]
Due to this expression, we know that $B_{\kappa_A(\rho)}$ is
of trace class. 
Define $0\le n(p)\le N-1$ and $1\le i(p)\le d$ 
for $1\le p\le dN$ so that $p=n(p)d+i(p)$.
Setting $\tilde{h}_p=h_{n(p);i(p)}$ for $1\le p\le dN$, we 
have that 
\[
    B_{\kappa_A(\rho)}
    =\sum_{1\le p<q\le dN} a^{pq} 
       \tilde{h}_p\otimes \tilde{h}_q,
    \quad\text{where }
    a^{pq}=\begin{cases}
            K_{n(q)n(p)}^{i(q)i(p)} &
            \text{if } n(p)<n(q),
            \\
            0 & \text{if } n(p)\ge n(q).
          \end{cases}
\]
This implies that 
\[
    \det(I+B_{\kappa_A(\rho)})=1
    \quad\text{and}\quad
    \tr B_{\kappa_A(\rho)}=0.
\]
Thus 
$\dettwo(I+B_{\kappa_A(\rho)})
 =\det(I+B_{\kappa_A(\rho)})e^{-\tr B_{\kappa_A(\rho)}}=1$.
\end{proof}

\begin{lemma}\label{l.tw.(iv)to(ii)}
Suppose that the condition {\rm(iv)} in the theorem holds. 
Then $\eta(\kappa_A(\rho))=\eta$, 
the condition {\rm(ii)} is fulfilled, and 
the identity \eqref{t.tways.4} holds.
\end{lemma}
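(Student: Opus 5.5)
We compute $\eta(\kappa_A(\rho))$ directly from \eqref{eq:eta.kappa}, then invoke Lemma~\ref{l.dettwo} and Theorem~\ref{t.inv.transf} with $\kappa=\kappa_A(\rho)$, exactly as Lemma~\ref{l.tw.(iii)to(ii)} did for $\tau$.

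\emph{Step 1: the kernel identity.} Write $\kappa=\kappa_A(\rho)$. Since $\rho\in\stwo$, we have
\[
\kappa(s,t)^\dagger=-\one_{[0,s)}(t)\rho(s,t)^\dagger=-\one_{(t,T]}(s)\rho(t,s).
\]
Hence
\[
\kappa(t,s)+\kappa(s,t)^\dagger=-\{\one_{[0,t)}(s)+\one_{(t,T]}(s)\}\rho(t,s)=-\rho(t,s)
\quad\text{for } s\ne t,
\]
that is, for a.e.\ $(t,s)$. For the quadratic term,
\[
\kappa(u,t)^\dagger\kappa(u,s)=\one_{[0,u)}(t)\one_{[0,u)}(s)\rho(u,t)^\dagger\rho(u,s)=\one_{(t\vee s,T]}(u)\rho(t,u)\rho(u,s).
\]
Integrating in $u$ gives $\int_{t\vee s}^T\rho(t,u)\rho(u,s)du$. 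Substituting into \eqref{eq:eta.kappa} yields
\[
\eta(\kappa)(t,s)=\rho(t,s)-\int_{t\vee s}^T\rho(t,u)\rho(u,s)du,
\]
which equals $\eta$ a.e.\ by \eqref{t.tways.2}. Therefore $\eta(\kappa_A(\rho))\eqltwo\eta$, and so $B_\eta=B_{\eta(\kappa)}$ and $\q_\eta=\q_{\eta(\kappa)}$ by Lemma~\ref{l.b.kappa} and Theorem~\ref{t.q.eta}.

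\emph{Step 2: condition (ii).} By Lemma~\ref{l.dettwo}, $\Lambda(B_{\eta(\kappa)})<1$, hence $\Lambda(B_\eta)<1$. The same lemma gives $\dettwo(I+B_\kappa)=1$, so by \eqref{r.transf.2} (equivalently Remark~\ref{r.inv.transf}(ii)) the operator $I+B_\kappa$ and its adjoint $I+B_\kappa^*=I+B_{\kappa_A^*(\rho)}$ have continuous inverses. The adjoint identity uses $B_\kappa^*=B_{\kappa^*}$ with $\kappa^*(t,s)=\kappa(s,t)^\dagger=-\one_{(t,T]}(s)\rho(t,s)=\kappa_A^*(\rho)(t,s)$.

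\emph{Step 3: the identity \eqref{t.tways.4}.} Apply \eqref{eq:inv.transf} of Theorem~\ref{t.inv.transf} with this $\kappa$. Since $|\dettwo(I+B_\kappa)|=1$, it reads
\[
\int_{\mathcal{W}} f e^{\q_\eta+\D^*h}d\mu=e^{\frac12\|\kappa\|_2^2}\int_{\mathcal{W}}f(\iota+F_{\widehat{\kappa}})e^{\D^*[(I+B_{\kappa_A^*(\rho)})^{-1}h]}d\mu .
\]
It remains to check $\|\kappa\|_2^2=\frac12\|\rho\|_2^2$. By the symmetry $|\rho(t,s)|=|\rho(s,t)^\dagger|=|\rho(s,t)|$, the integral of $|\rho|^2$ over $\{s<t\}$ equals that over $\{s>t\}$, and the diagonal is null. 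Hence $\|\kappa\|_2^2=\int_{\{s<t\}}|\rho|^2=\frac12\|\rho\|_2^2$, giving the factor $e^{\frac14\|\rho\|_2^2}$.

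No step is a real obstacle; the only care needed is the indicator bookkeeping in Step 1, in particular the a.e.\ handling of the diagonal and the correct orientation of $\kappa_A^*(\rho)$ as the kernel of $B_{\kappa_A(\rho)}^*$.
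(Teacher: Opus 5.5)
Your proposal is correct and follows the paper's proof. You compute $\eta(\kappa_A(\rho))$ off the diagonal to get \eqref{eq.eta.kappa.a.rho}, use Lemma~\ref{l.dettwo} for both $\Lambda(B_\eta)<1$ and $\dettwo(I+B_{\kappa_A(\rho)})=1$, and then insert $\|\kappa_A(\rho)\|_2^2=\frac12\|\rho\|_2^2$ into \eqref{eq:inv.transf}. Beyond the paper, you only make explicit the indicator bookkeeping and the identification $B_{\kappa_A(\rho)}^*=B_{\kappa_A^*(\rho)}$.
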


\begin{proof}
Notice that 
\begin{equation}\label{eq.eta.kappa.a.rho}
    \eta(\kappa_A(\rho))(t,s)
    =\rho(t,s)-\int_{t\vee s}^T \rho(t,u)\rho(u,s)du
    \quad\text{for $(t,s)\in[0,T]^2$ with $t\ne s$}.
\end{equation}
Hence $\eta(\kappa_A(\rho))\eqltwo\eta$.
By Lemma~\ref{l.dettwo}, we see that
$\Lambda(B_\eta)=\Lambda(B_{\eta(\kappa_A(\rho))})<1$.

Notice that $\|\kappa_A(\rho)\|_2^2=\frac12\|\rho\|_2^2$.
By Lemma~\ref{l.dettwo} and the identity \eqref{eq:inv.transf} 
with $\kappa=\kappa_A(\rho)$, we obtain \eqref{t.tways.4}
\end{proof}

\begin{lemma}\label{l.tw.(ii)to(iii)}
If the condition {\rm(ii)} holds, then so does {\rm(iii)}.
\end{lemma}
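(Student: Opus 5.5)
The plan is to build $\tau$ from the functional calculus of the self-adjoint Hilbert--Schmidt operator $B_\eta$, taking $I+B_\tau$ to be the positive square root of $I-B_\eta$. By Lemma~\ref{l.b.kappa}, $B_\eta\in\mathcal{S}(\htwo)$. We can therefore expand $B_\eta=\sum_{n}a_nh_n\otimes h_n$ with an ONB $\{h_n\}$ of $\mathcal{H}$ and square summable real $a_n$, where $\sup_na_n=\Lambda(B_\eta)<1$ by (ii). Define
\[
    b_n=\sqrt{1-a_n}-1 \quad\text{and}\quad C=\sum_{n}b_nh_n\otimes h_n .
\]
Then $1+b_n=\sqrt{1-a_n}\ge\sqrt{1-\Lambda(B_\eta)}>0$ for every $n$.

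The first step is to check that $C\in\mathcal{S}(\htwo)$. The elementary estimate $|\sqrt{1-a}-1|=|a|/(\sqrt{1-a}+1)\le|a|$, valid for $a<1$, gives $\sum_nb_n^2\le\sum_na_n^2=\|\eta\|_2^2<\infty$. Hence $C$ is Hilbert--Schmidt, and it is self-adjoint by construction. Lemma~\ref{l.b.kappa} then supplies $\tau\in\stwo$ with $B_\tau=C$.

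The next step is the identity $(I+B_\tau)^2=\sum_n(1+b_n)^2h_n\otimes h_n=\sum_n(1-a_n)h_n\otimes h_n=I-B_\eta$. Since $\tau\in\stwo$, the kernel of $2B_\tau+B_\tau^2$ is $2\tau(t,s)+\int_0^T\tau(t,u)\tau(u,s)du$, so this operator identity reads $B_{-2\tau-\tau*\tau}=B_\eta$. The injectivity part of Lemma~\ref{l.b.kappa} then yields \eqref{t.tways.1} for a.e.\ $(t,s)$.

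It remains to show $\dettwo(I+B_\tau)\ne0$. Because $I+B_\tau$ has spectrum bounded below by $\sqrt{1-\Lambda(B_\eta)}>0$, it has a continuous inverse. By \cite[Theorem\,XII.1.1]{GGK}, as already used in Remark~\ref{r.transf}, this gives $\dettwo(I+B_\tau)\ne0$. The determinant can also be seen directly: $\dettwo(I+B_\tau)=\prod_n(1+b_n)e^{-b_n}$, which converges and is strictly positive since every factor is positive and $\sum_nb_n^2<\infty$.

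The main point needing care is the Hilbert--Schmidt bound on $C$, which rests on the uniform gap $1-\Lambda(B_\eta)>0$. Without that gap, $b_n$ would not be well defined and $I+B_\tau$ would not be invertible. Everything else is a direct translation between operators and kernels via Lemma~\ref{l.b.kappa}.
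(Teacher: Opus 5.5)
Your proof is correct and produces exactly the paper's $\tau=\kappa_S(\eta)$ (compare Remark~\ref{r.sqrt}). The paper obtains $C_\eta=I+B_\tau$ from the abstract square root lemma and gets the Hilbert--Schmidt property from $C_\eta-I=-(I+C_\eta)^{-1}B_\eta$; you diagonalize $B_\eta$, where that identity becomes $b_n=-a_n/(1+\sqrt{1-a_n})$, and the remaining steps (reading $(I+B_\tau)^2=I-B_\eta$ as \eqref{t.tways.1} via Lemma~\ref{l.b.kappa}, and deducing $\dettwo(I+B_\tau)\ne0$ from invertibility) coincide with the paper's.
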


\begin{proof}
Let $\mathcal{S}_+(\mathcal{H})$ be the totality of 
self-adjoint, continuous, non-negative definite, and linear
operators of $\mathcal{H}$ to $\mathcal{H}$.
Since $\Lambda(B_\eta)<1$, it holds that
\[
    \langle(I-B_\eta)h,h\rangle_{\mathcal{H}}
    \ge
    (1-\Lambda(B_\eta))\|h\|_{\mathcal{H}}^2
    \quad\text{for }h\in \mathcal{H}.
\]
Hence $I-B_\eta\in \mathcal{S}_+(\mathcal{H})$.
Due to the square root lemma (Lemma~\ref{l.sq.root}), there
is a unique $C_\eta\in \mathcal{S}_+(\mathcal{H})$ with
\[
    C_\eta^2=I-B_\eta.
\]
Since $C_\eta$ is non-negative definite, it holds that
\[
    \|(I+C_\eta)h\|_{\mathcal{H}}
    \ge\langle (I+C_\eta)h,h\rangle_{\mathcal{H}}
    \ge1
    \quad\text{for every }h\in \mathcal{H}
    \text{ with }\|h\|_{\mathcal{H}}=1.
\]
By Lemma~\ref{l.cont.inv}, $I+C_\eta$ has a continuous
inverse.
Using this inverse, we have that
\[
    C_\eta-I=-(I+C_\eta)^{-1}B_\eta \in \mathcal{S}(\htwo).
\]
By virtue of Lemma~\ref{l.b.kappa}, there is a unique 
$\kappa_S(\eta)\in \stwo$ with
\[
    B_{\kappa_S(\eta)}=C_\eta-I.
\]

Set $\tau=\kappa_S(\eta)$.
By definition, we have that
\[
    (I+B_\tau)^2=C_\eta^2=I-B_\eta.
\]
This means that \eqref{t.tways.1} holds.

Since 
\[
    \inf_{\|h\|_{\mathcal{H}}=1}\|C_\eta h\|_{\mathcal{H}}^2
    =\inf_{\|h\|_{\mathcal{H}}=1}
      \langle(I-B_\eta)h,h\rangle_{\mathcal{H}}
    =1-\Lambda(B_\eta)>0,
\]
by Lemma~\ref{l.cont.inv}, $C_\eta=I+B_\tau$ has a
continuous inverse.
Thus $\dettwo(I+B_\tau)\ne0$ (\cite[Theorem\,XII.1.2]{GGK}).
\end{proof}

\begin{remark}\label{r.sqrt}
Since $B_\eta\in \mathcal{S}(\htwo)$, it has an ONB 
$\{h_n\}_{n=1}^\infty$ developing it in $\htwo$ as
$B_\eta=\sum\limits_{n=1}^\infty a_n h_n\otimes h_n$,
where $a_n=\langle B_\eta h_n,h_n\rangle_{\mathcal{H}}$ for
$n\in \mathbb{N}$.
Then we have that
$B_{\kappa_S(\eta)}=\sum\limits_{n=1}^\infty 
 \{(1-a_n)^{\frac12}-1\} h_n\otimes h_n$.
\end{remark}

\begin{lemma}\label{l.tw.(ii)to(iv)}
If the condition {\rm(ii)} holds, then so does {\rm(iv)}.
\end{lemma}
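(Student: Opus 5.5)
The plan is to reduce condition (iv) to a triangular factorization of the positive operator $I-B_\eta$. By the definition \eqref{eq:eta.kappa} and \eqref{r.transf.1}, together with the computation \eqref{eq.eta.kappa.a.rho}, condition (iv) for some $\rho\in\stwo$ is the same as $\eta\eqltwo\eta(\kappa_A(\rho))$. In operator form this reads
\[
    I-B_\eta=(I+B_{\kappa_A(\rho)})^*(I+B_{\kappa_A(\rho)}).
\]
So it suffices to produce a Hilbert--Schmidt operator $V=B_\kappa$ whose kernel is causal, i.e.\ $\kappa(t,s)=0$ for $s>t$, with $I-B_\eta=(I+V)^*(I+V)$. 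We then put $\rho(t,s)=-\kappa(t,s)$ for $s<t$ and $\rho(t,s)=\rho(s,t)^\dagger$ for $s>t$, so that $\rho\in\stwo$ and $\kappa=\kappa_A(\rho)$. Expanding $(I+V)^*(I+V)$ and using Lemma~\ref{l.b.kappa} recovers \eqref{t.tways.2}.

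To construct the factor, I use the truncated resolvents of Gohberg--Krein type. Via $h\mapsto h^\prime$, identify $\mathcal{H}$ with $L^2([0,T];\mathbb{R}^d)$, and let $P_r$ be the projection onto functions supported in $[r,T]$. Since $\langle (I-B_\eta)h,h\rangle_{\mathcal{H}}\ge(1-\Lambda(B_\eta))\|h\|_{\mathcal{H}}^2$, every compression $P_r(I-B_\eta)P_r$ is invertible on $P_r\mathcal{H}$, uniformly in $r\in[0,T]$, by Lemma~\ref{l.cont.inv}. Hence the inverse is $I+\Gamma_r$ with $\Gamma_r$ Hilbert--Schmidt, and its kernel $\Gamma_r(t,s)$, supported on $[r,T]^2$, satisfies the resolvent equation
\[
    \Gamma_r(t,s)-\eta(t,s)=\int_r^T\eta(t,u)\Gamma_r(u,s)\,du .
\]
I then plan to show that the causal factor is determined by the diagonal values of these resolvents. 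Concretely, take the candidate $\rho(t,s)=\Gamma_{t\vee s}(t,s)$, possibly with the roles of $\eta$ and of the kernel of $(I-B_\eta)^{-1}-I$ interchanged, depending on which direction yields \eqref{t.tways.2} with the correct signs. Verifying the identity rests on the Krein differentiation formula
\[
    \partial_r\Gamma_r(t,s)=-\Gamma_r(t,r)\Gamma_r(r,s),
\]
which, integrated in $r$ from $t\vee s$ to $T$, produces exactly the term $\int_{t\vee s}^T\rho(t,u)\rho(u,s)\,du$.

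Since the pointwise objects $\Gamma_r(t,r)$ and the derivative in $r$ are not meaningful for general $L^2$ kernels, I first carry out the construction for continuous $\eta$. In that case the resolvent kernels are continuous in $(r,t,s)$, and the Krein formula follows by differentiating $P_r(I-B_\eta)P_r(I+\Gamma_r)=P_r$. For general $\eta$, approximate by continuous $\eta_m$ with $\|\eta_m-\eta\|_2\to0$. Then $\Lambda(B_{\eta_m})<1$ uniformly for large $m$, using $|\Lambda(B)-\Lambda(B^\prime)|\le\|B-B^\prime\|_{\htwo}$. The map $\eta\mapsto$ causal factor is the composition of $B\mapsto(I-B)^{-1}$ with the triangular truncation. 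I need to check that the resulting $\rho_m$ converge in $\ltwo$, and I would try to deduce this from the Hilbert--Schmidt continuity of the inverse together with the fact that the triangular truncation $\kappa\mapsto\one_{[0,t)}(s)\kappa(t,s)$ is a contraction on $\ltwo$. Passing to the limit in \eqref{t.tways.2} then uses only continuity of $(\rho,\rho)\mapsto B_\rho P B_\rho$ in $\htwo$.

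The main obstacle is this last step together with the sign bookkeeping. A clean alternative I would try first is to avoid Krein's formula altogether by a direct operator argument: set $I+V=\bigl(\mathcal{T}_A[(I-B_\eta)^{-1}]\bigr)^{-1}$-type expressions, where $\mathcal{T}_A$ denotes the causal truncation. With such an $V$, the product $(I+V)^{-*}(I-B_\eta)^{-1}$ is causal in one direction, while the equation forces it to be anticausal, so it must equal the identity, because its diagonal part vanishes (cf.\ Lemma~\ref{l.dettwo}). If this works, it yields the factorization without any pointwise analysis, and only the Hilbert--Schmidt bounds remain to be checked.
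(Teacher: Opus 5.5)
Your reduction is correct and matches what the paper's proof delivers. Condition (iv) is equivalent to $I-B_\eta=(I+V)^*(I+V)$ with $V=B_\kappa$, $\kappa=\one_{\Delta_1}\kappa$. The paper obtains exactly this, in the form $I-B_\eta=(I+\mathcal{L}_{\nu_+})(I+\mathcal{L}_{\nu_-})$ with $\nu_+^*=\nu_-$. It gets there by applying the Gohberg--Krein factorization (Proposition~\ref{p.sigma.gamma}) to the kernel $-\varphi$, where $B_\varphi=(I-B_\eta)^{-1}-I$ and the resolvent kernel of $-\varphi$ is $-\eta$, after checking that every truncated equation on $[0,\xi]$ is solvable.

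What your proposal does not supply is the existence of the causal factor for a general $\eta\in\stwo$, which is the whole content of the lemma. Two concrete problems block it.

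First, the construction as written is misindexed. Your $\Gamma_r$ lives on $[r,T]^2$, so for fixed $(t,s)$ it is nonzero only when $r\le t\wedge s$. Hence $\Gamma_{t\vee s}(t,s)=0$ for $t\ne s$, and $\int_{t\vee s}^T\partial_r\Gamma_r(t,s)\,dr=0$. Integrating Krein's formula over the range where it is nontrivial, $r\in[0,t\wedge s]$, gives $(I-B_\eta)^{-1}=(I+L)(I+L)^*$ with $L$ causal of kernel $\Gamma_{t\wedge s}(t,s)$. So $I+L$ is the \emph{inverse} of the factor you want, and $\rho$ is minus the kernel of $(I+L)^{-1}-I$, not a diagonal value of $\Gamma$. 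Swapping in $-\varphi$ while keeping the $[r,T]$-truncations gives the factorization in the opposite order, namely the time-reversed identity with $\int_0^{t\wedge s}$. Reading off $\rho$ directly requires truncations to $[0,r]$ of $-\varphi$, which is precisely the Gohberg--Krein setting.

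Second, and decisively, the passage from continuous to $L^2$ kernels rests on a false claim. The causal factor is not the causal truncation of $(I-B_\eta)^{-1}$, nor of $-\eta$. From $I-B_\eta=(I+V)^*(I+V)$ one gets $\one_{\Delta_1}(-\eta)\eqltwo\kappa+\one_{\Delta_1}c(\kappa)$, where $c(\kappa)$, the kernel of $V^*V$, is not triangular. Similarly, with $I+W=(I+V)^{-1}$, the causal part of the kernel of $(I-B_\eta)^{-1}-I$ is $W+\one_{\Delta_1}(WW^*)$, not $W$. The map $\eta\mapsto V$ is nonlinear, so the contraction property of triangular truncation and continuity of $B\mapsto(I-B)^{-1}$ give no control of $\rho_m-\rho_{m'}$.

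Your ``clean alternative'' fails for the same reason. If $I+W$ is the causal truncation of $(I-B_\eta)^{-1}$, nothing makes $(I+W)^*(I-B_\eta)^{-1}$ triangular, so the uniqueness argument has nothing to act on.

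Triangularity must instead be imposed column by column. For a.e.\ $s$, solve
\[
x_s(t)-\int_s^T\eta(t,u)x_s(u)\,du=\eta(t,s)\quad\text{on }[s,T].
\]
This has a unique solution with $\|x_s\|\le(1-\Lambda(B_\eta))^{-1}\|\eta(\cdot,s)\|$, since the compression of $B_\eta$ to $[s,T]$ has $\Lambda\le\Lambda(B_\eta)$. Set $X(t,s)=\one_{(s,T]}(t)x_s(t)$; then $X\in\ltwo$.

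By construction, the kernel of $(I-B_\eta)(I+B_X)-I$ vanishes on $\Delta_1$. So $(I+B_X)^*(I-B_\eta)(I+B_X)$ is self-adjoint and equals $I$ plus an operator whose kernel is supported in $\Delta_2$; by Lemma~\ref{l.b.kappa} it equals $I$. Since $B_X$ is Volterra, $I+B_X$ is invertible with $(I+B_X)^{-1}=I+B_\kappa$ and $\kappa$ causal. This gives $I-B_\eta=(I+B_\kappa)^*(I+B_\kappa)$ directly for $L^2$ kernels, with no Krein formula and no approximation.

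To close the proof you need either this argument or a citation of Gohberg--Krein, as in the paper.
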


\begin{proof}
As was seen in the Proof of Lemma~\ref{l.tw.(ii)to(iii)}, 
the square root operator
$C_\eta\in \mathcal{S}_+(\mathcal{H})$ 
of $I-B_\eta$ has a continuous inverse. 
Hence so does $I-B_\eta$.
Define $\varphi\in\stwo$ as $\varphi=\widehat{-\eta}$, that
is, $B_\varphi=(I-B_\eta)^{-1}-I$.
Rewrite this equality as
\begin{equation}\label{l.tw.(ii)to(iv).21}
    (I-B_{-\varphi})^{-1}=I+B_{-\eta}.
\end{equation}

Let 
$a=\inf\limits_{\|h\|_{\mathcal{H}}=1}\langle B_\eta h,
 h\rangle_{\mathcal{H}}$.
Then $-\infty\le -\|B_\eta\|_\op\le a\le \Lambda(B_\eta)<1$ and  
\[
    \langle (I-B_\eta)g,g\rangle_{\mathcal{H}}
    \le (1-a)\|g\|_{\mathcal{H}}^2
    \quad\text{for any }g\in \mathcal{H}.
\]
Substituting $g=C_\eta^{-1}h$ into this inequality, we
obtain that  
\[
    (1-a)^{-1}
    \le \langle (I-B_\eta)^{-1}h,h\rangle_{\mathcal{H}}
    \quad\text{for $h\in \mathcal{H}$ with 
    $\|h\|_{\mathcal{H}}=1$}.
\]
Combined with \eqref{l.tw.(ii)to(iv).21}, this implies that
\[
    \Lambda(B_{-\varphi})\le 1-(1-a)^{-1}<1.
\]

For $\xi\in(0,T]$, define the subspace $\mathcal{H}_\xi$ of
$\mathcal{H}$ by
\[
    \mathcal{H}_\xi
    =\{h\in \mathcal{H}\mid h^\prime=\one_{[0,\xi)}h^\prime\}.
\]
Letting $\pi_\xi$ be the orthogonal projection of
$\mathcal{H}$ onto $\mathcal{H}_\xi$, we have that
\begin{equation}\label{l.tw.(ii)to(iv).22}
    \Lambda(\pi_\xi B_{-\varphi} \pi_\xi)
    \le \Lambda(B_{-\varphi})<1.
\end{equation}

For $\psi\in\stwo$, let $\mathcal{L}_\psi$ be the 
continuous operator of $L^2([0,T];\mathbb{R}^d)$ to itself
such that
\[
    \mathcal{L}_\psi f
    =\int_0^T \psi(\cdot,s)f(s)ds
    \quad\text{for }f\in L^2([0,T];\mathbb{R}^d).
\]
Identify 
$\mathcal{H}$ and $L^2([0,T];\mathbb{R}^d)$ via the
correspondence 
\[
    \mathcal{H}\ni h\leftrightarrow h^\prime\in
    L^2([0,T];\mathbb{R}^d).
\]
Under this correspondence, $B_\psi$ and $\mathcal{L}_\psi$
are also identified, and the identity
\eqref{l.tw.(ii)to(iv).21} is rewritten as 
\[
    (I-\mathcal{L}_{-\varphi})^{-1}=I+\mathcal{L}_{-\eta}.
\]
Thus $-\eta$ is the resolvent kernel of
$\mathcal{L}_{-\varphi}$.

Let $\xi\in(0,T]$. 
Under the identification between
$\mathcal{H}_\xi$ and $L^2([0,\xi];\mathbb{R}^d)$, 
\eqref{l.tw.(ii)to(iv).22} implies that
\[
    \sup_{\|f\|_{L^2([0,\xi];\mathbb{R}^d)}=1}
    \int_0^\xi\biggl\langle \int_0^\xi (-\varphi)(t,s)
      f(s)ds,f(t)\biggr\rangle dt <1,
\]
where $\|\cdot\|_{L^2([0,\xi];\mathbb{R}^d)}$ is the
$L^2$-norm in $L^2([0,\xi];\mathbb{R}^d)$. 
Hence the equation
\[
    g(t)-\int_0^\xi (-\varphi)(t,s)g(s)ds=f(t)
    \quad\text{for } t\in[0,\xi]
\]
is solvable in $L^2([0,\xi];\mathbb{R}^d)$.
Thanks to the special factorization of
$\mathcal{L}_{-\varphi}$ due to Gohberg-Krein (\cite{GK})
(see Proposition~\ref{p.sigma.gamma}), there exists a
$\nu\in\stwo$ such that 
\[
    -\eta(t,s)=\nu(t,s)+\int_t^T \nu(t,u)\nu(u,s)du
    \quad\text{for }0\le s<t\le T.
\]
Setting $\rho=-\nu$, we see that \eqref{t.tways.2} holds.
\end{proof}

Let $\kappa_S(\eta)$ be as in
Lemma~\ref{l.tw.(ii)to(iii)} and $\kappa_A(\rho)$ be as in 
Theorem~\ref{t.tways}.
The kernel $\kappa_S(\eta)$ was constructed with the help of
the square root lemma.
The stochastic process
$\{(F_{\kappa_A(\rho)})^\prime(t)
 =\int_0^t \rho(t,s)d\theta(s)\}_{t\in[0,T]}$ is adapted.
On account of these, we call $\iota+F_{\kappa_S(\eta)}$ 
a {\it square root transformation} of order one, and
$\iota+F_{\kappa_A(\rho)}$ 
an {\it adapted transformation} of order one.

\section{Bijectivity}
\label{sec.bijectivity}

As was seen in Remark~\ref{r.transf}(ii), the mapping
$\ltwo\ni\kappa
 \mapsto\eta(\kappa)\in\stwo$ 
is not injective. 
We have the following two types of bijectivity by restricting the
domain of the mapping.
For $\mathcal{L}\subset\ltwo$ and $\mathcal{S}\subset\stwo$,
we say that a mapping 
$A:\mathcal{L}\ni \kappa\mapsto A(\kappa)\in \mathcal{S}$
is bijective if $\kappa_1\eqltwo\kappa_2$ for 
$\kappa_1,\kappa_2\in \mathcal{L}$ with
$A(\kappa_1)\eqltwo A(\kappa_2)$ and 
each $\eta\in \mathcal{S}$ has a $\kappa\in \mathcal{L}$
with $A(\kappa)\eqltwo\eta$.

\begin{proposition}\label{p.bij}
Put
\begin{align*}
    & \widehat{\stwo}=\{\eta\in\stwo\mid
       \Lambda(B_\eta)<1\},
    \\
    & \mathscr{S}_{2,+}=\bigl\{\kappa\in\stwo\mid 
      I+B_\kappa\ge0\bigr\},
    \\
    & \widehat{\mathscr{S}_{2,+}}
      =\bigl\{\kappa\in\mathscr{S}_{2,+}\mid 
        \Lambda(B_{\eta(\kappa)})<1\bigr\},
\end{align*}
and
\[
    \widehat{\ltwo}=\bigl\{\kappa\in\ltwo\mid 
      \kappa(t,s)=\one_{[0,t)}(s)\kappa(t,s)
      \text{ for }(t,s)\in[0,T]^2\bigr\},
\]
where ``$I+B_\kappa\ge0$'' means that the operator is 
non-negative definite.
\\
{\rm(i)}
If $\kappa_1,\kappa_2\in \mathscr{S}_{2,+}$ and 
$\eta(\kappa_1)\eqltwo\eta(\kappa_2)$, then 
$\kappa_1\eqltwo\kappa_2$.
Moreover, the mapping 
$\widehat{\mathscr{S}_{2,+}}\ni\kappa\mapsto
 \eta(\kappa)\in\widehat{\stwo}$ 
is bijective.
\\
{\rm (ii)}
If $\rho_1,\rho_2\in \stwo$ and 
$\eta(\kappa_A(\rho_1))\eqltwo\eta(\kappa_A(\rho_2))$, 
then $\rho_1\eqltwo\rho_2$.
Moreover, the image $\eta\bigl(\widehat{\ltwo}\bigr)$
of $\widehat{\ltwo}$ through $\eta$ is included in 
$\widehat{\stwo}$, and the mapping 
$\widehat{\ltwo}\ni\kappa\mapsto 
 \eta(\kappa)\in\widehat{\stwo}$ 
is bijective.
\end{proposition}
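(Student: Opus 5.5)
The plan is to reduce both parts to operator identities on $\mathcal{H}$ via Lemma~\ref{l.b.kappa} and \eqref{r.transf.1}, that is, via
\[
    I-B_{\eta(\kappa)}=(I+B_\kappa)^*(I+B_\kappa).
\]
Injectivity then becomes a uniqueness statement for a factorization, and surjectivity comes from the constructions already made in Lemmas~\ref{l.tw.(ii)to(iii)} and \ref{l.tw.(ii)to(iv)}.

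\emph{Part (i).} Let $\kappa\in\mathscr{S}_{2,+}$. Then $B_\kappa^*=B_\kappa$, so the identity above reads $I-B_{\eta(\kappa)}=(I+B_\kappa)^2$, and $I+B_\kappa\in\mathcal{S}_+(\mathcal{H})$. Suppose $\eta(\kappa_1)\eqltwo\eta(\kappa_2)$ with $\kappa_1,\kappa_2\in\mathscr{S}_{2,+}$. Then $(I+B_{\kappa_1})^2=(I+B_{\kappa_2})^2$. By the uniqueness in the square root lemma (Lemma~\ref{l.sq.root}) this gives $I+B_{\kappa_1}=I+B_{\kappa_2}$, and Lemma~\ref{l.b.kappa} gives $\kappa_1\eqltwo\kappa_2$.

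The image of $\widehat{\mathscr{S}_{2,+}}$ lies in $\widehat{\stwo}$ by the very definition of $\widehat{\mathscr{S}_{2,+}}$. For surjectivity, take $\eta\in\widehat{\stwo}$ and set $\tau=\kappa_S(\eta)$ from Lemma~\ref{l.tw.(ii)to(iii)}. Then $I+B_\tau=C_\eta\ge0$, so $\tau\in\mathscr{S}_{2,+}$. Lemma~\ref{l.tw.(iii)to(ii)} gives $\eta(\tau)\eqltwo\eta$, hence $\Lambda(B_{\eta(\tau)})=\Lambda(B_\eta)<1$, so $\tau\in\widehat{\mathscr{S}_{2,+}}$.

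\emph{Part (ii).} Put $V_i=B_{\kappa_A(\rho_i)}$. These are Volterra operators, with kernels supported in $\{s<t\}$. By Lemma~\ref{l.dettwo} we have $\dettwo(I+V_i)=1$, so each $I+V_i$ and each $I+V_i^*$ is continuously invertible. From $\eta(\kappa_A(\rho_1))\eqltwo\eta(\kappa_A(\rho_2))$ we get $(I+V_1^*)(I+V_1)=(I+V_2^*)(I+V_2)$, and rearranging,
\[
    (I+V_2^*)^{-1}(I+V_1^*)=(I+V_2)(I+V_1)^{-1}.
\]
The key auxiliary fact is that the lower-triangular class
\[
    \{B_\kappa\mid \kappa=\one_{[0,t)}(s)\kappa\}
\]
is a closed subspace of $\htwo$, stable under products, and that for $V$ in it we have $(I+V)^{-1}-I$ in it again. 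For the inverse I would use the Neumann series $\sum_{n\ge1}(-V)^n$. It converges in $\htwo$ because a Volterra Hilbert--Schmidt operator has iterated kernel bounds of the form $\|V^n\|_{\htwo}\le C^n/\sqrt{(n-1)!}$, obtained from the usual simplex-volume estimate applied to the kernel. Each term has a lower-triangular kernel, and the subspace is closed, so the limit is lower triangular too. The same argument applies to the upper-triangular class by taking adjoints. Consequently the left side of the rearranged identity is $I$ plus an upper-triangular Hilbert--Schmidt operator, and the right side is $I$ plus a lower-triangular one. Their common kernel is supported in both $\{s>t\}$ and $\{s<t\}$, so it vanishes a.e. Hence $I+V_2=I+V_1$, so $\kappa_A(\rho_1)\eqltwo\kappa_A(\rho_2)$. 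That is, $\rho_1=\rho_2$ a.e.\ on $\{s<t\}$, and by the symmetry $\rho_i\in\stwo$ also on $\{s>t\}$; the diagonal is a null set.

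Every $\kappa\in\widehat{\ltwo}$ is of the form $\kappa_A(\rho)$, with $\rho=-\kappa$ on $\{s<t\}$ and $\rho(t,s)=-\kappa(s,t)^\dagger$ on $\{s>t\}$. Lemma~\ref{l.dettwo} then shows $\eta(\widehat{\ltwo})\subset\widehat{\stwo}$, and the injectivity just proved transfers directly. Surjectivity follows from Lemma~\ref{l.tw.(ii)to(iv)} combined with Lemma~\ref{l.tw.(iv)to(ii)}, which give $\eta(\kappa_A(\rho))\eqltwo\eta$.

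I expect the main obstacle to be the triangularity-preservation step for inverses. If the Neumann-series bound proves awkward, I would instead approximate by the step kernels used in the proof of Lemma~\ref{l.dettwo}. There $I+V$ is block unipotent lower triangular, so its inverse visibly has the same structure, and one passes to the limit using continuity of inversion near invertible operators.
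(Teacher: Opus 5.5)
Your proof is correct. Part (i) is the paper's argument: uniqueness of the non-negative square root for injectivity, and $\kappa_S(\eta)$ for surjectivity. You also supply the check $\Lambda(B_{\eta(\kappa_S(\eta))})=\Lambda(B_\eta)<1$, which the paper leaves implicit. The surjectivity in (ii) likewise matches the paper's appeal to Theorem~\ref{t.tways}.

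For the injectivity in (ii) you take a genuinely different route. The paper works at the level of kernels. By \eqref{eq.eta.kappa.a.rho}, the difference $\gamma=\rho_1-\rho_2$ satisfies $\gamma(t,s)=\int_{t\vee s}^T\{\rho_1(t,u)\rho_1(u,s)-\rho_2(t,u)\rho_2(u,s)\}\,du$. After choosing $N$ so that the $\rho_i$ have small $L^2$ mass on strips of width $T/N$, the Schwarz inequality gives a contraction on each region $\{T_n\le t\vee s\le T_{n-1}\}$. Hence $\gamma$ is killed strip by strip, backward from $T$; this is a Gronwall-type uniqueness argument for a nonlinear Volterra equation.

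You instead read $I-B_{\eta(\kappa_A(\rho))}=(I+V)^*(I+V)$ as an upper-times-lower factorization and prove that factorization is unique. Two facts drive this:
\begin{itemize}
\item $I$ plus the closed, product-stable space of strictly lower-triangular Hilbert--Schmidt operators is inverse-closed. The factorial decay of iterated Volterra kernels that you invoke is correct and makes the Neumann series converge in $\htwo$.
\item Upper- and lower-triangular kernels can only agree on the null diagonal.
\end{itemize}

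The paper's argument is elementary and stays within $\ltwo$ estimates, but it depends on the explicit formula \eqref{eq.eta.kappa.a.rho} and on some bookkeeping with the strips. Yours is more structural and gets invertibility of $I+V_i$ without Lemma~\ref{l.dettwo}. It also uses the symmetry of $\rho_i$ only in the final unpacking from $\{s<t\}$ to $\{s>t\}$. Finally, it shows that this injectivity is exactly the uniqueness half of the Gohberg--Krein special factorization, whose existence half (Proposition~\ref{p.sigma.gamma}) underlies the surjectivity.
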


\begin{proof}
(i)
Let $\kappa_1,\kappa_2\in \mathscr{S}_{2,+}$ and assume that  
$\eta(\kappa_1)\eqltwo\eta(\kappa_2)$.
By \eqref{r.transf.1}, we have that
\[
    (I+B_{\kappa_1})^2
    =I-B_{\eta(\kappa_1)}=I-B_{\eta(\kappa_2)}
    =(I+B_{\kappa_2})^2.
\]
Since 
$I+B_{\kappa_1}$ and $I+B_{\kappa_2}$ are both in 
$\mathcal{S}_+(\mathcal{H})$, due to the uniqueness of
square root operator (Lemma~\ref{l.sq.root}),
$I+B_{\kappa_1}=I+B_{\kappa_2}$. 
By Lemma~\ref{l.b.kappa}, $\kappa_1\eqltwo\kappa_2$.

The first assertion shows the injectivity of the mapping
$\widehat{\mathscr{S}_{2,+}}\ni\kappa
 \mapsto\eta(\kappa)\in\widehat{\stwo}$. 
To see the surjectivity, let $\eta\in\widehat{\stwo}$.
As was seen in the proof of Lemma~\ref{l.tw.(ii)to(iii)},
$\kappa_S(\eta)\in\mathscr{S}_{2,+}$ and
$\eta(\kappa_S(\eta))\eqltwo\eta$.
Thus the surjectivity has been shown.

\smallskip\noindent
(ii)
Notice that
$\widehat{\ltwo}
 =\{\kappa_A(\rho)\mid \rho\in\stwo\}$.
By Lemma~\ref{l.dettwo},
$\eta\bigl(\widehat{\ltwo}\bigr)
 \subset\widehat{\stwo}$.
Due to Theorem~\ref{t.tways},
the mapping 
$\widehat{\ltwo}\ni\kappa
 \mapsto\eta(\kappa)\in\widehat{\stwo}$
is surjective.
Thus it suffices to show the first assertion.

Let $\rho_1,\rho_2\in\stwo$ and assume that
$\eta(\kappa_A(\rho_1))\eqltwo\eta(\kappa_A(\rho_2))$.
By \eqref{eq.eta.kappa.a.rho}, it holds that
\begin{equation}\label{p.bij.21}
    \rho_1(t,s)-\rho_2(t,s)
    =\int_{t\vee s}^T \bigl\{
        \rho_1(t,u)\rho_1(u,s)
        -\rho_2(t,u)\rho_2(u,s)\bigr\} du
\end{equation}
for a.e.~$(t,s)\in[0,T]^2$.
Take an $N\in \mathbb{N}$ such that
\[
    \iint_{[0,a+\frac{T}{N}]\times[0,T]}
     |\rho_i(t,s)|^2 dtds<\frac15
    \quad
    \text{for any $a\in[0,T_1]$ and $i=1,2$},
\]
where $T_n=T-\frac{nT}{N}$ for $1\le n\le N$.
Put 
\[
    \gamma(t,s)=\rho_1(t,s)-\rho_2(t,s)
    \quad\text{for }(t,s)\in[0,T]^2.
\]
By \eqref{p.bij.21} and the Schwarz inequality, we have that  
\begin{align*}
    |\gamma(t,s)|^2
    \le 2\biggl\{ 
    & \biggl(\int_{t\vee s}^T |\rho_1(u,s)|^2 du\biggr)
      \biggl(\int_{t\vee s}^T |\gamma(t,u)|^2 du\biggr)
    \\
    &  +\biggl(\int_{t\vee s}^T |\rho_2(t,u)|^2 du\biggr)
       \biggl(\int_{t\vee s}^T |\gamma(u,s)|^2 du\biggr)
     \biggr\}
    \quad\text{for a.e.~}(t,s)\in[0,T]^2.
\end{align*}
Since $|\rho_2(t,u)|=|\rho_2(u,t)|$, this implies that
\[
    \iint_{t\vee s\ge T_1} |\gamma(t,s)|^2 dtds
    \le \frac45 
    \iint_{t\vee s\ge T_1} |\gamma(t,s)|^2 dtds,
\]
where $\iint_{t\vee s\ge a} \dots dtds$ is the integral over
the set $\{(t,s)\in[0,T]^2\mid t\vee s\ge a\}$. 
Hence we obtain that 
\[
    \iint_{t\vee s\ge T_1} |\gamma(t,s)|^2 dtds=0.
\]
In other words, $\gamma(t,s)$ vanishes 
for a.e.~$(t,s)\in[0,T]^2$ with $t\vee s\ge T_1$.

By this vanishing, we know that \eqref{p.bij.21} holds with
$T_1$ for $T$.
In exactly the same manner as above, only this time with
$T_1$ for $T$, we see that $\gamma(t,s)=0$ 
for a.e.~$(t,s)\in[0,T]^2$ with $T_2\le t\vee s\le T_1$.
Hence so does for a.e.~$(t,s)\in[0,T]^2$ with 
$t\vee s\ge T_2$.
Continue the argument successively to have the vanishing of
$\gamma(t,s)$ for a.e.~$(t,s)\in[0,T]^2$.
Thus we know that $\rho_1\eqltwo\rho_2$.
\end{proof}

\section{Quadratic Wiener functional of harmonic-oscillator
  type} 
\label{sec.elementary}
We give an application of Theorem~\ref{t.tways} to
$\mathfrak{h}(\kappa;x), \mathfrak{h}(\kappa)
 \in \mathbb{D}^\infty$ 
described in Proposition~\ref{p.h.osc}.  

\begin{proposition}\label{p.h.osc.lap}
Let $\kappa\in\ltwo$ and $x\in \mathbb{R}^d$.
Set $c(\kappa;x),c(\kappa)\in\stwo$ as in
Proposition~\ref{p.h.osc}, and define
$c^\prime(\kappa;x),c^\prime(\kappa)\in\stwo$ by 
\[
    c^\prime(\kappa;x)=\kappa_S(-c(\kappa;x))
    \quad\text{and}\quad
    c^\prime(\kappa)=\kappa_S(-c(\kappa)).
\]
Then $B_{c(\kappa;x)}$ is of trace class and it holds that 
\[
  \int_{\mathcal{W}} f e^{-\mathfrak{h}(\kappa;x)+\D^*h}
         d\mu
     =\{\det(I+B_{c(\kappa;x)})\}^{-\frac12}
       \int_{\mathcal{W}} f\Bigl(\iota
         +F_{\widehat{c^\prime(\kappa;x)}}\Bigr)
       e^{\D^*[(I+B_{c^\prime(\kappa;x)})^{-1}h]} d\mu
\]
and
\[
  \int_{\mathcal{W}} f e^{-\mathfrak{h}(\kappa)+\D^*h}
         d\mu
     =\{\det(I+B_\kappa^*B_\kappa)\}^{-\frac12}
      \int_{\mathcal{W}} f\Bigl(\iota
         +F_{\widehat{c^\prime(\kappa)}}\Bigr)
       e^{\D^*[(I+B_{c^\prime(\kappa)})^{-1}h]} d\mu 
\]
for every $f\in C_b(\mathcal{W})$ and $h\in \mathcal{H}$.
In particular, 
\[
   \int_{\mathcal{W}} e^{-\mathfrak{h}(\kappa;x)+\D^*h}
         d\mu
    =\{\det(I+B_{c(\kappa;x)})\}^{-\frac12}
       e^{\frac12\langle (I+B_{c(\kappa;x)})^{-1}h,
             h\rangle_{\mathcal{H}}}
\]
and 
\[
  \int_{\mathcal{W}} e^{-\mathfrak{h}(\kappa)+\D^*h} d\mu
    =\{\det(I+B_\kappa^*B_\kappa)\}^{-\frac12}
       e^{\frac12\langle (I+B_\kappa^*B_\kappa)^{-1}h,
             h\rangle_{\mathcal{H}}}
\]
for any $h\in \mathcal{H}$.
\end{proposition}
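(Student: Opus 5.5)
The plan is to reduce everything to Theorem~\ref{t.tways} applied with $\eta=-c(\kappa;x)$, respectively $\eta=-c(\kappa)$. The second identity follows from the first by the same argument, or by summing over an ONB $e_1,\dots,e_d$ as in the proof of Proposition~\ref{p.h.osc}; it is simpler, though, to run the argument directly with $c(\kappa)$, so I treat a generic $c\in\{c(\kappa;x),c(\kappa)\}$.

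\emph{Trace class and nonnegativity.} Since $\langle B_{c(\kappa;x)}h,g\rangle_{\mathcal{H}}=\int_0^T\langle x,(B_\kappa h)^\prime(t)\rangle\langle x,(B_\kappa g)^\prime(t)\rangle dt$, we have $B_{c(\kappa;x)}=B_\kappa^*P_xB_\kappa$, where $P_x$ is the bounded operator on $\mathcal{H}$ with $(P_xh)^\prime=\langle x,h^\prime\rangle x$. The proof of Proposition~\ref{p.h.osc} gives this computation. Hence $B_{c(\kappa;x)}$ is a product of two Hilbert-Schmidt operators and a bounded one, so it is of trace class and non-negative definite. The same holds for $B_{c(\kappa)}=B_\kappa^*B_\kappa$. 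Consequently, with $\eta=-c$, we get $\Lambda(B_\eta)=\sup_{\|h\|=1}(-\langle B_ch,h\rangle_{\mathcal{H}})\le0<1$. Condition (ii) of Theorem~\ref{t.tways} therefore holds, and Lemma~\ref{l.tw.(ii)to(iii)} supplies $\tau=\kappa_S(-c)=c^\prime$ with $(I+B_{c^\prime})^2=I+B_c$ and $I+B_{c^\prime}=C_{-c}\ge0$.

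\emph{Rewriting the exponent.} By Proposition~\ref{p.h.osc}, $-\mathfrak{h}(\kappa;x)=\q_{-c(\kappa;x)}-\frac12\tr B_{c(\kappa;x)}$. Here I use that $\int_0^T\int_0^T|\kappa(t,s)^\dagger x|^2dsdt=\tr B_{c(\kappa;x)}$, which equals $\sum_n\|P_x^{1/2}B_\kappa h_n\|^2_{\mathcal{H}}$, i.e.\ the Hilbert-Schmidt norm squared of $x^\dagger\kappa$. Similarly, $-\mathfrak{h}(\kappa)=\q_{-c(\kappa)}-\frac12\|\kappa\|_2^2=\q_{-c(\kappa)}-\frac12\tr(B_\kappa^*B_\kappa)$. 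Plugging into \eqref{t.tways.3}, the constant on the right becomes
\[
\{\dettwo(I+B_c)\}^{-\frac12}e^{-\frac12\tr B_c}=\{\det(I+B_c)e^{-\tr B_c}\}^{-\frac12}e^{-\frac12\tr B_c}=\{\det(I+B_c)\}^{-\frac12},
\]
using $\dettwo(I+A)=\det(I+A)e^{-\tr A}$ for trace-class $A$. Since $B_{c^\prime}$ is self-adjoint, the factor $(I+B_\tau)^{-1}h$ in \eqref{t.tways.3} is exactly $(I+B_{c^\prime})^{-1}h$. This gives the two displayed change-of-variables identities.

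\emph{The particular case $f=1$.} The functional $\D^*[(I+B_{c^\prime})^{-1}h]$ is centered Gaussian with variance $\|(I+B_{c^\prime})^{-1}h\|^2_{\mathcal{H}}=\langle (I+B_{c^\prime})^{-2}h,h\rangle_{\mathcal{H}}=\langle (I+B_c)^{-1}h,h\rangle_{\mathcal{H}}$, by self-adjointness and $(I+B_{c^\prime})^2=I+B_c$. Taking the Gaussian expectation gives the final formulas.

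The only step needing care is the identification $\tr B_{c(\kappa;x)}=\int\!\!\int|\kappa^\dagger x|^2$, together with its trace-class property. I would handle both through the factorization $B_\kappa^*P_xB_\kappa=(P_xB_\kappa)^*(P_xB_\kappa)$ (valid because $P_x$ is an orthogonal projection when $|x|=1$; for general $x$, scale by $|x|^2$), whose trace is $\|P_xB_\kappa\|_{\htwo}^2$. That quantity is computed by \eqref{eq.hs} applied to the kernel $(x\otimes x)\kappa/|x|^2$.
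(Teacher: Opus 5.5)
Your proposal is correct and follows essentially the same route as the paper. You apply Theorem~\ref{t.tways} with $\eta=-c$ (using $\Lambda(B_{-c})\le0$), rewrite $-\mathfrak{h}$ via Proposition~\ref{p.h.osc}, and convert $\dettwo$ to $\det$ through $\tr B_{c(\kappa;x)}=\int_0^T\int_0^T|\kappa(t,s)^\dagger x|^2dsdt$. The only minor difference is the trace-class step: you use the factorization $B_{c(\kappa;x)}=B_\kappa^*P_xB_\kappa$, whereas the paper notes that $B_{c(\kappa;x)}\ge0$ equals its own absolute value by uniqueness of square roots and then sums $\langle B_{c(\kappa;x)}h_n,h_n\rangle_{\mathcal{H}}$ over an ONB; both arguments work.
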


\begin{proof}
By Proposition~\ref{p.h.osc}, it holds that
\begin{equation}\label{p.h.osc.21}
    -\mathfrak{h}(\kappa;x)
    =\q_{-c(\kappa;x)}
     -\frac12 \int_0^T\int_0^T|\kappa(t,s)^\dagger x|^2 
       dsdt.
\end{equation}
Since
\begin{equation}\label{p.h.osc.22}
    \langle B_{-c(\kappa;x)}h,h\rangle_{\mathcal{H}}
    =-\int_0^T\biggl(\int_0^T \langle 
       \kappa(u,s)^\dagger x,h^\prime(s)\rangle ds
       \biggr)^2 du
    \quad\text{for } h\in \mathcal{H},
\end{equation}
$\Lambda(B_{-c(\kappa;x)})\le0<1$.
By Theorem~\ref{t.tways} and \eqref{p.h.osc.21}, we
have that
\begin{align}
\label{p.h.osc.23}
    \int_{\mathcal{W}} f e^{-\mathfrak{h}(\kappa;x)+\D^*h}
        d\mu 
    = & \exp\biggl(-\frac12\int_0^T\int_0^T
           |\kappa(t,s)^\dagger x|^2 dsdt\biggr)
     \{\dettwo(I+B_{c(\kappa;x)})\}^{-\frac12}
    \\
    & 
     \times
     \int_{\mathcal{W}}
       f\Bigl(\iota+F_{\widehat{c^\prime(\kappa;x)}}\Bigr) 
       e^{\D^*[(I+B_{c^\prime(\kappa;x)})^{-1}h]} d\mu
 \nonumber
\end{align}
for every $f\in C_b(\mathcal{W})$ and $h\in \mathcal{H}$.

For $A\in \htwo$, denote by 
$|A|\in \mathcal{S}_+(\mathcal{H})$ the square root of
$A^*A$.
By definition, $A$ is of trace class if 
$\sum\limits_{n=1}^\infty \langle |A|h_n,h_n
  \rangle_{\mathcal{H}}<\infty$
for some ONB $\{h_n\}_{n=1}^\infty$ of $\mathcal{H}$.
Since 
$B_{c(\kappa;x)}\in \mathcal{S}(\htwo)$,
$B_{c(\kappa;x)}^*B_{c(\kappa;x)}=B_{c(\kappa;x)}^2$.
By \eqref{p.h.osc.22}, 
$B_{c(\kappa;x)}\in \mathcal{S}_+(\mathcal{H})$.
Due to the uniqueness of square root, we obtain that
$B_{c(\kappa;x)}=|B_{c(\kappa;x)}|$.
By \eqref{p.h.osc.22} again, we have that
\[
    \sum_{n=1}^\infty \langle B_{c(\kappa;x)}h_n,h_n
       \rangle_{\mathcal{H}}
    =\int_0^T\int_0^T|\kappa(t,s)^\dagger x|^2 dsdt
    \quad\text{for any ONB $\{h_n\}_{n=1}^\infty$ of
      $\mathcal{H}$.}
\]
Hence $B_{c(\kappa;x)}$ is of trace class and 
\[
    \tr B_{c(\kappa;x)}
    =\int_0^T\int_0^T|\kappa(t,s)^\dagger x|^2 dsdt.
\]
Thus we have that 
\[
    \dettwo(I+B_{c(\kappa;x)})
    =\det(I+B_{c(\kappa;x)})
     \exp\biggl(-\int_0^T\int_0^T|\kappa(t,s)^\dagger x|^2 
       dsdt\biggr).
\]
Plugging this into \eqref{p.h.osc.23}, we obtain the first
identity.

By Proposition~\ref{p.h.osc}, we have that
\[
    -\mathfrak{h}(\kappa)
    =\q_{-c(\kappa)}-\frac12 \|\kappa\|_2^2.
\]
Recall that $B_{c(\kappa)}=B_\kappa^*B_\kappa$.
Hence $\Lambda(B_{-c(\kappa)})\le0<1$, $B_{c(\kappa)}$ is of
trace class, and 
$\tr B_{c(\kappa)}
 =\|B_\kappa\|_{\htwo}^2=\|\kappa\|_2^2$.
By Theorem~\ref{t.tways}, it holds that
\[
    \int_{\mathcal{W}} f e^{-\mathfrak{h}(\kappa)+\D^*h}
       d\mu 
    = e^{-\frac12\|\kappa\|_2^2}
     \{\dettwo(I+B_{c(\kappa)})\}^{-\frac12}
     \int_{\mathcal{W}} 
      f(\iota+F_{\widehat{c^\prime(\kappa)}})
       e^{\D^*[(I+B_{c^\prime(\kappa)})^{-1}h]} d\mu
\]
for every $f\in C_b(\mathcal{W})$.
Remembering that $\dettwo(I+B)=\det(I+B)e^{-\tr B}$ for 
$B\in\htwo$ of trace class, we obtain the second identity. 

The third and fourth identities follows from the first and
second ones, respectively.
\end{proof}

In Proposition~\ref{p.h.osc.inv}, we showed that 
$\mathfrak{h}(\kappa)$ relates to a quadratic form $\q_\eta$
with non-negative definite $B_\eta\in \mathcal{S}(\htwo)$.
We shall see that $\mathfrak{h}(\eta)$ for $\eta\in\stwo$
arises as the square of the $\mathcal{H}$-norm of the
$\mathcal{H}$-derivative of $\q_\eta$.

\begin{proposition}\label{p.h.eta}
Let $\eta\in\stwo$.
It holds that
\[
    \mathfrak{h}(\eta)
    =\frac12 \|\D\q_\eta\|_{\mathcal{H}}^2.
\]
Furthermore, it holds that
\[
    \int_{\mathcal{W}} f 
      e^{-\frac12\|\D\q_\eta\|_{\mathcal{H}}^2+\D^*h} d\mu
     =\{\dettwo(I+B_\eta^2)\}^{-\frac12}
       \int_{\mathcal{W}} 
        f(\iota+F_{\widehat{c^\prime(\eta)}})
        e^{\D^*[(I+B_{c^\prime(\eta)})^{-1}h]} d\mu
\]
and
\[
    \int_{\mathcal{W}} 
      e^{-\frac12\|\D\q_\eta\|_{\mathcal{H}}^2+\D^*h} d\mu
      = \{\dettwo(I+B_\eta^2)\}^{-\frac12}
        e^{\frac12\langle (I+B_\eta^2)^{-1}h,
                    h\rangle_{\mathcal{H}}}
\]
for every $f\in C_b(\mathcal{W})$ and $h\in\mathcal{H}$.
\end{proposition}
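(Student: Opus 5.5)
The plan is to compute $\D\q_\eta$ explicitly and compare it with the definition of $\mathfrak{h}(\eta)$. We then read off the Laplace transform formulas from Proposition~\ref{p.h.osc.lap}, taking $\kappa=\eta$.

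\emph{Step 1: the identity $\mathfrak{h}(\eta)=\frac12\|\D\q_\eta\|_{\mathcal{H}}^2$.} By Theorem~\ref{t.q.eta}(i) and the computation in its proof, $\D\q_\eta$ has the following property: $\D^2\q_\eta=B_\eta$ and $\int_{\mathcal{W}}\D\q_\eta\,d\mu=0$. The cleanest route is to show that $\D\q_\eta=F_\eta$. Indeed, Lemma~\ref{l.f.kappa} (formula \eqref{l.f.kappa.21}) gives $\D F_\eta=B_\eta$ and $\int_{\mathcal{W}} F_\eta\,d\mu=0$. Hence $\D(\D\q_\eta-F_\eta)=0$, and Lemma~\ref{l.D=0} yields $\D\q_\eta=F_\eta$. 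Alternatively, this follows directly from the explicit formula for $\langle\D\q_\eta,g\rangle_{\mathcal{H}}$ in the proof of Theorem~\ref{t.q.eta}: using $\eta(t,s)^\dagger=\eta(s,t)$, the first term equals $\int_0^T\langle\int_t^T\eta(s,t)^\dagger\!\ldots\rangle$, and the two Itô pieces merge into $\int_0^T\langle\int_0^T\eta(t,s)d\theta(s),g'(t)\rangle dt$. Consequently $(\D\q_\eta)'(t)=\int_0^T\eta(t,s)d\theta(s)$, so
\[
\|\D\q_\eta\|_{\mathcal{H}}^2=\int_0^T\Bigl|\int_0^T\eta(t,s)d\theta(s)\Bigr|^2dt=2\mathfrak{h}(\eta).
\]
The only delicate point is justifying that the $\mathcal{H}$-valued functional $F_\eta$, defined through Itô integrals, has exactly this derivative. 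The Lemma~\ref{l.D=0} argument avoids any pathwise issues.

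\emph{Step 2: the Laplace transforms.} We apply Proposition~\ref{p.h.osc.lap} with $\kappa=\eta\in\stwo\subset\ltwo$. Its second identity gives $\int f e^{-\mathfrak{h}(\eta)+\D^*h}d\mu$ in terms of $\{\det(I+B_\eta^*B_\eta)\}^{-1/2}$ and the transformation $\iota+F_{\widehat{c'(\eta)}}$. Since $B_\eta^*=B_\eta$ (Lemma~\ref{l.b.kappa}), we have $B_\eta^*B_\eta=B_\eta^2$. It remains to reconcile $\det$ with $\dettwo$. Because $B_\eta^2$ is trace class with $\tr B_\eta^2=\|B_\eta\|_{\htwo}^2=\|\eta\|_2^2$, we get
\[
\dettwo(I+B_\eta^2)=\det(I+B_\eta^2)e^{-\|\eta\|_2^2}.
\]
We will carefully match this constant against the form stated. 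If the intended normalization differs by $e^{\pm\frac12\|\eta\|_2^2}$, the cleanest check is to set $f=1$, $h=0$ and compare with Proposition~\ref{p.h.osc.lap}'s explicit formula, together with the relation $-\mathfrak{h}(\eta)=\q_{-c(\eta)}-\frac12\|\eta\|_2^2$. The alternative is to derive the result directly from \eqref{t.tways.3} applied to $\q_{-c(\eta)}$, where $\dettwo(I+B_{c(\eta)})$ appears naturally, and then multiply by $e^{-\frac12\|\eta\|_2^2}$.

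\emph{Step 3: the special case.} The final identity follows from the general one by taking $f=1$. We use that $\D^*g$ is $N(0,\|g\|_{\mathcal{H}}^2)$, and that $(I+B_{c'(\eta)})^2=I+B_\eta^2$ by construction of $\kappa_S$ in Lemma~\ref{l.tw.(ii)to(iii)}. This gives $\|(I+B_{c'(\eta)})^{-1}h\|^2=\langle(I+B_\eta^2)^{-1}h,h\rangle_{\mathcal{H}}$.

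I expect the main obstacle to be the bookkeeping of the determinant normalization ($\det$ versus $\dettwo$, and the constant $e^{-\frac12\|\eta\|_2^2}$ from Proposition~\ref{p.h.osc}). Step 1 itself should be routine once $\D\q_\eta=F_\eta$ is established.
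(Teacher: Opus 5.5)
Your Step~1 is correct. You obtain $\D\q_\eta=F_\eta$ exactly as the paper does, by applying Lemma~\ref{l.D=0} to $\D\q_\eta-F_\eta$, but you then finish more directly. Reading off $(F_\eta)^\prime(t)=\int_0^T\eta(t,s)d\theta(s)$ from the definition of $F_\eta$ gives $\|F_\eta\|_{\mathcal{H}}^2=2\mathfrak{h}(\eta)$ at once. The paper instead recomputes $\D$ and $\D^2$ of $\frac12\|\D\q_\eta\|_{\mathcal{H}}^2$, uses Lemma~\ref{l.D^3=0} to get $\q_{c(\eta)}+\frac12\|\eta\|_2^2$, and then appeals to Proposition~\ref{p.h.osc}. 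Your shortcut is legitimate; the paper itself uses $\|F_\kappa\|_{\mathcal{H}}^2=\int_0^T|u(t)|^2dt$ in Section~\ref{sec.girsanov}.

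The gap is in Step~2: you leave the constant open, and in fact it cannot be matched to the statement as printed. Applying Proposition~\ref{p.h.osc.lap} with $\kappa=\eta$ gives exactly $\{\det(I+B_\eta^2)\}^{-\frac12}$; this is also the paper's whole argument for the second assertion. Equivalently, \eqref{t.tways.3} for $\q_{-c(\eta)}$ gives $e^{-\frac12\|\eta\|_2^2}\{\dettwo(I+B_\eta^2)\}^{-\frac12}$. This is the same number, because $\dettwo(I+B_\eta^2)=\det(I+B_\eta^2)e^{-\|\eta\|_2^2}$.

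Hence both displayed identities with $\dettwo(I+B_\eta^2)$ are off by a factor $e^{\frac12\|\eta\|_2^2}$, and they are false unless $\eta\eqltwo0$. To see this concretely, take $\eta(t,s)=a[h_1^\prime(s)\otimes h_1^\prime(t)]$ with $\|h_1\|_{\mathcal{H}}=1$. Then $\frac12\|\D\q_\eta\|_{\mathcal{H}}^2=\frac12a^2(\D^*h_1)^2$, so $\int_{\mathcal{W}}e^{-\frac12\|\D\q_\eta\|_{\mathcal{H}}^2}d\mu=(1+a^2)^{-\frac12}=\{\det(I+B_\eta^2)\}^{-\frac12}$. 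By contrast, $\{\dettwo(I+B_\eta^2)\}^{-\frac12}=(1+a^2)^{-\frac12}e^{\frac{a^2}2}$.

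So the statement contains a misprint: $\dettwo$ should be $\det$ in both formulas. You should say so explicitly and prove the corrected version, rather than proposing to ``match'' the normalization. With that correction your Steps~2 and~3 are complete, including the use of $(I+B_{c^\prime(\eta)})^2=I+B_\eta^2$ to evaluate the Gaussian factor.
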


\begin{proof}
The second assertion follows from the first one and
Proposition~\ref{p.h.osc.lap}.
As was seen in the proofs of Theorem~\ref{t.q.eta},
$\D^2\q_\eta=B_\eta$ and $\int_{\mathcal{W}}\D\q_\eta d\mu=0$.
Recalling that $\D F_\eta=B_\eta$ (see \eqref{l.f.kappa.21}), we have
that
\[
    \D(\D \q_\eta-F_\eta)=B_\eta-B_\eta=0.
\]
Furthermore, $\int_{\mathcal{W}} F_\eta d\mu=0$, and hence 
$\int_{\mathcal{W}} (\D\q_\eta-F_\eta) d\mu=0$.
By Lemma~\ref{l.D=0}, we obtain that 
\[
    \D \q_\eta=F_\eta.
\]

By the identity $\D F_\eta=B_\eta$, we see that 
\[
    \biggl\langle 
       \D\biggl(\frac12 \|\D\q_\eta\|_{\mathcal{H}}^2
       \biggr),g\biggr\rangle_{\mathcal{H}}
      =\langle F_\eta,B_\eta g\rangle_{\mathcal{H}}
\]
and
\[
    \biggl\langle 
       \D^2\biggl(\frac12 \|\D\q_\eta\|_{\mathcal{H}}^2
       \biggr),h\otimes g\biggr\rangle_{\htwo}
      =\langle B_\eta h,B_\eta g\rangle_{\mathcal{H}}
      =\langle B_{c(\eta)}, h\otimes g\rangle_{\htwo}
\]
for every $h,g\in \mathcal{H}$.
Noticing that 
\[
    \int_{\mathcal{W}}\|\D\q_\eta\|_{\mathcal{H}}^2 d\mu
    =\int_{\mathcal{W}}\|F_\eta\|_{\mathcal{H}}^2 d\mu
    =\|\eta\|_2^2,
\]
and applying Lemma~\ref{l.D^3=0}, Theorem~\ref{t.q.eta}, and 
Proposition~\ref{p.h.osc}, we obtain that
\[
    \frac12 \|\D\q_\eta\|_{\mathcal{H}}^2
    =\frac12 (\D^*)^2B_{c(\eta)}+\frac12\|\eta\|_2^2
    =\q_{c(\eta)}+\frac12\|\eta\|_2^2
    =\mathfrak{h}(\eta).
\qedhere
\]
\end{proof}

\begin{remark}
In \cite{st-ecp}, the author investigated
$\bigl\|\D\bigl(\int_0^T|\theta(t)|^2 dt\bigr)
 \bigr\|_{\mathcal{H}}^2$ 
when $d=1$.
The above proposition is its extension to general 
$\q_\eta$s. 
Heuristically speaking, the derivative of quadratic form is
of order one, and the square of its $\mathcal{H}$-norm must
be of order two, i.e., quadratic.
\end{remark}

\section{Derivation of the Girsanov theorem}
\label{sec.girsanov}
Let $\rho\in \stwo$.
Set 
$\kappa=\kappa_A(\rho)$ and 
$\eta=\eta(\kappa)$.
Using Lemma~\ref{l.d*f.kappa}, we shall see that the
Girsanov theorem is derived from the transformation
$\iota+F_\kappa$.

Since 
$\|\kappa\|_2^2=\frac12\|\rho\|_2^2$, 
by \eqref{eq:transf} and Lemma~\ref{l.dettwo},
we have that
\begin{equation}\label{eq:girsanov}
    \int_{\mathcal{W}} f(\iota+F_\kappa)
     e^{\q_\eta} d\mu
    =e^{\frac14\|\rho\|_2^2} \int_{\mathcal{W}} f d\mu
    \quad\text{for every }f\in C_b(\mathcal{W}).
\end{equation}
Setting  
\[
    u(t)=-\int_0^t \rho(t,s)d\theta(s)
    \quad\text{for }t\in[0,T],
\]
we have that 
\[
    (F_\kappa)^\prime(t)=u(t)
    \quad\text{for }t\in[0,T].
\]
Since the stochastic process $\{u(t)\}_{t\in[0,T]}$ is
adapted, by \cite[Theorem\,5.3.3]{mt-cambridge},
we know that
\[
    \D^*F_\kappa=\int_0^T\langle u(t),d\theta(t)\rangle.
\]
Note that
$\|F_\kappa\|_{\mathcal{H}}^2=\int_0^T|u(t)|^2dt$.
By Lemma~\ref{l.d*f.kappa}, the above identity 
\eqref{eq:girsanov} is rewritten as 
\[
    \int_{\mathcal{W}} 
     f\Bigl(\theta+\int_0^\bullet u(t)dt\Bigr)
     \exp\biggl(
      -\int_0^T\langle u(t),d\theta(t)\rangle
      -\frac12 \int_0^T|u(t)|^2dt\biggr) d\mu
    =\int_{\mathcal{W}} f d\mu
\]
for every $f\in C_b(\mathcal{W})$.
That is, the distribution of $\theta+\int_0^\bullet u(t)dt$
under the probability measure 
\[
    \exp\biggl(
      -\int_0^T\langle u(t),d\theta(t)\rangle
      -\frac12 \int_0^T|u(t)|^2dt\biggr) d\mu
\]
coincides with the Wiener measure $\mu$. 
Thus the Girsanov theorem follows.

The Novikov condition is one of well-known sufficient
conditions for the Girsanov theorem to hold.
The condition is that 
$\exp(\frac12\int_0^T|u(t)|^2dt)\in L^1(\mu)$.
We show that such exponential integrability holds 
if and only if $\|B_\kappa\|_\op<1$.

To see this, note that
\[
    \frac12 \int_0^T|u(t)|^2dt
    =\frac12\int_0^T \biggl|\int_0^T 
        \kappa(t,s)d\theta(s)\biggr|^2 dt.
\]
In conjunction with Proposition~\ref{p.h.osc}, we have that 
\[
    \frac12 \int_0^T|u(t)|^2dt
    =\mathfrak{q}_{c(\kappa)}+\frac12\|\kappa\|_2^2.
\]
Since $B_{c(\kappa)}=B_\kappa^* B_\kappa$, we have that
\[
    \Lambda(B_{c(\kappa)})
    =\sup_{\|h\|_{\mathcal{H}}=1}
      \|B_\kappa h\|_{\mathcal{H}}^2
    =\|B_\kappa\|_\op^2.
\]
Due to Lemma~\ref{l.q.eta.int} for
$\mathfrak{q}_{c(\kappa)}$, we obtain the desired
equivalence. 

The Kazamaki condition is another well-known sufficient
condition for the Girsanov theorem to hold.
The condition is that 
$\bigl\{\frac12\int_0^r\langle u(t),d\theta(t)\rangle
  \bigr\}_{r\in[0,T]}$ is uniformly integrable.
We show that such uniform integrability holds if and only if 
$\Lambda(B_\rho)<2$.

To see this, for each $r\in[0,T]$, define 
$\beta_r\in \mathcal{S}_2$ by 
\[
    \beta_r(t,s)=\frac12
    \boldsymbol{1}_{[0,r)}(t)\boldsymbol{1}_{[0,r)}(s)
    \rho(t,s)
    \quad\text{for }(t,s)\in[0,T]^2.
\]
Then it holds that
\[
    \frac12\int_0^r\langle u(t),d\theta(t)\rangle
     =\mathfrak{q}_{\beta_r}.
\]
If 
$\frac12\int_0^T\langle u(t),d\theta(t)\rangle$ is
integrable, then, by Lemma~\ref{l.q.eta.int},
$\Lambda(B_{\beta_T})<1$. 
Thus $\Lambda(B_\rho)=2\Lambda(B_{\beta_T})<2$.
Conversely, suppose that $\Lambda(B_\rho)<2$, and hence
$\Lambda(B_{\beta_T})<1$.
Take a $p\in(1,\infty)$ with $p\Lambda(B_{\beta_T})<1$.
Since 
$\Lambda(B_{p\beta_r})\le \Lambda(B_{p\beta_T})
 =p\Lambda(B_{\beta_T}))<1$ 
and $\|\beta_r\|_2\le\|\beta_T\|_2$ for $r\in[0,T]$, by 
Lemma~\ref{l.q.eta.int}, we have that
\[
    \int_{\mathcal{W}} \exp[p \mathfrak{q}_{\beta_r}] d\mu
    \le \exp\biggl[\frac12\biggl\{\frac12
      +\frac{p\Lambda(B_{\beta_T})}{
         3(1-p\Lambda(B_{\beta_T}))^3}
      \biggr\} \|\beta_T\|_2^2 \biggr].
\]
Thus 
$\bigl\{\frac12\int_0^r\langle u(t),d\theta(t)\rangle
  \bigr\}_{r\in[0,T]}$ is uniformly integrable.

Since $B_\rho=B_\kappa+B_\kappa^*$, 
$\Lambda(B_\rho)\le 2\Lambda(B_\kappa)$.
Hence $\Lambda(B_\rho)<2$ if $\|B_\kappa\|_{\text{\rm op}}<1$.
Thus the Novikov condition implies the Kazamaki one.
Such an implication is widely known
(cf. \cite[Proposition~VIII(1.15)]{RY}).
We have rediscovered it via $\iota+F_\kappa$. 

It is natural to ask which adapted $\mathbb{R}^d$-valued
stochastic process $\{u(t)\}_{t\in[0,T]}$ with the property 
that
$\int_{\mathcal{W}} \bigl(\int_0^T |u(t)|^2 dt 
 \bigr)d\mu <\infty$
is obtained via an adapted transformation of order one as
above.
To see this, first assume that each component of $u(t)$
belongs to the Wiener chaos of order one for every
$t\in[0,T]$. 
Then there is a 
$\kappa_t:[0,T]\to \mathbb{R}^{d\times d}$ such that
\[
    \int_0^T|\kappa_t(s)|^2 ds<\infty
    \quad\text{and}\quad
    u(t)=\int_0^T \kappa_t(s)d\theta(s).
\]
Suppose in addition that the mapping 
$(t,s)\mapsto \kappa_t(s)$ is measurable.
It then holds that
\[
    \int_0^T\int_0^T |\kappa_t(s)|^2 dsdt
    =\int_0^T \biggl(\int_{\mathcal{W}} |u(t)|^2 d\mu 
      \biggr)dt<\infty.
\]
Define $\rho\in\stwo$ as
\[
    \rho(t,s)=\one_{[0,t)}(s)\kappa_t(s)
    +\one_{(t,T]}(s)\kappa_s(t)^\dagger
    \quad\text{for }(t,s)\in[0,T]^2.
\]
Since $\{u(t)\}_{t\in[0,T]}$ is adapted, we have that
\[
    u(t)=\int_0^t \kappa_t(s)d\theta(s)
        =\int_0^t \rho(t,s)d\theta(s).
\]
Thus $\{u(t)\}_{t\in[0,T]}$ corresponds to the adapted
transformation $\iota+F_{\kappa_A(\rho)}$ of order one.

\chapter{Linear adapted transformations}
\label{chap.lin.adapted}

\begin{quote}
{\small
We investigate adapted transformations which are also
continuous linear operators on $\mathcal{W}$.
We shall show the equivalence between the exponential
integrability of quadratic from and the solvability of the
associated Riccati ODE. 
The equivalence gives an explicit expression of Laplace
transformations of quadratic forms via ODEs.
The expression is applied to stochastic representations of
analytic or algebraic objects: (i)~heat kernels on two-step
nilpotent Lie groups, (ii)~Euler, Bernoulli, and Eulerian 
polynomials, and (iii)~reflectionless potentials and soliton
solution to the KdV equation.
}
\end{quote}

\section{Linear adapted transformation}
\label{sec.lin.adapted}
Let $C([0,T];\mathbb{R}^{d\times d})$ be the space of
$\mathbb{R}^{d\times d}$-valued continuous functions on
$[0,T]$. 
Define the Wiener functionals 
$G_\phi:\mathcal{W}\to \mathcal{H}$ and 
$\p_\phi:\mathcal{W}\to \mathbb{R}$ 
for $\phi\in C([0,T];\mathbb{R}^{d\times d})$ by
\[
    \langle G_\phi,h\rangle_{\mathcal{H}}
      =-\int_0^T\langle \phi(t)\theta(t),h^\prime(t)
           \rangle dt
      \quad\text{for }h\in \mathcal{H}
    \quad\text{and}\quad
    \p_\phi=\int_0^T\langle\phi(t)\theta(t),
        d\theta(t)\rangle.
\]
$G_\phi$ is also thought of as a continuous linear mapping
of $\mathcal{W}$ to $\mathcal{W}$ such that
\[
    (G_\phi(w))(t)=-\int_0^t \phi(s)w(s)ds
    \quad\text{for $w\in \mathcal{W}$ and $t\in[0,T]$}.
\]
When we think of $G_\phi$ as a continuous linear mapping, we
write simply as $G_\phi w$ for $G_\phi(w)$. 
Define $\rho_\phi\in\stwo$ by
\[
    \rho_\phi(t,s)
    =\one_{[0,t)}(s)\phi(t)
     +\one_{(t,T]}(s)\phi(s)^\dagger
    \quad\text{for } (t,s)\in[0,T]^2.
\]
It is easily seen that
\[
     G_\phi=F_{\kappa_A(\rho_\phi)}
     \quad\text{and}\quad
     \p_\phi=\q_{\rho_\phi}.
\]
Hence we call $\iota+G_\phi$ a 
{\it linear adapted transformation} of order one.
In addition, define 
$\sigma(\phi)\in C([0,T];\mathbb{R}^{d\times d})$ by
\begin{equation}\label{eq.sigma.chi}
    \sigma(\phi)(t)
    =\phi(t)-\int_t^T \phi(u)^\dagger\phi(u)du
    \quad\text{for }t\in[0,T].    
\end{equation}

Theorem~\ref{t.tways} yields the following.

\begin{theorem}\label{t.lap.lin}
For each $\chi\in C([0,T];\mathbb{R}^{d\times d})$, 
$e^{\p_{\sigma(\chi)}}\in L^1(\mu)$ and it holds that
\begin{equation}\label{t.lap.lin.3}
    \int_{\mathcal{W}} f e^{\p_{\sigma(\chi)}} d\mu
    =e^{\frac12\int_0^T\tr[(\chi-\sigma(\chi))(t)]dt}
     \int_{\mathcal{W}} 
        f(\iota+F_{\widehat{\kappa_A(\rho_\chi)}}) d\mu
\end{equation}
and 
\begin{equation}\label{t.lap.lin.4}
    \int_{\mathcal{W}} f 
        \exp\biggl(\int_0^T \langle\chi(t)\theta(t),
            d\theta(t)\rangle 
           -\frac12 \int_0^T |\chi(t)\theta(t)|^2 dt
       \biggr) d\mu
      =     \int_{\mathcal{W}} 
        f(\iota+F_{\widehat{\kappa_A(\rho_\chi)}}) d\mu
\end{equation}
for every $f\in C_b(\mathcal{W})$.
\end{theorem}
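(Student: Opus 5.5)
The plan is to obtain Theorem~\ref{t.lap.lin} as a direct specialization of Theorem~\ref{t.tways}, through condition (iv) with $\rho=\rho_\chi$. Put $\eta=\eta(\kappa_A(\rho_\chi))$. By \eqref{eq.eta.kappa.a.rho}, $\eta(t,s)=\rho_\chi(t,s)-\int_{t\vee s}^T\rho_\chi(t,u)\rho_\chi(u,s)du$ for $t\ne s$. The first step is to compute this kernel explicitly.

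For $s<t$ and $u\in(t,T]$ we have $\rho_\chi(t,s)=\chi(t)$, $\rho_\chi(t,u)=\chi(u)^\dagger$ and $\rho_\chi(u,s)=\chi(u)$. Hence
\[
    \eta(t,s)=\chi(t)-\int_t^T\chi(u)^\dagger\chi(u)du=\sigma(\chi)(t)=\rho_{\sigma(\chi)}(t,s).
\]
The case $s>t$ follows by the symmetry of both kernels in $\stwo$. Thus $\eta\eqltwo\rho_{\sigma(\chi)}$, and so $\q_\eta=\q_{\rho_{\sigma(\chi)}}=\p_{\sigma(\chi)}$. Condition (iv) therefore holds. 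Theorem~\ref{t.tways} then gives (i), namely $e^{\p_{\sigma(\chi)}}\in L^1(\mu)$, and \eqref{t.tways.4} with $h=0$ gives
\[
    \int_{\mathcal{W}} fe^{\p_{\sigma(\chi)}}d\mu=e^{\frac14\|\rho_\chi\|_2^2}\int_{\mathcal{W}} f(\iota+F_{\widehat{\kappa_A(\rho_\chi)}})d\mu .
\]

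It remains to identify the constant. Directly, $\|\rho_\chi\|_2^2=2\int_0^T t|\chi(t)|^2dt$. On the other hand,
\[
    \frac12\int_0^T\tr[(\chi-\sigma(\chi))(t)]dt=\frac12\int_0^T\int_t^T\tr[\chi(u)^\dagger\chi(u)]du\,dt=\frac12\int_0^T u|\chi(u)|^2du
\]
by Fubini. The two constants agree, which proves \eqref{t.lap.lin.3}.

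For \eqref{t.lap.lin.4}, I apply Lemma~\ref{l.d*f.kappa} with $\kappa=\kappa_A(\rho_\chi)$. Here $F_\kappa=G_\chi$ has derivative $(F_\kappa)'(t)=-\chi(t)\theta(t)$, which is adapted. By the adaptedness result quoted in Section~\ref{sec.girsanov} (\cite[Theorem\,5.3.3]{mt-cambridge}), $\D^*F_\kappa=-\int_0^T\langle\chi(t)\theta(t),d\theta(t)\rangle$. Also $\|F_\kappa\|_{\mathcal{H}}^2=\int_0^T|\chi(t)\theta(t)|^2dt$ and $\|\kappa\|_2^2=\frac12\|\rho_\chi\|_2^2$. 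Lemma~\ref{l.d*f.kappa} therefore gives
\[
    \p_{\sigma(\chi)}=\q_{\eta(\kappa)}=\int_0^T\langle\chi\theta,d\theta\rangle-\frac12\int_0^T|\chi\theta|^2dt+\frac14\|\rho_\chi\|_2^2 .
\]
Substituting this into the identity displayed above, the factor $e^{\frac14\|\rho_\chi\|_2^2}$ cancels on both sides, and \eqref{t.lap.lin.4} follows.

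The only delicate point is the kernel computation. One has to track carefully the cases $s<t<u$ versus $t<s<u$, and which transposes appear, to confirm that $\eta$ is exactly $\rho_{\sigma(\chi)}$ and not merely a kernel of similar form. Everything after that is routine bookkeeping.
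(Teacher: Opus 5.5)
Your proof is correct. For \eqref{t.lap.lin.3} it coincides with the paper's argument. The paper computes the kernel $\eta(\kappa_A(\rho_\chi))\eqltwo\rho_{\sigma(\chi)}$, applies Theorem~\ref{t.tways} through condition (iv) with $\rho=\rho_\chi$, and identifies $\frac14\|\rho_\chi\|_2^2=\frac12\int_0^T\tr[(\chi-\sigma(\chi))(t)]dt$, exactly as you do.

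For \eqref{t.lap.lin.4} you take a different route. You rewrite $\p_{\sigma(\chi)}=\q_{\eta(\kappa_A(\rho_\chi))}$ using Lemma~\ref{l.d*f.kappa}. You combine this with the fact that, for the adapted functional $F_{\kappa_A(\rho_\chi)}=G_\chi$, $\D^*$ reduces to the It\^o integral $-\int_0^T\langle\chi(t)\theta(t),d\theta(t)\rangle$, which is the same mechanism as in Section~\ref{sec.girsanov}. The paper instead applies It\^o's formula to $\langle R(t)\theta(t),\theta(t)\rangle$, where $R(t)=\int_t^T\chi(u)^\dagger\chi(u)du$, and so decomposes $\p_{\sigma(\chi)}$ directly.

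The paper's route is elementary: it needs only It\^o's formula and no Malliavin calculus. Yours is more conceptual. The exponent in \eqref{t.lap.lin.4} is exactly $-\D^*F_\kappa-\frac12\|F_\kappa\|_{\mathcal{H}}^2$ with $\kappa=\kappa_A(\rho_\chi)$. Hence \eqref{t.lap.lin.4} is the Girsanov identity of Section~\ref{sec.girsanov} for the drift $-\chi\theta$, written in the inverse form \eqref{eq:inv.transf}. This also explains why the constant $e^{\frac14\|\rho_\chi\|_2^2}$ cancels.
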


\begin{proof}
It holds that
\[
    \eta(\kappa_A(\rho_\chi))(t,s)
    =\rho_\chi(t,s)
       -\int_t^T \rho_\chi(t,u)\rho_\chi(u,s)du
    =\chi(t)-\int_t^T \chi(u)^\dagger\chi(u)du
    =\sigma(\chi)(t)
\]
for $0\le s<t\le T$.
Hence we have that
\[
    \eta(\kappa_A(\rho_\chi))\eqltwo \rho_{\sigma(\chi)}.
\]
Since $\p_{\sigma(\chi)}=\q_{\rho_{\sigma(\chi)}}$, by
Lemmas~\ref{l.dettwo} and \ref{l.q.eta.int}, we have that 
$e^{\p_{\sigma(\chi)}}\in L^1(\mu)$.

Since $\rho_\chi\in\stwo$, by \eqref{eq.sigma.chi}, we have
that 
\begin{align*}
    \|\rho_\chi\|_2^2
    & =2\int_0^T \biggl(\int_t^T |\rho_\chi(t,s)|^2 
        ds\biggr) dt
      =2\int_0^T \biggl(\int_t^T |\chi(s)|^2 ds\biggr)dt
    \\
    & =2\int_0^T \tr[(\chi-\sigma(\chi))(t)]dt.
\end{align*}
Plugging these into \eqref{t.tways.4} with 
$\eta=\rho_{\sigma(\chi)}$ and $\rho=\rho_\chi$, we obtain
\eqref{t.lap.lin.3}. 

By It\^o's formula and the definition of
$\sigma(\chi)$, we know that 
\begin{align*}
    & \int_0^T \biggl\langle \biggl(\int_t^T
      \chi(u)^\dagger \chi(u)du\biggr)\theta(t),
      d\theta(t)\biggr\rangle
    \\
    & = \frac12 \int_0^T |\chi(t)\theta(t)|^2 dt
        -\frac12 \int_0^T \tr\biggl[\int_t^T
          \chi(u)^\dagger \chi(u)du\biggr] dt
    \\
    & = \frac12 \int_0^T |\chi(t)\theta(t)|^2 dt
        -\frac12 \int_0^T \tr\bigl[
          (\chi-\sigma(\chi))(t)\bigr] dt.
\end{align*}
Hence we have that
\[
    \p_{\sigma(\chi)}
    =\int_0^T \langle \chi(t)\theta(t),d\theta(t)
       \rangle
     -\frac12 \int_0^T |\chi(t)\theta(t)|^2 dt
     +\frac12 \int_0^T \tr\bigl[
          (\chi-\sigma(\chi))(t)\bigr] dt.
\]
Plugging this into \eqref{t.lap.lin.3}, we obtain
\eqref{t.lap.lin.4}. 
\end{proof}

By this theorem, the evaluation of the Laplace
transformation of quadratic form $\p_\sigma$ comes down to 
finding $\chi$ with $\sigma(\chi)=\sigma$.
Before getting into this problem, we present an application
of Theorem~\ref{t.lap.lin} to the integral
$e^{\p_{\sigma(\chi)}}$ with respect to the pinned measure.
On the way, we also see that the transformation
$\iota+F_{\widehat{\kappa_A(\rho_\chi)}}$ has a more
explicit expression.

We make a brief review on pinned measures.
For details, see \cite[\S5.4]{mt-cambridge}.
Put
\[
    \mathbb{D}^{-\infty,\infty}
    =\bigcup_{k\in \mathbb{N}\cup\{0\}}
     \bigcap_{p\in(1,\infty)} 
     (\mathbb{D}^{k,p})^\prime
    \quad\text{and}\quad
    \mathbb{D}^{\infty,1+}
     =\bigcap_{k\in \mathbb{N}\cup\{0\}}
      \bigcup_{p\in(1,\infty)} \mathbb{D}^{k,p},
\]
where $(\mathbb{D}^{k,p})^\prime$ is the dual space of 
$\mathbb{D}^{k,p}$.
For $\xi\in \mathbb{D}^{-\infty,\infty}$, we denote by $\xi[\Psi]$ its
value at $\Psi\in \mathbb{D}^{\infty,1+}$.
Each $\Phi\in \mathbb{D}^\infty$ can be regarded as an element of
$\mathbb{D}^{\infty,1+}$ via the $L^2$-inner product:
\[
    \Phi[\Psi]=\int_{\mathcal{W}} \Psi\Phi d\mu
    \quad\text{for }\Psi\in \mathbb{D}^{\infty,1+}\subset L^{1+}(\mu).
\]
Let $N\in \mathbb{N}$,
$\mathscr{S}(\mathbb{R}^N)$ be the space of rapidly
decreasing functions on $\mathbb{R}^N$, and 
$\mathscr{S}^\prime(\mathbb{R}^N)$ the space of
tempered distributions on $\mathbb{R}^N$.
Assume that 
$\Phi=(\Phi^1,\dots,\Phi^N)
 \in \mathbb{D}^\infty(\mathbb{R}^N)$ is {\it non-degenerate}, 
that is,  
\[
    \Bigl\{\det\bigl[\bigl(\langle \D\Phi^i,\D\Phi^j
    \rangle_{\mathcal{H}}\bigr)_{1\le i,j\le N}\bigr]
    \Bigr\}^{-1}
    \in \bigcap\limits_{p\in(1,\infty)} L^p(\mu).
\]
The mapping 
$\mathscr{S}(\mathbb{R}^N)\ni u
 \mapsto u(\Phi)\in \mathbb{D}^{-\infty,\infty}$ can be 
extended to a continuous mapping 
$\mathfrak{C}:\mathscr{S}^\prime(\mathbb{R}^N)
 \to \mathbb{D}^{-\infty,\infty}$.
Inheriting the notation for $u\in\mathscr{S}(\mathbb{R}^N)$,
we denote as $u(\Phi)$ for the value $\mathfrak{C}(u)$ at 
$u\in \mathscr{S}^\prime(\mathbb{R}^N)$, and write the value of
$u(\Phi)$ at $\Psi\in \mathbb{D}^{\infty,1+}$ as
\[
    \int_{\mathcal{W}} \Psi u(\Phi)d\mu.
\]
We call $u(\Phi)$ the {\it pull-back} of $u$ by $\Phi$.
Furthermore, we Observe that
\begin{equation}\label{eq.e^a}
    fe^{\mathfrak{a}}\in \mathbb{D}^{\infty,1+}
    \quad\text{for }f\in \mathbb{D}^\infty
    \text{ and }\mathfrak{a}\in \mathbb{D}^\infty
    \text{ with }e^{\mathfrak{a}}\in L^{1+}(\mu).
\end{equation}
In fact, by the chain rule for $\D$ 
(\cite[Theorem\,5.2.8 and Corollary\,5.3.2]{mt-cambridge}),
we can express the higher $\mathcal{H}$-derivatives of 
$e^{\mathfrak{a}}$ as
\[
    \D^k e^{\mathfrak{a}}=e^{\mathfrak{a}}
     \sum_{\substack{j_1,\dots,j_k\ge0 \\ j_1+\dots+j_k=k}}
      c_{j_1,\dots,j_k} (\D^{j_1} \mathfrak{a} \otimes
      \cdots \otimes \D^{j_k} \mathfrak{a})
    \quad\text{with } c_{j_1,\dots,j_k}\in \mathbb{R}.    
\]
Hence 
$e^{\mathfrak{a}}\in\mathbb{D}^{\infty,1+}$, and so does 
$fe^{\mathfrak{a}}$.
Letting $\delta_z$ be the Dirac measure on $\mathbb{R}^N$ 
concentrated at $z\in \mathbb{R}^N$, we know that
$\delta_z(\Phi)\in \bigcap\limits_{p\in(1,\infty)}
 (\mathbb{D}^{2N+2,p})^\prime$ 
and $\delta_z(\Phi)$ determines a finite measure on
$\mathcal{W}$ 
(\cite[Theorem\,5.4.15]{mt-cambridge}).
The measure is also written as $\delta_z(\Phi)d\mu$, and 
called the {\it pinned measure} at $z$ by $\Phi$ 
(cf.\cite[Example\,5.4.17]{mt-cambridge}). 

Let $C^k([0,T];\mathbb{R}^{d\times d})$ for $k\in \mathbb{N}$ be
the space of $\mathbb{R}^{d\times d}$-valued $k$-times
continuously differentiable functions on $[0,T]$.
Define the function $g_V:\mathbb{R}^d\to \mathbb{R}$ for symmetric and
positive definite $V\in \mathbb{R}^{d\times d}$ as
\[
    g_V(x)=\frac1{\sqrt{(2\pi)^d \det V}}
           e^{-\frac12\langle V^{-1}x,x\rangle}
    \quad\text{for } x\in \mathbb{R}^d.
\]

\begin{theorem}\label{t.alpha}
Let $\chi\in C([0,T];\mathbb{R}^{d\times d})$.
Assume that there is an 
$\alpha\in C^1([0,T];\mathbb{R}^{d\times d})$ with the
property that
\begin{equation}\label{t.alpha.0}
    \alpha(T)=I_d,
    \quad
    \det \alpha(t)\ne0
    ~~\text{for every }t\in[0,T],
    \quad\text{and}\quad 
    \chi=\alpha^\prime \alpha^{-1}.
\end{equation}
Then the following holds.
\\
{\rm(i)}
Define 
$\boldsymbol{\xi}_\alpha:\mathcal{W}\to \mathcal{W}$ by  
\[
    (\boldsymbol{\xi}_\alpha(\cdot))(t)
    =\alpha(t)\int_0^t \alpha(s)^{-1} d\theta(s)
    \quad\text{for }t\in[0,T],
\]
where $(\boldsymbol{\xi}_\alpha(w))(t)$ is the evaluation of 
$\boldsymbol{\xi}_\alpha(w)\in \mathcal{W}$ at $t\in[0,T]$
for $w\in \mathcal{W}$.
Then it holds that
\begin{equation}\label{t.alpha.1}
    \int_{\mathcal{W}} f e^{\p_{\sigma(\chi)}+\D^*h} d\mu
    =\biggl(\frac{e^{-\int_0^T\tr[\sigma(\chi)(t)]dt}}{
           \det \alpha(0)}\biggr)^{\frac12}
     \int_{\mathcal{W}} f(\boldsymbol{\xi}_\alpha) 
       e^{\int_0^T \langle h^\prime(t),
             d\xi_\alpha(t)\rangle}
       d\mu
\end{equation}
for every $f\in C_b(\mathcal{W})$ and $h\in \mathcal{H}$,
where $\{\xi_\alpha(t)\}_{t\in[0,T]}$ is the stochastic
process given as
$\xi_\alpha(t)=(\boldsymbol{\xi}_\alpha(\cdot))(t)$ 
and $d\xi_\alpha(t)$ stands for its It\^o integral.
\\
{\rm(ii)}
Let $t\in(0,T]$ and put
\[
    v_t(\alpha)=\int_0^t (\alpha(t)\alpha(s)^{-1})
      (\alpha(t)\alpha(s)^{-1})^\dagger ds.
\]
Then it holds that
\begin{equation}\label{t.alpha.2}
      \int_{\mathcal{W}} e^{\p_{\sigma(\chi)}}
       \delta_y(\theta(t))d\mu
     =\biggl(\frac{e^{-\int_0^T\tr[\sigma(\chi)(t)]dt}}{
          \det \alpha(0)}\biggr)^{\frac12}
      g_{v_t(\alpha)}(y) 
     \quad\text{for }y\in \mathbb{R}^d.
\end{equation}
\end{theorem}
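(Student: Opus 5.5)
The plan is to derive the theorem from Theorem~\ref{t.lap.lin}, or more precisely from \eqref{t.tways.4} with $\rho=\rho_\chi$ and $\eta=\rho_{\sigma(\chi)}$, which already includes the factor $e^{\D^*h}$. Three things have to be identified: the constant, the inverse transformation $\iota+F_{\widehat{\kappa_A(\rho_\chi)}}$, and the exponential weight $e^{\D^*[(I+B_{\kappa_A^*(\rho_\chi)})^{-1}h]}$.

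\emph{The constant.} As in the proof of Theorem~\ref{t.lap.lin}, $e^{\frac14\|\rho_\chi\|_2^2}=e^{\frac12\int_0^T\tr[(\chi-\sigma(\chi))(t)]dt}$. Since $\alpha^\prime=\chi\alpha$, Jacobi's formula gives $(\log\det\alpha)^\prime=\tr\chi$. Note that $\det\alpha>0$ on $[0,T]$, because it never vanishes and $\det\alpha(T)=1$. Together with $\alpha(T)=I_d$ this gives $\int_0^T\tr\chi(t)\,dt=-\log\det\alpha(0)$, which produces exactly the prefactor $\bigl(e^{-\int_0^T\tr[\sigma(\chi)(t)]dt}/\det\alpha(0)\bigr)^{1/2}$.

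\emph{The inverse transformation.} Recall that $\iota+F_{\kappa_A(\rho_\chi)}=\iota+G_\chi$ acts by $w\mapsto w-\int_0^\bullet\chi(s)w(s)ds$. For every $w$, defining $\boldsymbol{\xi}_\alpha(w)$ pathwise via integration by parts, the function $x=\boldsymbol{\xi}_\alpha(w)$ solves $x=w+\int_0^\bullet\chi(s)x(s)ds$; this uses $\alpha^\prime=\chi\alpha$. Hence $(\iota+G_\chi)\circ\boldsymbol{\xi}_\alpha=\iota$, and $\boldsymbol{\xi}_\alpha(w)-w=\int_0^\bullet\chi\,\boldsymbol{\xi}_\alpha(w)\,ds\in\mathcal{H}$. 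Let $\widehat{\kappa}=\widehat{\kappa_A(\rho_\chi)}$ and $X=X_{\kappa_A(\rho_\chi)}\cap X_{\widehat{\kappa}}$, the $\mathcal{H}$-invariant full-measure set from Lemma~\ref{l.h.inv}. By the proof of Theorem~\ref{t.inv.transf}, $(\iota+F_{\widehat{\kappa}})\circ(\iota+G_\chi)=\iota$ on $X$. By $\mathcal{H}$-invariance, $\boldsymbol{\xi}_\alpha(w)\in X$ whenever $w\in X$. Therefore
\[
(\iota+F_{\widehat{\kappa}})(w)=(\iota+F_{\widehat{\kappa}})\bigl((\iota+G_\chi)(\boldsymbol{\xi}_\alpha(w))\bigr)=\boldsymbol{\xi}_\alpha(w)\qquad\text{for }w\in X,
\]
so $\iota+F_{\widehat{\kappa}}=\boldsymbol{\xi}_\alpha$ $\mu$-a.s. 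I expect this identification to be the delicate step, because of the null-set and modification issues raised in Remark~\ref{r.inv.transf}. The $\mathcal{H}$-invariance trick above is how I intend to handle it.

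\emph{The exponential weight.} By Lemma~\ref{l.d*h.f.hat.kappa} applied to $\kappa=\kappa_A(\rho_\chi)$, the weight $e^{\D^*[(I+B_{\kappa_A^*(\rho_\chi)})^{-1}h]}$ equals $e^{(\D^*h)(\iota+F_{\widehat{\kappa}})}=e^{(\D^*h)(\boldsymbol{\xi}_\alpha)}$. Take $w$ in the $\mathcal{H}$-invariant set associated with $\D^*h$. Using $(\D^*h)(w+g)=(\D^*h)(w)+\langle h,g\rangle_{\mathcal{H}}$ with $g=\boldsymbol{\xi}_\alpha(w)-w$, we get
\[
(\D^*h)(\boldsymbol{\xi}_\alpha)=\int_0^T\langle h^\prime,d\theta\rangle+\int_0^T\langle h^\prime(t),\chi(t)\xi_\alpha(t)\rangle dt=\int_0^T\langle h^\prime(t),d\xi_\alpha(t)\rangle,
\]
since $d\xi_\alpha=d\theta+\chi\xi_\alpha\,dt$ by It\^o's formula. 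Substituting these three identifications into \eqref{t.tways.4} yields \eqref{t.alpha.1}.

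\emph{Part (ii).} Apply (i) with $h=0$ and $f=\varphi(\theta(t))$ for $\varphi\in\mathscr{S}(\mathbb{R}^d)$. The random variable $\xi_\alpha(t)=\alpha(t)\int_0^t\alpha(s)^{-1}d\theta(s)$ is centered Gaussian with covariance $v_t(\alpha)$, and $v_t(\alpha)$ is positive definite for $t>0$. Hence
\[
\int_{\mathcal{W}}\varphi(\theta(t))e^{\p_{\sigma(\chi)}}d\mu=c\int_{\mathbb{R}^d}\varphi\, g_{v_t(\alpha)}\,dy,
\]
where $c$ is the prefactor in \eqref{t.alpha.2}. Now take $\varphi_n\to\delta_y$ in $\mathscr{S}^\prime(\mathbb{R}^d)$ (e.g.\ Gaussian mollifiers). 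We have $e^{\p_{\sigma(\chi)}}\in\mathbb{D}^{\infty,1+}$ by \eqref{eq.e^a} and Remark~\ref{r.q.eta.int}, and $\theta(t)$ is non-degenerate. So by the continuity of the pull-back, the left side converges to $\int_{\mathcal{W}} e^{\p_{\sigma(\chi)}}\delta_y(\theta(t))d\mu$. The right side converges to $c\,g_{v_t(\alpha)}(y)$ by continuity of $g_{v_t(\alpha)}$, which proves \eqref{t.alpha.2}.
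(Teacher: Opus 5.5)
Your proof is correct and has the same skeleton as the paper's. You reduce to the adapted change of variables formula with $\rho=\rho_\chi$ and $\eta=\rho_{\sigma(\chi)}$, get the constant from $\det\alpha(0)=e^{-\int_0^T\tr\chi(t)dt}$, identify $\iota+F_{\widehat{\kappa}}$ (with $\widehat{\kappa}=\widehat{\kappa_A(\rho_\chi)}$) with $\boldsymbol{\xi}_\alpha$, and deduce (ii) by testing (i) against $\varphi(\theta(t))$.

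\emph{Identification of the inverse.} This step is done differently. The paper introduces the continuous linear map $(A_\alpha w)(t)=-\alpha(t)\int_0^t(\alpha^{-1})^\prime(s)w(s)ds$; this is exactly your pathwise integration-by-parts version, i.e.\ $\iota+A_\alpha=\boldsymbol{\xi}_\alpha$. It checks on $\mathcal{H}$ that $\iota+A_\alpha$ is a \emph{two-sided} inverse of $\iota+G_\chi$, extends this to $\mathcal{W}$ by continuity, and then applies the everywhere-defined left inverse to the a.s.\ identity $(\iota+G_\chi)\circ(\iota+F_{\widehat{\kappa}})=\iota$. In that order the outer map $G_\chi$ is linear on all of $\mathcal{W}$, so no modification issue arises.

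You use only the right-inverse property and the opposite composition order, which forces you to evaluate $F_{\widehat\kappa}$ at shifted points. Your claim that $(\iota+F_{\widehat{\kappa}})\circ(\iota+G_\chi)=\iota$ holds on all of $X$ is too strong as stated, for two reasons:
\begin{itemize}
\item the proof of Theorem~\ref{t.inv.transf} gives it only $\mu$-a.e., since the vanishing of $F_\kappa+F_{\widehat\kappa}+B_{\widehat\kappa}F_\kappa$ comes from $\D^*$-identities;
\item it is proved for the modification of $F_{\kappa_A(\rho_\chi)}$, not for the pathwise $G_\chi$.
\end{itemize}
The repair is short. Since $G_\chi g=B_{\kappa_A(\rho_\chi)}g$ for $g\in\mathcal{H}$ and $(I+B_{\widehat\kappa})(I+B_{\kappa_A(\rho_\chi)})=I$, the functional $\Phi=G_\chi+F_{\widehat\kappa}+B_{\widehat\kappa}G_\chi$ satisfies $(\iota+F_{\widehat\kappa})\circ(\iota+G_\chi)=\iota+\Phi$ and $\Phi(w+g)=\Phi(w)$ on $X_{\widehat\kappa}$. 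Hence $\{w\in X_{\widehat\kappa}\mid \Phi(w)=0\}$ is an $\mathcal{H}$-invariant set of full measure, and your argument runs verbatim on it. Alternatively, note that $\boldsymbol{\xi}_\alpha$ is also a left inverse of $\iota+G_\chi$ (by uniqueness for the linear Volterra equation) and use the paper's order.

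\emph{The weight and part (ii).} For the factor $e^{\D^*h}$, the paper plugs into \eqref{t.lap.lin.3}, which is the $h=0$ formula. It identifies $(\D^*h)(\boldsymbol{\xi}_\alpha)$ with $\int_0^T\langle h^\prime(t),d\xi_\alpha(t)\rangle$ via step functions and a limiting procedure. You instead use \eqref{t.tways.4} with general $h$, Lemma~\ref{l.d*h.f.hat.kappa}, and the Cameron--Martin shift property of the $\mathcal{H}$-invariant version of $\D^*h$. This is more direct and makes the passage to general $h$ explicit. In (ii) the paper tests the disintegration identity \cite[Theorem\,5.4.11]{mt-cambridge} against $\varphi\in\mathscr{S}(\mathbb{R}^d)$ rather than mollifying $\delta_y$; the two arguments are equivalent.
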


\begin{remark}\label{r.alpha}
(i) Due to \eqref{t.alpha.0} and continuity of $\alpha$, 
$\det \alpha(t)>0$ for any $t\in[0,T]$.
Hence $\{\det \alpha(0)\}^{\frac12}$ is a real number.
\\
(ii) 
By It\^o's formula and the relation between $\alpha$ and $\chi$, 
we have that
\begin{equation}\label{eq:xi.alpha}
    d\xi_\alpha(t)
    =d\theta(t)+\chi(t)\xi_\alpha(t)dt.
\end{equation}
Hence the stochastic process
$\{\xi_\alpha(t)\}_{t\in[0,T]}$ is an It\^o process, and 
its It\^o integral is well defined.
\end{remark}

\begin{proof}
(i)
Define the continuous linear operator 
$A_\alpha:\mathcal{W}\to \mathcal{W}$ by 
\[
    (A_\alpha w)(t)=-\alpha(t)\int_0^t 
     (\alpha^{-1})^\prime(s) w(s)ds
     \quad\text{for }t\in[0,T]\text{ and }
     w\in \mathcal{W}.
\]
We shall show that $\iota+A_\alpha$ is the continuous
inverse of $\iota+G_\chi$.
To do so, let $h\in \mathcal{H}$.
By the integration by parts on $[0,T]$, we have that
\[
    (A_\alpha h)(t)=-h(t)+\alpha(t)\int_0^t 
        \alpha(s)^{-1} h^\prime(s)ds
    \quad\text{for } t\in[0,T].
\]
Hence we obtain that
\[
    \bigl((\iota+A_\alpha)h\bigr)(t)
    =\alpha(t)\int_0^t \alpha(s)^{-1}h^\prime(s)ds
    \quad\text{for } t\in[0,T].
\]
Since 
\[
    (G_\chi g)(t)
     =-\int_0^t \alpha^\prime(s)\alpha(s)^{-1} g(s)ds
    \quad\text{for }g\in \mathcal{H}
    \text{ and }t\in[0,T],
\]
we see that
\begin{align*}
    \bigl(\bigl[(\iota+A_\alpha)\circ
      (\iota+G_\chi)\bigr]h\bigr)(t)
    &  =\alpha(t)\int_0^t \alpha(s)^{-1}
       \bigl\{h^\prime(s)
        -\alpha^\prime(s)\alpha^{-1}(s)h(s)\bigr\}
       ds
    \\
    & =\alpha(t)\int_0^t (\alpha^{-1}h)^\prime(s)ds
      =h(t)
\end{align*}
and 
\begin{align*}
    \bigl(\bigl[(\iota+G_\chi)\circ
      (\iota+A_\alpha)\bigr]h\bigr)(t)
    & =\alpha(t)\int_0^t \alpha^{-1}(s)h^\prime(s)ds
       -\int_0^t \alpha^\prime(s)\biggl(
          \int_0^s \alpha^{-1}(u)h^\prime(u)du
          \biggr) ds
    \\
    & =\int_0^t h^\prime(u)du
      =h(t)
\end{align*}
for $t\in[0,T]$.
Thus it holds that
\[
    \bigl((\iota+A_\alpha)\circ(\iota+G_\chi)\bigr)h
     =\bigl((\iota+G_\chi)\circ(\iota+A_\alpha)\bigr)h=h
    \quad\text{for every }h\in \mathcal{H}.
\]
Due to the continuity of $A_\alpha$ and $G_\chi$ on
$\mathcal{W}$, these equalities continue to hold on
$\mathcal{W}$, i.e., we have that
\[
    \bigl((\iota+A_\alpha)\circ(\iota+G_\chi)\bigr)w
     =\bigl((\iota+G_\chi)\circ(\iota+A_\alpha)\bigr)w=w
    \quad\text{for every }w\in \mathcal{W}.
\]

By Theorem~\ref{t.inv.transf}, it holds that
\[
    (\iota+F_{\widehat{\kappa_A(\rho_\chi)}})\circ
      (\iota+G_\chi)
    =(\iota+G_\chi)\circ
      (\iota+F_{\widehat{\kappa_A(\rho_\chi)}})
    =\iota
    \quad\text{$\mu$-a.s.}
\]
Combining this with the above identity, we know that
\[
    \iota+F_{\widehat{\kappa_A(\rho_\chi)}}
    =\iota+A_\alpha
    \quad\text{$\mu$-a.s.}
\]
By It\^o's formula, we know that
\[
    \int_0^t (\alpha^{-1})^\prime(s)\theta(s)ds
    =\alpha(t)^{-1}\theta(t)
     -\int_0^t \alpha(s)^{-1} d\theta(s)
    \quad\text{for }t\in[0,T].
\]
We then have that
\begin{equation}\label{t.alpha.21}
    \iota+F_{\widehat{\kappa_A(\rho_\chi)}}
    =\boldsymbol{\xi}_\alpha.
\end{equation}

Since
\[
    (\det \alpha)^\prime
    =(\tr[\alpha^\prime \alpha^{-1}])\det \alpha 
    =(\tr \chi)\det \alpha,
    \quad 
    \det \alpha(T)=1,
\]
we have that
\begin{equation}\label{t.alpha.22}
    \det \alpha(0)=e^{-\int_0^T \tr \chi(t) dt}.
\end{equation}

If $h\in \mathcal{H}$ is of the form
\[
    h^\prime=\sum_{n=0}^{N-1} \one_{[t_n,t_{n+1})} e_n
\]
with $0=t_0<t_1<\dots<t_N=T$ and 
$e_0,\dots,e_{N-1}\in \mathbb{R}^d$, then 
\[
    \D^*h=\sum_{n=0}^{N-1} \langle e_n,
      \theta(t_{n+1})-\theta(t_n)\rangle.
\]
This implies that
\[
    (\D^*h)(\boldsymbol{\xi}_\alpha)
    =\sum_{n=0}^{N-1} \langle e_n,
      \xi_\alpha(t_{n+1})-\xi_\alpha(t_n)\rangle
    =\int_0^T \langle h^\prime(t),d\xi_\alpha(t)\rangle.
\]
Hence, by the standard limiting procedure, we obtain that 
\[
    (\D^*h)(\boldsymbol{\xi}_\alpha)
    =\int_0^T \langle h^\prime(t),d\xi_\alpha(t)\rangle
    \quad\text{for each }h\in \mathcal{H}.
\]
Plugging this, \eqref{t.alpha.21}, and \eqref{t.alpha.22}
into \eqref{t.lap.lin.3}, we arrive at \eqref{t.alpha.1}. 

\noindent
(ii) 
Let $t\in(0,T]$.
Note that $v_t(\alpha)$ is symmetric and positive
definite.
Hence $\xi_\alpha(t)=(\boldsymbol{\xi}_\alpha(\cdot))(t)$ obeys 
the normal distribution on $\mathbb{R}^d$ with mean $0$ and
covariance matrix $v_t(\alpha)$.
By \eqref{t.alpha.1} and
\cite[Theorem\,5.4.11]{mt-cambridge}, we have that 
\begin{align*}
    & \int_{\mathbb{R}^d} \varphi(y)
        \biggl(\int_{\mathcal{W}} e^{\p_{\sigma(\chi)}}
         \delta_y(\theta(t)) d\mu \biggr) dy
      =\int_{\mathcal{W}} \varphi(\theta(t))
       e^{\p_{\sigma(\chi)}} d\mu
    \\
    & =\biggl(\frac{e^{-\int_0^T\tr[\sigma(\chi)(t)]dt}}{
         \det \alpha(0)}\biggr)^{\frac12}
       \int_{\mathcal{W}} 
         \varphi(\xi_\alpha(t)) d\mu
      =\biggl(\frac{e^{-\int_0^T\tr[\sigma(\chi)(t)]dt}}{
         \det \alpha(0)}\biggr)^{\frac12}
       \int_{\mathbb{R}^d} \varphi(y) g_{v_t(\alpha)}(y)dy
\end{align*}
for every $\varphi\in \mathscr{S}(\mathbb{R}^d)$.
Thus \eqref{t.alpha.2} holds.
\end{proof}

\begin{remark}\label{r.xi.alpha}
As was seen in the proof of Theorem~\ref{t.alpha}, the
identity \eqref{t.alpha.2} is an immediate consequence of 
\eqref{t.alpha.1}.
With a little effort, we can extend \eqref{t.alpha.2} to
more general integrals with respect to pinned measures.
To see this, we continue to work under the same assumption 
as in Theorem~\ref{t.alpha}.
Let $N\in \mathbb{N}$ and take
$k_0\in \mathcal{H}$ and
$\boldsymbol{k}=(k_1,\dots,k_N)\in \mathcal{H}^N$ with
$\det C(\boldsymbol{k})\ne0$, where 
$C(\boldsymbol{k})=\bigl(
  \langle k_i,k_j\rangle_{\mathcal{H}}
  \bigr)_{1\le i,j\le N}$.
Put $\D^*\boldsymbol{k}=(\D^*k_1,\dots,\D^*k_N)$.
Define $\kappa[\alpha]\in\ltwo$ by
\[
    \kappa[\alpha](t,s)
    =\one_{[0,t)}(s) \alpha^\prime(t)\biggl\{
       \alpha(s)^{-1}+\biggl(\int_0^s 
       \alpha(u)^{-1}\bigl(\alpha(u)^{-1}\bigr)^\dagger
       du\biggr)\alpha^\prime(s)^\dagger
      \biggr\}
     \quad\text{for }(t,s)\in[0,T]^2.
\]
Set
\[
    \Gamma[\alpha]=\Bigl(\bigl\langle(I+B_{\kappa[\alpha]}
         +B_{\kappa[\alpha]}^*)k_p,k_q 
        \bigr\rangle_{\mathcal{H}}
      \Bigr)_{0\le p,q\le N} 
    \in \mathbb{R}^{(N+1)\times(N+1)}.
\]
Denote by $(x^0,x^1,\dots,x^N)$ the coordinate system on
$\mathbb{R}^{N+1}$ and let $P_\alpha(dx^0dx^1\dots dx^N)$ be
the Gaussian distribution on $\mathbb{R}^{N+1}$ obeying
$N(0,\Gamma[\alpha])$.
We have that 
\begin{align*}
   & \biggl(\int_{\mathcal{W}} e^{\q_{\sigma(\chi)}+\D^*k_0}
       \delta_{(x^1,\dots,x^N)}(\D^*\boldsymbol{k}) 
       d\mu\biggr)
      dx^1\cdots dx^N
   \\
   & =\biggl(\frac{e^{-\int_0^T \tr[\sigma(\chi)(t)]dt}}{
        \det \alpha(0)}\biggr)^{\frac12}
       \int_{\mathbb{R}} e^{x^0} P_\alpha(dx^0dx^1\cdots dx^N).
\end{align*}

\begin{proof}
Recall that $\D^*\boldsymbol{k}$ obeys the normal
distribution on $\mathbb{R}^N$ with mean $0$ and covariance
$C(\boldsymbol{k})$.
Since 
$\D^*h(\boldsymbol{\xi}_\alpha) 
 =\int_0^T\langle h^\prime(t),d\xi_\alpha(t)\rangle$ 
for $h\in \mathcal{H}$ as was seen in the proof of
Theorem~\ref{t.alpha}, by
\cite[Theorem\,5.4.11]{mt-cambridge}, it follows from
\eqref{t.alpha.1} that
\begin{align*}
    & \int_{\mathbb{R}^N} \varphi(x^1,\dots,x^N)
       \biggl(\int_{\mathcal{W}} e^{\q_{\sigma(\chi)}+\D^*k_0}
        \delta_{(x^1,\dots,x^N)}(\D^*\boldsymbol{k})d\mu
       \biggr) dx^1\cdots dx^N
    \\
    & =\int_{\mathcal{W}} \varphi(\D^*\boldsymbol{k})
          e^{\q_{\sigma(\chi)}+\D^*k_0} d\mu
    \\
    & =\biggl(\frac{e^{-\int_0^T \tr[\sigma(\chi)(t)]dt}}{
            \det \alpha(0)}\biggr)^{\frac12}
       \int_{\mathcal{W}} \varphi\biggl(
          \int_0^T\langle k_1^\prime(t),
               d\xi_\alpha(t)\rangle, \dots,
          \int_0^T\langle k_N^\prime(t),
               d\xi_\alpha(t)\rangle\biggr)
          e^{\int_0^T\langle k_0^\prime(t),
               d\xi_\alpha(t)\rangle} d\mu
\end{align*}
for any $\varphi\in \mathscr{S}(\mathbb{R}^N)$.
Note that 
$\Bigl(\int_0^T\langle k_p^\prime(t),
  d\xi_\alpha(t)\rangle\Bigr)_{0\le p\le N}$ 
obeys the normal distribution on $\mathbb{R}^{N+1}$ with
mean $0$. 
Thus it suffices to show that
\begin{equation}\label{c.alpha.21}
    \int_{\mathcal{W}}
       \biggl(\int_0^T\langle h^\prime(t),
           d\xi_\alpha(t)\rangle\biggr)
       \biggl(\int_0^T\langle g^\prime(t),
           d\xi_\alpha(t)\rangle\biggr) d\mu
    =\bigl\langle
       (I+B_{\kappa[\alpha]}+B_{\kappa[\alpha]}^*)h,g
      \bigr\rangle_{\mathcal{H}}
\end{equation}
for $h,g\in \mathcal{H}$.

Let $h,g\in \mathcal{H}$.
Using the expression of the stochastic differential of
$\{\xi_\alpha(t)\}_{t\in[0,T]}$ as 
\[
    d\xi_\alpha(t)=d\theta(t)
    + \alpha^\prime(t)\biggl(\int_0^t \alpha(u)^{-1}
        d\theta(u)\biggr) dt,
\]
we know that
\[
    \int_0^t \langle h^\prime(s),
           d\xi_\alpha(s)\rangle
    =\int_0^t \langle h^\prime(s),d\theta(s)\rangle 
     +\int_0^t \biggl\langle h^\prime(s),
        \alpha^\prime(s)\biggl(\int_0^s \alpha(u)^{-1}
         d\theta(u)\biggr) \biggr\rangle ds
\]
for $t\in[0,T]$.
Due to It\^o's formula, we have that
\begin{align*}
    & \biggl(\int_0^T \langle h^\prime(s),
           d\xi_\alpha(s)\rangle \biggr)
      \biggl(\int_0^T \langle g^\prime(s),
           d\xi_\alpha(s)\rangle \biggr)
    \\
    & =\int_0^T \biggl\langle
         \biggl(\int_0^t \langle h^\prime(s),
           d\xi_\alpha(s)\rangle \biggr)g^\prime(t)
         +\biggl(\int_0^t \langle g^\prime(s),
           d\xi_\alpha(s)\rangle \biggr)h^\prime(t),
       d\theta(t) \biggr\rangle
    \\
    & \hphantom{=}
      +\int_0^T \biggl\{
         \biggl(\int_0^t \langle h^\prime(s),
              d\xi_\alpha(s)\rangle \biggr)
         \biggl\langle g^\prime(t),
           \alpha^\prime(t) \int_0^t \alpha(u)^{-1} 
            d\theta(u) \biggr\rangle
    \\
    & \hphantom{= \int_0^T \biggl\{}
      +\biggl(\int_0^t \langle g^\prime(s),
              d\xi_\alpha(s)\rangle \biggr)
         \biggl\langle h^\prime(t),
           \alpha^\prime(t) \int_0^t \alpha(u)^{-1} 
            d\theta(u) \biggr\rangle \biggr\}dt
    \\
    & \hphantom{=}
      +\int_0^T \langle h^\prime(t),
         g^\prime(t)\rangle dt.
\end{align*}
The integration of the first term with respect to $\mu$
vanishes. 
The last term is exactly 
$\langle h,g\rangle_{\mathcal{H}}$.
Thus what to be computed is the Wiener integral of the
middle term.

To compute the middle term, rewrite as
\begin{align*}
    & \biggl(\int_0^t \langle h^\prime(s),
              d\xi_\alpha(s)\rangle \biggr)
         \biggl\langle g^\prime(t),
           \alpha^\prime(t) \int_0^t \alpha(u)^{-1} 
            d\theta(u) \biggr\rangle
    \\
    & 
    = \biggl(\int_0^t \langle h^\prime(s),
              d\theta(s)\rangle \biggr)
         \biggl\langle g^\prime(t),
           \alpha^\prime(t) \int_0^t \alpha(u)^{-1} 
            d\theta(u) \biggr\rangle
    \\
    & \hphantom{=}
      +\biggl(\int_0^t \biggl\langle h^\prime(s),
          \alpha^\prime(s)\int_0^s \alpha(u)^{-1}
          d\theta(u)\biggr\rangle ds \biggr)
         \biggl\langle g^\prime(t),
           \alpha^\prime(t) \int_0^t \alpha(u)^{-1} 
            d\theta(u) \biggr\rangle.
\end{align*}
Thanks to the isometry for It\^o integral, we have that
\begin{align*}
    & \int_{\mathcal{W}}
         \biggl(\int_0^t \langle h^\prime(s),
              d\theta(s)\rangle \biggr)
         \biggl\langle g^\prime(t),
           \alpha^\prime(t) \int_0^t \alpha(u)^{-1} 
            d\theta(u) \biggr\rangle d\mu
    \\
    &
    = \biggl\langle \alpha^\prime(t)^\dagger g^\prime(t),
       \int_0^t \alpha(u)^{-1}h^\prime(u) du
      \biggr\rangle
    =\biggl\langle \int_0^T \one_{[0,t)}(s)
         \alpha^\prime(t) \alpha^{-1}(s) h^\prime(s) ds,
        g^\prime(t) \biggr\rangle
\end{align*}
and 
\begin{align*}
    & \int_{\mathcal{W}} 
       \biggl(\int_0^t \biggl\langle h^\prime(s),
          \alpha^\prime(s)\int_0^s \alpha(u)^{-1}
          d\theta(u)\biggr\rangle ds \biggr)
         \biggl\langle g^\prime(t),
           \alpha^\prime(t) \int_0^t \alpha(u)^{-1} 
            d\theta(u) \biggr\rangle d\mu
    \\
    & = \int_0^t \biggl\langle 
          \alpha^\prime(s)^\dagger h^\prime(s),
          \biggl( \int_0^s \alpha(u)^{-1} 
           (\alpha(u)^{-1})^\dagger du \biggr)
          \alpha^\prime(t)^\dagger g^\prime(t)
          \biggr\rangle ds
    \\
    & =\biggl\langle \int_0^T \one_{[0,t)}(s)
        \alpha^\prime(t) 
          \biggl( \int_0^s \alpha(u)^{-1} 
           (\alpha(u)^{-1})^\dagger du \biggr)
          \alpha^\prime(s)^\dagger h^\prime(s) ds,
        g^\prime(t)\biggr\rangle.
\end{align*}
Hence we obtain that
\[
    \int_{\mathcal{W}} 
      \biggl(\int_0^T 
         \biggl(\int_0^t \langle h^\prime(s),
              d\xi_\alpha(s)\rangle \biggr)
         \biggl\langle g^\prime(t),
           \alpha^\prime(t) \int_0^t \alpha(u)^{-1} 
            d\theta(u) \biggr\rangle dt\biggr)d\mu
    =\langle B_{\kappa[\alpha]}h,g\rangle_{\mathcal{H}}.    
\]
Switching $h$ and $g$, we have that
\[
    \int_{\mathcal{W}} 
      \biggl(\int_0^T 
         \biggl(\int_0^t \langle g^\prime(s),
              d\xi_\alpha(s)\rangle \biggr)
         \biggl\langle h^\prime(t),
           \alpha^\prime(t) \int_0^t \alpha(u)^{-1} 
            d\theta(u) \biggr\rangle dt\biggr)d\mu
    =\langle B_{\kappa[\alpha]}^* h,g\rangle_{\mathcal{H}}.    
\]
Thus we arrive at \eqref{c.alpha.21}.
\end{proof}
\end{remark}

The assumption \eqref{t.alpha.0} is equivalent to that the
unique solution $\alpha$ to the ODE
\begin{equation}\label{eq.ode.alpha}
    \alpha^\prime=\chi \alpha,
    \quad \alpha(T)=I_d    
\end{equation}
satisfies that $\det \alpha(t)\ne0$ for every $t\in[0,T]$.
A na\"{\i}ve sufficient condition for this to hold is given
as follows. 

\begin{proposition}\label{c.lap.lin}
Let $\chi\in C([0,T];\mathbb{R}^{d\times d})$ and assume
that 
\begin{equation}\label{c.lap.lin.0}
    T\sqrt{d}\|\chi\|_\infty e^{T\|\chi\|_\infty }<1,
\end{equation}
where $\|\chi\|_\infty=\sup_{t\in[0,T]}|\chi(t)|$.
Then the solution 
$\alpha\in C^1([0,T];\mathbb{R}^{d\times d})$ to the ODE
\eqref{eq.ode.alpha} satisfies \eqref{t.alpha.0}.
In particular, the identities \eqref{t.alpha.1} and 
\eqref{t.alpha.2} hold.
\end{proposition}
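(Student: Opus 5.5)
The plan is to show that $\alpha(t)$ stays close to $I_d$ on all of $[0,T]$, and then that $\det\alpha(t)\ne0$. Once \eqref{t.alpha.0} is established, the identities \eqref{t.alpha.1} and \eqref{t.alpha.2} follow immediately from Theorem~\ref{t.alpha}.

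The ODE \eqref{eq.ode.alpha} is linear with continuous coefficient $\chi$. Hence it has a unique solution $\alpha\in C^1([0,T];\mathbb{R}^{d\times d})$ with $\alpha(T)=I_d$ and $\alpha'=\chi\alpha$, so $\chi=\alpha'\alpha^{-1}$ wherever $\alpha$ is invertible. The only thing to prove is therefore the non-degeneracy $\det\alpha(t)\ne0$ on $[0,T]$.

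Write $|\cdot|$ for the Euclidean (Frobenius) norm on $\mathbb{R}^{d\times d}$. It is submultiplicative, and $|I_d|=\sqrt d$. Integrating backwards from $T$ gives
\[
    \alpha(t)=I_d-\int_t^T\chi(u)\alpha(u)\,du,
\]
so $|\alpha(t)|\le\sqrt d+\|\chi\|_\infty\int_t^T|\alpha(u)|du$. Gronwall's inequality (run backwards in time) then yields $|\alpha(t)|\le\sqrt d\,e^{(T-t)\|\chi\|_\infty}\le\sqrt d\,e^{T\|\chi\|_\infty}$. Substituting this bound back into the integral equation,
\[
    |\alpha(t)-I_d|\le\int_t^T|\chi(u)||\alpha(u)|du
    \le T\sqrt d\,\|\chi\|_\infty e^{T\|\chi\|_\infty}<1
\]
by \eqref{c.lap.lin.0}.

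Now set $M=\alpha(t)-I_d$. The operator norm of $M$ is at most $|M|<1$, so the Neumann series shows that $I_d+M=\alpha(t)$ is invertible. Thus $\det\alpha(t)\ne0$ for every $t\in[0,T]$, which is \eqref{t.alpha.0}.

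No step here is a serious obstacle. The one point needing care is consistency of norms: the factor $\sqrt d$ in \eqref{c.lap.lin.0} comes exactly from $|I_d|=\sqrt d$ in the Frobenius norm, together with the fact that the Frobenius norm dominates the operator norm. Using that combination, the argument closes.
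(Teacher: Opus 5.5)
Your argument is correct and is essentially the paper's proof: you use the backward integral equation, then Gronwall to get $|\alpha(t)|\le\sqrt d\,e^{T\|\chi\|_\infty}$, then the bound $|\alpha(t)-I_d|\le T\sqrt d\,\|\chi\|_\infty e^{T\|\chi\|_\infty}<1$. Your remark that the Euclidean (Frobenius) norm dominates the operator norm, so the Neumann series gives invertibility, makes explicit the final step the paper leaves implicit.
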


\begin{proof}
It holds that
\[
    |\alpha(T-t)|
    =\biggl|I_d-\int_0^t \chi(T-s) \alpha(T-s) ds
      \biggr|
    \le \sqrt{d}+\|\chi\|_\infty \int_0^t
       |\alpha(T-s)| ds
    \quad\text{for }t\in[0,T].
\]
By Gronwall's inequality, we see  that 
\[
    \|\alpha\|_\infty\le\sqrt{d}e^{T\|\chi\|_\infty}.
\]
Hence we have that
\[
    |I_d-\alpha(t)|=\biggl|\int_t^T 
      \chi(s)\alpha(s)ds\biggr|
    \le T\sqrt{d}\|\chi\|_\infty e^{T\|\chi\|_\infty }<1.
\]
Thus $\det \alpha(t)\ne0$ for any $t\in[0,T]$.
\end{proof}

We proceed to finding 
$\chi\in C([0,T];\mathbb{R}^{d\times d})$ with 
$\sigma(\chi)=\sigma$ for given 
$\sigma\in C([0,T];\mathbb{R}^{d\times d})$.
Such a $\chi$ is constructed by using the Riccati ODE as
follows.

\begin{theorem}\label{t.riccati}
Let $\sigma\in C([0,T];\mathbb{R}^{d\times d})$.
The following three conditions are equivalent.
\\
{\rm(i)}
$e^{\p_\sigma}\in L^1(\mu)$.
\\
{\rm(ii)}
There exists a $\chi\in C([0,T];\mathbb{R}^{d\times d})$
with $\sigma(\chi)=\sigma$.
\\
{\rm(iii)}
There exists an 
$\bR\in C^1([0,T];\mathbb{R}^{d\times d})$
obeying the $\mathbb{R}^{d\times d}$-valued Riccati ODE
\begin{equation}\label{t.riccati.1}
    \bR^\prime=-\bR^2-\sigma^\dagger \bR -\bR\sigma
        -\sigma^\dagger\sigma,
    \quad \bR(T)=0.
\end{equation}

If these conditions hold, then
$\chi=\bR+\sigma$ and it holds that
\begin{equation}\label{t.riccati.2-}
    \int_{\mathcal{W}} f e^{\mathfrak{p}_\sigma} d\mu
    =e^{\int_0^T \tr \bR(t) dt/2}
     \int_{\mathcal{W}}
       f(\iota+F_{\widehat{\kappa_A(\rho_\chi)}})  d\mu
    \quad\text{for every }f\in C_b(\mathcal{W}).
\end{equation}
\end{theorem}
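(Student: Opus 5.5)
The cycle I will prove is (ii)$\Leftrightarrow$(iii), (ii)$\Rightarrow$(i), (i)$\Rightarrow$(ii). The formula \eqref{t.riccati.2-} will then follow from Theorem~\ref{t.lap.lin}.

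\emph{(ii)$\Leftrightarrow$(iii).} Given $\chi$ with $\sigma(\chi)=\sigma$, put $\bR=\chi-\sigma$. By \eqref{eq.sigma.chi},
\[
\bR(t)=\int_t^T\chi(u)^\dagger\chi(u)\,du .
\]
Hence $\bR\in C^1$, $\bR(T)=0$, and
\[
\bR^\prime=-\chi^\dagger\chi=-(\bR+\sigma)^\dagger(\bR+\sigma).
\]
Since $\bR$ is symmetric, this is exactly \eqref{t.riccati.1}. Conversely, suppose $\bR$ solves \eqref{t.riccati.1}. Taking transposes, $\bR^\dagger$ solves the same ODE with the same terminal value. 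By uniqueness for this locally Lipschitz ODE, $\bR=\bR^\dagger$. Setting $\chi=\bR+\sigma$, the equation reads $\bR^\prime=-\chi^\dagger\chi$. Integrating from $t$ to $T$ gives $\chi-\sigma=\int_t^T\chi^\dagger\chi\,du$, i.e.\ $\sigma(\chi)=\sigma$. This also shows that $\chi=\bR+\sigma$ whenever the conditions hold.

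\emph{(ii)$\Rightarrow$(i) and the formula.} Theorem~\ref{t.lap.lin} gives $e^{\p_{\sigma(\chi)}}\in L^1(\mu)$ together with \eqref{t.lap.lin.3}. Since $\chi-\sigma(\chi)=\bR$, the prefactor in \eqref{t.lap.lin.3} is $e^{\frac12\int_0^T\tr\bR\,dt}$, which is \eqref{t.riccati.2-}.

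\emph{(i)$\Rightarrow$(ii).} This is the main step. Since $\p_\sigma=\q_{\rho_\sigma}$, Theorem~\ref{t.tways} yields $\rho\in\stwo$ with
\[
\rho_\sigma(t,s)=\rho(t,s)-\int_t^T\rho(t,u)\rho(u,s)\,du \quad\text{for a.e. } s<t .
\]
The obstacle is to show that $\rho=\rho_\chi$ for some continuous $\chi$. Injectivity (Proposition~\ref{p.bij}(ii)) would then identify the candidate, but we first have to produce one.

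My plan is to get the Riccati solution directly and avoid analysing $\rho$. The idea is to use restriction to $[\xi,T]$. Conditioning on $\mathcal{F}_\xi$ and applying the Markov property, integrability of $e^{\p_\sigma}$ implies integrability of the analogous functional on $[\xi,T]$ started at a deterministic point $x$. I would argue this via Fatou/Fubini, using Lemma~\ref{l.q.eta.int} for the positivity of the corresponding $\Lambda$.

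Next, the local Riccati solution $\bR$ on $(\xi_0,T]$ exists by Cauchy--Lipschitz. For $\xi>\xi_0$, Theorem~\ref{t.alpha} (with $T$ replaced by intervals near the end) and a shifted Gaussian computation identify
\[
\int e^{\p_\sigma^{[\xi,T]}}\,d\mu_x
\]
as $c(\xi)\,e^{\frac12\langle\bR(\xi)x,x\rangle}$. The key claim is a uniform bound, for $\xi\in(\xi_0,T]$, on the largest eigenvalue of $\bR(\xi)$, coming from the fact that $\Lambda(B_{\rho_\sigma})<1$ persists on subintervals; compare \eqref{l.tw.(ii)to(iv).22}.

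To make the blow-up argument complete I also need a lower bound on $\bR$. It comes from the equation itself: $\bR^\prime=-\chi^\dagger\chi\le0$ and $\bR(T)=0$ give $\bR\ge0$ on the interval of existence. So $\bR$ is bounded, it cannot blow up, and it extends to all of $[0,T]$. Pinning down the upper bound on $\bR(\xi)$ rigorously, via the quadratic form $\langle\bR(\xi)x,x\rangle$ as an exponential moment controlled by $(1-\Lambda)^{-1}$, is the step I expect to be hardest.
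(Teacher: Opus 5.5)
Your proofs of (ii)$\Leftrightarrow$(iii), of (ii)$\Rightarrow$(i), and of \eqref{t.riccati.2-} are correct and coincide with the paper's. The gap is in (i)$\Rightarrow$(ii), which is the only substantial implication. You reduce it to two claims and prove neither of them:

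(a) for $\xi$ in the existence interval $(\xi_0,T]$ of the Riccati solution, the Laplace transform $V(\xi,x)$ of $\int_\xi^T\langle\sigma(t)(x+\theta(t)-\theta(\xi)),d\theta(t)\rangle$ equals $e^{\frac12\int_\xi^T\tr\bR(t)dt}e^{\frac12\langle\bR(\xi)x,x\rangle}$;

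(b) $\bR(\xi)\le CI_d$ with $C$ independent of $\xi$.

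The lower bound $\bR\ge0$ that you do prove cannot exclude blow-up, because blow-up happens upward. For example, with $d=1$, $\sigma\equiv c$ and $cT\ge1$, one has $\bR(t)=(1/c-T+t)^{-1}-c\to+\infty$ as $t\downarrow T-1/c$. Without (a), nothing connects hypothesis (i) to $\bR$ at all. As written, (i)$\Rightarrow$(ii) is a plan rather than a proof.

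Both claims can be proved with tools already in the paper.

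\emph{Claim (a).} On $[\xi,T]$ put $\chi=\bR+\sigma$. Your (iii)$\Rightarrow$(ii) computation gives $\sigma(\chi)=\sigma$ there. The solution of $\alpha^\prime=\chi\alpha$, $\alpha(T)=I_d$, satisfies $\det\alpha(t)=e^{-\int_t^T\tr\chi(u)du}\ne0$. Hence the time-shifted version of \eqref{t.alpha.1}, with $f=1$ and $h_x^\prime=\one_{[\xi,T]}\sigma x$, gives $V(\xi,x)=e^{\frac12\int_\xi^T\tr\bR(t)dt}\int_{\mathcal{W}}e^{Y}d\mu$, where $Y=\int_\xi^T\langle\sigma(t)x,d\xi_\alpha(t)\rangle$. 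By stochastic Fubini, $Y=\int_\xi^T\langle\sigma x+w,d\theta\rangle$, where $w$ solves $w^\prime=-\chi^\dagger(\sigma x+w)$, $w(T)=0$. Since $\bR x$ solves the same equation, $\sigma x+w=\chi x$. So $Y$ is centred Gaussian with variance $\int_\xi^T|\chi(t)x|^2dt=\langle\bR(\xi)x,x\rangle$.

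\emph{Claim (b).} Write $V(\xi,x)=\int_{\mathcal{W}}e^{\q_{\eta_\xi}+\D^*h_x}d\mu$. Here $B_{\eta_\xi}$ is the compression of $B_{\rho_\sigma}$ to $\{h\in\mathcal{H}\mid h^\prime=\one_{[\xi,T]}h^\prime\}$, so no Markov conditioning is needed. By (i) and Lemma~\ref{l.q.eta.int}, $\Lambda(B_{\eta_\xi})\le\Lambda(B_{\rho_\sigma})<1$, and also $\|\eta_\xi\|_2\le\|\rho_\sigma\|_2$. H\"older's inequality with $p\Lambda(B_{\rho_\sigma})<1$ and Lemma~\ref{l.q.eta.int}(i) give $\log V(\xi,x)\le C_1+\frac{p^\prime}2T\|\sigma\|_\infty^2|x|^2$ uniformly in $\xi$, where $p^\prime=p/(p-1)$. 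Since $\int_\xi^T\tr\bR\ge0$, replace $x$ by $\lambda x$ and let $\lambda\to\infty$. This yields $0\le\bR(\xi)\le p^\prime T\|\sigma\|_\infty^2I_d$, so the maximal solution extends to $[0,T]$.

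Completed this way, your route is genuinely different from the paper's. The paper performs exactly the analysis of $\rho$ that you set out to avoid. It takes $\rho$ from Theorem~\ref{t.tways}(iv) and shows, by a contraction argument iterated backward over short intervals $[T_n,T_{n-1}]$, that $\rho(t,s_1)=\rho(t,s_2)$ for a.e.\ $s_1,s_2<t$. By Lemma~\ref{l.leb.const} this gives $\rho(t,s)=\chi_0(t)$ for a.e.\ $s<t$, with $\chi_0(t)=\frac1t\int_0^t\rho(t,s)ds$. The paper then puts $\chi(t)=\sigma(t)+\int_t^T\chi_0(u)^\dagger\chi_0(u)du$.

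The paper's argument needs no ODE continuation and no Gaussian verification, but it rests on the Gohberg--Krein factorization behind Theorem~\ref{t.tways}(iv). Yours never uses $\rho$, so your opening appeal to Theorem~\ref{t.tways} can be dropped. It needs only Lemma~\ref{l.q.eta.int} and Theorem~\ref{t.alpha}, and it yields an explicit a priori bound on $\bR$.
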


It should be noted that the Riccati ODE \eqref{t.riccati.1}
is solved backward from t=T.
As will be seen in the proof of the theorem, 
this is because the condition $\sigma=\sigma(\chi)$ gives
the backward equation that
\[
    \sigma(t)=\chi(t)-\int_t^T \chi(u)^\dagger\chi(u)du
    \quad\text{for }t\in[0,T].
\]
For the proof, we prepare an elementary lemma on Lebesgue
integrals.

\begin{lemma}\label{l.leb.const}
Let $t\in(0,T]$.
If $f\in L^2([0,t];\mathbb{R})$ satisfies that
\[
    \iint_{[0,t]^2}|f(s_1)-f(s_2)|^2 ds_1ds_2=0,
\]
then it holds that
\[
    f(u)=\frac1{t}\int_0^t f(s)ds
    \quad\text{a.e.~$u\in[0,t]$.}
\]
\end{lemma}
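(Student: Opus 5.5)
The plan is to put $c=\frac1{t}\int_0^t f(s)\,ds$, which is finite because $f\in L^2([0,t];\mathbb{R})\subset L^1([0,t];\mathbb{R})$, and to show $\int_0^t |f(u)-c|^2\,du=0$. The natural tool is the elementary identity obtained by expanding the square in the double integral. We write $|f(s_1)-f(s_2)|^2=f(s_1)^2-2f(s_1)f(s_2)+f(s_2)^2$; every term is integrable on $[0,t]^2$ since $f\in L^2$. Applying Fubini's theorem then gives
\[
    \iint_{[0,t]^2}|f(s_1)-f(s_2)|^2 ds_1ds_2
    =2t\int_0^t f(s)^2ds-2\biggl(\int_0^t f(s)ds\biggr)^2 .
\]

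Next we compute directly
\[
    \int_0^t |f(u)-c|^2du
    =\int_0^t f(u)^2du-2c\int_0^t f(u)du+tc^2
    =\int_0^t f(u)^2du-\frac1{t}\biggl(\int_0^t f(s)ds\biggr)^2 .
\]
This is exactly $\frac1{2t}$ times the double integral above. The hypothesis says the double integral vanishes, so $\int_0^t|f(u)-c|^2du=0$. Hence $f(u)=c$ for a.e.\ $u\in[0,t]$, which is the claim.

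An equivalent route avoids the algebra. By Tonelli, $\int_0^t\bigl(\int_0^t|f(s_1)-f(s_2)|^2ds_2\bigr)ds_1=0$. So for a.e.\ $s_1$, the function $f$ equals $f(s_1)$ a.e., and therefore $f$ is a.e.\ constant. Integrating over $[0,t]$ then identifies that constant as the mean $c$.

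There is no genuine obstacle here. The only point needing care is justifying Fubini, which the square integrability of $f$ on a finite interval supplies.
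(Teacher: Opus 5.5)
Your proof is correct, but it takes a different route from the paper. The paper argues by contradiction. It lets $M$ and $m$ be the essential supremum and infimum of $f$ on $[0,t]$ and assumes $M>m$. It then takes the sets $A_1=\{f\ge M-\frac{M-m}3\}$ and $A_2=\{f\le m+\frac{M-m}3\}$, both of positive measure, and bounds the double integral from below by $(\frac{M-m}3)^2\lambda(A_1)\lambda(A_2)>0$. You instead use the exact variance identity
\[
\iint_{[0,t]^2}|f(s_1)-f(s_2)|^2\,ds_1ds_2=2t\int_0^t|f(u)-c|^2\,du .
\]
This identity is quantitative: a small double integral also forces $f$ to be $L^2$-close to its mean. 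It needs nothing about boundedness of $f$, and it uses $f\in L^2$ only to split the square, which the hypothesis supplies. The paper's level-set argument is qualitative and needs only measurability to conclude that $f$ is a.e.\ constant. As written, however, it tacitly assumes $M$ and $m$ are finite, which need not hold for $f\in L^2$. The repair is to pick real levels $m<a<b<M$ and use $\{f\le a\}$ and $\{f\ge b\}$ instead. Your alternative Tonelli route (for a.e.\ $s_1$, $f=f(s_1)$ a.e.) is closer in spirit to the paper's qualitative argument, but it avoids essential suprema and infima altogether and so does not run into that issue.
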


\begin{proof}
It suffices to show that the essential supremum $M$ of 
$f$ on $[0,t]$ is equal to its essential infimum $m$ on $[0,t]$.
In fact, if so, then $f=M$ a.e.~on $[0,t]$ and 
$\int_0^t f(s)ds=tM$.

We prove the equality $M=m$ by contradiction.
To do so, suppose that $M>m$.
Let 
$A_1=\{s\in[0,t]\mid f(s)\ge M-\frac{M-m}3\}$ and 
$A_2=\{s\in[0,t]\mid f(s)\le m+\frac{M-m}3\}$.
By the definition of $M$ and $m$, the Lebesgue measures $\lambda(A_1)$
and $\lambda(A_2)$ of $A_1$ and $A_2$, respectively, are both
positive. 
It then holds that
\begin{align*}
    0 & =\iint_{[0,t]^2}|f(s_1)-f(s_2)|^2 ds_1ds_2
        \ge \iint_{A_1\times A_2}|f(s_1)-f(s_2)|^2 ds_1ds_2
    \\
    & \ge \Bigl(\frac{M-m}3\Bigr)^2
          \lambda(A_1)\lambda(A_2)
      >0,
\end{align*}
which is a contradiction.
\end{proof}

\begin{proof}[Proof of Theorem~\ref{t.riccati}]
The implication of (ii) to (i) have already seen in
Theorem~\ref{t.lap.lin}. 

To see the implication of (ii) to (iii), let $\chi$ be as in 
(ii) and set $\bR=\chi-\sigma$.
By \eqref{eq.sigma.chi}, it holds that
\[
    \bR(t)=\int_t^T \chi(s)^\dagger\chi(s)ds
    \quad\text{for }t\in[0,T].
\]
Hence $\bR\in C^1([0,T];\mathbb{R}^{d\times d})$.
This identity also yields that $\bR^\dagger=\bR$, and hence  
$\chi^\dagger=\bR+\sigma^\dagger$.
Then the identity is rewritten as
\begin{equation}\label{t.riccati.22}
    \bR(t)=\int_t^T (\bR(s)+\sigma(s)^\dagger)
      (\bR(s)+\sigma(s))ds
    \quad\text{for }t\in[0,T].
\end{equation}
This is exactly the integral expression of the Riccati ODE
\eqref{t.riccati.1}. 

To show that (iii) implies (ii), let 
$\bR\in C^1([0,T];\mathbb{R}^{d\times d})$ be the solution
to the Riccati ODE \eqref{t.riccati.1}. 
Taking the transpose of the ODE, we see
that $\bR^\dagger$ obeys the same ODE.
Hence $\bR^\dagger=\bR$. 
Set $\chi=\bR+\sigma$.
Then $\chi\in C([0,T];\mathbb{R}^{d\times d})$ and 
$\chi^\dagger=\bR+\sigma^\dagger$.
Plugging these into the integral expression
\eqref{t.riccati.22} of the Riccati ODE, we have that
\[
    \chi(t)-\sigma(t)=\int_t^T \chi(u)^\dagger \chi(u) du
    \quad\text{for }t\in[0,T].
\]
Thus $\sigma(\chi)=\sigma$, i.e., (ii) holds.

We show that (i) implies (ii).
Since $\p_\sigma=\q_{\rho_\sigma}$, by
Theorem~\ref{t.tways}, there exists a  
$\rho\in \mathcal{S}_2$ 
with   
$\eta(\kappa_A(\rho))\eqltwo\rho_\sigma$.
Hence we have that
\begin{equation}\label{t.sigma.23}
    \int_0^T\biggl(\int_0^t \biggl| 
      \sigma(t)-\rho(t,s)
         +\int_t^T\rho(t,u)\rho(u,s)du
      \biggr|^2 ds \biggr)dt=0.
\end{equation}

Putting
\[
    \gamma(t,s_1,s_2)
    =\rho(t,s_1)-\rho(t,s_2)
    \quad\text{for }(t,s_1,s_2)\in[0,T]^3,
\]
we show that
\begin{equation}\label{t.sigma.24}
    \int_0^T \biggl( \iint_{[0,t]^2}
      |\gamma(t,s_1,s_2)|^2 ds_1ds_2\biggr)dt
    =0.
\end{equation}
To see this, observe that \eqref{t.sigma.23} yields that 
\begin{equation}\label{t.sigma.25}
    \int_0^T\biggl(\iint_{[0,t]^2} \biggl|
        \gamma(t,s_1,s_2)
        -\int_t^T \rho(t,u)\gamma(u,s_1,s_2)du
      \biggr|^2 ds_1ds_2 \biggr)dt
    =0.
\end{equation}
Take an $N\in \mathbb{N}$ such that
\[
    \iint_{[a,a+\frac{T}{N}]\times[0,T]}|\rho(t,s)|^2 dt ds
    <\frac12
    \quad\text{for any }a\in[0,T_1],
\]
where $T_n=T-\frac{nT}{N}$ for $0\le n\le T$.
By \eqref{t.sigma.25} and the Schwarz inequality, we have
that
\begin{align*}
    & \int_{T_1}^T \biggl(\iint_{[0,t]^2}
         |\gamma(t,s_1,s_2)|^2 ds_1 ds_2 \biggr)dt
    \\
    & \le \int_{T_1}^T 
          \biggl(\int_t^T
           |\rho(t,u)|^2 du\biggr)
          \biggl(\iint_{[0,t]^2} \biggl(\int_t^T
           |\gamma(v,s_1,s_2)|^2 dv \biggr) ds_1 ds_2 
         \biggr)dt
    \\
    & \le \frac12 \int_{T_1}^T \biggl(\iint_{[0,v]^2}
         |\gamma(v,s_1,s_2)|^2 ds_1 ds_2 \biggr)dv.
\end{align*}
Thus it holds that
\[
    \int_{T_1}^T \biggl(\iint_{[0,t]^2}
        |\gamma(t,s_1,s_2)|^2 ds_1 ds_2 \biggr)dt=0.
\]
This implies that \eqref{t.sigma.25} holds with $T_1$
for $T$. 
In exactly the same manner as above, only this time with
$T_1$ for $T$, we have that  
\[
    \int_{T_2}^{T_1} \biggl(\iint_{[0,t]^2}
        |\gamma(t,s_1,s_2)|^2 ds_1 ds_2 \biggr)dt=0.
\]
Continuing this argument successively, we obtain that
\[
    \int_{T_n}^{T_{n-1}} 
      \biggl(\iint_{[0,t]^2} |\gamma(t,s_1,s_2)|^2 
        ds_1 ds_2 \biggr)dt=0
    \quad\text{for }1\le n\le N.
\]
Thus \eqref{t.sigma.24} holds.

Define $\chi_0:[0,T]\to \mathbb{R}^{d\times d}$ as 
$\chi_0(0)=0$ and 
\[
    \chi_0(t)
    =\frac1{t}\int_0^t \rho(t,s)ds 
     \quad\text{for }t\in(0,T].
\]
By \eqref{t.sigma.24}, applying Lemma~\ref{l.leb.const} to
each component of $\rho$, we see that
\[
    \int_0^T \biggl(\int_0^t |\rho(t,s)-\chi_0(t)|^2 ds
       \biggr)dt=0.
\]
Since $\rho\in \mathcal{S}_2$, switching the order of
integration and exchanging $t$ and $s$, we can rewrite this 
equation as 
\[
    \int_0^T \biggl(\int_t^T |\rho(t,s)-\chi_0(s)^\dagger|^2
        ds \biggr)dt=0.
\]
Plugging these into \eqref{t.sigma.23}, we have that
\[
    \int_0^T \biggl|\sigma(t)-\chi_0(t)
      +\int_t^T \chi_0(u)^\dagger \chi_0(u) du\biggr|^2 dt
    =0.
\]
Define $\chi\in C([0,T];\mathbb{R}^{d\times d})$ by 
\[
    \chi(t)=\sigma(t)
      +\int_t^T \chi_0(u)^\dagger \chi_0(u) du
    \quad\text{for }t\in[0,T].
\]
By the above identity, we obtain that $\sigma(\chi)=\sigma$.
Thus (iii) holds.

We show the second assertion.
As was seen in the proof of the equivalence of (ii) and
(iii), we have that $\chi=\bR+\sigma$.
Then it holds that
\[
    \chi-\sigma(\chi)
    =\chi-\sigma=\bR.
\]
Plugging this into \eqref{t.lap.lin.3}, we obtain
\eqref{t.riccati.2-}.  
\end{proof}

\begin{remark}\label{r.riccati}
For $\eta\in\stwo$, it holds that
\[
    \langle B_\eta h,h\rangle_{\mathcal{H}}
    =2\int_0^T \biggl\langle \int_0^t \eta(t,s)h^\prime(s) ds,
       h^\prime(t)\biggr\rangle dt.
\]
In particular, we have that
\[
    \langle B_{\rho_\sigma} h,h\rangle_{\mathcal{H}}
    =2\int_0^T \langle
      \sigma(t)h(t),h^\prime(t)\rangle dt
    \quad\text{for }\sigma\in C([0,T];\mathbb{R}^{d\times d}).
\]
Thus, we know that
\[
    \Lambda(B_{\rho_\sigma})
     =2\sup_{\|h\|_{\mathcal{H}}=1} \int_0^T \langle
      \sigma(t)h(t),h^\prime(t)\rangle dt.
\]

Recall that $\p_\sigma=\q_{\rho_\sigma}$.
Due to Lemma~\ref{l.q.eta.int}, in order that
$e^{\p_\sigma}\in L^1(\mu)$, it is necessary and sufficient
that  
\begin{equation}\label{r.riccati.1}
    \sup_{\|h\|_{\mathcal{H}}=1} \int_0^T \langle
      \sigma(t)h(t),h^\prime(t)\rangle dt
    <\frac12.
\end{equation}
Thus, by Theorem~\ref{t.riccati}, if this condition
holds, the Riccati ODE \eqref{t.riccati.1} has a
$C^1$-solution.

Extending the notation $\Lambda(B)$ for 
$B\in \mathcal{S}_2(\htwo)$, we set 
$\Lambda(M)=\sup\limits_{x\in \mathbb{R}^d,|x|=1}
 \langle Mx,x\rangle$ for $M\in \mathbb{R}^{d\times d}$.
For $h\in \mathcal{H}$ with $\|h\|_{\mathcal{H}}=1$,
it holds that
\[
    \biggl|\int_0^T \langle
      \sigma(t)h(t),h^\prime(t)\rangle dt\biggr|
    \le \biggl(\int_0^T |\sigma(t)h(t)|^2 dt
          \biggr)^{\frac12}
    \le \biggl(\int_0^T t 
          \Lambda(\sigma(t)^\dagger \sigma(t)) dt
          \biggr)^{\frac12}.
\]
Hence, a sufficient condition for \eqref{r.riccati.1} to
hold is that
\[
    \int_0^T t\Lambda(\sigma(t)^\dagger \sigma(t)) dt
    <\frac14.
\]
\end{remark}

If $\sigma\in C^1([0,T];\mathbb{R}^{d\times d})$, then the
Riccati ODE in Theorem~\ref{t.riccati} can be replaced by a 
second order linear ODE as follows.

\begin{corollary}\label{c.2nd.ode}
Let $\sigma\in C^1([0,T];\mathbb{R}^{d\times d})$ and 
$\bS\in C^2([0,T];\mathbb{R}^{d\times d})$ be solution to
the second order ODE
\begin{equation}\label{c.2nd.ode.1}
    \bS^{\prime\prime}-2\sigma_A \bS^\prime-\sigma^\prime \bS=0,
    \quad \bS(T)=I_d,~~\bS^\prime(T)=\sigma(T),
\end{equation}
where $\sigma_A=\frac12(\sigma-\sigma^\dagger)$.
Then the equivalent conditions {\rm(i)}, {\rm(ii)}, and 
{\rm(iii)} in Theorem~\ref{t.riccati} is also equivalent to
the condition that
\\[5pt]
{\rm(iv)} $\det \bS(t)\ne0$ for any $t\in[0,T]$. 
\\[5pt]
If these conditions hold,
then $\sigma=\sigma(\chi)$ with $\chi=\bS^\prime \bS^{-1}$
and it holds that
\begin{equation}\label{c.2nd.ode.2}
    \int_{\mathcal{W}} f e^{\p_\sigma+\D^*h} d\mu
    =\biggl(\frac{e^{-\int_0^T \tr\sigma_S(t) dt}}{
       \det \bS(0)}\biggr)^{\frac12}
     \int_{\mathcal{W}} 
         f(\boldsymbol{\xi}_{\bS}) 
     e^{\int_0^T\langle h^\prime(t),d\xi_{\bS}(t)\rangle}
     d\mu
\end{equation}
for every $f\in C_b(\mathcal{W})$ and $h\in \mathcal{H}$,
where $\sigma_S=\frac12(\sigma+\sigma^\dagger)$, and
\begin{equation}\label{c.2nd.ode.3}
    \int_{\mathcal{W}} e^{\p_\sigma} 
     \delta_y(\theta(t)) d\mu
    =\biggl(\frac{e^{-\int_0^T \tr\sigma_S(t) dt}}{
       \det \bS(0)}\biggr)^{\frac12}
     g_{v_t(\bS)}(y)
    \quad\text{for $t\in(0,T]$ and $y\in \mathbb{R}^d$}.
\end{equation}
\end{corollary}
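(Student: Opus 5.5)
The plan is to link the Riccati equation \eqref{t.riccati.1} with the linear second order ODE \eqref{c.2nd.ode.1} by the classical linearization $\bR=\bS^\prime\bS^{-1}-\sigma$. Then (iv) gives (ii) via $\chi=\bS^\prime\bS^{-1}$, and (iii) gives (iv) by reconstructing $\bS$ from $\bR$. Once this is done, the formulas will follow from Theorem~\ref{t.alpha} with $\alpha=\bS$.

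\emph{Step 1: (iv) implies (ii).} Assume $\det\bS(t)\ne0$ on $[0,T]$ and set $\chi=\bS^\prime\bS^{-1}\in C^1$. I will show $\sigma(\chi)=\sigma$, i.e.\ $\chi(t)-\sigma(t)=\int_t^T\chi^\dagger\chi\,du$. At $t=T$ this reads $\chi(T)=\bS^\prime(T)=\sigma(T)$, which holds. It then suffices to check $(\chi-\sigma)^\prime=-\chi^\dagger\chi$. We have $\chi^\prime=\bS^{\prime\prime}\bS^{-1}-\chi^2$, and the ODE gives $\bS^{\prime\prime}\bS^{-1}=2\sigma_A\chi+\sigma^\prime$. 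Hence
\[
(\chi-\sigma)^\prime=2\sigma_A\chi-\chi^2=(\sigma-\sigma^\dagger-\chi)\chi .
\]
We need this to equal $-\chi^\dagger\chi$, which amounts to $\chi^\dagger=\chi-\sigma+\sigma^\dagger$, i.e.\ $\bR:=\chi-\sigma$ is symmetric. To prove symmetry, note that $\bR(T)=0$ and that $\bR$ satisfies $\bR^\prime=(\sigma-\sigma^\dagger-\chi)\chi$. Writing $\chi=\bR+\sigma$, this becomes
\[
\bR^\prime=-\bR^2-\sigma^\dagger\bR-\bR\sigma-\sigma^\dagger\sigma+(\bR-\bR^\dagger)\text{-terms},
\]
and I will compute that $\bR-\bR^\dagger$ satisfies a linear homogeneous ODE with zero terminal value. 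Hence $\bR=\bR^\dagger$. This bookkeeping is the main obstacle. If it gets messy, I will instead directly compare $\bR$ with the symmetric solution of \eqref{t.riccati.1} on a maximal interval via uniqueness.

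\emph{Step 2: (iii) implies (iv).} Let $\bR$ solve \eqref{t.riccati.1}. As in the proof of Theorem~\ref{t.riccati}, $\bR$ is symmetric. Set $\chi=\bR+\sigma$ and let $\alpha$ solve $\alpha^\prime=\chi\alpha$ with $\alpha(T)=I_d$. By Liouville's formula $\det\alpha(t)=e^{-\int_t^T\tr\chi}\ne0$. A direct computation, using $\chi^\prime=\bR^\prime+\sigma^\prime$ and $\bR^\prime=-\chi^\dagger\chi$, gives
\[
\alpha^{\prime\prime}=(\chi^\prime+\chi^2)\alpha=(\sigma^\prime+\chi^2-\chi^\dagger\chi)\alpha ,
\]
and by symmetry of $\bR$ we have $\chi-\chi^\dagger=\sigma-\sigma^\dagger=2\sigma_A$, so $\chi^2-\chi^\dagger\chi=2\sigma_A\chi$. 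Thus $\alpha$ solves \eqref{c.2nd.ode.1}, and $\alpha^\prime(T)=\chi(T)=\sigma(T)$. By uniqueness of solutions of linear ODEs, $\alpha=\bS$, so (iv) holds. Combined with Theorem~\ref{t.riccati}, this closes the equivalence (i)–(iv).

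\emph{Step 3: formulas.} With $\chi=\bS^\prime\bS^{-1}$ and $\sigma(\chi)=\sigma$, $\alpha=\bS$ satisfies \eqref{t.alpha.0}. Theorem~\ref{t.alpha} then yields \eqref{t.alpha.1} and \eqref{t.alpha.2} with the prefactor $(e^{-\int\tr\sigma}/\det\bS(0))^{1/2}$. Since $\tr\sigma=\tr\sigma_S$, these give \eqref{c.2nd.ode.2} and \eqref{c.2nd.ode.3}.
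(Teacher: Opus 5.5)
Your argument is correct. Steps~1 and~3 essentially match the paper.

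In Step~1 the ``main obstacle'' disappears. For $\chi=\bR+\sigma$ one has $(\sigma-\sigma^\dagger-\chi)\chi=-(\bR+\sigma^\dagger)(\bR+\sigma)$, so $\bR=\bS^\prime\bS^{-1}-\sigma$ satisfies \eqref{t.riccati.1} exactly, with no $(\bR-\bR^\dagger)$-terms. That is precisely the paper's route: (iv) gives (iii) at once. Then (ii) with $\chi=\bS^\prime\bS^{-1}$ follows from Theorem~\ref{t.riccati}. Your planned linear equation for $D=\bR-\bR^\dagger$ is $D^\prime=-(\bR+\sigma^\dagger)D-D(\bR^\dagger+\sigma)$ with $D(T)=0$, which is just the transposition argument in that proof.

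The genuine difference is in (iii)$\Rightarrow$(iv).

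The paper argues by continuation from $t=T$. Let $\tau_0$ be the infimum of those $\tau$ for which $\det\bS\ne0$ on $[\tau,T]$. For $\tau>\tau_0$, uniqueness for the \emph{nonlinear} Riccati equation identifies $\bS^\prime\bS^{-1}-\sigma$ with $\bR$ on $[\tau,T]$. Hence $\det\bS(\tau)=e^{-\int_\tau^T\tr[\bR+\sigma]dt}$, and letting $\tau\downarrow\tau_0$ forces $\tau_0=0$.

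You instead solve $\alpha^\prime=(\bR+\sigma)\alpha$, $\alpha(T)=I_d$, globally on $[0,T]$. You check that $\alpha$ solves \eqref{c.2nd.ode.1} and conclude $\alpha=\bS$ by uniqueness for \emph{linear} ODEs. Liouville's formula then gives $\det\bS(t)=e^{-\int_t^T\tr(\bR+\sigma)du}\neq0$ on all of $[0,T]$ in one stroke. This avoids the infimum-and-limit argument, at the cost only of introducing the auxiliary $\alpha$.

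Your Step~2 does not actually need the symmetry of $\bR$. The factorization $\bR^\prime=-(\bR+\sigma^\dagger)(\bR+\sigma)$ alone already gives $\chi^\prime+\chi^2=2\sigma_A\chi+\sigma^\prime$.
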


\begin{proof}
Take a $\tau\in[0,T)$ with the property that 
$\det\bS(t)\ne0$ for any $t\in[\tau,T]$.
Since $\det\bS(T)=1$, such a $\tau$ exists.
Define 
$\bR_\tau\in C^1([\tau,T];\mathbb{R}^{d\times d})$ as
\[
    \bR_\tau(t)=\bS^\prime(t)\bS(t)^{-1}-\sigma(t)
    \quad\text{for }t\in[\tau,T].
\]
By \eqref{c.2nd.ode.1}, we have that
\[
    \bR_\tau^\prime 
    =-\bR_\tau^2-\sigma^\dagger \bR_\tau
          -\bR_\tau\sigma-\sigma^\dagger\sigma,
    \quad
    \bR_\tau(T)=0.
\]
Thus $\bR_\tau$ obeys the Riccati ODE \eqref{t.riccati.1} on 
$[\tau,T]$.

Assume that $\det\bS(t)\ne0$ for any $t\in[0,T]$.
Setting $\tau=0$ in the above observation, we know that
$\bR_0\in  C^1([0,T];\mathbb{R}^{d\times d})$ solves the
Riccati ODE \eqref{t.riccati.1}.
Thus the condition (iii) in Theorem~\ref{t.riccati} holds.

Assume that the condition (iii) in Theorem~\ref{t.riccati}
holds.
Hence there is an 
$\bR\in  C^1([0,T];\mathbb{R}^{d\times d})$ solving the
Riccati ODE \eqref{t.riccati.1}.
Set
\[
    \tau_0=\inf\Bigl\{\tau\in[0,T] \,\Big|\, \det\bS(t)\ne0 
           \text{ for any }t\in[\tau,T]\Bigr\}.
\]
Since $\det\bS(T)=1$, $\tau_0<T$.
Let $\tau\in(\tau_0,T]$.
By the observation of the first paragraph and the uniqueness
of solution to the Riccati ODE, we have that
$\bR_\tau(t)=\bR(t)$ for $t\in[\tau,T]$.
Recall that 
$(\det\bS)^\prime(t)
 =[\tr(\bS^\prime\bS^{-1})(t)] \det\bS(t)$ for
$t\in[\tau,T]$.
Hence we have that
\[
    \det \bS(\tau)
    =e^{-\int_\tau^T \tr[(\bS^\prime\bS^{-1})(t)]dt}
    =e^{-\int_\tau^T\tr[\bR_\tau(t)+\sigma(t)]dt}
    =e^{-\int_\tau^T\tr[\bR(t)+\sigma(t)]dt}.
\]
Letting $\tau\searrow\tau_0$, we obtain that
\begin{equation}\label{c.2nd.ode.21}
    \det\bS(\tau_0)=e^{-\int_{\tau_0}^T\tr[\bR(t)+\sigma(t)]dt}
    \ne0.
\end{equation}
This implies that $\tau_0=0$.
In fact, if $\tau_0>0$, then by the continuity of $\bS$ at
$t=\tau_0$ and \eqref{c.2nd.ode.21}, there is an
$\varepsilon>0$ with $\tau_0\le \tau_0-\varepsilon$, which
is a contradiction.
By the continuity of $\bS$ at $t=0$ and \eqref{c.2nd.ode.21}
with $\tau_0=0$, we have that $\det\bS(t)\ne0$ for any
$t\in[0,T]$.

Assume that $\det\bS(t)\ne0$ for any $t\in[0,T]$.
Then $\bR_0$ obeys the Riccati ODE \eqref{t.riccati.1}.
By Theorem~\ref{t.riccati}, $\sigma(\chi)=\sigma$ with
$\chi=R_0+\sigma=\bS^\prime\bS^{-1}$.
Due to this identity for $\chi$ and Theorem~\ref{t.alpha}, 
we obtain \eqref{c.2nd.ode.2} and \eqref{c.2nd.ode.3}.
\end{proof}

We give an example of Corollary~\ref{c.2nd.ode}, which will
be used in the next two sections.
In the example, we use the matrix functions 
$\e,\ch,\snh,\sh,\tnh:
 \mathbb{C}^{d\times d}\to \mathbb{C}^{d\times d}$
given as
\begin{align*}
    & \e[M]=\sum_{n=0}^\infty \frac1{n!} M^n,
      \quad
      \ch[M]=\sum_{n=0}^\infty \frac1{(2n)!} M^{2n},
      \quad
      \snh[M]=\sum_{n=0}^\infty \frac1{(2n+1)!} M^{2n+1},
    \\
    & 
      \sh[M]=\sum_{n=0}^\infty \frac1{(2n+1)!} M^{2n},
      \quad
      \tnh[M^\prime]=\sh[M^\prime](\ch[M^\prime])^{-1}
\end{align*}
for $M,M^\prime\in \mathbb{C}^{d\times d}$
with $\det\ch[M^\prime]\ne0$.

\begin{example}\label{e.sAD}
Take $A,D\in \mathbb{R}^{d\times d}$ with $A^\dagger=-A$ and
$D^\dagger=D$.
Define the quadratic Wiener functional 
$\mathfrak{s}_{A,D}:\mathcal{W}\to \mathbb{R}$ as
\[
    \mathfrak{s}_{A,D}
    =\int_0^T \langle A\theta(t),d\theta(t)\rangle
     +\frac12\langle D\theta(T),\theta(T)\rangle.
\]
Since 
\[
    \frac12\langle D\theta(T),\theta(T)\rangle
    =\int_0^T \langle D\theta(t),d\theta(t)\rangle
     +\frac{T}2\tr D,
\]
if we put $\sigma\in C^1([0,T];\mathbb{R}^{d\times d})$ as
$\sigma\equiv A+D$, then it holds that
\begin{equation}\label{e.sAD.1}
    \mathfrak{s}_{A,D}
    =\p_\sigma+\frac12\int_0^T \tr\sigma_S(t)dt
\end{equation}

The ODE \eqref{c.2nd.ode.1} for this $\sigma$ reads as
\[
    \bS^{\prime\prime}-2A \bS^\prime=0,
    \quad \bS(T)=I_d,~\bS^\prime(T)=A+D.
\]
Thinking of this as the ODE for $\bS^\prime$, we obtain that
\[
    \bS^\prime(s)=\e[2(s-T)A](A+D)
    \quad\text{for }s\in[0,T].
\]
Observing that
\[
    \frac{d}{ds}\Bigl(s \e[sA]\sh[sA]\Bigr)
    =\e[2sA],
\]
we have that
\begin{align*}
    I_d-\bS(t)
    & =\int_t^T \bS^\prime(s) ds
      =(s-T)\e[(s-T)A]\sh[(s-T)A](A+D)\Bigr|_{s=t}^{s=T}
    \\
    & =-(t-T)\e[(t-T)A]\sh[(t-T)A](A+D)
    \\
    & =-\e[(t-T)A]\snh[(t-T)A]
       -(t-T)\e[(t-T)A]\sh[(t-T)A]D.
\end{align*}
Rewriting $I_d$ as $\e[(t-T)A]\e[-(t-T)A]$, we obtain that
\[
    \bS(t)=\e[(t-T)A]\Bigl\{
      \ch[(t-T)A]+(t-T)\sh[(t-T)A]D\Bigr\}.
\]

Since $A$ is skew-symmetric, it has a unitary matrix
$U\in \mathbb{C}^{d\times d}$ and non-zero
$\lambda_1,\dots,\lambda_k\in \mathbb{R}$ such that
\begin{equation}\label{e.sAD.1+}
    A= U^*\text{\rm diag}\bigl[
       \kyosu\lambda_1,-\kyosu\lambda_1,\dots,
       \kyosu\lambda_k,-\kyosu\lambda_k,
       \underbrace{0,\dots,0}_{d-2k} \bigr] U.    
\end{equation}
For every $a\in \mathbb{R}$, this expression implies that
\[
    \e[a A]=U^*\text{\rm diag}\bigl[
       e^{\kyosu\lambda_1 a},e^{-\kyosu\lambda_1 a},
       \dots,
       e^{\kyosu\lambda_k a},e^{-\kyosu\lambda_k a},
       1,\dots,1\bigr] U
\]
and hence 
\[
    \det\e[a A]=1.
\]

Define $T_0>0$ so that
\[
    T_0=\inf\Bigl\{t\ge0 \,\Big|\,
        \det\Bigl(\ch[t A]-t \sh[t A]D\Bigr)
        =0 \Bigr\}.
\]
Due to the above expression of $\bS$, we
conclude that 
\[
    \det\bS(t)\ne0\text{ for any }t\in[0,T]
    \quad\text{if and only if}\quad
    T<T_0.
\]

When $d=2$, $A\ne0$, and $D$ is diagonal, we have an
explicit expression $T_0$.
In fact, then there are $a,b,c\in \mathbb{R}$ with $a\ne0$
such that $A=aJ$ and 
$D=\begin{pmatrix} b & 0 \\ 0 & c\end{pmatrix}$,
where $J=\begin{pmatrix} 0 & -1 \\ 1 & 0 \end{pmatrix}$
as in Example~\ref{e.levy}.
By a direct computation, we know that
\[
    \ch[\omega J]
    =\begin{pmatrix} \cos\omega & 0 
     \\ 0 & \cos\omega\end{pmatrix},
    \quad
    \omega\sh[\omega J]
    =\begin{pmatrix} \sin\omega & 0 
     \\ 0 & \sin\omega\end{pmatrix}
    \quad\text{for }\omega\in \mathbb{R}.
\]
Hence it holds that
\begin{align*}
    \det\Bigl(\ch[t A]-t \sh[t A]D\Bigr)
    & =\Bigl(\cos(at)+\frac{b}{a}\sin(at)\Bigr)
       \Bigl(\cos(at)+\frac{c}{a}\sin(at)\Bigr)   
    \\
    & =\Bigl(\cos(|a|t)+\frac{b}{|a|}\sin(|a|t)\Bigr)
       \Bigl(\cos(|a|t)+\frac{c}{|a|}\sin(|a|t)\Bigr).
\end{align*}
The function 
$\tan_{[0,\pi)}:[0,\pi)\to \mathbb{R}\cup\{\infty\}$ 
defined as $\tan_{[0,\pi)}\omega=\tan\omega$ if
$\omega\ne\frac{\pi}2$ and $=\infty$ if
$\omega=\frac{\pi}2$ is bijective.
Denoting its inverse function as $\tan_{[0,\pi)}^{-1}$, 
we obtain that
\[
    T_0=\frac1{|a|}
        \min\biggl\{
         \tan_{[0,\pi)}^{-1}\Bigl(-\frac{|a|}{b}\Bigr),
         \tan_{[0,\pi)}^{-1}\Bigl(-\frac{|a|}{c}\Bigr)
        \biggr\},
\]
where $-\frac{|a|}{0}=\infty$.

Assume that $T<T_0$, equivalently $\det\bS(t)\ne0$ for any 
$t\in[0,T]$.
Then Corollary~\ref{c.2nd.ode} is applicable to $\p_\sigma$.
Due to \eqref{c.2nd.ode.2}, \eqref{c.2nd.ode.3}, and 
\eqref{e.sAD.1}, it holds that
\begin{equation}\label{e.sAD.2}
    \int_{\mathcal{W}} f e^{\mathfrak{s}_{A,D}+\D^*h} d\mu
      =\Bigl\{\det\Bigl(\ch[TA]-T\sh[TA]D\Bigr)
       \Bigr\}^{-\frac12}
       \int_{\mathcal{W}} f(\boldsymbol{\xi}_{\bS})
         e^{\int_0^T\langle h^\prime(t),
               d\xi_{\bS}(t)\rangle}
         d\mu
\end{equation}
and 
\begin{equation}\label{e.sAD.3}
   \int_{\mathcal{W}} e^{\mathfrak{s}_{A,D}} 
          \delta_x(\theta(t))d\mu
      =\Bigl\{\det\Bigl(\ch[TA]-T\sh[TA]D\Bigr)
       \Bigr\}^{-\frac12}
       g_{v_t(\bS)}(x)
\end{equation}
for any $f\in C_b(\mathcal{W})$, $h\in \mathcal{H}$,
$t\in(0,T]$, and $x\in \mathbb{R}^d$.

When $t=T$, the LHS of \eqref{e.sAD.3} transforms as
\[
    \int_{\mathcal{W}} e^{\mathfrak{s}_{A,D}} 
          \delta_x(\theta(T))d\mu
    =e^{\frac12\langle Dx,x\rangle}
     \int_{\mathcal{W}} e^{\mathfrak{s}_{A,0}} 
          \delta_x(\theta(T))d\mu.
\]
Hence, to investigate $\mathfrak{s}_{A,D}$ under
$\delta_x(\theta(T))d\mu$, we may assume that $D=0$.
Continue to assume that $T<T_0$.
We then know that
\[
    \bS(t)=\e[(t-T)A]\ch[(t-T)A]
    \quad\text{and}\quad
    \det\ch[(t-T)A]\ne0
    \quad\text{for } t\in[0,T].
\]
It holds that
\begin{align*}
    \bS(t)\bS(s)^{-1}
    & =\e[(t-T)A]\ch[(t-T)A]
       \ch[(s-T)A]^{-1}\e[-(s-T)A]
    \\
    & =\ch[(t-T)A](\ch[(s-T)A])^{-1}\e[(t-s)A]
      \quad\text{for } 0\le s\le t\le T.
\end{align*}
Since 
$(\e[a A])^\dagger=\e[-a A]$ and
$(\ch[a A])^\dagger=\ch[a A]$ for $a\in \mathbb{R}$,
we have that
\[
    (\bS(t)\bS(s)^{-1})(\bS(t)\bS(s)^{-1})^\dagger
    =(\ch[(t-T)A])^2(\ch[(s-T)A])^{-2}
    \quad\text{for }0\le s\le t\le T.    
\]
Noticing that
\begin{equation}\label{eq.tnh}
    \frac{d}{ds}\Bigl((s-T)\tnh[(s-T)A]\Bigr)
    =(\ch[(s-T)A])^{-2},
\end{equation}
we have that
\[
    v_T(\bS)
    =(s-T)\tnh[(s-T)A]\Bigr|_{s=0}^{s=T}
    =T \tnh[TA].
\]
Thus we obtain that
\[
    \det\bS(0) \det v_T(\bS)
    =\det(\ch[TA]) \det(T\tnh[TA])
    =T^d \det(\sh[TA]).
\]
Plugging these into \eqref{e.sAD.3} with $t=T$ and $D=0$, we
obtain that
\begin{equation}\label{e.sAD.4}
    \int_{\mathcal{W}} 
      e^{\int_0^T \langle A\theta(t),d\theta(t)\rangle}
        \delta_x(\theta(T))d\mu
      =\Bigl\{(2\pi T)^d\det(\sh[TA])\Bigr\}^{-\frac12}
        e^{-\frac1{2T}\langle
        (\tnh[TA])^{-1}x,x \rangle}.
\end{equation}
\end{example}

Using the similar argument as in this example, we give
another example of Corollary~\ref{c.2nd.ode}.

\begin{example}\label{e.h.osc}
Take $D_1,D_2\in \mathbb{R}^{d\times d}$ with
$D_1^\dagger=D_1$ and $D_2^\dagger=D_2$.
Define the quadratic Wiener functional
$\mathfrak{h}_{D_1,D_2}:\mathcal{W} \to \mathbb{R}$ as
\[
    \mathfrak{h}_{D_1,D_2}
    =\frac12\langle D_1\theta(T),\theta(T)\rangle 
     +\frac12\int_0^T \langle D_2\theta(t),
     \theta(t)\rangle dt.
\]
If we set $\sigma\in C([0,T];\mathbb{R}^{d\times d})$ so
that $\sigma(t)=D_1+(t-T)D_2$ for $t\in[0,T]$, then
\[
    \mathfrak{h}_{D_1,D_2} 
    =\p_\sigma+\frac12\int_0^T \tr\sigma_S(t)dt.
\]

The ODE \eqref{c.2nd.ode.1} for this $\sigma$ reads as
\[
    \bS^{\prime\prime}+D_2\bS=0,
    \quad \bS(T)=I_d,~\bS^\prime(T)=D_1.
\]
The symmetric $D_2$ has an orthogonal matrix 
$V\in \mathbb{R}^{d\times d}$ and
$\lambda_1,\dots,\lambda_d\in \mathbb{R}$ such that
$V^\dagger D_2V=\text{\rm diag}\bigl[
 \lambda_1,\dots,\lambda_d\bigr]$.
Set 
$\sqrt{D_2}=V \text{\rm diag}\bigl[
 \sqrt{\lambda_1},\dots,\sqrt{\lambda_d}\bigr] V^\dagger$.
Then we have that
\[
    \bS(t)=\ch[\kyosu(t-T)\sqrt{D_2}]
     +(t-T)\sh[\kyosu(t-T)\sqrt{D_2}] D_1
    \quad\text{for } t\in[0,T].
\]
Setting 
\[
    T_1=\inf\Bigl\{t\ge0 \,\Big|\,
        \det\Bigl(\ch[\kyosu t \sqrt{D_2}]
         -t \sh[\kyosu t \sqrt{D_2}] D_1
        \Bigr)=0 \Bigr\},
\]
we conclude that
\[
    \text{
    $\det\bS(t)\ne0$ for any $t\in[0,T]$ if and only if 
    $T<T_1$.}
\]

Suppose that $T<T_1$, equivalently $\det\bS(t)\ne0$ for
every $t\in[0,T]$. 
By Corollary~\ref{c.2nd.ode}, we have that
\[
    \int_{\mathcal{W}} 
      f e^{\mathfrak{h}_{D_1,D_2}+\D^*h} d\mu
      =\Bigl\{
         \det\Bigl(\ch[\kyosu T\sqrt{D_2}]
           -T\sh[\kyosu T\sqrt{D_2}] D_1\Bigr)
         \Bigr\}^{-\frac12}
      \int_{\mathcal{W}} 
         f(\boldsymbol{\xi}_\bS) d\mu
\]
and
\[
    \int_{\mathcal{W}} e^{\mathfrak{h}_{D_1,D_2}} 
        \delta_x(\theta(t))d\mu
      =\Bigl\{
         \det\Bigl(\ch[\kyosu T\sqrt{D_2}]
           -T\sh[\kyosu T\sqrt{D_2}] D_1\Bigr)
         \Bigr\}^{-\frac12}
       g_{v_t(\bS)}(x)
\]
for every $f\in C_b(\mathcal{W})$, $h\in \mathcal{H}$,
$t\in(0,T]$, and $x\in \mathbb{R}^d$.

When $t=T$, the term
$e^{\frac12\langle D_1 \theta(T),\theta(T)\rangle}$ in the
LHS of the second identity is factored out as 
$e^{\frac12\langle D_1 x,x\rangle}$.
Thus, to integrate $e^{\mathfrak{h}_{D_1,D_2}}$ under
$\delta_x(\theta(T))d\mu$, we may assume that $D_1=0$. 
Since 
$(\ch[z \sqrt{D_2}])^\dagger=\ch[z \sqrt{D_2}]$ 
for $z\in \mathbb{C}$,
we have that
\[
    (\bS(t)\bS(s)^{-1})(\bS(t)\bS(s)^{-1})^\dagger
    =(\ch[\kyosu(t-T)\sqrt{D_2}])^2
     (\ch[\kyosu(s-T)\sqrt{D_2}])^{-2}.
\]
By \eqref{eq.tnh}, we see that
\[
    v_T(\bS)
    =(s-T)\tnh[\kyosu(s-T)\sqrt{D_2}]
          \Bigr|_{s=0}^{s=t}
    =T\tnh[\kyosu T\sqrt{D_2}].
\]
Thus we obtain that
\[
    \int_{\mathcal{W}}
        e^{\frac12\int_0^T \langle D_2\theta(t),
              \theta(t)\rangle dt}
        \delta_x(\theta(T))d\mu
    =\Bigl\{(2\pi T)^d 
           \det \sh[\kyosu T\sqrt{D_2}]\Bigr\}^{-\frac12}
     e^{-\frac1{2T}\langle
      (\tnh[\kyosu T\sqrt{D_2}])^{-1} x,x \rangle}.
\]
\end{example}

\section{Feynman-Kac density function}
\label{sec.FK}
Define the quadratic Wiener functional 
$\mathfrak{a}_{\phi,\psi}^x$ for
$\phi\in C^1([0,T];\mathbb{R}^{d\times d})$,
$\psi\in C([0,T];\mathbb{R}^{d\times d})$, and 
$x\in \mathbb{R}^d$ as
\[
    \mathfrak{a}_{\phi,\psi}^x
    =\int_0^T \langle\phi(t)(x+\theta(t)),
         d\theta(t)\rangle
     +\frac12\int_0^T\langle \psi(t)(x+\theta(t)),
           x+\theta(t)\rangle dt.
\]
As an application of Corollary~\ref{c.2nd.ode}, we give
an explicit representation of the density function of the
distribution of $x+\theta(T)$ under
$e^{\mathfrak{a}_{\phi,\psi}^x}d\mu$.
Remember that $\mathfrak{a}_{\phi,\psi}^x$ relates to the 
Feynman-Kac formula (cf.\cite[\S3.5]{mt-cambridge}).
For example, if $\phi$ and $\psi$ are both constant functions, then
the density function is exactly the heat kernel for the Laplacian 
with scalar and vector potentials. 

\begin{theorem}\label{t.FK}
Let 
$\bS,\bU,\bV\in C^2([0,T];\mathbb{R}^{d\times d})$ be
the solutions to the ODE on $\mathbb{R}^{d\times d}$ 
\begin{equation}\label{t.FK.1}
    \Phi^{\prime\prime}-2\phi_A\Phi^\prime
    +(\psi_S-\phi^\prime)\Phi=0
\end{equation}
with the terminal condition that
\[
    \bS(T)=I_d,~ \bS^\prime(T)=\phi(T),~
    \bU(T)=I_d,~ \bU^\prime(T)=0,~
    \bV(T)=0, \text{ and }\bV^\prime(T)=I_d.
\]
Assume that $\det\bS(t)\ne0$ for every $t\in[0,T]$ and 
$\det\bV(0)\ne0$.
For $x,y\in \mathbb{R}^d$, put
\begin{align*}
    & \mathfrak{d}(x,y) 
    =\frac12\Bigl\{ 
      \bigl\langle\phi_S(T)y,y\bigr\rangle
      -\bigl\langle \phi_S(0)x,x \bigr\rangle
    \\
    & \hphantom{\mathfrak{d}(x,y)=\frac12\Bigl\{}
      +\bigl\langle \bU^\prime(0)y,x\bigr\rangle
      +\bigl\langle \bV(0)^{-1}(x-\bU(0)y),
           \bV^\prime(0)^\dagger x -y\bigr\rangle 
     \Bigr\}
\end{align*}
and 
\[    
   p_T(x,y)
   =\biggl(
     \frac{e^{-\int_0^T \tr\phi_S(t) dt}}{
         (2\pi)^d \det\bS(0) \det v_T(\bS)}
     \biggr)^{\frac12}
      e^{\mathfrak{d}(x,y)}.
\]
Then it holds that
\begin{equation}\label{t.FK.2}
    \int_{\mathcal{W}} \varphi(x+\theta(T))
    e^{\mathfrak{a}_{\phi,\psi}^x} d\mu
    =\int_{\mathbb{R}^d} \varphi(y)p_T(x,y)dy
    \quad\text{for every }\varphi\in C_b(\mathbb{R}^d).
\end{equation}
\end{theorem}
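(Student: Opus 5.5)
We plan to reduce $\mathfrak{a}_{\phi,\psi}^x$ to a functional of the form $\p_\sigma$ plus terms linear in $\theta$ plus constants, and then apply Corollary~\ref{c.2nd.ode} with a suitable $h\in\mathcal{H}$.

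\textbf{Step 1: rewrite $\mathfrak{a}_{\phi,\psi}^x$.} Since $\phi\in C^1$, It\^o's formula applied to $\frac12\langle\phi_S(t)(x+\theta(t)),x+\theta(t)\rangle$ expresses $\int_0^T\langle\phi_S(t)(x+\theta(t)),d\theta(t)\rangle$ as
\[
\tfrac12\langle\phi_S(T)(x+\theta(T)),x+\theta(T)\rangle-\tfrac12\langle\phi_S(0)x,x\rangle-\tfrac12\int_0^T\langle\phi_S^\prime(x+\theta),x+\theta\rangle dt-\tfrac12\int_0^T\tr\phi_S dt .
\]
The boundary term at $T$ becomes $\frac12\langle\phi_S(T)y,y\rangle$ under the pinning at $x+\theta(T)=y$. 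This accounts for the first two terms of $\mathfrak{d}$ and for part of the prefactor. The remaining pieces are as follows. The term $\int\langle\phi_A(x+\theta),d\theta\rangle$ stays. The potential becomes $\frac12\int\langle(\psi_S-\phi_S^\prime)(x+\theta),x+\theta\rangle dt$. This is harmonic-oscillator-like, and we absorb it into a $\p_\sigma$ by the device of Example~\ref{e.h.osc}: choose $\sigma(t)=\phi_A(t)+\Sigma(t)$ with $\Sigma$ symmetric and $\Sigma^\prime=\phi_S^\prime-\psi_S$ (up to sign conventions), writing $\frac12\int\langle M\theta,\theta\rangle dt$ through It\^o's formula for $\langle\Sigma\theta,\theta\rangle$. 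One then checks that the ODE \eqref{c.2nd.ode.1} for this $\sigma$ is exactly \eqref{t.FK.1}. We use $\sigma_A=\phi_A$ and $\sigma^\prime=\phi_A^\prime+\Sigma^\prime$, and fix the normalization of $\Sigma$ so that $\sigma(T)=\phi(T)$ after shifting the constant into the $\phi_S(T)$ boundary term. The $x$-linear parts, namely $\int\langle\phi_A x,d\theta\rangle$ and $\int\langle(\psi_S-\phi_S^\prime)x,\theta\rangle dt$, are first-chaos functionals $\D^*h_x$ with an explicit $h_x\in\mathcal{H}$. The $x$-quadratic constants combine with the previous ones.

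\textbf{Step 2: apply Corollary~\ref{c.2nd.ode}.} With $\det\bS\neq0$, \eqref{c.2nd.ode.2} gives
\[
\int\varphi(x+\theta(T))e^{\mathfrak{a}}\,d\mu=C(x)\int\varphi\bigl(x+\xi_\bS(T)\bigr)e^{\int\langle h_x^\prime,d\xi_\bS\rangle}d\mu ,
\]
where $C(x)$ contains the prefactor $\bigl(e^{-\int\tr\sigma_S}/\det\bS(0)\bigr)^{1/2}$ together with the explicit constants. The pair $\bigl(\xi_\bS(T),\int\langle h_x^\prime,d\xi_\bS\rangle\bigr)$ is Gaussian. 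Using $d\xi_\bS=d\theta+\chi\xi_\bS dt$ with $\chi=\bS^\prime\bS^{-1}$, we compute its covariance: $v_T(\bS)$ for the first component, and cross/self terms via the expression of Remark~\ref{r.xi.alpha}. Completing the square in the Gaussian integral then yields a density in $y$ of the form
\[
(2\pi)^{-d/2}(\det v_T(\bS))^{-1/2}\exp(\text{quadratic in }(x,y)).
\]
This already matches the prefactor of $p_T$, since $\tr\sigma_S$ combines with $\tr\phi_S$ from Step 1.

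\textbf{Step 3 (main obstacle).} We must identify the resulting quadratic form in $(x,y)$ with the expression in $\mathfrak{d}$ written via $\bU,\bV$. Our plan is to note that $\bU,\bV$ form a fundamental system of \eqref{t.FK.1} at $T$, so that $\bS=\bU+\bV\phi(T)$. The quantities $\bS(t)\bS(s)^{-1}$, $v_T(\bS)$ and the covariance of $\int\langle h_x^\prime,d\xi_\bS\rangle$ are then expressible through solutions of \eqref{t.FK.1}. We would verify the identity of the quadratic forms by differentiating in $T$ (or in the initial time), checking that both sides satisfy the same matrix Riccati equation with the same boundary data, and using Wronskian-type identities for the Hamiltonian system, such as the constancy of $\bU^\dagger\bV^\prime-\bU^{\prime\dagger}\bV$ with the $\phi_A$ correction. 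As a cross-check on the constants, we would specialize to $\phi=0$, $\psi$ constant, where Example~\ref{e.h.osc} gives the answer. This bookkeeping is where errors are likely. The assumption $\det\bV(0)\neq0$ enters precisely to invert the relevant block when completing the square.
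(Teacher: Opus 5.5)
Steps~1--2 of your reduction are sound in outline and give a genuinely different route from the paper's. However, the proposal stops exactly where the content of the theorem lies. Step~3 is a plan, not an argument, and ``differentiating in $T$ and comparing Riccati equations'' is awkward because $\sigma$, $\bS$, $\bU$, $\bV$ all move with $T$.

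Your remark on $\det\bV(0)\ne0$ is also misplaced. In your route, completing the square only inverts $v_T(\bS)$, which is positive definite.

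There is a bookkeeping point in Step~1 as well. Using the hypothesis on $\bS$ forces $\sigma(t)=\phi(t)+\int_t^T\psi_S(s)ds$. With this choice the boundary term $\frac12\langle\phi_S(T)(x+\theta(T)),x+\theta(T)\rangle$ of Step~1 cancels, so $\frac12\langle\phi_S(T)y,y\rangle$ must come out of $v_T(\bS)^{-1}$. One then has $\mathfrak{a}^x_{\phi,\psi}=\p_\sigma+\D^*h_x+\text{const}$ with $h_x'=\sigma x$.

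For comparison, the paper needs no identification at all. In Lemma~\ref{l.FK} it applies a Cameron--Martin shift by the classical path $\beta=\bU y+\bV\bV(0)^{-1}(x-\bU(0)y)$; this is where $\det\bV(0)\ne0$ is used. The first-chaos part then collapses to $\langle\sigma(T)y-\beta'(T),\theta(T)\rangle$, which is killed by $\delta_0(\theta(T))$. The action is the exact derivative $\frac12\{\langle\sigma_S\beta,\beta\rangle-\langle\beta,\beta'\rangle\}'$, which produces $\mathfrak{d}$ directly. Only \eqref{c.2nd.ode.3} at the origin is then needed.

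To complete your route you need three identities. Write $M^{-\dagger}=(M^{-1})^\dagger$ and $\chi=\bS'\bS^{-1}$, and let $\bR=\chi-\sigma$; by Theorem~\ref{t.riccati} and Corollary~\ref{c.2nd.ode}, $\bR$ is symmetric with $\bR(t)=\int_t^T\chi^\dagger\chi\,ds$.

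\emph{First identity.} The matrix $P=\bS'-\sigma\bS$ satisfies $P'=-\sigma^\dagger\bS'$ and $P(T)=0$. Hence $\int_s^T\bS'(t)^\dagger\sigma(t)dt=P(s)^\dagger$, and by stochastic Fubini
\[
  \int_0^T\langle h_x'(t),d\xi_{\bS}(t)\rangle=\int_0^T\langle\chi(s)x,d\theta(s)\rangle .
\]
Its covariance with $\xi_{\bS}(T)=\int_0^T\bS(s)^{-1}d\theta(s)$ is $(\bS(0)^{-1}-I_d)x$, and its variance is $\langle\bR(0)x,x\rangle$. So the $y$-density is $e^{\frac12\langle\bR(0)x,x\rangle}g_{v_T(\bS)}(y-\bS(0)^{-1}x)$, up to the constants.

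\emph{Second identity.} Using $\chi^\dagger-\chi=-2\sigma_A$, one checks that $\Psi(t)=\bS(t)\int_t^T\bS(s)^{-1}\bS(s)^{-\dagger}ds$ solves \eqref{t.FK.1} with $\Psi(T)=0$ and $\Psi'(T)=-I_d$. Hence
\[
  \bV(t)=-\bS(t)\int_t^T\bS(s)^{-1}\bS(s)^{-\dagger}ds,\qquad \bV(0)=-\bS(0)v_T(\bS).
\]
Consequently $\det\bV(0)\ne0$ is automatic, $v_T(\bS)^{-1}=-\bV(0)^{-1}\bS(0)$, and $\bS'\bS^{-1}+\bS^{-\dagger}\bV^{-1}=\bV'\bV^{-1}$ at $t=0$.

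\emph{Third identity.} One has $\bS=\bU+\bV\sigma(T)$. Moreover, $\Phi^\dagger(\Psi'-\sigma\Psi)-(\Phi'-\sigma\Phi)^\dagger\Psi$ is constant for solutions $\Phi,\Psi$ of \eqref{t.FK.1}. For the pairs $(\bV,\bV)$ and $(\bV,\bU)$ its values at $T$ are $0$ and $-I_d$, which gives $\bU'(0)-\bV'(0)\bV(0)^{-1}\bU(0)=-\bV(0)^{-\dagger}$.

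With these identities, the $yy$-, $xy$- and $xx$-parts of your exponent, including $\frac12\langle(\int_0^T\psi_S)x,x\rangle$ from Step~1, match $\mathfrak{d}$. The relation $\tr\sigma_S(t)=\tr\phi_S(t)+\int_t^T\tr\psi_S(s)ds$ then fixes the prefactor.

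Completed this way, your route uses \eqref{c.2nd.ode.2} with $f=\varphi(x+\theta(T))\in C_b(\mathcal{W})$, so it needs no pinned measures. It also shows that the hypothesis on $\bV(0)$ is redundant.
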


For the proof, we prepare a lemma.
To state it, define 
$\sigma\in C^1([0,T];\mathbb{R}^{d\times d})$ by 
\begin{equation}\label{eq.sigma.phi.psi}
    \sigma(t)=\phi(t)+\int_t^T \psi_S(s)ds
    \quad\text{for }t\in[0,T].
\end{equation}
Since $\sigma_A=\phi_A$,
$\sigma^\prime=\phi^\prime-\psi_S$, and
$\sigma(T)=\phi(T)$,
the ODE \eqref{t.FK.1} is rewritten with $\sigma$ as
\begin{equation}\label{eq.ode.sigma}
    \Phi^{\prime\prime}-2\sigma_A\Phi^\prime
    -\sigma^\prime\Phi=0
\end{equation}
and the terminal condition for $\bS$ is as
$\bS^\prime(T)=\sigma(T)$.
Applying It\^o's formula, we see that 
\begin{align*}
    &
    \frac12 \int_0^T \langle\psi(t)(x+\theta(t)),
         x+\theta(t) \rangle dt
      = \int_0^T \biggl\langle
          \biggl(\int_t^T \psi_S(s)ds\biggr)
          \{x+\theta(t)\},d\theta(t)\biggr\rangle 
    \\
    &  \hphantom{\frac12 \int_0^T \langle\psi(t)(x+\theta(t)),x+\theta(t)}
      +\frac12\biggl\langle 
          \biggl(\int_0^T \psi_S(s)ds\biggr)x,x
          \biggr\rangle
        +\frac12\int_0^T\biggl(\int_t^T \tr\psi_S(s)ds
            \biggr) dt. 
\end{align*}
Hence, if we put
\[
    \mathfrak{p}_\sigma^x
    =\int_0^T \langle \sigma(t)(x+\theta(t)),
      d\theta(t)\rangle
    =\mathfrak{p}_\sigma
     +\int_0^T \langle \sigma(t)x,d\theta(t)\rangle,
\]
then it holds that
\begin{equation}\label{eq.a.p}
    \mathfrak{a}_{\phi,\psi}^x
    =\mathfrak{p}_\sigma^x
     +\frac12 \biggl\langle \biggl(
       \int_0^T \psi_S(s)ds\biggr)x,x \biggr\rangle
     +\frac12 \int_0^T \biggl(\int_t^T 
        \text{\rm tr}\psi_S(s)ds\biggr) dt.
\end{equation}
Thus the investigation of $\mathfrak{a}_{\phi,\psi}^x$ is
reduced to that of $\mathfrak{p}_\sigma^x$.

\begin{lemma}\label{l.FK}
Assume the same assumption as in Theorem~\ref{t.FK}.
Put
\begin{align*}
    & \mathfrak{d}^{(\sigma)}(x,y)
      =\frac12\Bigl\{
      \bigl\langle \sigma_S(T)y,y\bigr\rangle
      -\bigl\langle \sigma_S(0)x,x\bigr\rangle
    \\
    & \hphantom{\mathfrak{d}^{(\sigma)}(x,y)=\frac12\Bigl\{}
      +\bigl\langle \bU^\prime(0)y,x\bigr\rangle
      +\bigl\langle \bV(0)^{-1}(x-\bU(0)y),
           \bV^\prime(0)^\dagger x -y\bigr\rangle 
      \Bigr\}
\end{align*}
and
\[
    p_T^{(\sigma)}(x,y)
    =\biggl(\frac{e^{-\int_0^T \tr\sigma_S(t) dt}}{
       (2\pi)^d \det \bS(0) \det v_T(\bS)}
     \biggr)^{\frac12}
     e^{\mathfrak{d}^{(\sigma)}(x,y)}
    \quad\text{for }x,y\in \mathbb{R}^d.
\]
Then it holds that
\begin{equation}\label{l.FK.1}
   \int_{\mathcal{W}} \varphi(x+\theta(T))
    e^{\p_\sigma^x} d\mu
    =\int_{\mathbb{R}^d} \varphi(y)p_T^{(\sigma)}(x,y)dy
    \quad\text{for every }\varphi\in C_b(\mathbb{R}^d).
\end{equation}
\end{lemma}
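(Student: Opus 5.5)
The plan is to reduce \eqref{l.FK.1} to Corollary~\ref{c.2nd.ode} applied with a suitable $h\in\mathcal{H}$, and then compute the resulting Gaussian integral. Define $k\in\mathcal{H}$ by $k^\prime(t)=\sigma(t)x$. Then
\[
    \int_0^T\langle\sigma(t)x,d\theta(t)\rangle=\D^*k,
    \qquad\text{so}\qquad
    \p_\sigma^x=\p_\sigma+\D^*k .
\]
Since $\det\bS(t)\ne0$ on $[0,T]$, the identity \eqref{c.2nd.ode.2} applies with $f=\varphi(x+\theta(T))$ (after the usual approximation of $f$ by $C_b(\mathcal{W})$ functions). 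It gives
\[
    \int_{\mathcal{W}}\varphi(x+\theta(T))e^{\p_\sigma^x}d\mu
    =\biggl(\frac{e^{-\int_0^T\tr\sigma_S dt}}{\det\bS(0)}\biggr)^{\frac12}
     \int_{\mathcal{W}}\varphi\bigl(x+\xi_{\bS}(T)\bigr)
     e^{\int_0^T\langle\sigma(t)x,d\xi_{\bS}(t)\rangle}d\mu .
\]
The remaining integral involves only centered Gaussian objects. As in the proof of Theorem~\ref{t.alpha}, $\boldsymbol{\xi}_{\bS}$ is a continuous linear map of $\mathcal{W}$, and $(\D^*k)(\boldsymbol{\xi}_{\bS})=\int_0^T\langle k^\prime,d\xi_{\bS}\rangle$ is of the form $\D^*g$ for some $g\in\mathcal{H}$ (namely $g=(I+B^*)^{-1}k$ with $B$ the kernel operator of $\widehat{\kappa_A(\rho_\chi)}$, by Lemma~\ref{l.d*h.f.hat.kappa}). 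By the Cameron--Martin theorem, multiplying by $e^{\D^*g-\frac12\|g\|_{\mathcal{H}}^2}$ shifts $\theta$ to $\theta+g$. Hence the integral equals
\[
    e^{\frac12\|g\|_{\mathcal{H}}^2}\int_{\mathbb{R}^d}\varphi(y)\,
    g_{v_T(\bS)}\bigl(y-x-m\bigr)dy,
    \qquad m=(\boldsymbol{\xi}_{\bS}g)(T),
\]
using that $\xi_{\bS}(T)\sim N(0,v_T(\bS))$. The prefactor $(2\pi)^{-d/2}(\det v_T(\bS))^{-1/2}$ from $g_{v_T(\bS)}$ combines with the constant above to give exactly the prefactor of $p_T^{(\sigma)}$. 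So everything reduces to the identity
\[
    \tfrac12\|g\|_{\mathcal{H}}^2
    -\tfrac12\bigl\langle v_T(\bS)^{-1}(y-x-m),\,y-x-m\bigr\rangle
    =\mathfrak{d}^{(\sigma)}(x,y).
\]

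This quadratic identity is the main obstacle. Computing $g$, $m$ and $v_T(\bS)^{-1}$ explicitly from $\bS$ and then recombining them is messy. I would instead characterise the left-hand side variationally. It is a quadratic polynomial in $(x,y)$, and a standard completion-of-squares computation (equivalently, Laplace's method, which is exact for Gaussians) identifies it with
\[
    \sup\Bigl\{\int_0^T\langle\sigma(t)(x+\gamma(t)),\gamma^\prime(t)\rangle dt
    -\tfrac12\int_0^T|\gamma^\prime(t)|^2dt \;\Big|\;
    \gamma\in\mathcal{H},\ x+\gamma(T)=y\Bigr\}.
\]
This is the formal exponent of $e^{\p_\sigma^x}$ times the Wiener density along the path $x+\gamma$. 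To justify it, I would check that both sides are quadratic in $(x,y)$ and agree as functions: first differentiate in $y$, then compare the values at $y=x+m$, where the Gaussian factor equals $1$.

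Writing $\Gamma=x+\gamma$, the Euler--Lagrange equation is
\[
    \Gamma^{\prime\prime}=\sigma\Gamma^\prime-\sigma^\dagger\Gamma^\prime-(\sigma^\prime)^\dagger\Gamma\cdot 0+\cdots,
\]
which after the symmetric part of $\sigma$ integrates out becomes the transposed form of \eqref{eq.ode.sigma}: $\Gamma^{\prime\prime}-2\sigma_A\Gamma^\prime-\sigma^\prime\Gamma=0$ up to transposition. The boundary conditions are $\Gamma(0)=x$, $\Gamma(T)=y$, together with the transversality condition coming from the free-endpoint structure at $t=0$.

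I would express the extremal using the fundamental solutions $\bU$ and $\bV$, which satisfy $\bU(T)=I_d$, $\bU^\prime(T)=0$, $\bV(T)=0$, $\bV^\prime(T)=I_d$. The extremal is $\Gamma(t)=\bU(t)y+\bV(t)c$, with $c$ fixed by $\Gamma(0)=x$, i.e.\ $c=\bV(0)^{-1}(x-\bU(0)y)$. This is where $\det\bV(0)\ne0$ is used. Evaluating the action along an extremal reduces, by integration by parts, to boundary terms. These are the $\frac12\langle\sigma_S\Gamma,\Gamma\rangle$ terms at $t=0$ and $t=T$, plus $\frac12\langle\Gamma^\prime,\Gamma\rangle$ evaluated between $0$ and $T$. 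Substituting $\Gamma^\prime(T)=c$, $\Gamma^\prime(0)=\bU^\prime(0)y+\bV^\prime(0)c$ and $\Gamma(0)=x$ should yield exactly the four terms of $\mathfrak{d}^{(\sigma)}(x,y)$.

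The delicate points are the transposes in $\bV^\prime(0)^\dagger x$ and in the $\bU^\prime(0)y$ term, and the symmetry relation between $\bU$ and $\bV$ (a Wronskian-type identity) that is needed to put the expression into the stated form. I expect that sign and transpose bookkeeping to be the real work.
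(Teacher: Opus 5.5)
\emph{Verdict.} Your reduction is sound. It reaches the lemma by a genuinely different route from the paper, and the step you call standard is true.

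\emph{The paper's route.} The paper never computes the Gaussian integral you arrive at. It applies the Cameron--Martin shift directly along the classical path $h=\beta-x$, where $\beta=\bU y+\bV\bV(0)^{-1}(x-\bU(0)y)$. An It\^o computation using the ODE \eqref{eq.ode.sigma} for $\beta$ then gives $\p_\sigma^x(\cdot+h)-\D^*h-\frac12\|h\|_{\mathcal H}^2=\p_\sigma+\langle\sigma(T)y-\beta'(T),\theta(T)\rangle+\mathfrak{d}^{(\sigma)}(x,y)$. The linear term dies under the pinning $\theta(T)=0$, and \eqref{c.2nd.ode.3} finishes the proof.

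\emph{Your route and the trade-off.} You instead use the unpinned identity \eqref{c.2nd.ode.2} with $h=k$ and shift by $g$. You then have to identify the resulting exponent with the classical action, where the same $\beta$ reappears as the extremal. Your route stays with ordinary integrals and explains why the exponent is the action. The paper's route is shorter: shifting by $\beta-x$ makes the constant term an exact derivative, so it needs neither $g$, $m$, nor $v_T(\bS)^{-1}$.

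\emph{What the ``standard'' step rests on.} You should state these facts explicitly.
\begin{enumerate}
\item Let $\chi=\bS'\bS^{-1}$ and $\kappa=\kappa_A(\rho_\chi)$. Then $\eta(\kappa)\eqltwo\rho_\sigma$, so by \eqref{r.transf.1} one has $I-B_{\rho_\sigma}=(I+B_\kappa^*)(I+B_\kappa)$.
\item On $\mathcal H$, the linear map $\boldsymbol\xi_{\bS}$ equals $(I+B_\kappa)^{-1}$ (proof of Theorem~\ref{t.alpha}).
\item Your action is $\langle k,\gamma\rangle_{\mathcal H}-\frac12\langle(I-B_{\rho_\sigma})\gamma,\gamma\rangle_{\mathcal H}$. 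It is coercive because $\Lambda(B_{\rho_\sigma})<1$.
\item Its free maximum is $\frac12\|(I+B_\kappa^*)^{-1}k\|_{\mathcal H}^2=\frac12\|g\|_{\mathcal H}^2$, attained at $\boldsymbol\xi_{\bS}g$, whose endpoint is $m$.
\item With $P\gamma=\gamma(T)$, Lemma~\ref{l.d*h.f.hat.kappa} shows that $P(I-B_{\rho_\sigma})^{-1}P^*$ is the covariance $v_T(\bS)$ of $\xi_{\bS}(T)$.
\end{enumerate}
The Lagrange-multiplier formula under the constraint $P\gamma=y-x$ then gives exactly your left-hand side.

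\emph{Local corrections.}
\begin{enumerate}
\item The correct element is $g=(I+B_\kappa^*)^{-1}k=(I+B_{\widehat\kappa}^*)k$, not $(I+B_{\widehat\kappa}^*)^{-1}k$.
\item The Euler--Lagrange equation is exactly $\Gamma''-2\sigma_A\Gamma'-\sigma'\Gamma=0$, with no transposition.
\item Both endpoints are fixed: $\Gamma(0)=x$ because $\gamma(0)=0$, and $\Gamma(T)=y$. There is no transversality condition; adding one would overdetermine the problem.
\item Along solutions, $\langle\sigma\Gamma,\Gamma'\rangle-\frac12|\Gamma'|^2=\frac12\{\langle\sigma_S\Gamma,\Gamma\rangle-\langle\Gamma,\Gamma'\rangle\}'$. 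So the $\langle\Gamma,\Gamma'\rangle$ boundary term enters with a minus sign.
\item Substituting $\Gamma(0)=x$, $\Gamma'(T)=c$ and $\Gamma'(0)=\bU'(0)y+\bV'(0)c$ gives $\mathfrak{d}^{(\sigma)}(x,y)$ directly. No Wronskian-type identity is needed.
\end{enumerate}
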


\begin{proof}
Let $x,y\in \mathbb{R}^d$.
Define $\beta\in C^2([0,T];\mathbb{R}^d)$ by 
\[
    \beta(t)=\bU(t)y+\bV(t)\bV(0)^{-1}(x-\bU(0)y)
    \quad\text{for }t\in[0,T].
\]
It follows from \eqref{eq.sigma.phi.psi} that $\beta$ obeys 
the following ODE on $\mathbb{R}^d$:
\[
    \beta^{\prime\prime}-2\sigma_A \beta^\prime
    -\sigma^\prime \beta=0,
    \quad \beta(0)=x,~~\beta(T)=y.
\]
Put $h=\beta-x$.
Define the Wiener functional 
$\mathfrak{s}[x,y]:\mathcal{W}\to \mathbb{R}$ by
\begin{align*}
    \mathfrak{s}[x,y] 
    & =\p_\sigma^x(\cdot+h)
        -\D^*h-\frac12\|h\|_{\mathcal{H}}^2
    \\
    & =\int_0^T \Bigl\langle \sigma(t)(\theta(t)+\beta(t)),
        d\theta(t)+\beta^\prime(t) dt\Bigr\rangle
      -\int_0^T \langle \beta^\prime(t),d\theta(t) \rangle
      -\frac12\int_0^T|\beta^\prime(t)|^2 dt.
\end{align*}
Applying the Cameron-Martin theorem, we have that
\begin{equation}\label{l.FK.21}
    \int_{\mathcal{W}} \varphi(x+\theta(T))
      e^{\mathfrak{p}_\sigma^x} d\mu
    =\int_{\mathcal{W}} \varphi(y+\theta(T))
     e^{\mathfrak{s}[x,y]}d\mu
    \quad\text{for }\varphi\in C_b(\mathbb{R}^d).
\end{equation}

By It\^o's formula, we know that
\[
    \int_0^T\langle \sigma(t)\theta(t),\beta^\prime(t)
       \rangle dt
    =\int_0^T \biggl\langle \int_t^T 
       \sigma(s)^\dagger \beta^\prime(s)ds,d\theta(t)
       \biggr\rangle.
\]
Hence we have that
\begin{align*}
    \mathfrak{s}[x,y] 
    = & \p_\sigma
        +\int_0^T \biggl\langle \sigma(t)\beta(t)
         +\int_t^T \sigma(s)^\dagger \beta^\prime(s)ds
         -\beta^\prime(t), d\theta(t)\biggr\rangle
    \\
     & +\int_0^T \langle\sigma(t) \beta(t),
           \beta^\prime(t)\rangle dt
      -\frac12\int_0^T|\beta^\prime(t)|^2 dt.
\end{align*}
Since
\[
    \biggl(\sigma \beta
      +\int_\bullet^T \sigma(s)^\dagger \beta^\prime(s)ds 
      -\beta^\prime \biggr)^\prime
    =\sigma^\prime \beta+2\sigma_A \beta^\prime
     -\beta^{\prime\prime}=0,
\]
we obtain that
\[
    \int_0^T \biggl\langle \sigma(t)\beta(t)
         +\int_t^T \sigma(s)^\dagger \beta^\prime(s)ds
         -\beta^\prime(t), d\theta(t)\biggr\rangle
    =\langle\sigma(T)y-\beta^\prime(T),
     \theta(T)\rangle.
\]
Furthermore, we have that 
\begin{align*}
    \langle \sigma \beta,\beta^\prime\rangle
      -\frac12|\beta^\prime|^2
    &  =\langle\sigma_S\beta,\beta^\prime\rangle
      -\frac12\Bigl\{
       \langle \beta,2\sigma_A \beta^\prime\rangle
       +|\beta^\prime|^2\Bigr\}
    \\
    & =\langle\sigma_S\beta,\beta^\prime\rangle
       -\frac12\Bigl\{
         \langle \beta,\beta^{\prime\prime}
            -\sigma^\prime \beta \rangle
         +|\beta^\prime|^2\Bigr\}
    \\
    &  =\langle\sigma_S\beta,\beta^\prime\rangle
       +\frac12\langle\sigma_S^\prime \beta,\beta\rangle
       -\frac12\Bigl\{
         \langle \beta,\beta^{\prime\prime}\rangle
         +|\beta^\prime|^2\Bigr\}
   \\
   & =\frac12\Bigl\{
        \langle\sigma_S\beta,\beta\rangle
        -\langle\beta,\beta^\prime\rangle
      \Bigr\}^\prime.
\end{align*}
Thus we see that
\begin{align*}
    \mathfrak{s}[x,y]
    =& 
     \p_\sigma
     +\bigl\langle\sigma(T)y-\beta^\prime(T),
           \theta(T)\bigr\rangle
    \\
    & +\frac12\Bigl\{
        \bigl\langle \sigma_S(T)y,y\bigr\rangle
        -\bigl\langle \sigma_S(0)x,x\bigr\rangle
        -\bigl\langle \beta^\prime(T),y\bigr\rangle
        +\bigl\langle \beta^\prime(0),x\bigr\rangle
      \Bigr\}.
\end{align*}
Since
\[
    \beta^\prime(0)=\bU^\prime(0)y
      +\bV^\prime(0)\bV(0)^{-1}(x-\bU(0)y)
    \quad\text{and}\quad
    \beta^\prime(T)=\bV(0)^{-1}(x-\bU(0)y),
\]
we obtain that
\[
    \mathfrak{s}[x,y]
     =\p_\sigma
     +\langle\sigma(T)y-\beta^\prime(T),
           \theta(T)\rangle
     +\mathfrak{d}^{(\sigma)}(x,y).
\]

Combining this with  \eqref{l.FK.21} and Corollary~\ref{c.2nd.ode}, 
we see that
\begin{align*}
    \int_{\mathcal{W}} e^{\p_\sigma^x} 
         \delta_y(x+\theta(T)) d\mu
    & =\int_{\mathcal{W}} 
        e^{\p_\sigma+
           \langle\sigma(T)y-\beta^\prime(T),
           \theta(T)\rangle
          +\mathfrak{d}^{(\sigma)}(x,y)} 
         \delta_0(\theta(T)) d\mu
     \\
     & =e^{\mathfrak{d}^{(\sigma)}(x,y)} 
        \biggl(\frac{e^{-\int_0^T \tr\sigma_S(t)dt}}{
              \det\bS(0)}\biggr)^{\frac12}
        g_{v_T(\bS)}(0).
\end{align*}
Thus we obtain \eqref{l.FK.1}. 
\end{proof}

\begin{proof}[Proof of Theorem~\ref{t.FK}]
Notice that 
\[
    \mathfrak{d}^{(\sigma)}(x,y)
      =\mathfrak{d}(x,y)
       -\frac12 \biggl\langle \biggl(
        \int_0^T \psi_S(s)ds\biggr)x,x \biggr\rangle
\]
and
\[
    \text{\rm tr}\sigma_S(t)
      =\text{\rm tr}\phi_S(t)
       +\int_t^T \text{\rm tr}\psi_S(s) ds
\]
for $t\in[0,T]$.
In conjunction with these and \eqref{eq.a.p}, \eqref{l.FK.1} yields
\eqref{t.FK.2}.
\end{proof}

\begin{remark}\label{r.FK}
While Corollary~\ref{c.2nd.ode} is a special case of
Theorem~\ref{t.FK} with $x=0$ and $\psi=0$, the corollary is
indispensable to show the theorem as have been seen.
Due to the expression \eqref{eq.a.p}, the corollary extends to 
$\mathfrak{a}_{\phi,\psi}^0$ as follows.
With $\bS$ as in Theorem~\ref{t.FK}, it holds that
\[
    \int_{\mathcal{W}} f 
       e^{\mathfrak{a}_{\phi,\psi}^0+\D^*h} d\mu
    =\biggl(\frac{e^{-\int_0^T\tr\phi_S(t)dt}}{
        \det\bS(0)}\biggr)^{\frac12}
     \int_{\mathcal{W}} f(\boldsymbol{\xi}_{\bS})
       e^{\int_0^T\langle h^\prime(t),d\xi_{\bS}(t)\rangle}
      d\mu
\]
for every $f\in C_b(\mathcal{W})$.
\end{remark}

We give two examples illustrating Theorem~\ref{t.FK}.

\begin{example}\label{e.FK}
Let $d=1$, $a\in \mathbb{R}$, and $\lambda>0$.
Set $\phi\equiv a$ and $\psi\equiv -\lambda^2$.
Since $\phi_A=0$, $\psi_S=-\lambda^2$, and $\phi^\prime=0$,
the ODE \eqref{t.FK.1} reads as 
\[
    \Phi^{\prime\prime}-\lambda^2\Phi=0.
\]
Solving this ODE with the given terminal conditions, 
we know that 
\[
    \bU(t)=\cosh(\lambda (t-T)),\quad
    \bV(t)=\frac1{\lambda}\sinh(\lambda (t-T)),
\]
and
\[
    \bS(t)=\cosh(\lambda (t-T))
         +\frac{a}{\lambda}\sinh(\lambda (t-T))
\]
for $t\in[0,T]$.
Hence $\bV(0)\ne0$.
Furthermore, $\bS(t)\ne0$ for every $t\in[0,T]$ if and only if 
$a<\lambda/\tanh(\lambda T)$.

Assume that $a<\lambda/\tanh(\lambda T)$.
Then the assumption in Theorem~\ref{t.FK} is fulfilled.
Since $\phi_S\equiv a$, by a direct computation, we obtain that
\[
    \mathfrak{d}(x,y)
      =\frac{a}2(y^2-x^2)
       -\frac{\lambda}2 \coth(\lambda T)
        \bigl\{
         x^2-2\text{\rm sech}(\lambda T)xy
         +y^2\bigr\}.
\]
Rewriting $\bS(t)$ as
\[
    \bS(t)
    =\frac12\biggl\{
     \Bigl(1-\frac{a}{\lambda}\Bigr)e^{\lambda(T-t)}
    +\Bigl(1+\frac{a}{\lambda}\Bigr)e^{-\lambda(T-t)}\biggr\}
    \quad\text{for }t\in[0,T],
\]
and using the identity that
\[
    \frac{d}{dt} (pe^{2\lambda t}+q)^{-1}
    =-2p\lambda (pe^{\lambda t}+q e^{-\lambda t})^{-2}
    \quad\text{for any }p,q\in \mathbb{R},
\]
we see that
\[
    v_T(\bS)=\frac{\sinh(\lambda T)}{\lambda}
            \Bigl(\cosh(\lambda T)
              +\frac{a}{\lambda} \sinh(-\lambda T)
              \Bigr)^{-1}.
\]
Thus $p_T(x,y)$ in Theorem~\ref{t.FK} is represented as
\[
    p_T(x,y)=\frac1{\sqrt{2\pi T}}
       \biggl(\frac{\lambda T}{\sinh(\lambda T)}
         \biggr)^{1/2}
    \exp\Bigl[\frac{a}2(y^2-x^2)
       -\frac{\lambda}2 \coth(\lambda T)
        \bigl\{x^2-2\text{\rm sech}(\lambda T)xy
         +y^2\bigr\} \Bigr].
\]

When $a=0$, this $p_T(x,y)$ is exactly the heat kernel
for the Schr\"odinger operator for harmonic oscillator
(see \cite[Theorem\,5.8.2]{mt-cambridge}).
\end{example}

\begin{example}\label{e.FK2}
Let $d=1$, $a\in \mathbb{R}$, and $\lambda>0$.
Set $\phi\equiv a$ and $\psi\equiv \lambda^2$.
Then the ODE \eqref{t.FK.1} reads as 
\[
    \Phi^{\prime\prime}+\lambda^2\Phi=0.
\]
Solving this ODE with the given terminal conditions, 
we know that 
\[
    \bU(t)=\cos(\lambda (t-T)),\quad
    \bV(t)=\frac1{\lambda}\sin(\lambda (t-T)),
\]
and
\[
    \bS(t)=\cos(\lambda (t-T))
         +\frac{a}{\lambda}\sin(\lambda (t-T))
\]
for $t\in[0,T]$.
Hence $\bV(0)\ne0$ if and only if 
$\lambda T\notin\{n\pi\mid n\in \mathbb{N}\}$.
Furthermore, $\bS(t)\ne0$ for every $t\in[0,T]$ if
and only if either 
(i)~$a>0$ and $\lambda T<\text{\rm arctan}(\lambda/a)$,
(ii)~$a<0$ and 
$\lambda T<(\pi/2)+\text{\rm arctan}(|a|/\lambda)$,
or
(iii)~$a=0$ and $\lambda T<\pi/2$.

Assume that one of the above conditions (i), (ii), and (ii)
holds.
Then $\lambda T<\pi$, and the assumption in
Theorem~\ref{t.FK} is fulfilled. 
A similar computation as in Example~\ref{e.FK} with 
$\sqrt{-1}\lambda$ for $\lambda$ yields that 
\[
    \mathfrak{d}(x,y)
      =\frac{a}2 (y^2-x^2)
        -\frac{\lambda}2 \cot(\lambda T)
       \bigl\{x^2
       -2\text{\rm sec}(\lambda T) xy+y^2\bigr\}
\]
and
\[
    v_T(\bS)=\frac{\sin(\lambda T)}{\lambda}
            \Bigl(\cos(\lambda T)
              +\frac{a}{\lambda} \sin(-\lambda T)
              \Bigr)^{-1}.
\]
Thus $p_T(x,y)$ in Theorem~\ref{t.FK} is represented as
\[
    p_T(x,y)=\frac1{\sqrt{2\pi T}}
       \biggl(\frac{\lambda T}{\sin(\lambda T)}
         \biggr)^{1/2}
    \exp\Bigl[\frac{a}2(y^2-x^2)
      -\frac{\lambda}2 \cot(\lambda T)
      \bigl\{x^2-2\text{\rm sec}(\lambda T)xy
         +y^2\bigr\} \Bigr].
\]
\end{example}

\begin{remark}\label{r.FK.1dim}
In the above two examples, $\bV(0)\ne0$ whenever
$\bS(t)\ne0$ for any $t\in[0,T]$.
This always happens when $d=1$.
In fact, both $\bS$ and $\bV$ obey the ODE
\[
    \Phi^{\prime\prime}-(\psi_S-\phi^\prime)\Phi=0.
\]
The Sturm-Picone comparison theorem 
(\cite[10$\cdot$30,10$\cdot$31]{ince})
implies that, if $t_0\in(0,T]$ is a zero of $\bV$, then
$\bS$ has a zero in $(t_0,T)$.
Thus, if $\bS(t)\ne0$ for any $t\in[0,T]$, then
$\bV(t)\ne0$ for $t\in(0,T]$.
\end{remark}

In general, we have a na\"{\i}ve sufficient condition for
the assumption to be satisfied. 

\begin{proposition}\label{p.FK}
Let $\sigma$ be as in Corollary~\ref{c.2nd.ode} or
(\ref{eq.sigma.phi.psi}).
Put
\[
    \delta=2\|\sigma_A\|_\infty+\|\sigma^\prime\|_\infty
    \quad\text{and}\quad
    \delta^\prime=|\sigma(T)|+\delta.
\]
Then the following holds.
\\
{\rm(i)}
$\det\bV(T)\ne0$ if 
\[
    \frac{T}2\sqrt{d} e^{(1+\delta)T}\delta<1.
\]
{\rm(ii)}
$\det\bS(t)\ne0$ for every $t\in[0,T]$ if 
\begin{equation}\label{p.FK.1}
    \frac{T}2\bigl\{2+T(\sqrt{d}+\delta^\prime)
       e^{(1+\delta)T}\bigr\}\delta^\prime
    <1.
\end{equation}
\end{proposition}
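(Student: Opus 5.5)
Both claims concern solutions of the linear ODE \eqref{eq.ode.sigma}, that is, $\Phi^{\prime\prime}=2\sigma_A\Phi^\prime+\sigma^\prime\Phi$. For the $\sigma$ of \eqref{eq.sigma.phi.psi} this is the same as \eqref{t.FK.1}. I will reverse time and work with $W(t)=\Phi(T-t)$ for $t\in[0,T]$. Then $W^{\prime\prime}(t)=-2\sigma_A(T-t)W^\prime(t)+\sigma^\prime(T-t)W(t)$, and the terminal data at $T$ become initial data at $t=0$.

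The plan is to compare $W$ with its first-order Taylor polynomial. Step one is a Gronwall estimate for the first-order system $(W,W^\prime)$. From
\[
|W^\prime(t)|\le|W^\prime(0)|+\delta\int_0^t\bigl(|W(s)|+|W^\prime(s)|\bigr)ds
\quad\text{and}\quad
|W(t)|\le|W(0)|+\int_0^t|W^\prime(s)|ds
\]
we get $|W(t)|+|W^\prime(t)|\le(|W(0)|+|W^\prime(0)|)e^{(1+\delta)t}$. Here $|\cdot|$ is the Frobenius norm, so $|I_d|=\sqrt d$. Step two is the remainder formula
\[
W(t)-W(0)-tW^\prime(0)=\int_0^t(t-s)W^{\prime\prime}(s)ds,
\]
whose norm is at most $\delta\int_0^t(t-s)(|W|+|W^\prime|)ds\le\frac{t^2}2\,\delta\,(|W(0)|+|W^\prime(0)|)e^{(1+\delta)t}$. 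Step three uses the fact that $|I_d-M|<1$ in Frobenius norm implies that $M$ is invertible, because the operator norm is at most the Frobenius norm and a Neumann series then converges.

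For (i), take $\Phi=\bV$. Then $W(0)=0$ and $W^\prime(0)=-I_d$, so $W(t)=-t\{I_d-E(t)/t\}$ with $|E(t)|\le\frac{t^2}2\delta\sqrt d e^{(1+\delta)t}$. At the end of the backward integration, after time $T$, this gives $|E(T)/T|\le\frac T2\sqrt d e^{(1+\delta)T}\delta<1$, and hence that determinant is nonzero. Literally, the statement asserts $\det\bV(T)\ne0$. But the terminal condition in Theorem~\ref{t.FK} forces $\bV(T)=0$, so the quantity controlled by this hypothesis can only be $\bV$ at the far endpoint $t=0$ of $[0,T]$, where $W(T)=\bV(0)$. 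This is precisely the assumption $\det\bV(0)\ne0$ used in Theorem~\ref{t.FK}. I will prove the estimate in that form and note that the argument bounds $\bV$ at the point reached after backward time $T$.

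For (ii), take $\Phi=\bS$. Then $W(0)=I_d$ and $W^\prime(0)=-\sigma(T)$. This gives
\[
|W(t)-I_d|\le t|\sigma(T)|+\frac{t^2}2\delta(\sqrt d+|\sigma(T)|)e^{(1+\delta)t}\le\frac T2\bigl\{2+T(\sqrt d+\delta^\prime)e^{(1+\delta)T}\bigr\}\delta^\prime
\]
for every $t\in[0,T]$, using $|\sigma(T)|,\delta\le\delta^\prime$. By \eqref{p.FK.1} the right-hand side is $<1$, so $\det\bS(T-t)\ne0$ for all $t\in[0,T]$.

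The only delicate point is the Gronwall step. It must be applied to the coupled quantity $|W|+|W^\prime|$ rather than to $W^\prime$ alone, which is where the exponent $(1+\delta)T$ comes from. Everything after that is bookkeeping of constants.
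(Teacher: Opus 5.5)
Your proof is correct and follows the paper's argument essentially step for step. The paper also uses a Gronwall bound on $|W|+|W^\prime|$ for the time-reversed solution, then the second-order Taylor remainder estimate, and then applies these to $\bS(T-\cdot)$ and to $\bV(T-\cdot)$. You are also right that (i) should read $\det\bV(0)\ne0$: the printed $\bV(T)$, which also appears in the paper's proof, is a slip, and $\det\bV(0)\ne0$ is exactly the hypothesis Theorem~\ref{t.FK} needs.
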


\begin{proof}
Let
$\alpha,\gamma\in C([0,T];\mathbb{R}^{d\times d})$,
$C_0,C_1\in \mathbb{R}^{d\times d}$, and 
$W\in C^2([0,T];\mathbb{R}^{d\times d})$ be the solution to
the ODE on $\mathbb{R}^{d\times d}$
\[
    W^{\prime\prime}+\alpha W^\prime+\gamma W=0,
    \quad W(0)=C_0,~W^\prime(0)=C_1.
\]
Put $\Delta=\|\alpha\|_\infty+\|\gamma\|_\infty$.
The ODE implies that
\[
    |W(t)|+|W^\prime(t)|
    \le |C_0|+|C_1|+(1+\Delta)\int_0^t
    \{|W(s)|+|W^\prime(s)|\} ds
    \quad\text{for }t\in[0,T].
\]
By Gronwall's inequality, we have that
\[
    |W(t)|+|W^\prime(t)|
    \le (|C_0|+|C_1|)e^{(1+\Delta)T}
    \quad\text{for }t\in[0,T].
\]
Hence it holds that
\begin{align*}
    |W(t)-C_0-tC_1|
    & \le \int_0^t \biggl(\int_0^s 
      |\alpha(u)W^\prime(u)+\gamma(u)W(u)|du\biggr) ds
    \\
    & \le \frac{t^2}2 (|C_0|+|C_1|)e^{(1+\Delta)T}\Delta
    \quad\text{for }t\in[0,T].
\end{align*}
This yields two inequalities that
\[
    \biggl|\frac1{T}W(T)-C_1\biggr| 
      \le \frac1{T}|C_0|+\frac{T}2
        (|C_0|+|C_1|)e^{(1+\Delta)T}\Delta
\]
and 
\[
    |W(t)-C_0| \le T|C_1|+\frac{T^2}2
        (|C_0|+|C_1|)e^{(1+\Delta)T}\Delta
\]
for $t\in[0,T]$.
Applying the first inequality to $\bV$ and the second one to
$\bS(T-\cdot)$, we obtain that 
\[
    \biggl|\frac1{T}\bV(T)-I_d\biggr|
      \le \frac{T}2 \sqrt{d} e^{(1+\delta)T}\delta
\]
and
\[
    |\bS(t)-I_d|
      \le T|\sigma(T)|+\frac{T^2}2
          \bigl(\sqrt{d}+|\sigma(T)|\bigr)
          e^{(1+\delta)T}\delta
      \le \frac{T}2\bigl\{
           2+T(\sqrt{d}+\delta^\prime)
           e^{(1+\delta^\prime)T}\bigr\}\delta^\prime.
\]
Plugging the assumptions on $\delta$ and $\delta^\prime$, we
complete the proof.
\end{proof}

\begin{remark}\label{r.S}
If $\phi$ and $\psi$ are both constant functions, then $\bS$
in Theorem~\ref{t.FK} has a concrete expression.
To see this, take $C,D\in \mathbb{R}^{d\times d}$ with
$D^\dagger=D$.
Let $\phi\equiv C$ and $\psi\equiv D$, and set $A=C-C^\dagger$.
The ODE \eqref{t.FK.1} for $\bS$ reads as
\[
    \bS^{\prime\prime}-A\bS^\prime+D\bS=0,
    \quad \bS(T)=I_d,~\bS^\prime(T)=C.
\]
The function
$\hat{\bS}\in C^2([0,T];\mathbb{R}^{d\times d})$ given as
$\hat{\bS}=\bS(T-\cdot)$ obeys the ODE
\[
    \hat{\bS}^{\prime\prime}+A\hat{\bS}^\prime
    +D\hat{\bS}=0,
    \quad \hat{\bS}(0)=I_d,~\hat{\bS}^\prime(0)=-C.
\]

Let $\lambda_1,\dots,\lambda_{2d}$ be the eigenvalues
counted with multiplicity of the matrix
\[
    \begin{pmatrix} 0 & I_d \\ -D & -A \end{pmatrix}
    \in \mathbb{R}^{2d\times 2d}.
\]
Define 
$r_1,\dots,r_{2d}\in C^\infty(\mathbb{R};\mathbb{C})$ by the
system of ODEs 
\[
    \left\{
      \begin{aligned}     
         & r_1^\prime=\lambda_1 r_1,
         \\
         & r_j^\prime=r_{j-1}+\lambda_j r_j
           \quad\text{for }2\le j\le 2d,
         \\
         & r_1(0)=1,~~
           r_2(0)=\dots=r_{2d}(0)=0,
      \end{aligned}
    \right.
\]
and $Q_1,\dots,Q_{2d}\in \mathbb{C}^{d\times d}$
successively as 
$Q_1=I_d$, $Q_2=-C$, and
\begin{align*}
    Q_{3+n}
    = & -\sum_{j=1}^{n+1} \bigl\{r_j^{(n)}(0)D
         +r_j^{(n+1)}(0)A+r_j^{(n+2)}(0)I_d\bigr\}Q_j
    \\
    & 
      -\bigl\{r_{n+2}^{(n+1)}(0)A+r_{n+2}^{(n+2)}(0)I_d
       \bigr\}Q_{n+2}
      \quad\text{for }0\le n\le 2d-3.
\end{align*}
By Proposition~\ref{p.ode.U}, it holds that
\[
    \hat{\bS}=\sum_{j=1}^{2d} r_j Q_j
    \quad\text{and hence }\quad
    \bS=\sum_{j=1}^{2d} r_j(T-\cdot) Q_j.
\]

If $\lambda_1,\dots,\lambda_{2d}$ are different from each
other, then we have another expression of $\bS$.
In fact, define 
$\widehat{U}_n=(\widehat{U}_n^{pq})_{1\le p,q\le d}
 \in \mathbb{R}^{d\times d}$ for $0\le n\le 2d-1$
and 
$\widehat{Q}_j=(\widehat{Q}_j^{pq})_{1\le p,q\le d}
 \in \mathbb{C}^{d\times d}$ 
for $1\le j\le 2d$ as
\[
    \widehat{U}_0=I_d,\quad
    \widehat{U}_1=-C,\quad
    \widehat{U}_n=-(D\widehat{U}_{n-2}+A\widehat{U}_{n-1})
    \quad\text{for }3\le n\le 2d-1
\]
and 
\[
    \begin{pmatrix} \widehat{Q}_1^{pq} \\ \vdots \\ \vdots
      \\  \widehat{Q}_{2d}^{pq} \end{pmatrix}
    =\begin{pmatrix} 1 & \cdots & \cdots & 1 \\
       \lambda_1 & \cdots & \cdots & \lambda_{2d} \\
       \vdots & & & \vdots \\
      \lambda_1^{2d-1} & \cdots & \cdots &
      \lambda_{2d}^{2d-1}
     \end{pmatrix}^{-1}
     \begin{pmatrix} \widehat{U}_0^{pq} \\ \vdots \\
      \vdots \\ \widehat{U}_{2d-1}^{pq}
     \end{pmatrix}
     \quad\text{for }1\le p,q\le d.
\]
By Proposition~\ref{p.ode.U.nu}, we have that
\[
    \hat{\bS}(t)=\sum_{j=1}^{2d} e^{\lambda_j t} \widehat{Q}_j
    \quad\text{and hence}\quad
    \bS(t)=\sum_{j=1}^{2d} e^{\lambda_j(T-t)} \widehat{Q}_j
    \quad\text{for }t\in[0,T].
\]
\end{remark}

We present an application of Theorem~\ref{t.lap.lin} and  
Remark~\ref{r.FK} to Ornstein-Uhlenbeck processes.
Let $\chi\in C([0,T];\mathbb{R}^{d\times d})$ and
$\xi^{[\chi]}=\{\xi^{[\chi]}(t)\}_{t\in[0,T]}$ be the unique
solution to the stochastic differential equation (SDE in
short)
\[
    d\xi^{[\chi]}(t)=d\theta(t)+\chi(t)\xi^{[\chi]}(t) dt,
    \quad \xi^{[\chi]}(0)=0.    
\]

\begin{corollary}\label{c.ode.ou}
Let $\chi$ and $\xi^{[\chi]}$ be as above.
\\
{\rm(i)}
Let $\gamma\in C([0,T];\mathbb{R}^{d\times d})$ and 
$\beta \in C^1([0,T];\mathbb{R}^{d\times d})$.
Assume that $\gamma^\dagger=\gamma$, and the solution 
$\alpha\in C^1([0,T];\mathbb{R}^{d\times d})$ to the ODE
\[
    \alpha^\prime=\chi \alpha,\quad \alpha(T)=I_d
\]
satisfies that $\det \alpha(t)\ne0$ for any $t\in[0,T]$.
Then it holds that
\begin{align}
 \label{c.ode.ou.1}
    & \int_{\mathcal{W}} f(\xi^{[\chi]})
      \exp\biggl(\frac12\int_0^T 
        \langle \gamma(t)\xi^{[\chi]}(t),
             \xi^{[\chi]}(t)\rangle dt
       +\frac12 \langle \beta(T)\xi^{[\chi]}(T),
             \xi^{[\chi]}(T)\rangle \biggr)
    \\
    & =e^{\frac12\int_0^T\tr[\beta_S(t)]dt}
       \int_{\mathcal{W}} f 
         e^{\mathfrak{a}_{\chi+\beta_S,\gamma+\beta_S^\prime-\chi^\dagger\chi}}
         d\mu
 \nonumber
\end{align}
for every non-negative $f\in C_b(\mathcal{W})$,
where the identity may hold as $\infty=\infty$.
\\
{\rm(ii)}
If $\chi$ is of $C^1$-class and the solution 
$\bS\in C^2([0,T];\mathbb{R}^{d\times d})$  
to the ODE 
\[
    \bS^{\prime\prime}-2\chi_A\bS^\prime
    +(\gamma-\chi^\dagger\chi-\chi^\prime)\bS=0,
    \quad \bS(T)=I_d,~\bS^\prime(T)=\chi(T)+\beta_S(T)
\]
satisfies that $\det\bS(t)\ne0$ for every $t\in[0,T]$, then
it holds that 
\begin{align*}
    & \int_{\mathcal{W}} f(\xi^{[\chi]})
        \exp\biggl(\frac12 \int_0^T \langle 
             \gamma(t)\xi^{[\chi]}(t),\xi^{[\chi]}(t) \rangle dt
           +\frac12\langle \beta(T)\xi^{[\chi]}(T),
              \xi^{[\chi]}(T)\rangle\biggr) d\mu
    \\
    & =\biggl(\frac{e^{-\int_0^T\tr\chi_S(t)dt}}{
         \det \bS(0)}\biggr)^{\frac12}
       \int_{\mathcal{W}} f(\xi_{\bS})d\mu
    \quad\text{for every }f\in C_b(\mathcal{W}).
\end{align*}
\end{corollary}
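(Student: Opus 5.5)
For (i), I would use \eqref{t.lap.lin.4} of Theorem~\ref{t.lap.lin} together with the identification \eqref{t.alpha.21}. Under the assumption on $\alpha$, the proof of Theorem~\ref{t.alpha} shows that $\iota+F_{\widehat{\kappa_A(\rho_\chi)}}=\boldsymbol{\xi}_\alpha$ holds $\mu$-a.s. By \eqref{eq:xi.alpha}, $\boldsymbol{\xi}_\alpha$ solves the same linear SDE as $\xi^{[\chi]}$, so $\boldsymbol{\xi}_\alpha=\xi^{[\chi]}$. Consequently, \eqref{t.lap.lin.4} reads
\[
\int_{\mathcal{W}} g\, e^{\int_0^T\langle\chi\theta,d\theta\rangle-\frac12\int_0^T|\chi\theta|^2dt}\,d\mu=\int_{\mathcal{W}} g(\xi^{[\chi]})\,d\mu
\]
for $g\in C_b(\mathcal{W})$. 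By monotone convergence it also holds for every nonnegative measurable $g$, possibly as $\infty=\infty$.

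I would apply this with $g=f\cdot\exp\bigl(\frac12\int_0^T\langle\gamma\theta,\theta\rangle dt+\frac12\langle\beta(T)\theta(T),\theta(T)\rangle\bigr)$, truncated by $N\wedge$ and then let $N\to\infty$. The left side of \eqref{c.ode.ou.1} becomes $\int f e^{\Xi}d\mu$ with
\[
\Xi=\int_0^T\langle\chi\theta,d\theta\rangle-\frac12\int_0^T\langle\chi^\dagger\chi\theta,\theta\rangle dt+\frac12\int_0^T\langle\gamma\theta,\theta\rangle dt+\frac12\langle\beta(T)\theta(T),\theta(T)\rangle .
\]
It then remains to rewrite the boundary term with It\^o's formula. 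Since only the symmetric part matters, replace $\beta(T)$ by $\beta_S(T)$ and obtain
\[
\frac12\langle\beta_S(T)\theta(T),\theta(T)\rangle=\int_0^T\langle\beta_S\theta,d\theta\rangle+\frac12\int_0^T\langle\beta_S'\theta,\theta\rangle dt+\frac12\int_0^T\tr\beta_S\,dt .
\]
Substituting gives $\Xi=\mathfrak{a}^0_{\chi+\beta_S,\gamma+\beta_S'-\chi^\dagger\chi}+\frac12\int_0^T\tr\beta_S\,dt$, which is \eqref{c.ode.ou.1}. The only care needed is the bookkeeping of the trace constant and the fact that $\beta$ need only be $C^1$.

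For (ii), I would apply Remark~\ref{r.FK} with $h=0$, $\phi=\chi+\beta_S$, and $\psi=\gamma+\beta_S'-\chi^\dagger\chi$, noting that $\psi$ is symmetric. This requires checking that the ODE \eqref{t.FK.1} matches the stated one:
\begin{itemize}
\item $\phi_A=\chi_A$;
\item $\psi_S-\phi'=\gamma+\beta_S'-\chi^\dagger\chi-\chi'-\beta_S'=\gamma-\chi^\dagger\chi-\chi'$;
\item the terminal condition is $\phi(T)=\chi(T)+\beta_S(T)$.
\end{itemize}
So $\bS$ is the one from Theorem~\ref{t.FK}, and
\[
\int f e^{\mathfrak{a}^0}d\mu=\Bigl(e^{-\int\tr(\chi_S+\beta_S)}/\det\bS(0)\Bigr)^{1/2}\int f(\boldsymbol{\xi}_{\bS})\,d\mu .
\]
Multiplying by $e^{\frac12\int\tr\beta_S}$ from (i) cancels the $\beta_S$ trace, leaving the claimed constant.

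The main obstacle is that (i) as set up needs the hypothesis on $\alpha$ to identify $\boldsymbol{\xi}_\alpha$ with $\xi^{[\chi]}$. In (ii) that hypothesis is not assumed, and $\chi$ is only $C^1$. I would handle this by avoiding $\alpha$ entirely: $e^{\int\langle\chi\theta,d\theta\rangle-\frac12\int|\chi\theta|^2}$ is a Girsanov density, by the Novikov or Beneš condition for linear drift, and the Girsanov theorem identifies the law of $\theta$ under it with the law of $\xi^{[\chi]}$. Alternatively, I would argue by time-splitting into short intervals where Proposition~\ref{c.lap.lin} applies. Either route gives the displayed identity for all continuous $\chi$, after which both parts follow as described.
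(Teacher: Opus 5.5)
Your argument is the paper's. For (i), you use \eqref{t.lap.lin.4} together with \eqref{t.alpha.21} and $\boldsymbol{\xi}_\alpha=\xi^{[\chi]}$, applied to $f(\varphi\wedge N)$, then It\^o's formula for $\frac12\langle\beta_S(T)\theta(T),\theta(T)\rangle$ and monotone convergence. For (ii), you use Remark~\ref{r.FK} with $\phi=\chi+\beta_S$ and $\psi=\gamma+\beta_S^\prime-\chi^\dagger\chi$, combined with (i).

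The ``main obstacle'' you raise is not real. Since $\alpha$ solves the linear ODE $\alpha^\prime=\chi\alpha$ with $\alpha(T)=I_d$, Liouville's formula gives $\det\alpha(t)=\exp\bigl(-\int_t^T\tr\chi(s)\,ds\bigr)>0$ for every continuous $\chi$. So the hypothesis of (i) is automatic, and (ii) follows from (i) exactly as in the paper. The Girsanov/Bene\v{s} detour is valid but unnecessary, and the time-splitting idea, which would need its own concatenation argument, can be dropped.
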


This expression plays a key role in the stochastic approach
to the KdV equation (see \cite{ikeda-t,st-spa} or
Lemma~\ref{l.reflpot.cv0} below).

\begin{proof}
(i)
By \eqref{eq:xi.alpha}, 
$(\boldsymbol{\xi}_\alpha(\cdot))(t)=\xi^{[\chi]}(t)$
for $t\in[0,T]$.
Put 
\[
    \varphi=\exp\biggl(\frac12 \int_0^T \langle 
         \gamma(t)\theta(t),\theta(t) \rangle dt
           +\frac12\langle \beta(T)\theta(T),
              \theta(T)\rangle\biggr)
\]
and $\varphi_N=\varphi\wedge N$ for $N\in \mathbb{N}$.
By \eqref{t.lap.lin.4} and \eqref{t.alpha.21}, we obtain
that 
\begin{equation}\label{c.ode.ou.21}
    \int_{\mathcal{W}} f 
     \varphi_N \exp\biggl( \int_0^T
     \langle \chi(t)\theta(t),d\theta(t)\rangle 
     -\frac12 \int_0^T |\chi(t)\theta(t)|^2 dt
     \biggr)d\mu
    =\int_{\mathcal{W}} (f\varphi_N)(\xi^{[\chi]})d\mu
\end{equation}
for non-negative $f\in C_b(\mathcal{W})$.

Since
\begin{align*}
    \frac12\langle \beta(T)\theta(T),\theta(T)\rangle
    & =\frac12\langle \beta_S(T)\theta(T),\theta(T)\rangle
    \\
    & =\frac12 \int_0^T \langle \beta_S^\prime(t)\theta(t),
       \theta(t)\rangle dt
     +\int_0^T \langle \beta_S(t)\theta(t),d\theta(t)
       \rangle 
     +\frac12\int_0^T \tr[\beta_S(t)]dt,
\end{align*}
we have that
\begin{align*}
    & \varphi_N \exp\biggl( \int_0^T
       \langle \chi(t)\theta(t),d\theta(t)\rangle 
       -\frac12 \int_0^T |\chi(t)\theta(t)|^2 dt
       \biggr)
    \\
    & \nearrow \exp\biggl(
      \mathfrak{a}_{\chi+\beta_S,\gamma+\beta_S^\prime-\chi^\dagger\chi}^0
      +\frac12 \int_0^T \tr[\beta_S(t)]dt\biggr)
    \quad\text{as }N\to\infty.
\end{align*}
Thanks to the monotone convergence theorem, letting
$N\to\infty$ in \eqref{c.ode.ou.21}, we obtain
\eqref{c.ode.ou.1}.
\\
(ii)
Set $\phi=\chi+\beta_S$ and
$\psi=\gamma+\beta_S^\prime-\chi^\dagger\chi$. 
Then $\phi_S=\chi_S+\beta_S$, $\phi_A=\chi_A$, and 
$\psi_S-\phi^\prime=\gamma-\chi^\dagger\chi-\chi^\prime$. 
Applying Remark~\ref{r.FK}, we obtain the desired identity
from \eqref{c.ode.ou.1}.
\end{proof}

As another application of Corollary~\ref{c.2nd.ode} and 
Proposition~\ref{p.FK}, we give an explicit expression of
the characteristic function of $\p_\sigma$ as follows.

\begin{theorem}\label{t.ch.fn}
Let $\sigma\in C^1([0,T];\mathbb{R}^{d\times d})$.
For $\zeta\in \mathbb{C}$, denote by 
$\bS_\zeta\in C^2([0,T];\mathbb{C}^{d\times d})$ the unique
solution to the ODE on $\mathbb{C}^{d\times d}$
\[
    \bS_\zeta^{\prime\prime}
    -2\zeta\sigma_A \bS_\zeta^\prime
    -\zeta\sigma^\prime \bS_\zeta=0,
    \quad \bS_\zeta(T)=I_d,~~
    \bS_\zeta^\prime(T)=\zeta\sigma(T).
\]
Set 
$\Omega_\sigma=\{\zeta\in \mathbb{C}\mid 
 |\text{\rm Re}\zeta|\|B_{\rho_\sigma}\|_\op<1 \}$.
Then, for every $\zeta\in \Omega_\sigma$, 
$\det\bS_{\zeta}(0)\ne0$ and it holds that
\begin{equation}\label{t.ch.fn.1}
    \biggl(\int_{\mathcal{W}} e^{\zeta\p_\sigma} 
      d\mu\biggr)^2
    =\frac{e^{-\zeta\int_0^T\tr\sigma_S(t)dt}}{
         \det\bS_\zeta(0)}.      
\end{equation}
In particular, for every $\lambda\in \mathbb{R}$, 
$\det\bS_{\kyosu\lambda}(0)\ne0$ and it holds that
\[
    \biggl(\int_{\mathcal{W}} e^{\kyosu\lambda\p_\sigma} 
      d\mu\biggr)^2
    =\frac{e^{-\kyosu\lambda\int_0^T\tr\sigma_S(t)dt}}{
         \det\bS_{\kyosu\lambda}(0)}.      
\]
\end{theorem}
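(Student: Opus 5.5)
Our plan is to prove \eqref{t.ch.fn.1} first for real $\zeta$ near $0$ using Corollary~\ref{c.2nd.ode}, and then extend it to all of $\Omega_\sigma$ by analytic continuation.

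\emph{Real $\zeta$.} For real $\zeta=\lambda$ with $|\lambda|\|B_{\rho_\sigma}\|_\op<1$ we have $\p_{\lambda\sigma}=\lambda\p_\sigma=\q_{\lambda\rho_\sigma}$. Moreover
\[
\Lambda(B_{\lambda\rho_\sigma})\le|\lambda|\|B_{\rho_\sigma}\|_\op<1 .
\]
By Lemma~\ref{l.q.eta.int} this gives $e^{\lambda\p_\sigma}\in L^1(\mu)$, which is condition (i) of Theorem~\ref{t.riccati} for $\lambda\sigma$. The ODE \eqref{c.2nd.ode.1} written for $\lambda\sigma$ is exactly the ODE defining $\bS_\lambda$, since $(\lambda\sigma)_A=\lambda\sigma_A$ and $(\lambda\sigma)'=\lambda\sigma'$. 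Hence Corollary~\ref{c.2nd.ode} gives $\det\bS_\lambda(t)\ne0$ on $[0,T]$. Taking $f=1$ and $h=0$ in \eqref{c.2nd.ode.2} then yields \eqref{t.ch.fn.1} for this $\lambda$.

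\emph{Holomorphy of the two sides.} By Remark~\ref{r.q.eta.int}, or directly from the bound in Lemma~\ref{l.q.eta.int}, $\sup_{|\mathrm{Re}\zeta|\le r}|e^{\zeta\p_\sigma}|=e^{r|\p_\sigma|}$ is integrable as long as $r\|B_{\rho_\sigma}\|_\op<1$. Indeed $e^{r|\p_\sigma|}\le e^{r\p_\sigma}+e^{-r\p_\sigma}$, and both terms are integrable because $\Lambda(\pm rB_{\rho_\sigma})<1$. Differentiation under the integral sign then shows that $F(\zeta)=\int e^{\zeta\p_\sigma}d\mu$ is holomorphic on the strip $\Omega_\sigma$. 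The solution of a linear ODE depends holomorphically on a parameter entering its coefficients and initial data holomorphically. Hence $\zeta\mapsto\bS_\zeta(0)$ is entire, and so is $D(\zeta)=\det\bS_\zeta(0)$.

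\emph{Extension to $\Omega_\sigma$.} The identity $F(\zeta)^2D(\zeta)=e^{-\zeta\int_0^T\tr\sigma_S dt}$ holds on a real interval. Both sides are holomorphic on the connected domain $\Omega_\sigma$, so by the identity theorem it holds on all of $\Omega_\sigma$. The right-hand side never vanishes, so $D(\zeta)\ne0$ on $\Omega_\sigma$, and dividing by $D(\zeta)$ gives \eqref{t.ch.fn.1}. Purely imaginary $\zeta=\kyosu\lambda$ lie in $\Omega_\sigma$ for every $\lambda\in\mathbb{R}$, which gives the last assertion.

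\emph{Main obstacle.} The step needing the most care is the real-interval step: one must check that the hypotheses of Corollary~\ref{c.2nd.ode} hold for $\lambda\sigma$ on a whole interval of $\lambda$. With the criterion above this is immediate, and it avoids needing Proposition~\ref{p.FK}. That proposition could serve instead to supply a small interval of real $\lambda$ on which $\det\bS_\lambda(t)\ne0$, which is all the identity theorem requires.
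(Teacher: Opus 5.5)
Your proof is correct. The analytic-continuation half coincides with the paper's: holomorphy of $\zeta\mapsto\int_{\mathcal W}e^{\zeta\p_\sigma}d\mu$ on the strip (this is exactly Lemma~\ref{l.hol}), entirety of $\zeta\mapsto\det\bS_\zeta(0)$, and the identity theorem.

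The difference lies in how you obtain the identity on a real set.

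\textbf{The paper's route.} It takes $|\lambda|$ so small that Proposition~\ref{p.FK}, a Gronwall-type estimate, guarantees $\det\bS_\lambda(t)\ne0$ on $[0,T]$. It then uses only the direction ``$\det\bS\ne0\Rightarrow$ \eqref{c.2nd.ode.2}'' of Corollary~\ref{c.2nd.ode}. That direction rests on the elementary implication (iii)$\Rightarrow$(ii) of Theorem~\ref{t.riccati}, together with Theorems~\ref{t.alpha} and \ref{t.lap.lin}. Ultimately it uses only the adapted transformation through Lemma~\ref{l.tw.(iv)to(ii)}.

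\textbf{Your route.} You combine Lemma~\ref{l.q.eta.int} with the implication (i)$\Rightarrow$(iv) of Corollary~\ref{c.2nd.ode}. This gives you \eqref{t.ch.fn.1}, and non-singularity of $\bS_\lambda$ on all of $[0,T]$, for every real $\lambda$ in $\Omega_\sigma$ at once, with no quantitative smallness estimate.

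\textbf{The trade-off.} Your route carries a heavier logical dependency. The implication (i)$\Rightarrow$(iv) goes through (i)$\Rightarrow$(ii) of Theorem~\ref{t.riccati}, hence through (ii)$\Rightarrow$(iv) of Theorem~\ref{t.tways}. The proof of that step (Lemma~\ref{l.tw.(ii)to(iv)}) invokes the Gohberg--Krein special factorization (Proposition~\ref{p.sigma.gamma}). The identity theorem needs only a set with an accumulation point, so the paper's cheaper small-interval input from Proposition~\ref{p.FK} suffices; this explains its choice. As the paper stands, both routes are valid.
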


To show this, we prepare a lemma on holomorphy.

\begin{lemma}\label{l.hol}
Let $\eta\in\stwo$.
Set 
$\mathcal{D}_\eta=\bigl\{\zeta\in \mathbb{C}\mid 
  |\text{\rm Re}\zeta|\|B_\eta\|_\op<1\bigr\}$.
For any $\zeta\in \mathcal{D}_\eta$, 
$|e^{\zeta\q_\eta}|\in L^1(\mu)$.
Furthermore, the function 
\[
    g:\mathcal{D}_\eta\ni\zeta\mapsto
      \int_{\mathcal{W}} e^{\zeta\q_\eta}d\mu \in \mathbb{C} 
\]
is holomorphic.
\end{lemma}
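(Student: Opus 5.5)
The plan is to use the spectral development of $B_\eta$ from the proof of Lemma~\ref{l.q.eta.int}. Since $B_\eta\in\mathcal{S}(\htwo)$, write $B_\eta=\sum_n a_n h_n\otimes h_n$ with an ONB $\{h_n\}$ and $\sum_n a_n^2<\infty$. Note that $\sup_n|a_n|=\|B_\eta\|_\op$. By Theorem~\ref{t.q.eta}, $\q_\eta=\frac12\sum_n a_n\{(\D^*h_n)^2-1\}$ in every $L^p(\mu)$, and the $\D^*h_n$ are IID $N(0,1)$.

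\textbf{Integrability.} For $\zeta\in\mathcal{D}_\eta$ we have $|e^{\zeta\q_\eta}|=e^{(\text{Re}\,\zeta)\q_\eta}=e^{\q_{(\text{Re}\,\zeta)\eta}}$. Moreover $\Lambda(B_{(\text{Re}\,\zeta)\eta})\le|\text{Re}\,\zeta|\,\|B_\eta\|_\op<1$, so Lemma~\ref{l.q.eta.int} gives $|e^{\zeta\q_\eta}|\in L^1(\mu)$. This also holds with a margin: Remark~\ref{r.q.eta.int} applies to $p\,\text{Re}\,\zeta$ for some $p>1$, uniformly on compact subsets.

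\textbf{Holomorphy.} I would prove local uniform integrability and then use Morera (or differentiate under the integral). Fix a compact $K\subset\mathcal{D}_\eta$. Set $r=\max_{\zeta\in K}|\text{Re}\,\zeta|$, so $r\|B_\eta\|_\op<1$, and choose $p>1$ with $pr\|B_\eta\|_\op<1$. For $\zeta\in K$,
\[
e^{p\,\text{Re}\,\zeta\,\q_\eta}\le e^{pr\q_\eta}+e^{-pr\q_\eta}.
\]
Both terms on the right are integrable by Lemma~\ref{l.q.eta.int}, since $\Lambda(B_{\pm pr\eta})\le pr\|B_\eta\|_\op<1$. Hence $\{e^{\zeta\q_\eta}\}_{\zeta\in K}$ is bounded in $L^p$, and therefore uniformly integrable.

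For each fixed $w$ the map $\zeta\mapsto e^{\zeta\q_\eta(w)}$ is entire. The function $g$ is continuous on $\mathcal{D}_\eta$ by dominated convergence: the domination is $e^{r\q_\eta}+e^{-r\q_\eta}$ on a neighborhood. For any closed triangle $\Delta\subset\mathcal{D}_\eta$, Fubini is justified by the same bound, so
\[
\oint_{\partial\Delta}g\,d\zeta=\int_{\mathcal{W}}\oint_{\partial\Delta}e^{\zeta\q_\eta}\,d\zeta\,d\mu=0.
\]
Morera's theorem then gives holomorphy. Alternatively, one can bound $|\q_\eta e^{\zeta\q_\eta}|$ by $C_\epsilon(e^{(r+\epsilon)\q_\eta}+e^{-(r+\epsilon)\q_\eta})$ and differentiate under the integral.

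\textbf{Main obstacle.} The delicate point is the domination. Lemma~\ref{l.q.eta.int} only controls $\Lambda$, not the negative spectrum, so for negative real parts one needs $\Lambda(B_{-\eta})$, the supremum of $-a_n$. This is exactly why the domain is defined through $\|B_\eta\|_\op$ rather than $\Lambda(B_\eta)$, and why I split $e^{\text{Re}\,\zeta\,\q_\eta}$ into $e^{\pm r\q_\eta}$ to use symmetric bounds.
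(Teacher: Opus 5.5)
Your proposal is correct and follows essentially the paper's argument. The key step is the domination $|e^{\zeta\q_\eta}|\le e^{\delta\q_\eta}+e^{-\delta\q_\eta}$ on a strip $|\mathrm{Re}\,\zeta|<\delta$, which is integrable by Lemma~\ref{l.q.eta.int} because $\Lambda(B_{\pm\delta\eta})\le\delta\|B_\eta\|_\op<1$. The paper then concludes by differentiating under the integral sign with the bound $|\q_\eta e^{\zeta\q_\eta}|\le\varepsilon^{-1}e^{(\delta+\varepsilon)|\q_\eta|}$, which is your stated alternative; using Morera instead is an immaterial difference.
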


\begin{proof}
Take a $\delta>0$ with $\delta \|B_\eta\|_\op<1$ and set 
$\mathcal{D}^{(\delta)}
 =\{\zeta\in \mathbb{C}\mid |\text{\rm Re}\zeta|<\delta\}$.
It suffices to show that
$|e^{\zeta\q_\eta}|\in L^1(\mu)$ for any
$\zeta\in \mathcal{D}^{(\delta)}$ and 
$g$ is holomorphic on $\mathcal{D}^{(\delta)}$.

If $\zeta\in \mathcal{D}^{(\delta)}$, then 
$|e^{\zeta\q_\eta}|\le e^{\delta|\q_\eta|}
    \le e^{\delta \q_\eta}+e^{-\delta \q_\eta}$.
Since 
$\Lambda(B_{\pm\delta\eta})\le\delta\|B_\eta\|_\op<1$, 
by Lemma~\ref{l.q.eta.int},  
$e^{\pm\delta \q_\eta}\in L^1(\mu)$.
Hence $|e^{\zeta\q_\eta}|\in L^1(\mu)$.

Choose an $\varepsilon>0$ such that 
$(\delta+\varepsilon)\|B_\eta\|_\op<1$.
Observe that
\[
    \biggl|\frac{d}{d\zeta}
       e^{\zeta\q_\eta}\biggr|
    =|\q_\eta e^{\zeta\q_\eta}|
    \le \frac1{\varepsilon} 
        e^{(\delta+\varepsilon)|\q_\eta|}
\]
for any $\zeta\in \mathcal{D}^{(\delta)}$.
By the similar argument as above, we have that
$e^{(\delta+\varepsilon)|\q_\eta|}\in L^1(\mu)$.
Applying the dominated convergence theorem of  
differentiation type, we see that $g$ is holomorphic on
$\mathcal{D}^{(\delta)}$. 
\end{proof}

\begin{proof}[Proof of Theorem~\ref{t.ch.fn}]
Since the function 
$\mathbb{C}\ni\zeta\mapsto
 \bS_\zeta(0)\in \mathbb{C}^{d\times d}$ is holomorphic,
so is the function
$\mathbb{C}\ni\zeta\mapsto
 \det \bS_\zeta(0)\in \mathbb{C}$.
By Lemma~\ref{l.hol}, the function 
$g:\Omega_\sigma\to \mathbb{C}$ given as
\[
    g(\zeta)=(\det\bS_\zeta(0))
    e^{\zeta\int_0^T\tr\sigma_S(t)dt}
     \biggl(\int_{\mathcal{W}} e^{\zeta\p_\sigma}
     d\mu\biggr)^2
    \quad\text{for }\zeta\in\Omega_\sigma
\]
is holomorphic.

Take a $\delta^\prime>0$ satisfying \eqref{p.FK.1}.
For $\lambda\in \mathbb{R}$ with 
$|\lambda|(2\|\sigma_A\|_\infty
 +\|\sigma^\prime\|_\infty+|\sigma(T)|)<\delta^\prime$, 
by Proposition~\ref{p.FK} and Corollary~\ref{c.2nd.ode},
$g(\lambda)=1$. 
Hence $g=1$ on $\Omega_\sigma$.
This implies that $\det\bS_\zeta(0)\ne0$ for every 
$\zeta\in\Omega_\sigma$ and \eqref{t.ch.fn.1} holds.
\end{proof}

\begin{remark}\label{r.ode.history}
The evaluation of 
$\int_{\mathcal{W}} e^{\mathfrak{a}_{\phi,\psi}^0} f d\mu$ by using 
solutions to ODEs was first made by Cameron and Martin
(\cite{CM0,CM}) when $d=1$. 
In their case, $\phi=0$ and hence
$\mathfrak{a}_{\phi,\psi}^0$ is the weighted $L^2$-integral
$\int_0^T \psi(t)|\theta(t)|^2 dt$ of sample path.
The corresponding ODE is the Sturm-Liouville equation 
\[
    f^{\prime\prime}+\psi f=0,\quad f(T)=1,~f^\prime(T)=0.
\]
If $\psi\equiv1$, then it corresponds to the harmonic
oscillator (\cite{CM,kac} and 
\cite[Subsection~5.8.1]{mt-cambridge}). 
When $d=2$, 
$\phi=\begin{pmatrix} 0 & -1 \\ 1 & 0 \end{pmatrix}$, and 
$\psi=0$, $\mathfrak{a}_{\phi,\psi}^0$ is L\'evy's
stochastic area and the evaluation is know as 
L\'evy's stochastic area formula (\cite{levy,yor} and
\cite[Subsection~5.8.2]{mt-cambridge}).  
Such an evaluation was extended to general dimensions 
by the author (\cite{st-ptrf}) with the additional
assumption that $\phi^\dagger=-\phi$.
The extension was made by using Girsanov transformations.
In \cite{st-kjm-2024}, another evaluation with the help of
change of variables formula on the Wiener space was achieved
by the author under an assumption of smallness of $\phi$ and
$\psi$.
The evaluation for general $\phi$, $\psi$, and $x$ was
achieved by the author recently 
(\cite{st-kjm-2026}).
\end{remark}

\section{Heat kernel on two-step nilpotent Lie group}
\label{sec.heat.kernel}
In this section, we apply Corollary~\ref{c.2nd.ode} to
compute heat kernels on two-step nilpotent Lie groups.
We start with introducing the Lie group on which we work.
Let 
\[
    \as(d)=\{A\in \mathbb{R}^{d\times d}\mid 
       A^\dagger=-A\}
    \quad\text{and}\quad
    \mathbb{G}(d)=\mathbb{R}^d\times \as(d).
\]
$\mathbb{G}(d)$ is the Lie group with the product
\[
    (x,A)\cdot(y,B)
    =\biggl(x+y,A+B+\frac12
      \bigl(x^i y^j -x^j y^i 
         \Bigr)_{1\le i,j\le d}\biggr)
\quad\text{for }(x,A),(y,B)\in \mathbb{G}(d).
\]

Put $N(d)=\frac12 d(d-1)$ and denote by 
$a=(a^{i j})_{1\le i<j\le d}$ 
the coordinate system on $\mathbb{R}^{N(d)}$ obtained by identifying
$\mathbb{R}^{N(d)}$ with the space of $d\times d$ matrices 
whose $(i,j)$-entries for $i\ge j$ all vanish.
Define 
$\A:\mathbb{R}^{N(d)}\ni a\mapsto 
 \A(a)=\bigl(\A(a)^{i j}
  \bigr)_{1\le i,j\le d}\in \as(d)$ 
by
\[
    \A(a)^{i j}
    =\begin{cases}
        a^{i j} & \text{if }i<j,
        \\
        -a^{j i} & \text{if }i>j,
        \\
        0 & \text{if }i=j
     \end{cases}
\]
for $a=(a^{i j})_{1\le i<j\le d}$.
The global coordinate system 
$(x,a)=\bigl((x^i)_{1\le i\le d},
      (a^{ij})_{1\le i<j\le d}\bigr)$
on $\mathbb{G}(d)$ is given via the mapping
\[
    \mathbb{R}^d\times \mathbb{R}^{N(d)}
    \ni (x,a)\mapsto (x,\A(a))\in \mathbb{G}(d).
\]

For $(z,C)\in \mathbb{G}(d)$, let $\mathcal{T}_{(z,C)}$ be the tangent
space of $\mathbb{G}(d)$ at $(z,C)$, and 
$(\varphi_*)_{(z,C)}:\mathcal{T}_{(z,C)}\to
 \mathcal{T}_{\varphi((z,C))}$
be the differential of smooth function $\varphi$ on $\mathbb{G}(d)$ 
at $(z,C)$.
Denote by $L(x,A):\mathbb{G}(d)\to \mathbb{G}(d)$ the left action by
$(x,A)\in \mathbb{G}(d)$: 
$L(x,A)[(y,B)]=(x,A)\cdot(y,B)$ for $(y,B)\in \mathbb{G}(d)$.
It is easily seen that
\begin{equation}\label{eq.laction.x}
    (L(x,A)_*)_{(0,0)}\Bigl(\frac{\partial}{\partial x^i}\Bigr)_0
    =\Bigl(\frac{\partial}{\partial x^i}\Bigr)_x
     +\frac12\sum_{j<i} x^j
         \Bigl(\frac{\partial}{\partial a^{ji}}\Bigr)_A
     -\frac12\sum_{j>i} x^j
         \Bigl(\frac{\partial}{\partial a^{ij}}\Bigr)_A
\end{equation}
and
\begin{equation}\label{eq.laction.A}
    (L(x,A)_*)_{(0,0)}\Bigl(\frac{\partial}{\partial a^{ij}}\Bigr)_0
    =\Bigl(\frac{\partial}{\partial a^{ij}}\Bigr)_A.
\end{equation}
Define the left-invariant $C^\infty$-vector fields $V_1,\dots,V_d$ on  
$\mathbb{G}(d)$ as
\[
    V_i=\frac{\partial}{\partial x^i}
     +\frac12\sum_{j=1}^{i-1} x^j 
         \frac{\partial}{\partial a^{j i}}
     -\frac12\sum_{j=i+1}^d x^j 
         \frac{\partial}{\partial a^{i j}}
    \quad\text{for }1\le i\le d.
\]
Their Lie brackets enjoy the property that
\[
    [V_i,V_j]=\frac{\partial}{\partial a^{ij}}
    \quad\text{for }1\le i<j\le d
    \quad\text{and}\quad
    [V_i,[V_j,V_k]]=0
    \quad\text{for }1\le i,j,k\le d.
\]
Thus, in conjunction with \eqref{eq.laction.x} and
\eqref{eq.laction.A}, we see that $\mathbb{G}(d)$ is a two-step
nilpotent Lie group.

Put
\[
    \sa^{ij}(t)=\frac12\int_0^t
      \{\theta^i(s)d\theta^j(s)
        -\theta^j(s)d\theta^i(s)\}
    \quad\text{for }1\le i<j\le d,~t\in[0,T]
\]
and 
\[
    \sa(t)=\bigl(\sa^{i j}(t)\bigr)_{1\le i<j\le d}
    \quad \text{for }t\in[0,T].
\]
The stochastic process
$\{\boldsymbol{\Xi}(t)\}_{t\in[0,T]}$ defined as
\[
    \boldsymbol{\Xi}(t)
    =\bigl(\theta(t),\A(\sa(t))\bigr)
    \quad\text{for }t\in[0,T]
\]
is the unique solution to the following SDE on $\mathbb{G}(d)$:
\[
    d\boldsymbol{\Xi}(t)
    =\sum_{i=1}^d V_i(\boldsymbol{\Xi}(t))\circ
     d\theta^i(t),
    \quad \boldsymbol{\Xi}(0)=(0,0), 
\]
where $\circ d\theta^i(t)$ stands for the Stratonovich
integral with respect to $\{\theta^i(t)\}_{t\in[0,T]}$. 
Due to the left invariance of $V_i$s, the solution to
the above SDE with the initial condition 
$\boldsymbol{\Xi}(0)=(x,A)$ is exactly the stochastic
process 
$\bigl\{(x,A)\cdot\boldsymbol{\Xi}(t)\bigr\}_{t\in[0,T]}$. 
Thus the heat kernel associated with the second order
differential operator 
\[
    \frac12\sum_{i=1}^d V_i^2,
\]
i.e., the transition density function of the diffusion
process generated by the differential operator is computed
by using the density function of $\boldsymbol{\Xi}(t)$.  
On account of the arbitrariness of $T$, it suffices to know
an expression of the density function only when $t=T$.

\begin{theorem}\label{t.as(d)}
Put 
\[
    p_T(x,a)
    =\frac{1}{(2\pi)^{N(d)}\sqrt{2\pi T}^d} 
     \int_{\mathbb{R}^{N(d)}} 
      \frac{e^{\kyosu\langle a,b\rangle}}{
          \det(\sh[\kyosu\frac{T}2\A(b)])}
      \exp\biggl(-\frac1{2T}\biggl\langle
        \Bigl(\tnh\Bigl[\kyosu\frac{T}2\A(b)\Bigr]
          \Bigr)^{-1}x,x \biggr\rangle \biggr) db
\]
for $(x,a)\in \mathbb{R}^d\times \mathbb{R}^{N(d)}$,
where $\langle\cdot,\cdot\rangle$ stands for the 
Euclidean inner product in $\mathbb{R}^{N(d)}$.
Then it holds that
\begin{equation}\label{t.as(d).1}
     \int_{\mathcal{W}} f(\boldsymbol{\Xi}(T)) d\mu
     =\int_{\mathbb{R}^d\times \mathbb{R}^{N(d)}}
       f(x,\A(a))p_T(x,a)dx da
\end{equation}
for every $f\in C_b(\mathbb{R}^d\times\as(d))$.
\end{theorem}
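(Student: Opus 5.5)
Fourier inversion in the area variable reduces the statement to the pinned Laplace transform \eqref{e.sAD.4} of Example~\ref{e.sAD}. Fix $f(x,\A(a))=\varphi(x)\psi(a)$ with $\varphi\in\mathscr{S}(\mathbb{R}^d)$ and $\psi$ whose Fourier transform $\hat\psi(b)=\int e^{-\kyosu\langle a,b\rangle}\psi(a)da$ is integrable (e.g.\ $\psi\in\mathscr{S}(\mathbb{R}^{N(d)})$). Since both sides of \eqref{t.as(d).1} define finite measures, these products determine the identity for all $f\in C_b$ by a monotone class argument, once we check that $p_T\ge0$ has total mass one (which follows with $\varphi=\psi=1$ after the computation). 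Writing $\psi(a)=(2\pi)^{-N(d)}\int e^{\kyosu\langle a,b\rangle}\hat\psi(b)db$ and using Fubini,
\[
   \int_{\mathcal{W}} \varphi(\theta(T))\psi(\sa(T))d\mu
   =(2\pi)^{-N(d)}\int_{\mathbb{R}^{N(d)}}\hat\psi(b)
    \int_{\mathcal{W}}\varphi(\theta(T))
     e^{\kyosu\langle \sa(T),b\rangle}d\mu\, db .
\]
The key observation is that $\langle\sa(T),b\rangle=\sum_{i<j}b^{ij}\sa^{ij}(T)=\int_0^T\langle \frac12\A(b)^\dagger\theta(t),d\theta(t)\rangle$ up to a sign convention, i.e.\ it is $\int_0^T\langle A\theta,d\theta\rangle$ with the skew matrix $A=-\frac12\A(b)$ (or $+\frac12$, depending on orientation; this sign is immaterial since $\det\sh$ and $\tnh$ are even in $A$).

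Next I would compute the inner integral via the pinned measure: $\int\varphi(\theta(T))e^{\zeta\int_0^T\langle A\theta,d\theta\rangle}d\mu=\int\varphi(x)\bigl(\int e^{\zeta\int\langle A\theta,d\theta\rangle}\delta_x(\theta(T))d\mu\bigr)dx$. For real $\zeta$ near $0$, \eqref{e.sAD.4} applied to $\zeta A$ (valid since $T<T_0$ for small $\zeta$) gives
\[
  \{(2\pi T)^d\det\sh[\zeta TA]\}^{-\frac12}
  e^{-\frac1{2T}\langle(\tnh[\zeta TA])^{-1}x,x\rangle}.
\]
I then extend to $\zeta=\kyosu$ by analytic continuation, exactly as in Lemma~\ref{l.hol} and Theorem~\ref{t.ch.fn}: the left side integrated against $\varphi$ is holomorphic in $\zeta$ on a strip containing the imaginary axis (Lemma~\ref{l.hol} with $\eta=\rho_A$, together with boundedness of $\varphi$), and the right side is holomorphic wherever $\det\ch[\zeta TA]\neq0$ and the Gaussian is integrable. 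For $\zeta=\kyosu$ and $A$ skew-symmetric, $\kyosu A$ is Hermitian, so $\ch[\kyosu TA]$ is a genuine $\cosh$ of real eigenvalues, hence invertible; $\sh[\kyosu TA]$ has positive determinant; and $(\tnh[\kyosu TA])^{-1}\cdot T^{-1}$ is positive definite, so everything is holomorphic along the path from $0$ to $\kyosu$ (for instance in the region $|\mathrm{Re}\,\zeta|$ small). Identity theorem then gives the formula at $\zeta=\kyosu$. The branch of the square root is fixed by continuity from $\zeta=0$, and it is the positive root.

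Finally I would substitute back, compare with $p_T$ (note $\frac{T}{2}\A(b)$ matches $T A$ up to sign; the displayed $1/\det\sh$ should match the square root after accounting for the eigenvalue pairing $\pm\kyosu\lambda$, which gives squared factors, so $\det\sh^{-1/2}$ of the $d\times d$ matrix equals the stated expression; this bookkeeping I would verify via \eqref{e.sAD.1+}), and undo the Fourier transform to obtain \eqref{t.as(d).1}.

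The main obstacle is the analytic continuation: I must justify holomorphy of $\zeta\mapsto\int\varphi(x)(\text{pinned integral})dx$ on a domain connecting a real neighborhood of $0$ to $\kyosu$, and I need integrability in $b$ of the result to apply Fubini. For the latter I would use the exponential decay of $1/\det\sh[\kyosu\frac T2\A(b)]\sim\prod\lambda_k/\sinh\lambda_k$ in $|b|$.
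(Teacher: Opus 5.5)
\textbf{Where your plan matches the paper.} Your route is the paper's. It has three ingredients: Fourier inversion in the area variable; the rewriting $\langle b,\sa(T)\rangle=\int_0^T\langle\frac12\A(b)^\dagger\theta(t),d\theta(t)\rangle$; and the pinned formula \eqref{e.sAD.4} continued analytically from small real $\zeta$ to $\zeta=\kyosu$, which is exactly Lemma~\ref{l.as(d)}.

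Your factor $\frac12$ is the correct one. The paper's displayed computation of $-\langle b,\sa(T)\rangle$ drops it, but the $\frac T2\A(b)$ in the statement agrees with you.

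Continuing the $\varphi(\theta(T))$-integrated quantity is a slightly cleaner variant of the paper's continuation of the pinned integral at fixed $x$. That quantity is an honest Lebesgue integral covered by Lemma~\ref{l.hol}.

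One minor point: $\psi\equiv1$ has no integrable $\hat\psi$, so you cannot plug in $\varphi=\psi=1$. Obtain $p_T\ge0$ and total mass one by testing against approximate identities and using monotone convergence instead.

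\textbf{The step that fails is the final bookkeeping.} What your argument actually yields, like the paper's, is the factor $\{\det\sh[\kyosu\frac T2\A(b)]\}^{-1/2}$.

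The eigenvalue pairing of \eqref{e.sAD.1+} only shows that this square root is unambiguous. If $\pm\kyosu\lambda_k$ are the nonzero eigenvalues of $\frac T2\A(b)$, then $\det\sh[\kyosu\frac T2\A(b)]=\prod_k(\sinh\lambda_k/\lambda_k)^2$. Hence the $-\frac12$ power equals $\prod_k\lambda_k/\sinh\lambda_k$. This is not $1/\det\sh[\kyosu\frac T2\A(b)]$, and no manipulation makes the two equal.

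The printed $1/\det(\cdots)$ in $p_T$ is a misprint for $\{\det(\cdots)\}^{-1/2}$. Already for $d=2$ you can see the discrepancy:
\begin{itemize}
\item Integrating the printed $p_T$ over $x$ gives $\sa(T)$ the characteristic function $Tb/\sinh(Tb)$, hence variance $T^2/3$.
\item Formula \eqref{p.sa.ext.1} with Brownian scaling, or a direct It\^o isometry computation, gives $1/\cosh(Tb/2)$, hence variance $T^2/4$.
\end{itemize}
So you should state and prove the identity with the corrected density, rather than claim to have verified the displayed one.
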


\begin{remark}\label{r.as(d)}
(i)
By \eqref{e.sAD.1+}, there are $k\in \mathbb{N}, \le \frac{d}2$,
$\lambda_1,\dots,\lambda_k\in \mathbb{R}\setminus\{0\}$, and
a unitary matrix $U\in \mathbb{C}^{d\times d}$ such that 
\[
    \ch[\kyosu A] 
    = U^* \text{\rm diag}[\cosh(\lambda_1),
         \cosh(\lambda_1),\dots,\cosh(\lambda_k),
         \cosh(\lambda_k),\underbrace{1,\dots,1}_{d-2k}]
        U
\]
and 
\[
    \sh[\kyosu A] 
    = U^* \text{\rm diag}\biggl[
         \frac{\sinh(\lambda_1)}{\lambda_1},
         \frac{\sinh(\lambda_1)}{\lambda_1},
         \dots,
         \frac{\sinh(\lambda_k)}{\lambda_k},
         \frac{\sinh(\lambda_k)}{\lambda_k},
         \underbrace{1,\dots,1}_{d-2k}\biggr] U.
\]
Hence we obtain that
\[
    \det(\ch[\kyosu A])=\prod_{i=1}^k 
     (\cosh(\lambda_i))^2 \ne0,
    \quad
    \det(\sh[\kyosu A])=\prod_{i=1}^k 
     \biggl(\frac{\sinh(\lambda_i)}{\lambda_i}
      \biggr)^2 \ne0,
\]
and 
\[
    \det(\tnh[\kyosu A])\ne0.
\]
(ii)
The identity \eqref{t.as(d).1} was shown by Inahama and the
author in \cite{inahama-t}.  
\end{remark}

For the proof of Theorem~\ref{t.as(d)}, we prepare a lemma.

\begin{lemma}\label{l.as(d)}
For every $A\in\as(d)$, it holds that
\begin{align}
\label{l.as(d).1}
    & \int_{\mathcal{W}}
        e^{\kyosu  \int_0^T \langle A\theta(t),
                d\theta(t)\rangle}
         \delta_x(\theta(T))d\mu
    \\
    & =\biggl(\frac1{(2\pi T)^d \det(\sh[\kyosu TA])}
        \bigg)^{\frac12}
        \exp\biggl(-\frac1{2T}\bigl\langle
         (\tnh[\kyosu TA])^{-1}x,x 
         \bigr\rangle\biggr)
    \quad\text{for }x\in \mathbb{R}^d.
 \nonumber
\end{align}
\end{lemma}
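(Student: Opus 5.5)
The plan is to obtain \eqref{l.as(d).1} by analytic continuation in a complex parameter, starting from the real-parameter identity \eqref{e.sAD.4} of Example~\ref{e.sAD}.

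\emph{Step 1 (real parameter).} Fix $A\in\as(d)$ and $x\in\mathbb{R}^d$. For real $\lambda$, apply Example~\ref{e.sAD} with the skew-symmetric matrix $\lambda A$ and $D=0$. Choose $\lambda$ so small that $T<T_0(\lambda A)$. This is possible because $\ch[t\lambda A]\to I_d$ uniformly on $[0,T]$ as $\lambda\to0$. Then \eqref{e.sAD.4} gives
\[
\int_{\mathcal{W}} e^{\lambda\int_0^T\langle A\theta,d\theta\rangle}\delta_x(\theta(T))d\mu
=\Bigl\{(2\pi T)^d\det(\sh[T\lambda A])\Bigr\}^{-\frac12}
e^{-\frac1{2T}\langle(\tnh[T\lambda A])^{-1}x,x\rangle}.
\]

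\emph{Step 2 (extend both sides holomorphically in $\zeta$).} Replace $\lambda$ by $\zeta\in\mathbb{C}$ and extend both sides to a strip $|\mathrm{Re}\,\zeta|<\delta$ that contains the imaginary axis. For the left side, put $\p_A=\int_0^T\langle A\theta,d\theta\rangle=\q_{\rho_A}$. By Lemma~\ref{l.hol}, $|e^{\zeta\p_A}|$ is integrable for $|\mathrm{Re}\,\zeta|\,\|B_{\rho_A}\|_\op<1$. To pair with the pinned measure we need $e^{\zeta\p_A}\in\mathbb{D}^{\infty,1+}$, uniformly on compact subsets of the strip, which follows from \eqref{eq.e^a}. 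Holomorphy of $\zeta\mapsto\int e^{\zeta\p_A}\delta_x(\theta(T))d\mu$ then follows exactly as in Lemma~\ref{l.hol}: the $\zeta$-derivative $\p_Ae^{\zeta\p_A}$ stays bounded in $\mathbb{D}^{k,p}$ for some $p>1$, and $\delta_x(\theta(T))$ is a continuous functional on that space. A cleaner alternative is to test against $\varphi\in\mathscr{S}(\mathbb{R}^d)$, i.e.\ to work with $\int\varphi(\theta(T))e^{\zeta\p_A}d\mu$, and recover pointwise values afterwards. For the right side, the functions $\zeta\mapsto\det\sh[T\zeta A]$ and $\zeta\mapsto\tnh[T\zeta A]^{-1}=\ch\,\sh^{-1}$ are holomorphic wherever $\det\sh\neq0$.

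\emph{Step 3 (main obstacle: a nonvanishing domain).} Find a simply connected domain containing a real neighbourhood of $0$ and the whole imaginary axis on which $\det\sh[T\zeta A]\neq0$ and a holomorphic branch of the square root exists. Block-diagonalize as in \eqref{e.sAD.1+}. Each block of $\zeta A$ contributes a factor $\bigl(\sin(T\zeta\lambda_j)/(T\zeta\lambda_j)\bigr)^2$ to $\det\sh[T\zeta A]$. These factors are nonzero on the region $|\mathrm{Re}(\zeta)|<\pi/(T\max_j|\lambda_j|)$, a vertical strip. Because each factor is a perfect square, the square root can be taken as $\prod_j\sin(T\zeta\lambda_j)/(T\zeta\lambda_j)$, which is holomorphic and equals the positive real root for small real $\zeta$. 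Shrink the strip if necessary so that it also lies inside the region from Step 2.

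\emph{Step 4 (conclude).} Both sides are holomorphic on the strip and agree for small real $\zeta$, so by the identity theorem they agree on the whole strip. Setting $\zeta=\kyosu$ gives \eqref{l.as(d).1}. The value of the square root there is consistent with Remark~\ref{r.as(d)}(i), where $\det\sh[\kyosu TA]>0$, so $\det\sh[\kyosu TA]^{-1/2}$ is the positive root as the statement requires.
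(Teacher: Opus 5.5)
Your proposal is correct and follows essentially the same route as the paper. You get the identity for small real $\lambda$ from \eqref{e.sAD.4}; the paper secures $\det\bS(t)\ne0$ via Proposition~\ref{p.FK}, whereas you check $T<T_0$ directly. You then continue holomorphically in $\zeta$ on a domain that contains the imaginary axis and on which $\det\sh[\zeta TA]\ne0$, and set $\zeta=\kyosu$. Your explicit vertical strip and the branch $\prod_j\sin(T\zeta\lambda_j)/(T\zeta\lambda_j)$ of the square root make precise the connectedness and the choice of branch, which the paper leaves implicit.
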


\begin{proof}
Let $A\in\as(d)$.
Take a $\delta^\prime>0$ satisfying \eqref{p.FK.1}.
For $\lambda\in \mathbb{R}$ with 
$3|\lambda||A|<\delta^\prime$, set
$\sigma\in C^1([0,T];\mathbb{R}^{d\times d})$ 
so that $\sigma\equiv \lambda A$.
Since 
$|\sigma(T)|+2\|\sigma_A\|_\infty
 +\|\sigma^\prime\|_\infty
 =3|\lambda||A|<\delta^\prime$,
by Proposition~\ref{p.FK}, the solution 
$\bS\in C^2([0,T];\mathbb{R}^{d\times d})$ to the ODE 
\[
    \bS^{\prime\prime}-2\lambda A\bS^\prime=0,
    \quad \bS(T)=I_d,~~\bS^\prime(T)=\lambda A
\]
satisfies that $\det \bS(t)\ne0$ for any $t\in[0,T]$.
Due to \eqref{e.sAD.4} in Example~\ref{e.sAD}, we have that
\begin{align}
    & \int_{\mathcal{W}} 
      e^{\lambda \int_0^T \langle A\theta(t),d\theta(t)\rangle}
        \delta_x(\theta(T))d\mu
\label{l.as(d).21}
    \\
    & =\biggl(\frac1{(2\pi T)^d\det(\sh[\lambda TA])}
          \biggr)^{\frac12}
      \exp\biggl(-\frac1{2T}\bigl\langle
        (\tnh[\lambda TA])^{-1}x,x 
        \bigr\rangle\biggr)
    \quad\text{for }x\in \mathbb{R}^d.
 \nonumber
\end{align}

Let 
$\Omega_A=\{\zeta\in \mathbb{C}\mid 
 3|\text{Re}\zeta||A|<\delta^\prime\}$.
By the same argument as in Lemma~\ref{l.hol}, we know that 
the mapping 
\[
    \Omega_A\ni\zeta\mapsto
    \int_{\mathcal{W}} 
      e^{\zeta \int_0^T \langle A\theta(t),
             d\theta(t)\rangle}
        \delta_x(\theta(T)) d\mu
\]
is holomorphic.
Since $\det(\sh[\kyosu aTA])\ne0$ for any $a\in \mathbb{R}$
as was seen in Remark~\ref{r.as(d)}, there is a domain
$\Omega^\prime\subset \mathbb{C}$ such that
$\{\kyosu\lambda\mid \lambda\in \mathbb{R}\}
 \subset \Omega^\prime$
and 
$\det(\sh[\zeta TA])\ne0$ for $\zeta\in\Omega^\prime$.
Hence \eqref{l.as(d).21} extends holomorphically to
$\Omega_A\cap\Omega^\prime$, that is, it holds that
\begin{align*}
    & \int_{\mathcal{W}} 
      e^{\zeta \int_0^T \langle A\theta(t),
             d\theta(t)\rangle}
        \delta_x(\theta(T)) d\mu
    \\
    & =\biggl(\frac1{
          (2\pi T)^d\det(\sh[\zeta TA])}\biggr)^{\frac12}
       \exp\biggl(-\frac1{2T}\bigl\langle
        (\tnh[\zeta TA])^{-1}x,x 
        \bigr\rangle\biggr)
    \quad\text{for }\zeta\in 
    \Omega_A\cap\Omega^\prime.
\end{align*}
Substituting $\zeta=\kyosu$, we obtain \eqref{l.as(d).1}.
\end{proof}

\begin{proof}[Proof of Theorem~\ref{t.as(d)}]
Let 
$f\in \mathscr{S}(\mathbb{R}^d\times\mathbb{R}^{N(d)})$.
Using the Fourier analysis and exchanging the order of
integration, we rewrite as 
\begin{align*}
    & \int_{\mathcal{W}} f(\boldsymbol{\Xi}(T)) d\mu
      =\int_{\mathbb{R}^d} 
       \biggl(\int_{\mathcal{W}} f(x,\sa(T)) 
         \delta_x(\theta(T)) d\mu\biggr) dx
    \\
    & =\int_{\mathbb{R}^d\times \mathbb{R}^{N(d)}}
             \biggl(\int_{\mathbb{R}^{N(d)}}
            \frac{e^{\kyosu\langle b,a\rangle}}{
                         (2\pi)^{N(d)}}
          \biggl(\int_{\mathcal{W}} 
              e^{-\kyosu\langle b,\sa(T)\rangle}
              \delta_x(\theta(T)) d\mu\biggr)
         db\biggr)     
       f(x,a) dxda.
\end{align*}
For 
$b=(b^{i j})_{1\le i<j\le d}
 \in \mathbb{R}^{N(d)}$, 
it holds that
\begin{align*}
    -\langle b,\sa(T)\rangle
    & =-\sum_{i<j} b^{i j} 
      \int_0^T \{\theta^i(t) d\theta^j(t)
                -\theta^j(t)d\theta^i(t) \}
    \\
    & 
      =\sum_{i,j=1}^d \A(b)^{ji}
      \int_0^T \theta^i(t) d\theta^j(t)
    =\int_0^T \langle \A(b)\theta(t),d\theta(t)\rangle.
\end{align*}
Plugging this and Lemma~\ref{l.as(d)} into the above
identity, we obtain that
\[
    \int_{\mathcal{W}} f(\boldsymbol{\Xi}(T)) d\mu
    =\int_{\mathbb{R}^d\times \mathbb{R}^{N(d)}}
       f(x,a)p_T(x,a) dxda.
\]
Thus we arrive at \eqref{t.as(d).1}.
\end{proof}

\section{Euler, Bernoulli, and Eulerian polynomials}
\label{sec.euler}

In this section, we apply Example~\ref{e.sAD} 
to representing stochastically Euler, Bernoulli, and 
Eulerian polynomials 
In this section, we use the conditional expectation
$\mathbb{E}[\cdot|\theta(t)=x]$ given $\theta(t)=x$ instead
of pinned measures $\delta_x(\theta(t))d\mu$.
They relate to each other as
\[
    \mathbb{E}[\Psi|\theta(t)=x]
    =(2\pi t)^{\frac{d}2} \,e^{\frac1{2t}|x|^2}
     \int_{\mathcal{W}} \Psi \delta_x(\theta(t))d\mu
    \quad\text{for }
    \Psi\in\mathbb{D}^{\infty,1+}.
\]

Let $d=2$ and $T=1$.
For $a,b\in \mathbb{R}$, define
$\mathfrak{s}_{a,b}:\mathcal{W}\to \mathbb{R}$ as
\[
    \mathfrak{s}_{a,b}
    =\frac{a}2 \int_0^1 \langle J\theta(t),d\theta(t)
       \rangle+\frac{b}2|\theta(1)|^2,
    \quad\text{where }
    J=\begin{pmatrix} 0 & -1 \\ 1 & 0 \end{pmatrix}.
\]
We write simply as $\mathfrak{s}_1$ for
$\mathfrak{s}_{1,0}$.

\begin{proposition}\label{p.sa}
Let $a,b\in \mathbb{R}$.
Define $c_{a,b},v_{a,b}:[0,1]\to \mathbb{R}$ by 
\[
    c_{a,b}(t)=\cos\frac{a(1-t)}2
       -\frac{2b}{a} \sin\frac{a(1-t)}2
\]
and
\[
    v_{a,b}(t)= c_{a,b}(t)^2 \int_0^t c_{a,b}(s)^{-2} ds
\]    
for $t\in[0,1]$.
There exists an $\varepsilon>0$ such that, if
$|a|+|b|<\varepsilon$, then $c_{a,b}(t)>0$ for every
$t\in[0,1]$ and it holds that
\begin{equation}\label{p.sa.1}
    \int_{\mathcal{W}} e^{\mathfrak{s}_{a,b}} d\mu
      =\frac1{c_{a,b}(0)} 
\end{equation}
and
\begin{equation}\label{p.sa.2}
   \mathbb{E}[e^{\mathfrak{s}_{a,b}}\mid \theta(t)=x]
      =\frac{t e^{-\frac12\bigl\{
         \frac1{v_{a,b}(t)}-\frac1{t}\bigr\}|x|^2}}{
        c_{a,b}(0)v_{a,b}(t)}
   \quad\text{for $t\in(0,1]$ and $x\in \mathbb{R}^2$}.
\end{equation}
\end{proposition}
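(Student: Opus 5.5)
Proposition~\ref{p.sa} is a specialization of Example~\ref{e.sAD} with $d=2$, $T=1$, $A=\frac{a}2J$ and $D=bI_2$. By definition,
\[
    \mathfrak{s}_{a,b}=\mathfrak{s}_{A,D}.
\]
I would first record the explicit form of $\bS$ in this case. The example gives
\[
    \bS(t)=\e[(t-1)A]\bigl\{\ch[(t-1)A]+(t-1)\sh[(t-1)A]D\bigr\}.
\]
Using the formulas for $\ch[\omega J]$ and $\omega\sh[\omega J]$ from Example~\ref{e.sAD}, with $\omega=\frac{a(t-1)}2$, and using that $\cos$ is even and $\sin$ is odd, we get
\[
    \ch[(t-1)A]+(t-1)\sh[(t-1)A]\,bI_2
    =\Bigl(\cos\frac{a(1-t)}2-\frac{2b}{a}\sin\frac{a(1-t)}2\Bigr)I_2
    =c_{a,b}(t)I_2.
\]
Hence $\bS(t)=c_{a,b}(t)\,\e[(t-1)A]$. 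Since $\e[(t-1)A]$ is a rotation, $\det\e[(t-1)A]=1$, and so $\det\bS(t)=c_{a,b}(t)^2$. The case $a=0$ is read by continuity, with $c_{0,b}(t)=1-b(1-t)$.

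Next I would choose $\varepsilon$. We have $c_{a,b}(t)\to1$ uniformly in $t\in[0,1]$ as $|a|+|b|\to0$; the only term needing care is $\frac{2b}{a}\sin\frac{a(1-t)}2$, which is bounded by $|b|$ because $|\sin u|\le|u|$. So there is $\varepsilon>0$ with $c_{a,b}>0$ on $[0,1]$ whenever $|a|+|b|<\varepsilon$. Then $\det\bS\ne0$ on $[0,1]$, so $1<T_0$ and Example~\ref{e.sAD} applies; equivalently, Corollary~\ref{c.2nd.ode} applies directly.

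For \eqref{p.sa.1}, take $f=1$ and $h=0$ in \eqref{e.sAD.2}. This gives
\[
    \int_{\mathcal{W}} e^{\mathfrak{s}_{a,b}}\,d\mu=\{\det(\ch[A]-\sh[A]D)\}^{-\frac12}.
\]
The determinant equals $\det\bS(0)=c_{a,b}(0)^2$, and since $c_{a,b}(0)>0$ the right-hand side is $1/c_{a,b}(0)$.

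For \eqref{p.sa.2}, use \eqref{e.sAD.3}:
\[
    \int_{\mathcal{W}} e^{\mathfrak{s}_{a,b}}\delta_x(\theta(t))\,d\mu=c_{a,b}(0)^{-1}g_{v_t(\bS)}(x).
\]
It remains to compute $v_t(\bS)$. We have
\[
    \bS(t)\bS(s)^{-1}=\frac{c_{a,b}(t)}{c_{a,b}(s)}\,\e[(t-s)A],
\]
and the rotation factor cancels in $MM^\dagger$. Therefore
\[
    v_t(\bS)=c_{a,b}(t)^2\int_0^t c_{a,b}(s)^{-2}\,ds\;I_2=v_{a,b}(t)I_2,
\]
and
\[
    g_{v_t(\bS)}(x)=\frac{1}{2\pi v_{a,b}(t)}\,e^{-|x|^2/(2v_{a,b}(t))}.
\]
Multiplying by $(2\pi t)e^{|x|^2/(2t)}$, via the stated relation between conditional expectation and the pinned measure (with $d=2$), gives \eqref{p.sa.2}.

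The main obstacle is only bookkeeping: the sign conventions in $\ch$ and $\sh$ at the negative argument $(t-1)A$, checking that the rotation factor cancels both in $\det$ and in $v_t$, and handling the case $a=0$ by continuity or by a direct solution of the ODE $\bS''=0$.
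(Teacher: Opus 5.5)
Your proof is correct and is essentially the paper's argument. You specialize Example~\ref{e.sAD} to $A=\frac a2J$, $D=bI_2$, use $\ch[A]-b\sh[A]=c_{a,b}(0)I_2$ and $\bS(t)=c_{a,b}(t)\e[(t-1)A]$ to obtain $v_t(\bS)=v_{a,b}(t)I_2$, and plug these into \eqref{e.sAD.2}--\eqref{e.sAD.3}. The one difference is that the paper secures $\det\bS\ne0$ through the smallness criterion of Proposition~\ref{p.FK} (taking $\varepsilon=\frac14\wedge\frac{\sqrt2}3\delta'$), whereas you read it off directly from $\det\bS(t)=c_{a,b}(t)^2>0$, which makes that appeal unnecessary.
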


\begin{proof}
Let $|a|+|b|<\frac14$.
Since $\bigl|\frac{a(1-t)}2\bigr|<\frac{\pi}3$ for $t\in[0,1]$, we
have that
\begin{equation}\label{p.sa.21}
    \cos\frac{a(1-t)}2
      -\frac{2b}{a}\sin\frac{a(1-t)}2
    \ge \frac12-|b|>\frac14
    \quad\text{for any }t\in[0,1].
\end{equation}

Take a $\delta^\prime>0$ satisfying \eqref{p.FK.1}, and put 
$\varepsilon=\frac14\wedge
  \bigl(\frac{\sqrt{2}}3 \delta^\prime\bigr)$.
Assume that $|a|+|b|<\varepsilon$.
Set $A=\frac{a}2 J$ and $D=bI_2$.
Using the notation in Example~\ref{e.sAD}, we have that 
$\mathfrak{s}_{a,b}=\mathfrak{s}_{A,D}$.
Define the function $\sigma\in C^1([0,T];\mathbb{R}^{2\times 2})$
as $\sigma\equiv A+D$.
Then we see that
\[
    |\sigma(T)|+2\|\sigma_A\|_\infty+\|\sigma^\prime\|_\infty
    \le \frac2{\sqrt{2}}(|a|+|b|)<\delta^\prime.
\]
By Proposition~\ref{p.FK}, the solution $\bS$ to the ODE 
\[
    \bS^{\prime\prime}-2A\bS^\prime=0,
    \quad \bS(1)=I_2,~\bS^\prime(1)=A+D
\]
enjoys that $\det\bS(t)\ne0$ for any $t\in[0,T]$.
Due to \eqref{e.sAD.2} and \eqref{e.sAD.3}, we obtain that
\begin{equation}\label{p.sa.22}
    \int_{\mathcal{W}} e^{\mathfrak{s}_{a,b}} d\mu
      =\Bigl\{\det\Bigl(\ch[A]-b\sh[A]\Bigr)
       \Bigr\}^{-\frac12}
\end{equation}
and
\begin{equation}\label{p.sa.23}
    \int_{\mathcal{W}} e^{\mathfrak{s}_{a,b}} 
          \delta_x(\theta(t))d\mu
      =\Bigl\{\det\Bigl(\ch[A]-b\sh[A]\Bigr)
       \Bigr\}^{-\frac12}
       g_{v_t(\bS)}(x)
\end{equation}
for any $t\in(0,T]$ and $x\in \mathbb{R}^d$. 

Since $\ch[\xi J]=\cos(\xi)I_2$ and 
$\sh[\xi J]=\frac{\sin(\xi)}{\xi}I_2$ 
for $\xi\in \mathbb{R}$, we have that
\[
    \ch[A]-b\sh[A]
    =\biggl\{\cos\Bigl(\frac{a}2\Bigr)
      -\frac{2b}{a} \sin\Bigl(\frac{a}2\Bigr)
     \biggr\} I_2
    =c_{a,b}(0)I_2.
\]
Plugging this into \eqref{p.sa.22} together with the
positivity of $c_{a,b}(0)$, we obtain \eqref{p.sa.1}.

If we put 
$\boldsymbol{o}(\xi)
  =\begin{pmatrix}
       \cos\xi & -\sin\xi \\
       \sin\xi & \cos\xi 
   \end{pmatrix}$
for $\xi\in \mathbb{R}$, then 
$\e[\xi J]=\boldsymbol{o}(\xi)$.
As was seen in Example~\ref{e.sAD}, we know that
\[
    \bS(t)
    =\e[(t-1)A]\Bigl\{\ch[(t-1)A]+b(t-1)\sh[(t-1)A]\Bigr\}
    =c_{a,b}(t) \boldsymbol{o}\Bigl(\frac{a(t-1)}2\Bigr).
\]
Since $\boldsymbol{o}(\xi)$ is an orthogonal matrix, we
see that 
\[
    v_t(\bS)=\int_0^t (\bS(t)\bS(s)^{-1})
      (\bS(t)\bS(s)^{-1})^\dagger ds
     = v_{a,b}(t) I_2.
\]
Plugging this into \eqref{p.sa.23}, we obtain
\eqref{p.sa.2}. 
\end{proof}

Continuing holomorphically the above identities for
$e^{\mathfrak{s}_{a,b}}$, we obtain the following.

\begin{proposition}\label{p.sa.ext}
For $\beta,\gamma,a\in \mathbb{R}$ with $-\beta\gamma<1$ and
$|\gamma|\le\frac12$, it holds that
\begin{equation}\label{p.sa.ext.1}  
    \int_{\mathcal{W}} 
       e^{\kyosu \beta\{\mathfrak{s}_1
          +\frac{\kyosu}2\gamma|\theta(1)|^2\}} 
       d\mu
      =\biggl\{
        \biggl(\frac12+\gamma\biggr)e^{\frac{\beta}2}
        +\biggl(\frac12-\gamma\biggr)e^{-\frac{\beta}2}
       \biggr\}^{-1}
\end{equation}
and
\begin{equation}\label{p.sa.ext.2}  
   \mathbb{E}[e^{\kyosu a \mathfrak{s}_1}\mid \theta(1)=0]
     =\frac{\frac{a}2}{\sinh\frac{a}2}.    
\end{equation}
\end{proposition}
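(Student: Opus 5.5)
Both identities come from Proposition~\ref{p.sa} by analytic continuation in the parameters, using the holomorphy argument of Lemma~\ref{l.hol}. Note that $\mathfrak{s}_{a,b}=a\mathfrak{s}_1+\frac{b}2|\theta(1)|^2$. Also, $\kyosu\beta\{\mathfrak{s}_1+\frac{\kyosu}2\gamma|\theta(1)|^2\}=\mathfrak{s}_{a,b}$ with $a=\kyosu\beta$ and $b=-\beta\gamma$.

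For \eqref{p.sa.ext.1}, fix $\gamma$ with $|\gamma|\le\frac12$ and regard both sides as functions of $\zeta\in\mathbb{C}$, with $a=\zeta$ and $b=\zeta\gamma\cdot(-\kyosu)\cdot\kyosu$ chosen consistently. Concretely, put
\[
  G(\zeta)=\int_{\mathcal{W}} e^{\zeta\{\mathfrak{s}_1+\frac{\kyosu}2\gamma|\theta(1)|^2\}}\,d\mu .
\]
The exponent has real part $\mathrm{Re}\,\zeta\,\mathfrak{s}_1-\frac{\gamma}{2}\mathrm{Im}\,\zeta\,|\theta(1)|^2$, which is a real quadratic Wiener functional plus a constant. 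By Lemma~\ref{l.q.eta.int}, $|e^{(\cdot)}|$ is integrable on the open set where $\Lambda$ of the corresponding operator is less than $1$. As in Lemma~\ref{l.hol}, domination on compact subsets gives holomorphy of $G$ there. On the right side, Proposition~\ref{p.sa} with real small $a$ and $b$ gives $1/c_{a,b}(0)$, where
\[
  c_{a,b}(0)=\cos\tfrac a2-\tfrac{2b}{a}\sin\tfrac a2 .
\]
This expression is entire in $(a,b)$. With $a=-\kyosu\zeta$ rewritten appropriately, namely $a=\kyosu\beta$ and $b=-\beta\gamma$, we get
\[
  \cos\tfrac{\kyosu\beta}2=\cosh\tfrac\beta2,\qquad \tfrac{2\beta\gamma}{\kyosu\beta}\sin\tfrac{\kyosu\beta}2=2\gamma\sinh\tfrac\beta2 ,
\]
so $c_{a,b}(0)=(\frac12+\gamma)e^{\beta/2}+(\frac12-\gamma)e^{-\beta/2}$. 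This quantity is positive for real $\beta$ when $|\gamma|\le\frac12$, except in the degenerate case $\gamma=\pm\frac12$, where it is still an exponential and hence nonzero.

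The identity holds for $\zeta$ near $0$ in the region where both sides are defined. This is justified by Proposition~\ref{p.sa}, applied after a two-variable continuation: first extend in $b$ complex, then in $a$, using holomorphy of $(a,b)\mapsto\int e^{\mathfrak{s}_{a,b}}d\mu$ on a product domain, which again follows from Lemma~\ref{l.q.eta.int} and Lemma~\ref{l.hol}. The identity theorem then propagates it along the connected set $\{\zeta=\kyosu\beta\}$ intersected with the integrability region.

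The main obstacle is to check that the whole segment, i.e.\ every real $\beta$ with $-\beta\gamma<1$, stays inside the integrability domain and in one connected component with the small-parameter region. For $\zeta=\kyosu\beta$ the real part of the exponent is $\frac{\beta\gamma}{2}\cdot(-1)\cdot(-1)|\theta(1)|^2$, up to the sign convention, which equals $-\frac{\beta\gamma}{2}|\theta(1)|^2$. Since $\theta(1)\sim N(0,I_2)$, this is integrable exactly when $-\beta\gamma<1$. I would therefore use the domain $\{\zeta:\ |\mathrm{Re}\,\zeta|\ \text{small},\ \mathrm{Re}(\kyosu\zeta\gamma)$-condition$<1\}$, which is connected and contains both $0$ and the whole admissible line. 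The right side has no zeros on this path, as computed above.

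For \eqref{p.sa.ext.2}, set $b=0$ and $t=1$, $x=0$ in \eqref{p.sa.2}:
\[
  \mathbb{E}[e^{a\mathfrak{s}_1}\mid\theta(1)=0]=\frac{1}{c_{a,0}(0)\,v_{a,0}(1)} .
\]
We have $c_{a,0}(t)=\cos\frac{a(1-t)}2$, and
\[
  v_{a,0}(1)=\int_0^1\cos^{-2}\tfrac{a(1-s)}2\,ds=\tfrac2a\tan\tfrac a2 ,
\]
so the product equals $\frac{2}{a}\sin\frac a2$ and the conditional expectation is $\frac{a/2}{\sin(a/2)}$ for small real $a$. The pinned integral $\zeta\mapsto\int e^{\zeta\mathfrak{s}_1}\delta_0(\theta(1))d\mu$ is holomorphic on a vertical strip, by the same argument used in Lemma~\ref{l.as(d)}. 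Substituting $\zeta=\kyosu a$ turns $\frac{\zeta/2}{\sin(\zeta/2)}$ into $\frac{a/2}{\sinh(a/2)}$, and this has no poles on the imaginary axis, which gives \eqref{p.sa.ext.2}.
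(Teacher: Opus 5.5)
Your proposal is correct and follows essentially the paper's route. Both identities come from holomorphic continuation of Proposition~\ref{p.sa}, with integrability controlled by Lemma~\ref{l.q.eta.int} (or H\"older), using the value $c_{\kyosu\beta,-\beta\gamma}(0)=(\frac12+\gamma)e^{\beta/2}+(\frac12-\gamma)e^{-\beta/2}$, and, for \eqref{p.sa.ext.2}, the computation $c_{a,0}(0)v_{a,0}(1)=\frac2a\sin\frac a2$ followed by continuation to $\zeta=\kyosu a$.

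The only difference is that you fix $\gamma$ and continue in the single variable $\zeta$ along $b=\kyosu\gamma\zeta$, after obtaining the identity near the origin from the two-variable identity theorem. The paper instead works on its two-variable domain $\Omega\cap\Omega'$. Your variant is harmless, and the connectedness you assert holds because the integrability region $\{\zeta:\Lambda(\cdot)<1\}$ is convex. Note that the admissible width in $\mathrm{Re}\,\zeta$ must shrink as $-\gamma\,\mathrm{Im}\,\zeta\to1$, as in the paper's union over $n$. Also tidy the stray sign slips in the write-up: for instance, $b$ should read $\kyosu\gamma\zeta$ in the definition of $G$.
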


The identities \eqref{p.sa.ext.1} with $\gamma=0$ and
\eqref{p.sa.ext.2} are called 
{\it L\'evy's stochastic area formulas}.

\begin{proof}
Take a $\delta>0$ such that 
$e^{\delta|\mathfrak{s}_1|}\in L^1(\mu)$
(cf. Lemma~\ref{l.q.eta.int}).
For $n\ge3$ and $\alpha>0$, put
\[
    \Omega_{n,\alpha}
    =\Bigl\{ (z,\zeta)\in \mathbb{C}^2 \,\Big|\,
      |\text{\rm Re}z|<\frac{\delta}{n}-\alpha,
      \text{\rm Re}(z\zeta)<1-\frac2{n}-\alpha
     \Bigr\}.
\]
If $(z,\zeta)\in\Omega_{n,\alpha}$, then it holds that
\[
    \bigl|
       e^{z\{\mathfrak{s}_1+\frac{\zeta}2|\theta(1)|^2\}}
      \bigr|
    \le e^{|\text{\rm Re}z||\mathfrak{s}_1|
         +\frac12\text{\rm Re}(z\zeta)|\theta(1)|^2}
    \le 
      e^{\frac{\delta}{n}|\mathfrak{s}_1|
         +(1-\frac2{n})\frac12|\theta(1)|^2}
\]
and 
\[
    \Bigl\{|\mathfrak{s}_1|+\frac12|\theta(1)|^2
        \Bigr\} \bigl|
         e^{z\{\mathfrak{s}_1+\frac{\zeta}2|\theta(1)|^2\}}
       \bigr|
     \le \frac1{\alpha} 
        e^{\frac{\delta}{n}|\mathfrak{s}_1|
         +(1-\frac2{n})\frac12|\theta(1)|^2}.
\]
Applying H\"older's inequality with conjugates $n$ and
$\frac{n}{n-1}$, we know that 
\[
    e^{\frac{\delta}{n}|\mathfrak{s}_1|
     +(1-\frac2{n})\frac12|\theta(1)|^2}\in L^1(\mu).
\]
Hence, by the dominated convergence theorem of differential
type, the function
\[
    f(z,\zeta)=\int_{\mathcal{W}} 
     e^{z\{\mathfrak{s}_1+\frac{\zeta}2|\theta(1)|^2\}}d\mu
\]
is well-defined and holomorphic on $\Omega_{n,\alpha}$.
Thus $f$ is well-defined and holomorphic on 
\[
    \Omega\equiv \bigcup_{n\ge3} 
    \biggl\{ (z,\zeta)\in \mathbb{C}^2 \,\bigg|\,
      |\text{\rm Re}z|<\frac{\delta}{n},
      \text{\rm Re}(z\zeta)<1-\frac2{n} \biggr\}.
\]

Put
\[
    g(z,\zeta)=\cos\frac{z}2-2\zeta \sin\frac{z}2
    \quad\text{for }(z,\zeta)\in \mathbb{C}^2.
\]
Note that $g(0,0)=1$ and 
\begin{equation}\label{p.sa.ext.21}
    g(\kyosu \beta,\kyosu\gamma)
    =\Bigl(\frac12+\gamma\Bigr)e^{\frac{\beta}2}
     +\Bigl(\frac12-\gamma\Bigr)e^{-\frac{\beta}2}
    >0
    \quad\text{for $\beta,\gamma\in \mathbb{R}$ 
    with $|\gamma|\le\frac12$}.
\end{equation}
Take a $\delta^\prime>0$ such that 
\[
    g(z,\zeta)\ne0
    \quad\text{on }
    B(\delta^\prime)\equiv
    \bigl\{(z,\zeta)\in \mathbb{C}^2\mid 
       |z|+|\zeta|<\delta^\prime\bigr\}.
\]
Then there is a domain $\Omega^\prime\subset\mathbb{C}^2$
such that 
\[
    B(\delta^\prime)\cup
    \Bigl\{(\kyosu \beta,\kyosu\gamma) \,\Big|\,
       \beta,\gamma\in \mathbb{R},|\gamma|\le\frac12
    \Bigr\}
    \subset \Omega^\prime
\]
and $1/g$ is holomorphic on $\Omega^\prime$.
Thus both $f$ and $1/g$ are holomorphic on
$\Omega\cap\Omega^\prime$.

Let $\varepsilon>0$ be as in Proposition~\ref{p.sa} and 
put 
$\varepsilon^\prime=\varepsilon\wedge\frac{\delta}3
   \wedge\frac13\wedge\delta^\prime$.
If $a,b\in \mathbb{R}$ and $|a|+|b|<\varepsilon^\prime$,
then 
\[
    |a|+|ab|<\varepsilon,~~
    |\text{\rm Re}a|=|a|<\frac{\delta}3,~~
    \text{\rm Re}(ab)\le|ab|<\frac13=1-\frac23,~~
    |a|+|b|<\delta^\prime.   
\]
Hence $(a,b)$ satisfies the condition in
Proposition~\ref{p.sa} and is in $\Omega\cap\Omega^\prime$.
Applying the same proposition, we see that
\[
    f(a,b)=\int_{\mathcal{W}} e^{\mathfrak{s}_{a,ab}} d\mu
    =\frac1{g(a,b)}.
\]
Due to the holomorphy of $f$ and $1/g$ on
$\Omega\cap\Omega^\prime$, we know that
\[
    f=\frac1{g}
    \quad\text{on }\Omega\cap\Omega^\prime.
\]

Take $\beta,\gamma\in \mathbb{R}$ with $-\beta\gamma<1$ and
$|\gamma|\le\frac12$.
Then 
$(\kyosu \beta,\kyosu\gamma)\in\Omega\cap\Omega^\prime$. 
Due to the above observation, we obtain that
\[
    f(\kyosu \beta,\kyosu\gamma)
    =\frac1{g(\kyosu \beta,\kyosu\gamma)}.
\]
In conjunction with \eqref{p.sa.ext.21}, we obtain
\eqref{p.sa.ext.1}.

Let $|a|<\varepsilon$.
Notice that
\[
    \int_0^1\Bigl( \cos\frac{a(1-s)}2 \Bigr)^{-2} ds
    =\frac{\tan\frac{a}2}{\frac{a}2}.
\]
Substituting this into \eqref{p.sa.2} with $b=0$, $t=1$, and
$x=0$, we obtain that
\[
    \mathbb{E}[e^{a \mathfrak{s}_1}\mid\theta(1)=0]
    =\frac{\frac{a}2}{\sin\frac{a}2}.
\]
Continuing this holomorphically, we obtain
\eqref{p.sa.ext.2}.
\end{proof}

We apply Proposition~\ref{p.sa.ext} to representing 
stochastically Euler, Bernoulli, and Eulerian polynomials.
Euler polynomial $E_n(\xi)$ and Bernoulli polynomial
$B_n(\xi)$ of $\xi\in \mathbb{R}$, where $n\in 
\mathbb{N}\cup\{0\}$, are defined via the exponential
generating functions as  
\[
    \sum_{n=0}^\infty E_n(\xi)\frac{\zeta^n}{n!}
    =\frac{2e^{\zeta \xi}}{e^{\zeta}+1}
    \quad\text{and}\quad
    \sum_{n=0}^\infty B_n(\xi)\frac{\zeta^n}{n!}
    =\frac{\zeta e^{\zeta \xi}}{e^{\zeta}-1}
\]
for $\xi\in \mathbb{R}$ and $\zeta\in \mathbb{R}$ with
$\zeta(\xi-1)<1$. 
It may be interesting to notice that the Euler number 
$e_n$ and the Bernoulli number $b_n$ satisfy that
\[
    e_n=2^n E_n\Bigl(\frac12\Bigr)
    \quad\text{and}\quad
    b_n=B_n(0)
    \quad\text{for }n\in \mathbb{N}\cup\{0\}
\]
For these, see \cite{erdelyi}.
We have the following stochastic representations of $E_n$
and $B_n$.

\begin{theorem}\label{t.euler}
For each $n\in \mathbb{N}\cup\{0\}$, it holds that
\[
    E_n(\xi)
    =\kyosu^n \sum_{k=0}^n \binom{n}{k}
       (1-2\xi)^k \biggl(\int_{\mathcal{W}} \Bigl\{
         \mathfrak{s}_1+\frac{\kyosu}4 |\theta(1)|^2
       \Bigr\}^k d\mu\biggr) 
       \biggl(\int_{\mathcal{W}} \mathfrak{s}_1^{n-k} d\mu
          \biggr)
\]
and
\[
    B_n(\xi)=\kyosu^n \sum_{k=0}^n \binom{n}{k}
       (1-2\xi)^k \biggl(\int_{\mathcal{W}} \Bigl\{
         \mathfrak{s}_1+\frac{\kyosu}4 |\theta(1)|^2
       \Bigr\}^k d\mu\biggr) 
       \mathbb{E}[\mathfrak{s}_1^{n-k} \mid \theta(1)=0].
\]
\end{theorem}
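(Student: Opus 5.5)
The plan is to use the generating functions together with Proposition~\ref{p.sa.ext}. First I rewrite the Euler generating function as a product of two pieces, each of which is one of the identities in Proposition~\ref{p.sa.ext}, and then I compare Taylor coefficients at $\zeta=0$.

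For the Euler polynomials, take $\beta=\zeta$ and $\gamma=\frac12-\xi$ in \eqref{p.sa.ext.1}. Then
\[
    \Bigl(\frac12+\gamma\Bigr)e^{\frac{\zeta}2}+\Bigl(\frac12-\gamma\Bigr)e^{-\frac{\zeta}2}
    =(1-\xi)e^{\frac{\zeta}2}+\xi e^{-\frac{\zeta}2}.
\]
This is not directly $\frac12(e^{\zeta}+1)e^{-\zeta\xi}$, so I split differently. I write
\[
    \frac{2e^{\zeta\xi}}{e^\zeta+1}
    =\frac{e^{\zeta(\xi-\frac12)}}{\cosh\frac{\zeta}2}.
\]
The factor $1/\cosh\frac{\zeta}2$ is \eqref{p.sa.ext.1} with $\gamma=0$, which equals $\int e^{\kyosu\zeta\mathfrak{s}_1}d\mu$. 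The factor $e^{\zeta(\xi-\frac12)}$ should correspond to $\int e^{\kyosu(1-2\xi)\zeta\{\mathfrak{s}_1+\frac{\kyosu}4|\theta(1)|^2\}}d\mu$, which by \eqref{p.sa.ext.1} with $\beta=(1-2\xi)\zeta$ and $\gamma=\frac12$ equals $e^{-\beta/2}=e^{\zeta(\xi-\frac12)}$. The constraint $-\beta\gamma<1$ holds for small $\zeta$. So the generating function is the product
\[
    \biggl(\int_{\mathcal{W}} e^{\kyosu(1-2\xi)\zeta\{\mathfrak{s}_1+\frac{\kyosu}4|\theta(1)|^2\}}d\mu\biggr)
    \biggl(\int_{\mathcal{W}} e^{\kyosu\zeta\mathfrak{s}_1}d\mu\biggr).
\]
For the Bernoulli polynomials I argue the same way. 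I write
\[
    \frac{\zeta e^{\zeta\xi}}{e^\zeta-1}=e^{\zeta(\xi-\frac12)}\,\frac{\frac{\zeta}2}{\sinh\frac{\zeta}2},
\]
and the second factor is $\mathbb{E}[e^{\kyosu\zeta\mathfrak{s}_1}\mid\theta(1)=0]$ by \eqref{p.sa.ext.2}.

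To extract coefficients, I expand each factor as a power series in $\zeta$ by exchanging expectation and sum. The terms are $\frac{(\kyosu\zeta)^k}{k!}(1-2\xi)^k\int\{\mathfrak{s}_1+\frac{\kyosu}4|\theta(1)|^2\}^k d\mu$ and $\frac{(\kyosu\zeta)^m}{m!}\int\mathfrak{s}_1^m d\mu$. Multiplying the two series and matching the coefficient of $\zeta^n/n!$ gives the stated Cauchy-product formula with the prefactor $\kyosu^n$.

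The main obstacle is justifying the termwise expansion, i.e.\ identifying the Taylor coefficients at $0$ of these holomorphic functions with the moments. The proof of Proposition~\ref{p.sa.ext} shows that $f(z,\zeta)$ is holomorphic near the relevant points. For fixed $\zeta=\frac{\kyosu}2$, the map $z\mapsto\int e^{z\{\mathfrak{s}_1+\frac{\zeta}2|\theta(1)|^2\}}d\mu$ is holomorphic for $|z|$ small, because $|e^{z(\cdots)}|\le e^{|z|(|\mathfrak{s}_1|+\frac14|\theta(1)|^2)}$ and this is integrable for $|z|$ small by Lemma~\ref{l.q.eta.int}. Its derivatives at $0$ are the moments, by differentiation under the integral sign. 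For the conditional expectation I use the same argument with the pinned measure, where $\mathbb{E}[|\mathfrak{s}_1|^k e^{\delta|\mathfrak{s}_1|}\mid\theta(1)=0]<\infty$ for small $\delta$, as in the holomorphic-continuation step of Proposition~\ref{p.sa.ext}.

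Finally, I extend the identity of Taylor coefficients from $\xi$ with $|1-2\xi|$ bounded to all $\xi$. Both sides are polynomials in $\xi$, and for each fixed $\xi$ the series identity holds on a small disc in $\zeta$, so the comparison is valid for every $\xi$.
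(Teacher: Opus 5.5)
Your proposal is correct and is essentially the paper's own proof: the same factorizations $\frac{2e^{\zeta\xi}}{e^{\zeta}+1}=e^{-\frac12\zeta(1-2\xi)}/\cosh\frac{\zeta}{2}$ and $\frac{\zeta e^{\zeta\xi}}{e^{\zeta}-1}=e^{-\frac12\zeta(1-2\xi)}\frac{\zeta/2}{\sinh(\zeta/2)}$, the same uses of \eqref{p.sa.ext.1} with $(\beta,\gamma)=((1-2\xi)\zeta,\frac12)$ and $(\zeta,0)$ and of \eqref{p.sa.ext.2}, and then termwise expansion under the integral followed by comparison of the coefficients of $\zeta^n$. The expansion is justified, as in the paper, by a small exponential moment of $|\mathfrak{s}_1|+\frac14|\theta(1)|^2$; your remark on pinned-measure moments only spells out a step the paper leaves implicit, and the closing ``extension to all $\xi$'' is unnecessary, since for each fixed $\xi$ the identity already holds for all sufficiently small $\zeta$.
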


\begin{proof}
We first show the expression of $E_n$.
Rewrite the exponential generating function of the Euler
polynomials as
\[
    \frac{2e^{\zeta \xi}}{e^{\zeta}+1}
    =\frac{e^{-\frac12\zeta(1-2\xi)}}{
       \frac12(e^{\frac{\zeta}2}+e^{-\frac{\zeta}2})}.
\]

Given $\xi\in \mathbb{R}$, choose $\zeta\in \mathbb{R}$ with
$\zeta(\xi-1)<1$ and $-\frac12\zeta(1-2\xi)<1$.
The identity \eqref{p.sa.ext.1} with $\beta=\zeta(1-2\xi)$
and $\gamma=\frac12$ implies that
\begin{equation}\label{t.euler.21}
    e^{-\frac12\zeta(1-2\xi)}
    =\int_{\mathcal{W}} 
       e^{\kyosu \zeta(1-2\xi)\{\mathfrak{s}_1
         +\frac{\kyosu}4|\theta(1)|^2\}} d\mu.
\end{equation}
The identity \eqref{p.sa.ext.1} with $\beta=\zeta$ and
$\gamma=0$ also yields that
\[
    \frac1{\frac12(e^{\frac{\zeta}2}+e^{-\frac{\zeta}2})}
    =\int_{\mathcal{W}} e^{\kyosu\zeta \mathfrak{s}_1} d\mu.
\]

Take $0<\delta<2$ with 
$\int_{\mathcal{W}} e^{\delta\{|\mathfrak{s}_1|+\frac14|\theta(1)|^2\}}
 d\mu<\infty$.
Then it holds that
\begin{align*}
    & \sum_{n=0}^\infty E_n(\xi)\frac{\zeta^n}{n!}
      =\biggl(\int_{\mathcal{W}} 
        e^{\kyosu \zeta(1-2\xi)\{\mathfrak{s}_1
         +\frac{\kyosu}4|\theta(1)|^2\}} d\mu\biggr)
        \biggl(\int_{\mathcal{W}} 
          e^{\kyosu\zeta \mathfrak{s}_1} d\mu \biggr)
    \\
    & =\biggl( \sum_{k=0}^\infty \frac1{k!}
        \{\kyosu\zeta(1-2\xi)\}^k 
        \int_{\mathcal{W}} \biggl\{\mathfrak{s}_1
          +\frac{\kyosu}4|\theta(1)|^2\biggr\}^k d\mu
        \biggr)
       \biggl(\sum_{j=0}^\infty \frac1{j!}
        (\kyosu\zeta)^j
        \int_{\mathcal{W}} \mathfrak{s}_1^j d\mu \biggr)
\end{align*}
for $\zeta\in \mathbb{R}$ with 
$\zeta(\xi-1)<1$ and 
$|\zeta(1-2\xi)|\vee|\zeta|<\delta$.
Transforming this into a series of $\zeta$ about $0$ and comparing 
the coefficients of $\zeta^n$s, we obtain the desired expression
of $E_n$s. 

We next show the expression of $B_n$.
Rewrite the exponential generating function of Bernoulli
polynomials as
\[
    \frac{\zeta e^{\zeta\xi}}{e^{\zeta}-1}
    =e^{-\frac12 \zeta(1-2\xi)} 
     \frac{\frac{\zeta}2}{\sinh\frac{\zeta}2}.
\]
By \eqref{p.sa.ext.2} and \eqref{t.euler.21}, we have that
\[
    \sum_{n=0}^\infty B_n(\xi)\frac{\zeta^n}{n!}
    =\biggl(\int_{\mathcal{W}} 
        e^{\kyosu \zeta(1-2\xi)\{\mathfrak{s}_1
         +\frac{\kyosu}4|\theta(1)|^2\}} d\mu\biggr)
     \mathbb{E}[e^{\kyosu \zeta \mathfrak{s}_1}\mid\theta(1)=0].
\]
Transforming the right hand side of the identity into a
series of $\zeta$, and comparing the coefficients of
$\zeta^n$s, we obtain the desired expression of $B_n$s.
\end{proof}

The similar method works for Eulerian polynomials.
The Eulerian polynomials $P_n(\xi)$ of type A and
$P(B_n;\xi)$ of type B are defined by 
\[
    \frac{P_n(\xi)}{(1-\xi)^{n+1}}
    =\sum_{k=0}^\infty (k+1)^n \xi^k
    \quad\text{and}\quad
    \frac{P(B_n;\xi)}{(1-\xi)^{n+1}}
    =\sum_{k=0}^\infty (2k+1)^n \xi^k
\]
for $n\in \mathbb{N}\cup\{0\}$ and $\xi\in \mathbb{R}$ with
$|\xi|<1$, and their exponential generating functions are
given by 
\[
    \sum_{n=0}^\infty P_n(\xi)\frac{\zeta^n}{n!}
      =\frac{(1-\xi)e^{(1-\xi)\zeta}}{
              1-\xi e^{(1-\xi)\zeta}}
      \quad\text{for $\zeta\in \mathbb{R}$ with
         $|\xi|e^{(1-\xi)\zeta}<1$}
\]
and
\[
    \sum_{n=0}^\infty P(B_n;\xi)\frac{\zeta^n}{n!}
      =\frac{(1-\xi)e^{(1-\xi)\zeta}}{
              1-\xi e^{2(1-\xi)\zeta}}
      \quad\text{for $\zeta\in \mathbb{R}$ with
         $|\xi|e^{2(1-\xi)\zeta}<1$}
\]
(cf.\cite{cohen,hirzebruch}).
We have the following stochastic expressions of $P_n$s and
$P(B_n;\cdot)$s.

\begin{proposition}\label{p.eul.poly}
It holds that
\begin{align*}
    P_n(\xi)
    = & \sum_{k=0}^n \binom{n}{k}
       (1-\xi)^k \biggl(\int_{\mathcal{W}} \Bigl\{
         \kyosu\mathfrak{s}_1+\frac14 |\theta(1)|^2
         \Bigr\}^k d\mu\biggr) 
    \\
    & \hphantom{\sum_{k=0}^n}
      \times 
      \biggl(\int_{\mathcal{W}} \Bigl\{
          \kyosu(1-\xi)\mathfrak{s}_1
          +\frac{1+\xi}4 |\theta(1)|^2 \Bigr\}^{n-k}
          d\mu \biggr)
\end{align*}
and
\[
    P(B_n;\xi)
      =\int_{\mathcal{W}} \Bigl\{
          2\kyosu(1-\xi)\mathfrak{s}_1
          +\frac{1+\xi}2 |\theta(1)|^2 \Bigr\}^n
          d\mu
\]
for $n\in \mathbb{N}\cup\{0\}$ and $\xi\in \mathbb{R}$ with
$|\xi|<1$. 
\end{proposition}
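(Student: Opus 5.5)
Following the proof of Theorem~\ref{t.euler}, I will express each exponential generating function as a product of Laplace/Fourier transforms of quadratic Wiener functionals that are already evaluated by \eqref{p.sa.ext.1}, expand these as power series in $\zeta$, and compare coefficients. The key observation is that \eqref{p.sa.ext.1} gives, for real $\beta,\gamma$ with $|\gamma|\le\frac12$ and $-\beta\gamma<1$,
\[
    \int_{\mathcal{W}} e^{\kyosu\beta\mathfrak{s}_1-\frac{\beta\gamma}2|\theta(1)|^2}d\mu
    =\frac{2}{(1+2\gamma)e^{\beta/2}+(1-2\gamma)e^{-\beta/2}}.
\]
Choosing $\gamma=-\frac12$ gives $e^{\beta/2}$ from the exponent $\kyosu\beta\mathfrak{s}_1+\frac{\beta}4|\theta(1)|^2$. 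Choosing $\gamma=-\frac12\frac{1+\xi}{1-\xi}$ should produce the denominator $1-\xi e^{\beta}$ up to an exponential factor. The constraint $|\gamma|\le\frac12$ fails for $\xi>0$ in that second choice, so instead of \eqref{p.sa.ext.1} I would use the holomorphic identity $f=1/g$ on $\Omega\cap\Omega'$ from the proof of Proposition~\ref{p.sa.ext}. This only requires $g(\kyosu\beta,\kyosu\gamma)\neq 0$ along a path from the origin, which holds for small $\zeta$.

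\textbf{Type B.} Take $\beta=2(1-\xi)\zeta$ and $\gamma$ such that $-\beta\gamma/2=\frac{1+\xi}{2}\zeta$, i.e.\ $\gamma=-\frac{1+\xi}{2(1-\xi)}$. Then $(1+2\gamma)=\frac{-2\xi}{1-\xi}$ and $(1-2\gamma)=\frac{2}{1-\xi}$. The right-hand side becomes
\[
    \frac{(1-\xi)e^{\beta/2}}{1-\xi e^{\beta}}
    =\frac{(1-\xi)e^{(1-\xi)\zeta}}{1-\xi e^{2(1-\xi)\zeta}},
\]
which is exactly the generating function of $P(B_n;\xi)$. Hence
\[
    \sum_n P(B_n;\xi)\frac{\zeta^n}{n!}=\int_{\mathcal{W}} e^{\zeta\{2\kyosu(1-\xi)\mathfrak{s}_1+\frac{1+\xi}2|\theta(1)|^2\}}d\mu.
\]
Expanding the exponential and exchanging sum and integral gives the claim. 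The exchange is justified by dominated convergence for $|\zeta|$ small, using $e^{\delta(|\mathfrak{s}_1|+|\theta(1)|^2)}\in L^1(\mu)$ for small $\delta$ (Lemma~\ref{l.q.eta.int} with Hölder).

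\textbf{Type A.} The generating function factors as
\[
    \frac{(1-\xi)e^{(1-\xi)\zeta}}{1-\xi e^{(1-\xi)\zeta}}
    =e^{(1-\xi)\zeta/2}\cdot\frac{(1-\xi)e^{(1-\xi)\zeta/2}}{1-\xi e^{(1-\xi)\zeta}}.
\]
The first factor is $\int e^{(1-\xi)\zeta\{\kyosu\mathfrak{s}_1+\frac14|\theta(1)|^2\}}d\mu$, by the $\gamma=-\frac12$ case with $\beta=(1-\xi)\zeta$. The second factor is the type~B expression with $\zeta$ replaced by $\zeta/2$, namely $\int e^{\zeta\{\kyosu(1-\xi)\mathfrak{s}_1+\frac{1+\xi}4|\theta(1)|^2\}}d\mu$. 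Multiplying the two power series (Cauchy product) and comparing the coefficient of $\zeta^n/n!$ yields the binomial formula, with the factor $(1-\xi)^k$ coming from the first series.

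\textbf{Main obstacle.} The delicate step is justifying the identity for non-real parameters and for $\gamma$ outside $[-\frac12,\frac12]$. I would argue directly via holomorphy in one complex variable. Fix $\xi\in(-1,1)$ and consider
\[
    F(\zeta)=\int_{\mathcal{W}} e^{\zeta\{2\kyosu(1-\xi)\mathfrak{s}_1+\frac{1+\xi}2|\theta(1)|^2\}}d\mu,
\]
which is holomorphic near $0$ as in Lemma~\ref{l.hol}. Compare it with the right-hand side, also holomorphic near $0$. They agree for purely imaginary small $\zeta=\kyosu s$: there the exponent becomes
\[
    -2(1-\xi)s\,\mathfrak{s}_1+\kyosu\tfrac{1+\xi}{2}s|\theta(1)|^2,
\]
which falls under Proposition~\ref{p.sa.ext}'s domain $\Omega\cap\Omega'$ via $f(z,\zeta')$ with $z$ real and small. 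Indeed, $f=1/g$ holds there by Proposition~\ref{p.sa} with $(a,ab)$ real and small, after checking the sign conventions. By the identity theorem the two functions then agree near $\zeta=0$, which is all that coefficient comparison needs.
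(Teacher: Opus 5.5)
Your argument is correct and follows the paper's route: you write each exponential generating function as a product of Wiener integrals evaluated through \eqref{p.sa.ext.1} with $\gamma=-\frac12$ and $\gamma=-\frac{1+\xi}{2(1-\xi)}$, then expand in $\zeta$ and compare coefficients. The one difference is how you handle $0<\xi<1$, where $|\gamma|>\frac12$. The paper's proof explicitly covers only $-1<\xi\le 0$, where $|\gamma|\le\frac12$ and \eqref{p.sa.ext.1} applies verbatim; the remaining range follows, though the paper does not say so, because both sides are polynomials in $\xi$. You instead treat all $|\xi|<1$ at once by continuing $f=1/g$ analytically past $|\gamma|\le\frac12$. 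That continuation is legitimate: $f$ is holomorphic and $g\neq0$ on a small polydisc $\{|z|<\epsilon,\ |\zeta'|<K\}$ that contains the points you need.
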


\begin{proof}
We first show the expression of $P_n$.
Rewrite the exponential generating function of $P_n$s as
\[
    \frac{(1-\xi)e^{(1-\xi)\zeta}}{
              1-\xi e^{(1-\xi)\zeta}}
    =e^{\frac12(1-\xi)\zeta}
     \biggl\{\frac1{1-\xi}e^{-\frac12(1-\xi)\zeta}
       -\frac{\xi}{1-\xi}e^{\frac12(1-\xi)\zeta}       
     \biggr\}^{-1}.
\]

Given $\xi\in \mathbb{R}$ with $|\xi|<1$, let 
$\zeta\in \mathbb{R}$ satisfy that 
$|\xi|e^{(1-\xi)\zeta}<1$ and $\frac12(1-\xi)\zeta<1$.
By \eqref{p.sa.ext.1} with $\beta=(1-\xi)\zeta$ and
$\gamma=-\frac12$, we obtain that
\[
    e^{\frac12(1-\xi)\zeta}
    =\int_{\mathcal{W}}
       e^{\kyosu(1-\xi)\zeta\{\mathfrak{s}_1
           -\frac{\kyosu}4|\theta(1)|^2\}} d\mu.
\]

Let $-1<\xi\le 0$ and put
$\gamma=-\dfrac{1+\xi}{2(1-\xi)}$.
Then 
\[
    |\gamma|\le\frac12,
    \quad
    \frac12+\gamma=-\frac{\xi}{1-\xi},
    \quad\text{and}\quad
    \frac12-\gamma=\frac1{1-\xi}.
\]
Take $\zeta\in \mathbb{R}$ with $\frac12(1+\xi)\zeta<1$. 
By \eqref{p.sa.ext.1} with $\beta=(1-\xi)\zeta$ and this
$\gamma$, we have that
\begin{equation}\label{p.eul.poly.21}
    \biggl\{\frac1{1-\xi}e^{-\frac12(1-\xi)\zeta}
       -\frac{\xi}{1-\xi}e^{\frac12(1-\xi)\zeta}       
     \biggr\}^{-1}
    =\int_{\mathcal{W}} 
        e^{\kyosu(1-\xi)\zeta\{\mathfrak{s}_1
           -\frac{\kyosu(1+\xi)}{4(1-\xi)}
            |\theta(1)|^2\}} d\mu.
\end{equation}

Thus we obtain that
\[
    \sum_{n=0}^\infty P_n(\xi)\frac{\zeta^n}{n!}
    =\biggl(\int_{\mathcal{W}}
          e^{(1-\xi)\zeta\{\kyosu\mathfrak{s}_1
            +\frac14|\theta(1)|^2\}} 
        d\mu \biggr)
      \biggl(\int_{\mathcal{W}} 
        e^{\zeta\{\kyosu(1-\xi)\mathfrak{s}_1
           +\frac{(1+\xi)}4 |\theta(1)|^2\}} 
        d\mu \biggr)
\]
for $-1<\xi\le 0$ and $\zeta\in \mathbb{R}$ with
$\max\{(1-x)\zeta,(1+\xi)\zeta\}<1$ and 
$|\xi|e^{(1-\xi)\zeta}<1$.
Transforming the right hand side of the identity into a
series of $\zeta$ about $0$, and comparing the coefficients of
$\zeta^n$s, we obtain the desired expression of $P_n$s.

We next show the expression of $P(B_n;\cdot)$.
Rewrite the corresponding exponential generating function as
\[
    \frac{(1-\xi)e^{(1-\xi)\zeta}}{
              1-\xi e^{2(1-\xi)\zeta}}
    =\biggl\{\frac1{1-\xi}e^{-(1-\xi)\zeta}
       -\frac{\xi}{1-\xi}e^{(1-\xi)\zeta}       
     \biggr\}^{-1}.
\]
By \eqref{p.eul.poly.21}, we see that
\[
    \sum_{n=0}^\infty P(B_n;\xi)\frac{\zeta^n}{n!}
    =\int_{\mathcal{W}} 
        e^{2\zeta\{\kyosu(1-\xi)\mathfrak{s}_1
           +\frac{(1+\xi)}4 |\theta(1)|^2\}} d\mu 
\]
for $-1<\xi\le 0$ and $\zeta\in \mathbb{R}$ with
$\max\{(1-x)\zeta,(1+\xi)\zeta\}<\frac12$ and 
$|\xi|e^{2(1-\xi)\zeta}<1$.
This implies the desired expression of $P(B_n;\xi)$.
\end{proof}

\begin{remark}\label{r.euler}
The subject in this section is taken from the papers
\cite{ikeda-t-spa,ikeda-t-bull-sci} by Ikeda and the
author. 
Therein the proof of Proposition~\ref{p.sa.ext} was based on 
the expression of heat kernels, which was shown by Matsumoto 
(\cite{matsumoto}) with the help of the Van Vleck formula.
The expression was used to handle the term $|\theta(1)|^2$.
As was seen in the proof of Proposition~\ref{p.sa}, which
was used to show Proposition~\ref{p.sa.ext}, the term
is a part of the quadratic form $\p_{\sigma_{a,b}}$ and
could be dealt with in our scheme of evaluating Laplace
transformations of quadratic forms. 
Proposition~\ref{p.sa} is much more na\"{\i}ve and direct than 
using heat kernels.
\end{remark}

\section{KdV equation}
\label{sec.kdv}
In this section, we apply Corollary~\ref{c.ode.ou} to
reflectionless potentials and soliton solutions to the KdV
equation.

Let $n\in \mathbb{N}$, $\widehat{\mathcal{W}}$ be the space
of $\mathbb{R}^n$-valued continuous functions on
$[0,\infty)$ vanishing at $0$, and $\widehat{\mu}$ the
Wiener measure on $\widehat{\mathcal{W}}$.
In what follows, we use the expectation symbol
$\widehat{\mathbb{E}}[\cdots]$ to denote the integration over
$\widehat{\mathcal{W}}$ with respect to $\widehat{\mu}$, that is,
$\widehat{\mathbb{E}}[f]
 =\int_{\widehat{\mathcal{W}}} f d\widehat{\mu}$.
The assertions before this section continue to hold with 
$d=n$, $\widehat{\mathcal{W}}$, and $\widehat{\mu}$
instead of $\mathcal{W}$ and $\mu$:
we regard Wiener functionals on $\mathcal{W}$ as those on 
$\widehat{\mathcal{W}}$ by thinking of  $\mathcal{W}$ as 
a subspace of $\widehat{\mathcal{W}}$ and $\mu$ as 
a restriction of $\widehat{\mu}$ to $\mathcal{W}$.

Put
\[
    \mathcal{SD}_n
    =\Bigl\{(\eta_j,m_j)_{1\le j\le n} \,\Big|\,
      0<\eta_1<\dots<\eta_n,m_1,\dots,m_n>0\Bigr\}.
\]
Its element is called a {\it scattering date} of length $n$.
For 
$\boldsymbol{s}=(\eta_j,m_j)_{1\le j\le n}
 \in \mathcal{SD}_n$,
define $C^\infty$-functions
$G_{\boldsymbol{s}}:\mathbb{R}\to \mathbb{R}^{n\times n}$
and 
$u_{\boldsymbol{s}}:\mathbb{R}\to \mathbb{R}$
as
\[
    G_{\boldsymbol{s}}(x)
      =\biggl(
       \frac{\sqrt{m_im_j} e^{-(\eta_i+\eta_j)x}}{
               \eta_i+\eta_j}\biggr)_{1\le i,j\le n}
    \quad\text{for } x\in \mathbb{R}
\]
and 
\[
    u_{\boldsymbol{s}}
      =-2\Bigl(\frac{d}{dx}\Bigr)^2
       \log\det(I_n+G_{\boldsymbol{s}}).
\]
The function $u_{\boldsymbol{s}}$ is called a 
{\it reflectionless potential} with scattering data
$\boldsymbol{s}$. 
If we set 
\[
    \boldsymbol{s}(t)
    =\bigl(\eta_j,m_j\exp(-2\eta_j^3t)
       \bigr)_{1\le j\le n}
    \quad\text{for }t\in \mathbb{R},
\]
then the function 
$v(x,t)=-u_{\boldsymbol{s}(t)}(x)$ for $x,t\in \mathbb{R}$
is a $n$-{\it soliton} solution to the KdV equation
\begin{equation}\label{eq.kdv}
    \frac{\partial v}{\partial t}
    -\frac32 v \frac{\partial v}{\partial x}
    -\frac14 \frac{\partial^3 v}{\partial x^3}=0
    \quad\text{with }v(\cdot,0)=-u_{\boldsymbol{s}}.
\end{equation}
See \cite{miwa-jimbo-date}.

Define the space $\boldsymbol{\Sigma}_n$ of discrete
measures on $\mathbb{R}^n$ as 
\[
    \boldsymbol{\Sigma}_n
    =\Biggl\{\sum_{j=1}^n c_j^2 \delta_{p_j} \,\Bigg|\,
      p_1,\dots,p_n\in \mathbb{R},\,
      c_1,\dots,c_n>0,\,
      p_i\ne p_j\text{ for }i\ne j \Biggr\},
\]
where $\delta_p$ stands for the Dirac measure on $\mathbb{R}^n$
concentrated at $p$.
In what follows, the letters $x,y,z$ are also used to indicate 
the time parameter of stochastic processes.  
Fix 
\[
    \ba=\sum\limits_{j=1}^n c_j^2\delta_{p_j}
        \in \boldsymbol{\Sigma}_n.
\]
Put
\[
    D_\ba=\text{\rm diag}[p_1,\dots,p_n]
      \in \mathbb{R}^{n\times n}
    \quad\text{and}\quad
    \boldsymbol{c}_\ba=(c_1,\dots,c_n)
      \in \mathbb{R}^n.
\]
The Ornstein-Uhlenbeck process
$\{\xi_\ba(x)\}_{x\in[0,\infty)}$ associated with $\ba$ is given by
\[
    \xi_\ba(x)=e^{xD_\ba }\int_0^x 
         e^{-yD_\ba} d\theta(y)
    \quad\text{for } x\in[0,\infty),
\]
that is, its $i$th component $\xi_\ba^i(x)$ is equal to
$e^{xp_i}\int_0^x e^{-yp_i} d\theta^i(y)$.
$\{\xi_\ba(x)\}_{x\in[0,\infty)}$ is a unique solution
to the SDE
\[
    d\xi_\ba(x)
    =d\theta(x)+D_\ba\xi_\ba(x)dx,
    \quad \xi_\ba(0)=x.
\]
Define $\Psi_\ba:[0,\infty)\to [0,\infty)$ by
\[
    \Psi_\ba(x)=\widehat{\mathbb{E}}\biggl[
      \exp\biggl(-\frac12\int_0^x 
        \langle \boldsymbol{c}_\ba,
             \xi_\ba(y) \rangle^2 dy\biggr)\biggr]
    ~~\text{for }x\in[0,\infty).
\]

\begin{lemma}\label{l.psi.a.smooth}
$\Psi_\ba(x)>0$ for $x>0$ and 
$\Psi_\ba$ is infinitely differentiable on $[0,\infty)$.
\end{lemma}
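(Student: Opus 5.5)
Positivity comes first and is immediate. For $x>0$ the integrand $\exp\bigl(-\frac12\int_0^x\langle\boldsymbol{c}_\ba,\xi_\ba(y)\rangle^2dy\bigr)$ is strictly positive and bounded by $1$. Hence $\Psi_\ba(x)\in(0,1]$, and $\Psi_\ba(0)=1$.

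For smoothness I plan to obtain an explicit formula for $\Psi_\ba$ and read regularity off it. Fix $x>0$ and work on $[0,x]$ with $T=x$. The process $\xi_\ba$ is exactly $\xi^{[\chi]}$ with $\chi\equiv D_\ba$. Take $\gamma\equiv-\boldsymbol{c}_\ba\otimes\boldsymbol{c}_\ba$ (the symmetric matrix $(c_ic_j)$ up to sign) and $\beta\equiv 0$. Then the exponent is $\frac12\int_0^x\langle\gamma\xi,\xi\rangle dy$. The solution $\alpha(t)=e^{(t-x)D_\ba}$ of $\alpha'=\chi\alpha$, $\alpha(x)=I$, is nonsingular, so Corollary~\ref{c.ode.ou}(ii) applies with $f\equiv1$, provided the $C^2$ solution $\bS=\bS_x$ of
\[
    \bS''+(\gamma-D_\ba^2)\bS=0,\quad \bS(x)=I_n,~\bS'(x)=D_\ba
\]
has $\det\bS_x(t)\ne0$ on $[0,x]$. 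It would then give $\Psi_\ba(x)=\bigl(e^{-x\,\tr D_\ba}/\det\bS_x(0)\bigr)^{1/2}$.

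The main obstacle is this nondegeneracy. I will establish it through Theorem~\ref{t.riccati} rather than directly. The functional $-\frac12\int_0^x\langle\boldsymbol{c},\xi\rangle^2$ is nonpositive, so the integrability condition (i) there holds automatically. By Corollary~\ref{c.ode.ou}(i), the same quantity equals $\int e^{\mathfrak{a}^0_{\phi,\psi}}d\mu$ with $\phi=D_\ba$, $\psi=\gamma-D_\ba^2$. Via \eqref{eq.a.p}, this is a constant times $\int e^{\p_\sigma}d\mu$ with $\sigma(t)=D_\ba+(x-t)(\gamma-D_\ba^2)$, which is $C^1$. So $e^{\p_\sigma}\in L^1$, and Corollary~\ref{c.2nd.ode} yields $\det\bS_x(t)\ne0$ on $[0,x]$. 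I need to check that the ODE there coincides with the one above: here $\sigma_A=0$ and $\sigma'=-(\gamma-D_\ba^2)$. With nondegeneracy in hand, Remark~\ref{r.FK} gives the formula for $\Psi_\ba(x)$.

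Finally, regularity in $x$. The matrix $M=\gamma-D_\ba^2$ is constant, so $\bS_x(t)=\widetilde{S}(t-x)$, where $\widetilde{S}$ solves $\widetilde{S}''+M\widetilde{S}=0$, $\widetilde{S}(0)=I$, $\widetilde{S}'(0)=D_\ba$. Thus $\det\bS_x(0)=\det\widetilde{S}(-x)$, an entire (real-analytic) function of $x$. It is nonzero for every $x>0$ by the previous step, and equals $1$ at $x=0$. Hence $\Psi_\ba(x)=e^{-x\tr D_\ba/2}\bigl(\det\widetilde{S}(-x)\bigr)^{-1/2}$ is positive and $C^\infty$, indeed analytic, on $[0,\infty)$. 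Here the square root is taken of the positive branch, by continuity from $x=0$ and the positivity of $\Psi_\ba$.
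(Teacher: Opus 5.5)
Your proof is correct, but it is not the route the paper takes for this lemma. The paper argues directly. Dominated convergence gives the first derivative. It\^o's formula applied to $f(\xi_\ba(x))\exp\bigl(-\frac12\int_0^x\langle\boldsymbol{c}_\ba,\xi_\ba(y)\rangle^2dy\bigr)$, with the generator $\mathcal{L}$, then expresses each derivative as an expectation of the same form with $f$ replaced by $\mathcal{L}f$, which again has polynomial growth. Iterating gives $C^\infty$, with no explicit formula and no nondegeneracy question.

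You instead derive the closed form $\Psi_\ba(x)=\bigl(e^{-x\tr D_\ba}/\det\widetilde{S}(-x)\bigr)^{1/2}$. Since $\widetilde{S}(-\,\cdot\,)$ solves $\Phi''-E_\ba\Phi=0$, $\Phi(0)=I_n$, $\Phi'(0)=-D_\ba$ (note $E_\ba=-(\gamma-D_\ba^2)$), this is exactly the formula of Lemma~\ref{l.reflpot.cv} with $\widetilde{S}(-x)=\phi_\ba(x)$. The paper establishes that formula only afterwards, in Lemmas~\ref{l.reflpot.cv0}--\ref{l.reflpot.cv}.

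Your nondegeneracy step also differs from the paper's. There, $\det\phi_\ba(y)\ne0$ comes from a short energy identity (Lemma~\ref{l.reflpot.ode}). You extract it from the trivial bound $\Psi_\ba(x)\le1$ via Corollary~\ref{c.ode.ou}(i), \eqref{eq.a.p}, and the implication (i)$\Rightarrow$(iv) of Theorem~\ref{t.riccati} and Corollary~\ref{c.2nd.ode}. Your check that $\sigma_A=0$, $\sigma'=-(\gamma-D_\ba^2)$, $\sigma(x)=D_\ba$ is right, so the ODEs coincide. This is a neat use of that equivalence, but it rests on the heavy direction (i)$\Rightarrow$(ii) of Theorem~\ref{t.riccati}, and ultimately on the Gohberg--Krein factorization behind Theorem~\ref{t.tways}.

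The paper's argument is elementary and self-contained, and it does not use the Gaussian/quadratic structure. It works for any nonnegative smooth potential of polynomial growth in place of $\frac12\langle\boldsymbol{c}_\ba,\xi\rangle^2$. Yours gives more: real-analyticity on a neighbourhood of $[0,\infty)$, and the explicit formula later needed for Theorem~\ref{t.reflpot}. The price is dependence on the machinery of the earlier chapters. You could reduce that price by proving $\det\widetilde{S}(-x)\ne0$ with the energy argument of Lemma~\ref{l.reflpot.ode} instead of going through Theorem~\ref{t.riccati}.
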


\begin{proof}
The positivity is obvious.
By virtue of the dominated convergence theorem,
$\Psi_\ba$ is differentiable and its derivative is  
\[
    \Psi_\ba(x)=\widehat{\mathbb{E}}\biggl[
     \biggl(-\frac12\langle \boldsymbol{c}_\ba,
        \xi_\ba(x)\rangle^2\biggr)
      \exp\biggl(-\frac12\int_0^x
        \langle \boldsymbol{c}_\ba,
             \xi_\ba(y) \rangle^2 dy\biggr)\biggr].
\]
To see the higher order differentiability, let 
$C_\nearrow^\infty(\mathbb{R}^n)$ be the space of 
infinitely differentiable functions on $\mathbb{R}^n$ whose 
derivatives of all orders are at most of polynomial growth
order.
For $f\in C_\nearrow^\infty(\mathbb{R}^n)$, set
$\displaystyle\nabla f=\Bigl(
       \frac{\partial f}{\partial x^i}
       \Bigr)_{1\le i\le n}$ 
and define $\mathcal{L} f$ as
\[
    \mathcal{L} f(\xi)
      =\frac12\sum_{i=1}^n
        \Bigl(\Bigl(\frac{\partial}{\partial x^i}\Bigr)^2 f
         \Bigr)(\xi)
      +\langle D_\ba\xi,\nabla f(\xi)
       \rangle
      -\frac12\langle \boldsymbol{c}_\ba,
         \xi\rangle^2 f(\xi) 
    \quad\text{for }\xi\in \mathbb{R}^n.
\]
By It\^o's formula, we have that
\begin{align*}
    & \widehat{\mathbb{E}}\biggl[f(\xi_\ba(x))
      \exp\biggl(-\frac12\int_0^x 
        \langle \boldsymbol{c}_\ba,
             \xi_\ba(y) \rangle^2 dy\biggr)\biggr]
    \\
    & =\int_0^x \widehat{\mathbb{E}}\biggl[
        (\mathcal{L} f)(\xi_\ba(y))
      \exp\biggl(-\frac12\int_0^y 
        \langle \boldsymbol{c}_\ba,
             \xi_\ba(z) \rangle^2 dz\biggr)\biggr] dy
    \quad\text{for every }
    f\in C_\nearrow^\infty(\mathbb{R}^n).
\end{align*}
Applying this identity successively, we obtain the higher
order differentiability.
\end{proof}

Without loss of generality, we assume that
\\[5pt]
\indent
$\text{\bf (H)}_m$
\begin{minipage}[t]{320pt}
there are $0\le m<n$ and $1\le j(1)<\dots <j(m)\le n$ such
that 
\begin{align*}
    & |p_j|\le |p_{j+1}|
      \quad\text{for }1\le j\le n-1, 
    \quad
     \#\{|p_1|,\dots,|p_n|\}=n-m, 
    \\
    & p_{j(\ell)}>0
      ~~\text{and}~~
      p_{j(\ell)+1}=-p_{j(\ell)}
      ~~\text{for }1\le \ell\le m.
\end{align*}
\end{minipage}
\\[5pt]
Denote by $0<r_1<\dots<r_{n-m}$ the roots of the 
algebraic equation for $r$ that
\[
    -1=\sum_{j=1}^n \frac{c_j^2}{p_j^2-r}.
      =\sum_{\ell=1}^m 
          \frac{c_{j(\ell)}^2+c_{j(\ell)+1}^2}{
                      p_{j(\ell)}^2-r}
       +\sum_{j\notin \{j(\ell),j(\ell)+1;1\le \ell\le m\}}
           \frac{c_j^2}{p_j^2-r}.
\]
Define 
$\boldsymbol{s}(\ba)=(\eta_j,m_j)_{1\le j\le n}
 \in \mathcal{SD}_n$ as
\[
    \{\eta_1<\dots<\eta_n\}
      =\{p_{j(1)},\dots,p_{j(m)},\sqrt{r_1},\dots,
         \sqrt{r_{n-m}}\}
\]
and
\[
    m_j 
      =\begin{cases}
        \displaystyle
         2\eta_j\frac{c_{j(\ell)+1}^2}{c_{j(\ell)}^2}
          \prod_{k\ne j} \frac{\eta_k+\eta_j}{
                              \eta_k-\eta_j}
          \prod_{k\ne j,j+1}
            \frac{p_k+\eta_j}{p_k-\eta_j}
         & \text{if }j=j(\ell),
        \\
        \displaystyle
         -2\eta_j 
           \prod_{k\ne j} \frac{\eta_k+\eta_j}{
                              \eta_k-\eta_j}
           \prod_{k=1}^n 
            \frac{p_k+\eta_j}{p_k-\eta_j}
          & \text{if }
            j\notin\{j(\ell);1\le\ell\le m\}.
       \end{cases}
\]

\begin{remark}\label{rem.m_j>0}
It holds that
\begin{equation}\label{r.m_j>0.1}
   \begin{cases}
        |p_k|<\eta_k<|p_{k+1}| 
        & \text{for } k\notin\{j(\ell);1\le \ell\le m\},
        \\
        \eta_{j(\ell)-1}
        < p_{j(\ell)}=-p_{j(\ell)+1}=\eta_{j(\ell)}
        <\eta_{j(\ell)+1}
        & \text{for }1\le\ell\le m.
   \end{cases}
\end{equation}
This implies the positivity of $\eta_j$s and the order that
$\eta_1<\eta_2<\dots<\eta_n$.
This also yields that
\[
    p_k^2-\eta_j^2
    \begin{cases}
      <0 & \text{if (i) $j\notin\{j(\ell)\mid 1\le\ell\le m\}$ and 
                 $k\le j$ or (ii) $j=j(\ell)$ and $k<j(\ell)$},
      \\
      >0 & \text{if (iii) $j\notin\{j(\ell)\mid 1\le\ell\le m\}$ and 
                 $k>j$ or (vi) $j=j(\ell)$ and $k>j(\ell)+1$}.
    \end{cases}
\]
Hence all denominators appearing in $m_j$s do not vanish, and hence
$m_j$s are well-defined.
Furthermore, we have that
\[
    \text{sgn}(m_j)
    =\begin{cases}
       (-1)^{j-1}(-1)^{j-1}=1 & \text{if }j=j(\ell),
       \\
       -(-1)^{j-1}(-1)^j=1 
       & \text{if }j\notin\{j(\ell)\mid 1\le\ell\le m\}.
     \end{cases}
\]
Thus $m_j$s are all positive.
\end{remark}

We shall present a probabilistic representation o
reflectionless potential in terms of the Ornstein-Uhlenbeck
process.

\begin{theorem}\label{t.reflpot}
Let $\ba$ and $\boldsymbol{s}(\ba)=(\eta_j,m_j)_{1\le j\le n}$ be as
above.
Then it holds that
\begin{align}
    4\log\Psi_\ba(x)
    = & -2\log \det\Bigl(
            I_n+G_{\boldsymbol{s}(\ba)}(x)\Bigr)
\label{t.reflpot.1}
    \\
     & +2\log \det\Bigl(
         I_n+G_{\boldsymbol{s}(\ba)}(0)\Bigr)
       -2x\sum_{j=1}^n (p_j+\eta_j)
     \quad\text{for $x\in[0,\infty)$.}
\nonumber
\end{align}
In particular, it holds that
\[
    4 \Bigl(\frac{d}{dx}\Bigr)^2 \log\Psi_\ba
    =u_{\boldsymbol{s}(\ba)}
    \quad\text{on }[0,\infty).
\]
\end{theorem}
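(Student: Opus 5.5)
Throughout, fix $x>0$ and work on $[0,x]$ in place of $[0,T]$; the case $x=0$ is trivial, since both sides of \eqref{t.reflpot.1} vanish. The plan is to reduce $\Psi_\ba(x)$ to a determinant of the solution of a linear second order ODE via Corollary~\ref{c.ode.ou}(ii), and then to identify that determinant with $\det(I_n+G_{\boldsymbol{s}(\ba)}(x))$ by an explicit spectral computation.

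\textbf{Step 1 (reduction to an ODE).} Apply Corollary~\ref{c.ode.ou} with $d=n$, $T=x$, $\chi\equiv D_\ba$, $\beta\equiv0$ and $\gamma\equiv-\boldsymbol{c}_\ba\otimes\boldsymbol{c}_\ba$. By \eqref{eq:xi.alpha}, $\xi^{[\chi]}=\xi_\ba$ on $[0,x]$, and $\alpha(t)=e^{(t-x)D_\ba}$ is nonsingular. Part~(i) then gives
\[
\Psi_\ba(x)=\int_{\mathcal{W}} e^{\mathfrak{a}^0_{\phi,\psi}}\,d\mu,
\qquad \phi=D_\ba,\quad \psi=-\boldsymbol{c}_\ba\otimes\boldsymbol{c}_\ba-D_\ba^2 .
\]
This integral is finite, since $\Psi_\ba(x)\le1$. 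Through \eqref{eq.a.p}, finiteness means $e^{\p_\sigma}\in L^1(\mu)$ for $\sigma$ as in \eqref{eq.sigma.phi.psi}. By Theorem~\ref{t.riccati} and Corollary~\ref{c.2nd.ode}, it follows that $\det\bS(t)\ne0$ on $[0,x]$, where $\bS$ solves
\[
\bS''-M\bS=0,\qquad \bS(x)=I_n,\quad \bS'(x)=D_\ba,\qquad M=D_\ba^2+\boldsymbol{c}_\ba\otimes\boldsymbol{c}_\ba .
\]
Here $\chi_A=0$ and $\chi'=0$. Corollary~\ref{c.ode.ou}(ii) with $f=1$ therefore gives
\[
4\log\Psi_\ba(x)=-2x\textstyle\sum_j p_j-2\log\det\bS(0).
\]
Put $\hat\bS(t)=\bS(x-t)$. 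Then $\hat\bS(0)=I_n$, $\hat\bS'(0)=-D_\ba$, and since $M$ is symmetric positive semidefinite,
\[
\det\bS(0)=\det\bigl(\ch[x\sqrt M]-x\,\sh[x\sqrt M]D_\ba\bigr)=:\Delta(x).
\]
Comparing with \eqref{t.reflpot.1}, it therefore suffices to prove
\[
\Delta(x)=e^{x\sum_j\eta_j}\,\det\bigl(I_n+G_{\boldsymbol{s}(\ba)}(x)\bigr)\big/\det\bigl(I_n+G_{\boldsymbol{s}(\ba)}(0)\bigr).
\]
Once this holds, the second assertion follows by differentiating twice, because the linear term is killed.

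\textbf{Step 2 (spectrum of $M$).} By the matrix determinant lemma,
\[
\det(M-r)=\prod_j(p_j^2-r)\Bigl(1+\sum_j\frac{c_j^2}{p_j^2-r}\Bigr).
\]
Hence the eigenvalues of $M$ are the roots $r_1,\dots,r_{n-m}$ together with the doubled values $p_{j(\ell)}^2$. Their square roots are exactly $\eta_1,\dots,\eta_n$, and these are positive by Remark~\ref{rem.m_j>0}. Diagonalize $M=Q\,\mathrm{diag}[\eta_j^2]\,Q^\dagger$, with explicit eigenvectors proportional to $\bigl(c_k/(p_k^2-\eta_j^2)\bigr)_k$ in the generic case. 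Writing $\cosh$ and $\sinh$ as exponentials gives
\[
\Delta(x)=e^{x\sum\eta_j}\det\bigl(P+R\,e^{-2x\,\mathrm{diag}[\eta]}\bigr),
\]
where $P=\tfrac12(I+E^{-1}D')$ and $R=\tfrac12(I-E^{-1}D')$, with $E=\mathrm{diag}[\eta]$ and $D'=Q^\dagger D_\ba Q$. Factoring out $P$ (invertible, since $\Delta$ is nonzero for large $x$), this equals $e^{x\sum\eta}\det P\,\det\bigl(I+P^{-1}Re^{-2xE}\bigr)$.

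\textbf{Step 3 (matching with the Cauchy matrix); this is the main obstacle.} One has to show that $P^{-1}R$ is similar, via a diagonal conjugation commuting with $e^{-2xE}$, to the Cauchy-type matrix $\bigl(\sqrt{m_im_j}/(\eta_i+\eta_j)\bigr)$. Then $\det(I+P^{-1}Re^{-2xE})=\det(I+G_{\boldsymbol{s}(\ba)}(x))$, and evaluating at $x=0$, where $\Delta(0)=1$, forces $\det P=\det(I+G_{\boldsymbol{s}(\ba)}(0))^{-1}$.

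First, I would do the generic case $m=0$ with all $|p_j|$ distinct. Here I would compare the principal-minor expansions $\sum_J\det(\cdot)_J\,e^{-2x\sum_J\eta}$ on both sides and verify that each minor factorizes as a Cauchy determinant
\[
\prod_{i<j\in J}\Bigl(\frac{\eta_i-\eta_j}{\eta_i+\eta_j}\Bigr)^2\prod_{J}\frac{m_j}{2\eta_j}.
\]
The mixed products $\prod(p_k+\eta_j)/(p_k-\eta_j)$ in $m_j$ should come from the explicit eigenvectors together with the secular equation. Second, I would obtain the degenerate case $(\mathrm{H})_m$ by letting a pair $p_{j(\ell)+1}\to-p_{j(\ell)}$ and passing to the limit. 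Both sides are continuous in $\ba$, and the limiting formula for $m_{j(\ell)}$ is the one displayed, with the ratio $c_{j(\ell)+1}^2/c_{j(\ell)}^2$ arising as the limit.
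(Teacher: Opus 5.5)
Your Step~1 is correct. It reproduces the paper's reduction $\Psi_\ba(x)^2=e^{-x\tr D_\ba}/\det\phi_\ba(x)$ of Lemma~\ref{l.reflpot.cv}; your $\bS(0)$ is the paper's $\phi_\ba(x)$. Obtaining $\det\bS\ne0$ from $\Psi_\ba(x)\le1$ via Corollary~\ref{c.2nd.ode} is a legitimate, if heavier, substitute for the elementary Lemma~\ref{l.reflpot.ode}.

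The gap is everything after that. Step~3 is the only place where the particular $\eta_j$ and $m_j$ enter, so it \emph{is} the theorem, and you only announce it. Step~2 also contains two errors that would derail it.

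First, $P$ and $R$ are interchanged. One has $\cosh(xE)-\sinh(xE)E^{-1}D'=e^{xE}\bigl(R+e^{-2xE}P\bigr)$, so the coefficient of $e^{x\sum_j\eta_j}$ is $\det R$ with $R=\frac12(I-E^{-1}D')$, not $\det P$. Check $n=1$: $\Delta(x)=\cosh(\eta x)-\frac{p}{\eta}\sinh(\eta x)$, whose coefficient of $e^{\eta x}$ is $\frac{\eta-p}{2\eta}$. Your formula is the correct one for $-D_\ba$, so the matching would produce the wrong norming constants.

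Second, ``invertible, since $\Delta$ is nonzero for large $x$'' is not an argument. Nonvanishing of $\Delta(x)$ for every $x$ says nothing about $\lim_{x\to\infty}e^{-x\sum_j\eta_j}\Delta(x)$.

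The missing idea is the one used in Lemma~\ref{l.reflpot.H_0}. Under $\text{\bf(H)}_0$, take the orthogonal eigenvector matrix $U=C(\boldsymbol{c}_\ba)\bigl(1/(p_i^2-\eta_j^2)\bigr)V(\boldsymbol{c}_\ba)$. Then $EU^{-1}\pm U^{-1}D_\ba=V(\boldsymbol{c}_\ba)\bigl(1/(\pm p_j-\eta_i)\bigr)_{ij}C(\boldsymbol{c}_\ba)$. So both coefficient matrices are diagonal rescalings of the Cauchy matrices $X=\bigl(1/(\eta_i+p_j)\bigr)$ and $Y=\bigl(1/(p_j-\eta_i)\bigr)$.
\begin{itemize}
\item Cauchy's identity gives $\det X\ne0$, which is the correct reason the leading term is invertible.
\item Compute $X^{-1}$ entrywise from its Cauchy cofactors, then sum with Lagrange's identity \eqref{eq.lagrange}. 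This gives $YX^{-1}=-B\bigl(\sqrt{m_im_j}/(\eta_i+\eta_j)\bigr)B^{-1}$ with $B$ diagonal.
\item This is exactly where the two products in the formula for $m_j$ come from. After fixing the ordering, the matrix to identify is $PR^{-1}=-E^{-1}V(\boldsymbol{c}_\ba)\,YX^{-1}\,V(\boldsymbol{c}_\ba)^{-1}E$, which is then visibly a diagonal conjugate of $G_{\boldsymbol{s}(\ba)}(0)$.
\end{itemize}

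Your principal-minor plan does not replace this. You cannot compute principal minors of $PR^{-1}$ without an explicit formula for the matrix. Going through Cauchy--Binet and Jacobi's complementary-minor formula instead leads to sums over subsets of products of Cauchy determinants, which still require a Lagrange-type identity.

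Likewise, the degenerate case is not mere ``continuity in $\ba$''. The constant $m_{j(\ell)}(\varepsilon)$ contains the $0/0$ factor $(p_{j(\ell)+1}(\varepsilon)+\eta_{j(\ell)}(\varepsilon))/(p_{j(\ell)}(\varepsilon)-\eta_{j(\ell)}(\varepsilon))$. Its limit $c_{j(\ell)+1}^2/c_{j(\ell)}^2$ requires first-order expansions of $p_{j(\ell)}(\varepsilon)^2-\eta_{j(\ell)}(\varepsilon)^2$ and $p_{j(\ell)+1}(\varepsilon)^2-\eta_{j(\ell)}(\varepsilon)^2$ extracted from the secular equation, as in Lemma~\ref{l.reflpot.H_m}. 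You assert that limit without deriving it.
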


\begin{remark}\label{r.reflpot}
Since every reflectionless potential is real analytic on
$\mathbb{R}$, it is determined by values on $[0,\infty)$. 
Thus 
$4\bigl(\frac{d}{dx}\bigr)^2 \log\Psi_\ba$
determines a reflectionless potential uniquely.
It is known that the mapping 
$\boldsymbol{\Sigma}_n\ni\ba\mapsto 
 \boldsymbol{s}(\ba)\in \mathcal{SD}_n$
is surjective (\cite{st-spa}).
Thus every reflectionless potential is of the form
$4\bigl(\frac{d}{dx}\bigr)^2 \log\Psi_\ba$ on $[0,\infty)$. 
\end{remark}

The proof of Theorem~\ref{t.reflpot} is broken into several
steps, each step being a lemma.
During the proof, we also fix $x>0$ in addition to 
$\ba$ and $\boldsymbol{s}(\ba)=(\eta_j,m_j)_{1\le j\le n}$.
Put 
\[
    E_\ba=D_\ba^2
      +\boldsymbol{c}_\ba\otimes \boldsymbol{c}_\ba.
\]
Denote by $\widehat{\mathcal{W}}_x$ the space of
$\mathbb{R}^n$-valued continuous functions on $[0,x]$
vanishing at $0$.
$\widehat{\mathcal{W}}_x$ is a subspace of 
$\widehat{\mathcal{W}}$ as 
$w(x\wedge \cdot)\in \widehat{\mathcal{W}}$ for 
$w\in \widehat{\mathcal{W}}_x$.

\begin{lemma}\label{l.reflpot.cv0}
Let $\phi\in C^\infty([0,x];\mathbb{R}^{n\times n})$ be
a solution to the ODE on $[0,x]$
\[
    \phi^{\prime\prime}-E_\ba\phi=0.
\]
Assume that $\det\phi(y)\ne0$ for any $y\in[0,x]$.
Define $\psi\in C^\infty([0,x];\mathbb{R}^{n\times n})$ as
$\psi=-(\phi^\prime \phi^{-1})(x-\cdot)$.
Denote by $\psi_S$ (resp. $\psi_A$) the symmetric
(resp. anti-symmetric) part of $\psi$.
Then it holds that 
\begin{align}
 \label{l.reflpot.cv0.1}
    & \widehat{\mathbb{E}}\biggl[ f(\xi_\ba)
      \exp\biggl(-\frac12\int_0^x \langle 
          \boldsymbol{c}_\ba,\xi_\ba(y)\rangle^2 dy
        +\frac12\bigl\langle (\psi_S(x)-D_{\ba})\xi_\ba(x),
            \xi_\ba(x)\bigr\rangle
    \\
    & \hphantom{\widehat{\mathbb{E}}\biggl[ f(\xi_\ba)
      \exp\biggl(-\frac12\int_0^x \langle 
          \boldsymbol{c}_\ba,\xi_\ba(y)\rangle^2 dy}
      -\frac12\int_0^x |\psi_A(y)\xi_\ba(y)|^2 dy
      \biggr) \biggr]
 \nonumber
    \\
    & =\biggl(
         \frac{\det\phi(0)e^{-x\tr D_\ba}}{
           \det\phi(x)}\biggr)^{\frac12}
       \widehat{\mathbb{E}}\bigl[ 
          f(\iota+F_{\widehat{\kappa_A(\rho_{\psi_S})}})\bigr]
    \quad\text{for every }f\in \widehat{\mathcal{W}}_x.
 \nonumber
\end{align}
\end{lemma}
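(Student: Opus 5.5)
The plan is to reduce both sides to Wiener integrals of the form in \eqref{t.lap.lin.4}, working on $[0,x]$ in place of $[0,T]$. There are two applications of Theorem~\ref{t.lap.lin}: one with $\chi\equiv D_\ba$ to remove the Ornstein--Uhlenbeck process, and one with $\chi=\psi_S$ to produce the right-hand side.

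\emph{Step 1 (remove $\xi_\ba$).} By \eqref{eq:xi.alpha} with $\alpha(y)=e^{(y-x)D_\ba}$, the process $\xi_\ba$ on $[0,x]$ coincides with $\boldsymbol{\xi}_\alpha=\iota+F_{\widehat{\kappa_A(\rho_{D_\ba})}}$ (see \eqref{t.alpha.21}). Let $\Phi(w)$ denote the exponent on the left-hand side, evaluated along a path $w$. I apply \eqref{t.lap.lin.4} with $\chi\equiv D_\ba$ to the bounded functional $f\cdot(e^{\Phi}\wedge N)$. Letting $N\to\infty$ by monotone convergence, exactly as in the proof of Corollary~\ref{c.ode.ou}, the left-hand side of \eqref{l.reflpot.cv0.1} becomes
\[
\widehat{\mathbb{E}}\Bigl[f\exp\Bigl(\int_0^x\langle D_\ba\theta,d\theta\rangle-\tfrac12\int_0^x|D_\ba\theta|^2dy+\Phi(\theta)\Bigr)\Bigr].
\]
(Split $f$ into positive and negative parts if needed.)

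\emph{Step 2 (rewrite the exponent).} Since $\phi$ is nonsingular on $[0,x]$, $\psi$ is smooth. Differentiating $\psi=-(\phi'\phi^{-1})(x-\cdot)$ and using $\phi''=E_\ba\phi$ gives the Riccati relation $\psi'=E_\ba-\psi^2$. Its symmetric part is $\psi_S'=E_\ba-\psi_S^2-\psi_A^2$, because the cross terms $\psi_S\psi_A+\psi_A\psi_S$ are antisymmetric. Using $-\psi_A^2=\psi_A^\dagger\psi_A$, this yields the key pointwise identity
\[
\langle(\psi_S'-E_\ba)\xi,\xi\rangle-|\psi_A\xi|^2=-|\psi_S\xi|^2 .
\]
Next, It\^o's formula gives
\[
\tfrac12\langle\psi_S(x)\theta(x),\theta(x)\rangle=\int_0^x\langle\psi_S\theta,d\theta\rangle+\tfrac12\int_0^x\langle\psi_S'\theta,\theta\rangle dy+\tfrac12\int_0^x\tr\psi_S\,dy ,
\]
and similarly $\tfrac12\langle D_\ba\theta(x),\theta(x)\rangle=\int_0^x\langle D_\ba\theta,d\theta\rangle+\tfrac x2\tr D_\ba$. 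Substituting both and using $D_\ba^2+\boldsymbol{c}_\ba\otimes\boldsymbol{c}_\ba=E_\ba$, the total exponent collapses to
\[
\int_0^x\langle\psi_S\theta,d\theta\rangle-\tfrac12\int_0^x|\psi_S\theta|^2dy+\tfrac12\int_0^x\tr\psi\,dy-\tfrac x2\tr D_\ba .
\]
For the constant, note $\int_0^x\tr\psi\,dy=-\int_0^x\tr(\phi'\phi^{-1})\,dy=\log\frac{\det\phi(0)}{\det\phi(x)}$ by Jacobi's formula. This gives exactly the prefactor in \eqref{l.reflpot.cv0.1}; the square root is real because $\det\phi$ does not vanish and hence has constant sign on $[0,x]$.

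\emph{Step 3 (conclude).} Applying \eqref{t.lap.lin.4} with $\chi=\psi_S\in C([0,x];\mathbb{R}^{n\times n})$ turns the remaining integral into $\widehat{\mathbb{E}}[f(\iota+F_{\widehat{\kappa_A(\rho_{\psi_S})}})]$, which is the right-hand side.

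The main obstacle is the integrability bookkeeping in Step 1. The factor $e^{\frac12\langle(\psi_S(x)-D_\ba)\xi_\ba(x),\xi_\ba(x)\rangle}$ need not be bounded, so the passage must go through truncation and monotone convergence, allowing $\infty=\infty$ a priori. Finiteness then follows at the end from Step 3, since \eqref{t.lap.lin.4} shows the transformed integrand has finite integral. Once that is handled, the Riccati identity in Step 2 is the only genuinely computational point.
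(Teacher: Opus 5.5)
Your proposal is correct and takes essentially the same route as the paper. The paper packages your Steps 1--2 into Corollary~\ref{c.ode.ou}, taking $\chi\equiv D_\ba$, $\gamma=-\boldsymbol{c}_\ba\otimes\boldsymbol{c}_\ba+\psi_A^2$ and $\beta=\psi_S-D_\ba$, and uses the Riccati relation $\psi'=E_\ba-\psi^2$ to check that $\chi+\beta_S=\psi_S$ and $\gamma+\beta_S'-\chi^\dagger\chi=-\psi_S^2$ (this is your key identity); it then applies \eqref{t.lap.lin.4} with $\psi_S$ and Jacobi's formula for the prefactor, so you have simply inlined the corollary's truncation, monotone-convergence and It\^o-formula argument.
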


\begin{proof}
Define $\chi,\alpha\in C^1([0,x];\mathbb{R}^{n\times n})$ as
$\chi\equiv D_{\ba}$ and $\alpha=\e[(\cdot-x)D_\ba]$.
Then $\alpha$ satisfies that
\[
    \alpha^\prime=\chi \alpha,\quad
    \alpha(x)=I_n, \quad\text{and}\quad
    \det \alpha(y)\ne0~~\text{for }y\in[0,x].
\]
Furthermore, $\xi^{[\chi]}=\xi_{\ba}$, where $\xi^{[\chi]}$ is 
the Ornstein-Uhlenbeck process discussed in
Corollary~\ref{c.ode.ou}. 

Put 
$\gamma=-\boldsymbol{c}_\ba\otimes \boldsymbol{c}_\ba
  +\psi_A^2$ 
and $\beta=\psi_S-D_\ba$.
Then the LHS of \eqref{l.reflpot.cv0.1} is written as
\[
    \widehat{\mathbb{E}}\biggl[ f(\xi^{[\chi]})
     \exp\biggl( \frac12\int_0^x 
        \langle\gamma(y)\xi^{[\chi]}(y),\xi^{[\chi]}(y)\rangle 
        dy
        +\frac12 \langle \beta(x)\xi^{[\chi]}(x),
           \xi^{[\chi]}(x)\rangle \biggr)\biggr].
\]
By Corollary~\ref{c.ode.ou}, this is equal to
\begin{equation}\label{l.reflpot.cv0.21}
    \exp\biggl(\frac12 \int_0^x \tr[\beta(y)]dy\biggr)
    \widehat{\mathbb{E}}\Bigl[ f 
     \exp\bigl(
      \mathfrak{a}_{\chi+\beta_S,\gamma+\beta_S^\prime-\chi^\dagger\chi}^0
     \bigr)\Bigr]
    \quad\text{for }f\in C_b(\mathcal{W}_x)
    \text{ with }f\ge0.
\end{equation}

Note that $\psi^\prime=E_\ba-\psi^2$ and 
$\psi_S^\prime=E_\ba-\psi_S^2-\psi_A^2$.
Then we know that
\[
    \chi+\beta_S=D_\ba+\psi_S-D_\ba=\psi_S
\]
and
\[
    \gamma+\beta_S^\prime-\chi^\dagger\chi
      =-\boldsymbol{c}_\ba\otimes \boldsymbol{c}_\ba
       +\psi_A^2+E_\ba-\psi_S^2-\psi_A^2-D_\ba^2
     =-\psi_S^2.
\]
Hence we have that
\[
    \mathfrak{a}_{\chi+\beta_S,\gamma+\beta_S^\prime-\chi^\dagger\chi}^0    
    =\int_0^x \langle \psi_S(y)\theta(y),d\theta(y)
               \rangle
     -\frac12 \int_0^x |\psi_S(y)\theta(y)|^2 dy.
\]
In conjunction with \eqref{t.lap.lin.4}, this implies that
\begin{equation}\label{l.reflpot.cv0.22}
    \widehat{\mathbb{E}}\Bigl[ f 
     \exp\bigl(
      \mathfrak{a}_{\chi+\beta_S,\gamma+\beta_S^\prime-\chi^\dagger\chi}^0
     \bigr)\Bigr]
    =\widehat{\mathbb{E}}\Bigl[ 
       f\Bigl(\iota+F_{\widehat{\kappa_A(\rho_{\psi_S})}}\Bigr)
       \Bigr].
\end{equation}

Notice that 
\[
    \det\phi(x)=\det\phi(0)
     \exp\biggl(-\int_0^x \tr[\psi(y)]dy\biggr).
\]
Hence we have that
\[
    \exp\biggl(\int_0^x \tr[\beta_S(y)]dy\biggr)
    =\exp\biggl(\int_0^x \tr[\psi(y)]dy
       -x\tr D_\ba\biggr)
    =\frac{\det\phi(0) e^{-x\tr D_\ba}}{\det\phi(x)}.
\]
Plugging this and \eqref{l.reflpot.cv0.22} into 
\eqref{l.reflpot.cv0.21}, we obtain
\eqref{l.reflpot.cv0.1}. 
\end{proof}

\begin{lemma}\label{l.reflpot.ode}
Let 
$\phi_\ba\in C^\infty([0,\infty);\mathbb{R}^{n\times n})$
be the unique solution to the ODE
\begin{equation}\label{eq.reflpot.ode}
    \phi_\ba^{\prime\prime}-E_\ba\phi_\ba=0,
    \quad \phi_\ba(0)=I_n,~~
    \phi_\ba^\prime(0)=-D_\ba.
\end{equation}
Then 
$\det\phi_\ba(y)\ne0$ for every $y\in[0,\infty)$.
\end{lemma}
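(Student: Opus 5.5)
The plan is a direct energy (Lyapunov-type) argument for the vector solutions of the ODE \eqref{eq.reflpot.ode}, using only that $D_\ba$ is symmetric (indeed diagonal) and that $E_\ba-D_\ba^2=\boldsymbol{c}_\ba\otimes \boldsymbol{c}_\ba$ is symmetric and non-negative definite, with $\langle(\boldsymbol{c}_\ba\otimes \boldsymbol{c}_\ba)\xi,\xi\rangle=\langle \boldsymbol{c}_\ba,\xi\rangle^2$. We argue by contradiction: suppose $\det\phi_\ba(y_0)=0$ for some $y_0>0$ ($y_0=0$ is excluded since $\phi_\ba(0)=I_n$). Pick $v\in \mathbb{R}^n\setminus\{0\}$ with $\phi_\ba(y_0)v=0$, and set $f(y)=\phi_\ba(y)v$ for $y\in[0,y_0]$. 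Then
\[
    f^{\prime\prime}=E_\ba f,\quad f(0)=v,\quad f^\prime(0)=-D_\ba v,\quad f(y_0)=0.
\]

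The key step is to introduce $h=f^\prime+D_\ba f$, which vanishes at $y=0$, together with
\[
    Q(y)=\langle f(y),h(y)\rangle=\langle f(y),f^\prime(y)\rangle+\langle D_\ba f(y),f(y)\rangle .
\]
Differentiating and using $D_\ba^\dagger=D_\ba$ and $f^{\prime\prime}=E_\ba f$, we get
\[
    Q^\prime=|f^\prime|^2+\langle E_\ba f,f\rangle+2\langle D_\ba f,f^\prime\rangle
    =|f^\prime+D_\ba f|^2+\langle (E_\ba-D_\ba^2)f,f\rangle
    =|h|^2+\langle \boldsymbol{c}_\ba,f\rangle^2\ge0 .
\]
Since $Q(0)=0$ (because $h(0)=0$) and $Q(y_0)=0$ (because $f(y_0)=0$), the monotonicity of $Q$ forces $Q\equiv0$ on $[0,y_0]$. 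Hence $Q^\prime\equiv0$, and in particular $h\equiv0$ there.

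Consequently $f^\prime=-D_\ba f$ with $f(0)=v$, so $f(y)=\e[-yD_\ba]v$, which is nonzero at $y_0$ because $\e[-y_0D_\ba]$ is invertible. This contradicts $f(y_0)=0$, so $\det\phi_\ba(y)\ne0$ for every $y\ge0$.

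The only step that needs care is finding the right quadratic quantity $Q$. The naive choice $\langle f,f^\prime\rangle$ is monotone as well, but its initial value $-\langle D_\ba v,v\rangle$ has no definite sign. Shifting by $\langle D_\ba f,f\rangle$ is exactly what makes $Q(0)=0$ and turns $Q^\prime$ into a sum of squares. Everything else is routine.
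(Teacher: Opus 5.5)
Your proof is correct and is essentially the paper's argument: the paper also integrates $\frac{d}{dz}\langle \phi_\ba v,\widetilde{\phi}_\ba v\rangle=|\widetilde{\phi}_\ba v|^2+\langle \boldsymbol{c}_\ba,\phi_\ba v\rangle^2$ over $[0,y]$, deduces that $\phi_\ba v$ solves a first-order linear equation, and gets a contradiction from the invertibility of a matrix exponential. Your sign choice $h=f^\prime+D_\ba f$ is the right one, since it makes the boundary term at $0$ vanish. The paper's $\widetilde{\phi}_\ba=\phi_\ba^\prime-D_\ba\phi_\ba$ has $\widetilde{\phi}_\ba(0)=-2D_\ba$, so its identity silently drops a term $-2\langle D_\ba v,v\rangle$, and its conclusion $\phi_\ba(z)v=\e[zD_\ba]v$ should read $\e[-zD_\ba]v$.
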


\begin{proof}
To show the assertion, we employ proof by contradiction.
To do so, suppose that there is a $y\in(0,\infty)$ with 
$\det\phi_\ba(y)=0$.
Take a $v\in \mathbb{R}^n\setminus\{0\}$ satisfying that  
$\phi_\ba(y)v=0$.
Set 
$\widetilde{\phi}_\ba
 =\phi_\ba^\prime-D_\ba\phi_\ba$.
By \eqref{eq.reflpot.ode}, we have that
\[
    \widetilde{\phi}_\ba^\prime
      =(\boldsymbol{c}_\ba\otimes 
        \boldsymbol{c}_\ba)\phi_\ba
        -D_\ba\widetilde{\phi}_\ba.
\]
Rewriting as
$\phi_\ba^\prime
 =\widetilde{\phi}_\ba+D_\ba\phi_\ba$,
we see that
\[
    0=\langle \phi_\ba(y)v,
      \widetilde{\phi}_\ba(y)v \rangle
    =\int_0^y \Bigl\{|\widetilde{\phi}_\ba(z)v|^2
       +\langle \boldsymbol{c}_\ba,
         \phi_\ba(z)v\rangle^2
      \Bigr\}dz.
\]
This implies that
$\widetilde{\phi}_\ba(z)v=0$ for every $z\in[0,y]$.
Hence 
\[
    \phi_\ba^\prime(z) v-D_\ba \phi_\ba(z)v
    =0\quad\text{for every }z\in[0,y].
\]
Combining with $\phi_\ba(0)=I_n$, we have that
\[
    \phi_\ba(z)v=\e[zD_\ba]v 
    \quad\text{for every }z\in[0,y].
\]
Since $\phi_\ba(y)v=0$, this implies that $v=0$, which is a
contradiction. 
\end{proof}

\begin{lemma}\label{l.reflpot.cv}
Let $\phi_\ba$ be as in Lemma~\ref{l.reflpot.ode}.
It holds that 
\begin{equation}\label{l.reflpot.cv.1}
    \Psi_\ba(x)=\biggl(
      \frac{e^{-x \tr D_\ba}}{\det\phi_\ba(x)}\biggr)^{\frac12}.
\end{equation}
\end{lemma}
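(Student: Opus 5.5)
The plan is to apply Lemma~\ref{l.reflpot.cv0} with $f\equiv1$ and a solution $\phi$ of $\phi''-E_\ba\phi=0$ chosen so that the extra exponential terms in \eqref{l.reflpot.cv0.1} disappear. Then the left hand side reduces to $\Psi_\ba(x)$. The right hand side becomes $\bigl(\det\phi(0)e^{-x\tr D_\ba}/\det\phi(x)\bigr)^{1/2}$ times $\widehat{\mathbb{E}}[1]=1$. The natural candidate is to reverse time: set $\phi(y)=\phi_\ba(x-y)$ for $y\in[0,x]$. This $\phi$ still solves $\phi''-E_\ba\phi=0$, because the equation has constant coefficients and is of second order. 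By Lemma~\ref{l.reflpot.ode} we have $\det\phi(y)\ne0$ on $[0,x]$. Moreover $\det\phi(0)/\det\phi(x)=\det\phi_\ba(x)^{-1}$, which is exactly the factor in \eqref{l.reflpot.cv.1}.

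Next I compute $\psi=-(\phi'\phi^{-1})(x-\cdot)$. Since $\phi'(y)=-\phi_\ba'(x-y)$, we get $\psi(y)=\phi_\ba'(y)\phi_\ba(y)^{-1}$. In particular $\psi(x)=\phi_\ba'(x)\phi_\ba(x)^{-1}$. This does not immediately kill the boundary term $\frac12\langle(\psi_S(x)-D_\ba)\xi_\ba(x),\xi_\ba(x)\rangle$. It does vanish at the other end, since $\psi(0)=-D_\ba$... that is the wrong endpoint, so this choice is not yet right.

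I would therefore first recheck the direction conventions. In the lemma the boundary term is evaluated at time $x$, and $\psi$ is evaluated at $x-\cdot$. So $\psi(x)=-\phi'(0)\phi(0)^{-1}$, and the condition we want is $\phi(0)=I_n$, $\phi'(0)=-D_\ba$, i.e.\ $\phi=\phi_\ba$ itself restricted to $[0,x]$. With this choice, $\psi_S(x)=-D_\ba$ symmetric, so $\psi_S(x)-D_\ba=-2D_\ba$, which still does not vanish. The sign must be reconciled, and I would resolve it by the following step.

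The key structural fact needed is that $\psi$ is symmetric, i.e.\ $\psi_A\equiv0$. For a Riccati solution $\psi'=E_\ba-\psi^2$ with symmetric $E_\ba$ and symmetric initial value, symmetry is preserved by uniqueness (the same argument as for $\bR^\dagger=\bR$ in Theorem~\ref{t.riccati}). This removes the $\int|\psi_A\xi_\ba|^2$ term. For the boundary term I would choose the terminal data of $\phi$ so that $\psi_S(x)=D_\ba$. Concretely, take $\phi(y)=\phi_\ba(y)$-type data with $-\phi'(0)\phi(0)^{-1}=D_\ba$, i.e.\ $\phi'(0)=-D_\ba$, $\phi(0)=I_n$. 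This is exactly $\phi_\ba$, and I expect the sign discrepancy above to be an error in my reading of the convention rather than in the lemma.

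The main obstacle is this bookkeeping of time reversal and signs: matching $\psi(x)$ to $D_\ba$, confirming $\det\phi(0)=1$ so that the prefactor is $(e^{-x\tr D_\ba}/\det\phi_\ba(x))^{1/2}$, and checking nonsingularity via Lemma~\ref{l.reflpot.ode}. Once these are settled, taking $f=1$ in \eqref{l.reflpot.cv0.1} gives \eqref{l.reflpot.cv.1} directly. The case $x=0$ is trivial.
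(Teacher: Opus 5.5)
Your final choice, $f\equiv 1$ and $\phi=\phi_\ba$ in Lemma~\ref{l.reflpot.cv0}, is exactly the paper's proof. It uses the same three ingredients: nonsingularity from Lemma~\ref{l.reflpot.ode}, $\psi^\dagger=\psi$ by uniqueness for $\psi'=E_\ba-\psi^2$, and $\det\phi_\ba(0)=1$.

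The ``sign discrepancy'' you left open is only an arithmetic slip. Indeed $\psi(x)=-\phi_\ba'(0)\phi_\ba(0)^{-1}=-(-D_\ba)=D_\ba$, so $\psi_S(x)-D_\ba=0$ and the boundary term vanishes outright. The time-reversed attempt $\phi_\ba(x-\cdot)$ should simply be discarded.
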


\begin{proof}
Let $\phi_\ba$ be as in Lemma~\ref{l.reflpot.ode}.
By the lemma, $\det\phi_\ba(y)\ne0$ for every $y\in[0,x]$.
Put $\psi_\ba=-(\phi_\ba^\prime \phi_\ba^{-1})(x-\cdot)$.
It holds that
\[
    \psi_\ba^\prime=E_\ba-\psi_\ba^2,\quad
    \psi_\ba(x)=D_\ba.
\]
Mention that $\psi_\ba^\dagger$ satisfies the same ODE.
Hence $\psi_\ba=\psi_\ba^\dagger$.
Thus we obtain that 
\[
    \psi_\ba=(\psi_\ba)_S 
    \quad\text{and}\quad
    (\psi_\ba)_A=0.
\]
Plugging these into \eqref{l.reflpot.cv0.1} with $f=1$ and
$\psi=\psi_\ba$, we obtain \eqref{l.reflpot.cv.1}.
\end{proof}

\begin{lemma}\label{l.reflpot.E_sigma}
Put $R=\text{\rm diag}[\eta_1,\dots,\eta_n]$.
Define $U=(U_j^i)_{1\le i,j\le n}\in \mathbb{R}^{n\times n}$ by 
\[
    U_j^i
    =\begin{cases}
       \dfrac1{\,|(D_\ba^2-\eta_j^2I_n)^{-1}
              \boldsymbol{c}_\ba|\,}
       \dfrac{c_i}{p_i^2-\eta_j^2}
       & \text{if }j\notin\{j(\ell)\mid 1\le\ell\le m\},
       \\[15pt]
       \dfrac{\delta_{i,j(\ell)+1}c_{j(\ell)}-
              \delta_{i,j(\ell)}c_{j(\ell)+1}}{
            (c_{j(\ell)}^2+c_{j(\ell)+1}^2)^{\frac12}}
       & \text{if }j=j(\ell).
     \end{cases}
\]
Then $U$ is an orthogonal matrix and 
\[
    E_\ba=UR^2U^{-1}.
\]
Furthermore, it holds that
\begin{equation}\label{eq.reflpot.phi_sigma.2}
    \phi_\ba(y)
    =U\bigl\{\ch[yR]
      -\mathfrak{sh}[yR]R^{-1}U^{-1}D_\ba U\bigr\}U^{-1}
    \quad\text{for }y\in[0,\infty).
\end{equation}
\end{lemma}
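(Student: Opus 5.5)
The proof will be pure linear algebra on the symmetric matrix $E_\ba=D_\ba^2+\boldsymbol{c}_\ba\otimes\boldsymbol{c}_\ba$ (a rank-one perturbation of a diagonal matrix), followed by solving the linear ODE \eqref{eq.reflpot.ode} in the eigenbasis. First I will show that each column $U_j=(U_j^i)_i$ is a unit eigenvector of $E_\ba$ with eigenvalue $\eta_j^2$; orthonormality then follows because $E_\ba$ is symmetric and the $\eta_j$ are distinct by Remark~\ref{rem.m_j>0}.

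\emph{Columns $j\notin\{j(\ell)\}$.} Here $\eta_j^2=r$ is a root of $-1=\sum_k c_k^2/(p_k^2-r)$, and by \eqref{r.m_j>0.1} no $p_k^2$ equals $r$. Put $v=(D_\ba^2-rI_n)^{-1}\boldsymbol{c}_\ba$. Then
\[
(E_\ba-rI_n)v=\boldsymbol{c}_\ba+\boldsymbol{c}_\ba\langle\boldsymbol{c}_\ba,v\rangle=\boldsymbol{c}_\ba\Bigl(1+\sum_k\frac{c_k^2}{p_k^2-r}\Bigr)=0,
\]
so $U_j=v/|v|$ is a unit eigenvector with eigenvalue $\eta_j^2$.

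\emph{Columns $j=j(\ell)$.} Here $p_{j(\ell)}^2=p_{j(\ell)+1}^2=\eta_j^2$, and the vector $w$ supported on the indices $j(\ell),j(\ell)+1$ with entries $-c_{j(\ell)+1},\,c_{j(\ell)}$ satisfies $\langle\boldsymbol{c}_\ba,w\rangle=0$. Hence $E_\ba w=D_\ba^2w=\eta_j^2w$, and it has the normalization stated in the lemma.

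Since the $n$ values $\eta_j$ are distinct, the eigenvectors are mutually orthogonal. So $U$ is orthogonal and $E_\ba=UR^2U^{-1}$. I expect the bookkeeping here to be the main obstacle. One must check via \eqref{r.m_j>0.1} that the set $\{\eta_j\}$ contains each $p_{j(\ell)}$ and each $\sqrt{r_i}$ exactly once, and that no $p_k^2$ coincides with an $r_i$. The latter uses the hypothesis $\#\{|p_k|\}=n-m$ together with the fact that the merged secular equation has exactly $n-m$ positive roots interlacing the distinct $p^2$ values.

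For \eqref{eq.reflpot.phi_sigma.2}, the right-hand side, with $y$-derivative evaluated through the power series of $\ch$ and $\mathfrak{sh}$, has the following properties:
\begin{itemize}
\item it equals $U\,U^{-1}=I_n$ at $y=0$;
\item its derivative is $U\{R\,\mathfrak{sh}[yR]R-\ch[yR]\,U^{-1}D_\ba U\}U^{-1}$, using $\mathfrak{sh}[yR]'=\ch[yR]R$ and $\ch[yR]'=\mathfrak{sh}[yR]R$ (diagonal, so everything commutes), and this equals $-D_\ba$ at $y=0$;
\item its second derivative is $U R^2\{\cdots\}U^{-1}=E_\ba\cdot(\text{RHS})$.
\end{itemize}
By uniqueness of solutions to \eqref{eq.reflpot.ode}, the right-hand side equals $\phi_\ba$.
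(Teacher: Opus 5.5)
Your proposal matches the paper's proof essentially step for step: you use the eigenvectors $(D_\ba^2-r_kI_n)^{-1}\boldsymbol{c}_\ba$ and the two-entry vectors orthogonal to $\boldsymbol{c}_\ba$, get orthogonality from the $n$ distinct eigenvalues of the symmetric $E_\ba$, and then check that the right-hand side of \eqref{eq.reflpot.phi_sigma.2} solves \eqref{eq.reflpot.ode}. One small slip: by your own rule $\ch[yR]'=\mathfrak{sh}[yR]R$, the first derivative is $U\{\mathfrak{sh}[yR]R-\ch[yR]U^{-1}D_\ba U\}U^{-1}$, with no extra factor $R$ on the left, and this is the expression your second-derivative computation actually uses.
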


\begin{proof}
To show the first assertion, put 
\[
    u_\ell=(\underbrace{0,\dots,0}_{j(\ell)-1},
       -c_{j(\ell)+1},c_{j(\ell)},
      \underbrace{0,\dots,0}_{n-j(\ell)-1})^\dagger
    \in \mathbb{R}^n.
\]
Since 
$\langle \boldsymbol{c}_\ba,u_\ell
   \rangle=0$ 
and $p_{j(\ell)+1}^2=p_{j(\ell)}^2$, we see that
\[
    E_\ba u_\ell=D_\ba^2u_\ell=p_{j(\ell)}^2u_\ell.
\]

Noting that 
\[
    \det(D_\ba^2-r_kI_n)=\prod_{i=1}^n (p_i^2-r_k)\ne0,
\]
we set 
\[
    v_k=(D_\ba^2-r_kI_n)^{-1}\boldsymbol{c}_\ba.
\]
Then we have that
\[
    E_\ba v_k=(1+\langle v_k,\boldsymbol{c}_\ba
       \rangle)\boldsymbol{c}_\ba
       +r_k v_k.
\]
Since 
\[
    1+\langle v_k,\boldsymbol{c}_\ba
       \rangle
    =1+\sum_{j=1}^n \frac{c_j^2}{p_j^2-r_k}=0,
\]
it holds that
\[
    E_\ba v_k=r_k v_k.
\]

Thus 
$p_{j(1)}^2,\dots,p_{j(m)}^2$ and $r_1,\dots,r_{n-m}$
are different $n$ positive eigenvalues of $E_\ba$ and 
$u_1,\dots,u_m$ and $v_1,\dots,v_{n-m}$ are the
corresponding eigenvectors.
If $j\notin\{j(\ell)\mid 1\le \ell\le m\}$, then
$\eta_j=\sqrt{r_k}$ for some $1\le k\le n-m$, and 
$\eta_{j(\ell)}=p_{j(\ell)}$ for $1\le \ell\le m$.
Hence $U$ is an orthogonal matrix and $E_{\ba}=UR^2U^{-1}$.

We next show the expression \eqref{eq.reflpot.phi_sigma.2}.
To do so, denote its RHS by $\psi$.
It is easily seen that
\[
    \psi^\prime(y)
      =UR\bigl\{\mathfrak{sh}[yR]
       -\ch[yR]R^{-1}U^{-1}D_\ba U\bigr\}U^{-1}
\]
and
\[
    \psi^{\prime\prime}(y)
      =UR^2\bigl\{\ch[yR]
       -\mathfrak{sh}[yR]R^{-1}U^{-1}D_\ba U\bigr\}U^{-1}
      =E_\ba\psi(y).
\]
Thus $\psi$ obeys the ODE \eqref{eq.reflpot.ode}, and hence 
$\phi_\ba=\psi$.
\end{proof}

\begin{lemma}\label{l.reflpot.H_0}
Assume that $\text{\bf(H)}_0$ holds.
\\
{\rm(i)}
Put 
\begin{align*}
    &
    X=\biggl(\dfrac1{\eta_i+p_j}\biggr)_{1\le i,j\le n},
    \quad
    C(\boldsymbol{c}_\ba)
      =\text{\rm diag}[c_1,\dots,c_n],
    \\
    & V(\boldsymbol{c}_\ba)=\text{\rm diag}\Bigl[
        |(D_\ba^2-\eta_1^2I_n)^{-1}\boldsymbol{c}_\ba|^{-1},
        \dots,
        |(D_\ba^2-\eta_n^2I_n)^{-1}\boldsymbol{c}_\ba|^{-1}
        \Bigr],
    \\
    & \tau(i)=\text{\rm sgn}\biggl(
        \prod_{j=1}^n (p_j-\eta_i)\biggr),
      \quad
       b(i)=\tau(i)\left\{-2\eta_i
       \frac{\prod_{\alpha\ne i}
              (\eta_\alpha^2-\eta_i^2)}{
              \prod_{\beta=1}^n (p_\beta^2-\eta_i^2)}
       \right\}^{\frac12},
\end{align*}
and $B=\text{\rm diag}[b(1),\dots,b(n)]$.
Then it holds that
\begin{equation}\label{l.reflpot.H_0.1}
    U^{-1}\phi_\ba(y)U
    =-\frac12 V(\boldsymbol{c}_\ba)R^{-1}B
      \Bigl(I_n+G_{\boldsymbol{s}(\ba)}(y)\Bigr)
      \e[yR]B^{-1}
      X C(\boldsymbol{c}_\ba)U.
\end{equation}
{\rm(ii)}
The identity \eqref{t.reflpot.1} holds.
\end{lemma}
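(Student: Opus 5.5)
The plan is to work entrywise with the explicit expression \eqref{eq.reflpot.phi_sigma.2} from Lemma~\ref{l.reflpot.E_sigma}. Under $\text{\bf (H)}_0$ we have $m=0$, all $|p_j|$ are distinct, $\eta_j=\sqrt{r_j}$, and $U_j^i=V_jc_i/(p_i^2-\eta_j^2)$, where $V_j$ denotes the $j$th diagonal entry of $V(\boldsymbol{c}_\ba)$. Since $U$ is orthogonal, $U^{-1}=U^\dagger$. Multiplying \eqref{l.reflpot.H_0.1} on the right by $U^{-1}$ reduces (i) to the identity
\[
    U^{-1}\phi_\ba(y)
    =-\frac12 V(\boldsymbol{c}_\ba)R^{-1}B
      \bigl(I_n+G_{\boldsymbol{s}(\ba)}(y)\bigr)
      \e[yR]B^{-1}XC(\boldsymbol{c}_\ba).
\]
By \eqref{eq.reflpot.phi_sigma.2}, the left side equals $\ch[yR]U^\dagger-\mathfrak{sh}[yR]R^{-1}U^\dagger D_\ba$. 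Its $(j,i)$ entry is therefore $V_jc_i(p_i^2-\eta_j^2)^{-1}\{\cosh(\eta_jy)-\frac{p_i}{\eta_j}\sinh(\eta_jy)\}$.

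Writing $\cosh$ and $\sinh$ in exponentials and using $(\eta-p)/(p^2-\eta^2)=-1/(p+\eta)$ and $(\eta+p)/(p^2-\eta^2)=1/(p-\eta)$, this entry becomes
\[
    \frac{V_jc_i}{2\eta_j}\Bigl\{-\frac{e^{\eta_jy}}{\eta_j+p_i}
      +\frac{e^{-\eta_jy}}{p_i-\eta_j}\Bigr\}.
\]
The first term is exactly the $(j,i)$ entry of $-\frac12V(\boldsymbol{c}_\ba)R^{-1}\e[yR]XC(\boldsymbol{c}_\ba)$, which is the $I_n$-part of the right side. The $G$-part of the right side has $(j,i)$ entry
\[
    -\frac{V_jc_i}{2\eta_j}e^{-\eta_jy}\sum_{k}\frac{b(j)\sqrt{m_jm_k}}{b(k)(\eta_j+\eta_k)(\eta_k+p_i)}.
\]
The factors $e^{\eta_ky}$ cancel against the factor $e^{-(\eta_j+\eta_k)y}$ in $G$. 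So (i) reduces to the $y$-independent algebraic identity
\[
    \sum_{k=1}^n\frac{b(j)\sqrt{m_jm_k}}{b(k)(\eta_j+\eta_k)(\eta_k+p_i)}
    =\frac1{\eta_j-p_i}
    \quad\text{for all }i,j.
\]

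This identity is the main obstacle. I would first use the definitions of $m_k$ and $b(k)$, together with the sign analysis of Remark~\ref{rem.m_j>0}, to write $\sqrt{m_k}/b(k)$ and $b(j)\sqrt{m_j}$ as explicit signed products. The guess is that $m_k/b(k)^2$ equals $\prod_{\alpha\ne k}(\eta_\alpha+\eta_k)^{-2}\prod_\beta(p_\beta+\eta_k)^2$ up to constants. With this, the sum should become $\sum_k\operatorname{Res}_{z=\eta_k}$ of a rational function such as
\[
    F(z)=\frac{\prod_\beta(z+p_\beta)\prod_\alpha(\eta_\alpha+\eta_j)}{(z+\eta_j)(z+p_i)\prod_\alpha(z-\eta_\alpha)\prod_{\beta}(\eta_j+p_\beta)}
    \quad\text{(or its analogue with }z\mapsto -z).
\]
This $F$ decays like $z^{-2}$, so the sum of all its residues vanishes. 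The remaining residues are at $z=-\eta_j$ and $z=-p_i$. One of them vanishes because the numerator has the factor $z+p_i$ (the index $\beta=i$), and the other should produce $1/(\eta_j-p_i)$. The secular equation $-1=\sum_jc_j^2/(p_j^2-\eta_k^2)$ enters through $V_k$ and the normalisation of $b(k)$. If the precise form of $F$ does not match at first, I would adjust it by checking the case $n=1$.

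For (ii), I take determinants in \eqref{l.reflpot.H_0.1}. The constant matrices give a factor $K$ independent of $y$, so
\[
    \det\phi_\ba(y)=K\det\bigl(I_n+G_{\boldsymbol{s}(\ba)}(y)\bigr)e^{y\sum_j\eta_j}.
\]
Setting $y=0$ and using $\phi_\ba(0)=I_n$ gives $K=\det(I_n+G_{\boldsymbol{s}(\ba)}(0))^{-1}$. Substituting this into Lemma~\ref{l.reflpot.cv}, in the form $4\log\Psi_\ba(x)=-2x\tr D_\ba-2\log\det\phi_\ba(x)$ with $\tr D_\ba=\sum_jp_j$, yields \eqref{t.reflpot.1} directly.
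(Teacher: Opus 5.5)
Your approach is correct. It differs from the paper only in how the central algebraic identity is proved. Your target
\[
\sum_{k=1}^n\frac{b(j)\sqrt{m_jm_k}}{b(k)(\eta_j+\eta_k)(\eta_k+p_i)}=\frac1{\eta_j-p_i}
\]
is the paper's identity $YX^{-1}=-B\bigl(\sqrt{m_im_j}/(\eta_i+\eta_j)\bigr)_{1\le i,j\le n}B^{-1}$, with $Y=(1/(p_j-\eta_i))_{1\le i,j\le n}$, multiplied through by $X$ and written entrywise.

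The paper proceeds in three steps. It inverts the Cauchy matrix $X$ through cofactors and Cauchy's determinant \eqref{eq.cauchy}. It then simplifies $YX^{-1}$ with Lagrange's interpolation identity \eqref{eq.lagrange}, and finally does the sign bookkeeping for $b(i)$. You instead verify the multiplied-out identity directly by a single residue sum.

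Your route is shorter, and it never needs $X^{-1}$ or $\det X\ne0$ for (i). In (ii), both arguments obtain the non-vanishing of the constant from $y=0$. The paper's route has the advantage of producing the explicit value \eqref{l.reflpot.H_0.211} of $\det X$, which it reuses in Lemma~\ref{l.sol.nondeg} to identify $Z_\ba$. Your part (ii) is the same as the paper's.

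Your sketch of the residue step needs three corrections.

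\textbf{The formula for $m_k/b(k)^2$.} Your guess is wrong. The definitions give
\[
m_k/b(k)^2=\prod_\beta(p_\beta+\eta_k)^2\Big/\prod_{\alpha\ne k}(\eta_\alpha-\eta_k)^2,
\]
with $\eta_\alpha-\eta_k$, not $\eta_\alpha+\eta_k$. A check at $n=1$ cannot detect this, because the products over $\alpha\ne k$ are then empty. Fixing the signs with Remark~\ref{rem.m_j>0} gives
\[
b(j)\sqrt{m_j}=\frac{2\eta_j\prod_{\alpha\ne j}(\eta_\alpha+\eta_j)}{\prod_{\beta}(p_\beta-\eta_j)},
\qquad
\frac{\sqrt{m_k}}{b(k)}=\frac{m_k}{b(k)\sqrt{m_k}}=-\frac{\prod_{\beta}(p_\beta+\eta_k)}{\prod_{\alpha\ne k}(\eta_\alpha-\eta_k)}.
\]
The first expression has sign $\tau(j)$, as $b(j)$ does, which is why the equality holds with that sign. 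Only with $\eta_\alpha-\eta_k$ are the summands residues of your $F$.

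\textbf{The constant in $F$.} Drop the constant factor you attached to $F$ and take
\[
F(z)=\frac{\prod_\beta(z+p_\beta)}{(z+\eta_j)(z+p_i)\prod_\alpha(z-\eta_\alpha)}.
\]
Then the $k$th summand equals $(-1)^nb(j)\sqrt{m_j}\,\mathrm{Res}_{z=\eta_k}F$. Since $F$ decays like $z^{-2}$ and has no pole at $-p_i$, we get $\sum_k\mathrm{Res}_{z=\eta_k}F=-\mathrm{Res}_{z=-\eta_j}F$, where
\[
\mathrm{Res}_{z=-\eta_j}F=\frac{(-1)^n\prod_\beta(p_\beta-\eta_j)}{2\eta_j(p_i-\eta_j)\prod_{\alpha\ne j}(\eta_\alpha+\eta_j)}.
\]
This yields exactly $1/(\eta_j-p_i)$. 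The constant you wrote would not.

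\textbf{The secular equation.} It does not enter this identity. The factor $V_jc_i/(2\eta_j)$ is common to both sides, and what remains is a purely rational identity in the $\eta$'s and $p$'s. The secular equation was used earlier: in the orthogonality of $U$, which gives $U^{-1}=U^\dagger$, and in the interlacing behind Remark~\ref{rem.m_j>0}.
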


\begin{remark}
By \eqref{r.m_j>0.1}, we have that
\[
    \frac{\prod_{\alpha\ne i}
            (\eta_\alpha^2-\eta_i^2)}{
          \prod_{\beta=1}^n (p_\beta^2-\eta_i^2)}
    <0
    \quad\text{for }1\le i\le n.
\]
Hence each $b(i)$ is a real number.
\end{remark}

\begin{proof}
(i)
By \eqref{eq.reflpot.phi_sigma.2}, we have that
\[
    \phi_\ba(y)=\frac12UR^{-1}\Bigl\{
      \e[yR](RU^{-1}-U^{-1}D_\ba)
      +\e[-yR](RU^{-1}+U^{-1}D_\ba)\Bigr\}.
\]
Due to Lemma~\ref{l.reflpot.E_sigma}, we know that 
\begin{equation}\label{eq.p264}
    U=C(\boldsymbol{c}_\ba)
      \biggl(\frac1{p_i^2-\eta_j^2}\biggr)_{1\le i,j\le n}
      V(\boldsymbol{c}_\ba).
\end{equation}
By the same lemma, we obtain 
\[
     U^{-1}=U^\dagger 
     =V(\boldsymbol{c}_\ba)\biggl(
        \frac1{p_j^2-\eta_i^2}\biggr)_{1\le i,j\le n}
      C(\boldsymbol{c}_\ba).
\]
By virtue of the commutativity of diagonal matrices, we
obtain that
\[
    RU^{-1}\pm U^{-1}D_\ba
    =V(\boldsymbol{c}_\ba)\biggl(
     \frac1{\pm p_j-\eta_i}\biggr)_{1\le i,j\le n}
     C(\boldsymbol{c}_\ba).
\]

By Cauchy's identity \eqref{eq.cauchy}, we have that 
\begin{equation}\label{l.reflpot.H_0.211}
    \det X
    =\frac{\prod_{1\le i<j\le n}
          (\eta_i-\eta_j)(p_i-p_j)}{
       \prod_{i,j=1}^n (\eta_i+p_j)}
    \ne0.
\end{equation}
If we set 
\[
    Y=\biggl(\frac1{p_j-\eta_i}\biggr)_{1\le i,j\le n},
\]
then, thanks to the commutation
$RV(\boldsymbol{c}_\ba)=V(\boldsymbol{c}_\ba)R$, we obtain
that
\begin{equation}\label{l.reflpot.H_0.21}
    \phi_\ba(y)=-\frac12 UR^{-1}V(\boldsymbol{c}_\ba)
     \bigl\{I_n-\e[-yR]YX^{-1}\e[-yR]\bigr\}
        \e[yR]XC(\boldsymbol{c}_\ba).
\end{equation}

By the definition of the cofactor matrix 
$\widetilde{X}=(\widetilde{X}_{k\ell})_{1\le k,\ell\le n}$
of $X$ and Cauchy's identity \eqref{eq.cauchy}, we have that 
\[
    \widetilde{X}_{k\ell}
    = (-1)^{k+\ell} 
      \frac{\prod_{i<j,i,j\ne \ell}(\eta_i-\eta_j)
               \prod_{i<j,i,j\ne k}(p_i-p_j)}{
       \prod_{i,j=1,i\ne\ell,j\ne k}^n (\eta_i+p_j)}.
\]
Hence $(k,\ell)$th component of $X^{-1}$ is given by
\[
    (X^{-1})_{k\ell}=\frac1{\det X}\widetilde{X}_{k\ell}
     =\frac{\prod_{\alpha\ne\ell}(\eta_\alpha+p_k)
            \prod_{\beta=1}^n(\eta_\ell+p_\beta)}{
        \prod_{\alpha\ne\ell}(\eta_\alpha-\eta_\ell)
        \prod_{\beta\ne k}(p_\beta-p_k)}.
\]
This expression of $X^{-1}$ and Lagrange's
identity \eqref{eq.lagrange} imply that the $(i,j)$th
component of $YX^{-1}$ is represented as 
\begin{align}
    (YX^{-1})_{ij}
    & =\frac{\prod_{\beta=1}^n(\eta_j+p_\beta)}{
        \prod_{\beta=1}^n(p_\beta-\eta_i)
        \prod_{\alpha\ne j}(\eta_\alpha-\eta_j)}
      \prod_{\alpha\ne j}(\eta_\alpha+\eta_i)
\label{l.reflpot.H_0.22}
    \\
    & =\frac{\prod_{\alpha\ne i}(\eta_\alpha+\eta_i)}{
        \prod_{\beta=1}^n(p_\beta-\eta_i)}
     \frac{2\eta_i}{\eta_i+\eta_j}
     \frac{\prod_{\beta=1}^n(\eta_j+p_\beta)}{        
        \prod_{\alpha\ne j}(\eta_\alpha-\eta_j)}.
 \nonumber 
\end{align}

Notice that
\[
    \frac{\prod_{\alpha\ne i}(\eta_\alpha+\eta_i)}{
         \prod_{\beta=1}^n(p_\beta-\eta_i)}
    \frac{\prod_{\beta=1}^n(p_\beta+\eta_i)}{
          \prod_{\alpha\ne i}(\eta_\alpha-\eta_i)}
    =\frac{\prod_{\alpha\ne i}(\eta_\alpha+\eta_i)^2
     \prod_{\beta=1}^n(p_\beta+\eta_i)^2}{
       \prod_{\alpha\ne i}(\eta_\alpha^2-\eta_i^2)
        \prod_{\beta=1}^n(p_\beta^2-\eta_i^2)}
    <0.
\]
This implies that
\[
    \text{\rm sgn}\left[
      \frac{\prod_{\beta=1}^n(p_\beta+\eta_i)}{
          \prod_{\alpha\ne i}(\eta_\alpha-\eta_i)}
       \right]
    =-\tau(i).
\]
Hence we have that
\[
    b(i)m_i^{\frac12}
      =\tau(i)2\eta_i\left|
           \frac{\prod_{\alpha\ne i}
                   (\eta_\alpha+\eta_i)}{
         \prod_{\beta=1}^n(p_\beta-\eta_i)}\right|
      =2\eta_i
        \frac{\prod_{\alpha\ne i}
             (\eta_\alpha+\eta_i)}{
        \prod_{\beta=1}^n(p_\beta-\eta_i)}
\]
and
\[
    m_i^{\frac12}b(i)^{-1}
      =\tau(i)\left|
        \frac{\prod_{\beta=1}^n(p_\beta+\eta_i)}{
           \prod_{\alpha\ne i}(\eta_\alpha-\eta_i)}
         \right|
    =-\frac{\prod_{\beta=1}^n(p_\beta+\eta_i)}{
           \prod_{\alpha\ne i}(\eta_\alpha-\eta_i)}.
\]
Substituting these into \eqref{l.reflpot.H_0.22}, we see that
\[
    YX^{-1}=-B\biggl(\frac{\sqrt{m_im_j}}{\eta_i+\eta_j}
        \biggr)_{1\le i,j\le n} B^{-1}.
\]
Plugging this into \eqref{l.reflpot.H_0.21} and using the
commutativity of diagonal matrices, we arrive at
\eqref{l.reflpot.H_0.1}.

\noindent
(ii)
By \eqref{l.reflpot.H_0.1}, we have that
\[
    \det\phi_\ba(y)
    =\det\bigl(I_n+G_{\boldsymbol{s}(\ba)}(y)\bigr)
     e^{y\tr\,R}
     \det\biggl(-\frac12 V(\boldsymbol{c}_\ba)R^{-1}
        X C(\boldsymbol{c}_\ba)U\biggr).
\]
Substituting $y=0$, we see that
\[
    1=\det\bigl(I_n+G_{\boldsymbol{s}(\ba)}(0)\bigr)
     \det\biggl(-\frac12 V(\boldsymbol{c}_\ba)R^{-1}
        X C(\boldsymbol{c}_\ba)U\biggr).
\]
Hence we have that 
\[
    \det\phi_\ba(y)
    =\frac{\det\bigl(I_n+G_{\boldsymbol{s}(\ba)}(y)\bigr)
     e^{y\tr\,R}}{
     \det\bigl(I_n+G_{\boldsymbol{s}(\ba)}(0)\bigr)}.
\]
Plugging this into \eqref{l.reflpot.cv.1}, we obtain that 
\[
    \Psi_\ba(x)=\biggl(
     \frac{\det\bigl(I_n+G_{\boldsymbol{s}(\ba)}(0)\bigr)}{
     \det\bigl(I_n+G_{\boldsymbol{s}(\ba)}(x)\bigr)
     e^{x\tr\,(D_\ba+R)}}\biggr)^{\frac12}.
\]
Thus \eqref{t.reflpot.1} holds.
\end{proof}

\begin{lemma}\label{l.reflpot.H_m}
If $\text{\bf(H)}_m$ with $m\ge1$ holds, then
\eqref{t.reflpot.1} holds.
\end{lemma}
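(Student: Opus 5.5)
Under $\text{\bf(H)}_m$ with $m\ge1$, I would follow the same route as in Lemma~\ref{l.reflpot.H_0}. By Lemma~\ref{l.reflpot.cv}, it suffices to show
\[
    \det\phi_\ba(y)
    =\frac{\det\bigl(I_n+G_{\boldsymbol{s}(\ba)}(y)\bigr)e^{y\tr R}}{
      \det\bigl(I_n+G_{\boldsymbol{s}(\ba)}(0)\bigr)}
    \quad\text{for }y\ge0,
\]
since plugging this into \eqref{l.reflpot.cv.1} gives \eqref{t.reflpot.1} exactly as in part (ii) of that lemma. The expression \eqref{eq.reflpot.phi_sigma.2} remains available, since Lemma~\ref{l.reflpot.E_sigma} was proved under $\text{\bf(H)}_m$. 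Hence $\phi_\ba(y)=\frac12UR^{-1}\{\e[yR](RU^{-1}-U^{-1}D_\ba)+\e[-yR](RU^{-1}+U^{-1}D_\ba)\}$ still holds.

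I see two possible strategies. The cleanest is a \emph{continuity (perturbation) argument}. Perturb the data $\ba$ to $\ba_\varepsilon=\sum_j c_j^2\delta_{p_j(\varepsilon)}$ with $p_{j(\ell)+1}(\varepsilon)=-p_{j(\ell)}-\varepsilon$, say, so that the $|p_j(\varepsilon)|$ become distinct for small $\varepsilon>0$ and $\text{\bf(H)}_0$ holds (after reordering, which only permutes coordinates and leaves determinants unchanged). Lemma~\ref{l.reflpot.H_0} then gives the identity for $\ba_\varepsilon$. On the left, $\phi_{\ba_\varepsilon}(y)\to\phi_\ba(y)$ by continuous dependence of the ODE \eqref{eq.reflpot.ode} on $E_\ba$ and $D_\ba$. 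On the right, I would show $\boldsymbol{s}(\ba_\varepsilon)\to\boldsymbol{s}(\ba)$, which requires two limits.
\begin{itemize}
\item[(a)] The roots. As $\varepsilon\to0$, the roots of $-1=\sum_j c_j^2/(p_j(\varepsilon)^2-r)$ converge to the $r_k$, and the one root trapped between $p_{j(\ell)}^2$ and $p_{j(\ell)}(\varepsilon)^2$ converges to $p_{j(\ell)}^2=\eta_{j(\ell)}^2$. This follows by interlacing, as in Remark~\ref{rem.m_j>0}.
\item[(b)] The norming constants. The generic formula for $m_j$ at the index $j=j(\ell)$ must converge to the special formula
\[
    2\eta_j\frac{c_{j(\ell)+1}^2}{c_{j(\ell)}^2}\prod_{k\ne j}\frac{\eta_k+\eta_j}{\eta_k-\eta_j}\prod_{k\ne j,j+1}\frac{p_k+\eta_j}{p_k-\eta_j}.
\]
\end{itemize}

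I expect (b) to be the main obstacle. In the generic formula, the factor $(p_{j+1}+\eta_j)/(p_{j+1}-\eta_j)$ is of type $0/0$, since $p_{j+1}(\varepsilon)\to-\eta_j$ and $\eta_j(\varepsilon)\to\eta_j$; likewise $(p_j-\eta_j)$ tends to zero in the denominator. I would expand the secular equation near $r=p_{j(\ell)}^2$. Writing $\eta_j(\varepsilon)^2=p_{j}^2+\delta$, the two singular terms $c_{j}^2/(p_j^2-r)+c_{j+1}^2/(p_{j+1}(\varepsilon)^2-r)$ must stay bounded. This forces the ratio $\delta:(p_{j+1}(\varepsilon)^2-p_j^2-\delta)$ to tend to $c_j^2:c_{j+1}^2$ to leading order. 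Substituting into the ratio
\[
    \frac{(p_{j+1}+\eta_j)(p_j+\eta_j)}{(p_{j+1}-\eta_j)(p_j-\eta_j)}
\]
then gives the sign and the factor $c_{j+1}^2/c_j^2$ in the limit, which should reproduce the special formula. Since all $m_j>0$, $\det(I+G)$ varies continuously, and both sides converge. This yields the identity and hence \eqref{t.reflpot.1}.

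As a fallback, I would argue directly with \eqref{eq.reflpot.phi_sigma.2}. Block $U$ into the Cauchy-type columns for $j\notin\{j(\ell)\}$ and the two-sparse columns $u_\ell$, then compute $\det$ via the Cauchy and Lagrange identities restricted to the non-paired indices. This seems messier, so I would use the perturbation argument.
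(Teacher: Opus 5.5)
Your proposal is correct and takes essentially the paper's route: perturb $p_{j(\ell)+1}$ to $-p_{j(\ell)}-\varepsilon$ to reach $\text{\bf(H)}_0$, apply Lemma~\ref{l.reflpot.H_0}, and pass to the limit $\varepsilon\to0$. The crux, as in the paper, is your expansion of the secular equation near $p_{j(\ell)}^2$, which turns the product of the two singular factors into $-c_{j(\ell)+1}^2/c_{j(\ell)}^2$ and so recovers the special formula for $m_{j(\ell)}$.

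The only difference is where the limit is taken. You pass to the limit in the deterministic identity for $\det\phi_\ba(y)$, using continuous dependence of \eqref{eq.reflpot.ode} on its coefficients and then Lemma~\ref{l.reflpot.cv}. The paper instead passes to the limit in $\Psi_{\ba(\varepsilon)}(x)$, using $L^p$-convergence of the Ornstein--Uhlenbeck processes. Both work.

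One small correction: the trapped root lies between $p_{j(\ell)}^2$ and $p_{j(\ell)+1}(\varepsilon)^2$, not between $p_{j(\ell)}^2$ and $p_{j(\ell)}(\varepsilon)^2$.
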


\begin{proof}
Take an $\varepsilon>0$ with 
$\varepsilon<\min\{|a-b|\mid a,b\in\{|p_1|,\dots,|p_n|,a\ne b\}$.
Put 
\[
    p_j(\varepsilon)
    =\begin{cases}
       p_j-\varepsilon 
       & \text{if $j=j(\ell)+1$ for $1\le \ell\le m$},
       \\
       p_j & \text{otherwise}
     \end{cases}
\]
and
\[
    \ba(\varepsilon)=\sum_{j=1}^n c_j^2
    \delta_{p_j(\varepsilon)}.
\]
Since $|p_j(\varepsilon)|<|p_{j+1}(\varepsilon)|$
for $1\le j\le n-1$,
$\ba(\varepsilon)\in\boldsymbol{\Sigma}_n$
satisfies $\text{\bf(H)}_0$.

Define 
$(\eta_j(\varepsilon),m_j(\varepsilon))_{1\le j\le n}
 \in \mathcal{SD}_n$
as
\[
    (\eta_j(\varepsilon),m_j(\varepsilon))_{1\le j\le n}
    =\boldsymbol{s}(\boldsymbol{a}(\varepsilon)).
\]
By Lemma~\ref{l.reflpot.H_0}, we have that
\begin{align*}
    \log\Psi_{\ba(\varepsilon)}(x)
    =& -\frac12\log\det(
        I_n+G_{\boldsymbol{s}(\ba(\varepsilon))}(x))
    \\
     & +\frac12\log\det(
        I_n+G_{\boldsymbol{s}(\ba(\varepsilon))}(0))
     -\frac{x}2 \sum_{j=1}^n 
      (p_j(\varepsilon)+\eta_j(\varepsilon)).
\end{align*}
It is an easy exercise of It\^o calculus to see that 
\[
    \lim_{\varepsilon\to0}
      \widehat{\mathbb{E}}\biggl[\sup_{y\in[0,x]}
      |\xi_{\ba(\varepsilon)}(y)
        -\xi_\ba(y)|^p\biggr]=0
    \quad\text{for every 
               $x\in[0,\infty)$ and $p\in(1,\infty)$}.
\]
Due to the dominated convergence theorem, we obtain that 
\[
    \lim_{\varepsilon\to0} 
     \Psi_{\ba(\varepsilon)}(x)
    =\Psi_\ba(x) 
    \quad\text{for every }x\in[0,\infty).
\]
Thus, to prove \eqref{t.reflpot.1}, it remains to show that 
\[
    \lim_{\varepsilon\to0}\eta_j(\varepsilon)=\eta_j
    \quad\text{and}\quad
    \lim_{\varepsilon\to0} m_j(\varepsilon)=m_j
    \quad\text{for }1\le j\le n.
\]

We first show that $\eta_j(\varepsilon)$ converges to
$\eta_j$.
If $j=j(\ell)$ for some $1\le \ell\le m$, then it holds that
\begin{equation}\label{l.reflpot.H_m.21-1}
    p_{j(\ell)}=p_{j(\ell)}(\varepsilon)
    <\eta_{j(\ell)}(\varepsilon)
    <|p_{j(\ell)+1}(\varepsilon)|
     =p_{j(\ell)}+\varepsilon.
\end{equation}
Hence $\eta_{j(\ell)}(\varepsilon)$ converges to
$\eta_{j(\ell)}$.

Suppose that  $j\notin\{j(\ell)\mid 1\le \ell\le m\}$.
Let $\hat{\eta}_j$ be an accumulation point of
$\eta_j(\varepsilon)$ as $\varepsilon\to0$.
Taking a subsequence $\{\varepsilon_k\}_{k=1}^\infty$ with 
$\eta_j(\varepsilon_k)\to\hat{\eta}_j$ as $k\to\infty$, and 
letting $\varepsilon\to0$ along the subsequence in the
equation 
\begin{equation}\label{l.reflpot.H_m.21-1+}
    \sum_{i=1}^n \frac{c_i^2}{
        p_i(\varepsilon)^2-\eta_j(\varepsilon)^2}
    =-1,
\end{equation}
we obtain that
\[
    \sum_{i=1}^n \frac{c_i^2}{p_i^2-\hat{\eta}_j^2}
    =-1.
\]
Thus $\hat{\eta}_j=\eta_j$, that is,
$\eta_j(\varepsilon)$ converges to $\eta_j$.

Next we see that $m_j(\varepsilon)$ converges to $m_j$.
To do so, we recall the expression of
$m_j(\varepsilon)$ as
\begin{equation}\label{l.reflpot.H_m.211}
    m_j(\varepsilon)  =-2\eta_j(\varepsilon)
     \prod_{k\ne j} 
      \frac{\eta_k(\varepsilon)+\eta_j(\varepsilon)}{
         \eta_k(\varepsilon)-\eta_j(\varepsilon)}
     \prod_{k=1}^n 
      \frac{p_k(\varepsilon)+\eta_j(\varepsilon)}{
          p_k(\varepsilon)-\eta_j(\varepsilon)}
\end{equation}
and observe that
\begin{equation}\label{l.reflpot.H_m.210}
    -2\eta_j(\varepsilon)
     \prod_{k\ne j} 
      \frac{\eta_k(\varepsilon)+\eta_j(\varepsilon)}{
         \eta_k(\varepsilon)-\eta_j(\varepsilon)}
    \to -2\eta_j  \prod_{k\ne j} 
      \frac{\eta_k+\eta_j}{\eta_k-\eta_j}
    \quad\text{as }\varepsilon\to0.
\end{equation}

If $j\notin\{j(\ell)\mid 1\le \ell\le m\}$, then 
$p_k\ne \eta_j$ for every $1\le k\le n$.
By \eqref{l.reflpot.H_m.211} and \eqref{l.reflpot.H_m.210}, we have
that 
\[
    m_j(\varepsilon) 
    \to -2\eta_j  \prod_{k\ne j} 
      \frac{\eta_k+\eta_j}{\eta_k-\eta_j}
     \prod_{k=1}^n \frac{p_k+\eta_j}{p_k-\eta_j}
     =m_j
    \quad\text{as }\varepsilon\to0.
\]

Let $j=j(\ell)$ for some $1\le \ell\le m$.
If $k\notin\{j(\ell),j(\ell)+1\}$, then $p_k\ne
\eta_{j(\ell)}$, and hence 
\begin{equation}\label{l.reflpot.H_m.22}
    \prod_{k\ne j(\ell),j(\ell)+1}
      \frac{p_k(\varepsilon)+\eta_{j(\ell)}(\varepsilon)}{
          p_k(\varepsilon)-\eta_{j(\ell)}(\varepsilon)}
    \to\prod_{k\ne j(\ell),j(\ell)+1}
      \frac{p_k+\eta_{j(\ell)}}{p_k-\eta_{j(\ell)}}
    \quad\text{as }\varepsilon\to0.
\end{equation}
We rewrite the product for $k=j(\ell),j(\ell)+1$ as
\[
    \prod_{k=j(\ell),j(\ell)+1}
      \frac{p_k(\varepsilon)+\eta_{j(\ell)}(\varepsilon)}{
          p_k(\varepsilon)-\eta_{j(\ell)}(\varepsilon)}
    =\frac{p_{j(\ell)}(\varepsilon)
               +\eta_{j(\ell)}(\varepsilon)}{
        p_{j(\ell)+1}(\varepsilon)
               -\eta_{j(\ell)}(\varepsilon)}
     \times
     \frac{p_{j(\ell)+1}(\varepsilon)
               +\eta_{j(\ell)}(\varepsilon)}{
        p_{j(\ell)}(\varepsilon)
               -\eta_{j(\ell)}(\varepsilon)}.
\]
The first factor satisfies that
\begin{equation}\label{l.reflpot.H_m.23}
    \frac{p_{j(\ell)}(\varepsilon)+\eta_{j(\ell)}(\varepsilon)}{
           p_{j(\ell)+1}(\varepsilon)-\eta_{j(\ell)}(\varepsilon)}
    \to \frac{2p_{j(\ell)}}{-2p_{j(\ell)}}=-1
    \quad\text{as }\varepsilon\to0.
\end{equation}
We compute the limit of the second factor.
Since
\[
    p_{j(\ell)}=|p_{j(\ell)}(\varepsilon)|
    <\eta_{j(\ell)}(\varepsilon)
    <|p_{j(\ell)+1}(\varepsilon)|=p_{j(\ell)}+\varepsilon,
\]
we have that
\[
    |p_{j(\ell)}(\varepsilon)^2-\eta_{j(\ell)}(\varepsilon)^2|
    =O(\varepsilon)
    \quad\text{and}\quad
    |p_{j(\ell)+1}(\varepsilon)^2-\eta_{j(\ell)}(\varepsilon)^2|
    =O(\varepsilon),
\]
where 
``$f(\varepsilon)=O(\varepsilon^p)$'' for $p>0$ means that
$\limsup\limits_{\varepsilon\to0}
 \varepsilon^{-p} |f(\varepsilon)|<\infty$.
Due to these estimations, multiplying the identity
\eqref{l.reflpot.H_m.21-1+} by 
$(p_{j(\ell)}(\varepsilon)^2-\eta_{j(\ell)}(\varepsilon)^2)
 (p_{j(\ell)+1}(\varepsilon)^2-\eta_{j(\ell)}(\varepsilon)^2)$,
we obtain that 
\begin{equation}\label{l.reflpot.H_m.24}
    c_{j(\ell)}^2\{p_{j(\ell)+1}(\varepsilon)^2
            -\eta_{j(\ell)}(\varepsilon)^2\}
    +c_{j(\ell)+1}^2\{p_{j(\ell)}(\varepsilon)^2
            -\eta_{j(\ell)}(\varepsilon)^2\}
    =O(\varepsilon^2).
\end{equation}
Recalling the definition that
$p_{j(\ell)}(\varepsilon)=p_{j(\ell)}$ and 
$p_{j(\ell)+1}(\varepsilon)=-p_{j(\ell)}-\varepsilon$,
we see that
\begin{equation}\label{l.reflpot.H_m.241}
    p_{j(\ell)}(\varepsilon)^2-\eta_{j(\ell)}(\varepsilon)^2
    =-\frac{2p_{j(\ell)}c_{j(\ell)}^2}{
        c_{j(\ell)}^2+c_{j(\ell)+1}^2}\,\varepsilon
     +O(\varepsilon^2).
\end{equation}
Substituting this into \eqref{l.reflpot.H_m.24}, we have that 
\begin{equation}\label{l.reflpot.H_m.242}
    p_{j(\ell)+1}(\varepsilon)^2-\eta_{j(\ell)}(\varepsilon)^2
    =\frac{2p_{j(\ell)}c_{j(\ell)+1}^2}{
        c_{j(\ell)}^2+c_{j(\ell)+1}^2}\,\varepsilon
     +O(\varepsilon^2).
\end{equation}
Since 
$r_{j(\ell)}(\varepsilon)=\eta_{j(\ell)}(\varepsilon)^2$,
by these estimations and \eqref{l.reflpot.H_m.23}, we arrive at
the convergence 
\begin{align}
 \label{l.reflpot.H_m.25}
   \frac{p_{j(\ell)+1}(\varepsilon)
       +\eta_{j(\ell)}(\varepsilon)}{
       p_{j(\ell)}(\varepsilon)-\eta_{j(\ell)}(\varepsilon)}
   & =\frac{p_{j(\ell)}(\varepsilon)
       +\eta_{j(\ell)}(\varepsilon)}{
       p_{j(\ell)+1}(\varepsilon)-\eta_{j(\ell)}(\varepsilon)}
      \frac{p_{j(\ell)+1}(\varepsilon)^2
       -\eta_{j(\ell)}(\varepsilon)^2}{
       p_{j(\ell)}(\varepsilon)^2-\eta_{j(\ell)}(\varepsilon)^2}
   \\
   & \to (-1)\times
         \frac{2p_{j(\ell)}c_{j(\ell)+1}^2}{
                  -2p_{j(\ell)}c_{j(\ell)}^2}
         =\frac{c_{j(\ell)+1}^2}{c_{j(\ell)}^2}
     \quad\text{as }\varepsilon\to0.
 \nonumber
\end{align}
Plugging \eqref{l.reflpot.H_m.210},
\eqref{l.reflpot.H_m.22},  \eqref{l.reflpot.H_m.23}, and
this into \eqref{l.reflpot.H_m.211}, we see that 
$m_{j(\ell)}(\varepsilon)$ converges to $m_j$. 
\end{proof}

We next extend the probabilistic representation to soliton
solutions to the KdV equation. 
Fix $\ba=\sum\limits_{j=1}^n c_j^2\delta_{p_j}
        \in \boldsymbol{\Sigma}_n$ 
and $(\eta_j,m_j)_{1\le j\le d}=\boldsymbol{s}(\ba)$ as above. 
For $(x,t)\in \mathbb{R}^2$, set 
\[
    \zeta_j(x,t)=x\eta_j+t\eta_j^3
    \quad\text{for } 1\le j\le n
    \quad\text{and}\quad
    \zeta(x,t)=\text{\rm diag}[\zeta_1(x,t),
        \dots,\zeta_n(x,t)].
\]
Put
\[
    \widehat{G}(x,t)
    =\biggl(\frac{\sqrt{m_im_j}
          e^{-(\zeta_i(x,t)+\zeta_j(x,t))}}{
          \eta_i+\eta_j}\biggr)_{1\le i,j\le n}.
\]
As was seen in the beginning of this section, the function
\[
    2\Bigl(\frac{\partial}{\partial x}\Bigr)^2
      \log\det(I_n+\widehat{G})
\]
is the soliton solution to the KdV equation \eqref{eq.kdv}. 
Define 
$\widehat{\phi}_\ba\in
 C^\infty(\mathbb{R}^2;\mathbb{R}^{n\times n})$ by
\[
    \widehat{\phi}_\ba(x,t)
    =U\bigl\{\ch[\zeta(x,t)]
         -\snh[\zeta(x,t)]R^{-1}U^{-1}D_\ba U\bigr\}
     U^{-1},
\]
where we have continued to use the symbols $U$ and $R$
employed in the proof of Theorem~\ref{t.reflpot}.
As will be seen in Lemma~\ref{l.sol.nondeg} below,
$\det\widehat{\phi}_\ba(y,t)\ne0$ for every 
$(y,t)\in \mathbb{R}^2$.
Thus we can define 
$\widehat{\psi}_{\ba,x,t}:[0,x]\to \mathbb{R}^{n\times n}$ 
as
\[
    \widehat{\psi}_{\ba,x,t}(y)
    =-\biggl(\Bigl(
      \frac{\partial\widehat{\phi}_\ba}{\partial x}
      \Bigr)(\cdot,t) 
      \widehat{\phi}_\ba(\cdot,t)^{-1}\biggr)(x-y)
    \quad\text{for }y\in[0,x].
\]
Set
\begin{align*}
    \widehat{\Psi}_\ba(x,t)
    & =\mathbb{E}\biggl[\exp\biggl(
       -\frac12 \int_0^x 
         \langle \boldsymbol{c}_\ba,\xi_\ba(y)
         \rangle^2 dy
      +\frac12 \bigl\langle
         (\widehat{\psi}_{\ba,x,t})_S(x)-D_\ba)
          \xi_\ba(x),\xi_\ba(x) \bigr\rangle
   \\
   &  \hphantom{=\mathbb{E}\biggl[\exp\biggl(}
      -\frac12\int_0^x \bigl|
          (\widehat{\psi}_{\ba,x,t})_A(y)
             \xi_\ba(y)\bigr|^2 dy\biggr)\biggr]
      \quad\text{for }(x,t)\in[0,\infty)\times \mathbb{R},
\end{align*}
where $(\widehat{\psi}_{\ba,x,t})_S$ 
(resp. $(\widetilde{\psi}_{\ba,x,t})_A$) is 
the symmetric (resp. anti-symmetric) part of 
$\widehat{\psi}_{\ba,x,t}$.

\begin{theorem}\label{t.kdv}
It holds that
\begin{align*}
    \log\widehat{\Psi}_\ba(x,t)
    =& -\frac12\log\det(I_n+\widehat{G}(x,t))
     \\
     & +\frac12\log\det(I_n+\widehat{G}(0,t))
     -\frac{x}2\sum_{j=}^n (p_j+\eta_j)
   \quad\text{for }(x,t)\in[0,\infty)\times \mathbb{R}.
\end{align*}
In particular, 
\[
    v=-4\Bigl(\frac{\partial}{\partial x}\Bigr)^2
      \log \widehat{\Psi}_\ba
\]
is a soliton solution to the KdV equation \eqref{eq.kdv}.
\end{theorem}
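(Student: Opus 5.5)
The plan is to repeat the proof of Theorem~\ref{t.reflpot}, treating $t$ as a fixed parameter. The only change is that $yR$ is replaced by $\zeta(y,t)=yR+tR^3$. Fix $t\in\mathbb{R}$ and $x>0$, and set $\phi(y)=\widehat{\phi}_\ba(y,t)$ for $y\in[0,x]$.

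\textbf{Step 1: the ODE for $\phi$.} Since $\partial_y\zeta(y,t)=R$ and the matrix functions $\ch,\snh$ of diagonal matrices commute with $R$, differentiating twice gives
\[
    \phi^{\prime\prime}=UR^2U^{-1}\phi=E_\ba\phi,
\]
by Lemma~\ref{l.reflpot.E_sigma}. I take the non-degeneracy $\det\widehat{\phi}_\ba(y,t)\ne0$ from the forthcoming Lemma~\ref{l.sol.nondeg}. Then Lemma~\ref{l.reflpot.cv0} applies with $\psi=\widehat{\psi}_{\ba,x,t}$ and $f=1$, and yields
\[
    \widehat{\Psi}_\ba(x,t)
    =\biggl(\frac{\det\widehat{\phi}_\ba(0,t)\,e^{-x\tr D_\ba}}{
       \det\widehat{\phi}_\ba(x,t)}\biggr)^{\frac12}.
\]
Unlike in Lemma~\ref{l.reflpot.cv}, $\psi$ need not be symmetric here. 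That is why the definition of $\widehat{\Psi}_\ba$ carries the $\psi_S$ and $\psi_A$ terms, and they match the left-hand side of \eqref{l.reflpot.cv0.1} exactly.

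\textbf{Step 2: the determinant under $\text{\bf(H)}_0$.} I redo the algebra of Lemma~\ref{l.reflpot.H_0}(i) with $\e[\pm yR]$ replaced by $\e[\pm\zeta(y,t)]$. All the matrices involved are diagonal and commute, so the same manipulation using Cauchy's and Lagrange's identities gives
\[
    U^{-1}\widehat{\phi}_\ba(y,t)U
    =-\tfrac12 V(\boldsymbol{c}_\ba)R^{-1}B\bigl(I_n+\widehat{G}(y,t)\bigr)\e[\zeta(y,t)]B^{-1}XC(\boldsymbol{c}_\ba)U .
\]
Here $\widehat{G}$ arises because $\e[-\zeta]YX^{-1}\e[-\zeta]$ yields exactly the entries $\sqrt{m_im_j}e^{-(\zeta_i+\zeta_j)}/(\eta_i+\eta_j)$. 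Taking determinants, the constant factor $\det\bigl(-\frac12 V R^{-1}XCU\bigr)$ cancels in the ratio of the values at $y=x$ and $y=0$, and $e^{\tr\zeta(x,t)-\tr\zeta(0,t)}=e^{x\tr R}$. Inserting this into the formula of Step 1 gives the stated identity.

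\textbf{Step 3: the degenerate case $\text{\bf(H)}_m$, $m\ge1$.} I perturb as in Lemma~\ref{l.reflpot.H_m}, using $\ba(\varepsilon)$. The convergences $\eta_j(\varepsilon)\to\eta_j$ and $m_j(\varepsilon)\to m_j$ were already established there, and they give $\widehat{G}_{\ba(\varepsilon)}\to\widehat{G}$. On the probabilistic side I also need $\widehat{\Psi}_{\ba(\varepsilon)}(x,t)\to\widehat{\Psi}_\ba(x,t)$. This is the main obstacle. The matrices $U(\varepsilon)$ and $\widehat{\phi}_{\ba(\varepsilon)}$ (and hence $\widehat{\psi}$) must converge uniformly on $[0,x]$. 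Moreover, the exponent now contains the term $+\frac12\langle(\psi_S(x)-D_\ba)\xi,\xi\rangle$, which can be positive, so dominated convergence needs a uniform integrability bound.

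My first approach avoids these issues: compare the closed forms instead of the expectations. The identity of Step 1 holds directly for $\ba$ itself once $\det\widehat{\phi}_\ba\ne0$, so only the determinant formula of Step 2 must pass to the limit. That is a statement about continuous matrix functions of $(\eta_j,m_j,U)$, which avoids any limit under the expectation. If convergence of $U(\varepsilon)$ is delicate, I instead verify directly that $\widehat{\phi}_\ba$ coincides with the $\varepsilon\to0$ limit. This holds because $\widehat{\phi}$ is determined by $E_\ba$ and $D_\ba$ through $\ch$ and $\snh$ of $\zeta$, which depend continuously on them.

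\textbf{Step 4: the KdV statement.} The last two terms of the identity are affine in $x$, so they vanish under $\partial_x^2$. Hence $-4\partial_x^2\log\widehat{\Psi}_\ba=2\partial_x^2\log\det(I_n+\widehat{G})$, which is the soliton solution recalled earlier in this section.
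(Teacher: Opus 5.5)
Your proof is correct and follows the paper's skeleton. You apply Lemma~\ref{l.reflpot.cv0} with $\phi=\widehat{\phi}_\ba(\cdot,t)$ and $f=1$, then the $\text{\bf(H)}_0$ algebra of Lemma~\ref{l.reflpot.H_0} with $\e[\zeta(y,t)]$ in place of $\e[yR]$, then the perturbation $\ba(\varepsilon)$ for $\text{\bf(H)}_m$.

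The two routes differ only in the degenerate case. The paper (proof of Lemma~\ref{l.sol.nondeg}) proves \eqref{l.sol.nondeg.1} with the explicit constant $Z_\ba$, so it must compute $\lim Z_{\ba(\varepsilon)}$ and $\lim U(\varepsilon)$. You pass to the limit only in the ratio, where the constant cancels, and get $\widehat{\phi}_{\ba(\varepsilon)}\to\widehat{\phi}_\ba$ from continuity in $(E_\ba,D_\ba)$. That claim is right, but state it precisely. Since $E_\ba=UR^2U^{-1}$, one has $\widehat{\phi}_\ba(y,t)=F_1(E_\ba)-F_2(E_\ba)D_\ba$, with the entire functions $F_1(\lambda)=\cosh(\sqrt{\lambda}(y+t\lambda))$ and $F_2(\lambda)=\sinh(\sqrt{\lambda}(y+t\lambda))/\sqrt{\lambda}$.

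As written, however, the saving is partly illusory. You import $\det\widehat{\phi}_\ba(x,t)\ne0$ from Lemma~\ref{l.sol.nondeg}. In the degenerate case the paper obtains exactly this from $Z_\ba\ne0$, that is, from the limit computations you set out to avoid. Your own method closes the loop. Since $\widehat{\phi}_{\ba(\varepsilon)}(0,0)=I_n$ and $\zeta(0,0;\varepsilon)=0$, the $(y,t)$-independent constant in the $\text{\bf(H)}_0$ factorization for $\ba(\varepsilon)$ equals $1/\det(I_n+\widehat{G}(0,0;\varepsilon))$. Hence
\[
    \det\widehat{\phi}_{\ba(\varepsilon)}(y,t)
    =\frac{\det(I_n+\widehat{G}(y,t;\varepsilon))}{\det(I_n+\widehat{G}(0,0;\varepsilon))}\,e^{\tr\zeta(y,t;\varepsilon)} .
\]
Let $\varepsilon\to0$, using $\eta_j(\varepsilon)\to\eta_j$, $m_j(\varepsilon)\to m_j$ and the continuity above. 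This gives the same identity for $\ba$. Its right-hand side is strictly positive because $\widehat{G}$ is positive semidefinite: write $\frac1{\eta_i+\eta_j}=\int_0^\infty e^{-(\eta_i+\eta_j)s}ds$. That positivity is the nondegeneracy.

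With this addition your argument is self-contained and shorter than the paper's. It needs neither the product formula for $Z_\ba$, nor the sign of $\det U$, nor the convergence of $U(\varepsilon)$. It also identifies $Z_\ba=1/\det(I_n+\widehat{G}(0,0))$.

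One side remark in your Step 1 is not right: $\widehat{\psi}_{\ba,x,t}$ is in fact symmetric. The Wronskian $\phi^\dagger\phi'-(\phi')^\dagger\phi$ is constant in $y$ and vanishes at $y=0$ because $\cosh^2-\sinh^2=1$. This does not affect your proof.
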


For the proof, we prepare a lemma.
Under $\text{\bf (H)}_m$, set
\[
    J=\{(j(\ell)+1,j(\ell))\mid 1\le \ell\le m\}
\]
and define
\begin{align*}
    Z_\ba
    =& (-1)^m \prod_{i=1}^n \frac{c_i}{2\eta_i}
     \prod_{1\le i<j\le n}(p_i-p_j)(\eta_i-\eta_j)
    \\
    & \times\biggl\{
      \prod_{(i,j)\notin J} (p_i+\eta_j)
      \prod_{k=1}^{n-m} |(D_\ba^2-r_kI_n)^{-1}
                \boldsymbol{c}_\ba|
      \prod_{\ell=1}^m \frac{c_{j(\ell)+1}(c_{j(\ell)}^2
          +c_{j(\ell)+1}^2)^{1/2}}{2p_{j(\ell)}c_{j(\ell)}}     
      \biggr\}^{-1},
\end{align*}
where, when $m=0$, $J=\emptyset$, 
``$(i,j)\notin J$'' means ``$1\le i,j\le n$'', and 
$\prod\limits_{\ell=1}^m\cdots=1$.

\begin{lemma}\label{l.sol.nondeg}
It holds that 
\begin{equation}\label{l.sol.nondeg.1}
    \det\widehat{\phi}_\ba(y,t)
    =\det(I_n+\widehat{G}(y,t))e^{\tr\zeta(y,t)}Z_\ba
    \quad\text{for every } (y,t)\in \mathbb{R}^2.
\end{equation}
In particular, 
$\det\widehat{\phi}_\ba(y,t)\ne0$ for every 
$(y,t)\in \mathbb{R}^2$.
\end{lemma}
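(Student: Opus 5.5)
The plan is to redo the computation of Lemma~\ref{l.reflpot.H_0}, replacing the scalar multiple $yR$ of $R$ by the diagonal matrix $\zeta(y,t)$. I first treat $\text{\bf(H)}_0$ and then reach $\text{\bf(H)}_m$ with $m\ge1$ by the perturbation $\ba(\varepsilon)$ of Lemma~\ref{l.reflpot.H_m}.

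\emph{Step 1: the case $\text{\bf(H)}_0$.} The derivation of \eqref{l.reflpot.H_0.1} used only the following facts: $\ch[yR]$, $\snh[yR]$ and $\e[\pm yR]$ are diagonal and commute with $R$, $V(\boldsymbol{c}_\ba)$ and $B$, together with the identities \eqref{eq.p264} for $U$ and $U^{-1}$, Cauchy's identity for $X$, and the formula for $YX^{-1}$. All of these remain valid when $yR$ is replaced by $\zeta(y,t)$. Hence I obtain
\[
    U^{-1}\widehat{\phi}_\ba(y,t)U
    =-\frac12 V(\boldsymbol{c}_\ba)R^{-1}B
      \bigl(I_n+\widehat{G}(y,t)\bigr)\e[\zeta(y,t)]B^{-1}XC(\boldsymbol{c}_\ba)U .
\]
The only point to check is that $\e[-\zeta]B\bigl(\sqrt{m_im_j}/(\eta_i+\eta_j)\bigr)B^{-1}\e[-\zeta]$ equals $B\widehat{G}B^{-1}$. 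This holds because all the matrices involved except the middle one are diagonal. Taking determinants gives
\[
    \det\widehat{\phi}_\ba(y,t)=\det(I_n+\widehat{G}(y,t))\,e^{\tr\zeta(y,t)}\,K_\ba,
    \qquad
    K_\ba=\det\Bigl(-\frac12V(\boldsymbol{c}_\ba)R^{-1}XC(\boldsymbol{c}_\ba)U\Bigr).
\]
The constant $K_\ba$ does not depend on $(y,t)$. It remains to identify it with $Z_\ba$ (for $m=0$). I expand
\[
    K_\ba=(-1/2)^n\Bigl(\prod_i\eta_i^{-1}\Bigr)\Bigl(\prod_k|(D_\ba^2-r_kI_n)^{-1}\boldsymbol{c}_\ba|^{-1}\Bigr)\,\det X\,\Bigl(\prod_i c_i\Bigr)\det U ,
\]
and insert $\det X$ from \eqref{l.reflpot.H_0.211}. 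This reproduces every factor of $Z_\ba$ except the sign. I then fix $\det U$, which is $\pm1$ since $U$ is orthogonal, by evaluating $\det U$ through \eqref{eq.p264} and Cauchy's determinant for $\bigl(1/(p_i^2-\eta_j^2)\bigr)$. Equivalently, the normalization $\det\widehat{\phi}_\ba(0,0)=\det\phi_\ba(0)=1$ together with the known positivity of $\det(I_n+G)$ forces $\operatorname{sgn}K_\ba>0$; this must agree with the sign carried by $(-1)^0$ times the products in $Z_\ba$.

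\emph{Step 2: the case $\text{\bf(H)}_m$, $m\ge1$.} I replace $\ba$ by $\ba(\varepsilon)$ and let $\varepsilon\to0$. Both $\widehat{\phi}_{\ba(\varepsilon)}$ and $\widehat{G}_\varepsilon$ converge, since $\eta_j(\varepsilon)\to\eta_j$ and $m_j(\varepsilon)\to m_j$ by the proof of Lemma~\ref{l.reflpot.H_m}, and $U(\varepsilon)$ converges along with them; the parts of $U$ at indices $j(\ell)$ converge to the eigenvectors $u_\ell$. Thus it suffices to show $Z_{\ba(\varepsilon)}\to Z_\ba$. The only singular factors are $(p_{j(\ell)+1}(\varepsilon)+\eta_{j(\ell)}(\varepsilon))$ and $|(D_{\ba(\varepsilon)}^2-\eta_{j(\ell)}(\varepsilon)^2I_n)^{-1}\boldsymbol{c}_\ba|$. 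The first tends to $0$ and the second blows up like $\varepsilon^{-1}$. Using the expansions \eqref{l.reflpot.H_m.241} and \eqref{l.reflpot.H_m.242}, I compute the limit of their product as
\[
    \frac{c_{j(\ell)+1}(c_{j(\ell)}^2+c_{j(\ell)+1}^2)^{1/2}}{2p_{j(\ell)}c_{j(\ell)}},
\]
up to the sign that produces $(-1)^m$. This is exactly the $\ell$-product in $Z_\ba$, with the pairs in $J$ removed from $\prod(p_i+\eta_j)$.

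\emph{Step 3: nonvanishing.} The matrix $\bigl(1/(\eta_i+\eta_j)\bigr)$ is positive definite, being a Cauchy/Gram matrix $\int_0^\infty e^{-(\eta_i+\eta_j)s}ds$. Hence $\widehat{G}(y,t)\ge0$ and $\det(I_n+\widehat{G})\ge1$. Moreover $Z_\ba\ne0$, because by \eqref{r.m_j>0.1} none of the factors in $Z_\ba$ vanishes.

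I expect the main obstacle to be the bookkeeping of signs: $\det U$ in Step 1 and the $(-1)^m$ arising in the limit of Step 2. I would first try to settle both simultaneously through the normalization $\det\widehat{\phi}_\ba(0,0)=1$, which fixes the sign of the constant independently.
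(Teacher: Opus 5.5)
Your plan is essentially the paper's proof: redo the $\text{\bf(H)}_0$ computation of Lemma~\ref{l.reflpot.H_0} with $\zeta(y,t)$ in place of $yR$, and identify the constant with $Z_\ba$ by computing $\det U=(-1)^n$ from \eqref{eq.p264} and Cauchy's identity. You then pass to $\text{\bf(H)}_m$ via $\ba(\varepsilon)$, proving $Z_{\ba(\varepsilon)}\to Z_\ba$ (each $\ell$ contributes a factor $-1$ because $p_{j(\ell)+1}(\varepsilon)+\eta_{j(\ell)}(\varepsilon)<0$) and $U(\varepsilon)\to U$. One caution: the normalization $\det\widehat{\phi}_\ba(0,0)=1$ only shows that the constant is positive. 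It therefore cannot replace the explicit sign computation of $\det U$ (or of the products in $Z_\ba$), which your main route already supplies.
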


\begin{proof}
We first assume that $\text{\bf(H)}_0$ holds.
In repetition of the argument in the proof of
Lemma~\ref{l.reflpot.H_0} with $e^{\zeta(y,t)}$ for
$e^{yR}$, we have that
\[
    U^{-1}\widehat{\phi}_\ba(y,t)U
    =-\frac12 V(\boldsymbol{c}_\ba) R^{-1}B
      (I_n+\widehat{G}(y,t))e^{\zeta(y,t)}B^{-1}X
      C(\boldsymbol{c}_\ba)U.
\]
Due to this and \eqref{l.reflpot.H_0.211}, we see that
\begin{align*}
    \det\widehat{\phi}_\ba(y,t)
    & =\det(I_n+\widehat{G}(y,t))e^{\tr\zeta(y,t)}
    \\ 
    & \hphantom{=}
      \times(-1)^n \prod_{i=1}^n 
       \frac{c_i}{2\eta_i|(D_\ba^2-\eta_i^2I_n)^{-1}
          \boldsymbol{c}_\ba|}
     \frac{\prod_{1\le i<j\le n}(p_i-p_j)(\eta_i-\eta_j)}{
           \prod_{1\le i,j\le n} (p_i+\eta_j)}
     \det U
    \\
    & =(-1)^n\det U
       \det(I_n+\widehat{G}(y,t))e^{\tr\zeta(y,t)}Z_\ba.
\end{align*}
By Cauchy' identity \eqref{eq.cauchy} and \eqref{eq.p264},
it holds that 
\[
    \det U=\det V(\boldsymbol{c}_\ba)
    \det C(\boldsymbol{c}_\ba)
    \frac{\prod_{1\le i<j\le n}
        (p_i^2-p_j^2)(\eta_j^2-\eta_i^2)}{
        \prod_{1\le i,j\le n}(p_i^2-\eta_j^2)}.
\]
It follows from \eqref{r.m_j>0.1} that
\[
    \text{\rm sgn}\biggl[
     \frac{\prod_{1\le i<j\le n}
        (p_i^2-p_j^2)(\eta_j^2-\eta_i^2)}{
        \prod_{1\le i,j\le n}(p_i^2-\eta_j^2)}\biggr]
    =(-1)^n.
\]
Since $|\det U|=1$ and 
$\det V(\boldsymbol{c}_\ba)
    \det C(\boldsymbol{c}_\ba)>0$,
we obtain that $\det U=(-1)^n$.
Thus \eqref{l.sol.nondeg.1} follows.

We next assume that $\text{\bf(H)}_m$ with $m\ge1$ holds.
We continue to use the notation introduced in the proof of
Lemma~\ref{l.reflpot.H_m} like 
$\ba(\varepsilon),\eta_j(\varepsilon),m_j(\varepsilon)$
and so on.
Put
\[
    R(\varepsilon)
    =\text{\rm diag}[\eta_1(\varepsilon),
     \dots,\eta_n(\varepsilon)],\quad
    \zeta_j(y,t;\varepsilon)
    =y\eta_j(\varepsilon)+t\eta_j(\varepsilon)^3,
\]
\[
    \zeta(y,t;\varepsilon)
    =\text{\rm diag}[\zeta_1(y,t;\varepsilon),
     \dots,\zeta_n(y,t;\varepsilon)],
\]
and 
\[
    \widehat{G}(y,t;\varepsilon)
    =\biggl(\frac{\sqrt{m_i(\varepsilon)m_j(\varepsilon)}}{
          \eta_i(\varepsilon)+\eta_j(\varepsilon)}
     e^{-(\zeta_i(y,t;\varepsilon)+\zeta_j(y,t;\varepsilon))}
     \biggr)_{1\le i,j\le n}.
\]
Due to the above observation, we know that
\[
    \det\widehat{\phi}_{\ba(\varepsilon)}(y,t)
    =\det(I_n+\widehat{G}(y,t;\varepsilon))
     e^{\tr\zeta(y,t;\varepsilon)}
     Z_{\ba(\varepsilon)}.
\]
By definition, 
$\lim\limits_{\varepsilon\to0}p_j(\varepsilon)=p_j$
for $1\le j\le n$.
Moreover, as was seen in the proof of
Lemma~\ref{l.reflpot.H_m},
we have that
\begin{equation}\label{l.sol.nondeg.211}
    \lim_{\varepsilon\to0}\eta_j(\varepsilon)=\eta_j
    \quad\text{and}\quad
    \lim_{\varepsilon\to0}m_j(\varepsilon)=m_j
    \quad\text{for }1\le j\le n.
\end{equation}
Hence it holds that
\[
    \lim_{\varepsilon\to0} \zeta(y,t;\varepsilon)
    =\zeta(y,t)
    \quad\text{and}\quad
    \lim_{\varepsilon\to0} \widehat{G}(y,t;\varepsilon)
    =\widehat{G}(y,t).
\]
Thus, to show \eqref{l.sol.nondeg.1},  it suffices to prove
that
\[
    \lim_{\varepsilon\to0}
       Z_{\ba(\varepsilon)}=Z_\ba \quad\text{and}\quad
    \lim_{\varepsilon\to0}
       U(\varepsilon)=U,
\]
where 
\[
    U(\varepsilon)
    =\biggl(\frac{c_i}{
     |(D_{\ba(\varepsilon)}^2-\eta_j(\varepsilon)^2I_n)^{-1}c_\ba|
      (p_i(\varepsilon)^2-\eta_j(\varepsilon)^2)}
     \biggr)_{1\le i,j\le n}.
\]

We first show that $Z_{\ba(\varepsilon)}$ converges to $Z_\ba$.
It is easy to see the following convergence of the factors
in $Z_{\ba(\varepsilon)}$.
\begin{align*}
    & \lim_{\varepsilon\to0} \prod_{i=1}^n 
          \frac{c_i}{2\eta_i(\varepsilon)}
      =\prod_{i=1}^n \frac{c_i}{2\eta_i},
    \\
    & 
    \lim_{\varepsilon\to0} \prod_{1\le i<j\le n}
          (p_i(\varepsilon)-p_j(\varepsilon))
       (\eta_i(\varepsilon)-\eta_j(\varepsilon))
      =\prod_{1\le i<j\le n}
          (p_i-p_j)(\eta_i-\eta_j),
    \\
    & \lim_{\varepsilon\to0}\prod_{(i,j)\notin J} 
      (p_i(\varepsilon)+\eta_j(\varepsilon))
      =\prod_{(i,j)\notin J}  (p_i+\eta_j)\ne0,
\end{align*}
and
\[
    \lim_{\varepsilon\to0}
         \prod_{j\notin \{j(1),\dots,j(m)\}}
         |(D_{\ba(\varepsilon)}^2-\eta_j(\varepsilon)^2 I_n)^{-1}
           \boldsymbol{c}_\ba|
      =\prod_{k=1}^{n-m}
       |(D_\ba^2-r_k I_n)^{-1}\boldsymbol{c}_\ba|.
\]
Thus, to see the convergence of $Z_{\ba(\varepsilon)}$, it remains 
to compute the limit of the factor
\[
    \prod_{\ell=1}^m (p_{j(\ell)+1}(\varepsilon)
         +\eta_{j(\ell)}(\varepsilon))
    \prod_{\ell=1}^m |(D_{\ba(\varepsilon)}^2
       -\eta_{j(\ell)}(\varepsilon)^2 I_n)^{-1}
           \boldsymbol{c}_\ba|.
\]

By \eqref{l.reflpot.H_m.21-1}, we have that
\begin{equation}\label{l.sol.nondeg.24}
    0>p_{j(\ell)+1}(\varepsilon)
      +\eta_{j(\ell)}(\varepsilon)=O(\varepsilon).
\end{equation}
Since 
\[
    \liminf_{\varepsilon\to0} \min\{
     |p_k(\varepsilon)^2-\eta_{j(\ell)}(\varepsilon)^2|
     \mid k\ne j(\ell),j(\ell)+1\}>0,
\]
this implies that
\begin{align*}
    & (p_{j(\ell)+1}(\varepsilon)
         +\eta_{j(\ell)}(\varepsilon))
      |(D_{\ba(\varepsilon)}^2
       -\eta_{j(\ell)}(\varepsilon)^2 I_n)^{-1}
           \boldsymbol{c}_\ba|
    \\
    & 
    =-\biggl\{\sum_{k=1}^n 
       \frac{c_k^2(p_{j(\ell)+1}(\varepsilon)
         +\eta_{j(\ell)}(\varepsilon))^2}{
       (p_k(\varepsilon)^2
         -\eta_{j(\ell)}(\varepsilon)^2)^2}
     \biggr\}^{1/2}
    \\
    & 
    =-\biggl\{
       \frac{c_{j(\ell)}^2(p_{j(\ell)+1}(\varepsilon)
         +\eta_{j(\ell)}(\varepsilon))^2}{
       (p_{j(\ell)}(\varepsilon)^2
         -\eta_{j(\ell)}(\varepsilon)^2)^2}
      +\frac{c_{j(\ell)+1}^2(p_{j(\ell)+1}(\varepsilon)
         +\eta_{j(\ell)}(\varepsilon))^2}{
       (p_{j(\ell)+1}(\varepsilon)^2
         -\eta_{j(\ell)}(\varepsilon)^2)^2}
       +O(\varepsilon^2)
     \biggr\}^{1/2}.
\end{align*}
It follows from \eqref{l.reflpot.H_m.25} and
\eqref{l.sol.nondeg.24} that 
\[
    \lim_{\varepsilon\to0}
        \frac{c_{j(\ell)}^2(p_{j(\ell)+1}(\varepsilon)
         +\eta_{j(\ell)}(\varepsilon))^2}{
       (p_{j(\ell)}(\varepsilon)^2
         -\eta_{j(\ell)}(\varepsilon)^2)^2}
      =\frac{c_{j(\ell)+1}^4}{4p_{j(\ell)}^2c_{j(\ell)}^2}
\]
and
\[
    \lim_{\varepsilon\to0}
        \frac{c_{j(\ell)+1}^2(p_{j(\ell)+1}(\varepsilon)
         +\eta_{j(\ell)}(\varepsilon))^2}{
           (p_{j(\ell)+1}(\varepsilon)^2
             -\eta_{j(\ell)}(\varepsilon)^2)^2}
      =\frac{c_{j(\ell)+1}^2}{4p_{j(\ell)}^2}.
\]
Hence we see that
\begin{equation}\label{l.sol.nondeg.25}
    \lim_{\varepsilon\to0}
      (p_{j(\ell)+1}(\varepsilon)
         +\eta_{j(\ell)}(\varepsilon))
      |(D_{\ba(\varepsilon)}^2
       -\eta_{j(\ell)}(\varepsilon)^2 I_n)^{-1}
           \boldsymbol{c}_\ba|
    =-\frac{c_{j(\ell)+1}}{2p_{j(\ell)}c_{j(\ell)}}
       (c_{j(\ell)}^2+c_{j(\ell)+1}^2)^{\frac12}.
\end{equation}
Thus $Z_{\ba(\varepsilon)}$ converges to $Z_\ba$.

We next give the proof of the convergence of $U(\varepsilon)$ to $U$.
If $j\notin\{j(\ell)\mid 1\le \ell\le m\}$, then, by 
\eqref{l.sol.nondeg.211}, we see that 
\[
    \lim_{\varepsilon\to0}U(\varepsilon)_j^i
    =\lim_{\varepsilon\to0}
      \frac1{|(D_{\ba(\varepsilon)}^2
       -\eta_j(\varepsilon)^2 I_n)^{-1}
           \boldsymbol{c}_\ba|}
      \frac{c_i}{p_i(\varepsilon)^2
              -\eta_j(\varepsilon)^2}
    =U_j^i
    \quad\text{for }1\le i\le n.
\]
If $j=j(\ell)$, then, due to \eqref{l.sol.nondeg.211} and
the existence of the limit described in
\eqref{l.sol.nondeg.25}, we see that 
\[
    \lim_{\varepsilon\to0}
      |(D_{\ba(\varepsilon)}^2
       -\eta_{j(\ell)}(\varepsilon)^2 I_n)^{-1}
           \boldsymbol{c}_\ba|=\infty.
\]
This implies that
\begin{align*}
    \lim_{\varepsilon\to0} U(\varepsilon)_{j(\ell)}^i
    & =\lim_{\varepsilon\to0}
      \frac1{|(D_{\ba(\varepsilon)}^2
       -\eta_{j(\ell)}(\varepsilon)^2 I_n)^{-1}
           \boldsymbol{c}_\ba|}
      \frac{c_i}{p_i(\varepsilon)^2
              -\eta_{j(\ell)}(\varepsilon)^2}
    \\
    & =0\times\frac{c_i}{p_i^2-p_{j(\ell)}^2}
    =0
    \quad\text{for }i\ne j(\ell),j(\ell)+1.
\end{align*}
By \eqref{l.reflpot.H_m.25} and \eqref{l.sol.nondeg.25}, 
we obtain that
\begin{align*}
    & \lim_{\varepsilon\to0} U(\varepsilon)_{j(\ell)}^{j(\ell)}
    \\
    & =\lim_{\varepsilon\to0} 
      \frac{c_{j(\ell)}}{p_{j(\ell)}(\varepsilon)
                       +\eta_{j(\ell)}(\varepsilon)}
      \bigl\{(p_{j(\ell)}(\varepsilon)
                       -\eta_{j(\ell)}(\varepsilon))
       |(D_{\ba(\varepsilon)}^2-\eta_{j(\ell)}(\varepsilon)^2 I_n)^{-1}
           \boldsymbol{c}_\ba|\bigr\}^{-1}
    \\
   &  =\frac{c_{j(\ell)}}{2p_{j(\ell)}}
       \lim_{\varepsilon\to0} \biggl\{-\biggl(
            \frac{c_{j(\ell)}^2}{(p_{j(\ell)}(\varepsilon)
               +\eta_{j(\ell)}(\varepsilon))^2}
           +\frac{c_{j(\ell)+1}^2(p_{j(\ell)}(\varepsilon)
               -\eta_{j(\ell)}(\varepsilon))^2}{
            (p_{j(\ell)+1}(\varepsilon)^2
               -\eta_{j(\ell)}(\varepsilon)^2)^2}
           \biggr)^{\frac12}\biggr\}^{-1}
    \\
    & =-c_{j(\ell)+1}
       (c_{j(\ell)}^2+c_{j(\ell)+1}^2)^{-\frac12}
      =U_{j(\ell)}^{j(\ell)}
\end{align*}
and
\begin{align*}
    & \lim_{\varepsilon\to0} U(\varepsilon)_{j(\ell)}^{j(\ell)+1}
    \\
    & =\lim_{\varepsilon\to0} 
       \frac{c_{j(\ell)+1}}{
          p_{j(\ell)+1}(\varepsilon)-\eta_{j(\ell)}(\varepsilon)}
       \bigl\{
        (p_{j(\ell)+1}(\varepsilon)+\eta_{j(\ell)}(\varepsilon))
        |(D_{\ba(\varepsilon)}^2-\eta_{j(\ell)}(\varepsilon)^2 I_n)^{-1}
           \boldsymbol{c}_\ba|\bigr\}^{-1}
    \\
    & =c_{j(\ell)}(c_{j(\ell)}^2+c_{j(\ell)+1}^2)^{-\frac12}
      =U_{j(\ell)}^{j(\ell)+1}.
\end{align*}
Thus $U(\varepsilon)$ converges to $U$.
\end{proof}

\begin{proof}[Proof of Theorem~\ref{t.kdv}]
Due the last assertion of Lemma~\ref{l.sol.nondeg} and  
Lemma~\ref{l.reflpot.cv0} with $\phi=\widehat{\phi}_\ba$, we
obtain that
\[
    \widehat{\Psi}_\ba(x,t)
    =\biggl(
       \frac{\det\widehat{\phi}_\ba(0,t) e^{-x\tr D_\ba}}{
       \det\widehat{\phi}_\ba(x,t)} 
      \biggr)^{\frac12}.
\]
By \eqref{l.sol.nondeg.1}, we see that
\[
    \frac{\det\widehat{\phi}_\ba(0,t)}{
       \det\widehat{\phi}_\ba(x,t)}
    =\frac{\det(I_n+\widehat{G}(0,t))}{              
           \det(I_n+\widehat{G}(x,t))
           e^{x\sum\limits_{j=1}^n \eta_j}}.
\]
Combining these two identities, we obtain the
desired expression of $\log\widehat{\Psi}_\ba(x,t)$.
\end{proof}

\chapter{Applications of square root transformations}
\label{chap.sqrt}

\begin{quote}{\small
Applications of square root transformations of order one 
are discussed.
Especially, the evaluation of Laplace transformations will
be extended to pinned measures, 
and the variation of evaluation accordingly 
as the pinning changes will be represented in terms of 
Pl\"ucker coordinate.
Several examples on the variation are also given.
}
\end{quote}

\section{Selfdecomposability}
\label{sec.selfd}
In this subsection, we apply Theorem~\ref{t.tways} to 
characteristic functions of quadra\-tic forms.
For this purpose, let $\eta\in\stwo$ 
and take an ONB $\{h_n\}_{n=1}^\infty$ of $\mathcal{H}$ 
such that $B_\eta$ is developed as
\[
    B_\eta=\sum_{n=1}^\infty a_n h_n\otimes h_n,
\]
where $a_n\in \mathbb{R}$ for $n\in \mathbb{N}$ and 
$\sum\limits_{n=1}^\infty a_n^2<\infty$.
Define $\f_\eta:\mathbb{R}\to[0,\infty)$ by
\[
    \f_\eta(x)
    =\begin{cases}
       \displaystyle
       \frac12 \frac1{|x|} \sum_{n;xa_n>0} 
             e^{-|\frac{x}{a_n}|} 
       & \text{if }x\in \mathbb{R}\setminus\{0\},
       \\
       0 & \text{if }x=0,
     \end{cases}
\]
where $\sum\limits_{n;xa_n>0}$ means that the sum is taken
over $n$s with the property that $xa_n>0$.
Since
\begin{equation}\label{eq.selfd.1}
    e^{-|\frac{x}{a}|} 
    \le k!\Bigl|\frac{a}{x}\Bigr|^k
    \quad\text{for $x,a\in \mathbb{R}\setminus\{0\}$ and 
     $k\in \mathbb{N}\cup\{0\}$,}
\end{equation}
the series for $\f_\eta$ converges uniformly on compacts in  
$\mathbb{R}\setminus\{0\}$. 
Thus $\f_\eta$ is continuous on
$\mathbb{R}\setminus\{0\}$, and hence measurable on
$\mathbb{R}$.
Observe that
\begin{align*}
    \int_{-1}^1 |x|^2 \f_\eta(x)dx
    & =\frac12\sum_{n;a_n<0} \int_{-1}^0 |x| 
        e^{-|\frac{x}{a_n}|} dx
     +\frac12\sum_{n;a_n>0} \int_0^1 |x| 
        e^{-|\frac{x}{a_n}|} dx
    \\
    & \le \frac12 \biggl(\sum_{n=1}^\infty a_n^2\biggr)
        \int_0^\infty y e^{-y} dy
      =\frac12\|B_\eta\|_{\htwo}^2.
\end{align*}
Combining this with \eqref{eq.selfd.1} for $k=3$ and
$|x|>1$, we obtain that
\begin{equation}\label{eq.selfd.2}
    \int_{-\infty}^\infty |x|^2 \f_\eta(x)dx
    <\infty.
\end{equation}
We shall show the following expression of the characteristic
function of $\q_\eta$.

\begin{theorem}\label{t.selfd}
It holds that
\begin{equation}\label{t.selfd.1}
    \int_{\mathcal{W}} e^{\kyosu\lambda\q_\eta} d\mu
    =\exp\biggl(\int_{-\infty}^\infty 
     \{e^{\kyosu\lambda x}-1-\kyosu\lambda x\}
     \f_\eta(x)dx \biggr)
    \quad\text{for every }\lambda\in \mathbb{R}.
\end{equation}
\end{theorem}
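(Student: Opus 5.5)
We will compute both sides of \eqref{t.selfd.1} as infinite products over the eigenvalues $a_n$ and match them factor by factor. By Theorem~\ref{t.q.eta}(ii), applied to the ONB $\{h_n\}$ diagonalizing $B_\eta$, we have
\[
    \q_\eta=\frac12\sum_{n=1}^\infty a_n\{(\D^*h_n)^2-1\},
\]
with convergence in $L^2(\mu)$, and the $\D^*h_n$ are IID with law $N(0,1)$. Put $X_N=\frac12\sum_{n\le N}a_n\{(\D^*h_n)^2-1\}$. Since $X_N\to\q_\eta$ in $L^2$, hence in law, and $|e^{\kyosu\lambda\cdot}|=1$, bounded convergence gives
\[
    \int_{\mathcal{W}} e^{\kyosu\lambda\q_\eta}d\mu
    =\lim_{N\to\infty}\prod_{n=1}^N \phi_n(\lambda),
    \qquad
    \phi_n(\lambda)=(1-\kyosu\lambda a_n)^{-\frac12}e^{-\frac12\kyosu\lambda a_n}.
\]
Here $\phi_n$ is the Gaussian integral $\int_{\mathbb{R}} e^{\frac12\kyosu\lambda a_n(x^2-1)}\frac{1}{\sqrt{2\pi}}e^{-\frac12x^2}dx$, and the square root is the principal branch, which is legitimate because $\text{Re}(1-\kyosu\lambda a_n)=1>0$.

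The key one-dimensional step is to show, for each $a\neq0$ and $\lambda\in\mathbb{R}$, that
\[
    \log\phi(\lambda)
    =\frac12\int_{xa>0}\{e^{\kyosu\lambda x}-1-\kyosu\lambda x\}\frac{e^{-|x/a|}}{|x|}dx .
\]
Here $\log\phi$ is the continuous branch vanishing at $\lambda=0$, namely $-\frac12\log(1-\kyosu\lambda a)-\frac12\kyosu\lambda a$ with the principal logarithm. We will verify this by the Frullani-type identity $\int_0^\infty(e^{-sy}-e^{-y})\frac{dy}{y}=-\log s$, valid for $\text{Re}\,s>0$, together with $\int_0^\infty e^{-y}dy=1$. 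Concretely, for $a>0$ we substitute $x=ay$ and $s=1-\kyosu\lambda a$ to get
\[
    \frac12\int_0^\infty (e^{\kyosu\lambda a y}-1-\kyosu\lambda a y)e^{-y}\frac{dy}{y}
    =-\frac12\log(1-\kyosu\lambda a)-\frac12\kyosu\lambda a .
\]
The case $a<0$ is symmetric. Alternatively, both sides are holomorphic in $\lambda$ on a strip containing $\mathbb{R}$, agree at $0$, and have equal derivatives, so one can check the derivative instead. Summing over $n\le N$ then gives $\log\prod_{n\le N}\phi_n(\lambda)=\int\{e^{\kyosu\lambda x}-1-\kyosu\lambda x\}\f_\eta^{(N)}(x)dx$, where $\f_\eta^{(N)}$ is the truncation of the series defining $\f_\eta$.

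The remaining step is to pass $N\to\infty$ inside the integral, and this is where care is needed. We use the bound $|e^{\kyosu\lambda x}-1-\kyosu\lambda x|\le\frac12\lambda^2x^2$. Since $\f_\eta^{(N)}\uparrow\f_\eta$ monotonically and $\int x^2\f_\eta dx<\infty$ by \eqref{eq.selfd.2}, dominated convergence yields convergence of the exponents to $\int\{e^{\kyosu\lambda x}-1-\kyosu\lambda x\}\f_\eta(x)dx$. Exponentiating, and using that $\exp$ is continuous so branch issues disappear after exponentiation, we obtain \eqref{t.selfd.1}.
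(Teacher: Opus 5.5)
Your proof is correct, and it takes a genuinely different, more direct route than the paper's.

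The paper proceeds in five steps:
\begin{enumerate}
\item It splits $\eta=\eta^{(+)}+\eta^{(-)}$ according to the sign of the $a_n$ and uses independence of the two parts.
\item For the positive part it invokes Theorem~\ref{t.tways} to get $\int_{\mathcal{W}}e^{-\lambda_0\q_{\eta^{(+)}}}d\mu=\{\dettwo(I+\lambda_0B_{\eta^{(+)}})\}^{-1/2}$ for real $\lambda_0>0$.
\item It rewrites each term $-\log(1+a_n\lambda_0)+a_n\lambda_0$ as an integral against $\frac1xe^{-x/a_n}$, by expanding in powers of $\lambda_0$ and using $\int_0^\infty x^ne^{-x/a}dx=n!a^{n+1}$, and sums over $n$ by dominated convergence.
\item It continues holomorphically in $\zeta$ on $\mathcal{D}_{\eta^{(+)}}\cap\{\text{Re}\,\zeta<0\}$ (Lemma~\ref{l.hol}) and lets $\zeta=-\frac1n+\kyosu\lambda\to\kyosu\lambda$.
\item The negative part follows by reflecting $\eta\mapsto-\eta$.
\end{enumerate}

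You work on the imaginary axis from the outset. Since $|e^{\kyosu\lambda\q_\eta}|=1$, there is no integrability or sign constraint on the $a_n$. So you need neither the $\pm$ split, nor Theorem~\ref{t.tways}, nor any analytic continuation of a Wiener integral. The only complex analysis left is scalar: the Gaussian characteristic function $(1-\kyosu\lambda a)^{-1/2}$ with an explicit branch, and the complex Frullani integral. Frullani also gives the one-dimensional identity for all real $\lambda$ at once. By contrast, the paper's term-by-term power series is valid only when $a_n\lambda_0<1$, though that range is all its continuation on $\mathcal{D}_{\eta^{(+)}}$ actually uses.

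What the paper's route buys is coherence with the monograph's program: the characteristic function is read off from the Laplace-transform formula produced by the transformation machinery. As a by-product it also yields the same L\'evy--Khintchine-type representation of $\int_{\mathcal{W}}e^{\zeta\q_{\eta^{(+)}}}d\mu$ on the strip $\mathcal{D}_{\eta^{(+)}}\cap\{\text{Re}\,\zeta<0\}$.
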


\begin{remark}\label{r.selfd}
(i)
Since 
$|e^{\kyosu\lambda x}-1-\kyosu\lambda x|
 \le \frac12\lambda^2|x|^2$ 
for $x\in \mathbb{R}$, by virtue of \eqref{eq.selfd.2},
$\{e^{\kyosu\lambda x}-1-\kyosu\lambda x\}
     \f_\eta(x)$
is integrable with respect to the Lebesgue measure.
Hence the integral in the exponent of the RHS of the identity
\eqref{t.selfd.1} is finitely definite.
\\
(ii) 
The expression \eqref{t.selfd.1} says that the distribution
of $\q_\eta$ is selfdecomposable and the corresponding
L\'evy measure is $\f_\eta(x)dx$ 
(cf.\cite[Corollary~15.11]{sato}). 
\end{remark}

\begin{proof}
Define $\eta^{(+)},\eta^{(-)}\in\stwo$ by
\[
    \eta^{(\pm)}(t,s)
    =\sum_{n;\pm a_n>0} a_n
     [h_n^\prime(s)\otimes h_n^\prime(t)]
    \quad\text{for }(t,s)\in[0,T]^2 
    \quad\text{(double-sign corresponds)}.
\]
By Theorem~\ref{t.q.eta}, 
$\q_\eta=\q_{\eta^{(+)}}+\q_{\eta^{(-)}}$, and
$\q_{\eta^{(+)}}$ and
$\q_{\eta^{(-)}}$ are independent.
Hence it holds that
\begin{equation}\label{t.selfd.21}
    \int_{\mathcal{W}} e^{\kyosu\lambda\q_\eta} d\mu
    =\biggl(\int_{\mathcal{W}} 
            e^{\kyosu\lambda\q_{\eta^{(+)}}} d\mu\biggr)
     \biggl(\int_{\mathcal{W}} 
            e^{\kyosu\lambda\q_{\eta^{(-)}}} d\mu\biggr)
    \quad\text{for }\lambda\in \mathbb{R}.
\end{equation}

We first consider $\q_{\eta^{(+)}}$.
Let $\lambda_0>0$.
Since 
$B_{\eta^{(+)}}=\sum\limits_{n;a_n>0} a_n h_n\otimes h_n$, we know that
\[
    \Lambda(B_{-\lambda_0\eta^{(+)}})
    =\sup_{n;a_n>0}(-\lambda_0 a_n)\le 0<1.
\]
By Theorem~\ref{t.tways}, we have that
\[
    \int_{\mathcal{W}} e^{-\lambda_0 q_{\eta^{(+)}}} d\mu
    =\{\dettwo(I+\lambda_0 B_{\eta^{(+)}})\}^{-\frac12}
     =\exp\biggl(\frac12\sum_{n;a_n>0}
       \{-\log(1+a_n\lambda_0)+a_n\lambda_0\}\biggr).
\]

Let $a>0$.
Applying the integration by parts formula successively, we
have that 
\[
    \int_0^\infty x^n e^{-\frac{x}{a}} dx=n! a^{n+1}
    \quad\text{for }n\in \mathbb{N}\cup\{0\}.
\]
Hence it holds that
\[
    \int_0^\infty \{e^{-\lambda_0 x}-1+\lambda_0 x\}
     \frac1{x} e^{-\frac{x}{a}} dx
    =\sum_{n=1}^\infty \frac{(-1)^n\lambda_0^n a^n}{n}
     + a\lambda_0
    =-\log(1+a\lambda_0)+a\lambda_0.
\]

Since 
\[
    |e^{-\lambda_0 x}-1+\lambda_0 x|
     \le \frac{\lambda_0^2}2|x|^2
    \quad\text{and}\quad
    0\le \frac12\sum_{n\le N;a_n>0}
      \frac1{x}e^{-\frac{x}{a_n}} \le \f_{\eta^{(+)}}(x)
\]
for $x\ge0$ and $N\in \mathbb{N}$,
by \eqref{eq.selfd.2} and the dominated convergence
theorem, we obtain that
\begin{align*}
    \frac12 \sum_{n;a_n>0} 
     \{-\log(1+a_n\lambda_0)+a_n\lambda_0\}
    & =\lim_{N\to\infty} \int_0^\infty 
         \{e^{-\lambda_0 x}-1+\lambda_0 x\}
      \biggl(\frac1{2x} \sum_{n\le N;a_n>0} 
      e^{-\frac{x}{a_n}}\biggr) dx
    \\
    & =\int_0^\infty \{e^{-\lambda_0 x}-1+\lambda_0 x\} 
        \f_{\eta^{(+)}}(x) dx.
\end{align*}
Thus we arrive at the identity that
\begin{equation}\label{t.selfd.22}
    \int_{\mathcal{W}} e^{-\lambda_0\q_{\eta^{(+)}}} d\mu
    =\exp\biggl(\int_0^\infty 
      \{e^{-\lambda_0 x}-1+\lambda_0 x\}
      \f_{\eta^{(+)}}(x) dx\biggr)
    \quad\text{for any $\lambda_0>0$.}
\end{equation}

Let 
$\Omega=\{\zeta\in \mathbb{C}\mid \text{\rm Re}\zeta<0\}$.
Observe that 
\begin{equation}\label{t.selfd.23}
     |e^{\zeta x}-1-\zeta x|
      =\biggl|\int_0^x\int_0^y 
          \zeta^2 e^{\zeta z}dz \biggr|
      \le \frac{|\zeta|^2}2 |x|^2
\end{equation}
and 
\[
    \biggl|\frac{d}{d\zeta}\biggl(
        e^{\zeta x}-1-\zeta x\biggr)\biggr|
    =|x(e^{\zeta x}-1)|\le |\zeta||x|^2
\]
for any $\zeta\in\Omega$ and $x\ge0$.
By \eqref{eq.selfd.2} for $\eta^{(+)}$ and the
dominated convergence theorem of differentiation type, the
mapping 
\[
    \Omega\ni\zeta\mapsto 
    \int_0^\infty\{e^{\zeta x}-1-\zeta x\} 
       \f_{\eta^{(+)}}(x)dx\in \mathbb{C}
\]
is holomorphic.
Combined with Lemma~\ref{l.hol} for $\eta^{(+)}$ and
\eqref{t.selfd.22}, this holomorphy implies that
\[
    \int_{\mathcal{W}} e^{\zeta\q_{\eta^{(+)}}} d\mu
    =\exp\biggl(\int_0^\infty 
      \{e^{\zeta x}-1-\zeta x\}
      \f_{\eta^{(+)}}(x) dx\biggr)    
    \quad\text{for any }
    \zeta\in \mathcal{D}_{\eta^{(+)}}\cap\Omega.
\]
Hence for $n\in \mathbb{N}$ with
$n>\|B_{\eta^{(+)}}\|_\op$, it holds that
\[
    \int_{\mathcal{W}} 
        e^{(-\frac1{n}+\kyosu\lambda)\q_{\eta^{(+)}}} d\mu
    =\exp\biggl(\int_0^\infty 
           \biggl\{e^{(-\frac1{n}+\kyosu\lambda) x}-1
             -\Bigl(-\frac1{n}+\kyosu\lambda\Bigr)x
           \biggr\}
           \f_{\eta^{(+)}}(x) dx\biggr)
    \quad\text{for $\lambda\in \mathbb{R}$.}
\]
By Lemma~\ref{l.hol}, the LHS of the equation converges to 
\[
    \int_{\mathcal{W}} e^{\kyosu\lambda\q_{\eta^{(+)}}}d\mu.
\]
By \eqref{t.selfd.23} and \eqref{eq.selfd.2},
the exponent in the RHS of the equation converges to 
\[
    \int_0^\infty 
    \{e^{\kyosu\lambda x}-1-\kyosu\lambda x\}
    \f_{\eta^{(+)}}(x) dx.
\]
Thus we obtain that 
\begin{equation}\label{t.selfd.24}
    \int_{\mathcal{W}} e^{\kyosu\lambda\q_{\eta^{(+)}}}d\mu
    =\exp\biggl(\int_0^\infty 
      \{e^{\kyosu\lambda x}-1-\kyosu\lambda x\}
      \f_{\eta^{(+)}}(x) dx\biggr)
    \quad\text{for }\lambda\in \mathbb{R}.
\end{equation}

We next consider $\eta^{(-)}$.
Observe that
\[
    \f_{\eta^{(-)}}(y)
    =\frac1{2|y|}\sum_{n;a_n<0}
      e^{-|\frac{y}{a_n}|}
    =\frac1{2|-y|}\sum_{n;(-a_n)>0}
      e^{-|\frac{-y}{-a_n}|}
    =\f_{(-\eta)^{(+)}}(-y)
    \quad\text{for } y<0.
\]
Since $-\eta^{(-)}=(-\eta)^{(+)}$, we obtain from 
\eqref{t.selfd.24} that
\begin{align}
 \label{t.selfd.25}
    \int_{\mathcal{W}} e^{\kyosu\lambda\q_{\eta^{(-)}}} d\mu
    & =\int_{\mathcal{W}} 
        e^{\kyosu(-\lambda)\q_{(-\eta)^{(+)}}} d\mu
    =\exp\biggl(\int_0^\infty 
         \{e^{-\kyosu\lambda x}-1+\kyosu\lambda x\}
           \f_{(-\eta)^{(+)}}(x) dx\biggr)
   \\
    & =\exp\biggl(\int_{-\infty}^0
         \{e^{\kyosu\lambda y}-1-\kyosu\lambda y\}
           \f_{\eta^{(-)}}(y) dy\biggr)
    \quad\text{for }\lambda\in \mathbb{R}.
 \nonumber
\end{align}

Noticing that
$\f_\eta=\one_{(-\infty,0)}\f_{\eta^{(-)}}
  +\one_{(0,\infty)}\f_{\eta^{(+)}}$, 
and plugging \eqref{t.selfd.24} and \eqref{t.selfd.25} into  
\eqref{t.selfd.21}, we obtain \eqref{t.selfd.1}.
\end{proof}

\section{Pinned measure}
\label{sec.cond.expect}
In this section, we show that the evaluation
\eqref{t.tways.3} of Laplace transformations is extended
to pinned measures.

Let $k_1,\dots,k_N\in \mathcal{H}$ be linearly independent. 
Put $\boldsymbol{k}=(k_1,\dots,k_N)$, and denote by
$\pi_{\boldsymbol{k}}$ the orthogonal projection of
$\mathcal{H}$ to the subspace spanned by $k_1,\dots,k_N$.
By Lemma~\ref{l.b.kappa}, there is a
$\Pi_{\boldsymbol{k}}\in\stwo$ with the property that
$\pi_{\boldsymbol{k}}=B_{\Pi_{\boldsymbol{k}}}$.
For $\eta\in\stwo$, put
\[
    B_{\eta;\boldsymbol{k}}
    =\pi_{\boldsymbol{k}}^\perp
       B_\eta \pi_{\boldsymbol{k}}^\perp,
    \quad\text{where }
    \pi_{\boldsymbol{k}}^\perp=I-\pi_{\boldsymbol{k}}.
\]
Define $\eta_{\boldsymbol{k}}\in\stwo$ by
\begin{align*}
    \eta_{\boldsymbol{k}}(t,s)
    = & \eta(t,s)
      -\int_0^T\Bigl\{\Pi_{\boldsymbol{k}}(t,u)\eta(u,s)
      +\eta(t,u)\Pi_{\boldsymbol{k}}(u,s)\Bigr\} du
    \\
    & +\int_0^T\int_0^T \Pi_{\boldsymbol{k}}(t,u)
       \eta(u,v)\Pi_{\boldsymbol{k}}(v,s) dudv
    \quad\text{for }(t,s)\in[0,T]^2.
\end{align*}
We then have that     
$B_{\eta;\boldsymbol{k}}=B_{\eta_{\boldsymbol{k}}}$:
\[
    (B_{\eta;\boldsymbol{k}}h)^\prime(t)
    =\int_0^T \eta_{\boldsymbol{k}}(t,s)h^\prime(s)ds
    \quad\text{for }h\in \mathcal{H}
    \text{ and }t\in[0,T].
\]
The Weiner functional 
$\D^*\boldsymbol{k}=(\D^*k_1,\dots,\D^*k_N)$
is in $\mathbb{D}^\infty(\mathbb{R}^N)$.
Let 
\[
  C(\boldsymbol{k})
  =\bigl( \langle k_i,k_j\rangle_{\mathcal{H}}
    \bigr)_{1\le i,j\le N}.
\]
Due to the linear independence of $k_i$s, 
$\det C(\boldsymbol{k})\ne0$.
Since $\D(\D^*k_i)=k_i$ for $1\le i\le N$
(\cite[(5.1.9)]{mt-cambridge}, 
$\D^* \boldsymbol{k}$ is non-degenerate.
Let $\mathcal{P}$ be the space of Wiener functionals of the
form $p(\ell_1,\dots,\ell_n)$ with a polynomial
$p:\mathbb{R}^n\to \mathbb{R}$ and 
$\ell_1,\dots,\ell_n\in \mathcal{W}^*$.
For $\eta\in\stwo$ with $\Lambda(B_\eta)<1$, by
Lemma~\ref{l.q.eta.int}, 
$e^{\q_\eta}\in L^{1+}(\mu)$.
Due to \eqref{eq.e^a}, we can consider
$\int_{\mathcal{W}} f e^{\q_\eta+\D^*h}
 \delta_0(\D^*\boldsymbol{k}) d\mu$ for any
$f\in \mathcal{P}$ and $h\in \mathcal{H}$.

\begin{theorem}\label{t.lap.cond}
Let $\eta\in\stwo$ and assume that $\Lambda(B_\eta)<1$.
Then it holds that
\begin{align}\label{t.lap.cond.1}
    \int_{\mathcal{W}} fe^{\q_\eta+\D^*h} 
        \delta_0(\D^*\boldsymbol{k}) d\mu
    = & \{(2\pi)^N
          \dettwo(I-B_{\eta;\boldsymbol{k}})
          \det C(\boldsymbol{k}) \}^{-\frac12}
        e^{-\frac12 \tr(\pi_{\boldsymbol{k}}B_\eta \pi_{\boldsymbol{k}})}
    \\
    & \times
      \int_{\mathcal{W}}
       ((\pi_{\boldsymbol{k}}^\perp)_*f)
        (\iota+F_{\widehat{\kappa_S(\eta_{\boldsymbol{k}})}})
        e^{\D^*[C_{\eta_{\boldsymbol{k}}}^{-1}
         (\pi_{\boldsymbol{k}}^\perp h)]} d\mu
 \nonumber
\end{align}
for every $f\in \mathcal{P}$ and $h\in \mathcal{H}$,
where
\[
    (\pi_{\boldsymbol{k}}^\perp)_*f
    =p\bigl(\D^*(\pi_{\boldsymbol{k}}^\perp\ell_1),
      \dots,\D^*(\pi_{\boldsymbol{k}}^\perp\ell_n)
     \bigr)
    \quad\text{for }f=p(\ell_1,\dots,\ell_n).
\]
\end{theorem}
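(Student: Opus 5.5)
Splitting the Wiener functionals into the part depending on $\D^*\boldsymbol{k}$ and the part independent of it, I then apply \eqref{t.tways.3} to the independent part. Replacing $\boldsymbol{k}$ by a Gram--Schmidt orthonormalization $e_1,\dots,e_N$ of $k_1,\dots,k_N$ changes $\delta_0(\D^*\boldsymbol{k})$ only by the Jacobian factor $(\det C(\boldsymbol{k}))^{-1/2}$. Indeed, $\D^*\boldsymbol{k}=M\D^*\boldsymbol{e}$ with $MM^\dagger=C(\boldsymbol{k})$, and $\delta_0(M y)=|\det M|^{-1}\delta_0(y)$. So it suffices to treat orthonormal $k_i$, for which $\D^*\boldsymbol{k}$ is standard Gaussian on $\mathbb{R}^N$, which accounts for the factor $(2\pi)^{-N/2}$. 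I would also extend $k_1,\dots,k_N$ to an ONB $\{k_n\}$ of $\mathcal{H}$. Then $\D^*g$ with $g\perp k_1,\dots,k_N$ is independent of $\D^*\boldsymbol{k}$, and every $\D^*g$ splits as $\D^*(\pi_{\boldsymbol{k}}g)+\D^*(\pi_{\boldsymbol{k}}^\perp g)$, whose first summand is a linear function of $\D^*\boldsymbol{k}$.

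Next I decompose $\q_\eta$ via Theorem~\ref{t.q.eta}(ii) in this ONB. Since $B_\eta=\pi B_\eta\pi+\pi B_\eta\pi^\perp+\pi^\perp B_\eta\pi+\pi^\perp B_\eta\pi^\perp$ (writing $\pi=\pi_{\boldsymbol{k}}$), we get $\q_\eta=\frac12(\D^*)^2(\pi B_\eta\pi)+(\D^*)^2(\pi^\perp B_\eta\pi)+\q_{\eta_{\boldsymbol{k}}}$. On the support of $\delta_0(\D^*\boldsymbol{k})$ we have $\D^*k_i=0$ for $i\le N$, so every product $(\D^*k_i)(\D^*k_m)$ with $i\le N$ vanishes. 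What survives of the first two pieces is $-\frac12\sum_{i\le N}\langle B_\eta k_i,k_i\rangle_{\mathcal{H}}=-\frac12\tr(\pi B_\eta\pi)$. Likewise $\D^*h$ reduces to $\D^*(\pi^\perp h)$, and $f=p(\ell_1,\dots,\ell_n)$ reduces to $(\pi^\perp)_*f$, using $\ell_j=\D^*\ell_j$. To make this rigorous I would use the defining duality of $\delta_0(\D^*\boldsymbol{k})$: for $\Psi=\Phi\cdot G$ with $\Phi$ a function of $\D^*\boldsymbol{k}$ and $G$ measurable with respect to $\{\D^*k_n\}_{n>N}$, the pairing with $\delta_0(\D^*\boldsymbol{k})$ equals $\Phi|_{\D^*\boldsymbol{k}=0}\,(2\pi)^{-N/2}\int G\,d\mu$. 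This follows from independence and \cite[Theorem\,5.4.11]{mt-cambridge}, as in Remark~\ref{r.xi.alpha}.

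The main obstacle is that $\q_\eta$ does not literally factor into such a product, because of the cross term $(\D^*)^2(\pi^\perp B_\eta\pi)$. I would approximate $\q_\eta$ by finite-rank truncations $B_\eta^{(M)}$ as in the proof of Theorem~\ref{t.q.eta}(ii). For these, the integrand is a function of finitely many Gaussians, and the $\delta_0$-pairing is just Gaussian conditioning, with the cross term equal to $\sum_{i\le N}(\D^*k_i)\cdot(\text{linear in the others})$, which vanishes at $\D^*\boldsymbol{k}=0$. To pass to the limit $M\to\infty$ I need convergence of $f e^{\q^{(M)}+\D^*h}$ in $\mathbb{D}^{2N+2,p}$ for some $p>1$, with $\Lambda(B^{(M)}_\eta)$ uniformly below $1$. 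I would obtain this from \eqref{eq.e^a}, the uniform exponential bound of Lemma~\ref{l.q.eta.int}, and $\|B_\eta^{(M)}-B_\eta\|_{\htwo}\to0$, in the same way as in the proof of Theorem~\ref{t.transf}.

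Finally, note that $\Lambda(B_{\eta_{\boldsymbol{k}}})=\Lambda(\pi^\perp B_\eta\pi^\perp)\le\Lambda(B_\eta)<1$. So Theorem~\ref{t.tways} (via Lemma~\ref{l.tw.(ii)to(iii)} with $\tau=\kappa_S(\eta_{\boldsymbol{k}})$, so that $I+B_\tau=C_{\eta_{\boldsymbol{k}}}$) applies to $\int (\pi^\perp)_*f\,e^{\q_{\eta_{\boldsymbol{k}}}+\D^*(\pi^\perp h)}d\mu$. Formula \eqref{t.tways.3} then yields $\{\dettwo(I-B_{\eta;\boldsymbol{k}})\}^{-1/2}\int((\pi^\perp)_*f)(\iota+F_{\widehat{\kappa_S(\eta_{\boldsymbol{k}})}})e^{\D^*[C_{\eta_{\boldsymbol{k}}}^{-1}\pi^\perp h]}d\mu$. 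Extending \eqref{t.tways.3} from $C_b$ to polynomial $f\in\mathcal{P}$ is done by truncation and uniform integrability, as in the proof of Theorem~\ref{t.inv.transf}. Collecting the factors $(2\pi)^{-N/2}$, $(\det C(\boldsymbol{k}))^{-1/2}$ and $e^{-\frac12\tr(\pi B_\eta\pi)}$ gives \eqref{t.lap.cond.1}.
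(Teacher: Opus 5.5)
Your proposal is correct and follows essentially the paper's route. You reduce to orthonormal $k_i$ at the cost of the factor $\{\det C(\boldsymbol{k})\}^{-1/2}$, and expand $\q_\eta$ in an ONB extending $k_1,\dots,k_N$. Under $\delta_0(\D^*\boldsymbol{k})d\mu$ the integrand then becomes $((\pi_{\boldsymbol{k}}^\perp)_*f)\,e^{\q_{\eta_{\boldsymbol{k}}}+\D^*(\pi_{\boldsymbol{k}}^\perp h)}e^{-\frac12\tr(\pi_{\boldsymbol{k}}B_\eta\pi_{\boldsymbol{k}})}$; you pull out $(2\pi)^{-N/2}$ by independence and apply \eqref{t.tways.3} to $\eta_{\boldsymbol{k}}$.

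The only difference is one of rigor. You justify the evaluation at $\D^*\boldsymbol{k}=0$ through finite-rank truncations and the $\mathbb{D}^{2N+2,p}$-continuity of the pairing, and you spell out the extension of \eqref{t.tways.3} to polynomial $f$. The paper simply asserts these reductions $\delta_0(\D^*\boldsymbol{k})d\mu$-a.e.\ and passes from polynomials to general $\sigma[\{\D^*h_n\}]$-measurable $\Psi$ without further comment.
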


Since 
$\Lambda(B_{\eta_{\boldsymbol{k}}})\le \Lambda(B_\eta)<1$, 
$\widehat{\kappa_S(\eta_{\boldsymbol{k}})}\in\stwo$ is
defined well. 

\begin{proof}
Notice that $C(\boldsymbol{k})$ is symmetric and positive definite. 
Take a 
$V=(V_{ij})_{1\le i,j\le N}\in \mathbb{R}^{N\times N}$ 
with
$VC(\boldsymbol{k})V^\dagger=I_N$.
For 
$\varphi,\widetilde{\varphi}\in\mathscr{S}(\mathbb{R}^N)$,
it holds that
\[
    \int_{\mathbb{R}^N}  \varphi(V^{-1}x)
      \widetilde{\varphi}(x) dx
    =\int_{\mathbb{R}^N}  \varphi(y)
      \widetilde{\varphi}(Vy) |\det V| dy.
\]
Letting $\varphi\to\delta_0$, we obtain that
$\delta_0(V^{-1} dx)=|\det V|\delta_0(dx)$.

Define $\widehat{k}_1,\dots,\widehat{k}_N\in \mathcal{H}$ by 
$\widehat{k}_i=\sum\limits_{j=1}^N V_{ij}k_j$
for $1\le i\le N$.
Then $\widehat{k}_1,\dots,\widehat{k}_N$ are orthonormal.
Put 
$\D^*\widehat{\boldsymbol{k}}
 =(\D^*\widehat{k}_1,\dots,\D^*\widehat{k}_N)$.
Then 
$\D^*\boldsymbol{k}=V^{-1}\D^*\widehat{\boldsymbol{k}}$.
Since $(\det V)^2\det C(\boldsymbol{k})=1$, 
we have that
\[
    \int_{\mathcal{W}} f e^{\q_\eta+\D^*h}
    \delta_0(\D^*\boldsymbol{k}) d\mu
    =\{\det C(\boldsymbol{k})\}^{-\frac12}
      \int_{\mathcal{W}} f e^{\q_\eta+\D^*h}
        \delta_0(\D^*\widehat{\boldsymbol{k}}) d\mu.
\]
Furthermore, note that
$B_{\eta;\boldsymbol{k}}=B_{\eta;\widehat{\boldsymbol{k}}}$,
$\pi_{\boldsymbol{k}}=\pi_{\widehat{\boldsymbol{k}}}$, and
$\eta_{\boldsymbol{k}}=\eta_{\widehat{\boldsymbol{k}}}$.
Thus, to prove \eqref{t.lap.cond.1}, we may and will assume
that $k_1,\dots,k_N$ are orthonormal.

Take an ONB $\{h_n\}_{n=1}^\infty$ of the subspace
$\pi_{\boldsymbol{k}}^\perp(\mathcal{H})$ such that 
\[
    B_{\eta;\boldsymbol{k}}=\sum_{n=1}^\infty 
     \langle B_\eta h_n,h_n\rangle_{\mathcal{H}}
     h_n\otimes h_n.
\]
Then 
$\{k_1,\dots,k_N,h_n\mid n\in \mathbb{N}\}$ is an ONB of
$\mathcal{H}$.
By Theorem~\ref{t.q.eta}, it holds that
\begin{align*}
    \q_\eta= & \frac12\sum_{n=1}^\infty
      \langle B_\eta h_n,h_n\rangle_{\mathcal{H}}
      \{(\D^*h_n)^2-1\}
     +\sum_{n=1}^\infty \sum_{j=1}^N
      \langle B_\eta h_n,k_j\rangle_{\mathcal{H}}
      (\D^*h_n)(\D^*k_j)
    \\
    & +\frac12 \sum_{i,j=1}^N
      \langle B_\eta k_i,k_j\rangle_{\mathcal{H}}
      \{(\D^*k_i)(\D^*k_j)-\delta_{ij}\}.
\end{align*}
Note that $B_{\eta;\boldsymbol{k}}=B_{\eta_{\boldsymbol{k}}}$
and $\D^*\boldsymbol{k}=0$
$\delta_0(\D^*\boldsymbol{k})d\mu$-a.e.
By Theorem~\ref{t.q.eta} for $\q_{\eta_{\boldsymbol{k}}}$,
this expression of $\q_\eta$, and the definition of
$(\pi_{\boldsymbol{k}}^\perp)_*f$, we have that
\[
    \q_\eta =\q_{\eta_{\boldsymbol{k}}}
       -\frac12 
        \tr(\pi_{\boldsymbol{k}}B_\eta \pi_{\boldsymbol{k}})
    \quad\text{and}\quad
    f=(\pi_{\boldsymbol{k}}^\perp)_*f
    \quad\text{$\delta_0(\D^*\boldsymbol{k})d\mu$-a.e.}
\]
Furthermore, developing $h\in \mathcal{H}$ as
$h=\sum\limits_{j=1}^N \langle h,k_j\rangle_{\mathcal{H}}
 k_j+\pi_{\boldsymbol{k}}^\perp h$, we see that
\[
    \D^*h
    =\D^*(\pi_{\boldsymbol{k}}^\perp h)
    \quad\text{$\delta_0(\D^*\boldsymbol{k})d\mu$-a.e.}
\]
Thus we obtain that
\begin{equation}\label{t.lap.cond.21}
    \int_{\mathcal{W}} f e^{\q_\eta+\D^*h} 
      \delta_0(\D^*\boldsymbol{k})d\mu
    =e^{-\frac12\tr(\pi_{\boldsymbol{k}}
            B_\eta\pi_{\boldsymbol{k}})}
     \int_{\mathcal{W}} 
       ((\pi_{\boldsymbol{k}}^\perp)_*f)
       e^{\q_{\eta_{\boldsymbol{k}}}
          +\D^*(\pi_{\boldsymbol{k}}^\perp h)} 
      \delta_0(\D^*\boldsymbol{k})d\mu.    
\end{equation}

Let $p:\mathbb{R}^n\to \mathbb{R}$ be a polynomial.
Take a sequence 
$\{\varphi_m\}_{m=1}^\infty\subset\mathscr{S}(\mathbb{R}^N)$ 
converging to $\delta_0$ in
$\mathscr{S}^\prime(\mathbb{R}^N)$.
We then have that
\begin{align*}
    & \int_{\mathcal{W}} p(\D^*h_1,\dots,\D^*h_n)
     \delta_0(\D^*\boldsymbol{k}) d\mu
    =\lim_{m\to\infty}
      \int_{\mathcal{W}} p(\D^*h_1,\dots,\D^*h_n)
      \varphi_m(\D^*\boldsymbol{k}) d\mu
    \\
    & =\biggl(\int_{\mathcal{W}} p(\D^*h_1,\dots,\D^*h_n) 
       d\mu\biggr)
     \lim_{m\to\infty}
      \int_{\mathcal{W}} \varphi_m(\D^*\boldsymbol{k}) d\mu
    \\
    & 
    =\biggl(\int_{\mathcal{W}} p(\D^*h_1,\dots,\D^*h_n) 
       d\mu\biggr)
     \lim_{m\to\infty}
      \int_{\mathbb{R}^N} \varphi_m(x) 
       \frac1{\sqrt{(2\pi)^N}} e^{-\frac12|x|^2} dx
    \\
    & 
    =(2\pi)^{-\frac{N}2}
     \int_{\mathcal{W}} p(\D^*h_1,\dots,\D^*h_n) d\mu.
\end{align*}
Hence, if  
$\Psi\in \mathbb{D}^{\infty,1+}$ is 
$\sigma[\{\D^*h_n\mid n\in \mathbb{N}\}]$-measurable, then  
\[
    \int_{\mathcal{W}}\Psi \delta_0(\D^*\boldsymbol{k}) d\mu 
    =(2\pi)^{-\frac{N}2}
    \int_{\mathcal{W}}\Psi d\mu.
\]
Combining this with \eqref{t.lap.cond.21}, we obtain that
\[
    \int_{\mathcal{W}} f e^{\q_\eta+\D^*h} 
      \delta_0(\D^*\boldsymbol{k})d\mu
    =(2\pi)^{-\frac{N}2}
     e^{-\frac12\tr[\pi_{\boldsymbol{k}}
            B_\eta\pi_{\boldsymbol{k}}]}
     \int_{\mathcal{W}} 
       ((\pi_{\boldsymbol{k}}^\perp)_*f)
       e^{\q_{\eta_{\boldsymbol{k}}}
          +\D^*(\pi_{\boldsymbol{k}}^\perp h)} d\mu.
\]
Applying Theorem~\ref{t.tways}(ii), we obtain 
\eqref{t.lap.cond.1}.
\end{proof}

\section{Pl\"ucker coordinates}
\label{sec.plucker}
In this section, we see how 
$\dettwo(I-B_{\eta;\boldsymbol{k}})$ appearing in
Theorem~\ref{t.lap.cond} varies as $N$ changes. 
For this purpose, Assume that $\eta\in\stwo$ satisfy the
condition that 
\\[5pt]
\indent
{\bf (A)}
\begin{minipage}[t]{380pt}
there exist $A_I,A_F\in\htwo$ such that
$\dettwo(I-A_I)\ne0$, $A_F$ is of finite rank, i.e., the
range $\mathcal{R}(A_F)$ of $A_F$ is of finite dimension,
and $B_\eta=A_I+A_F$.
\end{minipage}
\\[5pt]
Take linearly independent $k_1,\dots,k_M\in \mathcal{H}$
with $M\in \mathbb{N}$ such that $\mathcal{R}(A_F)$
is included in the subspace spanned by them.
Noting that $I-A_I$ has a continuous inverse
(\cite[Theorem\,XII.1.2]{GGK}),
for each $p=(p_1,\dots,p_M)\in \mathbb{R}^M$, we define
$J_p\in \mathcal{H}$ by
\[
    J_p=(I-A_I)^{-1}\biggl(\sum_{j=1}^M p_jk_j\biggr).
\]
Define the projections 
$\pi^{(N)}:\mathcal{H}\to \mathcal{H}$ for $0\le N\le M$ as
$\pi^{(0)}=0$ and 
$\pi^{(N)}=\pi_{\boldsymbol{k}^{(N)}}$ if $0<N\le M$, 
where $\boldsymbol{k}^{(N)}=(k_1,\dots,k_N)$ and 
$\pi_{\boldsymbol{k}^{(N)}}$ is the projection defined 
in the paragraph before Theorem~\ref{t.lap.cond}.
Let 
\[
    B_\eta^{(N)}=(I-\pi^{(N)})B_\eta.
\]
Define 
$\J_{\!N}\in \mathbb{R}^{M\times M}$ by
\[
    \J_{\!N}\,p
    =\Bigl(\bigl\langle \{I-(B_\eta^{(N)}
      -B_\eta^{(M)})\}J_p,k_j\bigr\rangle_{\mathcal{H}}
     \Bigr)_{1\le j\le M}
     \quad\text{for } p\in \mathbb{R}^M.
\]
We show the following expression of 
$\int_{\mathcal{W}} e^{\q_\eta}
 \delta_0(\D^*\boldsymbol{k}^{(N)}) d\mu$.

\begin{theorem}\label{t.plucker}
Let $\eta\in\stwo$.
Assume that $\Lambda(B_\eta)<1$ and the condition~{\bf(A)}
is fulfilled. 
Then $\det \J_{\!N}\ne0$ for every
$0\le N\le M$, and it holds that
\begin{equation}\label{t.plucker.1}
    \int_{\mathcal{W}} e^{\q_\eta}
      \delta_0(\D^*\boldsymbol{k}^{(N)})d\mu
    =\biggl(\frac{\det C(\boldsymbol{k}^{(M)})}{
         (2\pi)^N \det C(\boldsymbol{k}^{(N)})
         \det \J_{\!N}} \biggr)^{\frac12}
     \{\dettwo(I-A_I)\}^{-\frac12}
     e^{-\frac12\tr A_F}
\end{equation}
for $0\le N\le M$,
where 
$\delta_0(\D^*\boldsymbol{k}^{(0)})d\mu=d\mu$.
\end{theorem}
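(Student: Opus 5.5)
The plan is to reduce \eqref{t.plucker.1} to Theorem~\ref{t.lap.cond} with $f=1$, $h=0$ and $\boldsymbol{k}=\boldsymbol{k}^{(N)}$; for $N=0$ the same expression comes from Theorem~\ref{t.tways}. This gives
\[
  \int_{\mathcal{W}} e^{\q_\eta}\delta_0(\D^*\boldsymbol{k}^{(N)})d\mu
  =\{(2\pi)^N\dettwo(I-B_{\eta;\boldsymbol{k}^{(N)}})\det C(\boldsymbol{k}^{(N)})\}^{-\frac12}
   e^{-\frac12\tr(\pi^{(N)}B_\eta\pi^{(N)})} .
\]
What remains is a pure operator identity: we must show
\[
\dettwo(I-B_{\eta;\boldsymbol{k}^{(N)}})\,e^{\tr(\pi^{(N)}B_\eta\pi^{(N)})}
=\frac{\det C(\boldsymbol{k}^{(N)})\det\J_{\!N}}{\det C(\boldsymbol{k}^{(M)})}\,\dettwo(I-A_I)\,e^{\tr A_F}.
\]
We must also show $\det\J_{\!N}\neq 0$. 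Nonvanishing follows from the identity itself: $\Lambda(B_{\eta_{\boldsymbol{k}}})\le\Lambda(B_\eta)<1$ gives $\dettwo(I-B_{\eta;\boldsymbol{k}^{(N)}})\ne0$ by \eqref{r.transf.2}.

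First I pass from $\pi^{\perp}B_\eta\pi^{\perp}$ to $(I-\pi^{(N)})B_\eta=B_\eta^{(N)}$. Write $P=I-\pi^{(N)}$ and use $P^2=P$ together with the identity $\dettwo(I-XY)=\dettwo(I-YX)$ for Hilbert--Schmidt $X,Y$. I prove the latter for finite-rank operators via $\dettwo(I+A)=\det(I+A)e^{-\tr A}$ and $\tr(XY)=\tr(YX)$, then extend by continuity of $\dettwo$. This gives $\dettwo(I-PB_\eta P)=\dettwo(I-PB_\eta)$.

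Next I factor out $I-A_I$, which is invertible:
\[
I-B_\eta^{(N)}=(I-A_I)\bigl(I-(I-A_I)^{-1}F_N\bigr),\qquad F_N=B_\eta^{(N)}-A_I=A_F-\pi^{(N)}B_\eta .
\]
The operator $F_N$ has finite rank with range in $\mathrm{span}\{k_1,\dots,k_M\}$, because $\pi^{(N)}$ projects into $\mathrm{span}\{k_1,\dots,k_N\}$ and $\mathcal{R}(A_F)$ lies in the span. By the multiplicativity rule $\dettwo((I+A)(I+B))=\dettwo(I+A)\dettwo(I+B)e^{-\tr(AB)}$ (cf.~\cite{GGK}), and since $F_N$ is trace class, we get
\[
\dettwo(I-B^{(N)}_\eta)=\dettwo(I-A_I)\,\det\bigl(I-(I-A_I)^{-1}F_N\bigr)\,e^{\tr F_N}.
\]
Here $\tr F_N=\tr A_F-\tr(\pi^{(N)}B_\eta\pi^{(N)})$, which is exactly the exponential bookkeeping required.

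Finally I compute the Fredholm determinant $\det(I-(I-A_I)^{-1}F_N)$. The operator $(I-A_I)^{-1}F_N$ maps $\mathcal{H}$ into $\mathrm{span}\{J_{e_1},\dots,J_{e_M}\}$, so the determinant equals the determinant of its restriction to this $M$-dimensional space. In the basis $J_{e_j}$, I represent $I-(I-A_I)^{-1}F_N$ by taking $\mathcal{H}$-inner products against $k_1,\dots,k_M$. That is the matrix $(\langle (I-A_I)(\cdots)J_p,k_j\rangle)$, and it is read off from the definition of $\J_{\!N}$ after noting that for $N=M$ the operator $B_\eta^{(M)}$ plays the role of the base. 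Converting the matrix "$\langle\,\cdot\,,k_j\rangle$ applied to a combination of $k_i$" into an operator determinant introduces the Gram factor $\det C(\boldsymbol{k}^{(M)})^{\pm1}$. The projection $\pi^{(N)}$, expressed through $C(\boldsymbol{k}^{(N)})^{-1}$, contributes $\det C(\boldsymbol{k}^{(N)})$.

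The main obstacle is this last linear-algebra step: identifying precisely which $M\times M$ matrix represents the restricted operator, and tracking the Gram determinants and the role of $B_\eta^{(M)}$ so that the result is exactly $\det C(\boldsymbol{k}^{(N)})\det\J_{\!N}/\det C(\boldsymbol{k}^{(M)})$. As a sanity check I would first verify the case $N=M$, where $\J_{\!M}$ should reduce to the Gram-weighted identity and $B_\eta^{(M)}=\pi^{(M)\perp}A_I$. I would then handle general $N$ by comparing with $N=M$.
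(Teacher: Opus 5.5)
Your reduction through Theorem~\ref{t.lap.cond}, the factorization $I-B^{(N)}_\eta=(I-A_I)\bigl(I-(I-A_I)^{-1}F_N\bigr)$, and the trace bookkeeping $\tr F_N=\tr A_F-\tr(\pi^{(N)}B_\eta\pi^{(N)})$ are correct. They match the paper's Lemma~\ref{l.cond.det}: your $F_N$ is the paper's $A_F^{(N)}$, and the paper puts $I-A_I$ on the right instead of the left. Your justification of \eqref{l.cond.det.22} via $\dettwo(I-XY)=\dettwo(I-YX)$ also works, but $X=I-\pi^{(N)}$ is not Hilbert--Schmidt. You therefore need the version with $X$ bounded and only $Y$ Hilbert--Schmidt, approximating $Y$ alone.

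However, the operator identity you then set out to prove is false. The factor $\det C(\boldsymbol{k}^{(N)})$ already sits inside the brace in Theorem~\ref{t.lap.cond} and cancels against the one in \eqref{t.plucker.1}. What is actually needed is
\[
\dettwo(I-B_{\eta;\boldsymbol{k}^{(N)}})\,e^{\tr(\pi^{(N)}B_\eta\pi^{(N)})}=\frac{\det\J_{\!N}}{\det C(\boldsymbol{k}^{(M)})}\,\dettwo(I-A_I)\,e^{\tr A_F},
\]
that is, $\det\bigl(I-(I-A_I)^{-1}F_N\bigr)=\det\J_{\!N}/\det C(\boldsymbol{k}^{(M)})$. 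Your version already fails for $A_I=A_F=0$, $M=N=1$, $\|k_1\|_{\mathcal H}^2=c\ne1$. There the left side is $1$ and $\J_{\!1}=c$, so your right side equals $c$. Your plan to have $\pi^{(N)}$ "contribute $\det C(\boldsymbol{k}^{(N)})$" inside the Fredholm determinant is therefore chasing a factor that is not there.

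The step you leave as "the main obstacle" is where the one essential observation is needed, and your proposal does not state it. Since $B^{(M)}_\eta=(I-\pi^{(M)})B_\eta$ has range orthogonal to $k_1,\dots,k_M$, one has $\langle B^{(M)}_\eta h,k_j\rangle_{\mathcal H}=0$ for every $h\in\mathcal H$. With this fact your restriction idea closes at once. For $p\in\mathbb{R}^M$ we have $\bigl(I-(I-A_I)^{-1}F_N\bigr)J_p=(I-A_I)^{-1}(I-B^{(N)}_\eta)J_p$. Moreover $(I-B^{(N)}_\eta)J_p=\sum_ip_ik_i-F_NJ_p=\sum_i(Rp)_ik_i$ for some $R\in\mathbb{R}^{M\times M}$. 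So on $\mathrm{span}\{J_{e_1},\dots,J_{e_M}\}$, which is $M$-dimensional, the operator acts as $J_p\mapsto J_{Rp}$, and the Fredholm determinant equals $\det R$. Pairing with $k_j$ and using the orthogonality gives
\[
C(\boldsymbol{k}^{(M)})Rp=\bigl(\langle(I-B^{(N)}_\eta)J_p,k_j\rangle_{\mathcal H}\bigr)_{1\le j\le M}=\bigl(\langle\{I-(B^{(N)}_\eta-B^{(M)}_\eta)\}J_p,k_j\rangle_{\mathcal H}\bigr)_{1\le j\le M}=\J_{\!N}\,p .
\]
Hence $\det R=\det\J_{\!N}/\det C(\boldsymbol{k}^{(M)})$, and $\det\J_{\!N}\ne0$ follows from $\dettwo(I-B_{\eta;\boldsymbol{k}^{(N)}})\ne0$. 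With this correction your route is slightly more direct than the paper's. The paper encodes the same matrix as $q_N$ and passes through $(I-B^{(N)}_\eta)^{-1}$ to obtain $J_p=J_{N;q_N^\dagger p}$ and then $\J_{\!N}=C(\boldsymbol{k}^{(M)})q_N^\dagger$.
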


\begin{remark}\label{r.plucker}
(i) 
As will be seen in the examples in the next section,
\eqref{t.plucker.1} gives us another explicit expression of 
Laplace transformations with respect to pinned measures.
\\
(ii)
Assume that $k_1,\dots,k_M$ are orthogonal.
Let 
$\Phi
  =\begin{pmatrix} \J_{\!M} \\ \J_0 \end{pmatrix}
  \in \mathbb{R}^{2M\times M}$.
It holds that
\begin{align*}
    \langle(I-(B_\eta^{(N)}-B_\eta^{(M)}))J_p,
      k_j\rangle_{\mathcal{H}}
    & =\langle J_p,k_j\rangle_{\mathcal{H}}
       -\langle B_\eta J_p,(\pi^{(M)}-\pi^{(N)})k_j
         \rangle_{\mathcal{H}}
    \\
    & =\begin{cases}
        \langle J_p,k_j\rangle_{\mathcal{H}}
        & \text{if }j\le N,
        \\
        \langle J_p,k_j\rangle_{\mathcal{H}}
         -\langle B_\eta J_p,k_j\rangle_{\mathcal{H}}
        & \text{if }j>N.
      \end{cases}
\end{align*}
Hence denoting by $\Phi^{(i)}$ the $i$th row of $\Phi$ for
$1\le i\le 2M$, we have that
\begin{equation}\label{r.plucker.1}
    \J_{\!N}=\begin{pmatrix} \Phi^{(1)} \\
    \vdots \\ \Phi^{(N)} \\ \Phi^{(M+N+1)} \\
    \vdots \\ \Phi^{(2M)} \end{pmatrix}.
\end{equation}
Thus $\det \J_{\!N}$ is the $(1,\dots,N,M+N+1,\dots,2M)$th
Pl\"ucker coordinate of $\Phi$.
\end{remark}

For the proof of Theorem~\ref{t.plucker}, we prepare a
lemma.  

\begin{lemma}\label{l.cond.det}
Let $\eta\in\stwo$.
Assume that $\Lambda(B_\eta)<1$ and the condition~{\bf(A)}
is fulfilled. 
Then it holds that
\begin{align}
 \label{l.cond.det.1}
    \int_{\mathcal{W}} e^{\q_\eta}
      \delta_0(\D^*\boldsymbol{k}^{(N)})d\mu
    = & \bigl\{(2\pi)^N \det C(\boldsymbol{k}^{(N)})
       \det(I-A_F^{(N)}(I-A_I)^{-1})\bigr\}^{-\frac12}
    \\
    & \times
     \{\dettwo(I-A_I)\}^{-\frac12}
     e^{-\frac12 \tr A_F}
    \quad\text{for }0\le N\le M,
 \nonumber
\end{align}
where 
\[
    A_F^{(N)}=-\pi^{(N)}A_I+(I-\pi^{(N)})A_F.
\]
\end{lemma}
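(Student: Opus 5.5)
Apply Theorem~\ref{t.lap.cond} with $f=1$, $h=0$ and $\boldsymbol{k}=\boldsymbol{k}^{(N)}$; the case $N=0$ is simply \eqref{t.tways.3} with $f=1$, $h=0$. This gives
\[
    \int_{\mathcal{W}} e^{\q_\eta}\delta_0(\D^*\boldsymbol{k}^{(N)})d\mu
    =\{(2\pi)^N\dettwo(I-B_{\eta;\boldsymbol{k}^{(N)}})
       \det C(\boldsymbol{k}^{(N)})\}^{-\frac12}
     e^{-\frac12\tr(\pi^{(N)}B_\eta\pi^{(N)})}.
\]
The lemma therefore reduces to the operator identity
\begin{equation*}
    \dettwo(I-B_{\eta;\boldsymbol{k}^{(N)}})
      e^{\tr(\pi^{(N)}B_\eta\pi^{(N)})}
    =\det\bigl(I-A_F^{(N)}(I-A_I)^{-1}\bigr)
     \dettwo(I-A_I)e^{\tr A_F},
\end{equation*}
where we write $\pi=\pi^{(N)}$ and $\pi^\perp=I-\pi$.

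The first step rewrites $I-\pi^\perp B_\eta\pi^\perp$. Since $B_\eta=A_I+A_F$, we have $\pi^\perp B_\eta\pi^\perp=B_\eta-\pi B_\eta-\pi^\perp B_\eta\pi$. The term $\pi^\perp B_\eta\pi$ has finite rank, and it multiplies $\pi$ on the right. This suggests working with the operator $I-\pi^\perp B_\eta$ instead. Observe that $I-\pi^\perp B_\eta\pi^\perp$ and $I-\pi^\perp B_\eta$ agree on $\pi^\perp\mathcal{H}$, and both map $\pi\mathcal{H}$ into $\pi\mathcal{H}\oplus\pi^\perp\mathcal{H}$ block-triangularly. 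In the decomposition $\mathcal{H}=\pi\mathcal{H}\oplus\pi^\perp\mathcal{H}$ the operator $I-\pi^\perp B_\eta$ has the block form
\[
    \begin{pmatrix} I & 0\\ -\pi^\perp B_\eta\pi & I-\pi^\perp B_\eta\pi^\perp\end{pmatrix},
\]
so its regularized determinant should equal $\dettwo$ of the compressed block, corrected by an exponential trace factor. That correction is exactly what produces $e^{\tr(\pi B_\eta\pi)}$. The computation uses $\dettwo(I+A)=\det(I+A)e^{-\tr A}$, checked on trace class approximations and then extended by continuity (\cite[Theorem~XI.2.2]{GGK}). The expected outcome is
\[
    \dettwo(I-B_{\eta;\boldsymbol{k}})e^{\tr(\pi B_\eta\pi)}=\dettwo(I-\pi^\perp B_\eta)e^{\tr(\pi B_\eta)}\quad(\text{up to checking the trace bookkeeping}).
\]

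The second step writes $I-\pi^\perp B_\eta=I-A_I-A_F^{(N)}$. This holds because $\pi^\perp B_\eta=A_I-\pi A_I+\pi^\perp A_F$, and $-\pi A_I+\pi^\perp A_F=A_F^{(N)}$. Since $I-A_I$ is invertible, we factor
\[
I-A_I-A_F^{(N)}=(I-A_F^{(N)}(I-A_I)^{-1})(I-A_I).
\]
The first factor is a finite-rank perturbation of the identity, because $A_F$ and $\pi$ both have finite rank. The product rule $\dettwo((I+X)(I+Y))=\dettwo(I+X)\dettwo(I+Y)e^{-\tr(XY)}$ for Hilbert--Schmidt $X,Y$ (\cite{GGK}) then gives $\dettwo(I-A_I)$ times the ordinary determinant $\det(I-A_F^{(N)}(I-A_I)^{-1})$, times exponential trace terms. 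These exponential terms must combine with $e^{\tr(\pi B_\eta)}$ to give $e^{\tr A_F}$. This should work out because $\tr(\pi B_\eta)+\tr(\pi^\perp A_F)=\tr(\pi A_I)+\tr A_F$, and the $\tr(\pi A_I)$ cancels the corresponding term coming from $-\pi A_I$ in $A_F^{(N)}$.

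The main obstacle is this exponential trace bookkeeping. All the pieces $\pi$, $A_F$ and $\pi^\perp B_\eta\pi$ are finite rank, so their traces make sense even though $A_I$ is only Hilbert--Schmidt. To keep the identities rigorous, I would first prove the whole identity assuming $A_I$ is of trace class, where $\dettwo=\det e^{-\tr}$ and everything reduces to multiplicativity of $\det$ and cyclicity of the trace. I would then pass to general $A_I\in\htwo$ by approximation, using the continuity of $\dettwo$ in $\htwo$ together with the continuity of $(I-A_I)^{-1}$ near an invertible $I-A_I$.
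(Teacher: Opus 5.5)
Your proposal is correct and follows the paper's proof. The shared steps are: apply Theorem~\ref{t.lap.cond} with $f=1$, $h=0$ (and \eqref{t.tways.3} for $N=0$); use $\dettwo(I-B_{\eta;\boldsymbol{k}^{(N)}})=\dettwo(I-(I-\pi^{(N)})B_\eta)$; factor $I-(I-\pi^{(N)})B_\eta=(I-A_F^{(N)}(I-A_I)^{-1})(I-A_I)$; and finish with the same trace bookkeeping, $\tr A_F^{(N)}+\tr(\pi^{(N)}B_\eta\pi^{(N)})=\tr A_F$.

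One clarification for your first step: there is no exponential correction there. The block-triangular form gives $\dettwo(I-\pi^\perp B_\eta\pi^\perp)=\dettwo(I-\pi^\perp B_\eta)$ exactly, and $\tr(\pi B_\eta\pi)=\tr(\pi B_\eta)$ by cyclicity. That is why your displayed ``expected outcome'' holds as written.
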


\begin{proof}
Recall the identity 
$\dettwo(I+A)=\det(I+A)e^{-\tr A}$ for $A\in\htwo$ of trace 
class (\cite[Theorem~IX.2.1]{GGK}).
This implies that 
\begin{equation}\label{l.cond.det.21}
    \dettwo\bigl((I+A_1)(I+A_2)\bigr)
    =\det(I+A_1)\dettwo(I+A_2) e^{-\tr(A_1(I+A_2))}
\end{equation}
for $A_1\in\htwo$ of trace class and $A_2\in\htwo$.

We first consider the case when $N=0$.
By Theorem~\ref{t.tways}, we have that 
\[
    \int_{\mathcal{W}} e^{\q_\eta} d\mu
    =\{\dettwo(I-B_\eta)\}^{-\frac12}.
\]
Recalling that $B\in\htwo$ of finite rank is of trace class, 
and substituting $A_1=-A_F(I-A_I)^{-1}$ and $A_2=-A_I$ into
\eqref{l.cond.det.21}, we obtain that 
\[
    \dettwo(I-B_\eta)
     =\det\bigl(I-A_F(I-A_I)^{-1}\bigr)\dettwo(I-A_I)
       e^{\tr A_F}.
\]
Thus \eqref{l.cond.det.1} holds when $N=0$.

We next consider the case when $1\le N\le M$.
By Theorem~\ref{t.lap.cond}, we have that
\[
    \int_{\mathcal{W}} e^{\q_\eta}
       \delta_0(\D^*\boldsymbol{k}^{(N)})d\mu
    =\bigl\{ (2\pi)^N 
       \det C(\boldsymbol{k}^{(N)})
       \dettwo(I-B_{\eta;\boldsymbol{k}^{(N)}})
     \bigr\}^{-\frac12}
     e^{-\frac12\tr(\pi^{(N)}B_\eta \pi^{(N)})}.
\]
It holds that
\begin{equation}\label{l.cond.det.22}
    \dettwo(I-B_{\eta;\boldsymbol{k}^{(N)}})
    =\dettwo(I-B_\eta^{(N)}).
\end{equation}
Since
\[
    \bigl\{I-A_F^{(N)}(I-A_I)^{-1}\bigr\}(I-A_I)
    =I-A_I-A_F^{(N)}=I-B_\eta^{(N)},
\]
substituting $A_1=-A_F^{(N)}(I-A_I)^{-1}$ and $A_2=-A_I$
into \eqref{l.cond.det.21}, we obtain that
\[
    \dettwo(I-B_\eta^{(N)})
    =\det\bigl(I-A_F^{(N)}(I-A_I)^{-1}\bigr)
     \dettwo(I-A_I) e^{\tr A_F^{(N)}}.
\]
Furthermore, observe that 
\begin{align*}
   \tr\bigl(A_F^{(N)}+\pi^{(N)} B_\eta \pi^{(N)}\bigr)
   & =\tr\bigl(-\pi^{(N)}A_I+(I-\pi^{(N)})A_F
      +\pi^{(N)} B_\eta \pi^{(N)}\bigr)
   \\
   & =\tr\bigl(-\pi^{(N)} B_\eta (I-\pi^{(N)})+A_F\bigr)
     =\tr A_F.
\end{align*}
Thus \eqref{l.cond.det.1} holds for $1\le N\le M$.
\end{proof}

\begin{proof}[Proof of Theorem~\ref{t.plucker}]
Let $0\le N\le M$.
Set 
\[
    K_N=\pi^{(M)}(I-A_F^{(N)}(I-A_I)^{-1})\pi^{(M)},
\]
and define 
$q_N=(q_{N;ij})_{1\le i,j\le M}
 \in \mathbb{R}^{M\times M}$ by
\[
    K_Nk_i=\sum_{j=1}^M q_{N;ij} k_j
    \quad\text{for }1\le i\le M.
\]
It then holds that 
\[
    \det(I-A_F^{(N)}(I-A_I)^{-1})=\det q_N.
\]
By Lemma~\ref{l.cond.det}, we have that 
\begin{equation}\label{t.plucker.21}
    \int_{\mathcal{W}} e^{\q_\eta}
      \delta_0(\D^*\boldsymbol{k}^{(N)})d\mu
    =\bigl\{(2\pi)^N \det C(\boldsymbol{k}^{(N)})
      \det q_N \bigr\}^{-\frac12} 
     \{\dettwo(I-A_I)\}^{-\frac12}
     e^{-\frac12 \tr A_F}.
\end{equation}

Since 
$\Lambda(B_{\eta;\boldsymbol{k}^{(N)}})
 \le \Lambda(B_\eta)<1$, 
as was seen in the proof of Lemma~\ref{l.tw.(ii)to(iii)},
$\dettwo(I-B_{\eta;\boldsymbol{k}^{(N)}})\ne0$.
By \eqref{l.cond.det.22}, 
$\dettwo(I-B_\eta^{(N)})\ne0$, and hence 
$I-B_\eta^{(N)}$ has a continuous inverse.
Define $J_{N;p}\in \mathcal{H}$ for 
$p=(p_1,\dots,p_M)\in \mathbb{R}^M$ by
\[
    J_{N;p}=(I-B_\eta^{(N)})^{-1}\biggl(\sum_{j=1}^M
      p_jk_j\biggr).
\]
Notice that
\[ 
    \bigl(I-A_F^{(N)}(I-A_I)^{-1}\bigr)k_i=K_Nk_i
    \quad\text{for }1\le i\le M,
\]
and
\[
    I-A_F^{(N)}(I-A_I)^{-1}
      =I-(B_\eta^{(N)}-A_I)(I-A_I)^{-1}
      =(I-B_\eta^{(N)})(I-A_I)^{-1}.
\]
Then it holds that
\[
    (I-A_I)^{-1}k_i
    =(I-B_\eta^{(N)})^{-1}K_Nk_i
    =\sum_{j=1}^M q_{N;ij} (I-B_\eta^{(N)})^{-1}k_j.
\]
Hence we have that
\begin{equation}\label{t.plucker.22}
    J_p=J_{N;q_N^\dagger p}
    \quad\quad\text{for } p\in \mathbb{R}^M.
\end{equation}

Since $\langle B_\eta^{(M)}h,k_i\rangle_{\mathcal{H}}=0$ for 
any $h\in \mathcal{H}$ and $1\le i\le M$, we see that
\begin{align*}
    \langle J_{N;p},k_i\rangle_{\mathcal{H}}
    & =\langle (I-B_\eta^{(M)})J_{N;p},
           k_i\rangle_{\mathcal{H}}
      =\langle (I-B_\eta^{(N)})J_{N;p},
           k_i\rangle_{\mathcal{H}}
       +\langle (B_\eta^{(N)}-B_\eta^{(M)})J_{N;p},
           k_i\rangle_{\mathcal{H}}
    \\
    & =\biggl\langle \sum_{j=1}^M p_jk_j,
           k_i\biggr\rangle_{\mathcal{H}}
     +\langle (B_\eta^{(N)}-B_\eta^{(M)})J_{N;p},
           k_i\rangle_{\mathcal{H}}.
\end{align*}
In conjunction with \eqref{t.plucker.22}, this yields that
\[
    \J_{\!N}\,p=C(\boldsymbol{k}^{(M)})q_N^\dagger p
    \quad\text{for } p\in \mathbb{R}^M.
\]
Thus 
\[
    \det q_N=\frac{\det \J_{\!N}}{
         \det C(\boldsymbol{k}^{(M)})}.
\]
Plugging this into \eqref{t.plucker.21}, we arrive at
\eqref{t.plucker.1}.
\end{proof}

Given linearly independent $k_1,\dots,k_M\in \mathcal{H}$, 
$B_\eta$ with $\Lambda(B_\eta)<1$ has a trivial
decomposition satisfying the condition~{\bf (A)}.
In fact, denoting by $\pi^{(M)}$ the orthogonal projection
of $\mathcal{H}$ to the subspace spanned by 
$k_1,\dots,k_M$ as above, we set $A_I=B_\eta-\pi^{(M)}$ and
$A_F=\pi^{(M)}$.
Notice that $A_I\in\stwo$ and 
$\Lambda(A_I)\le \Lambda(B_\eta)<1$.
As was seen in the proof of Lemma~\ref{l.tw.(ii)to(iii)}, we
have that $\dettwo(I-A_I)\ne0$.
Obviously the range of $A_F$ is the subspace spanned by
$k_1,\dots,k_M$.
Thus the assumption {\bf(A)} is fulfilled with these $A_I$
and $A_F$, and Theorem~\ref{t.plucker} is applicable with
$k_1,\dots,k_M$.
Since $\tr A_F=M$, the identity \eqref{t.plucker.1} reads as 
\[
    \int_{\mathcal{W}} e^{\q_\eta}
      \delta_0(\D^*\boldsymbol{k}^{(N)})d\mu
    =\biggl(\frac{\det C(\boldsymbol{k}^{(M)})}{
         (2\pi)^N \det C(\boldsymbol{k}^{(N)})
         \det \J_{\!N}} \biggr)^{\frac12}
     \{\dettwo(I-B_\eta+\pi^{(M)})\}^{-\frac12}
     e^{-\frac{M}2}    
\]
for $0\le N\le M$.

\section{Examples}
\label{sec.example}
In this section, we give several examples for
Theorem~\ref{t.plucker}.
In the examples, the operator $A_I$ is a Volterra operator,
i.e., a compact operator with one-point spectrum $\{0\}$,
and we will encounter higher order linear ODEs.
In the following, we set 
\[
    \theta^{(N)}(t)=(\theta^1(t),\dots,\theta^N(t))
    \quad\text{for }0\le N\le d.
\]
Before getting into the examples, we prepare a lemma
corresponding to the condition~{\bf(A)}.
To state it, let 
\[
    \Delta_1=\{(t,s)\in[0,T]^2\mid t>s\}
    \quad\text{and}\quad
    \Delta_2=\{(t,s)\in[0,T]^2\mid t<s\}.
\]
For $\phi\in\ltwo$, define 
$\one_{\Delta_1}\phi,\one_{\Delta_2}\phi\in\ltwo$ as
\[
    (\one_{\Delta_i}\phi)(t,s)
    =\one_{\Delta_i}(t,s)\phi(t,s)
    \quad\text{for }(t,s)\in[0,T]^2
    \text{ and }i=1,2.
\]
As before, we set $\phi^*\in\ltwo$ so that
$\phi^*(t,s)=\phi(s,t)^\dagger$ for $(t,s)\in[0,T]^2$. 

\begin{lemma}\label{l.condA}
Take $\eta\in\stwo$ and $\phi\in\ltwo$ with 
$\one_{\Delta_2}\eta\eqltwo\one_{\Delta_2}\phi$.
Put $A_I=B_{\one_{\Delta_1}(\phi^*-\phi)}$ and $A_F=B_\phi$.
Then it holds that
\[
    \dettwo(I-A_I)=1
    \quad\text{and}\quad 
    B_\eta=A_I+A_F.
\]
In particular, if, in addition, $B_\phi$ is of finite rank,
then $B_\eta$ satisfies the condition~{\bf(A)}.
Furthermore, each $\kappa\in\ltwo$ with
$B_\eta=B_{\one_{\Delta_1}\kappa}+B_\phi$ satisfies that
$\one_{\Delta_1}\kappa\eqltwo\one_{\Delta_1}(\phi^*-\phi)$.
\end{lemma}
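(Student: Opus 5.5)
The plan is to prove the three assertions in order: first the kernel identity $B_\eta=A_I+A_F$, then $\dettwo(I-A_I)=1$, then the uniqueness of the strictly lower-triangular part.

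\textbf{Kernel identity.} By Lemma~\ref{l.b.kappa}, it suffices to show
\[
\eta\eqltwo\one_{\Delta_1}(\phi^*-\phi)+\phi .
\]
The diagonal of $[0,T]^2$ is Lebesgue-null, so I check the two triangles separately.

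On $\Delta_2$, the right side equals $\phi$. By hypothesis $\one_{\Delta_2}\phi\eqltwo\one_{\Delta_2}\eta$, so the two sides agree a.e. there.

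On $\Delta_1$, the right side equals $\phi^*$, that is $\phi^*(t,s)=\phi(s,t)^\dagger$. For $(t,s)\in\Delta_1$ we have $(s,t)\in\Delta_2$, so a.e.
\[
\phi(s,t)=\eta(s,t)=\eta(t,s)^\dagger ,
\]
using $\eta\in\stwo$. Transposing gives $\phi^*(t,s)=\eta(t,s)$ a.e. on $\Delta_1$. Hence $B_\eta=A_I+A_F$.

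\textbf{Regularized determinant.} The key observation is that the kernel $-\one_{\Delta_1}(\phi^*-\phi)$ is supported on $\{s<t\}$. Writing $\rho=\phi^*-\phi$, this kernel is $\one_{[0,t)}(s)\rho(t,s)$, which is of the form $\kappa_A(\rho')$ up to sign conventions.

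I would reuse the argument of Lemma~\ref{l.dettwo} directly rather than insisting on symmetry of $\rho$. That proof only used that the kernel vanishes for $s\ge t$; symmetry of $\rho$ played no role. The steps are:
\begin{itemize}
\item approximate the kernel in $\ltwo$ by step kernels of strictly block-lower-triangular form, with blocks indexed by $[T_{N;n},T_{N;n+1})$ and only $m<n$;
\item for such kernels, $\det=1$ and $\tr=0$, so $\dettwo(I-A)=1$;
\item pass to the limit using the $\htwo$-continuity of $A\mapsto\dettwo(I+A)$ together with \eqref{eq.hs}.
\end{itemize}
The approximation step needs care: general $\ltwo$ step approximations include diagonal blocks. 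Truncating to the off-diagonal blocks ($m<n$) still converges in $\ltwo$, because the diagonal blocks have total area $T^2/N\to0$ and the kernel is square-integrable. This is the main technical point, and it is routine.

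Once $\dettwo(I-A_I)=1\ne0$ is established, condition~{\bf(A)} follows immediately when $B_\phi$ has finite rank.

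\textbf{Uniqueness of the lower-triangular part.} Suppose $B_\eta=B_{\one_{\Delta_1}\kappa}+B_\phi$. By Lemma~\ref{l.b.kappa},
\[
\one_{\Delta_1}\kappa+\phi\eqltwo\eta .
\]
Restricting to $\Delta_1$ gives $\one_{\Delta_1}\kappa\eqltwo\one_{\Delta_1}(\eta-\phi)$. By the first step, $\one_{\Delta_1}\eta\eqltwo\one_{\Delta_1}\phi^*$. Therefore
\[
\one_{\Delta_1}\kappa\eqltwo\one_{\Delta_1}(\phi^*-\phi).
\]

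I expect no serious obstacle. The only delicate point is the determinant computation for a non-symmetric $\rho$, which is handled by the off-diagonal truncation described above.
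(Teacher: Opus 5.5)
Your proof is correct and follows the paper's argument. The kernel identity comes from $\one_{\Delta_1}\eta=(\one_{\Delta_2}\eta)^*\eqltwo\one_{\Delta_1}\phi^*$, the determinant from the strictly lower-triangular structure, and the last assertion from the injectivity in Lemma~\ref{l.b.kappa}.

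The only difference is that the paper simply cites Lemma~\ref{l.dettwo}. That citation is legitimate because $-\one_{\Delta_1}(\phi^*-\phi)=\kappa_A(\rho)$ for the symmetric extension $\rho=\one_{\Delta_1}(\phi^*-\phi)+\one_{\Delta_2}(\phi-\phi^*)\in\stwo$. So re-running that lemma's proof with your off-diagonal truncation is unnecessary, though harmless and correct.
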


\begin{proof}
By Lemma~\ref{l.dettwo}, $\dettwo(I-A_I)=1$.
Since 
$\bigl(\one_{\Delta_1}\kappa\bigr)^*
 =\one_{\Delta_2}\kappa^*$
for any $\kappa\in\ltwo$, we have that
\[
    \one_{\Delta_1}\eta
    =\bigl(\one_{\Delta_2}\eta\bigr)^*
    \eqltwo \bigl(\one_{\Delta_2}\phi\bigr)^*
    =\one_{\Delta_1}\phi^*.
\]
Hence we see that
\[
    \eta\eqltwo \one_{\Delta_1}\eta+\one_{\Delta_2}\eta
    \eqltwo \one_{\Delta_1}\phi^*+\one_{\Delta_2}\phi
    \eqltwo \one_{\Delta_1}(\phi^*-\phi)+\phi.
\]
Thus the decomposition $B_\eta=A_I+A_F$ holds.

The last assertion is an immediate consequence of 
Lemma~\ref{l.b.kappa}, since 
$B_{\one_{\Delta_1}\kappa}=B_{\one_{\Delta_1}(\phi^*-\phi)}$ in that case.
\end{proof}

\begin{example}\label{e.vol.ode}
Let $\sigma\in C^1([0,T];\mathbb{R}^{d\times d})$ and define
$\eta\in\stwo$ as $\eta=\rho_\sigma$, that is,
\begin{equation}\label{e.vol.ode.0}
    \eta(t,s)=\one_{[0,t)}(s)\sigma(t)
       +\one_{(t,T]}(s) \sigma(s)^\dagger
    \quad\text{for }(t,s)\in[0,T]^2.
\end{equation}
Then we have that 
\[
    \q_\eta=\p_\sigma=\int_0^T \langle \sigma(t)\theta(t),
        d\theta(t)\rangle.
\]

Define $\kappa_F\in\ltwo$ as
$\kappa_F(t,s)=\sigma(s)^\dagger$
for $(t,s)\in[0,T]^2$.
By the definition \eqref{e.vol.ode.0} of $\eta$, we see that 
$\one_{\Delta_2}\eta=\one_{\Delta_2}\kappa_F$.
Let $e_1,\dots,e_d$ be an ONB of $\mathbb{R}^d$ and define
$k_j\in \mathcal{H}$ as $k_j^\prime=e_j$ for $1\le j\le d$.
Since 
\[
    (B_{\kappa_F}h)^\prime(t)
    =\int_0^T \sigma(s)^\dagger h^\prime(s)ds
    \quad\text{for }h\in \mathcal{H} 
    \text{ and }t\in[0,T],
\]
we see that
\[
    \mathcal{R}(B_{\kappa_F})
    \subset\Biggl\{\sum_{i=1}^d a_i k_i \,\Bigg|\,
       a_1,\dots,a_d\in \mathbb{R} \Biggr\}.
\]
Thus $B_{\kappa_F}$ is of finite rank.

Set $\kappa_I=\one_{\Delta_1}(\kappa_F^*-\kappa_F)$.
We then have that
\[
    \kappa_I(t,s)
      =\one_{[0,t)}(s)\{\sigma(t)-\sigma(s)^\dagger\}
    \quad\text{for }(t,s)\in[0,T]^2.
\]
By Lemma~\ref{l.condA}, $B_\eta$ satisfies the
condition~{\bf(A)} with $A_I=B_{\kappa_I}$ and
$A_F=B_{\kappa_F}$.

Assume that $\Lambda(B_\eta)<1$.
By Corollary~\ref{c.2nd.ode}, this condition is equivalent to
the non-singularity of the solution $\bS$ to the ODE 
\[
    \bS^{\prime\prime}-2\sigma_A\bS^\prime-\sigma^\prime\bS=0
    \quad \bS(T)=I_d,~\bS^\prime(T)=\sigma(T).
\]
Another equivalent condition in terms of $\sigma$ can be found in
Remark~\ref{r.riccati}.

To apply Theorem~\ref{t.plucker} to $\q_\eta$ with the above
$k_1,\dots,k_d$, we compute $J_p$ for 
$p=(p_1,\dots,p_d)\in \mathbb{R}^d$. 
Observe that
\[
    (A_Ih)^\prime(t)
    =\sigma(t)h(t)
     -\int_0^t \sigma(s)^\dagger h^\prime(s)ds
    \quad\text{for }h\in \mathcal{H}
    \text{ and }t\in[0,T].
\]
Since $(I-A_I)J_p=\sum\limits_{j=1}^d p_jk_j$, 
we have that
\[
    J_p^\prime(t)-\sigma(t)J_p(t)
    +\int_0^t \sigma(s)^\dagger J_p^\prime(s)ds
    =p
    \quad\text{for } t\in[0,T].
\]
This equation together with the initial condition $J_p(0)=0$ is
equivalent to the second order ODE on $\mathbb{R}^d$
\[
    J_p^{\prime\prime}-2\sigma_A J_p^\prime
    -\sigma^\prime J_p=0,
    \quad  J_p(0)=0,~~ J_p^\prime(0)=p,
    \quad\text{where }
    \sigma_A=\frac12(\sigma-\sigma^\dagger).
\]

Let $U\in C^2([0,T];\mathbb{R}^{d\times d})$ be the solution
to the second order ODE on $\mathbb{R}^{d\times d}$
\begin{equation}\label{e.vol.ode.1}
    U^{\prime\prime}-2\sigma_A U^\prime
    -\sigma^\prime U=0,
    \quad U(0)=0,~~U^\prime(0)=I_d.
\end{equation}
Then we have that
\[
    J_p=U(\cdot)p.
\]
By definition, it holds that
\[
    \J_d\, p
    =\bigl(\langle J_p,k_j\rangle_{\mathcal{H}}
        \bigr)_{1\le j\le d}
    =\bigl(\langle J_p(T),e_j\rangle
        \bigr)_{1\le j\le d}
    =U(T)p.
\]
Thus we obtain that
\[
    \J_d=U(T).
\]

Since $(I-\pi^{(d)})k_j=0$ for $1\le j\le d$, by definition,
\begin{align*}
    \J_0\, p
    & =\Bigl(\langle (I-\pi^{(d)}B_\eta)J_p,
           k_j\rangle_{\mathcal{H}}
        \Bigr)_{1\le j\le d}
      =\Bigl(\langle (I-B_\eta)J_p,k_j\rangle_{\mathcal{H}}
        \Bigr)_{1\le j\le d}
    \\
    & =\biggl(\biggl\langle \sum_{i=1}^d p_ik_i-A_FJ_p,
        k_j\biggr\rangle_{\mathcal{H}}
        \biggr)_{1\le j\le d}
      =Tp
       -T\biggl(\int_0^T \sigma(s)^\dagger 
        U^\prime(s) ds \biggr)p.
\end{align*}
Hence we see that
\[
     \J_0 = T(I_d-\Sigma),
     \quad\text{where }
     \Sigma=\int_0^T \sigma(s)^\dagger 
             U^\prime(s) ds.
\]

Define $\Phi\in \mathbb{R}^{2d\times d}$ by
\[
    \Phi=\begin{pmatrix} 
           U(T)
         \\
         T(I_d-\Sigma)
        \end{pmatrix}.
\]
Then each $\J_{\!N}$ is given by \eqref{r.plucker.1} with
this $\Phi$.

Observe that $C(\boldsymbol{k}^{(N)})=T^NI_N$ for 
$0\le N\le d$.
Furthermore, it holds that
\[
    \tr A_F
    =\frac1{T}
     \sum_{j=1}^d \langle A_F k_j,k_j\rangle_{\mathcal{H}}
    =\frac1{T}
     \sum_{j=1}^d \int_0^T \biggl\langle
       \int_0^T \sigma(s)^\dagger e_j ds,e_j\biggr\rangle dt
    =\int_0^T \tr\sigma(s)ds.
\]
Since $\D^*k_j=\theta^j(T)$, by Theorem~\ref{t.plucker}, we
obtain that 
\begin{equation}\label{e.vol.ode.2}
    \int_{\mathcal{W}} 
       e^{\int_0^T \langle\sigma(t)\theta(t),d\theta(t)\rangle}
       \delta_0(\theta^{(N)}(T)) d\mu
     =\biggl(\frac{T^{d-N}}{
        (2\pi)^N \det \J_{\!N}}\biggr)^{\frac12}
     e^{-\frac12 \int_0^T \tr\sigma(s)ds}
\end{equation}
for $0\le N\le d$.
\end{example}

\begin{example}\label{e.vol.ode.AD}
In this example, we consider a special case of
Example~\ref{e.vol.ode}:
the case when the ODE \eqref{e.vol.ode.1} has constant coefficients. 
Then the solution $U$ has an explicit expression. 

Let $C,D\in \mathbb{R}^{d\times d}$ and assume that
$D^\dagger=D$.
Define the quadratic Wiener functional
$\mathfrak{a}_{C,D}$ by 
\[
    \mathfrak{a}_{C,D}
    =\frac12 \int_0^T 
      \langle C \theta(t),d\theta(t)\rangle
     +\frac12\int_0^T
      \langle D\theta(t),\theta(t)\rangle dt.
\]
This Wiener functional is equal to
$\mathfrak{a}_{\phi,\psi}^x$ in Theorem~\ref{t.FK} with 
$x=0$, $\phi\equiv\frac12 C$, and $\psi\equiv D$.
Put 
\[
    \sigma(t)=\frac12 C+(T-t)D
    \quad\text{for }t\in[0,T].
\]
By \eqref{eq.a.p}, we know that
\begin{equation}\label{e.vol.ode.AD.1}
    \mathfrak{a}_{C,D}
    =\int_0^T \langle \sigma(t)\theta(t),
                   d\theta(t)\rangle
     +\frac{T^2}4 \tr D.
\end{equation}
The ODE \eqref{e.vol.ode.1} in Example~\ref{e.vol.ode} is 
rewritten as
\[
    U^{\prime\prime}-AU^\prime+DU=0,
    ~~U(0)=0,~~U^\prime(0)=I_d,
    \quad\text{where } A=\tfrac12(C-C^\dagger).
\]

Put
\[
    M_{A,D}
    =\begin{pmatrix} 0 & I_d \\ -D & A \end{pmatrix}
    \in \mathbb{R}^{2d\times 2d},
\]
and let $\lambda_1,\dots,\lambda_{2d}\in \mathbb{C}$ be its
eigenvalues counted with multiplicity.
Define $r_1,\dots,r_{2d}\in C^\infty(\mathbb{R};\mathbb{C})$ 
by the system of ODEs 
\[
    \left\{
      \begin{aligned}     
         & r_1^\prime=\lambda_1 r_1,
         \\
         & r_j^\prime=r_{j-1}+\lambda_j r_j
           \quad\text{for }2\le j\le 2d,
         \\
         & r_1(0)=1,~~
           r_2(0)=\dots=r_{2d}(0)=0,
      \end{aligned}
    \right.
\]
and complex matrices $Q_1,\dots,Q_{2d}\in \mathbb{C}^{d\times d}$
successively as
\[
    Q_1=0,\quad Q_2=I_d,
\]
and
\[
    Q_{3+n}
     =-D \biggl(\sum_{j=1}^{n+1} r_j^{(n)}(0) Q_j\biggr)
      +A \biggl(\sum_{j=1}^{n+2} r_j^{(n+1)}(0) Q_j\biggr)
      -\sum_{j=1}^{n+2} r_j^{(n+2)}(0) Q_j
\]
for $0\le n\le 2d-3$.
Due to Proposition~\ref{p.ode.U} for $k=2$ with $C_0=-D$,
$C_1=A$, $U_0=0$, and $U_1=I_d$, we know that $U$ is
expressed as  
\begin{equation}\label{e.vol.ode.AD.1+}
    U=\sum_{j=1}^{2d} r_j Q_j.
\end{equation}

Observe that $\Sigma$ in Example~\ref{e.vol.ode} is given as
\[
    \Sigma
    = \frac12 C^\dagger U(T)+D\int_0^T U(s)ds.
\]
The matrix $\J_{\!N}$ is given by \eqref{r.plucker.1} with
\[
    \Phi=\begin{pmatrix}
          U(T) \\ T(I_d-\Sigma)
        \end{pmatrix}.
\]

Since 
\[
    \int_0^T \tr\sigma(s) ds
    =\frac{T}2\tr C+\frac{T^2}2 \tr D,
\]
by \eqref{e.vol.ode.2} and \eqref{e.vol.ode.AD.1}, we obtain
that
\[
    \int_{\mathcal{W}} 
     e^{\frac12\int_0^T \langle C\theta(t),
            d\theta(t)\rangle
        +\frac12\int_0^T \langle D\theta(t),
             \theta(t)\rangle dt}
     \delta_0(\theta^{(N)}(T)) d\mu
    =\biggl(\frac{T^{d-N}}{
        (2\pi)^N \det \J_{\!N}}\biggr)^{\frac12}
         e^{-\frac{T}4\tr C}
\]
for $0\le N\le d$.

Suppose that $M_{A,D}$ has distinct $2d$ eigenvalues
$\nu_1,\dots,\nu_{2d}$.
Define 
$\widehat{Q}_1,\dots \widehat{Q}_{2d}
 \in \mathbb{C}^{d\times d}$
in the following two steps.
(i)~Define $\widehat{U}_n\in \mathbb{R}^{d\times d}$ for 
$n\in \mathbb{N}\cup\{0\}$ successively as
\[
    \widehat{U}_0=0,\quad
    \widehat{U}_1=I_d,\quad
    \widehat{U}_{2+n}=-D \widehat{U}_n
     +A \widehat{U}_{n+1}
    \quad\text{for }0\le n\le 2d-3.
\]
(ii)~Define each $(p,q)$-component $\widehat{Q}_j^{pq}$ of
$\widehat{Q}_j$ as 
\[
    \begin{pmatrix}
     \widehat{Q}_1^{pq} \\ \vdots \\ \vdots \\ 
     \widehat{Q}_{2d}^{pq}
    \end{pmatrix}
    ={\begin{pmatrix}
      1 & \dots & 1
      \\
      \nu_1 & \dots & \nu_{2d}
      \\
      \nu_1^2 & \dots & \nu_{2d}^2
      \\
      \vdots & & \vdots
      \\
      \nu_1^{2d-1} & \dots & \nu_{2d}^{2d-1}
     \end{pmatrix}}^{-1}
     \begin{pmatrix}
     \widehat{U}_0^{pq} \\ \vdots \\ \vdots \\ 
     \widehat{U}_{2d-1}^{pq}
     \end{pmatrix}.
\]
Due to Proposition~\ref{p.ode.U.nu} for $k=2$ with $C_0=-D$,
$C_1=A$, $U_0=0$, $U_1=I_d$, we have another expression of
$U$ as 
\[
    U(t)=\sum_{j=1}^{2d} e^{\nu_j t} \widehat{Q}_j
    \quad\text{for }t\in[0,T].
\]
Furthermore, in this case, we have that
\[
    \Sigma=\sum_{j=1}^d \biggl\{
     \frac12 e^{\nu_j T} C^\dagger
     +\frac{e^{\nu_j T}-1}{\nu_j} D\biggr\} \widehat{Q}_j,
   \quad\text{where }\frac{e^{0T}-1}{0}=1.
\]
\end{example}

The ODEs in the above two examples are of second order.
Such ODEs has already appeared in the previous chapter.
In the following examples, we deal with examples where ODEs
of order more than or equal to three appear.

\begin{example}\label{e.vol.ode.AD.D}
We continue to use the notation in Examples~\ref{e.vol.ode}
and \ref{e.vol.ode.AD}. 
By \eqref{e.vol.ode.0} and \eqref{e.vol.ode.AD.1},
$\D \mathfrak{a}_{C,D}=\D\q_\eta$.
Due to Proposition~\ref{p.h.eta}, we have that
\begin{equation}\label{e.vol.ode.AD.D.01}
    \frac12\|\D \mathfrak{a}_{C,D}\|_{\mathcal{H}}^2
    =\q_{c(\eta)}+\frac12\|\eta\|_2^2.
\end{equation}
We apply Theorem~\ref{t.plucker} to 
$-\frac12\|D \mathfrak{a}_{C,D}\|_{\mathcal{H}}^2
 =\q_{-c(\eta)}-\frac12\|\eta\|_2^2$.

We first see that Theorem~\ref{t.plucker} is applicable to  
$B_{-c(\eta)}$.
To do so, develop $B_{-c(\eta)}$ as
\[
    B_{-c(\eta)}=-B_{c(\eta)}=-B_\eta^2
    =-A_I^2+\Bigl((-A_I)A_F+(-A_F)(A_I+A_F)\Bigr).
\]
Recall that $A_I=B_{\kappa_I}$, and hence
\[
    A_I^2=B_{\kappa_I*\kappa_I},
\]
where
\[
    (\kappa_I*\kappa_I)(t,s)
    =\int_0^T \kappa_I(t,u)\kappa_I(u,s) du
    \quad\text{for }(t,s)\in[0,T]^2.
\]
Since 
\[
    \kappa_I(t,s)=\one_{[0,t)}(s)\{A-(t-s)D\}
    \quad\text{for }(t,s)\in[0,T]^2,
\]
we know that
\[
    (\kappa_I*\kappa_I)(t,s)
    = \one_{[0,t)}(s)\biggl\{
       (t-s)A^2-\frac{(t-s)^2}2(DA+AD)
       +\frac{(t-s)^3}6 D^2 \biggr\}.
\]
Due to Lemma~\ref{l.dettwo}, we obtain that 
\begin{equation}\label{e.vol.ode.AD.D.02}
    \tr[A_I^2]=0
    \quad\text{and}\quad
    \dettwo(I+A_I^2)=1.
\end{equation}
Let $e_1,\dots,e_d$ be an ONB of $\mathbb{R}^d$ and define 
$k_1,\dots,k_{3d}\in \mathcal{H}$ by 
\[
    k_i^\prime(t)=e_i,\quad
    k_{d+i}^\prime(t)=\biggl(t-\frac{T}2\biggr)e_i,\quad
    k_{2d+i}^\prime(t)
    =\biggl(t^2-Tt+\frac{T^2}6\biggr)e_i
\]
for $t\in[0,T]$ and $1\le i\le d$.
Then $k_1,\dots,k_{3d}$ are orthogonal.
As was seen in the previous example, we have that
\[
    \mathcal{R}(A_F)
    \subset
    \biggl\{ \sum_{j=1}^d a_jk_j \,\bigg|\, 
      a_1,\dots,a_d\in \mathbb{R} \biggr\}.
\]
Since
\[
    (A_Ik_i)^\prime(t)
    =\biggl(tA-\frac{t^2}2D\biggr)e_i
    \quad\text{for }t\in[0,T]\text{ and }1\le i\le d,
\]
we see that
\[
    \mathcal{R}\Bigl((-A_I)A_F+(-A_F)(A_I+A_F)\Bigr)
    \subset
    \biggl\{\sum_{i=1}^{3d} a_i k_i \,\bigg|\,
         a_1,\dots,a_{3d}\in \mathbb{R} \biggr\}.
\]
Thus $B_{-c(\eta)}$ satisfies the condition {\bf(A)} with
$-A_I^2$ for $A_I$ and $(-A_I)A_F+(-A_F)(A_I+A_F)$ for
$A_F$.
Since $B_{c(\eta)}\in \mathcal{S}_+(\mathcal{H})$, 
$\Lambda(B_{-c(\eta)})\le0<1$.
Therefore Theorem~\ref{t.plucker} is applicable to
$\q_{-c(\eta)}$ with $k_1,\dots,k_{3d}$.

We compute $J_p$ associated with $\q_{-c(\eta)}$ for 
$p=(p^{(1)},p^{(2)},p^{(3)})\in 
 (\mathbb{R}^d)^3=\mathbb{R}^{3d}$.
Observe that
\[
    (A_Ih)^\prime(t)=\int_0^t [Ah^\prime-Dh](s)ds
\]
and 
\[
    (A_I^2 h)^\prime(t)
      =\int_0^t\biggl\{ A\biggl(
        \int_0^s[Ah^\prime-Dh](u)du\biggr)
       -D\biggl(\int_0^s\biggl(\int_0^u 
           [Ah^\prime-Dh](v)dv\biggr)du \biggr)
        \biggr\} ds
\]
for $h\in \mathcal{H}$ and $t\in[0,T]$.
Since $(I+A_I^2)J_p=\sum\limits_{i=1}^{3d} p_ik_i$, where
$p_1,\dots,p_{3d}$ is determined as
$p^{(i)}=(p_{(i-1)d+1},\dots,p_{id})$ for $i=1,2,3$,
$J_p$ is characterized by the equation 
\begin{align*}
    & J_p^{(1)}(t)+\int_0^t \biggl\{A\biggl(
        \int_0^s[AJ_p^{(1)}-DJ_p](u)du\biggr)
    \\
    & \hphantom{J_p^{(1)}(t)-\int_0^t \biggl\{}
     -D\biggl(\int_0^s\biggl(\int_0^u 
          [AJ_p^{(1)}-DJ_p](v)dv\biggr) du \biggr)
      \biggr\} ds
    \\
    & =p^{(1)}+\Bigl(t-\frac{T}2\Bigr)p^{(2)}
       +\Bigl(t^2-Tt+\frac{T^2}6\Bigr)p^{(3)}
    \quad\text{for }t\in[0,T].
\end{align*}
If we set 
\[
    \widehat{J}_0(p)=0,
      \quad
      \widehat{J}_1(p)
      =p^{(1)}-\frac{T}2 p^{(2)}+\frac{T^2}6 p^{(3)},
      \quad
      \widehat{J}_2(p)=p^{(2)}-T p^{(3)},
\]
and
\[
    \widehat{J}_3(p)=-A^2\Bigl(p^{(1)}-\frac{T}2 p^{(2)}
               +\frac{T^2}6 p^{(3)}\Bigr)
            +2p^{(3)},
\]
then this equation with the initial condition $J_p(0)=0$ is 
equivalent to the fourth order ODE on $\mathbb{R}^d$  
\[
    J_p^{(4)}+A^2J_p^{(2)}-(AD+DA)J_p^{(1)}+D^2J_p=0,
    \quad
    J_p^{(i)}(0)=\widehat{J}_i(p)
    \quad\text{for }0\le i\le 3.
\]

Let $\lambda_1,\dots,\lambda_{4d}\in \mathbb{C}$ be the
eigenvalues of the matrix
\[
    \hat{M}_{A,D}
    \equiv 
     \begin{pmatrix}
       0 & I_d & 0 & 0
       \\
       0 & 0 & I_d & 0 
       \\
       0 & 0 & 0& I_d 
       \\
       -D^2 & AD+DA & -A^2 & 0
     \end{pmatrix} \in \mathbb{R}^{4d\times 4d}
\]
counted with multiplicity.
Define 
$r_1,\dots,r_{4d}\in C^\infty(\mathbb{R};\mathbb{C})$ as 
\[
    \left\{
      \begin{aligned}     
         & r_1^\prime=\lambda_1 r_1,
         \\
         & r_j^\prime=r_{j-1}+\lambda_j r_j
           \quad\text{for }2\le j\le 4d,
         \\
         & r_1(0)=1,~~
           r_2(0)=\dots=r_{4d}(0)=0,
      \end{aligned}
    \right.
\]
and the multi-linear mappings 
$q_1,\dots q_{4d}:(\mathbb{C}^d)^3\to \mathbb{C}^d$ 
as
\[
    q_1(p)=0,\quad
      q_2(p)=\widehat{J}_1(p), \quad
      q_{m+1}(p)
      =\widehat{J}_m(p)-\sum_{j=1}^m r_j^{(m)}(0)q_j(p)
       \quad\text{for }2\le m\le 3,
\]
and
\begin{align*}
    q_{5+n}(p)
    = & -D^2 \biggl(\sum_{j=1}^{n+1} r_j^{(n)}(0)q_j(p)\biggr)
      +(AD+DA) \bigg(\sum_{j=1}^{n+2} r_j^{(n+1)}(0)q_j(p)\biggr)
    \\
    & -A^2 \biggl(\sum_{j=1}^{n+3} r_j^{(n+2)}(0)q_j(p)\biggr)
      -\sum_{j=1}^{n+4} r_j^{(n+4)}(0)q_j(p)
      \quad\text{for }0\le n\le 4d-5.
\end{align*}
By Corollay~\ref{c.ode.U} for $k=4$ with $C_0=-D^2$,
$C_1=AD+DA$, $C_2=-A^2$, and $C_3=0$, $J_p$ is represented as
\[
    J_p=\sum_{j=1}^{4d} r_j q_j(p).
\]

Using this $J_p$, we define the matrices
$\J_0,\J_{3d}\in \mathbb{R}^{3d\times 3d}$ and 
$\Phi\in \mathbb{R}^{6d\times 3d}$ by 
\[
    \J_0\, p
    =\begin{pmatrix}
      \langle J_p,(I+B_{c(\eta)})k_1\rangle_{\mathcal{H}}
      \\
      \vdots
      \\
      \langle J_p,(I+B_{c(\eta)})k_{3d}\rangle_{\mathcal{H}}
     \end{pmatrix},
    \quad 
    \J_{3d}\,p
    =\begin{pmatrix}
      \langle J_p,k_1\rangle_{\mathcal{H}}
      \\
      \vdots
      \\
      \langle J_p,k_{3d}\rangle_{\mathcal{H}}
     \end{pmatrix}
    \quad\text{for }p\in \mathbb{R}^{3d}
\]
and 
\[
    \Phi=\begin{pmatrix} \J_{3d} \\ \J_0\end{pmatrix}.
\]
The matrix $\J_{\!N}$ is given by \eqref{r.plucker.1} with
this $\Phi$.

The above 
$\langle J_p,k_i\rangle_{\mathcal{H}}$s are computed as
\[
    \langle J_p,k_i\rangle_{\mathcal{H}}
     = \langle J_p(T),e_i\rangle,
\]
\[
    \langle J_p,k_{d+i}\rangle_{\mathcal{H}}
      =\biggl\langle -\frac{T}2 J_p(T)
       +\int_0^T t J_p^{(1)}(t) dt,e_i\biggr\rangle,
\]
and
\[
    \langle J_p,k_{2d+i}\rangle_{\mathcal{H}}
      = \biggl\langle -\frac{T^2}6 J_p(T)
        +\int_0^T (t^2-Tt)J_p^{(1)}(t)dt,
       e_i\biggr\rangle.
\]
Furthermore, 
$\langle J_p,(I+B_{c(\eta)})k_i\rangle_{\mathcal{H}}$s 
are also computed as follows.
For $e\in \mathbb{R}^d$ and $n\in \{0,1,2\}$, 
define $h_{e;n}\in \mathcal{H}$ by 
$h_{e;n}^\prime(t)=t^n e$ for $t\in[0,T]$. 
Determine $K_n,P_n(t)\in \mathbb{R}^{d\times d}$ as
\[
    K_n= \int_0^T s^n\sigma(s)^\dagger ds
       =\frac{T^{n+1}}{2(n+1)}C^\dagger
        +\frac{T^{n+2} n!}{(n+2)!}D
\]
and
\begin{align*}
    P_n(t) = 
       & \frac{n!t^{n+2}}{(n+2)!} A^2
        -\frac{n!t^{n+3}}{(n+3)!} (AD+DA)
        +\frac{n!t^{n+4}}{(n+4)!} D^2
    \\
    &  +\frac1{n+1} K_{n+1}A
        -\frac{n!}{(n+2)!} K_{n+2}D
        +\biggl(tA-\frac{t^2}2 D+K_0\biggr) K_n.
\end{align*}
Since 
\[
    (B_\eta h)^\prime(t)
    =\int_0^t [Ah^\prime-Dh](s)ds
     +\int_0^T \sigma(s)^\dagger h^\prime(s) ds
    \quad\text{for $h\in \mathcal{H}$ and $t\in[0,T]$,}
\]
by a straightforward
computation, we obtain that
\[
    (B_{c(\eta)}h_{e;n})^\prime(t) 
    =P_n(t)e \quad\text{for }t\in[0,T].
\]
Rewriting as
\[
    k_i=h_{e_i;0},\quad
    k_{d+i}=h_{e_i;1}-\frac{T}2h_{e_i;0},\quad
    k_{2d+i}=h_{e_i;2}-T h_{e_i;1}+\frac{T^2}6 h_{e_i;0}
    \quad\text{for }1\le i\le d, 
\]
in conjunction with the above evaluation
of $\langle J_p,k_i\rangle_{\mathcal{H}}$s, we obtain that
\[
    \langle J_p,(I+B_{c(\eta)})k_i\rangle_{\mathcal{H}}
      =\biggl\langle J_p(T)
          +\int_0^T P_0(t)^\dagger J_p^{(1)}(t) dt,
        e_i\biggr\rangle,
\]
\[
    \langle J_p,(I+B_{c(\eta)})k_{d+i}\rangle_{\mathcal{H}}
      =\biggl\langle -\frac{T}2 J_p(T)
        +\int_0^T \Bigl(t I_d +P_1(t)^\dagger
            -\frac{T}2 P_0(t)^\dagger \Bigr)J_p^{(1)}(t)
         dt, e_i\biggr\rangle,
\]
and
\begin{align*}
    & \langle
      J_p,(I+B_{c(\eta)})k_{2d+i}\rangle_{\mathcal{H}}
    \\
    &  =\biggl\langle -\frac{T^2}6 J_p(T)
       +\int_0^T \Bigl((t^2-tT)I_d
            +P_2(t)^\dagger-T P_1(t)^\dagger
            +\frac{T^2}6 P_0(t)^\dagger\Bigr)J_p^{(1)}(t)
        dt, e_i \biggr\rangle
\end{align*}
for $1\le i\le d$.

We compute the remaining quantities in
Theorem~\ref{t.plucker}.
By a direct computation, we know that
\[
    \|k_i\|_{\mathcal{H}}^2=T, \quad
    \|k_{d+i}\|_{\mathcal{H}}^2=\frac{T^3}{12}, \quad
    \|k_{2d+i}\|_{\mathcal{H}}^2=\frac{T^5}{180}
    \quad\text{for }1\le i\le d.
\]
Hence we have that
\begin{equation}\label{e.vol.ode.AD.D.03}
    \det C(\boldsymbol{k}^{(N)})=T^N
    \quad\text{for }0\le N\le d
    \quad\text{and}\quad
    \det\bigl(C(\boldsymbol{k}^{(3d)})\bigr)
    =\frac{T^{9d}}{12^{d}180^{d}}.
\end{equation}
By \eqref{e.vol.ode.AD.D.02}, we know that
\begin{align*}
    & -\|\eta\|_2^2
      -\tr\Bigl[(-A_I)A_F+(-A_F)(A_I+A_F)\Bigr]
    \\
    & =-\|B_\eta\|_{\htwo}^2
       -\tr\Bigl[(-A_I)A_F+(-A_F)(A_I+A_F)\Bigr]
    \\
    & =\tr\Bigl[-B_\eta^2+A_IA_F+A_F(A_I+A_F)\Bigr]
      =\tr[-A_I^2]=0.
\end{align*}
Applying Theorem~\ref{t.plucker} to $\q_{-c(\eta)}$ 
together with this, \eqref{e.vol.ode.AD.D.01},
\eqref{e.vol.ode.AD.D.02}, and \eqref{e.vol.ode.AD.D.03}, 
we obtain that 
\[
    \int_{\mathcal{W}} 
       e^{-\frac12\|\D \mathfrak{a}_{C,D}\|_{\mathcal{H}}^2}
     \delta_0(\theta^{(N)}(T))d\mu
    =\biggl( \frac{T^{9d-N}}{
        (2\pi)^N 12^d 180^d \det \J_{\!N}} 
     \biggr)^{\frac12}
    \quad\text{for }0\le N\le d.
\]

Define $\hat{k}_{d+1},\dots,\hat{k}_{2d}\in \mathcal{H}$ by
$\hat{k}_{d+i}^\prime(t)=(t^2-\frac{T^2}3)e_i$ for
$1\le i\le d$ and $t\in[0,T]$.
If $A=0$, which always occurs when $d=1$, we can carry out
the above program with 
$k_1,\dots,k_d,\hat{k}_{d+1},\dots,\hat{k}_{2d}$
instead of $k_1,\dots,k_{3d}$.
We omit the details.
\end{example}

\begin{example}\label{e.samp.var}
In the previous examples, we encountered second and fourth
order ODEs.
In this example, we deal with the case when a third order 
ODE appears.

Let $D\in \mathbb{R}^{d\times d}$ and assume that
$D^\dagger=D$ and  
\[
    \Lambda(-D)
    \equiv\sup_{x\in \mathbb{R}^d,|x|=1}
        \langle (-D)x,x\rangle 
    <1.
\]
Define the quadratic Wiener functional $\mathfrak{v}_D$ by 
\[
    \mathfrak{v}_D=\frac12\int_0^T 
     \bigl\langle D(\theta(t)-\overline{\theta}),
          \theta(t)-\overline{\theta}\bigr\rangle dt,
    \quad\text{where }
    \overline{\theta}=\frac1{T} \int_0^T \theta(t)dt.
\]
When $d=1$ and $D=1$, then $\frac{2}{T}\mathfrak{v}_D$ is 
the sample variance of sample path, which played a key role
to dominate the inverse of Malliavin covariance in the
early days of Malliavin calculus. 
See, for example, the first edition of Ikeda and
Watanabe's book (\cite{IW}). 

As an easy exercise of Malliavin calculus, we know that
\[
    \langle \D \mathfrak{v}_D,g\rangle_{\mathcal{H}}
    =\int_0^T \langle D(\theta(t)-\overline{\theta}),
          g(t)-\overline{g}\rangle dt
    \quad\text{for }g\in \mathcal{H}.
\]
Hence 
\[
    \int_{\mathcal{W}} \D \mathfrak{v}_D d\mu=0.
\]
Taking the $\mathcal{H}$-derivative of the above, and
substituting the identity 
$\int_0^T (h(t)-\overline{h})dt=0$ into the resulting
identity, we see that
\begin{align}
 \label{e.samp.var.1}
    \langle \D^2\mathfrak{v}_D,
        h\otimes g\rangle_{\htwo}
      & =\int_0^T \langle D(h(t)-\overline{h}),
             g(t)-\overline{g}\rangle dt
    \\
    & =\int_0^T \langle D(h(t)-\overline{h}),
             g(t)\rangle dt
 \nonumber
    \\
    & =\int_0^T \biggl\langle 
           D\biggl(\int_t^T (h(s)-\overline{h}) ds
             \biggr), g^\prime(t)\biggr\rangle dt
 \nonumber
    \\
    & =\int_0^T \biggl\langle 
           -D\biggl(\int_0^t (h(s)-\overline{h}) ds
             \biggr), g^\prime(t)\biggr\rangle dt
    \quad\text{for }h,g\in \mathcal{H}.
 \nonumber
\end{align}
Setting $\eta\in\stwo$ as
\begin{equation}\label{e.samp.var.eta}
    \eta(t,s)=\biggl(-\one_{[0,t)}(s)(t-s)
      +\frac{t}{T}(T-s)\biggr)D
    =\biggl(\min\{t,s\}-\frac{ts}{T}\biggr)D
\end{equation}
for $(t,s)\in[0,T]^2$, we have that
\begin{equation}\label{e.samp.var.1+}
    \int_0^T \eta(t,s)h^\prime(s)ds
    =-D\biggl(\int_0^t (h(s)-\overline{h}) ds\biggr)
    \quad\text{for }h\in \mathcal{H}.
\end{equation}
Thus it holds that 
\[
    \D^2 \mathfrak{v}_D=B_\eta.
\]
Furthermore, it is easily seen that
\[
    \int_{\mathcal{W}} \mathfrak{v}_D d\mu
    =\frac{T^2}{12} \tr D.
\]
Due to Lemma~\ref{l.D^3=0} and Theorem~\ref{t.q.eta}, we
obtain that  
\begin{equation}\label{e.samp.var.2}
    -\mathfrak{v}_D=\q_{-\eta}-\frac{T^2}{12} \tr D.
\end{equation}

Define $\kappa_F\in\ltwo$ as 
\[
    \kappa_F(t,s)=-\frac{t}{T}(T-s)D
    \quad\text{for }(t,s)\in[0,T]^2.
\]
By the definition \eqref{e.samp.var.eta} of $\eta$, we have
that $\one_{\Delta_2}(-\eta)=\one_{\Delta_2}\kappa_F$.
Let $e_1,\dots,e_d$ be an ONB of $\mathbb{R}^d$ and define 
orthogonal $k_1,\dots,k_{2d}\in \mathcal{H}$ by 
\[
    k_i^\prime(t)=e_i 
    \quad\text{and}\quad
    k_{d+i}^\prime(t)=\biggl(t-\frac{T}2\biggr)e_i
    \quad\text{for }t\in[0,T]
    \text{ and }1\le i\le d.
\]
Since
\[
    (B_{\kappa_F}h)^\prime(t)
      =\int_0^T\biggl(-\frac{t}{T}(T-s)D\biggr)h^\prime(s)ds
      =-tD\overline{h}
    \quad\text{for }h\in \mathcal{H}
    \text{ and }t\in[0,T],    
\]
we have that
\[
    \mathcal{R}(B_{\kappa_F})
    \subset \biggl\{ \sum_{i=1}^{2d} a_ik_i \,\bigg|\,
       a_1,\dots,a_{2d}\in \mathbb{R}\biggr\}.
\]
Thus $B_{\kappa_F}$ is of finite rank.
Set $\kappa_I\in\ltwo$ so that
\[
    \kappa_I(t,s)=\one_{[0,t)}(s)(t-s)D
    \quad\text{for }(t,s)\in[0,T]^2.
\]
By \eqref{e.samp.var.eta} and Lemma~\ref{l.condA}, we see
that $B_{-\eta}$ satisfies the condition~{\bf(A)} with
$A_I=B_{\kappa_I}$ and $A_F=B_{\kappa_F}$.
Moreover, by the first identity in \eqref{e.samp.var.1}, we
see that 
\begin{align*}
    \Lambda(B_{-\eta})
    \le \Lambda(-D)\sup_{\|h\|_{\mathcal{H}}=1}
        \int_0^T |h(t)-\overline{h}|^2 dt
    \le \Lambda(-D)<1.
\end{align*}
Therefore Theorem~\ref{t.plucker} is applicable to
$\q_{-\eta}$ with $k_1,\dots,k_{2d}$.

We compute $J_p$ associated with $\q_{-\eta}$ for 
$p=(p^{(1)},p^{(2)})\in (\mathbb{R}^d)^2=\mathbb{R}^{2d}$.
Note that 
\[
    (A_Ih)^\prime(t)
    =\int_0^T \one_{[0,t)}(s)(t-s)Dh^\prime(s)ds
    =\int_0^t Dh(s)ds
    \quad\text{for }h\in \mathcal{H} 
    \text{ and }t\in[0,T].   
\]
Since $(I-A_I)J_p=\sum\limits_{i=1}^{2d} p_ik_i$,
where $p_1,\dots,p_{2d}$ is determined as
$p^{(i)}=(p_{(i-1)d+1},\dots,p_{id})$ for $i=1,2$, we have that
\[
    J_p^{(1)}(t)-\int_0^t DJ_p(s)ds
    =p^{(1)}+\Bigl(t-\frac{T}2\Bigr)p^{(2)}
    \quad\text{for }t\in[0,T].
\]
If we set
\[
    \widehat{J}_0(p)=0,\quad
    \widehat{J}_1(p)=p^{(1)}-\frac{T}2 p^{(2)},\quad
    \widehat{J}_2(p)=p^{(2)},
\]
then this identity with the initial condition $J_p(0)=0$ is
equivalent to the third order ODE on $\mathbb{R}^d$ 
\[
    J_p^{(3)}-DJ_p^{(1)}=0,
    \quad
    J_p^{(i)}(0)=\widehat{J}_i(p)
    \quad\text{for }0\le i\le 2.
\]

We solve this ODE as a second order ODE for
$J_p^{(1)}$ and integrate it to have $J_p$.
To do so, let $\lambda_1,\dots,\lambda_{2d}\in \mathbb{C}$
be the eigenvalues of the matrix
\[
    M_D=\begin{pmatrix}
          0 & I_d \\ D & 0 
        \end{pmatrix}
    \in \mathbb{R}^{2d\times 2d}
\]
counted with multiplicity.
Define 
$r_1,\dots,r_{2d}\in C^\infty(\mathbb{R};\mathbb{C})$ as
\[
    \left\{
      \begin{aligned}     
         & r_1^\prime=\lambda_1 r_1,
         \\
         & r_j^\prime=r_{j-1}+\lambda_j r_j
           \quad\text{for }2\le j\le 2d,
         \\
         & r_1(0)=1,~~
           r_2(0)=\dots=r_{2d}(0)=0,
      \end{aligned}
    \right.
\]
and the multi-linear mappings 
$q_1,\dots,q_{2d}:(\mathbb{C}^d)^2\to \mathbb{C}^d$ as
\[
    q_1(p)=\widehat{J}_1(p), \quad
      q_2(p)=\widehat{J}_2(p)-\lambda_1 \widehat{J}_1(p),
\]
and
\[
    q_{3+n}(p)
      =D\biggl(\sum_{j=1}^{n+1} r_j^{(n)}(0)q_j(p)\biggr)
       -\sum_{j=1}^{n+2} r_j^{(n+2)}(0) q_j(p)
      \quad\text{for }0\le n\le 2d-3.
\]
By Corollary~\ref{c.ode.U} for  $k=2$ with $C_0=D$ and $C_1=0$,
we have that 
\[
    J_p(t)=\sum_{j=1}^{2d} 
       \biggl(\int_0^t r_j(s)ds\biggr) q_j(p)
    \quad\text{for }t\in[0,T].
\]

Using this $J_p$, we define the matrices
$\J_{0},\J_{2d}\in \mathbb{R}^{2d\times 2d}$ by
\[
    \J_{0}\, p
    =\begin{pmatrix}
      \langle J_p,(I+B_\eta)k_1\rangle_{\mathcal{H}}
      \\
      \vdots
      \\
      \langle J_p,(I+B_\eta)k_{2d}\rangle_{\mathcal{H}}
     \end{pmatrix}
    \quad\text{and}\quad
    \J_{2d}\,p
    =\begin{pmatrix}
      \langle J_p,k_1\rangle_{\mathcal{H}}
      \\
      \vdots
      \\
      \langle J_p,k_{2d}\rangle_{\mathcal{H}}
     \end{pmatrix}
    \quad\text{for }p\in \mathbb{R}^{2d}.
\]
The matrix $\J_{\!N}$ are determined by \eqref{r.plucker.1}
with  
\[
    \Phi=\begin{pmatrix} \J_{2d} \\ \J_{0}\end{pmatrix}
    \in \mathbb{R}^{4d\times 2d}.
\]

The above $\langle J_p,k_i\rangle_{\mathcal{H}}$s and
$\langle J_p,(I+B_\eta)k_i\rangle_{\mathcal{H}}$s are
computed as follows.
It follows from \eqref{e.samp.var.1+} that
\[
    (B_\eta k_i)^\prime(t)=\frac{t(T-t)}2 De_i
    \quad\text{and}\quad
    (B_\eta k_{d+i})^\prime(t)
    =-\biggl(\frac{t^3}6-\frac{T}4 t^2+\frac{T^2}{12} t
      \biggr) De_i.
\]
Hence we have that
\[
    \langle J_p,k_i\rangle_{\mathcal{H}}
      =\langle J_p(T),e_i\rangle,
\]
\[
   \langle J_p,k_{d+i}\rangle_{\mathcal{H}}
      =\biggl\langle -\frac{T}2 J_p(T)
       +\int_0^T t J_p^{(1)}(t) dt, e_i\biggr\rangle,
\]
\[
    \langle J_p,(I+B_\eta)k_i\rangle_{\mathcal{H}}
      =\biggl\langle J_p(T)
       +\int_0^T \frac{t(T-t)}2 D J_p^{(1)}(t) dt,
        e_i\biggr\rangle,
\]
and
\[
    \langle J_p,(I+B_\eta)k_{d+i}\rangle_{\mathcal{H}}
      =\biggl\langle -\frac{T}2 J_p(T)
       +\int_0^T \Bigl\{ tI_d-\Bigl(\frac{t^3}6
           -\frac{T}4 t^2+\frac{T^2}{12} t \Bigr)D
           \Bigr\} J_p^{(1)}(t) dt,e_i\biggr\rangle
\]
for $1\le i\le d$.

We compute the remaining quantities in Theorem~\ref{t.plucker}.
As was seen in the previous example, we have that
\[
    \|k_i\|_{\mathcal{H}}^2=T
    \quad\text{and}\quad
    \|k_{d+i}\|_{\mathcal{H}}^2=\frac{T^3}{12}
    \quad\text{for }1\le i\le d.
\]
Hence we obtain that
\begin{equation}\label{e.samp.var.6}
    \det\bigl(C(\boldsymbol{k}^{(N)})\bigr)
    =T^N
    \quad\text{for }0\le N\le d
    \quad\text{and}\quad
    \det C(\boldsymbol{k}^{(2d)})
    =\frac{T^{4d}}{12^d}.
\end{equation}
By Lemma~\ref{l.trace.detail}, we have that 
\[
    -\frac12 \tr A_F 
    =-\frac12\int_0^T \tr[\kappa_F(t,t)] dt
    =\frac12 \tr\biggl[\int_0^T 
        \biggl(\frac{t}{T}(T-t)D\biggr) dt\biggr]
    =\frac{T^2}{12} \tr D.
\]

Due to Theorem~\ref{t.plucker}, Lemma~\ref{l.condA}, and the
identities \eqref{e.samp.var.2} and \eqref{e.samp.var.6}, we
obtain that 
\[
    \int_{\mathcal{W}} e^{-\mathfrak{v}_D}
     \delta_0(\theta^{(N)}(T))d\mu
    =\biggl(\frac{T^{4d-N}}{
       (2\pi)^N 12^d\det \J_{\!N}} \biggr)^{\frac12}
    \quad\text{for }0\le N\le d.
\]
\end{example}

\begin{example}\label{e.ikm}
In this example, we consider the case where ODEs of 
order higher than $4$ appear.

Let $\mathcal{I}:\mathcal{W}\to \mathcal{W}$ be the integral
operator such that
\[
    \mathcal{I}[w](t)=\int_0^t w(s)ds
    \quad\text{for }t\in[0,T]
    \text{ and }w\in \mathcal{W}.
\]
Fix an $N\in \mathbb{N}$ and define the stochastic process 
$\{X_N(t)\}_{t\in[0,T]}$ by
\[
    X_N(t)=\mathcal{I}^N[\theta](t)
    \quad\text{for }t\in[0,T].
\]

Define $\gamma_N:[0,T]^2\to \mathbb{R}$ as 
\[
    \gamma_N(t,s)
    =\one_{[0,t)}(s)\frac{(t-s)^N}{N!}
    \quad\text{for }(t,s)\in[0,T]^2.
\]
Exchanging the order of integration, we easily see that
\begin{equation}\label{e.ikm.1}
    \mathcal{I}^N[h](t)
    =\int_0^T \gamma_N(t,s)h^\prime(s)ds
    \quad\text{for }h\in \mathcal{H} 
    \text{ and }t\in[0,T].
\end{equation}
Approximating $\{\theta(t)\}_{t\in[0,T]}$ 
piecewise-linearly, we obtain from this that 
\begin{equation}\label{e.ikm.2}
    X_N(t)=\int_0^T \gamma_N(t,s)d\theta(s).
\end{equation}
Hence $\{X_N(t)\}_{t\in[0,T]}$ is a Gaussian process.
When $d=2$, the stochastic area surrounded by such a
Gaussian process was studied by Ikeda, Kusuoka, and
Manabe (\cite{IKM1}). 

For $\sigma\in C([0,T];\mathbb{R}^{d\times d})$, we consider 
the Wiener functional $\q:\mathcal{W}\to \mathbb{R}$ by
\[
    \q=\int_0^T 
        \bigl\langle \sigma(t)X_N(t),dX_N(t)\bigr\rangle
    =\int_0^T 
        \bigl\langle \sigma(t)X_N(t),X_{N-1}(t)\bigr\rangle dt.
\]
Define $\eta\in\stwo$ by
\[
    \eta(t,s)=\int_0^T \Bigl\{
      \gamma_{N-1}(u,t)\gamma_N(u,s)\sigma(u)
      +\gamma_N(u,t)\gamma_{N-1}(u,s)\sigma(u)^\dagger
      \Bigr\}du
    \quad\text{for }(t,s)\in[0,T]^2, 
\]
we shall show that
\begin{equation}\label{e.ikm.3}
    \q=\q_\eta
     +\frac1{2(N!)^2} \int_0^T t^{2N} \tr[\sigma(t)]dt.
\end{equation}
To see this, we first compute $\D\q$.
By \eqref{e.ikm.2}, we know that
\begin{align*}
    \langle \D\q, g\rangle_{\mathcal{H}}
    = & \int_0^T \biggl\langle \sigma(t)X_N(t),
          \int_0^T \gamma_{N-1}(t,s)g^\prime(s)ds
        \biggr\rangle dt
    \\
    & +\int_0^T \biggl\langle \sigma(t)
         \int_0^T \gamma_N(t,s)g^\prime(s)ds,
       X_{N-1}(t) \biggr\rangle dt
    \quad\text{for }g\in \mathcal{H}.
\end{align*}
By \eqref{e.ikm.2} again, this implies that
\[
    \int_{\mathcal{W}} \D\q d\mu=0.
\]
Next, taking the $\mathcal{H}$-derivative of $\D\q$, we have that
\begin{align}
 \label{e.ikm.4}
    \langle \D^2\q, h\otimes g\rangle_{\htwo}
    & = \int_0^T \biggl\langle \sigma(t)
         \int_0^T \gamma_N(t,u)h^\prime(u)du,
          \int_0^T \gamma_{N-1}(t,s)g^\prime(s)ds
        \biggr\rangle dt
    \\
    & \hphantom{=}
      +\int_0^T \biggl\langle \sigma(t)
         \int_0^T \gamma_N(t,s)g^\prime(s)ds,
          \int_0^T \gamma_{N-1}(t,u)h^\prime(u)du
        \biggr\rangle dt
 \nonumber
    \\
    & =\int_0^T \biggl\langle
        \int_0^T \eta(s,u)h^\prime(u)du,g^\prime(s)
        \biggr\rangle ds
    \quad\text{for }h,g\in \mathcal{H}.
 \nonumber
\end{align}
Hence it holds that
\[
    \D^2\q=B_\eta.
\]
Finally, by \eqref{e.ikm.2} and the isometry for It\^o
integral, we see that 
\[
    \int_{\mathcal{W}} \q d\mu
    =\int_0^T \biggl\{\biggl(
        \int_0^T \gamma_N(t,s)\gamma_{N-1}(t,s)ds
        \biggr)\tr[\sigma(t)] \biggr\}dt
    =\int_0^T \frac{t^{2N}}{2(N!)^2} 
         \bigl(\tr\sigma(t)\bigr)dt.
\]
By Lemma~\ref{l.D^3=0}, we obtain the identity \eqref{e.ikm.3}.

Let $N_1,N_2\in \mathbb{N}$.
By definition, we know that 
\[
    \gamma_{N_2}(u,s)
    =\one_{[0,u)}(s)\gamma_{N_2}(u,s)
    \quad\text{for }(u,s)\in[0,T]^2.
\]
Since
\[
    \one_{[0,t]}(u)\one_{[0,u)}(s)
    =\one_{[0,t)}(s)\one_{[0,t]}(u)\one_{[0,u)}(s),
\]
we have that
\begin{align}
    \gamma_{N_1}(u,t)\gamma_{N_2}(u,s)
    & =\frac{(u-t)^{N_1}}{N_1!} \gamma_{N_2}(u,s)
      -\one_{[0,t]}(u)\frac{(u-t)^{N_1}}{N_1!}
       \gamma_{N_2}(u,s)
 \label{e.ikm.101}
    \\
    & =\frac{(u-t)^{N_1}}{N_1!} \gamma_{N_2}(u,s)
      -\one_{[0,t)}(s) \one_{[0,t]}(u)
       \frac{(u-t)^{N_1}}{N_1!}\gamma_{N_2}(u,s)
 \nonumber
\end{align}
for $(u,t),(u,s)\in[0,T]^2$.
Define $\kappa_F\in\ltwo$ as
\begin{align*}
    \kappa_F(t,s)
    & =\int_0^T\biggl\{
       \frac{(u-t)^{N-1}}{(N-1)!} \gamma_N(u,s)\sigma(u)
       +\frac{(u-t)^N}{N!} \gamma_{N-1}(u,s)\sigma(u)^\dagger
       \biggr\} du
    \\
    & =\sum_{j=0}^{N-1} \frac{(-1)^j}{j!(N-1-j)!} 
        \biggl(
        \int_0^T u^{N-j-1}\gamma_N(u,s)\sigma(u)du
        \biggr)t^j
    \\
    & \hphantom{=}
      +\sum_{j=0}^N \frac{(-1)^j}{j!(N-j)!} \biggl(
        \int_0^T u^{N-j}\gamma_{N-1}(u,s)\sigma(u)^\dagger 
         du \biggr)t^j
    \quad\text{for }(t,s)\in[0,T]^2.
\end{align*}
By \eqref{e.ikm.101}, we know that
$\one_{\Delta_2}\eta=\one_{\Delta_2}\kappa_F$.
Let $e_1,\dots,e_d$ be an ONB of $\mathbb{R}^d$ and take
monic polynomials $f_0,\dots,f_N:\mathbb{R}\to \mathbb{R}$
such that each $f_i$ is of degree $i$ and 
$\int_0^T f_i(t)f_j(t)dt=0$ if $i\ne j$.
Define $k_1,\dots,k_{(N+1)d}\in \mathcal{H}$ by
\[
    k_{nd+i}^\prime=f_n e_i
    \quad\text{for }0\le n\le N
    \text{ and }1\le i\le d.
\] 
Then it holds that
\[
    \mathcal{R}(B_{\kappa_F})
    \subset\Biggl\{ \sum_{i=1}^{(N+1)d} a_i k_i \,\Bigg|\,
     a_1,\dots,a_{(N+1)d}\in \mathbb{R}\Biggr\}.
\]
Thus $B_{\kappa_F}$ is of finite rank.
Define $\kappa_I\in\ltwo$ by 
\[
    \kappa_I(t,s)
    =-\one_{[0,t)}(s) \int_0^t \biggl\{
       \frac{(u-t)^{N-1}}{(N-1)!}\gamma_N(u,s)\sigma(u)
      +\frac{(u-t)^N}{N!}\gamma_{N-1}(u,s)\sigma(u)^\dagger
      \biggr\}du
\]
for $(t,s)\in[0,T]^2$.
By \eqref{e.ikm.101} and Lemma~\ref{l.condA}, $B_\eta$
satisfies the condition~{\bf(A)} with $A_I=B_{\kappa_I}$ and
$A_F=B_{\kappa_F}$.

We assume that $\Lambda(B_\eta)<1$ and apply
Theorem~\ref{t.plucker} to $\q_\eta$.
This assumption is fulfilled, for example, if 
\begin{equation}\label{e.ikm.4-}
    \frac{2}{N!(N-1)!} \int_0^T t^{2N}
            |\sigma(t)| dt <1.
\end{equation}
In fact, by \eqref{e.ikm.4}, we have that
\begin{align*}
    \langle B_\eta h,h\rangle_{\mathcal{H}}
    & \le 2\int_0^T |\sigma(t)|
        \frac{t^N}{N!}\frac{t^{N-1}}{(N-1)!}
        \biggl(\int_0^t |h^\prime(s)|ds\biggr)^2 dt
    \\
    & \le \frac{2}{N!(N-1)!} \biggl(\int_0^T t^{2N}
            |\sigma(t)| dt\biggr)\|h\|_{\mathcal{H}}^2
    \quad\text{for }h\in \mathcal{H}.
\end{align*}
In the remaining of the example, we only discuss how to
characterize $J_p$ via a $(2N+2)$th order ODE. 

Let 
$p=(p^{(1)},\dots,p^{(N+1)})\in (\mathbb{R}^d)^{N+1}
  =\mathbb{R}^{(N+1)d}$.
By the very definition of $\kappa_I$ and \eqref{e.ikm.1}, we
see that
\begin{align*}
    (A_Ih)^\prime(t)
    & =-\int_0^t \biggl\{
        \frac{(u-t)^{N-1}}{(N-1)!} 
          \Bigl(\mathcal{I}\bigl[\sigma 
            \mathcal{I}^N[h]\bigr]\Bigr)^\prime(u)
        +\frac{(u-t)^N}{N!} 
          \Bigl(\mathcal{I}\bigl[\sigma^\dagger
            \mathcal{I}^{N-1}[h]\bigr]\Bigr)^\prime(u)
         \biggr\} du
    \\
    & =(-1)^N\Bigl\{
           \mathcal{I}^N\bigl[
            \sigma \mathcal{I}^N[h]\bigr]
            -\mathcal{I}^{N+1}\bigl[
            \sigma^\dagger \mathcal{I}^{N-1}[h]\bigr]
            \Bigr\}(t)
     \quad\text{for $h\in \mathcal{H}$ and $t\in[0,T]$.}
\end{align*}
Since 
\[
    (I-A_I)J_p=\sum_{n=0}^N 
     \biggl(\int_0^\bullet f_n(s)ds\biggr) p^{(n)},
\]
$J_p$ is specified by the equation
\begin{equation}\label{e.ikm.5}
    J_p^{(1)}
    -(-1)^N \mathcal{I}^N\bigl[
            \sigma \mathcal{I}^N[J_p]\bigr]
    +(-1)^N \mathcal{I}^{N+1}\bigl[
            \sigma^\dagger \mathcal{I}^{N-1}[J_p]\bigr]
    =\sum_{n=0}^N f_n p^{(n)}.
\end{equation}
Setting 
\[
    K_p=\mathcal{I}^N[J_p],
\]
and differentiating \eqref{e.ikm.5} $(N+1)$-times, we have
that 
\begin{equation}\label{e.ikm.6}
    K_p^{(2N+2)}-2(-1)^N\sigma_A K_p^{(1)}
    -(-1)^N\sigma^{(1)}K_p=0.
\end{equation}
The initial condition for $K_p$ follow from its definition
and \eqref{e.ikm.5} as follows.
\[
     K_p^{(i)}(0)=0, \quad
     K_p^{(N+1+i)}(0)=J_p^{(i+1)}(0)
       =\sum_{n=0}^N f_n^{(i)}(0)p^{(n)}
     \quad\text{for }0\le i\le N.
\]
$K_p$ is obtained by solving \eqref{e.ikm.6} with this
initial condition, and then $J_p$ is done by differentiating
$K_p$ $N$-times.

In a special case, which is a modification of
Example~\ref{e.vol.ode.AD} with $X_N$, we have a more
explicit expression of $J_p$.
To see this, let $C,D\in \mathbb{R}^{d\times d}$ and assume
that $D^\dagger=D$.
Define the Wiener functional
$\mathfrak{a}_{C,D}^{(N)}:\mathcal{W}\to \mathbb{R}$ by
\[
    \mathfrak{a}_{C,D}^{(N)}
    =\frac12\int_0^T \langle CX_N(t),X_{N-1}(t)\rangle dt
     +\frac12\int_0^T \langle DX_N(t),X_N(t)\rangle dt.
\]
Since 
\[
    \frac12\int_0^T \langle DX_N(t),X_N(t)\rangle dt
    =\int_0^T \langle (T-t)DX_N(t),X_{N-1}(t)\rangle dt,
\]
using the same $\sigma\in C([0,T];\mathbb{R}^{d\times d})$
given by
\[
    \sigma(t)=\frac12 C+(T-t)D
    \quad\text{for }t\in[0,T],    
\]
as in Example~\ref{e.vol.ode.AD}, we have that
\[
    \mathfrak{a}_{C,D}^{(N)}
    =\int_0^T \langle \sigma(t)X_N(t),X_{N-1}(t)\rangle dt.
\]
We assume that
\[
    \frac{T^{2N+1}}{N!(N-1)!(2N+1)}
    \biggl\{2|C|+\frac{T}{N+1}|D|\biggr\}<1,
\]
by which the condition \eqref{e.ikm.4-} is fulfilled.
We apply the above observation to $\mathfrak{a}_{C,D}^{(N)}$
so to compute $J_p$ associated with it. 

Since $C,C^\dagger,D$ and $\mathcal{I}$ commute, the
ODE \eqref{e.ikm.6} reads as
\[
    K_p^{(2N+2)}-(-1)^N AK_p^{(1)}+(-1)^N DK_p=0
    \quad\text{with }A=\tfrac12\{C-C^\dagger\},
\]
that is,
\begin{equation}\label{e.ikm.7-}
    J_p^{(N+2)}-(-1)^N A \mathcal{I}^{N-1}[J_p]
    +(-1)^N D \mathcal{I}^N[J_p]=0.
\end{equation}
Differentiate this equation $N$-times to have the ODE for
$J_p$ such that
\begin{equation}\label{e.ikm.7}
    J_p^{(2N+2)}-(-1)^N AJ_p^{(1)}+(-1)^N DJ_p=0.    
\end{equation}
Since $f_n^{(i)}=0$ if $i\ge N+1$, the initial conditions
for $J_p$, which are obtained from \eqref{e.ikm.5}
and \eqref{e.ikm.7-}, are 
\[
    J_p(0)=0,
    \quad
    J_p^{(i)}(0)=\sum_{n=0}^N f_n^{(i-1)}(0) p^{(n+1)}
      \quad\text{for }1\le i\le 2N+1.    
\]

Let $\lambda_1,\dots,\lambda_{(2N+2)d}$ be the eigenvalues
of the matrix
\[
    \begin{pmatrix}
       0 & \hspace{7pt}I_d \hspace{7pt}
       & 0  & \cdots & 0 \\
       0 &\ddots  & \ddots & \ddots& \vdots   \\
       \vdots & \ddots & \ddots & \ddots & 0   \\
       0 & \cdots & \hspace{10pt}0\hspace{10pt}
       & 0 & I_d \\
       \hspace{10pt}0 \hspace{10pt}
       & \cdots & 0  & (-1)^N A & -(-1)^N D
    \end{pmatrix}
    \in \mathbb{R}^{(2N+2)d\times (2N+2)d}
\]
counted with multiplicity.
Define 
$r_j\in C^\infty(\mathbb{R};\mathbb{C})$ as
\[
    \left\{
      \begin{aligned}     
         & r_1^\prime=\lambda_1 r_1,
         \\
         & r_j^\prime=r_{j-1}+\lambda_j r_j
           \quad\text{for }2\le j\le (2N+2)d,
         \\
         & r_1(0)=1,~~
           r_2(0)=\dots=r_{(2N+2)d}(0)=0,
      \end{aligned}
    \right.
\]
and the multi-linear mappings 
$q_j:(\mathbb{C}^d)^{(N+1)d}\to \mathbb{C}^d$ for 
$1\le j\le (2N+2)d$ as
\[
    q_1(p)=0,\quad 
      q_{m+1}(p)
      =\sum_{n=0}^N f_n^{(m-1)}(0)p^{(n+1)}
       -\sum_{j=1}^m r_j^{(m)}(0)q_j(p)
      \quad\text{for }1\le m\le 2N+1,
\]
and
\begin{align*}
    q_{2N+3+n}(p)
    = & (-1)^{N+1} D \biggl(\sum_{j=1}^{n+1}
          r_j^{(n)}(0)q_j(p)\biggr)
       +(-1)^N A \biggl(\sum_{j=1}^{n+2}
          r_j^{(n+1)}(0)q_j(p)\biggr)
    \\
    & -\sum_{j=1}^{2N+2+n} 
          r_j^{(2N+2+n)}(0)q_j(p)
      \quad\text{for }0\le n\le (2N+2)(d-1)-1.
\end{align*}
By Corollary~\ref{c.ode.U} with 
$k=2N+2$, $C_0=(-1)^{N+1}D$, $C_1=(-1)^NA$, and 
$C_2=\cdots=C_{2N+1}=0$,  we have that
\[
    J_p=\sum_{j=1}^{(2N+2)d} r_j q_j(p).
\]

Ikeda, Kusuoka, and Manabe's stochastic area surrounded by
$X_N$ (\cite{IKM1}) is our 
$\mathfrak{a}_{C,D}^{(N)}$ with $d=2$, $C$ being skew
symmetric, and $D=0$. 
\end{example}

\begin{remark}\label{r.volterra}
(i)~
All $A_I$s in the above examples are Volterra operators.
Taking advantage of the decomposition of $B_\eta$ into a sum
of a Volterra operator and a finite rank operator was found
out by Ikeda, Kusuoka, and Manabe (\cite{IKM1,IKM2,IM2}).   
Their result was used by Hara-Ikeda (\cite{hara-ikeda}) to
see the underlying Grassmannian structure (Pl\"ucker
coordinates).
In such a case, the author also showed in 
\cite{st-kjm-jacobi.f} the similar assertions to
Theorem~\ref{t.plucker} and Remark~\ref{r.plucker}. 
The general condition~{\bf(A)} is introduced anew in this
monograph.
\end{remark}

\appendix

\chapter{Analytic or algebraic assertions}
\label{chap.anal.alg}

\section{Functional analytic lemmas}
\label{chap.funct.anal}

\begin{lemma}\label{l.cont.inv}
Every self-adjoint continuous linear operator 
$A:\mathcal{H}\to \mathcal{H}$ with the property that
\[
    \inf_{\|h\|_{\mathcal{H}}=1} \|Ah\|_{\mathcal{H}}
    \ge \varepsilon
    \quad\text{for some $\varepsilon>0$}
\]
has a continuous inverse.
Furthermore, the inverse operator $A^{-1}$ satisfies that
\[
    \|A^{-1}\|_\op\le \varepsilon^{-1}.    
\]
\end{lemma}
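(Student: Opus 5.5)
The plan is to establish, in order, injectivity, closedness of the range, density of the range (via self-adjointness), and finally the norm bound for the inverse. The lower bound does most of the work. By linearity it extends from unit vectors to
\[
    \|Ah\|_{\mathcal{H}}\ge \varepsilon\|h\|_{\mathcal{H}}
    \quad\text{for every }h\in \mathcal{H}.
\]
Injectivity of $A$ is then immediate: $Ah=0$ forces $h=0$.

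Next I show that the range $\mathcal{R}(A)$ is closed. Suppose $Ah_n\to g$ in $\mathcal{H}$. Applying the inequality to differences gives $\|h_n-h_m\|_{\mathcal{H}}\le \varepsilon^{-1}\|Ah_n-Ah_m\|_{\mathcal{H}}$. Hence $\{h_n\}$ is Cauchy and converges to some $h$. Continuity of $A$ then yields $Ah=g$, so $g\in\mathcal{R}(A)$.

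Then I show that $\mathcal{R}(A)$ is dense. Take $g$ orthogonal to $\mathcal{R}(A)$. Self-adjointness gives
\[
    \langle Ag,h\rangle_{\mathcal{H}}=\langle g,Ah\rangle_{\mathcal{H}}=0
    \quad\text{for all }h\in \mathcal{H},
\]
so $Ag=0$, and injectivity forces $g=0$. A range that is both closed and dense is all of $\mathcal{H}$, so $A$ is bijective.

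For the last step, let $A^{-1}$ denote the algebraic inverse; it is linear. Applying the inequality to $h=A^{-1}g$ gives $\|A^{-1}g\|_{\mathcal{H}}\le \varepsilon^{-1}\|g\|_{\mathcal{H}}$. This shows at once that $A^{-1}$ is continuous and that $\|A^{-1}\|_\op\le\varepsilon^{-1}$, with no appeal to the open mapping theorem.

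There is no real obstacle in this argument. The one point needing care is the density step, which is exactly where self-adjointness is used: the identity $\mathcal{R}(A)^\perp=\ker A^*=\ker A$ converts injectivity into surjectivity once the range is known to be closed.
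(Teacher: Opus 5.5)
Your proof is correct and follows the paper's argument step for step: injectivity from the lower bound, closedness of $\mathcal{R}(A)$ via a Cauchy sequence, $\mathcal{R}(A)^\perp=\{0\}$ via self-adjointness and injectivity, and the bound $\|A^{-1}\|_\op\le\varepsilon^{-1}$ obtained by inserting $A^{-1}g$ into the inequality. The only difference is that the paper cites the bounded inverse theorem for continuity of $A^{-1}$. As you observe, that citation is superfluous once the norm bound is derived directly.
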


\begin{proof}
We first show that $A$ has a continuous inverse.
The assumed lower estimation implies the inequality that 
\begin{equation}\label{l.cont.inv.21}
    \|Ah\|_{\mathcal{H}} \ge \varepsilon\|h\|_{\mathcal{H}}
    \quad\text{for every }h\in \mathcal{H}.
\end{equation}
Hence $A$ is injective.
Due to the bounded inverse theorem, it suffices to show that 
$A$ is surjective. 

Let $\mathcal{R}(A)$ be the range of $A$.
If $h_n\in \mathcal{R}(A)$ for $n\in \mathbb{N}$ and $Ah_n$ 
converges to $g$ in $\mathcal{H}$, then the inequality
\eqref{l.cont.inv.21} yields that
\[
    \|h_n-h_m\|_{\mathcal{H}}
    \le \varepsilon^{-1} \|Ah_n-Ah_m\|_{\mathcal{H}}
    \to 0 \quad\text{as }n,m\to\infty,
\]
that is, $\{h_n\}_{n=1}^\infty$ is a Cauchy sequence 
in $\mathcal{H}$.
Let $h$ be its limit in $\mathcal{H}$.
Then we see that
\[
    \|Ah-g\|_{\mathcal{H}}
    =\lim_{n\to\infty} \|Ah_n-g\|_{\mathcal{H}}
    =0.
\]
Thus $\mathcal{R}(A)$ is closed.

If $h\in \mathcal{H}$ is perpendicular to
$\mathcal{R}(A)$, then, by the self-adjointness of $A$, we
have that
\[
    \langle Ah,g\rangle_{\mathcal{H}}
    =\langle h,Ag\rangle_{\mathcal{H}}=0
    \quad\text{for any }g\in \mathcal{H}.
\]
In conjunction with the injectivity of $A$, this yields that 
$h=0$.
Thus $\mathcal{R}(A)=\mathcal{H}$, that is, $A$ is
surjective.

Substituting $A^{-1}h$ for $h$ in \eqref{l.cont.inv.21}, 
we obtain the desired domination of $\|A^{-1}\|_\op$.
\end{proof}

\begin{lemma}\label{l.trace.detail}
If $\kappa\in\ltwo$ is continuous on $[0,T]^2$ and 
$B_\kappa$ is of trace class, then 
\[
    \tr B_\kappa=\int_0^T \tr[\kappa(s,s)]ds.
\]
\end{lemma}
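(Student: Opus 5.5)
The plan is to reduce the claim to the classical Mercer-type statement that a continuous kernel defining a trace class operator has trace equal to the integral of its diagonal. We transfer everything to $L^2([0,T];\mathbb{R}^d)$ via the isometry $\mathcal{H}\ni h\leftrightarrow h^\prime$. As in the proof of Lemma~\ref{l.tw.(ii)to(iv)}, this identifies $B_\kappa$ with the integral operator $\mathcal{L}_\kappa f=\int_0^T\kappa(\cdot,s)f(s)ds$. Trace and trace-class property are unitarily invariant, so it suffices to show $\tr\mathcal{L}_\kappa=\int_0^T\tr[\kappa(s,s)]ds$.

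The main step is an approximation through averaging. Let $P_n$ be the orthogonal projection of $L^2([0,T];\mathbb{R}^d)$ onto functions that are constant on each interval $I_{n,m}=[T_{n;m},T_{n;m+1})$, with $T_{n;m}=mT/n$. Then $P_n\to I$ strongly. Because $\mathcal{L}_\kappa$ is trace class, we get $\tr(P_n\mathcal{L}_\kappa P_n)\to\tr\mathcal{L}_\kappa$. To justify this, write $\mathcal{L}_\kappa=\sum_k s_k\langle\cdot,u_k\rangle v_k$ with $\sum s_k<\infty$. Then $\tr(P_n\mathcal{L}_\kappa P_n)=\sum_k s_k\langle P_nv_k,P_nu_k\rangle$, and dominated convergence applies since $|\langle P_nv_k,P_nu_k\rangle|\le1$. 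Alternatively one can use the standard fact that $\|P_nAP_n-A\|_1\to0$ for trace class $A$.

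Next we compute $\tr(P_n\mathcal{L}_\kappa P_n)$ explicitly. Use the orthonormal basis $(T/n)^{-1/2}\one_{I_{n,m}}e_i$ of the range of $P_n$. This gives
\[
\tr(P_n\mathcal{L}_\kappa P_n)=\sum_{m=0}^{n-1}\frac{n}{T}\iint_{I_{n,m}\times I_{n,m}}\tr[\kappa(t,s)]\,dtds .
\]
By uniform continuity of $\kappa$ on $[0,T]^2$, each block average of $\tr\kappa$ differs from $\tr[\kappa(t,t)]$ on $I_{n,m}$ by at most the modulus of continuity $\omega(\sqrt2\,T/n)$. So the sum is a Riemann sum converging to $\int_0^T\tr[\kappa(s,s)]ds$. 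Combining the two limits gives the identity.

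The expected obstacle is only the convergence $\tr(P_n\mathcal{L}_\kappa P_n)\to\tr\mathcal{L}_\kappa$. It needs the trace class hypothesis, not just Hilbert--Schmidt, and it is handled by the singular value expansion argument above. If preferred, one may instead cite \cite[Theorem~IV.8.1]{GGK}, where exactly this statement for continuous kernels is proved.
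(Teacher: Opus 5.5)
Your proof is correct, but it takes a different route from the paper's.

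\textbf{The paper's route.} The paper stays in $\mathcal{H}$ and projects onto the span of a trigonometric basis $h_{n;i}'=\varphi_ne_i$. Because these projections $\pi_N$ are nested, $\tr(\pi_NB_\kappa\pi_N)\to\tr B_\kappa$ is just convergence of the trace series in a fixed orthonormal basis. The kernels $\kappa_N$ of $\pi_NB_\kappa\pi_N$, however, are partial Fourier sums of $\kappa$ and need not converge. So the paper passes to Ces\`aro means $\frac1M\sum_{N\le M}\kappa_N$ and invokes Fej\'er's theorem to control the diagonal integrals.

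\textbf{What your route changes.} Your step-function projections move the work to the other step. The kernel of $P_n\mathcal{L}_\kappa P_n$ is the block average of $\kappa$. Its diagonal integral therefore converges by uniform continuity alone, with no Fourier analysis and no averaging.

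In particular, you never use the $T$-periodic extension of $\kappa$. The Fej\'er argument tacitly needs that extension to be continuous in order to get uniform convergence on all of $[0,T]^2$. This fails, for example, for the rank-$d$ kernel $\kappa(t,s)=tI_d$: there the means equal $\frac T2I_d$ at $(0,0)$, although the diagonal integrals still behave.

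\textbf{The cost of your route.} Your $P_n$ are not nested, since partitions of mesh $T/n$ are not refinements of one another. That is exactly why you need the singular value expansion with dominated convergence (or $\|P_nAP_n-A\|_1\to0$) to get $\tr(P_n\mathcal{L}_\kappa P_n)\to\tr\mathcal{L}_\kappa$. You handle this step correctly. Restricting to $n=2^k$ would make the projections nested (a Haar-type basis) and reduce that step to the same partial-sum argument the paper uses.
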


\begin{proof}
Let $\{\varphi_n\}_{n=1}^\infty$ be an orthonormal
trigonometric basis of $L^2([0,T];\mathbb{R})$,
that is, an ONB obtained by orthonormalizing
$\bigl\{\cos(\frac{2m\pi t}{T}),
 \sin(\frac{2m\pi t}{T});m\in \mathbb{N}\cup\{0\}
 \bigr\}$.
Take an ONB $e_1,\dots,e_d$ of $\mathbb{R}^d$.
For $n\in \mathbb{N}$ and $1\le i\le d$, define
$h_{n;i}\in \mathcal{H}$ by 
$h_{n;i}^\prime=\varphi_ne_i$.
Let $N\in \mathbb{N}$ and denote by $\pi_N$ the orthogonal 
projection of $\mathcal{H}$ onto the subspace spanned by 
$\{h_{n;i}\mid 1\le n\le N, 1\le i\le d\}$.
Define $\kappa_N\in\ltwo$ so that
\[
    B_{\kappa_N}=\sum_{i,j=1}^d \sum_{n,m=1}^N
      \langle B_\kappa h_{n;i},h_{m;j}
          \rangle_{\mathcal{H}}
      h_{n;i}\otimes h_{m;j},
\]
that is,
\[
    \kappa_N(t,s)
    =\sum_{i,j=1}^d \sum_{n,m=1}^N
      \langle B_\kappa h_{n;i}, h_{m;j}
        \rangle_{\mathcal{H}} 
      [(\varphi_n(s)e_i)\otimes
           (\varphi_m(t)e_j)]
     \quad\text{for } (t,s)\in[0,T]^2.
\]
It holds that
\[
    \int_0^T \tr[\kappa_N(s,s)]ds
    =\sum_{i=1}^d \sum_{n=1}^N
      \langle B_\kappa h_{n;i},h_{n;i}
          \rangle_{\mathcal{H}}.
\]
Since $B_{\kappa_N}=\pi_N B_\kappa \pi_N$,
by \cite[Theorem\,I.3.1]{GGK}, we obtain that
\begin{equation}\label{l.trace.21}
    \tr(\pi_N B_\kappa \pi_N)
    =\tr B_{\kappa_N}
    =\sum_{i=1}^d \sum_{n=1}^N
      \langle B_\kappa h_{n;i},h_{n;i}
          \rangle_{\mathcal{H}}
    =\int_0^T [\tr\kappa_N(s,s)]ds.
\end{equation}

Put 
\[
    s_M=\frac1{M}\sum_{N=1}^M \kappa_N
    \quad\text{for }M\in \mathbb{N}.
\]
By the Fej\'{e}r theorem, the sequence
$\{s_M\}_{M=1}^\infty$ converges uniformly to $\kappa$ on
$[0,T]^2$.
Since $B_\kappa$ is of trace class, due to \eqref{l.trace.21}, 
we have that
\begin{align*}
    \tr B_\kappa
    & =\lim_{M\to\infty} \tr\bigl(\pi_M B_\kappa\pi_M\bigr)
      =\lim_{M\to\infty} \frac1{M}\sum_{N=1}^M
         \tr\bigl(\pi_N B_\kappa\pi_N\bigr)
    \\
    & =\lim_{M\to\infty} \int_0^T \tr[s_M(s,s)]ds
      =\int_0^T \tr[\kappa(s,s)]ds.
\end{align*}
Thus the desired identity holds.
\end{proof}

Recall that $\mathcal{S}_+(\mathcal{H})$ is used to denote 
the totality of self-adjoint, continuous, and non-negative definite
linear operators of $\mathcal{H}$ to itself.

\begin{lemma}\label{l.sq.root}
Every $A\in \mathcal{S}_+(\mathcal{H})$ has a unique $B\in
\mathcal{S}_+(\mathcal{H})$ with $B^2=A$. 
If a continuous linear $C:\mathcal{H}\to \mathcal{H}$
commutes with $A$, then so with $B$.
\end{lemma}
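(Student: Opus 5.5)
The plan is the classical square-root construction via a monotone iteration, with uniqueness and commutation handled by polynomial approximation.

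\emph{Reduction.} By scaling I may assume $0\le A\le I$, i.e.\ $\|A\|_\op\le1$. Indeed, if $A\ne0$, I replace $A$ by $A/\|A\|_\op$ and multiply the resulting root by $\|A\|_\op^{1/2}$.

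\emph{Existence.} Write $A=I-T$ with $0\le T\le I$. I seek $B=I-Y$ where $Y$ solves $Y=\frac12(T+Y^2)$. Define $Y_0=0$ and $Y_{n+1}=\frac12(T+Y_n^2)$. By induction, each $Y_n$ is a polynomial in $T$ with nonnegative coefficients. Hence every $Y_n$ is self-adjoint and commutes with every operator commuting with $A$.

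Next I show $0\le Y_n\le Y_{n+1}\le I$. For this I use
\[
Y_{n+1}-Y_n=\tfrac12(Y_n+Y_{n-1})(Y_n-Y_{n-1}).
\]
Induction shows that $Y_n-Y_{n-1}$ is a polynomial in $T$ with nonnegative coefficients. A polynomial in $T$ with nonnegative coefficients is a nonnegative operator, because the powers $T^k$ are nonnegative: write $T^{2j}=(T^j)^2$ and $T^{2j+1}=T^jTT^j$. The bound $\|Y_n\|_\op\le1$ follows from $\|Y_{n+1}\|_\op\le\frac12(1+\|Y_n\|_\op^2)$.

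A bounded increasing sequence of self-adjoint operators converges strongly. I will prove this directly: the Cauchy--Schwarz inequality for the form of $Y_m-Y_n\ge0$ gives
\[
\|(Y_m-Y_n)h\|_{\mathcal{H}}^2\le\langle(Y_m-Y_n)h,h\rangle_{\mathcal{H}}\,\|Y_m-Y_n\|_\op .
\]
The first factor is Cauchy because $\langle Y_nh,h\rangle_{\mathcal{H}}$ is increasing and bounded, and the second is at most $2$. Passing to the strong limit yields $Y=\frac12(T+Y^2)$ with $0\le Y\le I$. Therefore $B=I-Y\ge0$ satisfies $B^2=A$, and $B$ commutes with every $C$ that commutes with $A$.

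\emph{Uniqueness.} I expect this to be the main obstacle. Suppose $B'\ge0$ with $B'^2=A$. Then $B'$ commutes with $A=B'^2$, so by the above $B'$ commutes with $B$. For $h\in\mathcal{H}$ put $g=(B-B')h$. Then
\[
\langle Bg,g\rangle_{\mathcal{H}}+\langle B'g,g\rangle_{\mathcal{H}}=\langle(B^2-B'^2)h,g\rangle_{\mathcal{H}}=0 .
\]
Both terms are nonnegative, so both vanish. Applying the step just constructed to $B$ and to $B'$ gives roots $\sqrt B$ and $\sqrt{B'}$. Hence $\|\sqrt B g\|_{\mathcal{H}}^2=\langle Bg,g\rangle_{\mathcal{H}}=0$, so $Bg=0$, and likewise $B'g=0$. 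Then
\[
\|(B-B')h\|_{\mathcal{H}}^2=\langle(B-B')g,h\rangle_{\mathcal{H}}=0,
\]
so $B=B'$.

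\emph{Commutation.} Every $C$ that commutes with $A$ commutes with each $Y_n$, so $CY=YC$ in the strong limit. Hence $C$ commutes with $B$, and the uniqueness step identifies this $B$ as the root.
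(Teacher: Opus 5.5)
Your proof is correct, but it reaches existence by a different route from the paper.

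\textbf{Existence.} The paper expands $(1-a)^{1/2}=1+\sum_{n\ge1}c_na^n$ and sets $B=I+\sum_{n\ge1}c_n(I-A)^n$. This converges in operator norm because $c_n<0$ and $\sum c_n=-1$, facts it imports from Reed--Simon. Squaring the absolutely convergent series gives $B^2=A$, positivity follows from $\langle Bh,h\rangle_{\mathcal H}\ge 1+\sum c_n=0$, and commutation is immediate from norm convergence.

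You instead solve $Y=\frac12(T+Y^2)$ by monotone iteration. This needs no information about the binomial coefficients. It uses only positivity of polynomials in $T$ with nonnegative coefficients and your self-contained monotone-convergence argument. The price is that you get only strong convergence, so you must pass to the limit in $Y_n^2$ and in $CY_n=Y_nC$ by hand, which you do.

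\textbf{Uniqueness.} Both arguments rest on $B'$ commuting with $B$ and on $(B+B')(B-B')=0$. The paper stays at the operator level: it deduces $(B-B')^3=0$ from the vanishing of $(B-B')B(B-B')$ and $(B-B')B'(B-B')$, then gets $B=B'$ from self-adjointness. You work with the vector $g=(B-B')h$ and invoke existence a second time to obtain $\sqrt B$ and $\sqrt{B'}$.

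That second appeal to existence is avoidable. The inequality $\|Pg\|_{\mathcal H}^2\le\langle Pg,g\rangle_{\mathcal H}\|P\|_\op$ for $P\ge0$, which you already proved for the strong convergence, gives $Bg=B'g=0$ directly.
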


\begin{proof}
This is widely known as the square root lemma.
We give a proof by modifying the proof
of \cite[Theorem\,VI.9]{RS}, where complex Hilbert spaces are
dealt with. 

It is sufficient to consider the case when 
$\|A\|_\op\le 1$.
Since $I-A\in \mathcal{S}_+(\mathcal{H})$, 
\[
    \|I-A\|_\op=\sup_{\|h\|_{\mathcal{H}}=1}
     \langle (I-A)h,h\rangle_{\mathcal{H}}\le 1.
\]

Let $(1-a)^{\frac12}=1+\sum\limits_{n=1}^\infty c_n a^n$ be
the power series of $(1-a)^{\frac12}$ about $a=0$.
As was seen in the lemma for \cite[Theorem\,VI.9]{RS},
$c_n<0$ for any $n\in \mathbb{N}$ and the series converges
absolutely for $a\in[-1,1]$.
Hence the series 
$I+\sum\limits_{n=1}^\infty c_n (I-A)^n$ converges in the
operator norm to a continuous linear operator
$B:\mathcal{H}\to \mathcal{H}$.
Since the convergence is absolute, we can square the series
and rearrange terms, which proves that $B^2=A$.
Since 
\[
    \langle (I-A)^nh,h\rangle_{\mathcal{H}} 
     \le \|I-A\|_\op^n\le 1
    \quad\text{for any $n\in \mathbb{N}$ and 
    $h\in \mathcal{H}$ with $\|h\|_{\mathcal{H}}=1$}, 
\]
we have that 
\[
    \langle Bh,h\rangle_{\mathcal{H}}
    \ge 1+\sum_{n=1}^\infty c_n=0
    \quad\text{for any 
    $h\in \mathcal{H}$ with $\|h\|_{\mathcal{H}}=1$}.
\]
Thus $B\in \mathcal{S}_+(\mathcal{H})$.
The absolute convergence of the series also implies that
$CB=BC$ for any continuous linear $C$ with $CA=AC$. 

Suppose that $B^\prime\in \mathcal{S}_+(\mathcal{H})$
satisfies that $(B^\prime)^2=A$.
Since $B^\prime A=(B^\prime)^3=AB^\prime$, $B^\prime$
commutes with $B$. 
Therefore we have that
\[
    (B-B^\prime)B(B-B^\prime)
    +(B-B^\prime)B^\prime(B-B^\prime)
    =(B^2-(B^\prime)^2)(B-B^\prime)
    =0.
\]
Since $(B-B^\prime)B(B-B^\prime)\ge0$ and 
$(B-B^\prime)B^\prime(B-B^\prime)\ge0$, they must vanish,
so their difference $(B-B^\prime)^3=0$.
Since $B-B^\prime$ is self-adjoint, we have that
\begin{align*}
    \|B-B^\prime\|_\op^4
    & =\sup_{\|h\|_{\mathcal{H}}=1}
        \|(B-B^\prime)h\|_{\mathcal{H}}^4
      \le \sup_{\|h\|_{\mathcal{H}}=1}
         \|(B-B^\prime)^2h\|_{\mathcal{H}}^2
    \\
    & \le \sup_{\|h\|_{\mathcal{H}}=1}
         \|(B-B^\prime)^4h\|_{\mathcal{H}}
      =0.
\end{align*}
Hence $B-B^\prime=0$, which implies the uniqueness.
\end{proof}

\section{Special factorization}
\label{chap.spec.factrization}

\begin{proposition}\label{p.sigma.gamma}
Let $\sigma\in\stwo$.
Assume that 
\begin{equation}\label{p.sigma.gamma.1}
    \inf_{\xi\in[0,T]} \Lambda(\pi_\xi B_\sigma\pi_\xi)<1,
\end{equation}
where $\pi_\xi:\mathcal{H}\to \mathcal{H}$ is the projection
used in \eqref{l.tw.(ii)to(iv).22}.
Define $\gamma\in\stwo$ by $B_\gamma=(I-B_\sigma)^{-1}-I$.
Then there exists a $\nu\in\stwo$ with
\begin{equation}\label{p.sigma.gamma.2}
    \gamma(t,s)=\nu(t,s)+\int_t^T \nu(t,u)\nu(u,s)du
    \quad\text{for }0\le s<t\le T.
\end{equation}
\end{proposition}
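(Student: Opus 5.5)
The plan is Krein's continuous-truncation construction of the resolvent. For $\xi\in(0,T]$ let $\sigma^\xi=\one_{[0,\xi)}(t)\one_{[0,\xi)}(s)\sigma(t,s)$, so that $B_{\sigma^\xi}=\pi_\xi B_\sigma\pi_\xi$. Whenever $I-B_{\sigma^\xi}$ is invertible, define $\Gamma^\xi\in\stwo$ by $B_{\Gamma^\xi}=(I-B_{\sigma^\xi})^{-1}-I$; this is possible by Lemma~\ref{l.b.kappa}, since $(I-B_{\sigma^\xi})^{-1}-I=(I-B_{\sigma^\xi})^{-1}B_{\sigma^\xi}$ is Hilbert--Schmidt and self-adjoint. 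Under the identification $\mathcal{H}\leftrightarrow L^2([0,T];\mathbb{R}^d)$ used in the proof of Lemma~\ref{l.tw.(ii)to(iv)}, $\Gamma^\xi$ is supported in $[0,\xi)^2$ and satisfies the resolvent equations
\[
\Gamma^\xi-\sigma^\xi=\sigma^\xi*\Gamma^\xi=\Gamma^\xi*\sigma^\xi .
\]
Moreover $\Gamma^T=\gamma$, and $\Gamma^\xi(t,s)^\dagger=\Gamma^\xi(s,t)$. I then set, for $0\le s<t\le T$,
\[
\nu(t,s)=\Gamma^t(t,s),\qquad \nu(s,t)=\nu(t,s)^\dagger ,
\]
which lies in $\stwo$ once measurability and square integrability in $(t,s)$ are checked.

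The key step is the Bellman--Krein variational identity
\[
\frac{\partial}{\partial\xi}\Gamma^\xi(t,s)=\Gamma^\xi(t,\xi)\,\Gamma^\xi(\xi,s),\qquad t,s<\xi .
\]
To obtain it, write $R_\xi=(I-B_{\sigma^\xi})^{-1}$ and compare $\xi$ with $\xi+\delta$. The difference $B_{\sigma^{\xi+\delta}}-B_{\sigma^\xi}$ is supported on the thin strip $\{t\vee s\in[\xi,\xi+\delta)\}$. Expanding $R_{\xi+\delta}-R_\xi=R_{\xi+\delta}(B_{\sigma^{\xi+\delta}}-B_{\sigma^\xi})R_\xi$ and using the resolvent equations on $[\xi,\xi+\delta)$ then yields the rank-type derivative. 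Integrating in $\xi$ from $t=t\vee s$ to $T$, for $s<t$, gives
\[
\gamma(t,s)=\Gamma^t(t,s)+\int_t^T\Gamma^u(t,u)\,\Gamma^u(u,s)\,du .
\]
For $t<u$ one has $\Gamma^u(t,u)=\Gamma^u(u,t)^\dagger=\nu(u,t)^\dagger=\nu(t,u)$, and $\Gamma^u(u,s)=\nu(u,s)$ since $s<u$. This is exactly \eqref{p.sigma.gamma.2}.

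Making this pointwise for $0\le s<t\le T$, and not merely almost everywhere, requires working with a good version. I would first run the argument for continuous $\sigma$. There $\Gamma^\xi$ is continuous on $[0,\xi]^2$ by the Neumann/Fredholm series, jointly continuous in $\xi$, and the derivative exists classically. Then $\nu$, defined as a genuine function of $(t,s)$, satisfies \eqref{p.sigma.gamma.2} at every point with $s<t$, with the right-hand side serving as the representative of $\gamma$. For general $\sigma\in\stwo$ I would approximate in $\ltwo$ by continuous symmetric kernels and pass to the limit, using the continuity of $B\mapsto(I-B)^{-1}$ and the uniform invertibility supplied by the hypothesis. The equality for all $s<t$ is then understood for the representative of $\gamma$ defined by the right side.

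The main obstacle is invertibility of $I-\pi_\xi B_\sigma\pi_\xi$ for \emph{every} $\xi\in(0,T]$, with a uniform bound on the inverses, which the construction needs along the whole path $\xi\mapsto\Gamma^\xi$. Condition \eqref{p.sigma.gamma.1} only controls one $\xi$. I would try a continuation argument: $\xi\mapsto\dettwo(I-\pi_\xi B_\sigma\pi_\xi)$ is continuous and equals $1$ at $\xi=0$, and $\det_2(I-B_\sigma)\ne0$ is implicit in the definition of $\gamma$. In the intended use (Lemma~\ref{l.tw.(ii)to(iv)}), the bound $\Lambda(\pi_\xi B_\sigma\pi_\xi)\le\Lambda(B_\sigma)<1$ holds for all $\xi$, and Lemma~\ref{l.cont.inv} then gives $\|(I-\pi_\xi B_\sigma\pi_\xi)^{-1}\|_\op\le(1-\Lambda(B_\sigma))^{-1}$. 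Extracting the needed invariance for every $\xi$ from \eqref{p.sigma.gamma.1} as literally stated is the step I expect to require the most care, and possibly a reinterpretation of the hypothesis.
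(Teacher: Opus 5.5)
Your route is genuinely different from the paper's. The paper does not reprove the factorization. It verifies the criterion of Lemma~\ref{l.gk}(i) through the bound $\langle P_\xi\mathcal{L}_\sigma P_\xi f,f\rangle\le\Lambda(\pi_\xi B_\sigma\pi_\xi)\|f\|^2$ on the complexified space, and takes the Gohberg--Krein factors $\nu_\pm$. It gets $\nu_+(s,t)^\dagger=\nu_-(t,s)$ from Lemma~\ref{l.gk}(ii) and real-valuedness of $\nu_-$ from Lemma~\ref{l.gk}(iii), then glues $\nu=\one_{\Delta_1}\nu_-+\one_{\Delta_2}\nu_+$ (notation of Lemma~\ref{l.condA}). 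Your truncated resolvents and the Bellman--Krein identity make that construction explicit. Your $\nu(t,s)=\Gamma^t(t,s)$, $s<t$, is exactly the lower factor represented in Lemma~\ref{l.gk}(iii), symmetry and real-valuedness are automatic, and for continuous $\sigma$ your computation is correct. The price is that you need boundary values $\Gamma^t(t,\cdot)$, hence an approximation step, whereas Gohberg--Krein works directly with Hilbert--Schmidt kernels.

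On the hypothesis, your suspicion is right, but drop the $\dettwo$ continuation: as literally stated (with $\inf$), the proposition is false. Indeed $\Lambda(\pi_\xi B_\sigma\pi_\xi)\le\|\pi_\xi B_\sigma\pi_\xi\|_{\htwo}\to0$ as $\xi\downarrow0$, so \eqref{p.sigma.gamma.1} holds for every $\sigma$. Conversely, any $\nu\in\stwo$ satisfying \eqref{p.sigma.gamma.2} gives, by symmetry, $I+B_\gamma=(I+V^*)(I+V)$ with the Volterra operator $V=B_{\one_{\Delta_1}\nu}$. Hence $I-B_\sigma=(I+W)(I+W^*)$ with $W=(I+V)^{-1}-I$ again lower triangular. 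Since $\pi_\xi(I+W)=\pi_\xi(I+W)\pi_\xi$ and $(I+W^*)\pi_\xi=\pi_\xi(I+W^*)\pi_\xi$, every $I-\pi_\xi B_\sigma\pi_\xi$ must then be invertible. Now take unit $h,k\in\mathcal{H}$ with $h'$ supported in $[0,T/2)$ and $k'$ in $[T/2,T]$, put $g=(h+k)/\sqrt2$, and let $B_\sigma=2g\otimes g$. Then $I-B_\sigma$ is invertible, while $I-\pi_{T/2}B_\sigma\pi_{T/2}=I-h\otimes h$ is not. So the hypothesis must be read as $\sup_\xi\Lambda(\pi_\xi B_\sigma\pi_\xi)<1$. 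This is what \eqref{l.tw.(ii)to(iv).22} supplies in Lemma~\ref{l.tw.(ii)to(iv)}, and it is also what the paper's own proof tacitly uses, since Lemma~\ref{l.gk}(i) requires solvability for every $\xi\in(0,T]$. Under that reading, Lemma~\ref{l.cont.inv} gives your uniform bound, for $\sigma$ and, because $|\Lambda(B)-\Lambda(B')|\le\|B-B'\|_{\htwo}$, for all nearby kernels.

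The genuine gap is the passage from continuous to general $\sigma$. Continuity of $B\mapsto(I-B)^{-1}$ only gives $\gamma_n\to\gamma$. But \eqref{p.sigma.gamma.2} is quadratic in $\nu$, so you need the factors $\nu_n$ themselves to converge, i.e.\ stability of the triangular factorization. Here is one way to get it. With $V_n=B_{\one_{\Delta_1}\nu_n}$ you have $(I+V_n^*)(I+V_n)=(I-B_{\sigma_n})^{-1}$. Hence $\|I+V_n\|_\op^2=\|(I-B_{\sigma_n})^{-1}\|_\op$ and $\|(I+V_n)^{-1}\|_\op^2=\|I-B_{\sigma_n}\|_\op$ are bounded in $n$, and
\[
(I+V_m^*)^{-1}(I+V_n^*)-(I+V_m)(I+V_n)^{-1}
=(I+V_m^*)^{-1}(B_{\gamma_n}-B_{\gamma_m})(I+V_n)^{-1}.
\]
The first term on the left is $I$ plus an operator with kernel supported in $\Delta_2$. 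The second is $I+(V_m-V_n)(I+V_n)^{-1}$, with perturbation supported in $\Delta_1$. Applying the contraction $\kappa\mapsto\one_{\Delta_1}\kappa$ of $\ltwo$ to the kernels gives $\|(V_m-V_n)(I+V_n)^{-1}\|_{\htwo}\le C\|\gamma_n-\gamma_m\|_2$, hence $\|\nu_n-\nu_m\|_2\le C'\|\gamma_n-\gamma_m\|_2$. So $\nu_n\to\nu$ in $\ltwo$ with $\nu\in\stwo$. You may then pass to the limit, because on $\Delta_1$ the integral in \eqref{p.sigma.gamma.2} equals $(\one_{\Delta_2}\nu)*(\one_{\Delta_1}\nu)$, and $\|\kappa_1*\kappa_2\|_2\le\|\kappa_1\|_2\|\kappa_2\|_2$. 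The limit identity holds a.e., which is all that $\gamma\in\stwo$ permits and all that Lemma~\ref{l.tw.(ii)to(iv)} uses, so your discussion of representatives is unnecessary.
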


To show the proposition, we recall the special factorization
due to Gohberg-Krein (\cite{GK}).  
Let 
\[
    \ltwo^{\mathbb{C}}
    =\{\tau_1+\kyosu \tau_2\mid \tau_1,\tau_2\in\ltwo\}.
\]
For $\tau\in \ltwo^{\mathbb{C}}$, define the continuous
linear operator $\mathcal{L}_\tau$ of
$L^2([0,T];\mathbb{C}^d)$ to itself by 
\[
    (\mathcal{L}_\tau f)(t)
    =\int_0^T \tau(t,s)f(s)ds
    \quad\text{for $t\in[0,T]$ and 
    $f\in L^2([0,T];\mathbb{C}^d)$}.
\]
The resolvent kernel of $\mathcal{L}_\tau$ is the function 
$\gamma\in\ltwo^{\mathbb{C}}$ with 
$(I-\mathcal{L}_\tau)^{-1}=I+\mathcal{L}_\gamma$.

\begin{lemma}\label{l.gk}
Let $\tau\in\ltwo^{\mathbb{C}}$.
\\
{\rm(i)}
In order that there exists a resolvent kernel
$\gamma\in\ltwo^{\mathbb{C}}$ for $\mathcal{L}_\tau$ in the
form 
\begin{equation}\label{l.gk.1}
    \mathcal{L}_\gamma=\mathcal{L}_{\nu_+}
     +\mathcal{L}_{\nu_-}+\mathcal{L}_{\nu_+}\mathcal{L}_{\nu_-},
\end{equation}
where $\nu_\pm\in\ltwo^{\mathbb{C}}$ satisfies that
\begin{equation}\label{l.gk.2}
    \nu_+(t,s)=\one_{[t,T]}(s)\nu_+(t,s)
    \quad\text{and}\quad
    \nu_-(t,s)=\one_{[0,t]}(s)\nu_-(t,s)
    \quad\text{for $(t,s)\in[0,T]^2$, }
\end{equation}
it is necessary and sufficient that the equation 
\[
    g(t)-\int_0^\xi \tau(t,s)g(s)ds=f(t)
    \quad\text{for }t\in[0,\xi]
\]
is solvable in $L^2([0,\xi];\mathbb{C}^d)$ for any
$\xi\in(0,T]$. 
\\
{\rm(ii)}
If the condition in (i) holds and $\mathcal{L}_\tau$ is
self-adjoint, then  
$\mathcal{L}_{\nu_+}^*=\mathcal{L}_{\nu_-}$, that is,
\[
    \overline{\nu_+(s,t)^\dagger}=\nu_-(t,s)
    \quad\text{for }(t,s)\in[0,T]^2.
\]
{\rm(iii)}
If the condition in (i) holds, then it holds that
\[
    \mathcal{L}_{\nu_-}
    =\int_0^T \mathcal{L}_\tau P_\xi
         (I-P_\xi \mathcal{L}_\tau P_\xi)^{-1} d\xi,
\]
where $(P_\xi f)(t)=\one_{[0,\xi]}(t)f(t)$ for $t\in[0,T]$.
\end{lemma}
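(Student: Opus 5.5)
This lemma is the special factorization theorem of Gohberg and Krein (\cite{GK}). I would prove it by working with triangular operators on $L^2([0,T];\mathbb{C}^d)$ and the chain of projections $\{P_\xi\}_{\xi\in[0,T]}$.

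\emph{Triangularity relations.} For a kernel $\nu_-$ supported in $\{s\le t\}$ the operator $\mathcal{L}_{\nu_-}$ is lower triangular, meaning $P_\xi\mathcal{L}_{\nu_-}=P_\xi\mathcal{L}_{\nu_-}P_\xi$. Dually, for $\nu_+$ supported in $\{s\ge t\}$ one has $\mathcal{L}_{\nu_+}P_\xi=P_\xi\mathcal{L}_{\nu_+}P_\xi$. Both operators are Hilbert--Schmidt Volterra operators, so $I+\mathcal{L}_{\nu_\pm}$ are invertible. Their inverses are again of the form $I+(\text{triangular Hilbert--Schmidt})$ with the same triangularity, since the Neumann series converges for Volterra operators.

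\emph{Necessity in (i).} Assume \eqref{l.gk.1} holds, i.e.\ $(I-\mathcal{L}_\tau)^{-1}=(I+\mathcal{L}_{\nu_+})(I+\mathcal{L}_{\nu_-})$. Then $I-\mathcal{L}_\tau=X_-X_+$ with $X_-=(I+\mathcal{L}_{\nu_-})^{-1}$ lower triangular and $X_+=(I+\mathcal{L}_{\nu_+})^{-1}$ upper triangular. The triangularity relations give
\[
P_\xi(I-\mathcal{L}_\tau)P_\xi=(P_\xi X_-P_\xi)(P_\xi X_+P_\xi).
\]
Both factors are invertible on $P_\xi L^2$, with inverses $P_\xi X_\mp^{-1}P_\xi$. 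Hence the truncated equation on $[0,\xi]$ is uniquely solvable.

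\emph{Sufficiency in (i).} I would define $\mathcal{L}_{\nu_-}$ by the formula in (iii) and construct it through a discretization. Take a partition $0=\xi_0<\dots<\xi_n=T$. With respect to the blocks $P_{\xi_k}-P_{\xi_{k-1}}$, the operator $I-\mathcal{L}_\tau$ is an $n\times n$ block matrix whose leading principal compressions $P_{\xi_k}(I-\mathcal{L}_\tau)P_{\xi_k}$ are invertible by hypothesis. Block Gaussian elimination then produces an exact factorization $I-\mathcal{L}_\tau=X_-^{(n)}X_+^{(n)}$ with block-triangular factors. The lower factor equals $I+\sum_k \mathcal{L}_\tau\Delta P_{\xi_k}(I-P_{\xi_k}\mathcal{L}_\tau P_{\xi_k})^{-1}$ up to ordering, which is a Riemann sum for the integral in (iii).

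By Fredholm theory, $\xi\mapsto(I-P_\xi\mathcal{L}_\tau P_\xi)^{-1}$ is strongly continuous, and the interval $[0,T]$ is compact. Together these give $\sup_\xi\|(I-P_\xi\mathcal{L}_\tau P_\xi)^{-1}\|_\op<\infty$.

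\emph{Main obstacle: the limit as the mesh tends to $0$.} I must show that the discrete factors converge in Hilbert--Schmidt norm to operators $I+\mathcal{L}_{\nu_\pm}$ with genuinely triangular kernels. Here the Hilbert--Schmidt setting helps. The block-triangular truncation of a Hilbert--Schmidt kernel is just multiplication by an indicator function. It is therefore contractive in $\|\cdot\|_2$ and converges to the sharp triangular truncation as the mesh shrinks. I would first prove convergence of $X_-^{(n)}-I$ via this truncation and the uniform resolvent bound. Then I would obtain $X_+^{(n)}=(X_-^{(n)})^{-1}(I-\mathcal{L}_\tau)$ and invert, which yields \eqref{l.gk.1} and \eqref{l.gk.2}. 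This also establishes (iii).

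\emph{Uniqueness and (ii).} If $(I+\mathcal{L}_{\nu_+})(I+\mathcal{L}_{\nu_-})=(I+\mathcal{L}_{\mu_+})(I+\mathcal{L}_{\mu_-})$, then
\[
(I+\mathcal{L}_{\mu_+})^{-1}(I+\mathcal{L}_{\nu_+})=(I+\mathcal{L}_{\mu_-})(I+\mathcal{L}_{\nu_-})^{-1}.
\]
The left side is $I$ plus an upper-triangular kernel and the right side is $I$ plus a lower-triangular kernel. A kernel that is both vanishes a.e., so $\nu_\pm=\mu_\pm$. If $\mathcal{L}_\tau$ is self-adjoint, taking adjoints of the factorization gives $(I-\mathcal{L}_\tau)^{-1}=(I+\mathcal{L}_{\nu_-}^*)(I+\mathcal{L}_{\nu_+}^*)$. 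Here $\mathcal{L}_{\nu_-}^*$ is upper triangular and $\mathcal{L}_{\nu_+}^*$ is lower triangular. Uniqueness then forces $\mathcal{L}_{\nu_+}^*=\mathcal{L}_{\nu_-}$, i.e.\ $\overline{\nu_+(s,t)^\dagger}=\nu_-(t,s)$.
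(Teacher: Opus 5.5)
The paper gives no argument here: it cites Gohberg--Krein for each of (i), (ii) and (iii). You instead reconstruct the special factorization for the chain $\{P_\xi\}$ by hand:
\begin{itemize}
\item necessity by compressing the triangular factors;
\item sufficiency by an exact block LU factorization on a partition, followed by a Hilbert--Schmidt limit;
\item (ii) from uniqueness of the factorization, which is essentially the reasoning behind the remark the paper quotes.
\end{itemize}
This is a legitimate, more elementary and self-contained route tied to $L^2$ kernels. Its cost is that you must control the limit yourself, whereas the citation inherits the result from the general theory of triangular integrals along chains. Your necessity argument and your uniqueness/(ii) argument are correct as written.

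In the sufficiency part, however, the discrete formula you wrote is wrong, and this matters because you want to read (iii) off from it. Put $\Delta_k=P_{\xi_k}-P_{\xi_{k-1}}$. The sum $\sum_k\mathcal{L}_\tau\Delta_k(I-P_{\xi_k}\mathcal{L}_\tau P_{\xi_k})^{-1}$ is not lower triangular in general; already for two scalar blocks it has a nonzero upper entry. Block elimination gives exactly
\[
(I-\mathcal{L}_\tau)^{-1}=\Bigl(I+\sum_k R_kP_{\xi_k}\mathcal{L}_\tau\Delta_k\Bigr)\Bigl(I+\sum_k\Delta_k\mathcal{L}_\tau P_{\xi_{k-1}}R_{k-1}\Bigr),
\qquad R_k=(I-P_{\xi_k}\mathcal{L}_\tau P_{\xi_k})^{-1}.
\]
So the second bracket is the \emph{inverse} of your lower factor $X_-^{(n)}$, not $X_-^{(n)}$ itself. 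It is a Riemann--Stieltjes sum for $\int dP_\xi\,\mathcal{L}_\tau P_\xi(I-P_\xi\mathcal{L}_\tau P_\xi)^{-1}$, and this is how (iii) must be read. With a plain Lebesgue $d\xi$ and no $dP_\xi$ on the left, the integral is not even triangular.

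For convergence, the uniform resolvent bound alone does not make these sums Cauchy; what drives convergence is continuity in $\xi$. Write $P_\xi R_\xi=P_\xi+\Gamma_\xi$ with $\Gamma_\xi=R_\xi-I$.
\begin{itemize}
\item Treat $\sum_k\Delta_k\mathcal{L}_\tau P_{\xi_{k-1}}$ by your indicator-truncation argument.
\item Treat $\sum_k\Delta_k\mathcal{L}_\tau\Gamma_{\xi_{k-1}}$ (and its comparison with a refinement) via $\|\sum_k\Delta_k\mathcal{L}_\tau B_k\|_{\mathrm{HS}}\le\|\mathcal{L}_\tau\|_{\mathrm{HS}}\max_k\|B_k\|_\op$. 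Combine this with the uniform operator-norm continuity of $\xi\mapsto\Gamma_\xi$ on $[0,T]$, which holds because $\xi\mapsto P_\xi\mathcal{L}_\tau P_\xi$ is continuous in Hilbert--Schmidt norm.
\end{itemize}

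Finally, to see that the limit of the upper factors $X_+^{(n)}$ is upper triangular, use nested partitions. Block triangularity with respect to a dense set of points $\xi$ passes to the limit, and strong continuity of $P_\xi$ then gives $X_+P_\xi=P_\xi X_+P_\xi$ for all $\xi$.

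With these repairs your argument proves the lemma.
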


\begin{proof}
(i) See the proposition~$1^\circ$ after (8.5) 
in \cite[p.180]{GK}.
\\
(ii)
See the remark after (2.5) in \cite[p.161]{GK}.
\\
(iii)
See Theorem~3.1 in \cite[p.162]{GK}.
\end{proof}

\begin{proof}[Proof of Proposition~\ref{p.sigma.gamma}]
Let $\sigma\in\ltwo$ and assume that \eqref{p.sigma.gamma.1}
holds.

Due to the inclusion
$\mathbb{R}^{d\times d}\subset\mathbb{C}^{d\times d}$,
$\sigma\in\ltwo^{\mathbb{C}}$.
For $f=f_1+\kyosu f_2\in L^2([0,T];\mathbb{C}^d)$ with
$f_1,f_2\in L^2([0,T];\mathbb{R}^d)$, since
$\sigma\in\stwo$, we have that
\begin{align*}
    & \langle P_\xi \mathcal{L}_\sigma P_\xi f,
        f\rangle_{L^2([0,T];\mathbb{C}^d)}
    \\
    & =\int_0^T \biggl\langle \int_0^T 
       \one_{[0,\xi]}(t) \sigma(t,s)\one_{[0,\xi]}(s)
        \{f_1(s)+\kyosu f_2(s)\} ds,
       \{f_1(t)-\kyosu f_2(t)\} 
       \biggr\rangle dt
    \\
    & =\sum_{i=1}^2 \int_0^T \biggl\langle \int_0^T
       \sigma(t,s)\one_{[0,\xi]}(s)f_i(s),
       \one_{[0,\xi]}(t) f_i(t)\biggr\rangle dt
    \\
    & \le \Lambda(\pi_\xi B_\sigma\pi_\xi) 
        \|f\|_{L^2([0,T];\mathbb{C}^d)}.
\end{align*}
Hence we obtain that
\[
    \inf_{\xi\in(0,T]} 
    \sup_{\|f\|_{L^2([0,T];\mathbb{C}^d)}=1}
       \langle P_\xi \mathcal{L}_\sigma P_\xi f,
        f\rangle_{L^2([0,T];\mathbb{C}^d)}
    <1.
\]
By Lemma~\ref{l.gk}(i), there exists a resolvent kernel
$\gamma\in\ltwo^{\mathbb{C}}$ for $\mathcal{L}_\tau$ 
and $\nu_\pm\in\ltwo^{\mathbb{C}}$ satisfying \eqref{l.gk.1}
and \eqref{l.gk.2}.

The identity \eqref{l.gk.1} is rewritten as
\[
    \gamma(t,s)=\nu_+(t,s)+\nu_-(t,s)
     +\int_0^T \nu_+(t,u)\nu_-(u,s)du
    \quad\text{for }(t,s)\in[0,T]^2.
\]
In particular, by \eqref{l.gk.2}, we have that
\begin{equation}\label{p.sigma.gamma.21}
    \gamma(t,s)=\nu_-(t,s)+\int_t^T \nu_+(t,u)\nu_-(u,s)du
    \quad\text{for }0\le s<t\le T.
\end{equation}

Since $\sigma\in\ltwo$, we have that
\[
    \overline{\sigma(s,t)^\dagger}=\sigma(t,s)
    \quad\text{for }(t,s)\in[0,T]^2.
\]
Hence $\mathcal{L}_\sigma^*=\mathcal{L}_\sigma$.
By Lemma~\ref{l.gk}(ii), we know that
\[
    \overline{\nu_+(s,t)^\dagger}=\nu_-(t,s)
    \quad\text{for }(t,s)\in[0,T]^2.
\]

It follows from Lemma~\ref{l.gk}(iii) that 
$\nu_-\in\ltwo$.
Due to the above identity, $\nu_\pm\in\ltwo$ and 
\begin{equation}\label{p.sigma.gamma.22}
    \nu_+(s,t)^\dagger=\nu_-(t,s)
    \quad\text{for }(t,s)\in[0,T]^2.
\end{equation}
Define $\nu\in\ltwo$ by
\[
    \nu(t,s)=\one_{[0,t)}(s)\nu_-(t,s)
      +\one_{(t,T]}(s)\nu_+(t,s)
    \quad\text{for }(t,s)\in[0,T]^2.
\]
By \eqref{p.sigma.gamma.22}, $\nu\in\stwo$.
Furthermore, due to \eqref{p.sigma.gamma.21}, we see
that \eqref{p.sigma.gamma.2} holds.
\end{proof}

\section{Matrix-valued linear ODE with constant coefficients}
\label{chap.e(t)}
In this section, we give an explicit expression of solutions
to matrix-valued linear ODEs with constant coefficients.

Let $k\in \mathbb{N}$ and
$C_0,\dots,C_{k-1},U_0,\dots,U_{k-1}
 \in \mathbb{R}^{d\times d}$.
Denote by $\bU\in C^k(\mathbb{R};\mathbb{R}^{d\times d})$
the solution to the ODE on $\mathbb{R}^{d\times d}$
\begin{equation}\label{eq.ode.U}
    \bU^{(k)}-C_{k-1}\bU^{(k-1)}-\cdots
         -C_1 \bU^{(1)}-C_0\bU=0,
    \quad
    \bU^{(i)}(0)=U_i
    \quad\text{for }0\le i\le k-1,
\end{equation}
where $\bU^{(i)}=\bigl(\frac{d}{dt}\bigr)^i \bU$.
Put
\[
    M=\begin{pmatrix}
       0 & I_d & 0 & \cdots & 0 
       \\
       0 & 0 & I_d & \ddots & \vdots
       \\
       \vdots & \vdots & \ddots & \ddots & 0
       \\
       0 & 0 & \cdots & 0 & I_d
       \\
       C_0 & C_1 & \cdots & \cdots & C_{k-1}
      \end{pmatrix}
   \in \mathbb{R}^{kd\times kd}.
\]
Let $\lambda_1,\dots,\lambda_{kd}\in \mathbb{C}$ be the
eigenvalues of $M$ counted with multiplicity.
Define $r_1,\dots,r_{kd}\in C^\infty(\mathbb{R};\mathbb{C})$
by the system of ODEs
\begin{equation}\label{eq.ode.r_j}
    \left\{
      \begin{aligned}     
         & r_1^\prime=\lambda_1 r_1,
         \\
         & r_j^\prime=r_{j-1}+\lambda_j r_j
           \quad\text{for }2\le j\le kd,
         \\
         & r_1(0)=1,~~
           r_2(0)=\dots=r_{kd}(0)=0,
      \end{aligned}
    \right.
\end{equation}
and $Q_1,\dots,Q_{kd}\in \mathbb{C}^{d\times d}$
successively by 
\begin{equation}\label{eq.Q}
    \left\{
      \begin{aligned}     
         & Q_1=U_0,\quad
           Q_{m+1}=U_m-\sum_{j=1}^m r_j^{(m)}(0) Q_j
           \quad\text{for }1\le m\le k-1,
         \\
         & Q_{k+n+1}=\sum_{i=0}^{k-1} C_i \biggl(
            \sum_{j=1}^{i+n+1} r_j^{(i+n)}(0) Q_j \biggr)
            -\sum_{j=1}^{k+n} r_j^{(k+n)}(0) Q_j
         \\
         & \hspace{200pt}
           \text{for }0\le n\le k(d-1)-1.
      \end{aligned}
    \right.
\end{equation}

With these $r_j$s and $Q_j$s, $\bU$ is represented as
follows. 

\begin{proposition}\label{p.ode.U}
Under the above notation, it holds that
\[
    \bU=\sum_{j=1}^{kd} r_j Q_j.
\]
\end{proposition}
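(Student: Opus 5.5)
Since both $\bU$ and $\sum_j r_jQ_j$ are smooth, the plan is to show that $V:=\sum_{j=1}^{kd} r_jQ_j$ solves \eqref{eq.ode.U} with the same initial data and conclude by uniqueness. The structure is the Putzer-type representation. Set $P_0=I$ and $P_j=\prod_{i=1}^{j}(M-\lambda_iI)$ on $\mathbb{C}^{kd}$. By \eqref{eq.ode.r_j} and Cayley--Hamilton ($P_{kd}=0$), one has $e^{tM}=\sum_{j=1}^{kd}r_j(t)P_{j-1}$. Differentiating $\sum r_jP_{j-1}$ gives $\sum(\lambda_jr_j+r_{j-1})P_{j-1}$, and $MP_{j-1}=P_j+\lambda_jP_{j-1}$, so both sides satisfy $X'=MX$, $X(0)=I$.

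Next I lift $\bU$ to the first-order system. Let $\mathbf{X}=(\bU,\bU',\dots,\bU^{(k-1)})^\dagger$, a $kd\times d$ matrix. Then $\mathbf{X}'=M\mathbf{X}$ and $\mathbf{X}(0)=(U_0,\dots,U_{k-1})^\dagger$. Hence $\bU=E\,e^{tM}\mathbf{X}(0)=\sum_j r_j\,\widetilde Q_j$, where $E$ projects onto the first block and $\widetilde Q_j=E P_{j-1}\mathbf{X}(0)$.

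It remains to identify $\widetilde Q_j$ with the recursively defined $Q_j$ in \eqref{eq.Q}. I would not compute $P_{j-1}$ directly. Instead I would use that the functions $r_1,\dots,r_{kd}$ have a triangular structure at $0$: $r_j^{(m)}(0)=0$ for $m<j-1$ and $r_j^{(j-1)}(0)=1$. This follows by induction from \eqref{eq.ode.r_j}. Differentiating $\bU=\sum_j r_j\widetilde Q_j$ $m$ times at $0$ then gives
\[
  \bU^{(m)}(0)=\sum_{j=1}^{m+1} r_j^{(m)}(0)\widetilde Q_j
  =\widetilde Q_{m+1}+\sum_{j=1}^{m} r_j^{(m)}(0)\widetilde Q_j .
\]
This is a triangular system, so the $\widetilde Q_j$ are uniquely determined by the derivatives $\bU^{(m)}(0)$ for $0\le m\le kd-1$.

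For $m\le k-1$ these derivatives are the initial data $U_m$, which yields the first line of \eqref{eq.Q}. For $m=k+n$ with $0\le n\le k(d-1)-1$, I would use the ODE differentiated $n$ times:
\[
  \bU^{(k+n)}(0)=\sum_{i=0}^{k-1}C_i\bU^{(i+n)}(0),
  \qquad
  \bU^{(i+n)}(0)=\sum_{j=1}^{i+n+1}r_j^{(i+n)}(0)\widetilde Q_j .
\]
Substituting these expressions reproduces exactly the second line of \eqref{eq.Q}. By induction on $j$, it follows that $\widetilde Q_j=Q_j$ for all $j$.

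I expect the main obstacle to be checking the triangularity claim $r_j^{(m)}(0)=0$ for $m<j-1$ and $r_j^{(j-1)}(0)=1$, together with the index bookkeeping in the second recursion: the upper limit $i+n+1$ and the term $r_{k+n+1}^{(k+n)}(0)=1$ must match. Both are routine inductions, but they must be stated carefully. Repeated eigenvalues need no special treatment, since Putzer's formula holds for an arbitrary ordering with multiplicity.
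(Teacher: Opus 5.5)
Your proposal is correct and follows essentially the same route as the paper: lift to the first-order system with matrix $M$, use Putzer's representation $\e[tM]=\sum_j r_j(t)P_j$ (Proposition~\ref{p.e[tM].r}, with your indices shifted by one) to get $\bU=\sum_j r_jQ_j$ for some $Q_j$, and then identify the $Q_j$ through the triangularity $r_{n+1}^{(n)}(0)=1$ and $r_{n+j}^{(n)}(0)=0$ for $j\ge 2$, combined with the initial data and the $n$-times differentiated ODE evaluated at $t=0$. Your opening remark about checking that $V$ solves \eqref{eq.ode.U} and then invoking uniqueness is never actually used, since uniqueness enters only through $\mathbf{X}=\e[tM]\mathbf{X}(0)$; this does not affect the argument.
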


Before proceeding to the proof of this proposition, we see that
the above expression is extended to ODEs on $\mathbb{R}^d$.

\begin{corollary}\label{c.ode.U}
Let $C_0,\dots,C_{k-1}\in \mathbb{R}^{d\times d}$ and
$J_0,\dots,J_{k-1}\in \mathbb{R}^d$.
The solution $J\in C^k(\mathbb{R};\mathbb{R}^d)$ to the ODE
on $\mathbb{R}^d$ 
\[
    J^{(k)}-C_{k-1} J^{(k-1)}-\cdots
         -C_1 J^{(1)}-C_0 J=0,
    \quad
    J^{(i)}(0)=J_i
    \quad\text{for }0\le i\le k-1
\]
is expressed as 
\[
    J=\sum_{j=1}^{kd} r_j q_j,
\]
where 
$r_1,\dots,r_{kd}\in C^\infty(\mathbb{R};\mathbb{C})$ 
are defined by \eqref{eq.ode.r_j} and 
$q_1,\dots,q_{kd}\in \mathbb{C}^d$ are determined
successively as
\[
    q_1=J_0,\quad
    q_{m+1}=J_m-\sum_{j=1}^m r_j^{(m)}(0) q_j
           \quad\text{for }1\le m\le k-1,
\]
and
\[
    q_{k+n+1}=\sum_{i=0}^{k-1} C_i \biggl(
            \sum_{j=1}^{i+n+1} r_j^{(i+n)}(0) q_j \biggr)
            -\sum_{j=1}^{k+n} r_j^{(k+n)}(0) q_j
           \quad\text{for }0\le n\le k(d-1)-1.
\]
\end{corollary}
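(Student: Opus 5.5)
The plan is to deduce Corollary~\ref{c.ode.U} directly from Proposition~\ref{p.ode.U}, applied column by column to a suitably chosen matrix-valued problem. The key observation is that the recursion \eqref{eq.Q} for the $Q_j$ involves only left multiplications by the fixed coefficient matrices $C_i$ and scalar coefficients $r_j^{(m)}(0)$. That recursion is therefore linear and \emph{right-equivariant}: if the data $U_0,\dots,U_{k-1}$ are replaced by $U_0V,\dots,U_{k-1}V$ for any matrix $V$, every $Q_j$ is replaced by $Q_jV$. The same holds with vectors in place of matrices, which is exactly how the vectors $q_j$ are defined in the statement.

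First I fix the vector data $J_0,\dots,J_{k-1}\in\mathbb{R}^d$ and build matrix data from it: let $U_i\in\mathbb{R}^{d\times d}$ be the matrix whose first column is $J_i$ and whose remaining columns vanish, i.e.\ $U_i=J_i\otimes e_1$ in the paper's notation, so that $U_ie_1=J_i$. Let $\bU$ be the solution of \eqref{eq.ode.U} with these data and the same $C_0,\dots,C_{k-1}$. Since the ODE \eqref{eq.ode.U} acts by left multiplication, each column of $\bU$ solves the vector ODE. In particular $\bU e_1$ solves the ODE in the corollary with initial data $(\bU e_1)^{(i)}(0)=U_ie_1=J_i$, and uniqueness for linear ODEs gives $J=\bU e_1$.

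Next I apply Proposition~\ref{p.ode.U}, which gives $\bU=\sum_{j=1}^{kd}r_jQ_j$ with the $Q_j$ from \eqref{eq.Q}, so that $J=\sum_j r_j\,(Q_je_1)$. It remains to check by induction on $j$ that $Q_je_1=q_j$. The base case is $Q_1e_1=U_0e_1=J_0=q_1$. The step applies $e_1$ on the right to each line of \eqref{eq.Q}: the terms $U_me_1=J_m$, $C_i(\cdots Q_j)e_1=C_i(\cdots Q_je_1)$ and $r_j^{(m)}(0)Q_je_1$ reproduce exactly the defining recursion of the $q_j$. Note also that the $r_j$ depend only on the eigenvalues of $M$, which is built from the $C_i$ alone, so the same $r_j$ serve in both statements.

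There is no real obstacle here; the only point needing care is to state the right-multiplication compatibility of the recursion clearly. One could even bypass it by running the argument with $d$ arbitrary columns $U_i=[J_i,0,\dots,0]$.
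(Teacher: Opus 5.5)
Your argument is correct and is essentially the paper's: both apply Proposition~\ref{p.ode.U} to matrix-valued problems and carry the recursion \eqref{eq.Q} over to the vectors by linearity. The only difference is packaging. The paper superposes the $k$ fundamental solutions $\bU_\nu$ (with data $U_m=\delta_{m\nu}I_d$) and sets $q_j=\sum_{\nu}Q_{\nu;j}J_\nu$, whereas you embed $J$ as the first column of a single matrix solution, which produces the same $q_j$.

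One notational slip: with the paper's convention $x\otimes y=(y^ix^j)_{1\le i,j\le d}$, the matrix whose first column is $J_i$ is $e_1\otimes J_i$, not $J_i\otimes e_1$. The latter has first row $J_i^\dagger$ and sends $e_1$ to $\langle J_i,e_1\rangle e_1$.
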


\begin{proof}
For each $0\le \nu\le k-1$, let 
$\bU_\nu
\in C^k(\mathbb{R};\mathbb{R}^{d\times d})$ be the
solution to the ODE \eqref{eq.ode.U} with
$\bU^{(i)}(0)=\delta_{\nu i} I_d$ for $0\le i\le k-1$.
Then we have that
\[
    J=\sum_{\nu=0}^{k-1} \bU_\nu J_\nu.
\]
Define 
$Q_{\nu;1},\dots,Q_{\nu;kd}\in \mathbb{C}^{d\times d}$ 
by \eqref{eq.Q} with $U_m=\delta_{m\nu} I_d$ for 
$0\le m\le k-1$ and set
\[
    q_j=\sum_{\nu=0}^{k-1} Q_{\nu;j} J_\nu
    \quad\text{for }1\le j\le kd.
\]
Due to Proposition~\ref{p.ode.U} and the above expression of
$J$ with $\bU_\nu$s, we know that
\[
    J=\sum_{\nu=0}^{k-1} \biggl(\sum_{j=1}^{kd}
       r_j Q_{\nu;j}\biggr)J_\nu
     =\sum_{j=1}^{kd} r_j q_j.
\]

The identities in the first line of \eqref{eq.Q} for $\bU_\nu$ are 
written as  
\[
    Q_{\nu;m+1}=\delta_{\nu m} I_d
      -\sum_{j=1}^m r_j^{(m)}(0) Q_{\nu;j}
    \quad\text{for }0\le m\le k-1,
\]
where $\sum\limits_{j=1}^0\cdots=0$.
This implies that
\[
    q_{m+1}=J_m-\sum_{j=1}^m r_j^{(m)}(0) q_j
    \quad\text{for }0\le m\le k-1.
\]
The identities in the second line of \eqref{eq.Q} for $\bU_\nu$
imply that  
\[
    q_{k+n+1}=\sum_{i=0}^{k-1} C_i \biggl(
            \sum_{j=1}^{i+n+1} r_j^{(i+n)}(0) q_j\biggr)
            -\sum_{j=1}^{k+n} r_j^{(k+n)}(0) q_j
           \quad\text{for }0\le n\le k(d-1)-1.
\]
Thus $q_j$s satisfy the desired successive relationship.
\end{proof}

We proceed to the proof of Proposition~\ref{p.ode.U}.
To do so, we recall an expression, due to E.J. Putzer 
(\cite{putzer}), of the exponential function 
\[
    \e[tM]=\sum_{n=0}^\infty \frac{t^n}{n!}M^n
    \quad\text{for }t\in \mathbb{R}.
\]

\begin{proposition}\label{p.e[tM].r}
Let $r_1,\dots,r_{kd}$ be as in \eqref{eq.ode.r_j}, and put
\[
    P_1=I_{kd}
    \quad\text{and}\quad
    P_j=(M-\lambda_{j-1}I_{kd})\cdots
        (M-\lambda_1I_{kd}) 
    \quad\text{for } 2\le j\le kd.
\]
Then it holds that
\[
    \e[tM]=\sum_{j=1}^{kd} r_j(t) P_j
    \quad\text{for }t\in \mathbb{R}.
\]
\end{proposition}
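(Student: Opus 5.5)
The plan is to show that both sides of the claimed identity solve the same matrix initial value problem and then invoke uniqueness. Put
\[
    \Phi(t)=\sum_{j=1}^{kd} r_j(t)P_j \quad\text{for }t\in\mathbb{R}.
\]
Since $r_1(0)=1$, $r_j(0)=0$ for $j\ge2$, and $P_1=I_{kd}$, we have $\Phi(0)=I_{kd}$. The function $t\mapsto\e[tM]$ is the unique $C^1$ solution of $Y^\prime=MY$ with $Y(0)=I_{kd}$. It therefore suffices to check that $\Phi^\prime=M\Phi$.

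To compute $\Phi^\prime$, apply \eqref{eq.ode.r_j} termwise:
\[
    \Phi^\prime=\sum_{j=1}^{kd}\lambda_j r_jP_j+\sum_{j=2}^{kd} r_{j-1}P_j .
\]
On the other side, the definition of the $P_j$ gives $P_{j+1}=(M-\lambda_jI_{kd})P_j$, so $MP_j=P_{j+1}+\lambda_jP_j$ for $1\le j\le kd-1$. For the last index, $MP_{kd}=\lambda_{kd}P_{kd}+P_{kd+1}$ with
\[
    P_{kd+1}:=(M-\lambda_{kd}I_{kd})\cdots(M-\lambda_1I_{kd}).
\]
Summing these relations,
\[
    M\Phi=\sum_{j=1}^{kd}\lambda_jr_jP_j+\sum_{j=1}^{kd-1}r_jP_{j+1}+r_{kd}P_{kd+1}.
\]
After shifting the index in the middle sum, this agrees with $\Phi^\prime$ except for the extra term $r_{kd}P_{kd+1}$.

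The one nontrivial step is to show $P_{kd+1}=0$. Because the $\lambda_j$ are all the eigenvalues of $M$ counted with multiplicity, $P_{kd+1}$ is the characteristic polynomial of $M$ evaluated at $M$. The factors $M-\lambda_jI_{kd}$ commute, so the order of multiplication does not matter. The Cayley--Hamilton theorem, applied over $\mathbb{C}$, then gives $P_{kd+1}=0$. Hence $\Phi^\prime=M\Phi$, and uniqueness of solutions to linear ODEs gives $\Phi=\e[tM]$ on all of $\mathbb{R}$.

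Everything else is bookkeeping. The $r_j$ are complex-valued and smooth because they solve a linear triangular system with constant coefficients, and the argument above never needs the eigenvalues to be distinct.
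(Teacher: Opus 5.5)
Your proof is correct and follows essentially the same route as the paper: both define $\Phi=\sum_j r_jP_j$, use $P_{j+1}=(M-\lambda_jI_{kd})P_j$ together with the Cayley--Hamilton identity $P_{kd+1}=0$ to obtain $\Phi'=M\Phi$, and then conclude from $\Phi(0)=I_{kd}$ and uniqueness. The only difference is in how the computation is organized: the paper goes through $\Phi'-\lambda_{kd}\Phi=(M-\lambda_{kd}I_{kd})\Phi$, whereas you compute $M\Phi$ directly and compare it with $\Phi'$ term by term.
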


\begin{proof}
Define 
$\Phi\in C^\infty(\mathbb{R};\mathbb{R}^{kd\times kd})$
as
\[
    \Phi(t)=\sum_{j=1}^{kd} r_j(t) P_j
    \quad\text{for }t\in \mathbb{R}.
\]
We have that
\[
    \Phi^\prime
    =\sum_{j=1}^{kd} r_j^\prime P_j
    =\lambda_1 r_1 P_1 
       +\sum_{j=2}^{kd} (r_{j-1}+\lambda_j r_j) P_j
    =\sum_{j=1}^{kd} \lambda_j r_j P_j
       +\sum_{j=1}^{kd-1} r_j P_{j+1}.
\]
This implies that
\[
    \Phi^\prime-\lambda_{kd}\Phi
    =\sum_{j=1}^{kd-1} 
       r_j\{P_{j+1}+(\lambda_j-\lambda_{kd})P_j\}.
\]

By definition and the Cayley-Hamilton theorem, we know that
\[
    P_{j+1}=(M-\lambda_jI_{kd})P_j
    \quad\text{for }j\ge1
    \quad\text{and}\quad P_{kd+1}=0.
\]
Hence we see that
\begin{align*}
    \Phi^\prime-\lambda_{kd}\Phi
    & =\sum_{j=1}^{kd-1} 
       r_j\{(M-\lambda_jI_{kd})
            +(\lambda_j-\lambda_{kd})I_{kd}\}P_j
      =\sum_{j=1}^{kd-1} r_j(M-\lambda_{kd}I_{kd})P_j
    \\
    & =(M-\lambda_{kd}I_{kd})(\Phi-r_{kd}P_{kd})
      =(M-\lambda_{kd}I_{kd}) \Phi.
\end{align*}
This implies that
\[
    \Phi^\prime=M\Phi.
\]
By definition, $\Phi(0)=I_{kd}$.
Thus $\e[tM]=\Phi(t)$ for $t\in \mathbb{R}$.
\end{proof}

\begin{proof}[Proof of Proposition~\ref{p.ode.U}]
Rewriting \eqref{eq.ode.U} as
\[
    \begin{pmatrix}
      \bU \\ \bU^{(1)} \\ \vdots \\ \bU^{(k-1)}
    \end{pmatrix}^\prime
    =M 
    \begin{pmatrix}
      \bU \\ \bU^{(1)} \\ \vdots \\ \bU^{(k-1)}
    \end{pmatrix},
    \quad
    \begin{pmatrix}
      \bU(0) \\ \bU^{(1)}(0) \\ \vdots \\ \bU^{(k-1)}(0)
    \end{pmatrix}
    =\begin{pmatrix}
      U_0 \\ U_1 \\ \vdots \\ U_{k-1}
     \end{pmatrix},
\]
by Proposition~\ref{p.e[tM].r}, we know that
\[
    \begin{pmatrix}
      \bU(t) \\ \bU^{(1)}(t) \\ \vdots \\ \bU^{(k-1)}(t)
    \end{pmatrix}
      =\biggl(\sum_{j=1}^{kd} r_j(t)P_j\biggr)
       \begin{pmatrix}
         U_0 \\ U_1 \\ \vdots \\ U_{k-1}
       \end{pmatrix}
    \quad\text{for }t\in \mathbb{R}.
\]
Hence we have that
\begin{equation}\label{p.ode.U.20}
    \bU=\sum_{j=1}^{kd} r_j Q_j
    \quad\text{for some }Q_j\in \mathbb{C}^{d\times d}.
\end{equation}
Thus the proof is completed, once we have shown that these
$Q_j$s satisfy the successive relationship \eqref{eq.Q}.

To show \eqref{eq.Q}, plug \eqref{p.ode.U.20} into
\eqref{eq.ode.U} to see that
\[
    \sum_{j=1}^{kd} r_j^{(k+n)} Q_j
    =\sum_{i=0}^{k-1} C_i \biggl(
        \sum_{j=1}^{kd} r_j^{(i+n)} Q_j \biggr)
    \quad\text{for }n\ge0.
\]
By \eqref{p.ode.U.20} and this, we obtain that
\begin{equation}\label{p.ode.U.21}
    \left\{
      \begin{aligned}
       & \sum_{j=1}^{kd} r_j^{(m)}(0) Q_j=U_m
         \quad\text{for }0\le m\le k-1,
       \\
       & \sum_{j=1}^{kd} r_j^{(k+n)}(0) Q_j
         =\sum_{i=0}^{k-1} C_i \biggl(\sum_{j=1}^{kd} 
            r_j^{(i+n)}(0) Q_j \biggr)
         \quad\text{for }n\ge0.
      \end{aligned}
    \right.
\end{equation}

Define $\Lambda\in \mathbb{R}^{kd\times kd}$ by
\[
    \Lambda
    =\begin{pmatrix}
       \lambda_1 & 0 & \cdots & \cdots & 0
       \\
       1 & \ddots & \ddots& & \vdots
       \\
       0 &\ddots & \ddots & \ddots & \vdots 
       \\
       \vdots & \ddots& \ddots & \ddots & 0
       \\
       0 & \cdots & 0 & 1 & \lambda_{kd}
     \end{pmatrix}.
\]
By \eqref{eq.ode.r_j}, we have that
\[
    \begin{pmatrix}    
      r_1(0) \\ r_2(0) \\ \vdots \\ r_{kd}(0) 
    \end{pmatrix}
    =
    \begin{pmatrix}    
      1 \\ 0 \\ \vdots \\ 0
    \end{pmatrix}
    \text{ and }
    \begin{pmatrix}    
      r_1^{(n)}(0) \\ r_2^{(n)}(0) \\ \vdots 
      \\ r_{kd}^{(n)}(0) 
    \end{pmatrix}
    =\Lambda
    \begin{pmatrix}    
      r_1^{(n-1)}(0) \\ r_2^{(n-1)}(0) \\ \vdots 
      \\ r_{kd}^{(n-1)}(0) 
    \end{pmatrix}
    \quad\text{for }n\ge 1.
\]
We obtain from this by induction that
\[
    r_{n+1}^{(n)}(0)=1 \text{ and }
    r_{n+j}^{(n)}(0)=0
    \quad\text{for }n\in \mathbb{N}\cup\{0\}
    \text{ and }j\ge 2.
\]
Substituting this into \eqref{p.ode.U.21}, we arrive at the 
successive relationship \eqref{eq.Q} for $Q_j$s.
\end{proof}

Represent the characteristic polynomial of $M$ as the
product function
\[
    \det(\lambda I_{kd}-M)
    =\prod_{j=1}^n (\lambda-\nu_j)^{m_j}
    \quad\text{for }\lambda\in \mathbb{C},
\]
where $n\in \mathbb{N}$, $\nu_j\in \mathbb{C}$,
$\nu_i\ne\nu_j$ if $i\ne j$, and $m_j\in \mathbb{N}$.
We have another expression of $\e[tM]$ as follows.

\begin{proposition}\label{p.e[tM].nu}
There are $C_{ji}\in \mathbb{C}^{kd\times kd}$ for 
$1\le j\le n$ and $1\le i\le m_j$ such that
\[
    \e[tM]=\sum_{j=1}^n \sum_{i=1}^{m_j}
       t^{i-1} e^{\nu_j t} C_{ji}
    \quad\text{for }t\in \mathbb{R}.
\]
\end{proposition}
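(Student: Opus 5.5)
The plan is to reduce the statement to a standard fact about matrix exponentials, namely the Jordan decomposition of $M$, or equivalently the partial-fraction decomposition of the resolvent.

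First decompose $\mathbb{C}^{kd}$ into the generalized eigenspaces $E_j=\ker (M-\nu_jI_{kd})^{m_j}$. By the Cayley--Hamilton theorem applied to the factorization $\det(\lambda I_{kd}-M)=\prod_{j=1}^n(\lambda-\nu_j)^{m_j}$, we have
\[
    \mathbb{C}^{kd}=E_1\oplus\cdots\oplus E_n .
\]
This follows from the coprimality of the factors $(\lambda-\nu_j)^{m_j}$ and a Bézout identity. Concretely, choose polynomials $g_j$ with $\sum_{j=1}^n g_j(\lambda)\prod_{l\ne j}(\lambda-\nu_l)^{m_l}=1$, and set
\[
    \Pi_j=g_j(M)\prod_{l\ne j}(M-\nu_lI_{kd})^{m_l}.
\]
The key properties of these matrices are the following.
\begin{enumerate}
\item They satisfy $\sum_{j=1}^n \Pi_j=I_{kd}$.
\item Each $\Pi_j$ commutes with $M$.
\item Each satisfies $(M-\nu_jI_{kd})^{m_j}\Pi_j=0$, again by Cayley--Hamilton.
\end{enumerate}

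Next write $M\Pi_j=\nu_j\Pi_j+N_j\Pi_j$ with $N_j=M-\nu_jI_{kd}$. Since $\nu_jI_{kd}$ commutes with $N_j$, we get
\[
    \e[tM]\Pi_j=e^{\nu_jt}\,\e[tN_j]\Pi_j .
\]
Because $N_j^{m_j}\Pi_j=0$, the exponential series truncates:
\[
    \e[tM]\Pi_j=e^{\nu_jt}\sum_{i=1}^{m_j}\frac{t^{i-1}}{(i-1)!}N_j^{\,i-1}\Pi_j .
\]
Summing over $j$ and using $\sum_j\Pi_j=I_{kd}$ gives the claim with
\[
    C_{ji}=\frac{1}{(i-1)!}(M-\nu_jI_{kd})^{i-1}\Pi_j .
\]

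An alternative route is through Proposition~\ref{p.e[tM].r}. Ordering the eigenvalues in blocks, the functions $r_j$ solve a triangular system whose solutions are exponential polynomials of the stated type, and collecting terms would give the same form. I regard that route as messier than the direct one above.

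The only nonroutine point is justifying the direct-sum decomposition, i.e., the Bézout construction of the $\Pi_j$. This is classical linear algebra, so I expect no real obstacle.
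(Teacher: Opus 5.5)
Your proof is correct, but it is not the paper's route. The paper in fact takes exactly the ``alternative route'' you set aside. It orders the eigenvalues in blocks ($\lambda_j=\nu_r$ for $\hat m_{r-1}<j\le\hat m_r$) and starts from Putzer's formula $\e[tM]=\sum_{j=1}^{kd}r_j(t)P_j$ of Proposition~\ref{p.e[tM].r}. It then proves by induction on $j$ that each scalar $r_j$ is an exponential polynomial $\sum_{p,q}c_{j;p,q}\,t^{q-1}e^{\nu_pt}$ with $q\le m_p$. The induction uses $r_{j+1}(t)=e^{\lambda_{j+1}t}\int_0^t e^{-\lambda_{j+1}s}r_j(s)\,ds$ together with an integration-by-parts formula for $\int_0^t s^{m-1}e^{\beta s}\,ds$. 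One may then take $C_{pq}=\sum_j c_{j;p,q}P_j$.

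Your argument replaces that bookkeeping with a spectral-projection argument. B\'ezout plus Cayley--Hamilton give $\Pi_j$ with $\sum_j\Pi_j=I_{kd}$ and $(M-\nu_jI_{kd})^{m_j}\Pi_j=0$, and you then truncate $\e[tN_j]\Pi_j$. This is shorter and more conceptual. It also gives a closed form $C_{ji}=\frac1{(i-1)!}(M-\nu_jI_{kd})^{i-1}\Pi_j$, with $C_{j1}=\Pi_j$ the spectral projection onto the generalized eigenspace. The cost is that the B\'ezout polynomials $g_j$ must be produced, for example from the partial fractions of $1/\det(\lambda I_{kd}-M)$.

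The paper's computation-heavy induction stays inside the Putzer framework already used for Proposition~\ref{p.ode.U}. It needs no generalized-eigenspace theory, and it gives recursive formulas for the coefficients purely in terms of the eigenvalues and the products $P_j$. For the only downstream use, Proposition~\ref{p.ode.U.nu}, just the existence of the exponential-polynomial form is needed, since the $Q_j$ are then fixed by the Vandermonde system. Either proof therefore serves equally well.
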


\begin{proof}
Set $\hat{m}_0=0$ and 
$\hat{m}_r=m_1+\dots+m_r$ for $1\le r\le n$.
Arrange the eigenvalues $\lambda_1,\dots,\lambda_{kd}$ of $M$
counted with multiplicity as
\[
    \lambda_j=\nu_r
    \quad\text{if }
    1\le r\le n\text{ and }
    \hat{m}_{r-1}<j\le \hat{m}_r.
\]
On account of Proposition~\ref{p.e[tM].r}, it suffices to
show the existence of complex numbers $c_{j;p,q}$s such
that, if $\hat{m}_{r-1}<j\le \hat{m}_r$, then
\begin{equation}\label{c.e[tM].r.21}
    r_j(t)
    =\sum_{p=1}^{r-1}\sum_{q=1}^{m_p} c_{j;p,q}
       t^{q-1} e^{\nu_p t}
     +\sum_{q=1}^{j-\hat{m}_{r-1}} c_{j;r,q} 
          t^{q-1} e^{\nu_r t}
    \quad\text{for }t\in \mathbb{R},
\end{equation}
where
$\sum\limits_{p=1}^{r-1}\sum\limits_{q=1}^{m_p} \dots=0$ if
$r=1$. 
We show by induction that \eqref{c.e[tM].r.21} holds.

Before carrying out the induction, we remark two identities.
To state the first one, for $m\in \mathbb{N}$ and $\beta\ne0$, 
define $b_0(m;\beta),\dots,b_m(m;\beta)\in \mathbb{R}$ as
\[
    b_0(1;\beta)=-\frac1{\beta},\quad
    b_1(1;\beta)=\frac1{\beta},\quad
    b_m(m;\beta)=\frac1{\beta},
\]
and
\[
    b_k(m;\beta)=-\frac{m-1}{\beta} b_k(m-1;\beta)
      \quad\text{for }m\ge2,~0\le k\le m-1.
\]
By virtue of the integration by parts on $[0,t]$, we have the first
identity that 
\[
    \int_0^t s^{m-1} e^{\beta s} ds
    =b_0(m;\beta)
     +\sum_{k=1}^m b_k(m;\beta) t^{k-1} e^{\beta t}
    \quad\text{for }t\in \mathbb{R}.
\]
The very definition of $r_j$ yields the second identity that
\[
    \lambda_{j+1}(t)
    =e^{\lambda_{j+1} t}\int_0^t r_j(s)e^{-\lambda_{j+1}s}
    ds
    \quad\text{for }t\in \mathbb{R}.
\]

We now proceed to the induction.
By the definition, we have that
\[
    r_1(t)=e^{\nu_1 t}
    \quad\text{for }t\in \mathbb{R}.
\]
Thus \eqref{c.e[tM].r.21} holds for $j=1$ with $c_{1;1,1}=1$.
Next let $\hat{m}_{r-1}<j\le \hat{m}_r$ and suppose
that \eqref{c.e[tM].r.21} holds for this $j$.
The rest of the induction is divided into two cases:
when $j=\hat{m}_r$ and when $\hat{m}_{r-1}<j<j+1\le\hat{m}_r$.
If $j=\hat{m}_r$, then $\lambda_{j+1}=\nu_{r+1}$.
Due to the identity \eqref{c.e[tM].r.21} for $j=\hat{m}_r$, we see
that 
\begin{align*}
    r_{j+1}(t)
    & = e^{\lambda_{j+1}t} \int_0^t e^{-\lambda_{j+1}s} r_j(s)ds
      = \sum_{p=1}^r \sum_{q=1}^{m_p} c_{j;p,q}
        e^{\nu_{r+1}t} 
        \int_0^t s^{q-1} e^{(\nu_p-\nu_{r+1})s} ds
    \\
    & = \sum_{p=1}^r \sum_{q=1}^{m_p} c_{j;p,q}
         b_0(q;\nu_p-\nu_{r+1})e^{\nu_{r+1}t}
       +\sum_{p=1}^r \sum_{q=1}^{m_p} c_{j;p,q}
        \biggl(\sum_{k=1}^q b_k(q;\nu_p-\nu_{r+1})t^{k-1}
        \biggr) e^{\nu_p t}.
\end{align*}
Thus \eqref{c.e[tM].r.21} holds for $j+1$ when $j=\hat{m}_r$
with 
\[
    c_{j+1;p,q}=\sum_{k=q}^{m_p} c_{j;p,k}
       b_q(k;\nu_p-\nu_{r+1})
      \quad\text{for }p\le r
      \text{ and }1\le q\le m_p
\]
and 
\[
    c_{j+1;r+1,1} 
      =\sum_{p=1}^r \sum_{q=1}^{m_p} c_{j;p,q}
        b_0(q;\nu_p-\nu_{r+1}).
\]
If $\hat{m}_{r-1}<j<j+1\le \hat{m}_r$, then we have that
\begin{align*}
    & r_{j+1}(t)
      = \sum_{p=1}^{r-1} \sum_{q=1}^{m_p} c_{j;p,q}
        e^{\nu_r} 
        \int_0^t s^{q-1} e^{(\nu_p-\nu_r)s} ds
       +\sum_{q=1}^{j-\hat{m}_{r-1}} c_{j;r,q} e^{\nu_r t}
         \int_0^t s^{q-1} ds
    \\
    & = \sum_{p=1}^{r-1} \sum_{q=1}^{m_p} c_{j;p,q}
        \biggl\{
          b_0(q;\nu_p-\nu_r) e^{\nu_r t}
          +\sum_{k=1}^q b_k(q;\nu_p-\nu_r) t^{k-1} e^{\nu_p t}
        \biggr\}
        +\sum_{q=1}^{j-\hat{m}_{r-1}} \frac{c_{j;r,q}}{q!} 
           t^q e^{\nu_r t}.
\end{align*}
Thus \eqref{c.e[tM].r.21} holds for $j+1$ when 
$\hat{m}_{r-1}<j<j+1\le \hat{m}_r$ with 
\[
    c_{j+1;p,q} =\sum_{k=q}^{m_p} c_{j;p,k}
        b_q(k;\nu_p-\nu_r)
      \quad\text{for } p\le r-1
      \text{ and }1\le q\le m_p,
\]
\[
    c_{j+1;r,1} = \sum_{p=1}^{r-1} \sum_{q=1}^{m_p}
        c_{j;p,q} b_0(q;\nu_p-\nu_r),
\]
and
\[
    c_{j+1;r,q} = \frac{c_{j;r,q-1}}{(q-1)!}
      \quad\text{for } 2\le q\le j+1-\hat{m}_{r-1}.
\]
The proof is completed.
\end{proof}

With the help of the above expression, we have another
expression of $\bU$.

\begin{proposition}\label{p.ode.U.nu}
Assume that $M$ has $kd$ eigenvalues $\nu_1,\dots,\nu_{kd}$
different from each other, that is, $n=kd$ and $m_j=1$ for
$1\le j\le n$ in Proposition~\ref{p.e[tM].nu}.
Define $\widehat{U}_n\in \mathbb{R}^{d\times d}$ for 
$0\le n\le kd-1$ successively by
\[
    \widehat{U}_n
    =\begin{cases}
      U_n & \text{for }0\le n\le k-1
      \\
      {\displaystyle
      \sum_{i=0}^{k-1} C_i \widehat{U}_{n-k+i}}
      & \text{for }k\le n\le kd-1,
     \end{cases}
\]
and 
$Q_j=\bigl(Q_j^{pq}\bigr)_{1\le p,q\le d}
  \in \mathbb{C}^{d\times d}$ 
for $1\le j\le kd$ by 
\begin{equation}\label{p.ode.U.nu.1}
    \begin{pmatrix}
      Q_1^{pq} \\ \vdots \\ \vdots \\ Q_{kd}^{pq}
    \end{pmatrix}
    =
    \begin{pmatrix}
      1 & \cdots & \cdots & 1 
      \\
      \nu_1 & \cdots & \cdots & \nu_{kd}
      \\
      \vdots  &&& \vdots
      \\
      \nu_1^{kd-1} & \cdots & \cdots & \nu_{kd}^{kd-1}
    \end{pmatrix}^{-1}
    \begin{pmatrix}
      \widehat{U}_0^{pq} \\ \vdots \\ \vdots \\ 
      \widehat{U}_{kd-1}^{pq}
    \end{pmatrix}.
\end{equation}
Then 
\[
    \bU(t)=\sum_{j=1}^{kd} e^{\nu_j t} Q_j
    \quad\text{for }t\in \mathbb{R}.
\]
\end{proposition}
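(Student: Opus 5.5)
The plan is to start from Proposition~\ref{p.e[tM].nu} with $n=kd$ and $m_j=1$. In that case $\e[tM]=\sum_{j=1}^{kd}e^{\nu_jt}C_{j1}$ for some $C_{j1}\in\mathbb{C}^{kd\times kd}$. As in the proof of Proposition~\ref{p.ode.U}, $\bU$ is the first $d\times d$ block row of $\e[tM]$ applied to the column $(U_0,\dots,U_{k-1})^\dagger$. Hence $\bU(t)=\sum_{j=1}^{kd}e^{\nu_jt}Q_j$ for some matrices $Q_j\in\mathbb{C}^{d\times d}$, namely the first block row of $C_{j1}$ applied to that column. It then remains to show that these $Q_j$ satisfy \eqref{p.ode.U.nu.1}.

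To identify them, differentiate $\bU=\sum_j e^{\nu_j t}Q_j$ $m$ times and set $t=0$, which gives $\bU^{(m)}(0)=\sum_{j=1}^{kd}\nu_j^mQ_j$ for every $m\ge0$. I then claim $\bU^{(m)}(0)=\widehat{U}_m$ for $0\le m\le kd-1$. For $m\le k-1$ this is the initial condition in \eqref{eq.ode.U}. For $m\ge k$, differentiate \eqref{eq.ode.U} $m-k$ times to get $\bU^{(m)}=\sum_{i=0}^{k-1}C_i\bU^{(m-k+i)}$; evaluating at $0$ and inducting on $m$ gives exactly the recursion that defines $\widehat{U}_m$.

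Fixing an entry $(p,q)$, the $kd$ relations $\sum_{j}\nu_j^mQ_j^{pq}=\widehat{U}_m^{pq}$ for $0\le m\le kd-1$ form the linear system whose coefficient matrix is the Vandermonde matrix in $\nu_1,\dots,\nu_{kd}$. Since the $\nu_j$ are pairwise distinct, this matrix is invertible, so the $Q_j^{pq}$ are uniquely determined and given by \eqref{p.ode.U.nu.1}.

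There is no serious obstacle. The only point needing care is that the coefficients in the exponential expansion are determined by the derivatives at $0$. This follows from invertibility of the Vandermonde matrix (equivalently, linear independence of the $e^{\nu_jt}$ for distinct $\nu_j$), so the $Q_j$ obtained from Proposition~\ref{p.e[tM].nu} coincide with those defined in \eqref{p.ode.U.nu.1}.
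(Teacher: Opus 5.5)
Your proposal is correct and follows essentially the same argument as the paper: expand $\bU$ via Proposition~\ref{p.e[tM].nu}, identify $\sum_{j}\nu_j^mQ_j=\bU^{(m)}(0)$ with $\widehat{U}_m$ through the recursion coming from \eqref{eq.ode.U}, and invert the Vandermonde matrix. The only difference is cosmetic: you differentiate the ODE itself, whereas the paper substitutes the exponential sum into it, and these amount to the same computation.
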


\begin{proof}
By Proposition~\ref{p.e[tM].nu}, there are 
$Q_1,\dots,Q_{kd}\in \mathbb{C}^{d\times d}$ such that
\begin{equation}\label{p.ode.U.nu.21}
    \bU(t)=\sum_{j=1}^{kd} e^{\nu_j t} Q_j
    \quad\text{for }t\in \mathbb{R}.
\end{equation}
Set 
\[
    K_n=\sum_{j=1}^{kd} \nu_j^n Q_j
    \quad\text{for }n\in \mathbb{N}\cup\{0\}.
\]

It follows from \eqref{p.ode.U.nu.21} that
\[
    \bU^{(n)}(t)=\sum_{j=1}^{kd} \nu_j^n e^{\nu_j t} Q_j
    \quad\text{for }n\in \mathbb{N}\cup\{0\}.
\]
Plugging this into the ODE \eqref{eq.ode.U}, we have that
\[
    \bU^{(k+n)}(t)=\sum_{i=0}^{k-1} C_i \biggl(
     \sum_{j=1}^{kd} \nu_j^{i+n} e^{\nu_j t} Q_j \biggr)
    \quad\text{for }n\in \mathbb{N}\cup\{0\}.
\]
Evaluating $\bU^{(n)}(t)$ at $t=0$, by using these two expressions,
we obtain that
\[
    K_n=U_n 
    \quad\text{for }0\le n\le k-1
    \quad\text{and}\quad
    K_n=\sum_{i=0}^{k-1} C_i K_{n-k+i}
    \quad\text{for }k\le n\le kd-1.
\]
Hence $K_n=\widehat{U}_n$ for $0\le n\le kd-1$.
This implies that 
\[
    \sum_{j=1}^{kd} \nu_j^n Q_j^{pq}
    =K_n^{pq}
    =\widehat{U}_n^{pq}
    \quad\text{for $0\le n\le kd-1$ and
    $1\le p,q\le d$}.
\]
Thus $Q_j^{pq}$s are determined by \eqref{p.ode.U.nu.1}.
\end{proof}

\section{Cauchy's and Lagrange's identities}
\label{chap.cauchy.lagrange}

We show two identities named after Cauchy and Lagrange.

\begin{lemma}\label{l.kdv.Cauchy_Lagrange}
Let 
$\alpha_i,\beta_i,a_i,b_j,z\in \mathbb{C}$
for $1\le i\le n,1\le j\le n-1$, and assume that
$a_i\ne a_j$ for $i\ne j$.
Then the following two identity hold.
\begin{align}
    & \det\biggl(\Bigl(\frac1{\alpha_i+\beta_j}
       \Bigr)_{1\le i,j\le n}\biggr)
    =\frac{\prod_{1\le i<j\le n}(\alpha_i-\alpha_j)
            (\beta_i-\beta_j)}{
               \prod_{1\le i,j\le n}(\alpha_i+\beta_j)},
 \label{eq.cauchy}
    \\[5pt]
    & \sum_{k=1}^n \frac{
           \prod_{j=1}^{n-1}(a_k+b_j)
           \prod_{i\ne k}(a_i-z)}{
              \prod_{i\ne k}(a_i-a_k)}
    =\prod_{k=1}^{n-1} (z+b_k).
 \label{eq.lagrange}
\end{align}
\end{lemma}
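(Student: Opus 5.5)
For Cauchy's identity \eqref{eq.cauchy}, I will use the standard rational-function argument. Multiply the matrix $\bigl(\frac1{\alpha_i+\beta_j}\bigr)$ by $\prod_{i,j}(\alpha_i+\beta_j)$; more precisely, multiply row $i$ by $\prod_{j}(\alpha_i+\beta_j)$. The quantity
\[
    D(\alpha,\beta)=\prod_{1\le i,j\le n}(\alpha_i+\beta_j)\,
    \det\biggl(\Bigl(\frac1{\alpha_i+\beta_j}\Bigr)_{1\le i,j\le n}\biggr)
\]
is then a polynomial in $\alpha_1,\dots,\alpha_n,\beta_1,\dots,\beta_n$. Its total degree is at most $n^2-n$: each term of the expanded determinant has degree $-n$, and the prefactor has degree $n^2$. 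The polynomial vanishes when $\alpha_i=\alpha_j$ (two equal rows) and when $\beta_i=\beta_j$ (two equal columns). Hence it is divisible by $\prod_{i<j}(\alpha_i-\alpha_j)(\beta_i-\beta_j)$, which has degree exactly $n^2-n$, so $D$ equals a constant $c_n$ times this product.

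To fix $c_n$, I plan an induction on $n$. Multiply both sides by $(\alpha_n+\beta_n)$ and let $\alpha_n\to-\beta_n$. On the determinant side, only the $(n,n)$ entry survives, so the left side tends to the $(n-1)$-determinant. On the right side, the factors $\prod_{i<n}(\alpha_i-\alpha_n)(\beta_i-\beta_n)$ over $\prod_{i<n}(\alpha_i+\beta_n)(\alpha_n+\beta_i)$ become $\prod_{i<n}(\alpha_i+\beta_n)(\beta_i-\beta_n)/\bigl((\alpha_i+\beta_n)(\beta_i-\beta_n)\bigr)=1$. This gives $c_n=c_{n-1}=c_1=1$. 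The identity then extends to all complex values where both sides are defined, by continuity.

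For Lagrange's identity \eqref{eq.lagrange}, I will treat both sides as polynomials in $z$ of degree at most $n-1$. The left side is the Lagrange interpolation polynomial, read with the sign convention $\prod_{i\ne k}(a_i-z)/(a_i-a_k)$, for the function $g(z)=\prod_{j=1}^{n-1}(b_j+z)$ at the nodes $a_1,\dots,a_n$. Indeed, at $z=a_m$ every term with $k\ne m$ vanishes, since it contains the factor $a_m-z$. The $k=m$ term gives $\prod_j(a_m+b_j)\cdot1=g(a_m)$. Both sides are polynomials in $z$ of degree at most $n-1$ that agree at $n$ distinct points, so they coincide.

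The only delicate step is the degree count and the limit bookkeeping in Cauchy's identity; the rest is routine.
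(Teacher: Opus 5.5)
Your proof is correct. For Lagrange's identity \eqref{eq.lagrange} you argue exactly as the paper does. Both sides are polynomials in $z$ of degree at most $n-1$ that agree at the $n$ distinct nodes $a_1,\dots,a_n$; the paper phrases this as the non-singularity of the Vandermonde matrix.

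For Cauchy's identity \eqref{eq.cauchy} you take a genuinely different route. The paper argues by direct induction on $n$ using elementary determinant manipulations:
\begin{itemize}
\item it subtracts the first column from the others and pulls out $\prod_{i}(\alpha_i+\beta_1)^{-1}\prod_{j\ge2}(\beta_1-\beta_j)$, which leaves a first column of ones;
\item it subtracts the first row from the others and pulls out $\prod_{j\ge2}(\alpha_1+\beta_j)^{-1}\prod_{i\ge2}(\alpha_1-\alpha_i)$;
\item what remains is the $(n-1)\times(n-1)$ Cauchy determinant in $\alpha_2,\dots,\alpha_n$ and $\beta_2,\dots,\beta_n$.
\end{itemize}
You instead clear denominators and use the vanishing of the resulting polynomial on the hyperplanes $\alpha_i=\alpha_j$ and $\beta_i=\beta_j$. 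Unique factorization in $\mathbb{C}[\alpha,\beta]$ together with the degree count $n^2-n$ then determines it up to a constant, which you fix by the residue-type limit $\alpha_n\to-\beta_n$.

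Your version explains structurally why the Vandermonde-type products appear, and it reduces the induction to a one-line normalization $c_n=c_{n-1}$. The cost is invoking divisibility in a polynomial ring and a limit taken at generic parameters; there you need $\beta_j\ne\beta_n$ and $\alpha_i+\beta_j\ne0$, which is harmless because the cleared-denominator identity is polynomial. The paper's route is purely computational, needs no genericity or limiting step, and produces the constant automatically as the factors are extracted.
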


\eqref{eq.cauchy} is called Cauchy's identity and 
\eqref{eq.lagrange} is done Lagrange's.

\begin{proof}
We first show \eqref{eq.cauchy} by induction.

When $n=2$, \eqref{eq.cauchy} can be shown by an elementary
computation.
Assume that \eqref{eq.cauchy} holds up to $n-1$.
Repeating the following manipulation
\begin{enumerate}
\item
subtracting the first line from the other lines, and
\item
taking the factor out of the determinant,
\end{enumerate}
we obtain that 
\begin{align*}
    & \det\biggl(\frac1{\alpha_i+\beta_j}
                   \biggr)_{1\le i,j\le n}
    =\det\begin{pmatrix}
            \dfrac1{\alpha_1+\beta_1} & 
            \\
            \vdots
            & \Biggl(\dfrac{\beta_1-\beta_j}{\alpha_i+\beta_1}
                  \,\dfrac1{\alpha_i+\beta_j}
              \Biggr)_{\substack{1\le i\le n\\ 2\le j\le n}}  
            \\
            \dfrac1{\alpha_n+\beta_1} & 
         \end{pmatrix}
    \\[5pt]
    & 
    =\prod_{i=1}^n \frac1{\alpha_i+\beta_1}
     \prod_{j=2}^n (\beta_1-\beta_j)
     \times\det\begin{pmatrix}
            1 & 
            \\
            \vdots
            & \Biggl(\dfrac1{\alpha_i+\beta_j}
              \Biggr)_{\substack{1\le i\le n\\ 2\le j\le n}}  
            \\
            1 &
         \end{pmatrix}
    \\
    & 
    =\prod_{i=1}^n \frac1{\alpha_i+\beta_1}
     \prod_{j=2}^n (\beta_1-\beta_j)
     \times\det\Biggl(
             \dfrac{\alpha_1-\alpha_i}{\alpha_1+\beta_j}\,
             \dfrac1{\alpha_i+\beta_j}
              \Biggr)_{2\le i,j\le n}
    \\
    & 
    =\prod_{i=1}^n \frac1{\alpha_i+\beta_1}
     \prod_{j=2}^n \frac1{\alpha_1+\beta_j}
     \prod_{i=2}^n (\alpha_1-\alpha_i)
                  (\beta_1-\beta_i)
     \times\det\Biggl( \dfrac1{\alpha_i+\beta_j}
              \Biggr)_{2\le i,j\le n}.
\end{align*}
In conjunction with the induction assumption, we obtain 
\eqref{eq.cauchy} for $n$.

We next prove \eqref{eq.lagrange}.
We write the left hand side of \eqref{eq.lagrange} as
$\sum\limits_{i=0}^{n-1} \gamma_i z^i$, and the right as 
$\sum\limits_{i=0}^{n-1} \delta_i z^i$.
If we substitute $z=a_j$, $1\le j\le n$,
they coincide.
This means that
\[
    (\gamma_0,\dots,\gamma_{n-1})
    \begin{pmatrix}
      1 & 1 & \dots & 1 \\
      a_1 & a_2 & \dots & a_n \\
      a_1^2 & a_2^2 & \dots & a_n^2 \\
      \vdots & \vdots & \ddots & \vdots\\
      a_1^{n-1} & a_2^{n-1} & \dots & a_n^{n-1} \\
    \end{pmatrix}
    =(\delta_0,\dots,\delta_{n-1})
    \begin{pmatrix}
      1 & 1 & \dots & 1 \\
      a_1 & a_2 & \dots & a_n \\
      a_1^2 & a_2^2 & \dots & a_n^2 \\
      \vdots & \vdots & \ddots & \vdots\\
      a_1^{n-1} & a_2^{n-1} & \dots & a_n^{n-1} \\
    \end{pmatrix}.
\]
Since $a_i\ne a_j$ for $i\ne j$,
the Vandermonde matrix for $a_1,\dots,a_n$ is not singular.
Hence $\gamma_i=\delta_i$ for $0\le i\le n-1$.
Thus \eqref{eq.lagrange} holds.
\end{proof}

\chapter{Without Malliavin calculus}
\label{chap.without.MC}
\setcounter{section}{1}

In this chapter, we prove the assertions in
Chapters~\ref{chap.qwf} and \ref{chap.transf} without using
Malliavin calculus. 

Let $\I:\mathcal{H}\to L^2(\mu)$ be the mapping of Wiener integral; 
we define $\I(\ell)=\ell$ for $\ell\in \mathcal{W}^*$ and 
extend it to $\mathcal{H}$ continuously by using the identity 
$\int_{\mathcal{W}}|\I(\ell)|^2
  d\mu=\|\ell\|_{\mathcal{H}}^2$ 
and the denseness of $\mathcal{W}^*$ in $\mathcal{H}$.
It is also expressed with It\^o integral as follows.
\[
    \I(h)=\int_0^T \langle h^\prime(t),d\theta(t)\rangle
    \quad\text{for }h\in \mathcal{H}.
\]
Since $\D^*h=\I(h)$ for $h\in \mathcal{H}$
(\cite[(5.1.8)]{mt-cambridge}), the assertions
in Section~\ref{sec.development} continue to hold with
$\I(h)$ for $\D^*h$. 
We give alternative proofs for the assertions which are
proved by using Malliavin calculus.
To make the correspondence clear, we add the prime
${}^\prime$ to the number of the assertion.

\noindent
{\bf Theorem~\ref{t.q.eta}$\boldsymbol{{}^\prime}$.}
{\it
Let $\eta\in\stwo$.
For any ONB $\{h_n\}_{n=1}^\infty$ of $\mathcal{H}$, 
$\q_\eta$ is expanded in $L^2(\mu)$ as
\[
    \q_\eta=\frac12\sum_{n,m=1}^\infty
     \langle B_\eta h_n,h_m\rangle_{\mathcal{H}}
     \{\I(h_n)\I(h_m)-\delta_{nm}\}.
\]
}

\begin{proof}
As was seen in the proof of Lemma~\ref{l.b.kappa}, we have
that 
\[
    \eta(t,s)
    =\sum_{n,m=1}^\infty
     \langle B_\eta h_n,h_m\rangle_{\mathcal{H}}
     [h_n^\prime(s)\otimes h_m^\prime(t)]
    \quad\text{for }(t,s)\in[0,T]^2.
\]
Define $\eta^{(N)}\in\stwo$ for $N\in \mathbb{N}$ by
\[
    \eta^{(N)}(t,s)
    =\sum_{n,m=1}^N
     \langle B_\eta h_n,h_m\rangle_{\mathcal{H}}
     [h_n^\prime(s)\otimes h_m^\prime(t)]
    \quad\text{for }(t,s)\in[0,T]^2.
\]
Then it holds that
\[
    \int_{\mathcal{W}}|\q_\eta-\q_{\eta^{(N)}}|^2 d\mu
    =\frac12 \|\eta-\eta^{(N)}\|_2^2
   \to 0
   \quad\text{as }N\to\infty.
\]

By definition, we know that
\[
    \q_{\eta^{(N)}}
    =\sum_{n,m=1}^N
     \langle B_\eta h_n,h_m\rangle_{\mathcal{H}}
     \int_0^T\biggl\langle\biggl(\int_0^t 
          \langle h_n^\prime(s),d\theta(s)\rangle
         \biggr) h_m^\prime(t),d\theta(t)
         \biggr\rangle.
\]
Since $B_\eta\in \mathcal{S}(\htwo)$, switching $n$ and $m$,
we have that
\[
   \q_{\eta^{(N)}}=\sum_{n,m=1}^N
     \langle B_\eta h_n,h_m\rangle_{\mathcal{H}}
     \int_0^T\biggl\langle\biggl(\int_0^t 
          \langle h_m^\prime(s),d\theta(s)\rangle
         \biggr) h_n^\prime(t),d\theta(t)
         \biggr\rangle.    
\]

Apply It\^o's formula to see that
\begin{align*}
    \I(h_n)\I(h_m)-\delta_{nm}
    = & \int_0^T\biggl\langle\biggl(\int_0^t 
       \langle h_n^\prime(s),d\theta(s)\rangle
         \biggr) h_m^\prime(t),d\theta(t)
         \biggr\rangle
    \\
    & +\int_0^T\biggl\langle\biggl(\int_0^t 
          \langle h_m^\prime(s),d\theta(s)\rangle
         \biggr) h_n^\prime(t),d\theta(t)
         \biggr\rangle. 
\end{align*}
Thus we obtain that
\[
    \q_{\eta^{(N)}}
    =\frac12 \sum_{n,m=1}^N
     \langle B_\eta h_n,h_m\rangle_{\mathcal{H}}
     \{\I(h_n)\I(h_m)-\delta_{nm}\}.
\]
Letting $N\to\infty$, we obtain the desired expansion in $L^2(\mu)$.
\end{proof}

\noindent
{\bf Theorem~\ref{t.w.chaos}$\boldsymbol{{}^\prime}$.}
{\it $\mathcal{C}_2=\{\q_\eta\mid \eta\in\stwo\}$.}

\begin{proof}
By Theorem~\ref{t.q.eta}${}^\prime$, 
$\q_\eta\in \mathcal{C}_2$ for any $\eta\in\stwo$.

To see the converse inclusion, let $\q\in \mathcal{C}_2$.
In repetition of the argument in the proof of
Theorem~\ref{t.w.chaos} with $\I(h_n)$s for $\D^*h_n$s, we
find a double sequence $\{b_{nm}\}_{n,m=1}^\infty$ of real
numbers such that 
$\sum\limits_{n,m=1}^\infty b_{nm}^2<\infty$,
$b_{nm}=b_{mn}$ for $n,m\in \mathbb{N}$, and 
\begin{equation}\label{t.w.chaos.prime.21}
    \q=\sum_{n,m=1}^\infty b_{nm}
       \{\I(h_n)\I(h_m)-\delta_{nm}\}.
\end{equation}
Define $B\in \mathcal{S}(\htwo)$ by 
\[
    B=\sum_{n,m=1}^\infty 2b_{nm}
         h_n\otimes h_m.
\]
By Lemma~\ref{l.b.kappa}, there exists an $\eta\in\stwo$
with $B_\eta=B$.
Then $\langle B_\eta h_n,h_m\rangle_{\mathcal{H}}=2b_{nm}$
for $n,m\in \mathbb{N}$.
By Theorem~\ref{t.q.eta}${}^\prime$
and \eqref{t.w.chaos.prime.21}, we obtain that
\[
    \q_\eta
    =\frac12\sum_{n,m=1}^\infty 
     \langle B_\eta h_n,h_m\rangle_{\mathcal{H}}
       \{\I(h_n)\I(h_m)-\delta_{nm}\}
    =\sum_{n,m=1}^\infty b_{nm}
       \{\I(h_n)\I(h_m)-\delta_{nm}\}
    =\q.
\]
The proof is completed.
\end{proof}

\noindent
{\bf Proposition~\ref{p.h.osc}$\boldsymbol{{}^\prime}$.}
{\it
Let $\kappa\in\ltwo$ and $x\in \mathbb{R}^d$.
Then it holds that
\[
    \mathfrak{h}(\kappa;x)
    =\q_{c(\kappa;x)}
       +\frac12\int_0^T\int_0^T 
           |\kappa(t,s)^\dagger x|^2 ds dt.
\]
}

\begin{proof}
Let $\kappa_1,\kappa_2\in\ltwo$.
By a straightforward computation, we see that 
\[
    \int_{\mathcal{W}}|\mathfrak{h}(\kappa_1;x)
      -\mathfrak{h}(\kappa_2;x)|d\mu,
    \quad
    \|c(\kappa_1;x)-c(\kappa_2;x)\|_2
\]
and 
\[
 \biggl|\int_0^T \biggl(\int_0^T 
        |\kappa_1(t,s)^\dagger x|^2 ds\biggr)dt
       -\int_0^T \biggl(\int_0^T 
        |\kappa_2(t,s)^\dagger x|^2 ds\biggr)dt
      \biggr|
\]
are all dominated by 
\[
        |x|^2 \bigl\{\|\kappa_1\|_2+\|\kappa_2\|_2\bigr\}
        \|\kappa_1-\kappa_2\|_2.
\]
Hence the identity for $\kappa$ holds if $\kappa_N\to\kappa$ in
$\ltwo$ as $N\to\infty$ and the identity with $\kappa=\kappa_N$ holds
for every $N$.
Thus it suffices to show the identity for $\kappa$ of the
form 
\begin{equation}\label{p.h.osc.prime.21}
    \kappa(t,s)=\kappa(T_{N;n},s)
    \quad\text{for }(t,s)\in [T_{N;n},T_{N;n+1})\times[0,T]
    \text{ and }0\le n\le N-1,
\end{equation}
where $N\in \mathbb{N}$ and $T_{N;n}=\frac{nT}{N}$.
For such a $\kappa$, we have that
\[
    \mathfrak{h}(\kappa;x)
    =\frac12\sum_{n=0}^{N-1} \frac{T}{N}
     \biggl\langle x,\int_0^T \kappa(T_{N;n},s)d\theta(s)
        \biggr\rangle^2.
\]

For $k\in \mathcal{H}\setminus\{0\}$, define
$\eta^\prime(k)\in\stwo$ by  
\[
    \eta^\prime(k)(t,s)
    =k^\prime(s)\otimes k^\prime(t)
    \quad\text{for }(t,s)\in[0,T]^2.
\]
Noting that $B_{\eta^\prime(k)}=k\otimes k$, and 
applying Theorem~\ref{t.q.eta}${}^\prime$ with an
ONB $\{h_n\}_{n=1}^\infty$ such that
$h_1=\frac{1}{\|k\|_{\mathcal{H}}}k$, we obtain that
\begin{equation}\label{p.h.osc.prime.22}
    \I(k)^2=2\q_{\eta^\prime(k)}+\|k\|_{\mathcal{H}}^2.
\end{equation}

Let $\kappa\in\ltwo$ be of the form described
in \eqref{p.h.osc.prime.21}. 
For $0\le n\le N-1$, define $k_n\in \mathcal{H}$ by 
\[
    k_n^\prime(s)=\kappa\bigl(T_{N;n},s)^\dagger x
    \quad\text{for }s\in[0,T].
\]
Since 
\[
    \biggl\langle x,\int_0^T 
       \kappa(T_{N;n},s)d\theta(s) \biggr\rangle
    =\int_0^T \bigl\langle x,
       \kappa(T_{N;n},s)d\theta(s) \bigr\rangle
    =\I(k_n),
\]
by \eqref{p.h.osc.prime.22}, it holds that
\[
    \mathfrak{h}(\kappa;x)
    =\frac12\sum_{n=0}^{N-1} \frac{T}{N} 
        \I(k_n)^2
    =\sum_{n=0}^{N-1} \frac{T}{N} 
        \q_{\eta^\prime(k_n)}
       +\frac12 \sum_{n=0}^{N-1} \frac{T}{N} 
        \|k_n\|_{\mathcal{H}}^2.
\]
Observe that
\begin{align*}
    \biggl(\sum_{n=0}^{N-1} \frac{T}{N} 
        \eta^\prime(k_n)\biggr)(t,s)
    &  =\sum_{n=0}^{N-1} \frac{T}{N} 
       \bigl[\bigl(\kappa(T_{N;n},s)^\dagger x\bigr)\otimes
         \bigl(\kappa(T_{N;n},t)^\dagger x\bigr)\bigr]
    \\
    & =\int_0^T \bigl[\bigl(\kappa(u,s)^\dagger x\bigr)
         \otimes 
         \bigl(\kappa(u,t)^\dagger x\bigr)\bigr]du
    \\
    &  =c(\kappa;x)(t,s)
    \quad\text{for }(t,s)\in[0,T]^2
\end{align*}
and 
\begin{align*}
    \sum_{n=0}^{N-1} \frac{T}{N} 
         \|k_n\|_{\mathcal{H}}^2
    & =\sum_{n=0}^{N-1} \frac{T}{N} \biggl(
        \int_0^T \bigl|
        \kappa(T_{N;n},s)^\dagger x\bigr|^2 
        ds \biggr)
      =\int_0^T \biggl(\int_0^T |\kappa(t,s)^\dagger x|^2 
         ds\biggr) dt.
\end{align*}
Thus we arrive at the desired identity.
\end{proof}

We now consider the the achievements in Chapter~\ref{chap.transf}. 

\noindent
{\bf Theorem~\ref{t.transf}$\boldsymbol{{}^\prime}$.}
{\it
Let $\kappa\in\ltwo$ and assume that
$\Lambda(B_{\eta(\kappa)})<1$.
Then it holds that
\[
    |\dettwo(I+B_\kappa)| 
    \int_{\mathcal{W}} f(\iota+F_\kappa)
       e^{\q_{\eta(\kappa)}} d\mu
    =e^{\frac12 \|\kappa\|_2^2} \int_{\mathcal{W}} f d\mu
    \quad\text{for every }f\in C_b(\mathcal{W}).
\]
}

For the proof, we modify Lemma~\ref{l.f.kappa}.

\noindent
{\bf Lemma~\ref{l.f.kappa}$\boldsymbol{{}^\prime}$.}
{\it
Let $\kappa\in\ltwo$.
For any ONB $\{h_n\}_{n=1}^\infty$ of $\mathcal{H}$, it
holds that
\[
    F_\kappa=\sum_{n,m=1}^\infty 
     \langle B_\kappa h_n,h_m\rangle_{\mathcal{H}}
     \I(h_n)h_m,
\]
where the series converges in $L^2(\mu;\mathcal{H})$.
}

\begin{proof}
By Lemma~\ref{l.b.kappa}, we have that
\[
    \kappa(t,s)=\sum_{n,m=1}^\infty
     \langle B_\kappa h_n,h_m\rangle_{\mathcal{H}}
     [h_n^\prime(s)\otimes h_m^\prime(t)]
    \quad\text{for }(t,s)\in[0,T]^2.
\]
Define $\kappa^{(N)}\in\ltwo$ for $N\in \mathbb{N}$ by
\[
    \kappa^{(N)}(t,s)=\sum_{n,m=1}^N
     \langle B_\kappa h_n,h_m\rangle_{\mathcal{H}}
     [h_n^\prime(s)\otimes h_m^\prime(t)]
    \quad\text{for }(t,s)\in[0,T]^2.
\]
Then, by the isometry for It\^o integral, we have that
\[
    \int_{\mathcal{W}}
     \|F_{\kappa^{(N)}}-F_\kappa\|_{\mathcal{H}}^2 
     d\mu
    =\|\kappa^{(N)}-\kappa\|_2^2\to0
    \quad\text{as }N\to\infty.
\]
By the very definition of $F_{\kappa^{(N)}}$, we see that 
\[
    F_{\kappa^{(N)}}=\sum_{n,m=1}^N
     \langle B_\kappa h_n,h_m\rangle_{\mathcal{H}}
     \I(h_n)h_m.
\]
Thus the proof is completed.
\end{proof}

\begin{proof}[Proof of Theorem~\ref{t.transf}\,$\boldsymbol{{}^\prime}$]
In the first paragraph of the proof of
Theorem~\ref{t.transf}, the continuity of $\D^*$ is used to
see that 
\[
    \q_{\eta(\kappa^{(N)})}\to\q_{\eta(\kappa)}
    ~~\text{in }L^p(\mu)
    \quad\text{and}\quad
    F_{\kappa^{(N)}}\to F_\kappa
    ~~\text{in }L^p(\mu;\mathcal{H})
    \quad\text{for any }p\in(1,\infty).
\]
To carry out the argument after the paragraph, 
it is enough to show these convergences in the $L^2$-sense. 
Such $L^2$-convergences can be seen by observing that
\[
    \int_{\mathcal{W}}
       |\q_{\eta(\kappa^{(N)})}-\q_{\eta(\kappa)}|^2 d\mu
      =\frac12\|\eta(\kappa^{(N)})-\eta(\kappa)\|_2^2
      =\frac12\|B_{\eta(\kappa^{(N)})}-B_{\eta(\kappa)}\|_{\htwo}^2
\]
and
\[
    \int_{\mathcal{W}}    
        \|F_{\kappa^{(N)}}-F_\kappa\|_{\mathcal{H}}^2 
       d\mu
      = \|\kappa^{(N)}-\kappa\|_2^2
      = \|B_{\kappa^{(N)}}-B_\kappa\|_{\htwo}^2.
\qedhere
\]
\end{proof}

\noindent
{\bf Theorem~\ref{t.inv.transf}$\boldsymbol{{}^\prime}$.}
{\it
The identities \eqref{t.inv.transf.1}
and \eqref{eq:inv.transf} and the last identity hold
with $\I(\cdot)$ for $\D^*$.
}

For the proof we need to prove Lemma~\ref{l.d*h.f.hat.kappa}
without using Malliavin calculus.

\medskip\noindent
{\bf Lemma~\ref{l.d*h.f.hat.kappa}$\boldsymbol{{}^\prime}$.}
{\it
Let $\kappa\in\ltwo$ and assume that
$\Lambda(B_{\eta(\kappa)})<1$.
It then holds that
\[
    \I(h)(\iota+F_{\widehat{\kappa}})
    =\I((I+B_\kappa^*)^{-1}h)
    \quad\text{for any }h\in \mathcal{H}.
\]
}

\begin{proof}
Let $h\in \mathcal{H}$.
Since $\I(\ell)=\ell$ for $\ell\in \mathcal{W}^*$, in
repetition of the argument in the proof of
\cite[Lemma\,5.7.7]{mt-cambridge}, we find an
$\mathcal{H}$-invariant set $X$ such that $\mu(X)$ and 
\begin{equation}\label{t.inv.transf.prime.20}
    \I(h)(w+g)=\I(h)(w)
     +\langle h,g\rangle_{\mathcal{H}}
    \quad\text{for every }w\in X
    \text{ and }g\in \mathcal{H}.
\end{equation}

By Lemma~\ref{l.f.kappa}${}^\prime$, we have the series
expansion in $L^2(\mu;\mathcal{H})$ as
\[
    F_{\widehat{\kappa}}
    =\sum_{n,m=1}^\infty \langle B_{\widehat{\kappa}} h_n,
       h_m\rangle_{\mathcal{H}} \I(h_n)h_m
\]
with an ONB $\{h_n\}_{n=1}^\infty$ of $\mathcal{H}$. 
In conjunction with \eqref{t.inv.transf.prime.20}, this
implies that
\begin{align*}
    \I(h)(\iota+F_{\widehat{\kappa}})
    & =\I(h)+\langle h,
        F_{\widehat{\kappa}}\rangle_{\mathcal{H}}
      =\I(h)+\sum_{n,m=1}^\infty 
        \langle B_{\widehat{\kappa}} h_n,
         h_m\rangle_{\mathcal{H}} \I(h_n)
        \langle h,h_m\rangle_{\mathcal{H}}
    \\
    & =\I(h)+\sum_{n=1}^\infty 
        \langle B_{\widehat{\kappa}} h_n,
         h\rangle_{\mathcal{H}} \I(h_n)
      =\I(h)+\sum_{n=1}^\infty 
        \langle B_{\widehat{\kappa}}^*h,
         h_n\rangle_{\mathcal{H}} \I(h_n)
    \\
    & =\I((I+B_{\widehat{\kappa}}^*)h),
\end{align*}
where to see the last equality, we have used the continuity of 
$\I:\mathcal{H}\to L^2(\mu)$ and the expansion that
$B_{\widehat{\kappa}}^*h
 =\sum\limits_{n=1}^\infty \langle B_{\widehat{\kappa}}^*h,
    h_n\rangle_{\mathcal{H}} h_n$.
Since $I+B_{\widehat{\kappa}}^*=(I+B_\kappa^*)^{-1}$, we
obtain the desired identity.
\end{proof}

\begin{proof}[Proof of Theorem~\ref{t.inv.transf}\,${}^\prime$]
By Lemma~\ref{l.f.kappa}${}^\prime$ and 
\eqref{t.inv.transf.prime.20}, we can show the assertion in
Lemma~\ref{l.h.inv} without using Malliavin calculus.
Hence we have that
\begin{equation}\label{t.inv.transf.prime.21}
    (\iota+F_\kappa)\circ(\iota+F_{\widehat{\kappa}})
    =\iota+F_\kappa+F_{\widehat{\kappa}}
     +B_\kappa F_{\widehat{\kappa}}.
\end{equation}

Let $\{h_n\}_{n=1}^\infty$ be an ONB of $\mathcal{H}$.
By Lemma~\ref{l.f.kappa}${}^\prime$, we have that
\[
    F_\kappa  =\sum_{n,m=1}^\infty 
      \langle B_\kappa h_n,h_m\rangle_{\mathcal{H}}
      \I(h_n)h_m
    \quad\text{and}\quad
    F_{\widehat{\kappa}} =\sum_{n,p=1}^\infty 
       \langle B_{\widehat{\kappa}} h_n,
            h_p \rangle_{\mathcal{H}}  
       \I(h_n)h_p.
\]
It follows from the second identity that
\begin{align*}
    B_\kappa F_{\widehat{\kappa}}
    & =\sum_{n,p=1}^\infty 
       \langle B_{\widehat{\kappa}} h_n,
            h_p \rangle_{\mathcal{H}}  
       \I(h_n)B_\kappa h_p
      =\sum_{n,m,p=1}^\infty 
       \langle B_{\widehat{\kappa}} h_n,
            h_p \rangle_{\mathcal{H}}  
       \langle B_\kappa h_p,h_m \rangle_{\mathcal{H}}  
       \I(h_n) h_m
    \\
    & =\sum_{n,m=1}^\infty 
       \langle B_{\widehat{\kappa}} h_n,
            B_\kappa^*h_m\rangle_{\mathcal{H}}  
       \I(h_n)h_m
      =\sum_{n,m=1}^\infty 
       \langle B_\kappa B_{\widehat{\kappa}} h_n,
            h_m\rangle_{\mathcal{H}}  
       \I(h_n)h_m.
\end{align*}
Since 
\[
    B_\kappa+B_{\widehat{\kappa}}
    +B_\kappa B_{\widehat{\kappa}}
    =(I+B_\kappa)(I+B_{\widehat{\kappa}})-I
    =0,
\]
plugging the series expansions of $F_\kappa$,
$F_{\widehat{\kappa}}$, and 
$B_\kappa F_{\widehat{\kappa}}$
into \eqref{t.inv.transf.prime.21}, we see that
\[
    (\iota+F_\kappa)\circ(\iota+F_{\widehat{\kappa}})
    =\iota.
\]
The other equality in \eqref{t.inv.transf.1} can be seen
similarly. 

The identity \eqref{eq:inv.transf} and the last identity
with $\I(\cdot)$ for $\D^*$ follow in repetition of the
argument in the proof of Theorem~\ref{t.inv.transf} with
Theorem~\ref{t.transf}${}^\prime$ and
Lemma~\ref{l.d*h.f.hat.kappa}${}^\prime$ for 
Theorem~\ref{t.transf} and Lemma~\ref{l.d*h.f.hat.kappa}.
\end{proof}

\noindent
{\bf Lemma~\ref{l.transf.cm}$\boldsymbol{{}^\prime}$.}
{\it
For $\phi\in\ltwo$ with 
$\Lambda(B_{\eta(\kappa_\phi)})<1$, it holds that
\[
    |\dettwo(I+B_{\kappa_\phi})|\int_{\mathcal{W}} 
      f(\iota+\mathbb{F}_\phi) 
      e^{\widetilde{\Psi}_\phi} d\mu
    =\int_{\mathcal{W}} f d\mu
    \quad\text{for every }f\in C_b(\mathcal{W}).
\]
}

\begin{proof}
For $\kappa\in\ltwo$, define $s(\kappa)\in\stwo$ by
\[
    s(\kappa)(t,s)=\kappa(t,s)+\kappa(s,t)^\dagger
    \quad\text{for }(t,s)\in[0,T]^2.
\]
Let $\{h_n\}_{n=1}^\infty$ be an ONB of $\mathcal{H}$.
By Lemma~\ref{l.f.kappa}${}^\prime$, it holds that
\[
    F_\kappa=\sum_{n,m=1}^\infty 
     \langle B_\kappa h_n,h_m\rangle_{\mathcal{H}}
     \I(h_n)h_m.
\]
Due to Theorem~\ref{t.q.eta}${}^\prime$
and \eqref{r.transf.1}, we have that
\begin{align*}
    \q_{\eta(\kappa)}
    & = -\q_{s(\kappa)}
        -\frac12 \sum_{n,m=1}^\infty 
         \langle B_\kappa^*B_\kappa h_n,h_m
            \rangle_{\mathcal{H}}
         \{\I(h_n)\I(h_m)-\delta_{nm}\}
    \\
    & =-\q_{s(\kappa)}-\frac12\|F_\kappa\|_{\mathcal{H}}^2
       +\frac12\|B_\kappa\|_{\htwo}^2.
\end{align*}
Thus, by Theorem~\ref{t.transf}${}^\prime$, it suffices to
show that
\begin{align}
 \label{t.transf.cm.prime.21}
    & \int_0^T \biggl\langle \int_0^t 
      s(\kappa_\phi)(t,s) d\theta(s), d\theta(t)
      \biggr\rangle
    \\
    & 
    =\int_0^T\biggl\langle \int_0^T \phi(t,s)^\dagger
      d\theta(t),\theta(s)\biggr\rangle ds
     -\int_0^T\biggl(\int_0^s \tr\phi(t,s) dt\biggr)ds.
 \nonumber
\end{align}

Recalling the definition of $s(\kappa)$, we first compute 
\[
    \int_0^T \biggl\langle \int_0^t 
      \kappa_\phi(t,s) d\theta(s), d\theta(t)
      \biggr\rangle.
\]
By It\^o's formula, it holds that
\begin{align*}
    \int_0^t \kappa_\phi(t,s) d\theta(s)
    & =\int_0^t\biggl(\int_s^T \phi(t,u)du\biggr)
       d\theta(s)
      =\int_0^T \phi(t,u)\biggl(\int_0^u 
        \one_{[0,t)}(s)d\theta(s)\biggr) du
    \\
    & =\int_0^T \phi(t,u)\theta(t\wedge u) du.
\end{align*}
This implies that
\begin{align}
 \label{t.transf.cm.prime.22}
    & \int_0^T \biggl\langle \int_0^t 
        \kappa_\phi(t,s) d\theta(s), d\theta(t)
        \biggr\rangle
       =\int_0^T \biggr\langle \int_0^T 
        \phi(t,u)\theta(t\wedge u) du,d\theta(t)
        \biggr\rangle
    \\
    &
      =\int_0^T \biggl(\int_0^T \langle 
        \phi(t,u)\theta(t\wedge u),d\theta(t)\rangle
        \biggr)du
 \nonumber
    \\
    & =\int_0^T \biggl(\int_0^u \langle \theta(t),
        \phi(t,u)^\dagger d\theta(t)\rangle
        \biggr)du
       +\int_0^T \biggl\langle \theta(u),
         \int_u^T \phi(t,u)^\dagger d\theta(t)
         \biggr\rangle du,
 \nonumber
\end{align}
where, to see the second equality, we have applied the
Fubini theorem to the double integral with respect to $du$
and $d\theta(t)$.

We next compute 
\[
    \int_0^T \biggl\langle \int_0^t 
      \kappa_\phi(s,t)^\dagger d\theta(s), d\theta(t)
      \biggr\rangle.
\]
Put 
\[
    M(t)=\int_0^t \kappa_\phi(s,t)^\dagger d\theta(s)
        =\int_0^t \biggl(\int_t^T \phi(s,u)^\dagger du
         \biggr)d\theta(s)
    \quad\text{for }t\in[0,T].
\]
$\{M(t)\}_{t\in[0,T]}$ is an It\^o process with stochastic
differential
\[
    dM(t)=\biggl(\int_t^T \phi(t,u)^\dagger du\biggr)
          d\theta(t)
          -\biggl(\int_0^t \phi(s,t)^\dagger d\theta(s)
           \biggr)dt.
\]
By It\^o's formula, we see that
\begin{align*}
    \langle M(r),\theta(r)\rangle
    =& \int_0^r \langle M(t),d\theta(t)\rangle
     +\int_0^r \biggl\langle \theta(t),
        \biggl(\int_t^T\phi(t,u)^\dagger du\biggr)
        d\theta(t)\biggr\rangle
    \\
     & -\int_0^r \biggl\langle \theta(t),
        \int_0^t \phi(s,t)^\dagger d\theta(s)
        \biggr\rangle dt
       +\int_0^r\biggl(\int_t^T \tr\phi(t,u) du\biggr)dt
\end{align*}
for $r\in[0,T]$.
Substituting $r=T$ and noting that $M(T)=0$, we obtain that 
\begin{align*}
    & \int_0^T \biggl\langle \int_0^t 
        \kappa_\phi(s,t)^\dagger d\theta(s), d\theta(t)
        \biggr\rangle
    \\
    & =-\int_0^T \biggl\langle \theta(t),
           \biggl(\int_t^T \phi(t,u)^\dagger du\biggr)
           d\theta(t)\biggr\rangle
       +\int_0^T \biggl\langle \theta(t),
           \int_0^t \phi(s,t)^\dagger d\theta(s)
           \biggr\rangle dt
    \\
    &\quad
       -\int_0^T\biggl(\int_t^T \tr\phi(t,u) du
         \biggr)dt
    \\
    & =-\int_0^T \biggl(\int_0^u \langle\theta(t),
         \phi(t,u)^\dagger d\theta(t)\rangle\biggr)du
       +\int_0^T \biggl\langle \theta(t),
           \int_0^t \phi(s,t)^\dagger d\theta(s)
           \biggr\rangle dt
    \\
    &\quad
       -\int_0^T\biggl(\int_0^u \tr\phi(t,u) dt
         \biggr)du,
\end{align*}
where to see the last equality, we have applied the Fubini
theorem to the double integral with respect to $du$ and
$d\theta(t)$ in the first term and switched the order of
integration in the third term.
Combining this with \eqref{t.transf.cm.prime.22}, we
obtain \eqref{t.transf.cm.prime.21}.  
\end{proof}

\chapter{Eigenvalue expansions}
\label{chap.lap.another}
\setcounter{section}{1}

Let $\eta\in\stwo$.
Since $B_\eta\in \mathcal{S}(\htwo)$
(Lemma~\ref{l.b.kappa}), there is an ONB 
$\{h_n\}_{n=1}^\infty$ such that 
\[
    \langle B_\eta h_n,h_m\rangle_{\mathcal{H}}
    =\langle B_\eta h_n,h_n\rangle_{\mathcal{H}}\delta_{nm}
    \quad\text{for }n,m\in \mathbb{N}.
\]
By Theorem~\ref{t.q.eta}, $\q_\eta$ has the series expansion
in $L^p(\mu)$ for any $p\in(1,\infty)$ as
\[
    \q_\eta=\frac12\sum_{n=1}^\infty
     \langle B_\eta h_n,h_n\rangle_{\mathcal{H}}
     \{(\D^*h_n)^2-1\}.
\]
With the help of this expansion, we can evaluate the Laplace
transformation of $\q_\eta$ as given in \eqref{t.tways.3} without
using the square root  transformation $\iota+F_{\kappa_S(\eta)}$,
where $\kappa_S(\eta)\in\stwo$ is defined as in
Lemma~\ref{l.tw.(ii)to(iii)}.
In this chapter, we shall present this different proof. 

\begin{lemma}\label{l.eta.hat}
Let $N\in \mathbb{N}$.
For orthonormal $h_1,\dots,h_N\in \mathcal{H}$ and
$a_1,\dots,a_N\in \mathbb{R}$ with 
$\max\limits_{1\le n\le N} a_n<1$, define $\eta\in\stwo$ by 
\begin{equation}\label{l.eta.hat.1}
    \eta(t,s)=\sum_{n=1}^N a_n 
    [h_n^\prime(s)\otimes h_n^\prime(t)]
    \quad\text{for }(t,s)\in[0,T]^2.
\end{equation}
Then $\dettwo(I+B_{\kappa_S(\eta)})\ne0$ and 
\[
    B_{\widehat{\kappa_S(\eta)}}=\sum_{n=1}^N
    \Bigl(\frac1{\sqrt{1-a_n}}-1\Bigr) h_n\otimes h_n.
\]
\end{lemma}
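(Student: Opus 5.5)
The plan is to compute everything spectrally, using that $B_\eta$ is diagonal in the finite orthonormal system $h_1,\dots,h_N$. First identify $B_\eta$. By the definition of $B_\kappa$ and \eqref{l.eta.hat.1},
\[
\langle B_\eta h,g\rangle_{\mathcal{H}}=\sum_{n=1}^N a_n\langle h_n,h\rangle_{\mathcal{H}}\langle h_n,g\rangle_{\mathcal{H}},
\]
so $B_\eta=\sum_{n=1}^N a_n h_n\otimes h_n$. Hence $\Lambda(B_\eta)=0\vee\max_n a_n<1$, and $\kappa_S(\eta)$ from Lemma~\ref{l.tw.(ii)to(iii)} is well defined.

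Next, determine $C_\eta$, the square root of $I-B_\eta$ in $\mathcal{S}_+(\mathcal{H})$. Let $\pi_N$ denote the orthogonal projection onto the span of $h_1,\dots,h_N$. Then
\[
I-B_\eta=(I-\pi_N)+\sum_{n=1}^N(1-a_n)\,h_n\otimes h_n .
\]
The candidate root is $C=(I-\pi_N)+\sum_{n=1}^N\sqrt{1-a_n}\,h_n\otimes h_n$. It is self-adjoint and non-negative definite, and squaring it gives $I-B_\eta$ because the projections are mutually orthogonal. By the uniqueness part of Lemma~\ref{l.sq.root}, $C_\eta=C$. This agrees with Remark~\ref{r.sqrt} and gives
\[
B_{\kappa_S(\eta)}=C_\eta-I=\sum_{n=1}^N\bigl(\sqrt{1-a_n}-1\bigr)h_n\otimes h_n .
\]

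The operator $I+B_{\kappa_S(\eta)}=C_\eta$ has the explicit two-sided inverse
\[
(I-\pi_N)+\sum_{n=1}^N(1-a_n)^{-1/2}\,h_n\otimes h_n ,
\]
which is continuous since each $1-a_n>0$. Because $B_{\kappa_S(\eta)}$ has finite rank, $\dettwo(I+B_{\kappa_S(\eta)})=\det(I+B_{\kappa_S(\eta)})e^{-\tr B_{\kappa_S(\eta)}}$, and
\[
\det(I+B_{\kappa_S(\eta)})=\prod_{n=1}^N\sqrt{1-a_n}\neq0 .
\]
Alternatively, the nonvanishing follows from the existence of the continuous inverse via \cite[Theorem\,XII.1.2]{GGK}, as in the proof of Lemma~\ref{l.tw.(ii)to(iii)}.

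By definition $B_{\widehat{\kappa_S(\eta)}}=(I+B_{\kappa_S(\eta)})^{-1}-I$. Subtracting $I$ from the inverse above cancels $I-\pi_N$ against the identity and leaves $\sum_{n=1}^N\bigl((1-a_n)^{-1/2}-1\bigr)h_n\otimes h_n$, which is the claimed formula. The one point requiring care is invoking uniqueness of the square root to identify $C_\eta$ with the explicit $C$; everything else is finite-dimensional diagonal algebra.
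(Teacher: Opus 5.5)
Your proof is correct and is essentially the paper's argument: both identify $B_\eta=\sum_{n=1}^N a_n h_n\otimes h_n$ and rest on the uniqueness part of Lemma~\ref{l.sq.root}. The differences are cosmetic. The paper applies uniqueness to $(I-B_\eta)^{-1}$ to identify $C_\eta^{-1}$ directly, and gets $\dettwo(I+B_{\kappa_S(\eta)})\ne0$ from $\eta(\kappa_S(\eta))\eqltwo\eta$ and \eqref{r.transf.2}. You instead identify $C_\eta$ itself, invert it explicitly (correctly keeping the $I-\pi_N$ part), and compute the determinant as $\prod_n\sqrt{1-a_n}$; your value $\Lambda(B_\eta)=0\vee\max_n a_n$ is also the accurate one.
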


\begin{proof}
Note that
\[
    B_\eta=\sum_{n=1}^N a_n h_n\otimes h_n.
\]
Hence 
$\Lambda(B_\eta)=\max\limits_{1\le n\le N} a_n<1$.
Since $\eta(\kappa_S(\eta))\eqltwo\eta$
(Theorem~\ref{t.tways}), due to
\eqref{r.transf.2}, $\dettwo(I+B_{\kappa_S(\eta)})\ne0$.

Put
\[
    B=\sum_{n=1}^N \Bigl(\frac1{\sqrt{1-a_n}}-1\Bigr)
    h_n\otimes h_n. 
\]
Then 
\[
    \langle(I+B)h,g\rangle_{\mathcal{H}}
    =\sum_{n=1}^N \frac1{\sqrt{1-a_n}}
     \langle h_n,h\rangle_{\mathcal{H}}
     \langle h_n,g\rangle_{\mathcal{H}}
    \quad\text{for }h,g\in \mathcal{H}.
\]
Hence $I+B\in \mathcal{S}_+(\mathcal{H})$ and 
$(I+B)^2(I-B_\eta)=I$.
By the uniqueness of square root (Lemma~\ref{l.sq.root}), we
see that $I+B=C_\eta^{-1}$.
Recalling the definition of $\kappa_S(\eta)$ that
$C_\eta-I=B_{\kappa_S(\eta)}$, we obtain that 
\[
    B_{\widehat{\kappa_S(\eta)}}
    =(I+B_{\kappa_S(\eta)})^{-1}-I
    =B.
\qedhere
\]
\end{proof}

\begin{lemma}\label{l.w*.finite}
Let $N\in \mathbb{N}$ and 
$\ell_1,\dots,\ell_N\in \mathcal{W}^*$ be orthonormal in
$\mathcal{H}$. 
For $a_1,\dots,a_N\in \mathbb{R}$ with 
$\max\limits_{1\le n\le N} a_n<1$, define $\eta\in\stwo$ by
\eqref{l.eta.hat.1} with $h_n=\ell_n$ for $1\le n\le N$.
Then \eqref{t.tways.3} with $h=0$ holds for every
$f\in C_b(\mathcal{W})$.
\end{lemma}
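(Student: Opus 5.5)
The plan is to reduce both sides of \eqref{t.tways.3} with $h=0$ to Gaussian integrals over $\mathbb{R}^N$, and then pass from cylinder functions to all of $C_b(\mathcal{W})$ by the splitting property of $\mu$, exactly as at the end of the proof of Theorem~\ref{t.transf}.

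First, by Theorem~\ref{t.q.eta} and $\D^*\ell_n=\ell_n$, we have $\q_\eta=\frac12\sum_{n=1}^N a_n(\ell_n^2-1)$. For the right-hand side, Lemma~\ref{l.eta.hat} gives $B_{\widehat{\kappa_S(\eta)}}=\sum_{n=1}^N b_n\ell_n\otimes\ell_n$ with $b_n=(1-a_n)^{-1/2}-1$. Lemma~\ref{l.f.kappa} then yields $F_{\widehat{\kappa_S(\eta)}}=\sum_{n=1}^N b_n\ell_n\,\ell_n$. Consequently $\ell_m(\iota+F_{\widehat{\kappa_S(\eta)}})=(1-a_m)^{-1/2}\ell_m$ for $m\le N$. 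Moreover, for $\ell\in\mathcal{W}^*$ orthogonal in $\mathcal{H}$ to $\ell_1,\dots,\ell_N$, we have $\ell(\iota+F_{\widehat{\kappa_S(\eta)}})=\ell$.

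Next, for the determinant, $\dettwo(I-B_\eta)=\prod_{n=1}^N(1-a_n)e^{a_n}$, since $B_\eta$ has finite rank.

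Now take $f=\varphi(\ell_1,\dots,\ell_N)\,\psi(\ell'_1,\dots,\ell'_M)$ with $\ell'_j\in\mathcal{W}^*$ orthogonal to the $\ell_n$. Independence splits off the $\psi$ factor identically on both sides, so it suffices to treat $\psi\equiv1$. The left-hand side is then
\[
\int_{\mathbb{R}^N}\varphi(x)\prod_{n=1}^N e^{\frac12 a_n(x_n^2-1)}\frac{e^{-x_n^2/2}}{\sqrt{2\pi}}\,dx .
\]
The substitution $x_n=(1-a_n)^{-1/2}y_n$ transforms this into
\[
\prod_{n=1}^N\bigl\{(1-a_n)^{-1/2}e^{-a_n/2}\bigr\}\int_{\mathbb{R}^N}\varphi\bigl((1-a_n)^{-1/2}y_n\bigr)_n\,\gamma_N(dy),
\]
where $\gamma_N$ is the standard Gaussian measure on $\mathbb{R}^N$. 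This is exactly $\{\dettwo(I-B_\eta)\}^{-1/2}\int f(\iota+F_{\widehat{\kappa_S(\eta)}})\,d\mu$.

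Finally, to extend to general $f\in C_b(\mathcal{W})$, we complete $\ell_1,\dots,\ell_N$ to an ONB of $\mathcal{H}$ inside $\mathcal{W}^*$. Both measures $f\mapsto\int fe^{\q_\eta}\,d\mu$ and $f\mapsto\int f(\iota+F_{\widehat{\kappa_S(\eta)}})\,d\mu$ are finite Borel measures on $\mathcal{W}$ that agree on bounded continuous cylinder functions, so they agree on $C_b(\mathcal{W})$. I expect this last step to be the only delicate point. The composition $f(\iota+F)$ requires the modification from Lemma~\ref{l.h.inv}, but for finite rank $F$ it is the explicit linear map above, so no difficulty arises.
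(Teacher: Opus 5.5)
Your proposal is correct and follows essentially the paper's proof: the Gaussian rescaling $x_n=(1-a_n)^{-1/2}y_n$, the identification $\ell_n(F_{\widehat{\kappa_S(\eta)}})=\bigl((1-a_n)^{-1/2}-1\bigr)\ell_n$ via Lemmas~\ref{l.eta.hat} and \ref{l.f.kappa}, and the splitting property of $\mu$ to pass from cylinder functions to all of $C_b(\mathcal{W})$. The only differences are immaterial: the paper tests against products $\prod_n\varphi_n(\ell_n)$ rather than a general $\varphi$ on $\mathbb{R}^N$, and completing to an ONB inside $\mathcal{W}^*$ is unnecessary for the last step, since $y=w-\sum_n\ell_n(w)\ell_n$ is independent of $(\ell_1,\dots,\ell_N)$ and left unchanged by $\iota+F_{\widehat{\kappa_S(\eta)}}$, so Fubini reduces everything to your $\mathbb{R}^N$ identity applied to $\varphi(x)=f\bigl(\sum_n x_n\ell_n+y\bigr)$ for each fixed $y$.
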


\begin{proof}
For $a\in \mathbb{R}$ with $a<1$, it holds that
\[
    \int_{\mathbb{R}} \varphi(x) e^{\frac12 a(x^2-1)}
      \frac1{\sqrt{2\pi}} e^{-\frac12 x^2} dx
    =\frac{e^{-\frac12 a}}{\sqrt{1-a}} \int_{\mathbb{R}}
      \varphi\biggl(y+
        \Bigl(\frac1{\sqrt{1-a}}-1\Bigr)y\biggr)
      \frac1{\sqrt{2\pi}} e^{-\frac12 y^2} dy
\]
for any $\varphi\in C_b(\mathbb{R})$.
Since $B_\eta=\sum\limits_{n=1}^N a_n \ell_n\otimes \ell_n$
and $\{\D^*\ell_n\mid 1\le n\le N\}$ is a family of
IID random variables obeying the normal distribution
$N(0,1)$, this yields that 
\begin{align*}
    & \int_{\mathcal{W}} \prod_{n=1}^N \Bigl\{
      \varphi_n(\D^*\ell_n)
      e^{\frac12 a_n\{(\D^*\ell_n)^2-1\}}\Bigr\} d\mu
    \\
    & =\{\dettwo(I-B_\eta)\}^{-\frac12} \int_{\mathcal{W}}
      \prod_{n=1}^N \biggl\{\varphi_n\Bigl(\D^*\ell_n
      +\Bigl(\frac1{\sqrt{1-a_n}-1}\Bigr)\D^*\ell_n
      \Bigr)\biggr\} d\mu
\end{align*}
for any $\varphi_1,\dots,\varphi_N\in C_b(\mathbb{R})$.
By Lemmas~\ref{l.eta.hat} and \ref{l.f.kappa}, it holds that 
\[
    \ell_n(F_{\widehat{\kappa_S(\eta)}})
     =\Bigl(\frac1{\sqrt{1-a_n}}-1\Bigr)\D^*\ell_n
    \quad\text{for }1\le n\le N.
\]
Recalling that $\D^*\ell_n=\ell_n$ for $1\le n\le N$
(see \cite[(5.1.5)]{mt-cambridge}), by
Theorem~\ref{t.q.eta}, we see that   
\[
    \int_{\mathcal{W}}\biggl[\prod_{n=1}^N 
        (\varphi_n\circ\ell_n)\biggr] e^{\q_\eta}d\mu
    =\{\dettwo(I-B_\eta)\}^{-\frac12}
      \int_{\mathcal{W}}\biggl[\prod_{n=1}^N 
        (\varphi_n\circ\ell_n)\biggr] 
      \bigl(\iota+F_{\widehat{\kappa_S(\eta)}}\bigr) d\mu.
\]
On account of the splitting property of the Wiener measure, 
from this it is easy to conclude that \eqref{t.tways.3}
with $h=0$ holds for every $f\in C_b(\mathcal{W})$.
\end{proof}

\begin{lemma}\label{l.w*.approx}
Let $N\in \mathbb{N}$ and 
$h_1,\dots,h_N\in \mathcal{H}$ be orthonormal.
For every $\varepsilon>0$, there exist
$\ell_1,\dots,\ell_N\in \mathcal{W}^*$, which are
orthonormal in $\mathcal{H}$ and satisfy that
$\sum\limits_{n=0}^N \|h_n-\ell_n\|_{\mathcal{H}}
 <\varepsilon$.
\end{lemma}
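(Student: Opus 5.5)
The plan is to use the density of $\mathcal{W}^*$ in $\mathcal{H}$ to approximate each $h_n$ by an element of $\mathcal{W}^*$, and then to restore orthonormality by Gram--Schmidt. Gram--Schmidt stays inside $\mathcal{W}^*$, since $\mathcal{W}^*$ is a linear subspace of $\mathcal{H}$, and it depends continuously on its input near an orthonormal system.

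\emph{Approximation.} Fix $\delta>0$, to be chosen later in terms of $\varepsilon$ and $N$. By the continuous and dense inclusion $\mathcal{W}^*\subset\mathcal{H}$, pick $g_1,\dots,g_N\in\mathcal{W}^*$ with $\|h_n-g_n\|_{\mathcal{H}}<\delta$ for each $n$.

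\emph{Gram--Schmidt.} Define successively
\[
    u_n=g_n-\sum_{j<n}\langle g_n,\ell_j\rangle_{\mathcal{H}}\,\ell_j,
    \qquad
    \ell_n=u_n/\|u_n\|_{\mathcal{H}}.
\]
Each $\ell_n$ is a finite linear combination of the $g_j$, so it lies in $\mathcal{W}^*$. By construction the $\ell_n$ are orthonormal in $\mathcal{H}$, once we know every $u_n\ne0$.

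\emph{Estimate, by induction on $n$.} The claim is that $\|h_n-\ell_n\|_{\mathcal{H}}\le c_n\delta$ for constants $c_n$ depending only on $n$, provided $\delta$ is small enough.

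Suppose the claim holds for all $j<n$. Since $\langle h_n,h_j\rangle_{\mathcal{H}}=0$ for $j<n$,
\[
    |\langle g_n,\ell_j\rangle_{\mathcal{H}}|
    \le |\langle g_n-h_n,\ell_j\rangle_{\mathcal{H}}|
       +|\langle h_n,\ell_j-h_j\rangle_{\mathcal{H}}|
    \le \delta+c_j\delta .
\]
Hence $\|u_n-h_n\|_{\mathcal{H}}\le \delta+\sum_{j<n}(1+c_j)\delta=:c_n'\delta$.

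If $c_n'\delta<1/2$, then $\|u_n\|_{\mathcal{H}}\ge 1-c_n'\delta>0$, which settles the nondegeneracy. Moreover $\bigl|\|u_n\|_{\mathcal{H}}-1\bigr|\le c_n'\delta$, so
\[
    \|\ell_n-h_n\|_{\mathcal{H}}
    \le \|\ell_n-u_n\|_{\mathcal{H}}+\|u_n-h_n\|_{\mathcal{H}}
    = \bigl|1-\|u_n\|_{\mathcal{H}}\bigr|+\|u_n-h_n\|_{\mathcal{H}}
    \le 2c_n'\delta =: c_n\delta .
\]
This completes the induction.

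\emph{Choice of $\delta$.} Take $\delta$ so small that $\max_n c_n'\delta<1/2$ and $\sum_{n=1}^N c_n\delta<\varepsilon$. Then $\sum_{n}\|h_n-\ell_n\|_{\mathcal{H}}<\varepsilon$, as required.

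The only step needing care is the nondegeneracy $u_n\ne0$ together with the uniform control of the normalization. The inductive bound above handles both, so I expect no real obstacle.
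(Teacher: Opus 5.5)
Your proof is correct and is essentially the paper's argument: both approximate the $h_n$ by elements of $\mathcal{W}^*$ and restore orthonormality by a Gram--Schmidt step, which stays inside $\mathcal{W}^*$. The only difference is bookkeeping. The paper inducts on $N$, applying the lemma for $N$ with tolerance $\delta<\frac13\wedge\frac{\varepsilon}{7}$ and then projecting off and normalizing one new approximant; you fix all approximants at once and propagate explicit constants $c_n$.
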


\begin{proof}
We show the assertion by induction for $N\in \mathbb{N}$. 
When $N=1$, the assertion follows from the denseness of
$\mathcal{W}^*$ in $\mathcal{H}$. 

Suppose that the assertion holds for $N$, and let
$h_1,\dots,h_{N+1}\in \mathcal{H}$ be orthonormal.
Fix an $\varepsilon>0$ and take a $\delta>0$ such that
$\delta<\frac13\wedge\frac{\varepsilon}7$.
By the hypothesis of induction, there exist 
$\ell_1,\dots,\ell_N\in \mathcal{W}^*$, which are
orthonormal in $\mathcal{H}$ and satisfy that
$\sum\limits_{n=1}^N \|h_n-\ell_n\|_{\mathcal{H}}<\delta$.
Denote by $\pi_N$ the orthogonal projection of $\mathcal{H}$
onto the subspace spanned by $\ell_1,\dots,\ell_N$.
It holds that
\[
    \|\pi_Nh_{N+1}\|_{\mathcal{H}}
    =\biggl\|\sum_{n=1}^N \langle h_{N+1},\ell_n
       \rangle_{\mathcal{H}}\ell_n\biggr\|_{\mathcal{H}}
    =\biggl\|\sum_{n=1}^N \langle h_{N+1},\ell_n-h_n
       \rangle_{\mathcal{H}}\ell_n\biggr\|_{\mathcal{H}}
    \le \sum_{n=1}^N \|h_n-\ell_n\|_{\mathcal{H}}<\delta.
\]

Choose an $\ell\in \mathcal{W}^*$ such that
$\|h_{N+1}-\ell\|_{\mathcal{H}}<\delta$.
We then have that
\[
    \|\pi_N \ell\|_{\mathcal{H}}
    \le \|\pi_N (\ell-h_{N+1})\|_{\mathcal{H}}
        +\|\pi_N h_{N+1}\|_{\mathcal{H}}<2\delta.
\]
This implies that
\begin{equation}\label{l.w*.approx.21}    
    \|(I-\pi_N)\ell\|_{\mathcal{H}}
    \ge \|\ell\|_{\mathcal{H}}-2\delta
    >\|h_{N+1}\|_{\mathcal{H}}-3\delta
    =1-3\delta>0.
\end{equation}

Define $\ell_{N+1}\in \mathcal{W}^*$ by
\[
    \ell_{N+1}=\frac1{\|(I-\pi_N)\ell\|_{\mathcal{H}}}
          (I-\pi_N)\ell.
\]
Then $\ell_1,\dots,\ell_{N+1}$ are orthonormal in
$\mathcal{H}$.
Note that
\[
    \|(I-\pi_N)\ell\|_{\mathcal{H}}
    \le \|\ell\|_{\mathcal{H}}
    \le \|h_{N+1}\|_{\mathcal{H}}+\delta=1+\delta.
\]
In conjunction with \eqref{l.w*.approx.21}, this yields that
\[
    \|(I-\pi_N)\ell-\ell_{N+1}\|_{\mathcal{H}}
     =\bigl|\|(I-\pi_N)\ell\|_{\mathcal{H}}-1\bigr|<3\delta.
\]

It holds that
\[
    \|h_{N+1}-\ell_{N+1}\|_{\mathcal{H}}
    \le \|h_{N+1}-\ell\|_{\mathcal{H}}
        +\|\pi_N \ell\|_{\mathcal{H}}
        +\|(I-\pi_N)\ell-\ell_{N+1}\|_{\mathcal{H}}
    <6\delta.
\]
Combined with the hypothesis of induction, this implies
that 
\[
    \sum_{n=1}^{N+1}\|h_n-\ell_n\|_{\mathcal{H}}
    \le \delta+6\delta=7\delta<\varepsilon.
\]
Thus the assertion holds for $N+1$.
\end{proof}

\begin{lemma}\label{l.h.finite}
Let $N\in \mathbb{N}$.
For orthonormal 
$h_1,\dots,h_N\in \mathcal{H}$ and 
$a_1,\dots,a_N\in\mathbb{R}$ with 
$\max\limits_{1\le n\le N} a_n<1$, define $\eta\in\stwo$ by
\eqref{l.eta.hat.1}.
Then \eqref{t.tways.3} with $h=0$ holds for any 
$f\in C_b(\mathcal{W})$.
\end{lemma}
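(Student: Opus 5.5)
The plan is to approximate the orthonormal $h_1,\dots,h_N$ by elements of $\mathcal{W}^*$, apply Lemma~\ref{l.w*.finite} to the approximants, and pass to the limit. By Lemma~\ref{l.w*.approx}, for each $m\in\mathbb{N}$ we choose $\ell_1^{(m)},\dots,\ell_N^{(m)}\in\mathcal{W}^*$, orthonormal in $\mathcal{H}$, with $\sum_{n}\|h_n-\ell_n^{(m)}\|_{\mathcal{H}}<1/m$. Let $\eta_m\in\stwo$ be given by \eqref{l.eta.hat.1} with $h_n$ replaced by $\ell_n^{(m)}$ and the same $a_n$. Lemma~\ref{l.w*.finite} then yields
\[
    \int_{\mathcal{W}} f e^{\q_{\eta_m}} d\mu
    =\{\dettwo(I-B_{\eta_m})\}^{-\frac12}
     \int_{\mathcal{W}} f(\iota+F_{\widehat{\kappa_S(\eta_m)}}) d\mu
\]
for every $f\in C_b(\mathcal{W})$. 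It remains to let $m\to\infty$ on each side.

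For the operators, $B_{\eta_m}=\sum_n a_n\ell_n^{(m)}\otimes\ell_n^{(m)}\to B_\eta$ in $\htwo$, since $\|h\otimes h-\ell\otimes\ell\|_{\htwo}\le(\|h\|_{\mathcal{H}}+\|\ell\|_{\mathcal{H}})\|h-\ell\|_{\mathcal{H}}$. By the continuity of $\dettwo$, we get $\dettwo(I-B_{\eta_m})\to\dettwo(I-B_\eta)$. This limit is nonzero; indeed it equals $\prod_n(1-a_n)e^{a_n}>0$ by finite rank. Lemma~\ref{l.eta.hat} gives the explicit form $B_{\widehat{\kappa_S(\eta_m)}}=\sum_n((1-a_n)^{-1/2}-1)\ell_n^{(m)}\otimes\ell_n^{(m)}$, which converges in $\htwo$ to the corresponding operator for $\eta$, namely $B_{\widehat{\kappa_S(\eta)}}$.

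On the right-hand side, the isometry $\int\|F_\kappa-F_{\kappa'}\|^2_{\mathcal{H}}d\mu=\|\kappa-\kappa'\|_2^2=\|B_\kappa-B_{\kappa'}\|^2_{\htwo}$ together with \eqref{eq.hs} gives $F_{\widehat{\kappa_S(\eta_m)}}\to F_{\widehat{\kappa_S(\eta)}}$ in $L^2(\mu;\mathcal{H})$. Since $\mathcal{H}\hookrightarrow\mathcal{W}$ continuously, we have $\iota+F_{\widehat{\kappa_S(\eta_m)}}\to\iota+F_{\widehat{\kappa_S(\eta)}}$ in probability in $\mathcal{W}$. Bounded convergence then applies to $f\in C_b(\mathcal{W})$.

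The left-hand side is the main obstacle, because it requires uniform integrability. We have $\q_{\eta_m}\to\q_\eta$ in $L^2(\mu)$, since $\int|\q_{\eta_m}-\q_\eta|^2d\mu=\frac12\|\eta_m-\eta\|_2^2$. So $fe^{\q_{\eta_m}}\to fe^{\q_\eta}$ in probability. For uniform integrability, note that $\Lambda(B_{\eta_m})=\max_n a_n<1$ holds exactly for every $m$, by orthonormality of the $\ell^{(m)}_n$. Fix $p>1$ with $p\max_n a_n<1$. Lemma~\ref{l.q.eta.int}(i) applied to $p\eta_m$ bounds $\sup_m\int e^{p\q_{\eta_m}}d\mu$ in terms of $\max_n a_n$ and $\|\eta_m\|_2^2=\sum_na_n^2$, both independent of $m$. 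This gives uniform integrability, so the left-hand sides converge to $\int fe^{\q_\eta}d\mu$, and \eqref{t.tways.3} with $h=0$ follows.
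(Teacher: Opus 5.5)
Your proposal is correct and follows essentially the same route as the paper: approximate the $h_n$ by $\mathcal{W}^*$-elements via Lemma~\ref{l.w*.approx}, apply Lemma~\ref{l.w*.finite}, use Lemma~\ref{l.eta.hat} to get convergence of $B_{\widehat{\kappa_S(\eta_m)}}$, and pass to the limit using the uniform integrability from Lemma~\ref{l.q.eta.int}. One harmless correction: since $B_{\eta_m}$ has finite rank, $\Lambda(B_{\eta_m})=0\vee\max_n a_n$ rather than $\max_n a_n$, and $\dettwo(I-B_{\eta_m})=\prod_n(1-a_n)e^{a_n}$ does not depend on $m$, so you do not need continuity of $\dettwo$.
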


\begin{proof}
By Lemma~\ref{l.w*.approx}, there are
$\ell_1^{(m)},\dots,\ell_N^{(m)}$, $m\in \mathbb{N}$, such
that $\ell_1^{(m)},\dots,\ell_N^{(m)}$ are
orthonormal in $\mathcal{H}$ for each $m\in \mathbb{N}$ and  
$\sum\limits_{n=1}^N \|h_n-\ell_n^{(m)}\|_{\mathcal{H}}\to0$ as
$m\to\infty$.
For $m\in \mathbb{N}$, define $\eta^{(m)}\in\stwo$ by
\[
    \eta^{(m)}(t,s)=\sum_{n=1}^N a_n
    [(\ell_n^{(m)})^\prime(s)\otimes 
     (\ell_n^{(m)})^\prime(t)]
    \quad\text{for }(t,s)\in[0,T]^2.
\]
Obviously $\|B_{\eta^{(m)}}-B_\eta\|_{\htwo}\to0$ as
$m\to\infty$.
Furthermore, by Lemma~\ref{l.eta.hat}, it holds that
\[
    \|B_{\widehat{\kappa_S(\eta^{(m)})}}
     -B_{\widehat{\kappa_S(\eta)}}\|_{\htwo}\to0.
\]
Due to Theorem~\ref{t.q.eta} and Lemma~\ref{l.f.kappa}, we
have that
\[
    \q_{\eta^{(m)}}\to\q_\eta\text{ in }L^p(\mu)
    \quad\text{and}\quad
    F_{\widehat{\kappa_S(\eta^{(m)})}}\to
    F_{\widehat{\kappa_S(\eta)}}
   \text{ in }L^p(\mu;\mathcal{H})
   \quad\text{as }m\to\infty
\]
for any $p\in(1,\infty)$.

Since $\max\limits_{1\le n\le N} a_n<1$ and 
\[
    \dettwo(I-B_{\eta^{(m)}})
    =\prod_{n=1}^N (1-a_n)e^{a_n}
    =\dettwo(I-B_\eta)>0
    \quad\text{for }m\in \mathbb{N},
\]
applying Lemma~\ref{l.w*.finite}, we have that
\begin{equation}\label{l.h.finite.21}
    \int_{\mathcal{W}} f e^{\q_{\eta^{(m)}}} d\mu
    =\{\dettwo(I-B_\eta)\}^{-\frac12}
     \int_{\mathcal{W}} 
      f(\iota+F_{\widehat{\kappa_S(\eta^{(m)})}})
     d\mu
\end{equation}
for every $f\in C_b(\mathcal{W})$ and $m\in \mathbb{N}$.

Let $M=\max\limits_{1\le n\le N}a_n$ and take a $p\in(1,\infty)$ such
that $pM<1$. 
Since 
$\Lambda(B_{p\eta^{(m)}})=p\Lambda(B_{\eta^{(m)}})
 =p(0\vee M)$
and $\|\eta^{(m)}\|_2=\|\eta\|_2$ for every $m\in \mathbb{N}$,
by Lemma~\ref{l.q.eta.int}, we see that 
\[
    \sup_{m\in \mathbb{N}}
    \int_{\mathcal{W}} e^{p\q_{\eta^{(m)}}} d\mu
    \le \exp\biggl(\frac12\biggl\{\frac12
       +\frac{p(0\vee M)}{
               3\{1-p(0\vee M)\}^3}\biggr\}
       p^2\|\eta\|_2^2\biggr).
\]
Thus the family $\{\exp(\q_{\eta^{(m)}})\mid m\in \mathbb{N}\}$
is uniformly integrable.
Letting $m\to\infty$ in \eqref{l.h.finite.21}, we arrive 
at \eqref{t.tways.3} with $h=0$. 
\end{proof}

\begin{lemma}\label{l.eta.hat.norm}
Let $\rho,\eta\in\stwo$.
Assume that $\Lambda(B_\rho)\vee \Lambda(B_\eta)<1$ and 
$B_\rho B_\eta=B_\eta B_\rho$.
Set $\Lambda^\prime(B)=1-\Lambda(B)$ for 
$B\in \mathcal{S}(\htwo)$.
Then it holds that
\[
    \|B_{\widehat{\kappa_S(\rho)}}
       -B_{\widehat{\kappa_S(\eta)}}\|_{\htwo}
    \le \frac{\|\rho-\eta\|_2}{
         \sqrt{\Lambda^\prime(B_\rho)}
         \sqrt{\Lambda^\prime(B_\eta)}
          (\sqrt{\Lambda^\prime(B_\rho)}
           +\sqrt{\Lambda^\prime(B_\eta)})}.
\]
\end{lemma}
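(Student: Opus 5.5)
The plan is to reduce the statement to a scalar inequality via simultaneous diagonalization. By construction (Lemma~\ref{l.tw.(ii)to(iii)}) we have $I+B_{\kappa_S(\eta)}=C_\eta$, the positive square root of $I-B_\eta$. Hence $I+B_{\widehat{\kappa_S(\eta)}}=C_\eta^{-1}$, so
\[
    B_{\widehat{\kappa_S(\rho)}}-B_{\widehat{\kappa_S(\eta)}}
    =C_\rho^{-1}-C_\eta^{-1}.
\]
The same holds for $\rho$. Since $B_\rho$ and $B_\eta$ are commuting compact self-adjoint operators, I will take a common ONB $\{h_n\}_{n=1}^\infty$ of $\mathcal{H}$ with $B_\rho h_n=b_nh_n$ and $B_\eta h_n=a_nh_n$. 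Remark~\ref{r.sqrt} applied to each operator shows that $C_\rho$ and $C_\eta$ are diagonal in this basis, with entries $(1-b_n)^{1/2}$ and $(1-a_n)^{1/2}$. Alternatively, the commutation part of Lemma~\ref{l.sq.root} gives that $C_\rho$ and $C_\eta$ commute with $B_\rho$ and $B_\eta$, which yields the same conclusion.

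The Hilbert--Schmidt norm then becomes a sum over $n$:
\[
    \|C_\rho^{-1}-C_\eta^{-1}\|_{\htwo}^2
    =\sum_n\Bigl|\frac1{\sqrt{1-b_n}}-\frac1{\sqrt{1-a_n}}\Bigr|^2 .
\]
Each term is bounded using
\[
    \frac1{\sqrt x}-\frac1{\sqrt y}=\frac{y-x}{\sqrt x\sqrt y(\sqrt x+\sqrt y)},
    \qquad x=1-b_n,\ y=1-a_n .
\]
We have $x\ge 1-\Lambda(B_\rho)=\Lambda^\prime(B_\rho)>0$ and $y\ge\Lambda^\prime(B_\eta)>0$, and the map $(x,y)\mapsto\sqrt x\sqrt y(\sqrt x+\sqrt y)$ is increasing in each variable. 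So the $n$th term is at most $|a_n-b_n|$ divided by the constant appearing in the claim.

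Summing over $n$ gives $\sum_n|a_n-b_n|^2=\|B_\rho-B_\eta\|_{\htwo}^2$, and this equals $\|\rho-\eta\|_2^2$ by \eqref{eq.hs}. Taking square roots completes the argument.

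The only delicate point I expect is justifying the common eigenbasis including the kernels. Off the kernels, the joint eigenspaces of commuting compact self-adjoint operators are finite dimensional and invariant, so one diagonalizes each. On $\ker B_\rho\cap\ker B_\eta$ any ONB will do. The remaining pieces such as $\ker B_\rho\cap(\ker B_\eta)^\perp$ are handled through the eigenspaces of the other operator.
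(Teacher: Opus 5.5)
Your argument is correct and yields exactly the paper's constant, but the route is different. You diagonalize $B_\rho$ and $B_\eta$ simultaneously. This reduces the Hilbert--Schmidt norm to an $\ell^2$ sum, which you bound entrywise by the scalar identity $x^{-1/2}-y^{-1/2}=(y-x)/\{\sqrt x\sqrt y(\sqrt x+\sqrt y)\}$. Using Remark~\ref{r.sqrt} for an arbitrary diagonalizing ONB is legitimate: uniqueness of the square root (Lemma~\ref{l.sq.root}) shows that the diagonal operator with entries $\sqrt{1-b_n}$ is $C_\rho$, and the paper's own proof begins this way.

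The paper never uses a joint eigenbasis. It diagonalizes one operator at a time, only to get the lower bound $\langle C_\rho h,h\rangle_{\mathcal{H}}\ge\sqrt{\Lambda^\prime(B_\rho)}\,\|h\|_{\mathcal{H}}^2$. Lemma~\ref{l.cont.inv} then gives $\|C_\rho^{-1}\|_\op\le\Lambda^\prime(B_\rho)^{-1/2}$ and $\|(C_\rho+C_\eta)^{-1}\|_\op\le(\sqrt{\Lambda^\prime(B_\rho)}+\sqrt{\Lambda^\prime(B_\eta)})^{-1}$. The paper then writes $C_\rho^{-1}-C_\eta^{-1}=C_\rho^{-1}C_\eta^{-1}(C_\rho+C_\eta)^{-1}(B_\rho-B_\eta)$ and applies the ideal inequality $\|XYZW\|_{\htwo}\le\|X\|_\op\|Y\|_\op\|Z\|_\op\|W\|_{\htwo}$. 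The commutativity of $C_\rho$ and $C_\eta$ needed for that factorization comes from the commutation clause of Lemma~\ref{l.sq.root}.

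So the paper's proof is basis-free and needs only the single-operator spectral theorem plus the square root lemma. Yours is more transparent: the constant is visibly the worst case of a scalar difference quotient over $x\ge\Lambda^\prime(B_\rho)$ and $y\ge\Lambda^\prime(B_\eta)$. The price is reliance on joint diagonalization of commuting compact self-adjoint operators, which the paper never states. Your sketch of that fact is the standard argument and suffices: nonzero eigenspaces of $B_\rho$ are finite-dimensional and $B_\eta$-invariant, and $B_\eta$ restricted to $\ker B_\rho$ is compact self-adjoint.
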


\begin{proof}
We first show that 
\begin{equation}\label{l.eta.hat.norm.21}
    \inf_{\|h\|_{\mathcal{H}}=1}
    \langle C_\rho h,h\rangle_{\mathcal{H}}
    \ge\sqrt{\Lambda^\prime(B_\rho)}.
\end{equation}
Since $B_\rho\in \mathcal{S}(\htwo)$
(Lemma~\ref{l.b.kappa}), there is  an ONB
$\{h_n\}_{n=1}^\infty$ of $\mathcal{H}$ such that  
\[
    B_\rho=\sum_{n=1}^\infty a_n h_n\otimes h_n,
\]
where $a_n\in \mathbb{R}$ for $n\in \mathbb{N}$ and 
$\sum\limits_{n=1}^\infty a_n^2<\infty$.
Recalling that 
$\sup\limits_{n\in \mathbb{N}} a_n=\Lambda(B_\rho)<1$, 
we define the linear operator $C:\mathcal{H}\to \mathcal{H}$
by 
\[
    Ch=\sum_{n=1}^\infty \sqrt{1-a_n} 
       \langle h,h_n\rangle_{\mathcal{H}} h_n
    \quad\text{for }h\in \mathcal{H}.
\]
Then it holds that
\[
    \|Ch\|_{\mathcal{H}}^2
    =\sum_{n=1}^\infty (1-a_n)
       \langle h,h_n\rangle_{\mathcal{H}}^2
    \le \|h\|_{\mathcal{H}}^2
\]
and
\[
    \langle Ch,g\rangle_{\mathcal{H}}
    =\sum_{n=1}^\infty \sqrt{1-a_n} 
       \langle h,h_n\rangle_{\mathcal{H}} 
       \langle g,h_n\rangle_{\mathcal{H}} 
    =\langle h,Cg\rangle_{\mathcal{H}}
    \quad\text{for }h,g\in \mathcal{H}.
\]
This also implies that
\begin{equation}\label{l.eta.hat.norm.22}
    \langle Ch,h\rangle_{\mathcal{H}}
    \ge \sqrt{\inf_{n\in \mathbb{N}}(1-a_n)} 
         \|h\|_{\mathcal{H}}^2
    =\sqrt{\Lambda^\prime(B_\rho)} \|h\|_{\mathcal{H}}^2
    \quad\text{for }h\in \mathcal{H}.
\end{equation}
Thus $C\in \mathcal{S}_+(\mathcal{H})$.
Moreover, we see that 
\[
    C^2h=\sum_{n=1}^\infty (1-a_n)
       \langle h,h_n\rangle_{\mathcal{H}} h_n
    =(I-B_\eta)h
    \quad\text{for }h\in \mathcal{H}.
\]
Due to the uniqueness of square root (Lemma~\ref{l.sq.root}),
we see that $C=C_\eta$.
The inequality \eqref{l.eta.hat.norm.22} is nothing but the
desired inequality \eqref{l.eta.hat.norm.21}.

It follows from \eqref{l.eta.hat.norm.21} that
\[
    \inf_{\|h\|_{\mathcal{H}}=1} \|C_\rho h\|
    \ge \inf_{\|h\|_{\mathcal{H}}=1} 
      \langle C_\rho h,h\rangle_{\mathcal{H}}
    \ge \sqrt{\Lambda^\prime(B_\rho)}.
\]
Due to Lemma~\ref{l.cont.inv}, we see that $C_\rho$ has a 
continuous inverse and 
\begin{equation}\label{l.eta.hat.norm.23}
    \|C_\rho^{-1}\|_\op
    \le \frac1{\sqrt{\Lambda^\prime(B_\rho)}}.
\end{equation}

By \eqref{l.eta.hat.norm.21}, we obtain that
\[
    \|(C_\rho+C_\eta)h\|_{\mathcal{H}}
    \ge \langle (C_\rho+C_\eta)h,h\rangle_{\mathcal{H}}
    \ge \sqrt{\Lambda^\prime(B_\rho)}
        +\sqrt{\Lambda^\prime(B_\eta)}
\]
for $h\in \mathcal{H}$  with $\|h\|_{\mathcal{H}}=1$.
By Lemma~\ref{l.cont.inv}, this implies that
\begin{equation}\label{l.eta.hat.norm.24}
    \|(C_\rho+C_\eta)^{-1}\|_\op
    \le \frac1{\sqrt{\Lambda^\prime(B_\rho)}
           +\sqrt{\Lambda^\prime(B_\eta)}}.
\end{equation}

Using the commutativity of $B_\rho$, $B_\eta$, $C_\rho$,
$C_\eta$, $C_\rho^{-1}$, and $C_\eta^{-1}$, we rewrite as
\[
    B_{\widehat{\kappa_S(\rho)}}
       -B_{\widehat{\kappa_S(\eta)}}
    =C_\rho^{-1}-C_\eta^{-1}
    =C_\rho^{-1} C_\eta^{-1}
    (C_\rho+C_\eta)^{-1}(B_\rho-B_\eta).
\]
Plugging the estimations \eqref{l.eta.hat.norm.23} for
$\rho$ and $\eta$ and \eqref{l.eta.hat.norm.24} into this,
we arrive at the desired inequality.
\end{proof}

\begin{proof}[Another proof of \eqref{t.tways.3}]
It suffices to show \eqref{t.tways.3} with $h=0$.
In fact, if $\ell\in \mathcal{W}^*$, then $\D^*\ell=\ell$. 
Substituting $f_N=(N\wedge e^{\ell})f$ into
\eqref{t.tways.3} and letting $N\to\infty$, we obtain
\eqref{t.tways.3} with $\ell\in \mathcal{W}^*$ for $h$. 
Due to the denseness of $\mathcal{W}^*$ in $\mathcal{H}$,
the identity extend to $\mathcal{H}$.

Let $\eta\in\stwo$ and assume that $\Lambda(B_\eta)<1$.
Take an ONB $\{h_n\}_{n=1}^\infty$ of $\mathcal{H}$ such
that
$B_\eta=\sum\limits_{n=1}^\infty a_n h_n\otimes h_n$, where 
$a_n=\langle B_\eta h_n,h_n\rangle_{\mathcal{H}}$ for 
$n\in \mathbb{N}$. 
For $N\in \mathbb{N}$, define $\eta^{(N)}\in\stwo$ by 
\[
    \eta^{(N)}(t,s)=\sum_{n=1}^N a_n
     [h_n^\prime(s)\otimes h_n^\prime(t)]
    \quad\text{for }(t,s)\in[0,T]^2.
\]
Then $\|B_{\eta^{(N)}}-B_\eta\|_{\htwo}\to0$ as $N\to\infty$
and 
$B_{\eta^{(N)}} B_\eta=B_\eta B_{\eta^{(N)}}$ for
$N\in \mathbb{N}$. 
By Theorem~\ref{t.q.eta} and Lemmas~\ref{l.eta.hat.norm} and
\ref{l.f.kappa}, 
$\q_{\eta^{(N)}}\to\q_\eta$ in $L^p(\mu)$
and 
$F_{\widehat{\kappa_S(\eta^{(N)})}}\to
 F_{\widehat{\kappa_S(\eta)}}$ in $L^p(\mu;\mathcal{H})$ as
$N\to\infty$ for any $p\in(1,\infty)$.

Since 
$\max\limits_{1\le n\le N} a_n \le \Lambda(B_\eta)<1$, 
by Lemma~\ref{l.h.finite},  we have that 
\begin{equation}\label{t.surj.1st.101}
    \int_{\mathcal{W}} f e^{\q_{\eta^{(N)}}} d\mu
    =\{\dettwo(I-B_{\eta^{(N)}})\}^{-\frac12}
     \int_{\mathcal{W}} f(\iota+F_{\widehat{\kappa_S(\eta^{(N)})}})
     d\mu
\end{equation}
for every $f\in C_b(\mathcal{W})$ and $N\in \mathbb{N}$. 

Take a $p\in(1,\infty)$ such that $p\Lambda(B_\eta)<1$.
By Lemma~\ref{l.q.eta.int} and the dominations that
$\Lambda(B_\eta^{(N)})\le \Lambda(B_\eta)$ and
$\|\eta^{(N)}\|_2\le \|\eta\|_2$, we have that
\[
    \sup_{N\in \mathbb{N}}
       \int_{\mathcal{W}} e^{p\q_{\eta^{(N)}}} d\mu
    \le \exp\biggl(\frac12\biggl\{\frac12
        +\frac{p\Lambda(B_\eta)}{
           3(1-p\Lambda(B_\eta))^3}
     \biggr\}p^2\|\eta\|_2^2\biggr).
\]
Hence the family $\{\exp(\q_{\eta^{(N)}})\mid N\in \mathbb{N}\}$ 
is uniformly integrable.
Letting $N\to\infty$ in \eqref{t.surj.1st.101}, we arrive at  
\eqref{t.tways.3} with $h=0$.
\end{proof}

\chapter*{Notations}
\label{chap.notations}
\begingroup
\renewcommand{\arraystretch}{2}
\begin{center}
Fundamentals
\\[5pt]
\begin{tabular}{|l|l|}
 \hline 
 $\mathbb{N}$ & natural numbers 
 \\ \hline
 $\mathbb{R}$ & real numbers
 \\ \hline
 $\mathbb{C}$ & complex numbers
 \\ \hline
 $\mathbb{R}^n$ & $n$-dimensional real Euclidean space
 \\ \hline
 $\langle \cdot,\cdot\rangle$ & 
 the inner product in $\mathbb{R}^n$
 \\ \hline
 $|x|$ &  the Euclidean norm of $x\in\mathbb{R}^n$
 \\ \hline 
 $\mathbb{R}^{d\times d}$ & $d\times d$ real matrices
 \\ \hline 
 $\mathbb{C}^{d\times d}$ &  $d\times d$ complex matrices
 \\ \hline 
 $|M|$ & the Euclidean norm of $M\in \mathbb{R}^{d\times d}$
 \\ \hline 
 $M^\dagger$ & the transpose of 
              $M\in \mathbb{C}^{d\times d}$
 \\ \hline 
 $\one_S$ & the indicator function of $S$
 \\ \hline 
\end{tabular}
\end{center}

\begin{center}
Wiener and Cameron-Martin spaces
\\[5pt]
\begin{tabular}{|l|l|}
\hline
$\mathcal{W}$ & 
$\mathbb{R}^d$-valued continuous functions on $[0,T]$
vanishing at $0$
\\
\hline
$\mu$ & the Wiener measure on $\mathcal{W}$
\\
\hline
$\mathcal{H}$ & 
\begin{minipage}[t]{300pt}
the space of absolutely continuous
$h\in \mathcal{W}$ possessing square integrable derivative
$h^\prime$ on $[0,T]$.
\end{minipage}
\\ \hline
$\langle h,g\rangle_{\mathcal{H}}$ &
$\displaystyle
  \int_0^T \langle h^\prime(t),g^\prime(t)
           \rangle dt$
\\ \hline
$\|h\|_{\mathcal{H}}$ & 
$\langle h,h\rangle_{\mathcal{H}}^{\frac12}$
\\
\hline
\end{tabular}
\end{center}

\begin{center}
Function spaces 
\\[5pt]
\begin{tabular}{|l|l|}
\hline
$L^p(\mu;E)$ & $E$-valued $p$th integrable functions with
respect to $\mu$
\\ \hline
$L^p(\mu)$ & the abbreviation for $L^p(\mu;\mathbb{R})$
\\ \hline
$L^{1+}(\mu)$ & $\bigcup\limits_{p\in(1,\infty)} L^p(\mu)$
\\ \hline
$L^p([a,b];E)$ & $E$-valued $p$th integrable functions on
$[a,b]$ 
\\ \hline
$C_b(X)$ &  continuous and bounded real functions on $X$
\\ \hline
$C([0,T];\mathbb{R}^{d\times d})$ & 
$\mathbb{R}^{d\times d}$-valued continuous functions on 
$[0,T]$
\\ \hline
$C^k([0,T];\mathbb{R}^{d\times d})$ &
$\mathbb{R}^{d\times d}$-valued $k$-times continuously
differentiable functions on $[0,T]$
\\ \hline
$\mathscr{S}(\mathbb{R}^N)$ & rapidly decreasing functions
on $\mathbb{R}^N$ 
\\ \hline
$\mathscr{S}^\prime(\mathbb{R}^N)$ & 
tempered distributions on $\mathbb{R}^N$
\\ \hline
\end{tabular}
\end{center}

\begin{center}
Malliavin calculus
\\[5pt]
\begin{tabular}{|l|l|}
\hline
 $\D$ & the $\mathcal{H}$-derivative 
 \\ \hline
 $\D^*$ &  the adjoint of $\D$
 \\ \hline
 $\mathbb{D}^{k,p}(E)$ &
 \begin{minipage}[t]{300pt}
 $k$-times $\mathcal{H}$-differentiable $E$-valued Wiener
 functionals with $p$th integrable $\mathcal{H}$-derivatives
 of all orders 
 \end{minipage}
 \\ \hline
 $\mathbb{D}^\infty(E)$ &
 $\bigcap\limits_{k\in \mathbb{N}\cup\{0\}}
   \bigcap\limits_{p\in(1,\infty)} \mathbb{D}^{k,p}(E)$
 \\ \hline
 $\mathbb{D}^{k,p}$ and $\mathbb{D}^\infty$ & 
 the abbreviations for $\mathbb{D}^{k,p}(\mathbb{R})$ and
 $\mathbb{D}^\infty(\mathbb{R})$, respectively
 \\ \hline
 $\delta_z(\Phi)d\mu$ & 
 the pinned measure at $z$ by $\Phi$
 \\ \hline
\end{tabular}
\end{center}

\newpage
\begin{center}
Hilbert-Schmidt Operators
\\[5pt]
\begin{tabular}{|l|l|}
\hline
$\htwo$ &
Hilbert-Schmidt operator of 
$\mathcal{H}$ to $\mathcal{H}$
\\
\hline
$\mathcal{S}(\htwo)$ &
self-adjoint Hilbert-Schmidt operator of 
$\mathcal{H}$ to $\mathcal{H}$
\\
\hline
$\mathcal{S}_+(\mathcal{H})$ &
self-adjoint, continuous, and non-negative definite 
operators of $\mathcal{H}$ to $\mathcal{H}$
\\
\hline
$\Lambda(B)$ &
$\sup\limits_{\|h\|_{\mathcal{H}}=1}
  \langle Bh,h\rangle_{\mathcal{H}}$
~($B\in \mathcal{S}(\htwo)$)
\\
\hline
$h\otimes g\in \htwo$ & 
$(h\otimes g)k=\langle h,k\rangle g$
~($k\in \mathcal{H}$)
\\ \hline
$B_\kappa\in \htwo$ &
$\displaystyle
   \langle B_\kappa h,g\rangle_{\mathcal{H}}
    =\int_0^T \biggl\langle \int_0^T 
       \kappa(t,s)h^\prime(s)ds,g^\prime(t)\biggr\rangle
       dt$
~($h,g\in \mathcal{H}$)
\\ \hline
\end{tabular}
\end{center}

\begin{center}
Kernels
\\[5pt]
\begin{tabular}{|l|l|}
\hline
$\ltwo$ & 
$\mathbb{R}^{d\times d}$-valued square integrable functions
on $[0,T]^2$
\\
\hline
$\stwo$ & the totality of 
$\eta\in\ltwo$ with 
$\eta(t,s)^\dagger=\eta(s,t)$
for $(t,s)\in[0,T]^2$
\\
\hline
$\|\kappa\|_2$ &
$\displaystyle
  \biggl(\int_0^T \int_0^T |\kappa(t,s)|^2 ds dt
     \biggr)^{\frac12}$
\\
\hline
$\kappa\eqltwo\kappa^\prime$ & 
if
$\|\kappa-\kappa^\prime\|_2=0$
\\ \hline
$\eta(\kappa)\in\stwo$ & 
$\displaystyle
 \eta(\kappa)(t,s)=-\biggl\{\kappa(t,s)
  +\kappa(s,t)^\dagger
  +\int_0^T \kappa(u,t)^\dagger \kappa(u,s)du\biggr\}$
\\ \hline
$\widehat{\kappa}\in\ltwo$ &
$B_{\widehat{\kappa}}=(I+B_\kappa)^{-1}-I$
\\ \hline
$\kappa_S(\eta)\in\stwo$ & 
$B_{\kappa_S(\eta)}=C_\eta-I$ with
$C_\eta\in \mathcal{S}_+(\mathcal{H})$ such that
$C_\eta^2=I-B_\eta$
\\ \hline
$\kappa_A(\rho)\in\ltwo$ & 
$\kappa_A(\rho)(t,s)=-\one_{[0,t)}(s) \rho(t,s)$
\\ \hline
$\rho_\phi\in\stwo$ &
$\rho_\phi(t,s)=\one_{[0,t)}(s)\phi(t)
  +\one_{(t,T]}(s)\phi(s)^\dagger$
\\ \hline
$\sigma(\phi)\in C([0,T];\mathbb{R}^{d\times d})$ &
$\displaystyle
 \sigma(\phi)(t)=\phi(t)
   -\int_t^T\phi(u)^\dagger\phi(u)du$
\\ \hline
\end{tabular}
\end{center}

\newpage
\begin{center}
Wiener functionals
\\[5pt]
\begin{tabular}{|l|l|}
\hline
$F_\kappa:\mathcal{W}\to \mathcal{H}$ &
$\displaystyle
 \langle F_\kappa,h\rangle_{\mathcal{H}}
    =\int_0^T \biggl\langle 
         \int_0^T \kappa(t,s)d\theta(s),h^\prime(t)
         \biggr\rangle dt$
~($h\in \mathcal{H}$)
\\ \hline
$\q_\eta:\mathcal{W}\to \mathbb{R}$ & 
$\displaystyle
 \int_0^T \biggl\langle \int_0^t
      \eta(t,s)d\theta(s),d\theta(t)
      \biggr\rangle$
\\ \hline
$\mathfrak{h}(\kappa;x)$ &
$\displaystyle
 \frac12\int_0^T \biggl\langle x,
   \int_0^T \kappa(t,s)d\theta(s) \biggr\rangle^2 dt$
\\ \hline
$\mathfrak{h}(\kappa)$ & 
$\displaystyle
 \frac12\int_0^T \biggl|\int_0^T \kappa(t,s)d\theta(s)
  \biggr|^2 dt$
\\ \hline
$\p_\sigma:\mathcal{W}\to \mathbb{R}$ & 
$\displaystyle
 \int_0^T\langle \sigma(t)\theta(t)d\theta(t)\rangle$
\\ \hline
\end{tabular}
\end{center}

\begin{center}
Matrix functions
\\[5pt]
\begin{tabular}{|l|l|}
 \hline 
 $\e[M]$ & 
 $\displaystyle
   \sum_{n=0}^\infty \frac1{n!}M^n$ 
 \\ \hline
 $\ch[M]$ & 
 $\displaystyle
  \sum_{n=0}^\infty \frac1{(2n)!}M^{2n}$
 \\ \hline
 $\sh[M]$ & 
 $\displaystyle
  \sum_{n=0}^\infty \frac1{(2n+1)!}M^{2n}$
 \\ \hline
 $\mathfrak{sh}[M]$ &
 $\displaystyle
  \sum_{n=0}^\infty \frac1{(2n+1)!}M^{2n+1}$
 \\ \hline
 $\tnh[M]$ & $\sh[M](\ch[M])^{-1}$ (if $\det\ch[M]\ne0$)
 \\ \hline
\end{tabular}
\end{center}

\endgroup


\end{document}